\definecolor{red}{rgb}{1,0,0}
\definecolor{white}{rgb}{1,1,1}
\numberwithin{section}{chapter}
\theoremstyle{plain}
\newtheorem{theorem}{\sc Theorem}[chapter]
\newtheorem{claim}[theorem]{\sc Claim}
\newtheorem{lemma}[theorem]{\sc Lemma}
\newtheorem{proposition}[theorem]{\sc Proposition}
\newtheorem{corollary}[theorem]{\sc Corollary}
\newtheorem{abgen}[theorem]{\sc Abstract Genericity Criterion}
\newtheorem{eqgen}[theorem]{\sc Equivariant Genericity Criterion}
\newtheorem{transvthm}[theorem]{\sc Transversality Theorem}
\newtheorem{gbcthm}[theorem]{\sc Gauss--Bonnet--Chern Theorem}
\newtheorem{sardthm}[theorem]{\sc Sard Theorem}
\newtheorem{sardsthm}[theorem]{\sc Sard--Smale Theorem}
\newtheorem{bairethm}[theorem]{\sc Baire Theorem}
\newtheorem{weakbumpythm}[theorem]{\sc Weak Bumpy Metric Theorem}
\newtheorem{bumpythm}[theorem]{\sc Bumpy Metric Theorem}
\newtheorem{cibumpythm}[theorem]{\sc $C^\infty$ Bumpy Metric Theorem}
\newtheorem{stwthm}[theorem]{\sc Stone--Weierstrass Theorem}
\newtheorem{rieszthm}[theorem]{\sc Riesz Representation Theorem}
\theoremstyle{definition}
\newtheorem{definition}[theorem]{\sc Definition}
\newtheorem{example}[theorem]{\sc Example}
\theoremstyle{remark}
\newtheorem{remark}[theorem]{\it Remark}
\numberwithin{equation}{chapter}
\numberwithin{figure}{chapter}
\renewenvironment{proof}[1][\proofname]{\par
  \pushQED{\qed}%
  \normalfont \topsep6\p@\@plus6\p@\relax
  \trivlist
  \item[\hskip\labelsep
        \it
    #1\@addpunct{.}]\ignorespaces
}{%
  \popQED\endtrivlist\@endpefalse
}
\newcommand{\ev}{\operatorname{ev}}
\newcommand{\p}{\mathcal{P}}
\newcommand{\op}{\Omega_{\p}}
\newcommand{\met}{\ensuremath\operatorname{Met}}
\newcommand{\gr}{\ensuremath\operatorname{Gr}}
\newcommand{\chr}{\ensuremath\Gamma}
\newcommand{\sect}{{\boldsymbol{\Gamma}}}
\newcommand{\D}{\boldsymbol{\operatorname{D}}}
\newcommand{\s}{\ensuremath{\mathcal{S}}}
\newcommand{\R}{\mathds{R}}
\newcommand{\A}{\mathcal{A}}
\newcommand{\Z}{\mathds{Z}}
\newcommand{\N}{\mathds{N}}
\newcommand{\C}{\mathds{C}}
\newcommand{\Q}{\mathds{Q}}
\newcommand{\vol}{\operatorname{vol}}
\newcommand{\Vol}{\operatorname{Vol}}
\newcommand\SO{{\rm SO}}
\newcommand\GL{{\rm GL}}
\newcommand{\Ad}{\operatorname{Ad}}
\newcommand{\id}{\operatorname{id}}
\newcommand{\sgn}{\operatorname{sgn}}
\newcommand{\Ric}{\operatorname{Ric}}
\newcommand{\tr}{\operatorname{tr}}
\newcommand{\crit}{\operatorname{Crit}}
\newcommand{\im}{\operatorname{Im}}
\newcommand{\coker}{\operatorname{coker}}
\newcommand{\ind}{\operatorname{ind}}
\newcommand{\eig}{\operatorname{Eig}}
\newcommand{\diff}{\operatorname{Diff}}
\newcommand{\hess}{\ensuremath{\mathrm{Hess}}}
\newcommand{\supp}{\operatorname{supp}}
\newcommand{\Iso}{\ensuremath{\mathrm{Iso}}}
\newcommand{\Lin}{\operatorname{Lin}}
\newcommand{\Bilin}{\operatorname{Bilin}}
\newcommand{\cpc}{\operatorname{K}}
\newcommand{\ver}{\ensuremath{\mathrm{Ver}}}
\newcommand{\hor}{\ensuremath{\mathrm{Hor}}}
\newcommand{\codim}{\ensuremath{\mathrm{codim}}}
\newcommand{\ea}{\ensuremath{\mbox{\rm\AE}}}
\newcommand{\eo}{\ensuremath{\mbox{\rm\OE}}}
\newcommand{\M}{\mathcal M}
\newcommand{\la}{\longrightarrow}
\newcommand{\dd}{\mathrm d}
\newcommand{\curves}[1]{\mathfrak H\left[#1\right]}
\newcommand{\Curves}[2]{\mathfrak H_{#1}\left[#2\right]}
\newcommand{\normc}[2]{\|#2\|_{C^{#1}}}
\newcommand{\norml}[2]{\|#2\|_{L^{#1}}}
\newcommand{\fibr}[1]{{#1}^\diamond}
\newcommand*{\longhookrightarrow}{\ensuremath{\lhook\joinrel\relbar\joinrel\rightarrow}}
\newcommand{\llangle}{\langle\!\!\langle}
\newcommand{\rrangle}{\rangle\!\!\rangle}
\title{Generic properties of semi--Riemannian geodesic flows}
\author{Renato G. Bettiol (IME USP, Brazil)}
\begin{document}

\frontmatter

\includepdf[pages={1-3}]{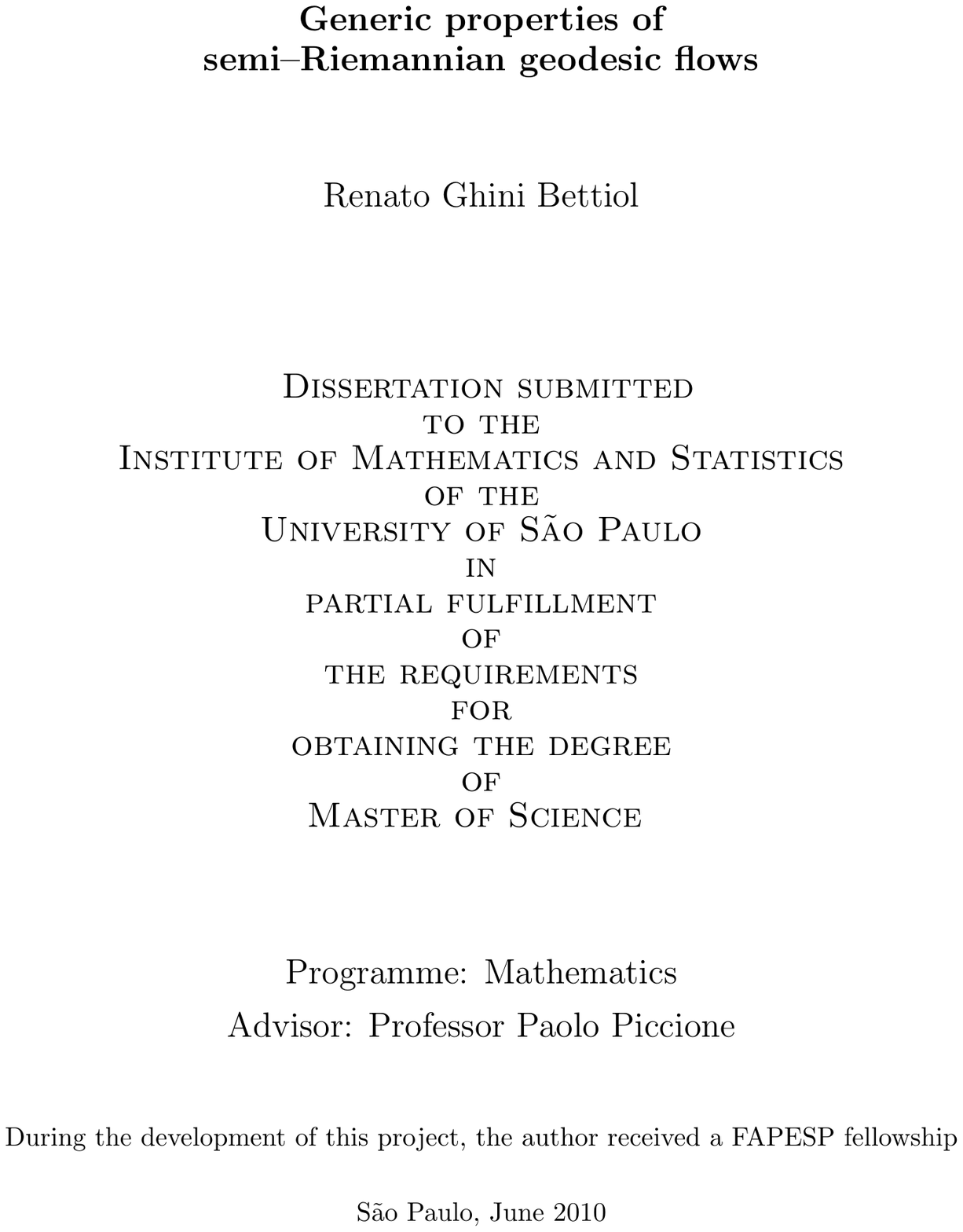}



\renewcommand{\chaptermark}[1]{\markboth{#1}{}}
\renewcommand{\sectionmark}[1]{\markright{#1}}

\cleardoublepage
{\color{white}.}

\renewcommand{\thepage}{\roman{page}}
\setcounter{page}{5}

\vskip 2.35cm
\centerline{\Large\bf Acknowledgements}
\vskip 2cm

{\small
The author gratefully thanks his advisor, Professor Paolo Piccione, and Professor Daniel Victor Tausk for their enormous support during countless fruitful conversations, and for the opportunity of learning from distinguished mathematicians as themselves. Not only they proportioned the best possible environment to stimulate the interest of a student in differential geometry, but also revealed extremely welcoming in the personal dimension.

Several other professors were of great influence, both personally and mathematically, specially Marcos Alexandrino, Leonardo Biliotti, Roberto Giamb\`o, Miguel Angel Javaloyes, Francesco Mercuri and Gaetano Siciliano. In addition, many thanks to Professors Luis Alias, Levi Lima, Guillermo Lobos and Jaime Ripoll for their kind invitations and to Fapesp for sponsoring this project. The author also acknowledges the many math departments that provided an excellent working environment at Universidad de Murcia in Spain, Universit\`a degli Studi di Parma in Italy, and Universidade Federal do Cear\'a, Universidade Federal do Rio Grande do Sul and Universidade Federal de S\~ao Carlos in Brazil.

Last, but not least, many thanks to Marcello Ghini Bettiol, Wagner Bettiol and Raquel Ghini, for their kind understanding and unconditional support during all the weekends away immersed in articles and books. Many thanks also to William Anderson and the Traditional Jazz Band Brasil, Rodrigo Andrade, Stephanie Blum, Ana Carolina Boero, Jorge Cham, Sheldon Cooper, Claudia Correa, Peter Hazard, Takeo Jumonji, Leandro Augusto Lichtenfelz, Fernando Henry Meirelles, Pedro Henrique Pontes, Rodrigo Roque, Lucas Kaufmann Sacchetto, Radu Saghin, Bianca Santoro, Cinthya Maria Schneider, Fabio Simas and Mariana Smit for their constant encouragement, without which this journey would have never been so smooth and pleasant.
}

\cleardoublepage
{\color{white}.}

\vskip 2.35cm
\centerline{\Large\bf Abstract}
\vskip 2cm

{\small
Let $M$ be a possibly non compact smooth manifold. We study genericity in the $C^k$--topology ($3\leq k\leq +\infty$) of nondegeneracy properties of semi--Riemannian geodesic flows on $M$. Namely, we prove a new version of the Bumpy Metric Theorem for a such $M$ and also genericity of metrics that do not possess any degenerate geodesics satisfying suitable endpoints conditions. This extends results of Biliotti, Javaloyes and Piccione \cite{biljavapic} for geodesics with fixed endpoints to the case where endpoints lie on a compact submanifold $\p\subset M\times M$ that satisfies an admissibility condition. Immediate consequences are generic non conjugacy between two points and non focality between a point and a submanifold (or also between two submanifolds). 
}

\vskip 2.35cm
\centerline{\Large\bf Resumo}
\vskip 2cm

{\small
Seja $M$ uma variedade suave possivelmente n\~ao compacta. Estuda--se a genericidade na topologia $C^k$ ($3\leq k\leq +\infty$) de propriedades de n\~ao degeneresc\^encia de fluxos geod\'esicos semi--Riemannianos em $M$. A saber, prova--se uma nova vers\~ao do Teorema de M\'etricas Bumpy para uma tal $M$ e tamb\'em a genericidade de m\'etricas que n\~ao possuem geod\'esicas degeneradas cujos pontos finais satisfazem certas condi\c{c}oes. Isso estende resultados anteriores de Biliotti, Javaloyes and Piccione \cite{biljavapic} para geod\'esicas com extremos fixos para o caso onde os extremos variam em uma subvariedade compacta $\p\subset M\times M$ que satisfaz uma condi\c{c}\~ao de admissibilidade. Consequ\^encias imediatas s\~ao genericidade de n\~ao conjuga\c{c}\~ao entre dois pontos e n\~ao focalidade entre um ponto e uma subvariedade (ou tamb\'em entre duas subvariedades).
}
\tableofcontents

\mainmatter
\renewcommand{\thepage}{\roman{page}}


\setcounter{page}{11}
\phantomsection
\renewcommand{\chaptermark}[1]{\markboth{#1}{}}
\renewcommand{\sectionmark}[1]{\markright{#1}}

\chapter*{Preface}

Genericity of properties of flows is a widely explored topic in dynamical systems, particularly regarding geodesic flows. A property satisfied by some elements of a metric space is called {\em generic} if the subset of elements that satisfy it contains a countable intersection of open dense subsets, i.e., a dense $G_\delta$. This subset is called a {\em generic} subset, and in particular, from the Baire Theorem, a generic subset is dense. In the case of the geodesic flow of a metric $g$, i.e., the flow on the tangent bundle $TM$ whose projection of trajectories on $M$ are the $g$--geodesics, one may analyze genericity of certain properties of metrics on $M$. Roughly, genericity of a such property means that it corresponds to the case of {\em typical} metrics on $M$, or, that an arbitrarily small perturbation of any given metric on $M$ produces a new metric on $M$ with this property. In this sense, generic properties of the geodesic flow give information on the expected behavior of a randomly chosen metric, and on the stability of this property. This stability is of great importance to infer conclusions using manifolds as mathematical models, since it guarantees that small inaccuracies in the observation are physically neglectable.

It is natural to expect that highly symmetric configurations are {\em not} generic, since they are unstable under perturbations. More precisely, consider for instance the isometry groups of a fixed manifold for varying Riemannian metrics. It is reasonable to predict that the subset of metrics on $M$ whose isometry group is trivial ought to be generic. In fact, this result was proved by Ebin \cite{ebin} in the seventies. There is, however, a subtle detail. In this article, genericity is established for {\em Riemannian structures} on $M$, i.e., equivalence classes of metrics on $M$ with respect to the action by pull--back of the diffeomorphisms group of $M$. Through the analysis of this action, particularly through the construction of a slice to the action, it is possible to infer several conclusions on the {\em orbit space} of Riemannian structures. Notice that an isometry of $(M,g)$ in this context is an element of the isotropy group of $g$.

We are interested in genericity of similar symmetry properties of metrics, concerning the existence of {\em degenerate} geodesics. Nevertheless, our approach is somewhat different. Namely, we aim to study generic subsets of the set $\met_\nu^k(M)$ of $C^k$ semi--Riemannian metrics of index $\nu$ on $M$, endowed with the topology induced from certain Banach spaces of tensors on $M$. In this sense, we prove genericity of certain properties of metrics, and not of {\em equivalence classes of metrics} as Ebin \cite{ebin}. In addition, such generic subsets will characterized by properties regarding the absence of degenerate geodesics, which constitute a sort of symmetric configuration, as it will be explained in the sequel.

A couple remarks are necessary at this point. First, we deal with non necessarily positive--definite metrics, i.e., semi--Riemannian metrics, which are nondegenerate symmetric $(0,2)$--tensors. Generic properties of {\em semi--Riemannian} geodesic flows constitute a fairly unexplored area, with a few very recent contributions by Biliotti, Javaloyes and Piccione \cite{biljavapic,biljavapic2} in 2009 and Bettiol and Giamb\`o \cite{metmna} in 2010. The main advantage of this more general context is that mathematical models of space--times in general relativity are also contemplated, together with Riemannian manifolds. Namely, space--times are modeled by four--dimensional manifolds endowed with a semi--Riemannian metric of index $\nu=1$ that satisfies the Einstein equations and have a time orientation, see Definition~\ref{def:spacetime}. Therefore, genericity of certain properties of such metrics clearly indicates that observation of these properties are physically relevant, since stable under small perturbations. More generally, all of our results are valid for higher indexes, and not only for the Lorentzian case $\nu=1$.

Second, the topology of $\met_\nu^k(M)$ is a delicate matter. Since we will deal with non necessarily compact manifolds, this space does not have a natural topology. For this reason, we introduce the concept of {\em $C^k$ Whitney type Banach spaces of tensors on $M$}, which are Banach spaces whose norm depends on a choice of an auxiliary Riemannian metric on $M$. In addition, the choice of another auxiliary metric $g_\mathrm A\in\met_\nu^k(M)$ will be necessary, to avoid empty interior intersections of $\met_\nu^k(M)$ and these Banach spaces, also maintaining its separability. This allows to induce a topology on a subset of $\met_\nu^k(M)$ formed by metrics that are asymptotically equal to $g_\mathrm A$ at infinity, turning it an open subset of a Banach space, in particular a metric space. Although this implies that all generic properties will be proved regarding the $C^k$--topology, standard intersection arguments will be applied to obtain the $C^\infty$ version of all our genericity statements.

Given the above considerations, let us briefly describe the nature of the generic properties of semi--Riemannian geodesic flows studied. A well--known result on generic properties of flows is the so--called \emph{Bumpy} Metric Theorem, stated by Abraham \cite{abraham}, and completely proved by Anosov \cite{anosov} in 1982. Metrics without degenerate periodic geodesics are called {\em bumpy}, since they are rather non symmetric objects. The classic Bumpy Metric Theorem asserts that the set of bumpy Riemannian metrics on a compact manifold $M$ is generic. In other words, the subset of Riemannian metrics on a compact manifold all of whose periodic geodesics do not have any periodic Jacobi field other than the tangent field is generic. Recently, Biliotti, Javaloyes and Piccione \cite{biljavapic2} managed to extend this classic result to the case of compact semi--Riemannian manifolds. In Section~\ref{sec:bumpy}, we prove a further extension of this result to non necessarily compact semi--Riemannian manifolds, the Bumpy Metric Theorem~\ref{thm:bumpy}.

This result paves the way to several possible applications, similarly to its Riemannian version. For instance, the classic Bumpy Metric Theorem was used by Klingenberg and Takens \cite{klitak} to establish further generic properties of the $k^{\mbox{\tiny th}}$ jet of the Poincar\'e map of periodic geodesics, and a similar statement holds in the case of semi--Riemannian manifolds. Nevertheless, counter examples by Meyer and Palmore \cite{mp} point out that abstract Hamiltonian systems cannot be considered for generalizations of the Bumpy Metric Theorem to a more comprehensive class of dynamical flows. Basically, the dynamics of solutions differ in distinct energy levels, and hence the nondegeneracy property fails to be generic. On the other hand, Gon\c{c}alves Miranda \cite{GonMir} proved genericity of nondegenerate periodic trajectories in the context of magnetic flows on a surface, which allows to establish an extension of the Kupka--Smale Theorem.

Furthermore, in Section~\ref{sec:gnggec} we use this Bumpy Metric Theorem~\ref{thm:bumpy} to establish another generic property concerning degenerate geodesics. We prove genericity of semi--Riemannian metrics without degenerate geodesics satisfying certain {\em general endpoints conditions}, or {\em GECs}. This was motivated by a result of Biliotti, Javaloyes and Piccione \cite{biljavapic} that asserts that given two distinct points $p,q\in M$, the set of semi--Riemannian metrics on $M$ for which $p$ and $q$ are not conjugate is generic. This is equivalent to the statement that all geodesics joining $p$ and $q$ are nondegenerate. This nondegeneracy is clearly in the sense that such geodesics are nondegenerate critical points $\gamma:[0,1]\to M$ of the energy functional $$E_g(\gamma)=\int_0^1 g(\dot\gamma,\dot\gamma)\;\dd t$$ for curves with fixed endpoints $\gamma(0)=p$ and $\gamma(1)=q$, i.e., critical points at which the second derivative of the functional is invertible.

Instead of fixing $p$ and $q$, we consider the energy functional for curves whose endpoints vary in a submanifold $\p\subset M\times M$, with certain reasonable properties. This submanifold $\p$ is called a {\em general endpoints condition}, or {\em GEC}. Critical points of the $g$--energy functional for such curves are $g$--geodesics $\gamma:[0,1]\to M$ with $$(\gamma(0),\gamma(1))\in\p \;\; \mbox{ and } \;\; (\dot\gamma(0),\dot\gamma(1))\in T_{(\gamma(0),\gamma(1))}\p^\perp,$$ where $^\perp$ denotes orthogonality with respect to $g\oplus (-g)$. Such geodesics will be called $(g,\p)$--geodesics. Theorem~\ref{thm:bigone} establishes genericity of semi--Riemannian metrics $g$ on $M$ all of whose $(g,\p)$--geodesics are nondegenerate.

In particular, considering for instance $\p=P\times\{q\}$, where $P$ is a submanifold of $M$, we obtain genericity of metrics for which $q$ is not focal to $P$, see Corollary~\ref{cor:nonfocality}. Moreover, setting $\p=\{p\}\times\{q\}$ we recover the result of Biliotti, Javaloyes and Piccione \cite{biljavapic}, with the additional advantage that $p$ and $q$ may be taken as the same point, see Corollary~\ref{cor:nonconjugacy}. The diagonal case $\p=\Delta$ however does not meet most requirements of Theorem~\ref{thm:bigone}, hence one should not expect to derive the Bumpy Metric Theorem from Theorem~\ref{thm:bigone}. In fact, the last {\em uses} the Bumpy Metric Theorem as part of its proof.

Motivation for studying such nondegeneracy generic properties of semi--Riemannian geodesic flows clearly come from possible applications in general relativity, but also from Morse theory. In fact, a crucial assumption to develop a Morse theory for geodesics between fixed points is that the two arbitrarily fixed distinct points must be non conjugate. Recent works by Abbondandolo and Majer \cite{AbbMej2,AbbMaj,AbbMaj2} connect Morse relations for critical points of the semi--Riemannian energy functional to the homology of a doubly infinite chain complex, the Morse--Witten complex, constructed out of the critical points of a strongly indefinite Morse functional, using the dynamics of the gradient flow. The Morse relations for critical points are obtained computing the homology of this complex, which in the standard Morse theory is isomorphic to the singular homology of the base manifolds. Abbondandolo and Majer \cite{AbbMej2} also managed to prove stability of this homology with respect to small perturbations of the metric structure. Thus, it is important to ask whether it is possible to perturb a metric in such a way that the non conjugacy property between two points is preserved. This is precisely the result of Biliotti, Javaloyes and Piccione \cite{biljavapic} above described, that corresponds to the particular case $\p=\{p\}\times\{q\}$ of our Theorem~\ref{thm:bigone}.

Let us give a more precise description of the admissibility hypotheses on a GEC $\p$ for Theorem~\ref{thm:bigone} to hold. First, given a metric $g$ on $M$ it is necessary to endow $\p\subset M\times M$ with a metric related to $g$ with some properties. For some technical reasons that will be clarified along the text, the adequate choice is to consider the product metric $g\oplus (-g)$ on $M$ and then its pull--back to $\p$. Nevertheless, since we are dealing with semi--Riemannian metrics, this is not always possible. Namely, the metric tensor might degenerate at the last step, for every choice of $g$. This is due to the fact that there exist topological obstructions to the existence of semi--Riemannian metrics of a given index, and in case $\p$ has such obstructions, the above procedure is always impossible. More generally, instead of studying the problem of nondegeneracy of certain submanifolds, we give a detailed study of obstructions to the existence of metrics of given index using characteristic classes in Section~\ref{sec:topobst}. For the GEC $\p$ to be admissible, it has to admit such induced metrics. In particular, it must be free of such topological obstructions.

Second, compactness of $\p$ is also necessary to obtain convenient convergent subsequences. Finally, if $\p$ intersects the diagonal $\Delta\subset M\times M$, it is necessary to ensure the existence of a lower bound to the Riemannian length of geodesics with endpoints in $\p$, for all metrics in a small open neighborhood of $g$. A submanifold $\p$ with the above three properties is called an admissible GEC, and to such $\p$'s Theorem~\ref{thm:bigone} may be applied. Admissibility of a large class of GECs that intersect $\Delta$ will be established. Namely, we prove in Proposition~\ref{prop:admissibility} that if $\p$ intersects $\Delta$ transversally, then $\p$ is admissible. In particular, this implies that a generic GEC is admissible.

With the above properties of $\p$ ensured, the proof of genericity of metrics $g$ without degenerate $(g,\p)$--geodesics uses transversality techniques and nonlinear Fredholmness of the $g$--energy functional to verify the hypotheses of an abstract genericity criterion, proved in Section~\ref{sec:abstractgenericity}. The keystone fact in use to establish this abstract criterion is the Sard--Smale Theorem, in a fashion clearly inspired by the previous works of White \cite{white} and Biliotti, Javaloyes and Piccione \cite{biljavapic,biljavapic2}. In addition, part of the techniques used in the proof of the Bumpy Metric Theorem~\ref{thm:bumpy} are transversality arguments very similar to these, with the additional complication imposed by the invariance of the energy functional under the action of $S^1$ reparameterizing periodic curves. For this reason, such abstract genericity criteria are studied separately in Chapter~\ref{chap4} and then applied in the proof of the two generic properties above in Chapters~\ref{chap5} and~\ref{chap6}.

We end with a few conventions and a short overview on the organization of the studied topics. By {\em smooth} we always mean of class $C^\infty$, by {\em operator} we always mean a linear map, and by {\em function} we always mean a map whose image is contained in the set $\R$ of real numbers. The symbol $M$ will always denote a finite--dimensional smooth manifold, and by {\em geodesic} we will always mean an {\em affinely parameterized} geodesic.

The text is divided in two parts, that respectively deal with topics of global analysis and semi--Riemannian geometry and with genericity of nondegenerate semi--Riemannian geodesics. The first part has four chapters and corresponds to the preliminary studies necessary for later applications. Basic objects of semi--Riemannian geometry as bundles, connections, metrics and curvature tensors are briefly recalled in Chapter~\ref{chap1}, together with some remarks on their importance in general relativity. In the last two sections of this first chapter, we respectively deal with topological obstructions to existence of semi--Riemannian metrics and a few auxiliary results. For instance, we prove that a non compact manifold always admits a Lorentzian metric, and compact manifolds admit Lorentzian metrics if and only if their Euler characteristic vanishes. In Chapter~\ref{chap2}, rudiments of functional analysis are recalled, beginning with notions of general theory of Fr\'echet, Banach and Hilbert spaces, compact and Fredholm operators and calculus on Banach spaces. Basic examples of function spaces and more auxiliary results are respectively given in the final sections of this chapter. Chapter~\ref{chap3} deals with infinite--dimensional manifolds, introducing basic terminology and proving elementary transversality results. For instance, it is proved that the preimage of a submanifold by a transverse map is a submanifold, in the context of Banach manifolds. This generalizes the classic result that the preimage of a regular value is a submanifold, which is also stated in this infinite--dimensional context, with the adequate adaptations. In addition, the sets of bounded tensors on a finite--dimensional manifold and Sobolev $H^1$ curves are respectively endowed with a Banach space and Hilbert manifold structures. A special attention is given to Banach spaces that will be used to induce a topology on $\met_\nu^k(M)$. Finally, abstract notions of continuous actions of Lie groups on Hilbert manifolds are studied along with the example of the reparameterization action of $S^1$ on the Hilbert manifold $H^1(S^1,M)$ of Sobolev $H^1$ periodic curves on a finite--dimensional manifold. Finally, a complete treatment of the semi--Riemannian geodesic variational problem under GECs is given in Chapter~\ref{chap35}. Namely, we compute first and second variations of the energy functional and analyze the kernel of its index form. In addition, we establish the existence of a sequence of submanifolds of $H^1(S^1,M)$ with special properties regarding the energy functional, that will be crucial in the proof of the Bumpy Metric Theorem~\ref{thm:bumpy}.

The second part has other four chapters and contains the proofs of our main results. Chapter~\ref{chap4} begins with a proof of the Sard--Smale Theorem and some remarks on genericity. The main results of this chapter are the four abstract genericity criteria, which are proved with the help of the Sard--Smale Theorem. The following Chapter~\ref{chap5} contains a study of iterate periodic geodesics and of a particularly degenerate class of these, called {\em strongly degenerate geodesics}, that play a fundamental role in the proof of the subsequent genericity results. In addition, it contains the proof of our version of the Bumpy Metric Theorem, as well as its version in the $C^\infty$--topology. Chapter~\ref{chap6} deals with admissibility of GECs and the proof of our second genericity result, Theorem~\ref{thm:bigone}, with a few applications. In addition, this generic property is also proved to hold in the $C^\infty$--topology. Finally, some final remarks are made in Chapter~\ref{chap7}, that concludes the text.


\renewcommand{\chaptermark}[1]{\markboth{\thechapter. #1}{}}
\renewcommand{\sectionmark}[1]{\markright{\thesection. #1}}

\newpage
\setcounter{page}{1}
\renewcommand{\thepage}{\arabic{page}}


\part{Global analysis and semi--Riemannian geometry}

\chapter{Rudiments of semi--Riemannian geometry}
\label{chap1}

In this chapter, we begin with a section recalling basic concepts of fiber bundles, vector bundles and connections on a smooth manifold, in order to establish notations and conventions. Furthermore, basic concepts of semi--Riemannian metrics such as geodesics, curvature tensors and Jacobi fields are defined and briefly explored in Section~\ref{sec:metricsetc}. The following section is dedicated to a few results on obstructions to existence of semi--Riemannian metrics of a prescribed index. More precisely, we prove that a smooth manifold $M$ admits a $C^k$ semi--Riemannian metric of index $\nu$ if and only if it admits a $C^k$ distribution of rank $\nu$, see Proposition~\ref{prop:metricdistribution}. In the final section, we prove some lemmas on accumulation of geodesics self intersections will be later used in our applications in Chapters~\ref{chap5} and~\ref{chap6}, see Lemma~\ref{le:finiteintersection} and Proposition~\ref{prop:selfintersections}.

Along this chapter, $M$ denotes a finite--dimensional smooth manifold, and by {\em smooth} we always mean of class $C^\infty$. We assume that the reader is familiar with fundamentals of differential and Riemannian geometry. Although the exposition of some elementary topics aims to keep the text self contained, it is beyond our objectives to give a thorough introduction to the theories of bundles, connections and semi--Riemannian geometry. For a detailed treatment of such topics, we respectively refer to \cite{kn1,kn2,picmertausk,pictaugstructure,walschap} and \cite{bee,oneill}.

\section{Fiber bundles and connections}
\label{sec:fiberbundlesconnections}

In this section, we briefly recall the concept of fiber bundle over $M$, in particular of vector bundle\footnote{Although in this section we shall only discuss vector bundles over $M$ with {\em finite--dimensional fibers}, several definitions and results naturally extend to the context of more general vector bundles over $M$, whose fibers are, for instance, Banach or Hilbert spaces. Such infinite--dimensional approach will be briefly used in the end of Section~\ref{sec:infinitedimmnflds}. A particularly effective reference for such bundles is Lang \cite{lang}.}, define abstract connections on vector bundles and study the particular case of tensor bundles. We follow closely the approach used in \cite{kn1,picmertausk}, to which we refer for a detailed exposition on the subject.

\begin{definition}\label{def:fiberbundle}
Let $\mathcal E$ be a  set and $\pi:\mathcal E\to M$ a map. A {\em trivialization}\index{Trivialization}\index{Fiber bundle!trivialization} of $(\mathcal E,M,\pi)$ is a bijective map $$\alpha:\pi^{-1}(U)\la U\times\mathcal E_0,$$ where $U\subset M$ is open, $\mathcal E_0$ is a finite--dimensional manifold and the following diagram commutes, where $p_1:U\times \mathcal E_0\to U$ denotes the projection.
\begin{equation*}
\xymatrix{\pi^{-1}(U)\ar[rr]^\alpha\ar@(d,l)[dr]_{\pi|_{\pi^{-1}(U)}} && U\times\mathcal E_0\ar@(d,r)[dl]^{p_1} \\
& U}
\end{equation*}
Denoting $p_2:U\times \mathcal E_0\to \mathcal E_0$ the other projection, for each $x\in U$ there is a bijection 
\begin{eqnarray*}
\alpha_x:\mathcal E_x &\la &\mathcal E_0 \\
e &\longmapsto & p_2(\alpha(e))
\end{eqnarray*}
between $\mathcal E_x=\pi^{-1}(x)$ and $\mathcal E_0$, called the {\em fiber}\index{Fiber} associated to the trivialization $\alpha$.

Two trivializations $\alpha:\pi^{-1}(U)\to U\times\mathcal E_0$ and $\beta:\pi^{-1}(V)\to V\times\mathcal E'_0$ are {\em $C^k$ compatible}\index{Trivialization!$C^k$ compatible} if either $U\cap V=\emptyset$ or the bijection $$\beta\circ\alpha^{-1}:(U\cap V)\times\mathcal E_0\ni(x,e_0)\longmapsto(x,\beta_x\circ\alpha_x^{-1}(e_0))\in(U\cap V)\times\mathcal E'_0$$ is a $C^k$ diffeomorphism. A family of pairwise $C^k$ compatible trivializations $\left\{\alpha_i:\pi^{-1}(U_i)\to U_i\times\mathcal E_0^i,i\in I\right\},$ with $M=\bigcup_{i\in I}U_i$ is called a {\em $C^k$ atlas of trivializations}\index{Atlas!of trivializations} for $(\mathcal E,M,\pi)$.

A {\em $C^k$ fiber bundle}\index{Fiber bundle} (or \emph{fibre bundle}) over $M$ consists of a map $\pi:\mathcal E\to M$ and a maximal $C^k$ atlas of trivializations for $(\mathcal E,M,\pi)$. In this case, $\mathcal E$ is called the {\em total space}\index{Fiber bundle!total space}, $M$ the {\em base}\index{Fiber bundle!base}, $\pi$ the {\em projection}\index{Fiber bundle!projection} and $\mathcal E_x=\pi^{-1}(x)$, $x\in M$, the {\em fibers}\index{Fiber}\index{Fiber bundle!fiber} of the fiber bundle.
\end{definition}

\begin{figure}[htf]
\begin{center}
\vspace{0.3cm}
\includegraphics[scale=0.27]{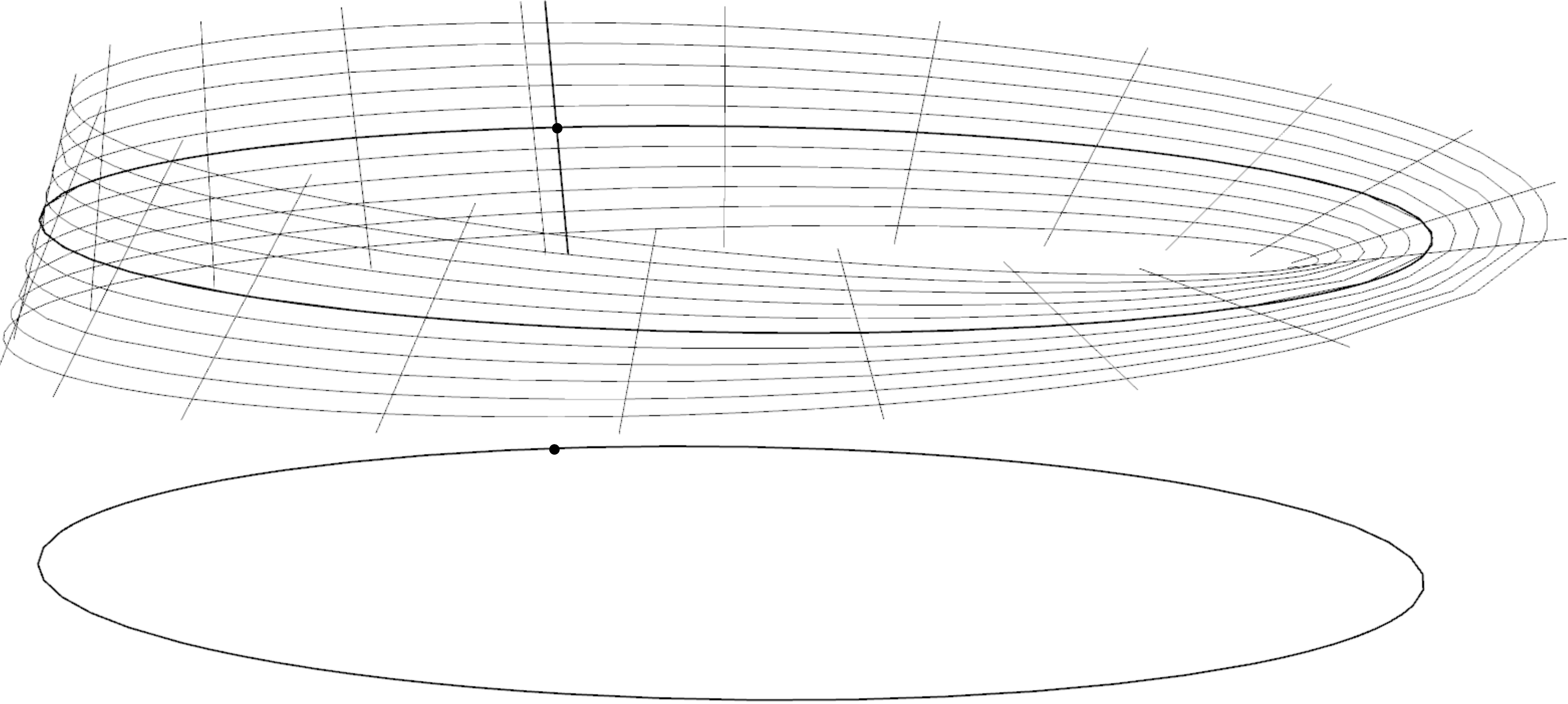}
\begin{pgfpicture}
\pgfputat{\pgfxy(-2,1.8)}{\pgfbox[center,center]{$S^1$}}
\pgfputat{\pgfxy(-8,1.6)}{\pgfbox[center,center]{$x$}}
\pgfputat{\pgfxy(-8,5.5)}{\pgfbox[center,center]{$\pi^{-1}(x)$}}
\end{pgfpicture}
\end{center}
\caption{Example of a fiber bundle. In this example, the total space $\mathcal E$ is the M\"obius strip, the base $M$ is $S^1$ and the fiber is $\R$.}
\end{figure}

\begin{remark}
Notice that different trivializations of $\mathcal E$ are not supposed to have the same fibers. However, if $M$ is connected, all fibers are diffeomorphic and hence there exists an atlas of trivializations whose corresponding fibers are the same.
\end{remark}

\begin{remark}
In the sequel, we call {\em trivialization}\index{Trivialization} only trivializations that belong to the given maximal atlas of a fiber bundle. In addition, the total space $\mathcal E$ alone is called {\em fiber bundle} over $M$ in case the projection and the maximal atlas of trivializations are implicit.
\end{remark}

A $C^k$ atlas of trivializations for a fiber bundle clearly induces a $C^k$ manifold structure on the total space $\mathcal E$ of the bundle, such that trivializations $\alpha:\pi^{-1}(U)\to U\times\mathcal E_0$ are $C^k$ diffeomorphisms defined on open subsets of $\mathcal E$. With this structure, the projection $\pi:\mathcal E\to M$ is a $C^k$ surjective submersion and fibers $\mathcal E_x$ are $C^k$ submanifolds of $\mathcal E$.

\begin{remark}\label{re:containedtrivial}
A subset $U\subset M$ is said to be {\em contained in a trivialization of $\mathcal E$} if $\pi^{-1}(U)$ is contained in the domain of a chart of the maximal atlas of $\mathcal E$. In this case, it is common to use the chart as an identification $\pi^{-1}(U)\cong U\times\mathcal E_x$.
\end{remark}

Locally, we also identify\footnote{In the sequel, we will be somewhat sloppy about this identification, since in some situations it is more convenient to identify the point $e=(x,e_x)$ with its fiber coordinate $e_x$, omitting the base point $x$.} a point $e\in\mathcal E$ with a pair of the form $(x,e_x)$, where $x=\pi(e)$ and $e_x\in\mathcal E_x$. In this local chart, $\pi$ is a projection, as guaranteed by the local form of submersions. The tangent space to the fiber $\mathcal E_x=\pi^{-1}(x)$ at $e\in\mathcal E$ is clearly given by the subspace $$T_e\mathcal E_x=\ker\dd\pi(x)\subset T_e\mathcal E,$$ called the \emph{vertical space} of $\mathcal E$ at $e$, and denoted $\ver_e\, \mathcal E$. Henceforth, $\mathcal E$ will be assumed endowed with such structures.

\begin{example}\label{ex:grassmann}
Consider the set $$\gr_r(M)=\bigcup_{x\in M} \{x\}\times\gr_r(T_xM),$$ where $\gr_r(T_xM)$ is the $r$--Grassmannian of $T_xM$, i.e., set of $r$--dimensional subspaces of $T_xM$. Then $\gr_r(M)$ is a smooth bundle over $M$, with compact fibers, called the {\em $r$--Grassmannian bundle}\index{Grassmannian bundle} over $M$.
\end{example}

\begin{remark}
As a particular case, notice that $\gr_1(\R^{n+1})$ is a (trivial) fiber bundle over $\R^{n+1}$, given by the product $\R^{n+1}\times\R P^n$.
\end{remark}

\begin{definition}\label{def:section}
Using the same notation as above, a $C^k$ map $s:M\to\mathcal E$ with $\pi\circ s=\id$ is called a {\em $C^k$ section}\index{Fiber bundle!section}\index{Section} of $\mathcal E$, see Figure \ref{fig:section}. The set of $C^k$ sections of $\mathcal E$ is denoted $\sect^k(\mathcal E)$. A section that is $C^k$ for every $k\in\N$ is said to be {\em smooth}, and the set of smooth sections of $\mathcal E$ is denoted $\sect^\infty(\mathcal E)$.
\end{definition}

\begin{figure}[htf]
\begin{center}
\includegraphics[scale=0.6]{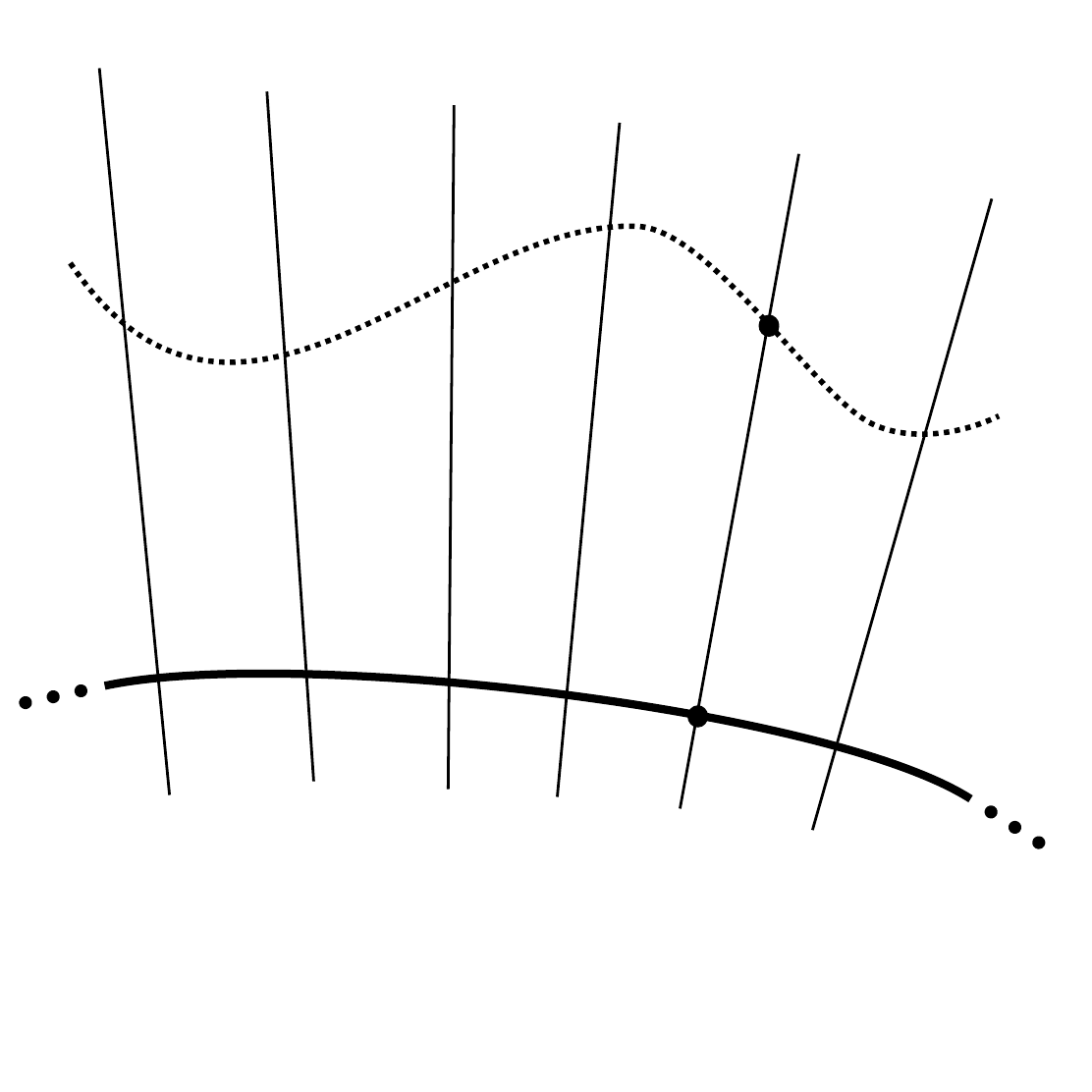}
\begin{pgfpicture}
\pgfputat{\pgfxy(-0.7,4.7)}{\pgfbox[center,center]{$s(x)\in\pi^{-1}(x)$}}
\pgfputat{\pgfxy(-2.5,1.6)}{\pgfbox[center,center]{$x\in M$}}
\pgfputat{\pgfxy(-7.6,5.2)}{\pgfbox[center,center]{$s\in\sect^{k}(\mathcal E)$}}
\end{pgfpicture}
\end{center}
\vspace{-1cm}
\caption{Section of a fiber bundle.}\label{fig:section}
\end{figure}

\begin{definition}\label{def:vectorbundle}
Consider $(\mathcal E,M,\pi)$ a fiber bundle over $M$, such that each fiber $\mathcal E_x$ has a finite--dimensional real vector space structure of dimension\footnote{It is easy to see from the regularity of $\mathcal E$ that the dimension of the fibers must be constant.} $n$. A {\em $C^k$ fiber--linear trivialization}\index{Trivialization!fiber--linear} $\alpha:\pi^{-1}(U)\to U\times\mathcal E_0$ of $\mathcal E$ is a trivialization, with $\mathcal E_0$ a real finite--dimensional vector space, such that the bijection $\alpha_x:\mathcal E_x\to\mathcal E_0$ is linear for every $x\in U$. A $C^k$ fiber bundle $\mathcal E$ endowed with such a real finite--dimensional vector space structure on each fiber $\mathcal E_x$ and a maximal atlas of $C^k$ fiber--linear trivializations is called a {\em $C^k$ vector bundle}\index{Vector bundle} of rank $r$.
\end{definition}

\begin{remark}
Vector bundles will be denoted $E$ instead of $\mathcal E$. Fibers of a vector bundle can be assumed to be equal to a fixed Euclidean space $\R^n$, and in the sequel we call \emph{trivialization}\index{Vector bundle!trivialization} of a vector bundle only fiber--linear trivializations.
\end{remark}

\begin{example}\label{ex:tmtmstar}\index{Bundle!tangent}\index{Bundle!cotangent}
The tangent and cotangent bundles $$TM=\bigcup_{x\in M} \{x\}\times T_xM,\;\quad\; TM^*=\bigcup_{x\in M} \{x\}\times T_xM^*$$ are clearly smooth vector bundles over $M$. Sections of $TM$ and $TM^*$ are respectively called {\em vector fields}\index{Vector field} and {\em $1$--forms}\index{$1$--form} on $M$.
\end{example}

There are two important operations that can be considered for vector bundles, described in the following result.

\begin{proposition}\label{prop:whitneysumtensor}
Let $E_1$ and $E_2$ be two $C^k$ vector bundles over $M$. Then the sets $$E_1\oplus E_2=\bigcup_{x\in M} \{x\}\times(E_1)_x\oplus (E_2)_x$$ and $$E_1\otimes E_2=\bigcup_{x\in M} \{x\}\times(E_1)_x\otimes (E_2)_x$$ admit a $C^k$ vector bundle structure. These are respectively called the {\em Whitney sum}\index{Vector bundle!Whitney sum} and {\em tensor product}\index{Vector bundle!tensor product} of $E_1$ and $E_2$.
\end{proposition}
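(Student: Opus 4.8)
The plan is to construct the bundle structures directly from atlases of fiber-linear trivializations of $E_1$ and $E_2$, using the standard functoriality of $\oplus$ and $\otimes$ for vector spaces. First I would fix $C^k$ atlases $\{\alpha_i:\pi_1^{-1}(U_i)\to U_i\times\R^{n_1}\}_{i\in I}$ for $E_1$ and $\{\beta_j:\pi_2^{-1}(V_j)\to V_j\times\R^{n_2}\}_{j\in J}$ for $E_2$; by intersecting the open covers we may assume (passing to a common refinement) that both are indexed by the same set with the same domains $U_i$. Then, on each $U_i$, I define a map $$\gamma_i:(\pi_1\oplus\pi_2)^{-1}(U_i)\la U_i\times(\R^{n_1}\oplus\R^{n_2}),\qquad (x,v_1,v_2)\longmapsto\big(x,(\alpha_i)_x(v_1),(\beta_i)_x(v_2)\big),$$ which is a fiberwise linear bijection onto $U_i\times\R^{n_1+n_2}$, and similarly $$\delta_i:(\pi_1\otimes\pi_2)^{-1}(U_i)\la U_i\times(\R^{n_1}\otimes\R^{n_2}),\qquad (x,w)\longmapsto\big(x,((\alpha_i)_x\otimes(\beta_i)_x)(w)\big),$$ a fiberwise linear bijection onto $U_i\times\R^{n_1 n_2}$. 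Here I use that the tensor product of two linear isomorphisms is again a linear isomorphism.

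The key step is verifying $C^k$ compatibility of these trivializations, so that each family is a $C^k$ atlas in the sense of Definition~\ref{def:fiberbundle}. On an overlap $U_i\cap U_j$ the transition of $\gamma$'s is $$\gamma_j\circ\gamma_i^{-1}(x,a,b)=\Big(x,\ \big((\alpha_j)_x\circ(\alpha_i)_x^{-1}\big)(a),\ \big((\beta_j)_x\circ(\beta_i)_x^{-1}\big)(b)\Big),$$ which in block-matrix form is the direct sum $g^{\alpha}_{ij}(x)\oplus g^{\beta}_{ij}(x)$ of the respective transition matrices; since $x\mapsto g^{\alpha}_{ij}(x)$ and $x\mapsto g^{\beta}_{ij}(x)$ are $C^k$ by hypothesis and the map $(A,B)\mapsto A\oplus B$ into $\GL(n_1+n_2,\R)$ is smooth, the composite is $C^k$. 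For the tensor product one instead gets the transition $x\mapsto g^{\alpha}_{ij}(x)\otimes g^{\beta}_{ij}(x)$, and one uses that the Kronecker product map $\GL(n_1,\R)\times\GL(n_2,\R)\to\GL(n_1 n_2,\R)$ is smooth (it is polynomial in the entries), hence the composite with the $C^k$ maps $g^{\alpha}_{ij},g^{\beta}_{ij}$ is again $C^k$. One then takes the maximal $C^k$ atlas containing the family $\{\gamma_i\}$ (resp. $\{\delta_i\}$), which by Definition~\ref{def:vectorbundle} promotes $E_1\oplus E_2$ (resp. $E_1\otimes E_2$) to a $C^k$ vector bundle, of rank $n_1+n_2$ (resp. $n_1 n_2$); the vector space structure on each fiber is the obvious one and is respected by the $(\gamma_i)_x$, $(\delta_i)_x$ by construction.

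I do not expect a serious obstacle here; the only mildly delicate points are bookkeeping ones: one should note that the underlying sets $E_1\oplus E_2$ and $E_1\otimes E_2$ come equipped with well-defined projections to $M$, that refining to a common index set does not lose generality, and that "$C^k$ as a function of $x$" for the transition maps is exactly what is needed to invoke the compatibility clause of Definition~\ref{def:fiberbundle}. The one computational fact worth isolating is that $\oplus$ and $\otimes$ of linear isomorphisms depend smoothly (indeed polynomially) on the isomorphisms, which is what transports the $C^k$ regularity of the transition cocycles of $E_1,E_2$ to that of the new bundles. No other input is required.
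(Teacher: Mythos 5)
Your proof is correct, and it is precisely the standard construction that the paper itself declines to spell out: the text simply refers the reader to Husem\"oller \cite{husemoller} for this proposition, so there is no in-paper argument to compare against. Your argument is the one found there (and in essentially every reference): refine to a common open cover, push the trivializations through the functors $\oplus$ and $\otimes$ on each fiber, and observe that the resulting transition cocycles $g^\alpha_{ij}\oplus g^\beta_{ij}$ and $g^\alpha_{ij}\otimes g^\beta_{ij}$ inherit $C^k$ regularity because the block-sum and Kronecker-product maps $\GL(n_1,\R)\times\GL(n_2,\R)\to\GL(n_1+n_2,\R)$ and $\GL(n_1,\R)\times\GL(n_2,\R)\to\GL(n_1 n_2,\R)$ are polynomial, hence smooth. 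The only cosmetic remark is that, to match Definition~\ref{def:vectorbundle} literally, one should also record that the so-constructed trivializations are automatically pairwise compatible with \emph{any} others one might adjoin (which is what passing to the maximal atlas accomplishes) and that the projection onto $M$ sending $(x,\cdot)\mapsto x$ is the bundle projection; you note both, so nothing is missing.
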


For a proof of the above proposition, see for instance \cite{husemoller}. It is usually more convenient to describe trivializations of vector bundles using \emph{local frames}.

\begin{definition}\label{def:frame}
A {\em $C^k$ local frame}\index{Local frame}\index{Frame} (or {\em local referential}\index{Local referential}\index{Referential}) of a vector bundle $E$ of rank $r$ is a $r$--uple $\{\xi_i\}_{i=1}^r$ of $C^k$ \emph{local} sections of $E$ defined in an open subset $U\subset M$ such that $\{\xi_i(x)\}_{i=1}^r$ is a basis of $E_x$ for all $x\in U$.

A local frame can also be expressed in the form of linear isomorphisms $$p(x):\R^r\la E_x,$$ that are $C^k$ dependent on $x\in U$. At each $x\in U$, the isomorphism $p(x)$ maps the canonical orthonormal basis of $\R^r$ to the basis $\{\xi_i(x)\}_{i=1}^r$ of $E_x$. Usually, we will prefer this more synthetic description to deal with frames.
\end{definition}

A local frame defines a unique trivialization $\alpha:E|_U\to U\times\R^r$ for which $\alpha_x(v)$ are the coordinates of $v$ in the basis $\{\xi_i(x)\}_{i=1}^r$ of $E_x$, for every $v\in E_x$ and every $x\in U$. Conversely, every trivialization $\alpha:E|_U\to U\times\R^r$ of $E$ arises from a frame $\{\xi_i\}_{i=1}^r$. In fact, for each $x\in U$, let $\xi_i(x)$ be the vector in $E_x$ that is mapped by $\alpha_x$ to the $i^{\mbox{\tiny th}}$ vector of the canonical basis of $\R^r$.

\begin{definition}\label{def:subbundle}
A subset $E'\subset E$ is a {\em $C^k$ vector sub bundle}\index{Vector bundle!sub bundle} of $E$ if $E'_x=E'\cap E_x$ is a vector subspace of $E_x$ for every $x\in M$ and if every point of $M$ has an open neighborhood $U\subset M$ on which there exist $C^k$ sections $\xi_i:U\to E$ of $E$, $i=1,\ldots,r'$, such that $\{\xi_i(x)\}_{i=1}^{r'}$ is a basis for $E'_x$ for every $x\in U$.
\end{definition}

In this case, reducing $U$ if necessary, it is possible to extend $\{\xi_i\}_{i=1}^{r'}$ to a $C^k$ local referential $\{\xi_i\}_{i=1}^r$ of $E$, obtaining a trivialization $\alpha:E|_U\to U\times\R^r$ such that $\alpha_x(E'_x)=\R^{r'}\oplus\{0\}^{r-r'}$ for every $x\in U$. Thus, $E'$ has a natural vector bundle structure with projection $\pi|_{E'}$, and $E'$ is a submanifold of $E$.

\begin{example}\label{ex:distribution}
A $C^k$ sub bundle $\mathcal D$ of the tangent bundle $TM$, see Example~\ref{ex:tmtmstar}, is called a \emph{$C^k$ distribution}\index{Distribution} on $M$, and the dimension of fibers $\mathcal D_x$ is the \emph{rank} of $\mathcal D$.
\end{example}

\begin{definition}\label{def:hor}
Consider a vector bundle $E$ over $M$ and $\ver_e E=\ker\dd\pi(x)$ the vertical space at $e\in E$. Any choice of a complementary subspace of $T_eE$ is called a \emph{horizontal space} at $e$, and denoted $\hor_e E$. Such a \emph{choice} of horizontal spaces at each $e\in E$ gives rise to a \emph{horizontal sub bundle} $\hor\, E$, complementary to the \emph{vertical sub bundle} $\ver\, E$, whose fibers are vertical spaces $\ver_e E$.\index{Vector bundle!horizontal}\index{Vector bundle!vertical}
\end{definition}

\begin{figure}[htf]
\begin{center}
\vspace{-0.5cm}
\includegraphics[scale=0.6]{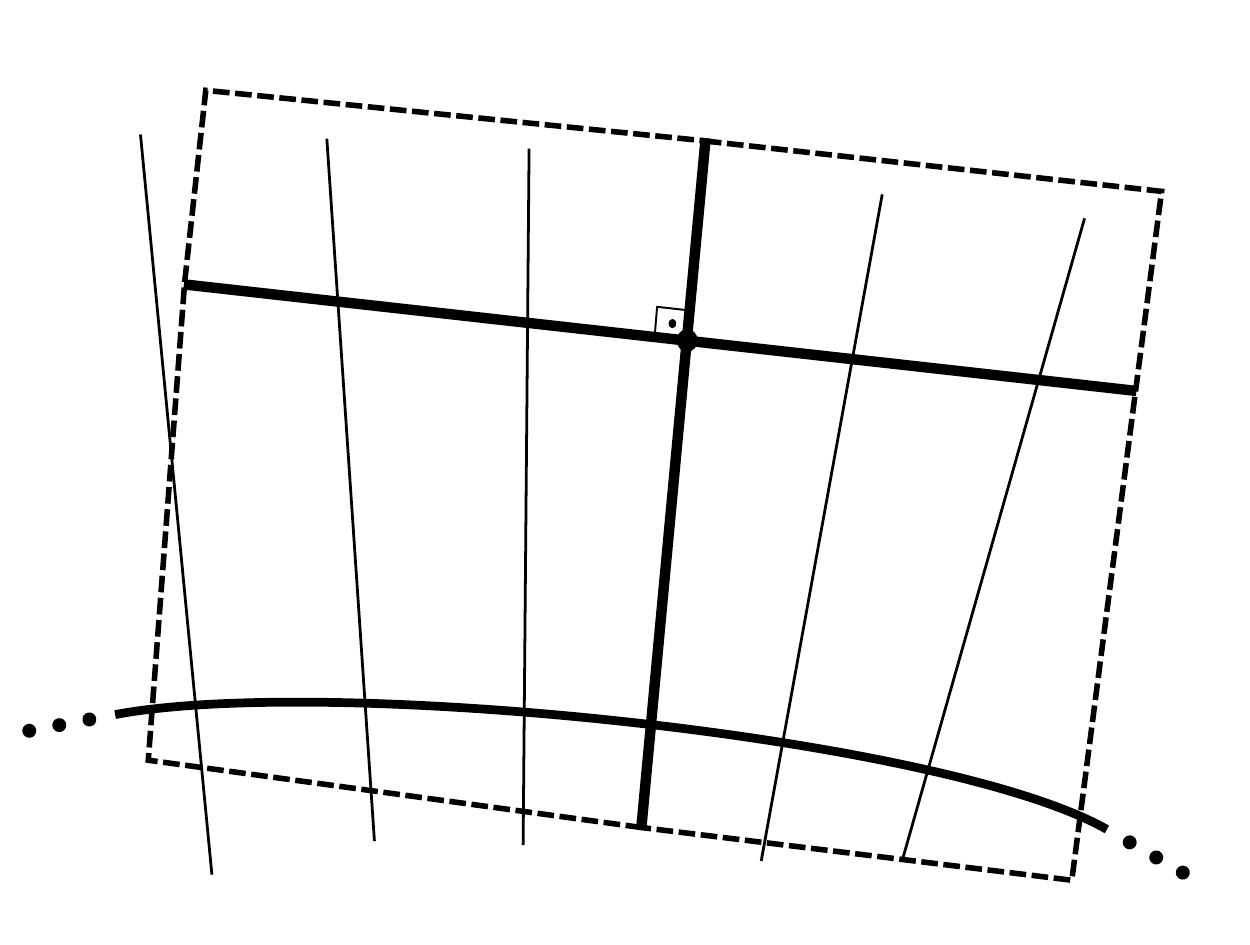}
\begin{pgfpicture}
\pgfputat{\pgfxy(-3.3,3.9)}{\pgfbox[center,center]{$e$}}
\pgfputat{\pgfxy(-0.2,3.4)}{\pgfbox[center,center]{$\hor_e E$}}
\pgfputat{\pgfxy(-3.1,5.2)}{\pgfbox[center,center]{$\ver_e E$}}
\end{pgfpicture}
\end{center}
\caption{Decomposition of $T_eE$ as sum of vertical and horizontal subspaces, $T_eE=\ver_e E\oplus\hor_e E$.}\label{fig:horver}
\end{figure}

We stress that in a \emph{general situation} there is no canonical choice of a horizontal space at $e\in E$, and in fact, such a choice defines a \emph{connection} on $E$ (see Definition~\ref{def:connection} and Remark~\ref{re:connectionhor}). The only fixed choice of horizontal spaces is possible on the \emph{null section} of $E$, as observed in Remark~\ref{re:nullsection}.

\begin{remark}\label{re:secvetsp}
The set of $C^j$ sections of a $C^k$ vector bundle $E$ for any $j=0,\ldots,k$ has a natural real vector space structure induced by the fibers. More precisely, for each $K_1,K_2\in\sect^k(E)$ and $\lambda\in\R$, consider $$(K_1+\lambda K_2)(x)=K_1(x)+\lambda K_2(x),$$ for all $x\in M$, where the right--hand side operations are vector operations of $E_x$. It can be easily verified that the above equation defines a real vector space structure on $\sect^k(E)$.
\end{remark}

In Section~\ref{sec:banachspacetensors}, we will endow (subspaces of) $\sect^k(E)$ with a Banach norm when $E$ is a \emph{tensor bundle} over $M$ (see Definition~\ref{def:tensorbundle} and Proposition~\ref{prop:banachspaceofsections}).

\begin{definition}\label{def:tendstozero}
Suppose that each fiber $E_x$ is endowed with a norm $\|\cdot\|_x$, varying continuously with the point $x$. A section $s\in\sect^k(E)$ is said to {\em tend to zero at infinity}\index{Section!tends to zero at infinity} if for every $\varepsilon>0$ there exists a compact subset $K\subset M$ such that $\|s(x)\|_x<\varepsilon$ for all $x\in M\setminus K$. The vector subspace of such sections is denoted $\sect_0^k(E)$. Notice however that this definition depends on the choice of the norms $\|\cdot\|_x$. Notice also that if $M$ is compact, all sections automatically satisfy this (empty) condition for any norms $\|\cdot\|_x$.
\end{definition}

\begin{remark}\label{re:nullsection}
The zero $\mathbf 0_E\in\sect^k(E)$ of this vector space, called \emph{null section} of $E$, is the map $$\mathbf 0_E:M\ni x\longmapsto (x,0_x)\in E,$$ where $0_x\in E_x$ is the zero. Hence there is a natural identification of $\mathbf 0_E$ with an embedding of the base manifold $M$ in $E$, and by \emph{null section}\index{Section!null} of $E$ we will also mean the \emph{image} of such embedding. This will be formalized in the context of infinite--dimensional vector bundles over infinite--dimensional manifolds in Remark~\ref{re:nullsectionsubmnfld}.

\begin{figure}[htf]
\begin{center}
\vspace{-0.5cm}
\includegraphics[scale=0.6]{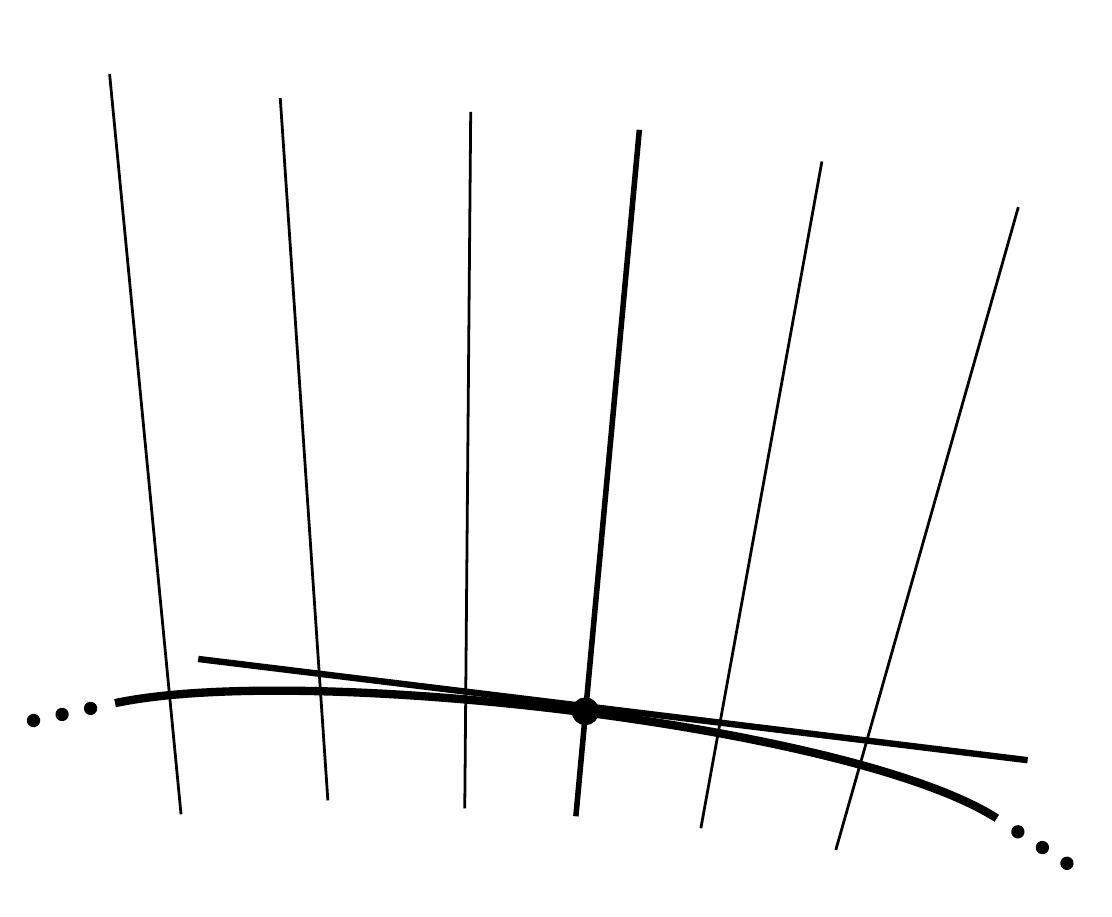}
\begin{pgfpicture}
\pgfputat{\pgfxy(-4,1.7)}{\pgfbox[center,center]{$(x,0_x)$}}
\pgfputat{\pgfxy(0.1,0.9)}{\pgfbox[center,center]{$T_xM$}}
\pgfputat{\pgfxy(-0.2,0)}{\pgfbox[center,center]{$\mathbf 0_E$}}
\pgfputat{\pgfxy(-3,5.1)}{\pgfbox[center,center]{$E_x$}}
\end{pgfpicture}
\end{center}
\caption{Null section $\mathbf 0_E$, horizontal space $\hor_{(x,0_x)} E$ given by $T_{(x,0_x)}\mathbf 0_E\cong T_xM$ and vertical space $\ver_{(x,0_x)} E\cong E_x$.}\label{fig:nullsection}
\end{figure}

Tangent vectors to the null section are called \emph{horizontal} vectors of $E$. Notice that a canonical choice of a horizontal space, i.e., a complementary subspace to $\ver_e E$ (see Definition~\ref{def:hor}), is possible only when $e\in\mathbf 0_E$, given by $$\hor_{(x,0_x)} E=T_{(x,0_x)}\mathbf 0_E \cong T_xM,$$ from the above identification. In this case, the tangent space to $E$ naturally decomposes in the sum of horizontal and vertical spaces, respectively tangent to the null section and to the fibers, i.e.,
\begin{eqnarray}\label{eq:tangentnullsec}
T_{(x,0_x)} E &\cong & T_{(x,0_x)}\mathbf 0_E\oplus T_{(x,0_x)}E_x \nonumber \\
& \cong & T_xM\oplus E_x,
\end{eqnarray}
see Figure \ref{fig:nullsection}. This decomposition naturally generalizes to the infinite--dimensional context, see Remark~\ref{re:ttx}.
\end{remark}

Analogously to the case of real valued functions, the \emph{support}\index{Section!support} of a section $s\in\sect^k(E)$ is defined as 
\begin{equation}\label{eq:support}
\supp s=\overline{M\setminus s^{-1}(\mathbf 0_E)}.
\end{equation}

\begin{example}\label{ex:functions}
The vector space $C^k(M)$ of $C^k$ functions on $M$ is identified with the space of $C^k$ sections of the \emph{trivial} vector bundle $M\times\R$, by the linear isomorphism $$C^k(M)\ni f\longmapsto (\id,f)\in\sect^k(M\times\R)$$ where by $(\id,f)$ we mean the section that maps each $x\in M$ to $(x,f(x))\in M\times\R$. It is a trivial but rather important observation that, with such identification, all results obtained for the structure of the space of $C^k$ sections of a vector bundle over $M$ are automatically valid for the space of $C^k$ functions on $M$.
\end{example}

Let us give a definition that applies Proposition~\ref{prop:whitneysumtensor} inductively on the {\em tangent bundle} $TM$ and the {\em cotangent bundle} $TM^*$, which are clearly (smooth) vector bundles over $M$.

\begin{definition}\label{def:tensorbundle}
A tensor power\footnote{The {\em tensor power} $(\otimes^s TM^*)$ denotes $TM^*\otimes\dots\otimes TM^*$ $s$ times, and analogously for $\otimes^r TM$, recall Example~\ref{ex:tmtmstar}.} $(\otimes^s TM^*)\otimes (\otimes^r TM)$ is called \emph{$(r,s)$--type tensor bundle over $M$}.\index{Tensor bundle}\index{Tensor bundle!$(r,s)$--type} Clearly, its fibers are $$\underbrace{T_xM^*\otimes\dots\otimes T_xM^*}_{s}\otimes \underbrace{T_xM\otimes\dots\otimes T_xM}_{r}=(\otimes^s T_xM^*)\otimes (\otimes^r T_xM).$$ Sections of this bundle are called \emph{$(r,s)$--tensors}.\index{$(r,s)$--tensor}
\end{definition}

Obviously, vector fields, $1$--forms and Riemannian metrics are $(r,s)$--tensors, more precisely, $(1,0)$, $(0,1)$ and $(0,2)$ tensors, respectively. More precisely, it is possible to classify some $(0,s)$--tensors as skew--symmetric or symmetric, respectively.

\begin{definition}\label{def:symskewsym}
A symmetric power\footnote{The {\em symmetric power} $\vee^s TM^*$ denotes $TM^*\vee\dots\vee TM^*$ $s$ times. Recall that if $V$ and $W$ are real vector spaces, $V\vee W$ is a quotient of the tensor product $V\otimes W$ formed by {\em symmetric} tensors.} $\vee^s TM^*$ can be identified with a sub bundle of $\otimes^s TM^*$, whose sections are {\em symmetric} $(0,s)$--tensors (see Definition~\ref{def:subbundle}).\index{Symmetric tensor}\index{Tensor bundle!symmetric} This means that a section $K\in\sect^k(\vee^s TM^*)$ at any $x\in M$, $$K(x):\prod_{i=1}^s T_xM\la\R,$$ is a symmetric $s$--multilinear form. Clearly, the fibers of $\vee^s TM^*$ are $$\underbrace{T_xM^*\vee\dots\vee T_xM^*}_{s}=\vee^s T_xM^*.$$

Analogously, the skew--symmetric power\footnote{The {\em skew--symmetric power} $\wedge^s TM^*$ denotes $TM^*\wedge\dots\wedge TM^*$ $s$ times. Recall that if $V$ and $W$ are real vector spaces, $V\wedge W$ is a quotient of the tensor product $V\otimes W$ formed by {\em skew--symmetric} tensors.} $\wedge^s TM^*$ can be identified with a sub bundle of $\otimes^s TM^*$, consisting of the {\em skew--symmetric} $(0,s)$--tensors, also called {\em differential $s$--forms} (see Definition~\ref{def:subbundle}).\index{Tensor bundle!skew--symmetric}\index{skew--symmetric tensor}\index{Differential form} This means that a section $K\in\sect^k(\wedge^s TM^*)$ at any $x\in M$, $$K(x):\prod_{i=1}^s T_xM\la\R,$$ is a skew--symmetric $s$--multilinear form. Clearly, the fibers of $\wedge^s TM^*$ are $$\underbrace{T_xM^*\wedge\dots\wedge T_xM^*}_{s}=\wedge^s T_xM^*.$$
\end{definition}

\begin{definition}\label{def:pullbackbundle}
If $\mathcal E$ is a $C^k$ fiber bundle over $M$ with projection $\pi:\mathcal E\to M$ and $f:N\to M$ is a $C^k$ map between smooth finite--dimensional manifolds, one can \emph{pull back} $\mathcal E$ to a fiber bundle over $N$. The {\em pull--back bundle}\index{Fiber bundle!pull--back}\index{Pull--back!bundle} $f^*\mathcal E$ is the $C^k$ fiber bundle over $N$  given by $$f^*\mathcal E=\bigcup_{x\in N}\{x\}\times \mathcal E_{f(x)},$$ and the projection $\widehat\pi:f^*\mathcal E\to N$ maps $\{x\}\times\mathcal E_{f(x)}$ to $x\in N$. If $\alpha:\pi^{-1}(U)\to U\times\mathcal E_0$ is a trivialization of $\mathcal E$, then the map $$\widehat\alpha:\widehat\pi^{-1}(f^{-1}(U))\ni (x,e)\longmapsto (x,\alpha_{f(x)}(e))\in f^{-1}(U)\times\mathcal E_0$$ is a trivialization of $f^*\mathcal E$, with the same regularity.

Given a section $s\in\sect^k(\mathcal E)$, one can {\em pull back} $s$ to a section of the pull--back bundle $f^*\mathcal E$. The {\em pull--back section}\index{Pull--back!Section} $f^*s$ is the $C^k$ section of $f^*\mathcal E$ given by
\begin{equation}
(f^*s)(x) = s(f(x)), \quad x\in N.
\end{equation}
Therefore, we have the following diagram relating $s$ and $f^*s$.
\begin{equation*}
\xymatrix@+20pt{
f^*\mathcal E\ar[d]^{\widehat\pi} & \mathcal E\ar[d]_\pi \\
N\ar@(l,l)[u]^{f^*s}\ar[r]^f & M\ar@(r,r)[u]_s
}
\end{equation*}
Notice that not every element of $\sect^k(f^*\mathcal E)$ is of this form, see Remark~\ref{re:extensible} below.
\end{definition}

\begin{remark}\label{re:restbundle}
A special case of \emph{pull--back bundle} is the \emph{restriction} bundle. If $i:N\hookrightarrow M$ is a submanifold and $(\mathcal E,M,\pi)$ is a fiber bundle, then $i^*\mathcal E$, denoted also $\mathcal E|_N$, is a fiber bundle over $N$, whose trivializations are restrictions of trivializations of $\mathcal E$ to $N$.
\end{remark}

\begin{figure}[htf]
\begin{center}
\includegraphics[scale=0.6]{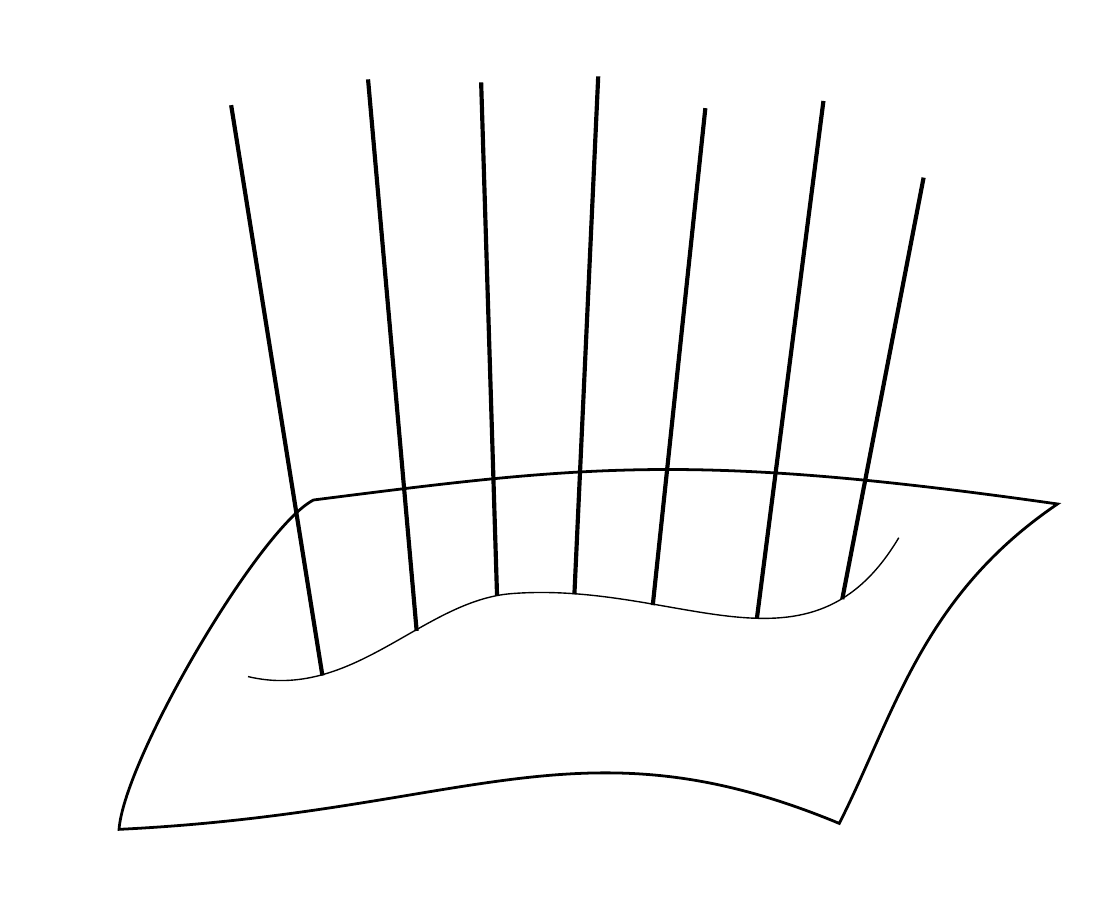}
\begin{pgfpicture}
\pgfputat{\pgfxy(-0.7,4)}{\pgfbox[center,center]{$\mathcal E|_N$}}
\pgfputat{\pgfxy(-0.8,1)}{\pgfbox[center,center]{$M$}}
\pgfputat{\pgfxy(-3,1.5)}{\pgfbox[center,center]{$N$}}
\end{pgfpicture}
\end{center}
\caption{Pull--back bundle $\mathcal E|_N$ over $N\subset M$.}
\end{figure}

\begin{example}\label{ex:vectoralongcurve}
If $\gamma:[a,b]\to M$ is a $C^k$ curve on $M$, the pull--back $\gamma^*TM$ is identified with the restriction of the tangent bundle $TM$ to the image of $\gamma$. Thus, an element $X\in\sect^k(\gamma^*TM)$ is a $C^k$ vector field along $\gamma$, i.e., $X:[a,b]\to TM$, with $X(t)\in T_{\gamma(t)}M$ for all $t\in [a,b]$.\index{Vector field!along a curve} Notice that although $TM$ is a smooth bundle over $M$, the regularity of the pull--back bundle $\gamma^*TM$ is the same of $\gamma$.
\end{example}

\begin{remark}\label{re:extensible}
A vector field $X\in\sect^k(\gamma^*TM)$ that is the pull--back section of some $\widetilde X\in\sect^k(TM)$ is called an {\em extensible} vector field along $\gamma$. Clearly there are vector fields $X$ along $\gamma$ that are not induced as restrictions of globally defined vector fields $\widetilde X\in\sect^k(TM)$. Consider for instance the tangent field $\dot\gamma$ of a curve with transverse self intersections, i.e. $\gamma(t)=\gamma(s)$ and $\dot\gamma(t)\ne\dot\gamma(s)$. This is clearly a non extensible vector field.

In particular, this example recalls that not every section of a pull--back bundle is a pull--back section.
\end{remark}

In the final part of this section, we study connections on vector bundles. A more detailed treatment of the abstract theory of connections can be found in \cite{kn1,picmertausk,pictaugstructure}.

\begin{definition}\label{def:connection}
A {\em connection}, or {\em affine connection},\index{Connection}\index{Vector bundle!connection} on a $C^k$ vector bundle $E$ over $M$ is a $\R$--linear operator $$\nabla:\sect^k(E)\la\sect^{k-1}(TM^*\otimes E),$$ satisfying the Leibniz rule $\nabla(fs)=\dd f\otimes s+f\nabla s,$ for all $f\in C^k(M)$ and $s\in\sect^k(E)$. The term \emph{affine}, often omitted, arises from the fact that the space of all connections on $E$ is an affine space. The image $\nabla s$ is called \emph{covariant derivative} of $s$.\index{Covariant derivative!of a section}
\end{definition}

\begin{remark}
A more common and less useful equivalent definition of connection is the following. A connection is a map $$\nabla:\sect^k(TM)\times\sect^k(E)\owns (X,s)\longmapsto\nabla_X s\in\sect^{k-1}(E)$$ that is $C^k(M)$-linear in $X$, $\R$-linear in $s$ and satisfies the Leibniz rule $\nabla_X (fs)=(X(f))s+f\nabla_X s,$ for all $f\in C^k(M)$ and $X\in\sect^k(TM)$.
\end{remark}

\begin{remark}\label{re:nabladepend}
From Definition~\ref{def:connection}, it is evident that the value of $\nabla_X s$ at a point $x\in M$ depends in different ways of the values of $X$ and $s$. Namely, it only depends of the value $X(x)$ of $X$ at the point $x$, however depends on the values of $s$ in a {\em neighborhood} of $x$. This fact is usually remarked as $\nabla$ being {\em tensorial} only on $X$, and not on $s$.
\end{remark}

\begin{remark}\label{re:connectionhor}
It is possible to prove that the choice of a connection on $E$ is equivalent to the choice of a horizontal bundle $\hor\, E$ with certain properties\footnote{For more details, see Mercuri, Piccione and Tausk \cite[Definition 2.1.6 and Proposition 2.1.12]{picmertausk}}. Let us briefly indicate how to construct such equivalence. Consider $s\in\sect^k(E)$ and $X\in\sect^k(TM)$. Given a horizontal bundle $\hor\, E$, the tangent space at each point $s(x)\in E$ decomposes in the direct sum $T_{s(x)}E=\hor_{s(x)}E\oplus\ver_{s(x)}E$, and the value of $\nabla_X s$ is defined to be the vertical component of $\dd s(x)X$. Conversely, given a connection $\nabla$, the bundle $$\hor_e E=\Big\{\dd s(x)X-\nabla_X s:X\in T_xM, s\in\sect^k(E), s(x)=e\Big\}$$ defines a horizontal bundle that satisfies the appropriate conditions.
\end{remark}

\begin{example}\label{ex:derfunctions}
As pointed out in Example~\ref{ex:functions}, the space $C^k(M)$ of functions on $M$ is identified with $\sect^k(M\times\R)$. Any connection on this trivial bundle acts as the usual derivative of functions, namely for any $f\in C^k(M)$ and $x\in M$,
\begin{equation}\label{eq:derfunctions}
\nabla f(x):T_xM\ni v\longmapsto (x,v(f))\in\{x\}\times\R.
\end{equation}
Notice that setting $s\in C^k(M)$ to be the constant function equal to $1$, it also follows from the Leibniz rule that
\begin{eqnarray*}
(\nabla f)(X) &=& \nabla (1f)(X) \\
&=& (\dd f\otimes 1 +f\nabla 1)(X) \\
&=& \dd f(X) \\
&=& X(f),
\end{eqnarray*}
for any $X\in\sect^k(TM)$. The reason for the covariant derivatives of functions be necessarily the usual derivative is obvious when a connection is identified with a choice of a horizontal bundle as discussed in Remark~\ref{re:connectionhor}. Clearly, $M\times\R$ has a unique possible decomposition in horizontal and vertical bundles, given by, respectively, the tangent spaces to each factor $M$ and $\R$. Hence, there is a unique connection on $M\times\R$, namely, the usual derivative.
\end{example}

\begin{definition}\label{def:pullbackconnection}
Given a connection $\nabla$ on a $C^k$ vector bundle $E$ over $M$ and $f:N\to M$ a $C^k$ map between smooth finite--dimensional manifolds, one can {\em pull back} $\nabla$ to a connection on $f^*E$, see Definition~\ref{def:pullbackbundle}, by setting
\begin{eqnarray*}
f^*\nabla:\sect^k(TN)\times\sect^k(f^*E) &\la &\sect^{k-1}(TN^*\otimes f^*E)\\
(X,f^*s) &\longmapsto &f^*(\nabla_{\dd f(X)} s).
\end{eqnarray*}
\end{definition}

\begin{example}\label{ex:nablasection}
Given a smooth frame\footnote{See Definition~\ref{def:frame}.} $p(x):\R^m\to E_x$ for all $x\in M$ of a vector bundle $E$ over $M$, it is possible to define a connection $\dd^p$ on $E$ associated to $p$ as follows. Let $s\in\sect^k(E)$. Define, for each direction $X\in\sect^k(TM)$,
\begin{equation}\label{eq:nablap}
\big((\dd^p)_X s\big)(x)=p(x)\left[\dd\tilde{s}(x)X(x)\right],
\end{equation}
where $\dd$ is the ordinary differentiation in Euclidean space and $\tilde{s}(x):M\to\R^m$ is the representation of $s$ with respect to the frame $p$ at $x\in M$, i.e.,
\begin{equation}\label{eq:tildes}
\tilde{s}(x)=p(x)^{-1}\big(s(x)\big).
\end{equation}
One can easily verify that $\dd^p$ is a connection in the sense of Definition~\ref{def:connection}. This special connection will be used in the sequel to explore local expressions of tensors, in case $E=TM$.
\end{example}

We finish this section with a couple of definitions for connections on the tangent bundle $TM$, that are particularly important in semi--Riemannian geometry. They allow to {\em parallel translate} vectors along curves, {\em connecting} tangent spaces of $M$ at different points. Notice that given a vector field $X\in\sect^k(TM)$, the covariant derivative $\nabla X\in\sect^{k-1}(TM^*\otimes TM)$ is simply the section that to each $x\in M$ associates the linear operator $$\nabla X(x):T_xM\ni v\longmapsto \nabla_v X\in T_xM.$$

\begin{definition}\label{def:symflat}
There are two important tensors related to connections on the tangent bundle $TM$. Given $\nabla$ a connection on $TM$, define the
\begin{itemize}
\item[(i)] {\em torsion}\index{Connection!torsion} of $\nabla$ to be the skew--symmetric $(1,2)$--tensor
\begin{equation}\label{eq:T}
T^\nabla(X,Y)=\nabla_X Y-\nabla_Y X-[X,Y];
\end{equation}
\item[(ii)] {\em curvature}\index{Connection!curvature} of $\nabla$ to be the $(1,3)$--tensor
\begin{eqnarray}\label{eq:R}
R^\nabla(X,Y)Z &=& \nabla_X\nabla_Y Z-\nabla_Y\nabla_X Z-\nabla_{[X,Y]}Z \\
&=&[\nabla_X,\nabla_Y]Z-\nabla_{[X,Y]}Z,\nonumber
\end{eqnarray}
\end{itemize}
for all $X,Y,Z\in\sect^k(TM)$. Recall that $[\cdot,\cdot]$ denotes the {\em Lie bracket} of vector fields on $M$. Finally, if $T^\nabla$ or $R^\nabla$ vanishes identically, $\nabla$ is respectively called {\em symmetric} or {\em flat}.\index{Connection!symmetric}\index{Connection!flat}
\end{definition}

\begin{definition}\label{def:christoffeltens}
Consider two connections $\nabla$ and $\nabla'$ on the tangent bundle $TM$. The $(1,2)$--tensor given by the difference 
\begin{equation}
\chr=\nabla-\nabla'
\end{equation}
is called the {\em Christoffel tensor of $\nabla$ relatively to $\nabla'$}.\index{Christoffel tensor}

Moreover, given a frame $p(x):\R^m\to T_xM$ for all $x\in M$, it is also possible to define the {\em Christoffel tensor of $\nabla$ relatively to $p$}, as the $(1,2)$--tensor given by the difference
\begin{equation}\label{eq:chrnablap}
\chr=\nabla-\dd^p.
\end{equation}
\end{definition}

\begin{remark}\label{re:chrsym}
Notice that if $\nabla$ and $\nabla'$ are {\em symmetric} connections, the Christoffel tensor of one relatively to the other is also {\em symmetric}.
\end{remark}

\section{Metrics and basic objects}
\label{sec:metricsetc}

In this section we briefly recall basic objects of semi--Riemannian geometry, such as metrics, geodesics, curvature tensors and Jacobi fields among others. For a detailed introduction to the subject, we refer to classic textbooks such as \cite{jost,kn1,kn2,lee,oneill,petersen} and for interpretations and applications to general relativity see \cite{bee,besse,hawking}.

\begin{definition}\label{def:srmetric}
A tensor $g\in\sect^k(TM^*\vee TM^*)$ is a {\em $C^k$ semi--Riemannian metric}\index{Semi--Riemannian!metric}\index{Metric}\index{Metric!semi--Riemannian} of index $\nu$ on $M$ if for all $x\in M$, the bilinear form $$g(x):T_xM\times T_xM\la\R$$ is nondegenerate (see Definition~\ref{def:nondegenerate}) and has index $\nu$, i.e., the dimension of the negative autospace of $g(x):T_xM\to T_xM^*\cong T_xM,$ see \eqref{ident:bilin}, is equal to $\nu$. In case $\nu=0$, this means that $g$ is positive--definite, and then $g$ is called a {\em Riemannian metric}\index{Riemannian!metric}\index{Metric!Riemannian} on $M$. The pair $(M,g)$ is called a {\em semi--Riemannian manifold}.\index{Semi--Riemannian!manifold} In case $\nu=1$, the metric $g$ is called a {\em Lorentzian metric}\index{Lorentzian!metric}\index{Metric!Lorentzian} on $M$, and $(M,g)$ is said to be a {\em Lorentzian manifold}.\index{Lorentzian!manifold}

The set of all $C^k$ semi--Riemannian metrics on $M$ of index $\nu$ is denoted $\met_\nu^k(M)$. Naturally, we also denote $\met_\nu^\infty(M)=\bigcap_{k\in\N}\met_\nu^k(M)$. We will also usually drop the base point $x$ in the notation of the metric, for instance, we will commonly use $g(v,w)$ instead of $g(x)(v,w)$, when there is not ambiguity concerning the base point of the vectors $v,w\in T_xM$.
\end{definition}

\begin{remark}\label{re:fixedgr}
Recall that $M$ is supposed smooth in this text. Using smooth partitions of the unity on $M$, it is possible to prove that $\met_0^\infty(M)\neq\emptyset$, see for instance \cite{jost,petersen}. For many different reasons\footnote{As an example, see Remarks~\ref{re:dependgr} and~\ref{re:dependgr2}, with reference to the use of this auxiliary Riemannian metric in Definition~\ref{def:fiberproduct} and in the subsequent developments.}, we will constantly need an auxiliary Riemannian metric on $M$, that we now fix. Henceforth, $g_\mathrm R\in\met_0^{\infty}(M)$ will denote this fixed smooth Riemannian metric on $M$.

Although it is quite simple to verify that $\met_0^k(M)\neq\emptyset$, for $\nu\geq1$ the set $\met_\nu^k(M)$ might be empty depending on the topology of $M$. In fact, there are obstructions to the existence of semi--Riemannian metrics, which will be studied in Section~\ref{sec:topobst}.
\end{remark}

Since we will be dealing with non necessarily positive--definite metric tensors, the {\em norm} $g(x)(v,v)$ of a vector $v\in T_xM$ might be null or even negative. This gives a classification of tangent vectors (and other associated objects) regarding this sign, called their {\em causal character}.\index{Causal character}

\begin{definition}\label{def:causalchar}
Let $g\in\met_\nu^k(M)$ and $x\in M$. Vectors $v\in T_xM$ are classified regarding their causal character as
\begin{itemize}
\item[(i)] {\em timelike}, if $g(v,v)<0$;\index{Causal character!timelike}
\item[(ii)] {\em nonspacelike} or {\em causal} if $g(v,v)\leq 0$;\index{Causal character!causal}
\item[(iii)] {\em lightlike} or {\em null} if $g(v,v)=0$;\index{Causal character!lightlike}
\item[(iv)] {\em spacelike} if $g(v,v)>0$,\index{Causal character!spacelike}
\end{itemize}
see Figure \ref{fig:lightcone}. A curve $\gamma:[a,b]\to M$ is called {\em timelike}, {\em lightlike} or {\em spacelike} if the tangent vector $\dot\gamma(t)\in T_{\gamma(t)}M$ is respectively {\em timelike}, {\em lightlike} or {\em spacelike}, for all $t\in [a,b]$.
\end{definition}

\begin{figure}[htf]
\begin{center}
\includegraphics[scale=0.8]{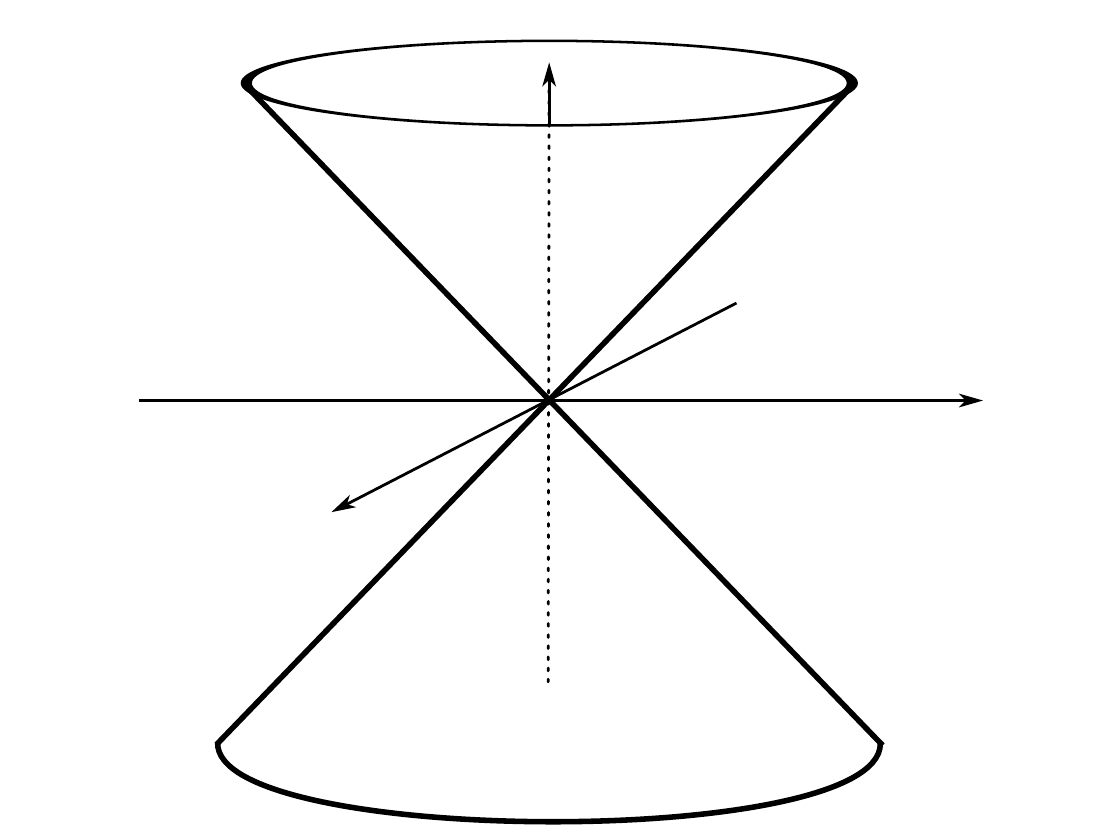}
\begin{pgfpicture}
\pgfputat{\pgfxy(-4.6,0.7)}{\pgfbox[center,center]{$g(v,v)<0$}}
\pgfputat{\pgfxy(-1,0.7)}{\pgfbox[center,center]{$g(v,v)=0$}}
\pgfputat{\pgfxy(-1.9,4)}{\pgfbox[center,center]{$g(v,v)>0$}}
\pgfputat{\pgfxy(-1.2,2.8)}{\pgfbox[center,center]{$T_pM$}}
\end{pgfpicture}
\end{center}
\caption{Possible causal characters of vectors in $T_pM$ and the {\em lightcone} $T^0_pM$, subset of $T_pM$ formed by lightlike vectors.}\label{fig:lightcone}
\end{figure}

\begin{definition}\label{def:gorthframe}
A frame $\{\xi_i(t)\}_{i=1}^m$ along a curve $\gamma:[a,b]\to M$ is a frame\footnote{Recall Definition~\ref{def:frame}.} of the vector bundle $\gamma^*TM$. In addition, given $g\in\met_\nu^k(M)$, it is said to be {$g$--orthonormal}\index{Frame!orthonormal} if for every $t\in [a,b]$, $$g(\xi_i(t),\xi_j(t))=\begin{cases} \delta_i=\pm1, & \text{if } i=j \\ 0, & \text{if }i\neq j. \end{cases}$$
\end{definition}

\begin{definition}\label{def:unittangentbundle}
Let $g\in\met_0^k(M)$ be a Riemannian metric on $M$. The sub bundle $$T^1 M=\bigcup_{x\in M} \{x\}\times\{v\in T_xM: g(x)(v,v)=1\}$$ is called the {\em unit tangent bundle}\index{Bundle!unit tangent} over $M$ with respect to $g$. For semi--Riemannian metrics $g\in\met_\nu^k(M)$, it is also possible to define unit tangent bundles for each causal character. Namely, consider the sub bundles of $TM$ given by $T^1M$ above, $$T^{0}M=\bigcup_{x\in M} \{x\}\times\{v\in T_xM: g(x)(v,v)=0\}$$ and $$T^{-1}M=\bigcup_{x\in M} \{x\}\times\{v\in T_xM: g(x)(v,v)=-1\}.$$ The sub bundle $T^0M$ is called the {\em $g$--light cone bundle} over $M$, and at each $x\in M$, its fiber $T^0_xM=\{v\in T_xM:g(x)(v,v)=0\}$ is called the {\em $g$--light cone}\index{Light cone} at $x\in M$, see Figure \ref{fig:lightcone}.
\end{definition}

\begin{remark}
The $g$--light cone at $x\in M$ divides the tangent space $T_xM$ in two parts. Namely, vectors {\em inside} the light cone are timelike, vectors on the light cone are lightlike and vectors {\em outside} the light cone are spacelike, see Figure \ref{fig:lightcone}. This separation is easily seen, since these components correspond respectively to $f^{-1}(-\infty,0)$, $f^{-1}(0)$ and $f^{-1}(0,+\infty)$ where $$f:T_xM\ni v\longmapsto g(x)(v,v)\in\R.$$
\end{remark}

\begin{definition}\label{def:spacetime}
A vector field $X\in\sect^k(M)$ is timelike if $X(x)\in T_xM$ is timelike for every $x\in M$. A Lorentzian manifold $(M,g)$ with a given timelike vector field $X$ is said to be {\em time oriented}\index{Lorentzian manifold!time oriented} by $X$. A {\em space--time}\index{Space--time} is a time oriented Lorentzian manifold.
\end{definition}

\begin{remark}
Not every Lorentzian manifold admits a time orientation. Nevertheless, if a Lorentzian manifold is not time orientable, it admits a time orientable two--fold cover. This can be proved using a few techniques developed in Section~\ref{sec:topobst} to deal with this type of topological obstructions.
\end{remark}

\begin{definition}\label{def:isometries}
A diffeomorphism $f:(M,g^M)\rightarrow (N,g^N)$ with $f^*g^N=g^M$ is called an {\em isometry}.\index{Isometry} This means that $$g^M(x)(v,w)=g^N(f(x))\big(\dd f(x)v,\dd f(x)w\big),$$ for all $x\in M$ and $v,w\in T_xM$.

The set of all isometries of a given semi--Riemannian manifold $(M,g)$ is clearly\footnote{This is a simple consequence of the chain rule for maps in $M$.} a group under composition of maps, denoted $\Iso(M,g)$ or simply $\Iso(M)$.
\end{definition}

\begin{remark}
A classic result of Myers and Steenrod \cite{myerssteenrod} proves that if $g$ is a Riemannian metric, every closed subgroup of $\Iso(M,g)$ in the compact--open topology\footnote{A subset $G$ of $\Iso(M,g)$ is {\em closed in the compact--open topology} if the following condition holds. Let $K\subset M$ be a compact subset and $\{f_{n}\}_{n\in\N}$ a sequence of isometries in $G$ that converges uniformly in $K$ to a continuous map $f:M\rightarrow M$. Then $f\in G$.} is a Lie group. In particular, $\Iso(M,g)$ itself is a Lie group.

Using this result, it is possible to prove that the group of {\em affine diffeomorphisms} of $M$, i.e., diffeomorphisms that preserve a connection, is a Lie group. This is done by regarding it as the isometry group of another Riemannian manifold. From this fact, it also follows that the isometry group $\Iso(M,g)$ of a semi--Riemannian manifold is a Lie group.
\end{remark}

\begin{remark}
If $g$ is a Riemannian metric, compactness of $M$ implies compactness of $\Iso(M,g)$. This is false for a general semi--Riemannian metric. Nevertheless, there are some interesting results in the literature, for instance D'Ambra \cite{dambra} proved that the isometry group of a real analytic simply connected compact Lorentzian manifold is compact. Recently, Piccione and Zeghib \cite{piczeghib} proved this result without the analyticity hypothesis, assuming the existence of a somewhere timelike Killing vector field.
\end{remark}

Given a semi--Riemannian metric $g$ on $M$ there is a canonical way to associate a connection $\nabla^g$ on $TM$, see Definition~\ref{def:connection}, that is {\em compatible} with $g$, as the following classic result asserts.

\begin{theorem}\label{thm:levicivita}
Let $(M,g)$ be a semi--Riemannian manifold. There exists a unique symmetric\footnote{See Definition~\ref{def:symflat}.} connection $\nabla^g$ on $TM$, called the {\em Levi--Civita connection}\index{Connection!Levi--Civita} of $g$, that is {\em compatible with $g$}, i.e.
\begin{equation}
Xg(Y,Z)=g(\nabla^g_X Y,Z)+g(Y,\nabla^g_X Z), \quad X,Y,Z\in\sect^k(TM).
\end{equation}
\end{theorem}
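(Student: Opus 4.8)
The plan is to prove the Levi--Civita theorem by the classical Koszul formula argument, which simultaneously establishes uniqueness and existence. First I would assume that $\nabla^g$ is a symmetric connection compatible with $g$, and derive a closed formula for $g(\nabla^g_X Y, Z)$. To do this, I write out the compatibility identity $Xg(Y,Z) = g(\nabla^g_X Y, Z) + g(Y, \nabla^g_X Z)$ three times, cyclically permuting $X, Y, Z$, then add the first two and subtract the third. Using the symmetry condition $\nabla^g_X Y - \nabla^g_Y X = [X,Y]$ (i.e.\ $T^{\nabla^g} \equiv 0$, recall Definition~\ref{def:symflat}) to simplify the mixed terms, the six covariant-derivative terms collapse to a single term $2g(\nabla^g_X Y, Z)$, yielding the \emph{Koszul formula}
\begin{equation*}
2g(\nabla^g_X Y, Z) = Xg(Y,Z) + Yg(Z,X) - Zg(X,Y) + g([X,Y],Z) - g([X,Z],Y) - g([Y,Z],X).
\end{equation*}

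Next I would observe that, since $g(x)$ is nondegenerate for every $x \in M$ (Definition~\ref{def:srmetric}), the right-hand side, viewed as a function of $Z \in \sect^k(TM)$, is a $C^k(M)$-linear functional that uniquely determines a vector field $\nabla^g_X Y$. This gives uniqueness immediately: any symmetric compatible connection must satisfy the Koszul formula, hence is completely determined by $g$. For existence, I would \emph{define} $\nabla^g_X Y$ by declaring it to be the unique vector field such that $g(\nabla^g_X Y, \cdot)$ equals the right-hand side above (nondegeneracy of $g$ makes this legitimate, fibrewise), and then verify that the resulting operator is genuinely a connection in the sense of Definition~\ref{def:connection}: namely $C^k(M)$-linearity in $X$, $\R$-linearity in $Y$, and the Leibniz rule $\nabla^g_X(fY) = (Xf)Y + f\nabla^g_X Y$. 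Each of these is a routine check obtained by substituting $fX$, $Y_1 + Y_2$, or $fY$ into the Koszul formula and using the Leibniz rule for vector fields acting on functions together with the identities $[X, fY] = (Xf)Y + f[X,Y]$. Finally I would confirm that this $\nabla^g$ is symmetric (swap $X$ and $Y$ in the Koszul formula and compare, using skew-symmetry of the bracket) and compatible with $g$ (add the Koszul formula for $2g(\nabla^g_X Y, Z)$ to that for $2g(Y, \nabla^g_X Z)$ and check the sum telescopes to $2Xg(Y,Z)$).

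I expect the main obstacle to be purely bookkeeping: keeping track of signs and of which of the six terms in the Koszul formula cancels against which when verifying the connection axioms and the symmetry/compatibility properties. There is no conceptual difficulty beyond nondegeneracy of $g$ --- the semi-Riemannian case is identical to the Riemannian one, since positive-definiteness is never used, only invertibility of $g(x): T_xM \to T_xM^*$. One mild point worth care is the regularity claim: if $g \in \sect^k(TM^*\vee TM^*)$ then the right-hand side of the Koszul formula involves first derivatives of $g$, so $\nabla^g_X Y \in \sect^{k-1}$, consistent with $\nabla^g: \sect^k(TM) \to \sect^{k-1}(TM^*\otimes TM)$ as required by Definition~\ref{def:connection}; I would remark on this explicitly. (When $k = \infty$ this is vacuous.)
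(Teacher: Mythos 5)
Your proposal is correct and follows exactly the classical Koszul formula argument that the paper itself invokes: the text displays the Koszul formula \eqref{eq:koszul} as the key fact and then defers the routine verifications to textbooks, which is precisely what you spell out. Your form of the formula agrees with the paper's (which writes $g(\nabla^g_Y X,Z)$ rather than $g(\nabla^g_X Y,Z)$, so the bracket terms appear with opposite sign — a relabeling, not a discrepancy), and your closing remark on regularity ($\nabla^g$ dropping one degree of differentiability) is a worthwhile point that the paper leaves implicit.
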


\begin{remark}
In the case of the fixed Riemannian metric $g_\mathrm R$, its Levi--Civita connection will be denoted $\nabla^\mathrm R$.
\end{remark}

The key fact on the proof of this theorem is the equation known as {\em Koszul formula}.\index{Koszul formula}
\begin{multline}\label{eq:koszul}
g(\nabla^g_Y X,Z)=\tfrac{1}{2}\Big( Xg(Y,Z)-Zg(X,Y)+Yg(Z,X) \\ -g([X,Y],Z)-g([X,Z],Y)-g([Y,Z],X)\Big).
\end{multline}
It exhibits the natural candidate to the Levi--Civita connection and shows that it is uniquely determined by the metric. A complete proof of Theorem~\ref{thm:levicivita} can be found in any basic Riemannian geometry textbook such as \cite{jost,petersen}.

\begin{remark}\label{re:chrg}
The Koszul formula also allows to compute, as follows, the Christoffel tensor of the Levi--Civita connection $\nabla^g$ relatively to a fixed connection $\nabla$ (see Definition~\ref{def:christoffeltens}).
\begin{equation}\label{eq:christoffeltensor}
g(\chr^g(X,Y),Z)=\tfrac12\Big(\nabla g(X,Z,Y)+\nabla g(Y,Z,X)-\nabla g(Z,X,Y)\Big).
\end{equation}
Notice that since $\nabla^g$ is symmetric, if $\nabla$ is symmetric, then $\chr^g$ is also symmetric as a consequence of Remark~\ref{re:chrsym}.
\end{remark}

\begin{definition}\label{def:christoffelsymb}
Consider a local chart $(U,\varphi)$ of $M$ and the local frame of $TM$ at $U$ given by the {\em coordinate basis}\footnote{The frame $p(x):\R^m\to T_xM$ given by the coordinate basis consists of local sections $\{\xi_i(x)\}_{i=1}^m$, where $\xi_i=\varphi^*e_i$ is the pull--back by $\varphi$ of the canonical orthonormal basis of $\R^m$, see Definition~\ref{def:frame}.} induced by $\varphi$. The {\em Christoffel symbols}\index{Christoffel symbol} of $g$ are the functions $\chr^k_{ij}$ in $U$ that give the local expression of the Christoffel tensor $\chr^g$ of $\nabla^g$ relatively to $p$ at $U$, defined by
\begin{equation}\label{eq:christoffelsymb}
\chr^g(\xi_i,\xi_j)=\sum_{k=1}^m \chr^k_{ij}\xi_k.
\end{equation}
\end{definition}

\begin{remark}
Let us compute the Christoffel tensor of the Levi--Civita connection $\nabla^g$ relatively to a frame $p$ (see Definition~\ref{def:christoffeltens}). From Definition~\ref{def:christoffeltens}, this Christoffel tensor is given by formula \eqref{eq:chrnablap},
\begin{equation*}
\chr^g(\gamma)(X,Y)=\nabla_{X} Y-\dd^p_{X} Y,
\end{equation*}
where $\dd^p$ is the connection induced by the frame $p$, defined in Example~\ref{ex:nablasection} by formula \eqref{eq:nablap}. More precisely,
\begin{equation*}
\big((\dd^p)_{X} Y\big)(x)=p(x)\left[\dd\tilde{Y}(x)X(x)\right],
\end{equation*}
where $\tilde{Y}$ is the representation of $Y$ with respect to $p$, given by \eqref{eq:tildes}. Thus, we obtain
\begin{equation}\label{eq:nablachr}
\nabla^g_X Y(x)=p(x)\big(\dd\tilde{Y}(x)X\big)+\chr^g(x)(X,Y), \quad x\in M.
\end{equation}
Formula \eqref{eq:nablachr} is usually known as the {\em covariant derivative formula}, expressed in local coordinates using the components of the frame $p$ and Christoffel symbols, see Definition~\ref{def:christoffelsymb} above.

Furthermore, thinking a connection as a choice of a horizontal bundle, as explained in Remark~\ref{re:connectionhor}, we have that $\chr^g(x)(X,Y)$ corresponds to the horizontal component and $\nabla^g_X Y(x)$ to the vertical component of $p(x)\big(\dd\tilde{Y}(x)X\big)$.
\end{remark}

We now aim to endow each tensor bundle over $M$, see Definition~\ref{def:tensorbundle}, with a natural connection by using the fixed Riemannian metric $g_\mathrm R$ on $M$ and its Levi--Civita connection
\begin{equation}\label{eq:nablar}
\nabla^\mathrm R:\sect^k(TM)\la\sect^{k-1}(TM^* \otimes TM).
\end{equation}
Let us first comment on a particular case, namely the case of the cotangent bundle $TM^*$.

\begin{proposition}\label{prop:cotangentconnection}
The Levi--Civita connection $\nabla^\mathrm R$ induces a natural connection on the cotangent bundle $TM^*$, denoted by the same symbol and given by
\begin{equation}\label{eq:nablarform}
\begin{aligned}
\nabla^\mathrm R:\sect^k(TM^*)\ni\omega &\longmapsto \nabla^\mathrm R\omega\in\sect^{k-1}(TM^*\otimes TM^*)\\
(\nabla^\mathrm R\omega)(Y,X) &=\nabla^\mathrm R_Y\omega(X)-\omega(\nabla^\mathrm R_Y X),
\end{aligned}
\end{equation}
for any $C^{k-1}$ vector fields $X,Y$.
\end{proposition}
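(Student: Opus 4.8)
The plan is to verify directly that the formula \eqref{eq:nablarform} fulfills all the requirements of Definition~\ref{def:connection} with $E=TM^*$; there is no deep content, only a sequence of short computations. Conceptually, the formula is forced: rewriting it as $\nabla^\mathrm R_Y\big(\omega(X)\big)=(\nabla^\mathrm R\omega)(Y,X)+\omega(\nabla^\mathrm R_Y X)$ says exactly that $\nabla^\mathrm R$ on $TM^*$ must obey the Leibniz rule for the natural duality pairing $TM^*\otimes TM\to M\times\R$, $(\omega,X)\mapsto\omega(X)$, against the connection $\nabla^\mathrm R$ on $TM$; since by Example~\ref{ex:derfunctions} the only connection on $M\times\R$ is ordinary differentiation, so that $\nabla^\mathrm R_Y(\omega(X))=Y(\omega(X))$, this determines $(\nabla^\mathrm R\omega)(Y,X)$ uniquely. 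I would begin the proof by recording this motivating remark.

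First I would show that the right-hand side of \eqref{eq:nablarform} is tensorial in the direction $Y$, so that $\nabla^\mathrm R\omega$ really defines a section over $M$. For $f\in C^\infty(M)$, using that $\nabla^\mathrm R$ is linear over functions in its direction argument and that it acts on the function $\omega(X)$ as the ordinary derivative, one computes $\nabla^\mathrm R_{fY}\big(\omega(X)\big)-\omega(\nabla^\mathrm R_{fY}X)=f\,Y\big(\omega(X)\big)-f\,\omega(\nabla^\mathrm R_Y X)=f\,(\nabla^\mathrm R\omega)(Y,X)$, with no surviving derivative-of-$f$ term; hence the value of $(\nabla^\mathrm R\omega)(Y,X)$ at $x\in M$ depends only on $Y(x)$. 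Next I would check $C^\infty(M)$--linearity in $X$: replacing $X$ by $fX$ produces a term $(Yf)\,\omega(X)$ from expanding $\nabla^\mathrm R_Y\big(f\,\omega(X)\big)$ and an equal term of opposite sign from $-\omega\big((Yf)X+f\nabla^\mathrm R_Y X\big)$, so they cancel and $(\nabla^\mathrm R\omega)(Y,fX)=f\,(\nabla^\mathrm R\omega)(Y,X)$. Bilinearity over $C^\infty(M)$ shows $\nabla^\mathrm R\omega$ is a genuine $(0,2)$--tensor, i.e.\ a section of $TM^*\otimes TM^*$; and since \eqref{eq:nablarform} involves only one derivative of $\omega$, we get $\nabla^\mathrm R\omega\in\sect^{k-1}(TM^*\otimes TM^*)$ whenever $\omega\in\sect^k(TM^*)$, matching the one-derivative loss of $\nabla^\mathrm R$ on $TM$.

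It remains to check the two defining properties of a connection. $\R$--linearity in $\omega$ is immediate, as both terms on the right of \eqref{eq:nablarform} are $\R$--linear in $\omega$. For the Leibniz rule, given $h\in C^k(M)$ I would expand $(\nabla^\mathrm R(h\omega))(Y,X)=\nabla^\mathrm R_Y\big(h\,\omega(X)\big)-h\,\omega(\nabla^\mathrm R_Y X)$; the ordinary product rule on the first term gives $(Yh)\,\omega(X)+h\big(\nabla^\mathrm R_Y(\omega(X))-\omega(\nabla^\mathrm R_Y X)\big)=(\dd h(Y))\,\omega(X)+h\,(\nabla^\mathrm R\omega)(Y,X)$, which is exactly $\big(\dd h\otimes\omega+h\,\nabla^\mathrm R\omega\big)(Y,X)$. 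Thus $\nabla^\mathrm R(h\omega)=\dd h\otimes\omega+h\,\nabla^\mathrm R\omega$, completing the verification. I do not anticipate any real obstacle: every step is a one-line manipulation, and the only points requiring mild care are separating the tensorial slot (the direction $Y$) from the non-tensorial behaviour in $\omega$, and tracking the single loss of differentiability so that the stated $\sect^k\to\sect^{k-1}$ regularity comes out right.
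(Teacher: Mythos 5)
Your proof is correct and follows essentially the same approach as the paper's: identify $\nabla^\mathrm R_Y$ acting on the function $\omega(X)$ with the ordinary derivative $Y(\omega(X))$, and verify the Leibniz rule by a direct one-line expansion. The only difference is that you spell out the tensoriality in $Y$ and the $C^\infty(M)$-linearity in $X$, which the paper dismisses in one clause as "an elementary verification that \eqref{eq:nablarform} is a well-defined $\R$-linear operator," so your write-up is just a more explicit version of the same argument.
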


\begin{proof}
First, we observe that $\nabla^\mathrm R_Y$ of a $C^{k-1}$ function $f$, such as $\omega(X)$, is simply its usual derivative $Y(f)$, as discussed in Example~\ref{ex:derfunctions}, and that it is an elementary verification that \eqref{eq:nablarform} is a well--defined $\R$-linear operator.

Thus, it only remains to verify that $\nabla^\mathrm R$ satisfies the Leibniz rule. In fact, given $\omega\in\sect^k(TM^*)$, $f\in C^k(M)$ and $X,Y\in\sect^k(TM)$,
\begin{eqnarray*}
\nabla^\mathrm R(f\omega)(Y,X) &=& \nabla^\mathrm R_Y(f\omega(X))-f\omega(\nabla^\mathrm R_Y X) \\
&=& Y(f\omega(X))-f\omega(\nabla^\mathrm R_Y X) \\
&=& Y(f)\omega(X)+fY(\omega(X))-f\omega(\nabla^\mathrm R_Y X) \\
&=& \dd f(Y)\omega(X)+f\left[ \nabla^\mathrm R_Y\omega(X)-\omega(\nabla^\mathrm R_Y X)\right] \\
&=& \left[\dd f\otimes\omega+f\nabla^\mathrm R\omega\right](Y,X).\qedhere
\end{eqnarray*}
\end{proof}

\begin{theorem}\label{thm:tensorconnection}
The Levi--Civita connection $\nabla^\mathrm R$ induces a natural connection on the $(r,s)$--type tensor bundle over $M$,\index{Connection!induced}\index{Tensor bundle!induced connection} denoted by the same symbol and given by
\begin{equation*}
\nabla^\mathrm R:\sect^k({TM^*}^{(s)}\otimes TM^{(r)})\ni K\longmapsto\nabla^\mathrm R K\in\sect^{k-1}({TM^*}^{(s+1)}\otimes TM^{(r)})
\end{equation*}
\begin{eqnarray}\label{eq:inducedconnection}
&(\nabla^\mathrm R K)(Y,X_1,\dots,X_s,\omega_1,\dots,\omega_r)=\nabla^\mathrm R_Y(K(X_1,\dots,X_s,\omega_1,\dots,\omega_r)) &\nonumber \\
&\hspace{4.6cm}-\textstyle\sum_{i=1}^s K(X_1,\dots,\nabla^\mathrm R_Y X_i,\dots,X_s,\omega_1,\dots,\omega_r)& \\
&\hspace{4.6cm}-\textstyle\sum_{j=1}^r K(X_1,\dots,X_s,\omega_1,\dots,\nabla^\mathrm R_Y\omega_j,\dots,\omega_r),&\nonumber
\end{eqnarray}
for any $C^{k-1}$ vector fields $Y,X_i$ and $1$--forms $\omega_j$.
\end{theorem}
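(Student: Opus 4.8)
The plan is to verify the three defining properties of a connection from Definition~\ref{def:connection} directly from formula~\eqref{eq:inducedconnection}: that the operator is well defined (i.e.\ the right-hand side depends tensorially on $Y,X_1,\dots,X_s$ and on $\omega_1,\dots,\omega_r$, so that it genuinely defines a $(r,s+1)$--tensor), that it is $\R$--linear in $K$, and that it satisfies the Leibniz rule $\nabla^\mathrm R(fK)=\dd f\otimes K+f\nabla^\mathrm R K$. The cotangent case, already handled in Proposition~\ref{prop:cotangentconnection}, is the template: the same bookkeeping works verbatim, only with more factors to keep track of. In fact, one may shortcut almost everything by observing that $\otimes^s TM^*\otimes\otimes^r TM$ is an iterated tensor product of the bundles $TM$ (carrying $\nabla^\mathrm R$) and $TM^*$ (carrying the connection of Proposition~\ref{prop:cotangentconnection}), and that formula~\eqref{eq:inducedconnection} is exactly the Leibniz-type extension of a connection to a tensor product of vector bundles, evaluated on decomposable tensors and then extended by $C^k(M)$--multilinearity; so the statement reduces to the standard fact that given connections on $E_1,\dots,E_n$ there is a unique connection on $E_1\otimes\cdots\otimes E_n$ satisfying the product rule on decomposable sections.

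I would, however, give the self-contained argument to match the style of Proposition~\ref{prop:cotangentconnection}. First I would note, exactly as in its proof, that $\nabla^\mathrm R_Y$ applied to the function $K(X_1,\dots,X_s,\omega_1,\dots,\omega_r)$ is just $Y$ of that function, by Example~\ref{ex:derfunctions}. Next I would check $\R$--linearity in $K$, which is immediate since every term on the right of~\eqref{eq:inducedconnection} is $\R$--linear in $K$. Then the tensoriality check: I replace each argument in turn by $h$ times itself for $h\in C^k(M)$ and verify the $h$-derivative terms cancel — for $Y$, the first term produces $(Yh)K(\dots)$ which is killed because there is no $\nabla^\mathrm R_Y$-of-$h$ contribution elsewhere unless... wait, actually one checks $C^k(M)$--linearity in each $X_i$ and each $\omega_j$: replacing $X_i$ by $hX_i$, the term $\nabla^\mathrm R_Y(K(\dots,hX_i,\dots))$ contributes $(Yh)K(\dots,X_i,\dots)$ and the term $-K(\dots,\nabla^\mathrm R_Y(hX_i),\dots)=-K(\dots,(Yh)X_i+h\nabla^\mathrm R_Y X_i,\dots)$ contributes $-(Yh)K(\dots,X_i,\dots)$, and these cancel; the same computation works for each $\omega_j$ using the cotangent connection's Leibniz rule from Proposition~\ref{prop:cotangentconnection}. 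Linearity in $Y$ is visible termwise. This establishes that the right-hand side is $C^k(M)$--multilinear in $(Y,X_1,\dots,X_s,\omega_1,\dots,\omega_r)$, hence defines a genuine section of ${TM^*}^{(s+1)}\otimes TM^{(r)}$.

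Finally, the Leibniz rule. Given $K\in\sect^k({TM^*}^{(s)}\otimes TM^{(r)})$ and $f\in C^k(M)$, I expand $\nabla^\mathrm R(fK)(Y,X_1,\dots,\omega_r)$ using~\eqref{eq:inducedconnection}. The first term is $\nabla^\mathrm R_Y\bigl(f\,K(X_1,\dots,\omega_r)\bigr)=Y(f)\,K(X_1,\dots,\omega_r)+f\,\nabla^\mathrm R_Y\bigl(K(X_1,\dots,\omega_r)\bigr)$ by the ordinary product rule for $Y$ acting on functions, while each of the remaining $s+r$ terms simply produces $f$ times the corresponding term of $\nabla^\mathrm R K$. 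Collecting, one gets $Y(f)\,K(X_1,\dots,\omega_r)+f\,(\nabla^\mathrm R K)(Y,X_1,\dots,\omega_r)$, which is precisely $\bigl(\dd f\otimes K+f\nabla^\mathrm R K\bigr)(Y,X_1,\dots,\omega_r)$, using $\dd f(Y)=Y(f)$. I expect no genuine obstacle here: the only mild care needed is uniform notational handling of the two cancellation patterns (covariant slots via $\nabla^\mathrm R$ on $TM$, contravariant slots via the induced connection on $TM^*$), and the observation — which should be stated once — that $C^k(M)$--multilinearity lets one define $\nabla^\mathrm R K$ on arbitrary vector field and $1$--form arguments from its values on a local coordinate frame, so that~\eqref{eq:inducedconnection} indeed specifies a well-defined tensor independent of choices.
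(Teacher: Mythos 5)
Your verification is essentially the proof the paper has in mind when it says the argument is ``a simple verification that the expression defines a linear operator that satisfies the Leibniz rule,'' totally analogous to Proposition~\ref{prop:cotangentconnection}, and your checks are correct. One small slip in the middle of your exposition: when checking $C^k(M)$--linearity in $Y$, replacing $Y$ by $hY$ in the first term $\nabla^\mathrm R_Y\bigl(K(\dots)\bigr)$ yields $h\,\nabla^\mathrm R_Y\bigl(K(\dots)\bigr)$ with \emph{no} $(Yh)K$ contribution (the connection is tensorial in the direction), so no cancellation is needed there --- you reach the right conclusion after the ``wait, actually'' self-correction, but the parenthetical claim that the first term produces $(Yh)K(\dots)$ should be struck.
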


Recall that $\nabla^\mathrm R_Y$ of a $C^{k-1}$ $1$--form $\omega$ is given by \eqref{eq:nablarform}, which can also be deduced from the general expression \eqref{eq:inducedconnection} by setting $r=0$ and $s=1$. In addition, $\nabla^\mathrm R_Y$ of a $C^{k-1}$ function $f$ is its usual derivative $Y(f)$, as observed in Example~\ref{ex:derfunctions}. Notice that \eqref{eq:inducedconnection} is a natural extension of \eqref{eq:derfunctions}, \eqref{eq:nablar} and \eqref{eq:nablarform}.

A proof of the above theorem is a simple verification that the expression \eqref{eq:inducedconnection} defines a linear operator that satisfies the Leibniz rule. Since it is totally analogous to the particular case studied in Proposition~\ref{prop:cotangentconnection}, it will be omitted. Henceforth, we will denote $\nabla^\mathrm R$ any connection on a tensor bundle over $M$ induced as above.

\begin{corollary}\label{cor:jcovder}
If $j\leq k$, any $C^k$ $(r,s)$--tensor $K$ has a {\em $j^{\mbox{\tiny th}}$ covariant derivative}\index{Covariant derivative} $(\nabla^\mathrm R)^j K$, which is a $C^{k-j}$ $(r,s+j)$--tensor.
\end{corollary}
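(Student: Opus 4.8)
The plan is to prove Corollary~\ref{cor:jcovder} by induction on $j$, using Theorem~\ref{thm:tensorconnection} as the engine that advances the induction.

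First I would set up the base case $j=0$, which is trivial: $(\nabla^\mathrm R)^0 K = K$ is by hypothesis a $C^k$ tensor of type $(r,s)$, matching the claimed conclusion with $j=0$ since $k-0=k$ and $s+0=s$. For the inductive step, suppose $1\leq j\leq k$ and that $(\nabla^\mathrm R)^{j-1}K$ is a $C^{k-(j-1)}=C^{k-j+1}$ tensor of type $(r,s+j-1)$; I want to conclude the same statement for $j$. Here is where Theorem~\ref{thm:tensorconnection} applies directly: it asserts that the induced Levi--Civita connection on the $(r,s+j-1)$--type tensor bundle is an operator
\begin{equation*}
\nabla^\mathrm R:\sect^\ell\big({TM^*}^{(s+j-1)}\otimes TM^{(r)}\big)\la\sect^{\ell-1}\big({TM^*}^{(s+j)}\otimes TM^{(r)}\big),
\end{equation*}
for any regularity $\ell$. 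The one subtlety is that Theorem~\ref{thm:tensorconnection} is phrased for the fixed smooth bundle $TM^*,TM$ with the smooth connection $\nabla^\mathrm R$, so the loss of exactly one derivative in regularity is built into Definition~\ref{def:connection} (a connection maps $\sect^k\to\sect^{k-1}$): applying it to the $C^{k-j+1}$ tensor $(\nabla^\mathrm R)^{j-1}K$ yields a $C^{k-j}$ section of the $(r,s+j)$--type tensor bundle, which is precisely $(\nabla^\mathrm R)^j K$ by definition of the iterated covariant derivative. This closes the induction.

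I would also remark on why the iteration is legitimate at each stage: the statement $j\leq k$ guarantees that $k-j+1\geq 1$, so that $(\nabla^\mathrm R)^{j-1}K$ is at least $C^1$ and the connection $\nabla^\mathrm R$ can indeed be applied to it (a $C^0$ section cannot be differentiated). Thus the inductive hypothesis is never vacuous before the final step, and at $j=k$ we land on a $C^0$ tensor, which is the edge of admissibility — exactly as the hypothesis $j\leq k$ permits.

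I do not anticipate a serious obstacle here; the corollary is essentially a bookkeeping consequence of Theorem~\ref{thm:tensorconnection} together with the regularity-loss convention for connections. The only point requiring a line of care is matching the type indices: each application of $\nabla^\mathrm R$ raises the covariant degree $s$ by one (prepending a $TM^*$ factor via the direction entry $Y$ in \eqref{eq:inducedconnection}) while leaving $r$ untouched, so after $j$ applications the type is $(r,s+j)$, never $(r+j,s)$ or anything else. Given the machinery already in place, I would expect to present this as a short two-sentence proof by induction rather than a lengthy argument.
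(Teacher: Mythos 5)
Your proof is correct and follows essentially the same route as the paper, which simply defines $(\nabla^\mathrm R)^j K=\nabla^\mathrm R\big[(\nabla^\mathrm R)^{j-1}K\big]$ inductively and lets the regularity and type bookkeeping flow from Theorem~\ref{thm:tensorconnection} and the convention in Definition~\ref{def:connection} that a connection sends $\sect^\ell$ to $\sect^{\ell-1}$. Your added remark that $j\leq k$ ensures each intermediate tensor is at least $C^1$ before the next differentiation is a useful explicit check that the paper leaves implicit.
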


In fact, define $(\nabla^\mathrm R)^2 K =\nabla^\mathrm R(\nabla^\mathrm R K),$ and inductively, $$(\nabla^\mathrm R)^jK=\nabla^\mathrm R\left[(\nabla^\mathrm R)^{j-1}K\right].$$ This allows to compute high order covariant derivatives of any $(r,s)$--tensors, and will be used in Section~\ref{sec:banachspacetensors} to endow (subspaces of) $\sect^k(E)$ with a Banach space norm, for tensor bundles $E$ over $M$.

Let us now continue to explore elementary aspects of semi--Riemannian geometry, defining covariant differentiation of vector fields along curves and geodesics.

\begin{proposition}\label{prop:covder}
Let $g\in\met_\nu^k(M)$ and $\gamma:[a,b]\to M$ a $C^k$ curve. The Levi--Civita connection $\nabla^g$ induces a unique operator
\begin{equation}
\D^g:\sect^k(\gamma^*TM)\la\sect^{k-1}(\gamma^*TM)
\end{equation}
called {\em covariant derivative}\index{Covariant derivative} operator, that satisfies the Leibniz rule
\begin{equation}
\D^g(fX)=f'(t)X+f\D^g X, \quad X\in\sect^k(\gamma^*TM),f\in C^k(\R,\R),
\end{equation}
and satisfies
\begin{equation}\label{eq:dgx}
\D^g X=\nabla^g_{\dot\gamma} \widetilde X
\end{equation}
if $X$ is induced\footnote{See Remark~\ref{re:nabladepend}.} from a vector field $\widetilde X\in\sect^k(TM)$.
\end{proposition}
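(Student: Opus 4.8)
The plan is to build the operator $\D^g$ by its local expression in a frame and then glue the local pieces, exactly as one does for the covariant derivative along a curve in the Riemannian case. First I would establish \emph{uniqueness}: assuming such an operator exists, I claim it is determined on any subinterval $I\subset[a,b]$ contained in a trivialization of $TM$. Indeed, pick a local frame $\{\xi_i\}_{i=1}^m$ of $TM$ over an open $U\subset M$ with $\gamma(I)\subset U$, and write a section $X\in\sect^k(\gamma^*TM)$ along $I$ as $X(t)=\sum_i X^i(t)\,\xi_i(\gamma(t))$ with $X^i\in C^k(I,\R)$. Each $\xi_i\circ\gamma$ is the pull--back along $\gamma$ of the extensible field $\xi_i$, so by \eqref{eq:dgx} we must have $\D^g(\xi_i\circ\gamma)=\nabla^g_{\dot\gamma}\xi_i$; combining this with the Leibniz rule forces
\begin{equation}\label{eq:dglocal}
\D^g X(t)=\sum_{i=1}^m\Big({X^i}'(t)\,\xi_i(\gamma(t))+X^i(t)\,\nabla^g_{\dot\gamma(t)}\xi_i\Big).
\end{equation}
Since the right--hand side is completely determined by $g$, $\gamma$ and $X$, uniqueness follows.

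For \emph{existence}, I would simply \emph{define} $\D^g X$ locally by the formula \eqref{eq:dglocal} and check it is well defined, i.e.\ independent of the choice of frame. This is the routine computation: if $\{\eta_j\}$ is another frame over $U'$ with transition functions $a^i_j\in C^k(U\cap U',\R)$, so that $\eta_j=\sum_i a^i_j\,\xi_i$, then expanding \eqref{eq:dglocal} in each frame and using the Leibniz rule for $\nabla^g$ together with the chain rule $(a^i_j\circ\gamma)'=\dd a^i_j(\dot\gamma)$ shows the two expressions agree on the overlap; hence the locally defined sections patch to a global $\D^g X\in\sect^{k-1}(\gamma^*TM)$. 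The regularity claim $\D^g X\in\sect^{k-1}$ is read off \eqref{eq:dglocal}: the $X^i$ are $C^k$ so ${X^i}'$ is $C^{k-1}$, the $\xi_i\circ\gamma$ are $C^k$ (pull--backs along the $C^k$ curve $\gamma$ of smooth sections), and $\nabla^g_{\dot\gamma}\xi_i$ is $C^{k-1}$ because $\nabla^g\xi_i\in\sect^{k-1}(TM^*\otimes TM)$ by Definition~\ref{def:connection} and $\dot\gamma$ is $C^{k-1}$. The $\R$--linearity of $\D^g$ and the Leibniz rule \eqref{eq:dgx} are immediate from \eqref{eq:dglocal} (for $f\in C^k(\R,\R)$ and $X=\sum X^i\xi_i\circ\gamma$ one has $fX=\sum (fX^i)\xi_i\circ\gamma$ and $(fX^i)'=f'X^i+f{X^i}'$), and property \eqref{eq:dgx} holds because if $X=\widetilde X\circ\gamma$ for $\widetilde X=\sum_i\widetilde X^i\xi_i\in\sect^k(TM)$ then $X^i=\widetilde X^i\circ\gamma$, $(\widetilde X^i\circ\gamma)'=\dd\widetilde X^i(\dot\gamma)$, and \eqref{eq:dglocal} collapses to $\sum_i\big(\dd\widetilde X^i(\dot\gamma)\xi_i\circ\gamma+\widetilde X^i\nabla^g_{\dot\gamma}\xi_i\big)=\nabla^g_{\dot\gamma}\widetilde X$ by the defining properties of the connection.

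The only point requiring any care — the ``main obstacle'', though it is mild — is the well--definedness under change of frame: one must make sure the derivative term ${X^i}'$ transforms correctly, which is precisely where the Leibniz rule for $\nabla^g$ and the extensibility of frame fields enter, and it is also where one sees why \eqref{eq:dgx} is forced rather than merely imposed. A subsidiary technical point is that a neighborhood of any $t_0\in[a,b]$ in $[a,b]$ maps under $\gamma$ into the domain of some trivialization of $TM$ (by continuity of $\gamma$ and Remark~\ref{re:containedtrivial}), so $[a,b]$ is covered by such subintervals and the patching argument applies; compactness of $[a,b]$ is not even needed since the construction is purely local. Everything else is bookkeeping identical to the classical Riemannian case, the semi--Riemannian signature playing no role whatsoever here since only the connection $\nabla^g$, not the metric directly, is used.
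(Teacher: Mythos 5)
Your argument is correct and is the standard local--frame construction; the paper states Proposition~\ref{prop:covder} without any proof, deferring to the cited textbooks such as \cite{jost,petersen}, and what you have written is precisely the textbook argument. The one detail you gloss over in the uniqueness step is that the Leibniz rule is stated for products $fX$ with $f\in C^k(\R,\R)$ and $X$ a \emph{globally} defined section, so decomposing $X$ in a local frame on a proper subinterval $I$ tacitly presupposes that $\D^g$ is a local operator; this follows from linearity and the Leibniz rule by the usual bump--function argument (if $f\equiv 1$ near $t_0$ and $\supp f\subset I$, then $\D^g(fX)(t_0)=\D^g X(t_0)$, and $fX$ is determined by $X|_I$), after which the frame decomposition on $I$ is legitimate and the rest goes through exactly as you describe.
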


\begin{remark}
The covariant derivative operator along curves induced by $\nabla^\mathrm R$ will be denoted $\D^\mathrm R$. In Remark~\ref{re:covderh1} we will comment on how to reduce the regularity hypotheses from class $C^k$ to weaker assumptions and still have a covariant derivative operator $\D^\mathrm R$ defined {\em almost everywhere}.
\end{remark}

For the following results, consider a fixed semi--Riemannian metric $g\in\met_\nu^k(M)$.

\begin{definition}\label{def:geodesic}
An {\em affinely parameterized} $C^2$ curve $\gamma:[a,b]\to M$ is a {\em $g$--geodesic}\index{Geodesic} if it satisfies $\D^g\dot\gamma=0$. In local coordinates, this is a second--order system of ODEs called the {\em $g$--geodesic equation}, that involves the Christoffel symbols of $g$. As usual, when the metric $g$ is evident from the context it will be omitted.
\end{definition}

\begin{remark}\label{re:geodequation}
Let $\gamma:[a,b]\to M$ be a $g$--geodesic. Since $\dot\gamma$ can always be locally extended\footnote{From Remark~\ref{re:nabladepend}, to compute $\nabla^g_{\dot\gamma}\dot\gamma$ at $\gamma(t)$, the vector field $\dot\gamma$ must be defined in an open neighborhood of $\gamma(t)$. Even if $\gamma$ has self intersections of the type $\gamma(t)=\gamma(s)$ and $\dot\gamma(t)\ne\dot\gamma(s)$, see Remark~\ref{re:extensible}, the vectors $\dot\gamma$ at $t$ and $s$ can be {\em locally} extended in different ways {\em around $t$ and $s$}. Since the matter is local, we may use each different extension to compute $\nabla^g_{\dot\gamma}\dot\gamma$ at $t$ and $s$ separately.}, we may apply the covariant derivative formula \eqref{eq:nablachr} for $\dot\gamma$. Using a local frame $p(x):\R^m\to T_xM$, $x\in U$, as an identification with Euclidean space, the $g$--geodesic equation locally reads\index{Geodesic!equation}
\begin{equation}\label{eq:geodequation}
\ddot\gamma(t)+\chr^g(\gamma(t))(\dot\gamma(t),\dot\gamma(t))=0, \quad t\in\gamma^{-1}(U).
\end{equation}
Usually, the geodesic equation is expressed in terms of the Christoffel symbols, see Definition~\ref{def:christoffelsymb}. To obtain this equation, it suffices to express \eqref{eq:geodequation} in terms of a local chart and use \eqref{eq:christoffelsymb}. In this text, we use exclusively {\em coordinate--free} notation such as \eqref{eq:geodequation}, refusing to work with incomprehensible formulas that yield a plethora of indexes.
\end{remark}

\begin{corollary}\label{cor:geodck}
Let $g\in\met^k_\nu(M)$. If $\gamma:[a,b]\to M$ is a $g$--geodesic, then $\gamma$ is of class $C^{k+1}$.
\end{corollary}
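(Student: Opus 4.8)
The plan is to bootstrap the regularity of $\gamma$ from its local differential equation. The preliminary observation is that the Christoffel tensor $\chr^g$ of the Levi--Civita connection $\nabla^g$ relatively to a fixed smooth frame depends algebraically on $g$, on $g^{-1}$, and on the first covariant derivative $\nabla^\mathrm R g$ of $g$ --- as is clear from formula \eqref{eq:christoffeltensor} in Remark~\ref{re:chrg} (equivalently, from the classical coordinate expression of the Christoffel symbols). Hence, if $g\in\met_\nu^k(M)$, then $\chr^g$ is a tensor of class $C^{k-1}$.

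First I would fix $t_0\in[a,b]$, a coordinate neighborhood $U$ of $\gamma(t_0)$, and a smooth local frame $p(x):\R^m\to T_xM$, $x\in U$, so that by Remark~\ref{re:geodequation} the curve $\gamma$ satisfies, on $\gamma^{-1}(U)$,
\begin{equation*}
\ddot\gamma(t)=-\chr^g(\gamma(t))\big(\dot\gamma(t),\dot\gamma(t)\big).
\end{equation*}
By Definition~\ref{def:geodesic}, $\gamma$ is a priori of class $C^2$, so the right--hand side makes sense. Now I would argue by induction: assume $\gamma\in C^j$ on $\gamma^{-1}(U)$ for some integer $j$ with $2\le j\le k$. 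Then $t\mapsto\gamma(t)$ is $C^j$ and $t\mapsto\dot\gamma(t)$ is $C^{j-1}$, while $\chr^g$ is $C^{k-1}$; since $j\le k$, the composition $t\mapsto\chr^g(\gamma(t))$ is of class $C^{\min(k-1,j)}$ and its contraction with the two $C^{j-1}$ arguments $\dot\gamma(t),\dot\gamma(t)$ is of class $C^{\min(k-1,\,j-1)}=C^{j-1}$. Therefore $\ddot\gamma\in C^{j-1}$, i.e. $\gamma\in C^{j+1}$ on $\gamma^{-1}(U)$. Starting from $j=2$ and iterating, the regularity of $\gamma$ increases by one unit at each step until $j=k$ is reached, at which point $\gamma\in C^{k+1}$ on $\gamma^{-1}(U)$; when $k=+\infty$ the same step performed indefinitely shows $\gamma\in C^j$ for every $j\in\N$, hence $\gamma$ is smooth. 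Since being of class $C^{k+1}$ is a local property and $t_0\in[a,b]$ was arbitrary, $\gamma$ is of class $C^{k+1}$ on all of $[a,b]$.

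There is no genuine obstacle here; the argument is elementary ODE bootstrapping. The only points demanding a little care are correctly bookkeeping the regularity of the composition $t\mapsto\chr^g(\gamma(t))(\dot\gamma(t),\dot\gamma(t))$ --- so that exactly one derivative is gained at each stage and the process saturates precisely at $C^{k+1}$ rather than overshooting --- and observing that the induction, run without termination, also covers the case $k=+\infty$.
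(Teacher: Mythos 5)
Your proof is correct and follows essentially the same ODE bootstrapping argument as the paper: note that $\chr^g$ is $C^{k-1}$, write the geodesic equation in local coordinates, and induct on the regularity of $\gamma$ from $C^2$ up to $C^{k+1}$. Your bookkeeping of the composition's regularity is in fact slightly more careful than the paper's, which simply calls the right--hand side ``a composite of $C^{j-1}$ maps''; but the content and structure are identical.
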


\begin{proof}
Since $\gamma$ is a $g$--geodesic, in local coordinates it satisfies the $g$--geodesic equation \eqref{eq:geodequation}, i.e.,
\begin{equation}\label{eq:geodequationk}
\ddot\gamma=-\chr^g(\gamma)(\dot\gamma,\dot\gamma).
\end{equation}
Notice that the Christoffel tensor $\chr^g$ is of class $C^{k-1}$, since it involves first derivatives of $g$ which is $C^k$, see \eqref{eq:christoffeltensor}. From Definition~\ref{def:geodesic}, it follows that $\gamma$ is of class $C^2$. Inductively, suppose that $\gamma$ is of class $C^j$, for some $2\leq j\leq k$. Then the map $$t\longmapsto-\chr^g(\gamma(t))(\dot\gamma(t),\dot\gamma(t))$$ is a composite of $C^{j-1}$ maps, hence of class $C^{j-1}$. Thus, from \eqref{eq:geodequationk} it follows that $\ddot\gamma$ is of class $C^{j-1}$, hence $\gamma$ is of class $C^{j+1}$. This argument works for $2\leq j\leq k$. Therefore, applying it for $j=k$, we may conclude that $\gamma$ is of class $C^{k+1}$.
\end{proof}

\begin{corollary}
If $g\in\met_\nu^\infty(M)$, then $g$--geodesics are smooth curves.
\end{corollary}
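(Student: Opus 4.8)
The plan is to bootstrap Corollary~\ref{cor:geodck} over all regularity classes at once. First I would unwind the definition of $\met_\nu^\infty(M)=\bigcap_{k\in\N}\met_\nu^k(M)$: the hypothesis $g\in\met_\nu^\infty(M)$ says precisely that $g\in\met_\nu^k(M)$ simultaneously for every $k\in\N$. A $g$--geodesic $\gamma:[a,b]\to M$ is, by Definition~\ref{def:geodesic}, a $C^2$ curve with $\D^g\dot\gamma=0$; in particular the notion presupposes no regularity beyond $C^2$, so the same curve $\gamma$ qualifies as a $g$--geodesic for the metric $g$ viewed as an element of $\met_\nu^k(M)$, for each $k$.

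Next I would fix an arbitrary $k\in\N$ and apply Corollary~\ref{cor:geodck} to $g\in\met_\nu^k(M)$ and the curve $\gamma$: this yields that $\gamma$ is of class $C^{k+1}$. Since $k\in\N$ was arbitrary, $\gamma$ is of class $C^{j}$ for every $j\in\N$, which is exactly the assertion that $\gamma$ is a smooth curve.

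There is essentially no obstacle: the statement is an immediate consequence of the already-proved $C^k$ case, and the only point worth flagging explicitly is the (trivial) remark that being a geodesic is a condition that makes sense independently of the ambient regularity of the metric, so Corollary~\ref{cor:geodck} may legitimately be invoked, one value of $k$ at a time, on one and the same curve $\gamma$.
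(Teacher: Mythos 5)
Your proof is correct and takes essentially the same approach as the paper: apply Corollary~\ref{cor:geodck} for each $k\in\N$ and conclude that $\gamma\in C^{k+1}$ for all $k$. The paper states this in one line; your write-up just spells out the (correct) observation that the notion of $g$--geodesic is independent of the ambient regularity class.
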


\begin{proof}
The result follows directly from Corollary~\ref{cor:geodck}, since $g$ is of class $C^k$ for all $k\in\N$.
\end{proof}

Applying the classic ODE theorem that guarantees existence and uniqueness of solutions, one can prove the following result.

\begin{proposition}\label{prop:geodexist}
For any $x\in M$, $t_0\in\R$ and $v\in T_xM$, there exist an open interval $I\subset\R$ containing $t_0$ and a $g$--geodesic $\gamma:I\rightarrow M$ satisfying the initial conditions $\gamma(t_0)=x$ and $\dot\gamma(t_0)=v$. In addition, any two $g$--geodesics with those initial conditions agree on their common domain.
\end{proposition}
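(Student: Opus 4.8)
The plan is to pass to local coordinates, reduce the second order geodesic equation to a first order system, and invoke the classical Picard--Lindel\"of theorem together with a connectedness argument for uniqueness.

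First I would fix a local chart $(U,\varphi)$ of $M$ around $x$, with associated coordinate frame $p(y)\colon\R^m\to T_yM$, $y\in U$, as in Definition~\ref{def:christoffelsymb}. By Remark~\ref{re:geodequation}, a $C^2$ curve $\gamma$ with image in $U$ is a $g$--geodesic if and only if it satisfies \eqref{eq:geodequation}, namely $\ddot\gamma(t)+\chr^g(\gamma(t))(\dot\gamma(t),\dot\gamma(t))=0$. Introducing the auxiliary variable $w=\dot\gamma$, this is equivalent to the first order system
\begin{equation*}
\dot\gamma = w,\qquad \dot w = -\,\chr^g(\gamma)(w,w)
\end{equation*}
for a curve $(\gamma,w)$ in the open set $\varphi(U)\times\R^m\subset\R^m\times\R^m$. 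The right--hand side is the value of a vector field $\mathcal X$ on $\varphi(U)\times\R^m$; since $\chr^g$ involves only first derivatives of the $C^k$ metric $g$ (see \eqref{eq:christoffeltensor}), it is of class $C^{k-1}$, so $\mathcal X$ is $C^{k-1}$, and as $k\ge 2$ it is in particular locally Lipschitz.

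Next I would apply Picard--Lindel\"of to $\mathcal X$ at the point $\big(\varphi(x),p(x)^{-1}v\big)$ and time $t_0$: there is an open interval $I\ni t_0$ and a unique integral curve $(\gamma,w)\colon I\to\varphi(U)\times\R^m$ with $(\gamma(t_0),w(t_0))=\big(\varphi(x),p(x)^{-1}v\big)$, which is $C^k$ since $\mathcal X$ is $C^{k-1}$. Reading it back through $\varphi$ and $p$ produces a curve in $U\subset M$, still denoted $\gamma$, of class $C^k$ (hence $C^2$), solving \eqref{eq:geodequation} with $\gamma(t_0)=x$ and $\dot\gamma(t_0)=v$; by Corollary~\ref{cor:geodck} it is actually $C^{k+1}$. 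This settles existence. For uniqueness on the common domain, let $\gamma_1\colon I_1\to M$ and $\gamma_2\colon I_2\to M$ be $g$--geodesics with the same initial data and set
\begin{equation*}
A=\big\{t\in I_1\cap I_2 : \gamma_1(t)=\gamma_2(t)\ \text{and}\ \dot\gamma_1(t)=\dot\gamma_2(t)\big\}.
\end{equation*}
Then $t_0\in A$, and $A$ is closed in the interval $I_1\cap I_2$ by continuity. To see $A$ is open, fix $t_*\in A$, take a chart around $\gamma_1(t_*)=\gamma_2(t_*)$ as above, and observe that near $t_*$ both $(\gamma_i,\dot\gamma_i)$ are integral curves of the same locally Lipschitz field $\mathcal X$ sharing the value at $t_*$; the uniqueness half of Picard--Lindel\"of forces them to coincide on a neighbourhood of $t_*$. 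Since $I_1\cap I_2$ is connected, $A=I_1\cap I_2$.

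The only point needing a little care is the reduction step and the passage between overlapping charts in the uniqueness argument: one must use that the condition $\D^g\dot\gamma=0$ is intrinsic (Proposition~\ref{prop:covder}), so that its local representative \eqref{eq:geodequation} is chart--independent and the "same field $\mathcal X$" appearing at $t_*$ really is the local form of a single geodesic spray on $TM$. Everything else is standard Cauchy--Lipschitz machinery, so I do not expect any genuine obstacle here.
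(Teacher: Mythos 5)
Your proposal is correct and follows precisely the route the paper alludes to: the paper does not write out a proof, but prefaces the statement with the remark that it follows by "applying the classic ODE theorem that guarantees existence and uniqueness of solutions," which is exactly the Picard--Lindel\"of argument you carry out. Your reduction to a first-order system in a chart, the regularity bookkeeping via \eqref{eq:christoffeltensor} and Corollary~\ref{cor:geodck}, and the open-and-closed connectedness argument for uniqueness on the common domain are the standard filling-in of that sketch.
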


Furthermore, from uniqueness of the solution, it is possible to obtain a maximal $g$--geodesic with this prescribed initial data.

\begin{example}\label{ex:periodicgeod}
Let us briefly introduce a very important class of geodesics, namely {\em periodic geodesics}. A $g$--geodesic $\gamma:[a,b]\to M$ is said to be {\em periodic}\index{Geodesic!periodic} if $$\gamma(a)=\gamma(b)\;\;\mbox{ and }\;\;\dot\gamma(a)=\dot\gamma(b).$$ If only the first condition is satisfied, i.e., $\gamma$ is a geodesic and a periodic curve, then $\gamma$ is called a {\em $g$--geodesic loop}.\index{Geodesic!loop}

\begin{figure}[htf]
\begin{center}
\vspace{-0.7cm}
\includegraphics[scale=0.8]{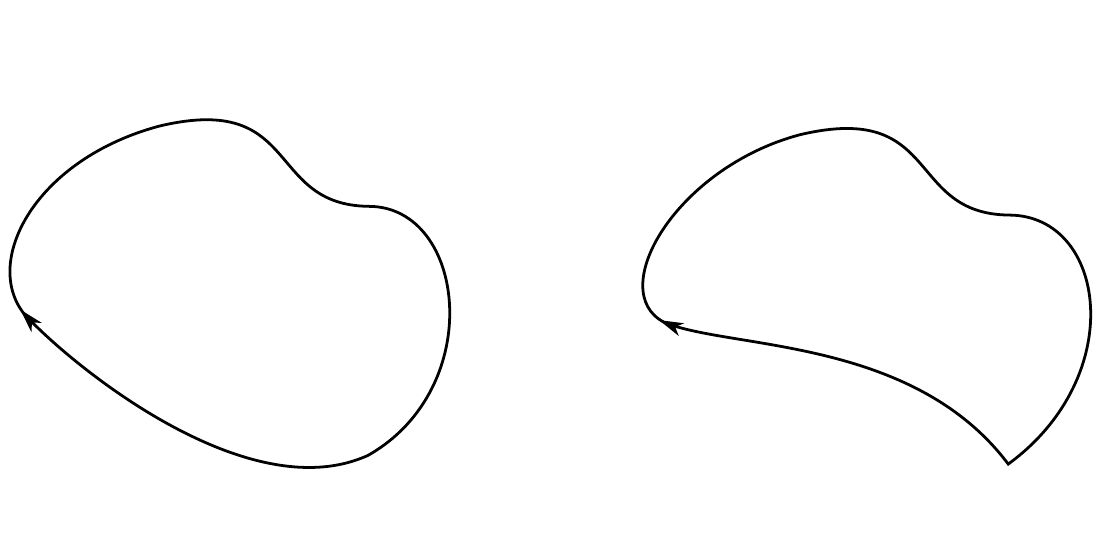}
\end{center}
\vspace{-0.5cm}
\caption{A periodic geodesic (on the left) and a geodesic loop (on the right).}
\end{figure}

From Proposition~\ref{prop:geodexist}, it is clear that under these conditions, the geodesic $\gamma$ can be extended to any interval that contains $[a,b]$, being $\R$ its maximal domain. The extensions of $\gamma$ are still geodesics, that may not be periodic themselves, but are recognizable as {\em portions} of periodic geodesics by counting its self intersections, see Proposition~\ref{prop:selfintersections}.

Each periodic geodesic $\gamma:[a,b]\to M$ has a {\em prime geodesic}\index{Geodesic!periodic!prime} as generator, in the sense that there exists a {\em minimal} interval $[a_0,b_0]\subset [a,b]$ whose endpoints satisfy the conditions $\gamma(a_0)=\gamma(b_0)$ and $\dot\gamma(a_0)=\dot\gamma(b_0)$. In Section~\ref{sec:actions} a precise definition of prime curve will be given, in terms of the reparameterization action of $S^1$ on the space of periodic curves, see Definition~\ref{def:prime}. This {\em prime} geodesic is therefore not given as $n$--fold iteration of any other periodic geodesic. The number $\omega=|b_0-a_0|$ is called the {\em period} of $\gamma$. In case $[a_0,b_0]$ does not coincide with $[a,b]$, $\gamma$ is either a portion of a periodic geodesic (if the endpoints $\gamma(a)$ and $\gamma(b)$ do not coincide), or an {\em iterate geodesic}\index{Geodesic!periodic!iterate} (if the endpoints coincide), see Definition~\ref{def:prime}.

An important property of two periodic geodesics is if they are {\em geometrically distinct}\index{Geodesic!periodic!geometrically distinct} or not. Two periodic geodesics $\gamma_1$ and $\gamma_2$ are geometrically distinct if their images do not coincide. A prime geodesic and any of its iterates are never geometrically distinct. More precisely, any two periodic geodesics given as iterates of the same prime geodesic are not geometrically distinct, since they obviously have the same image.
\end{example}

\begin{definition}\label{def:parallel}
A vector field $X\in\sect^k(TM)$ is said to be {\em $g$--parallel along} $\gamma$ if $\D^g X=0$. In addition, a vector field is called {\em $g$--parallel} if it is $g$--parallel\index{Vector field!parallel} along every curve.
\end{definition}

\begin{remark}
A $g$--geodesic $\gamma$ can be hence characterized as a curve whose tangent field $\dot\gamma$ is $g$--parallel along $\gamma$.
\end{remark}

Another construction that involves covariant differentiation along curves is parallel translation.

\begin{proposition}\label{prop:paralleltransport}
Let $\gamma:[a,b]\rightarrow M$ be a $C^k$ curve, $t_0\in [a,b]$ and $v_0\in T_{\gamma(t_0)}M$. There exists a unique $g$--parallel vector field $X\in\sect^k(\gamma^*TM)$ such that $X(t_0)=v_0$. This vector field is called the {\em $g$--parallel translate}\index{Parallel translation} of $v_0$ along $\gamma$.
\end{proposition}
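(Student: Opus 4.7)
The plan is to translate $\D^g X=0$ into a linear system of ODEs in the components of $X$ along a frame of $\gamma^*TM$, and then apply the classical linear ODE theory on the compact interval $[a,b]$.

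First, I would observe that since $[a,b]$ is contractible, the $C^k$ vector bundle $\gamma^*TM$ is trivializable and thus admits a $C^k$ global frame $\{\xi_i\}_{i=1}^m$, where $m=\dim M$. (Alternatively, since the image $\gamma([a,b])$ is compact, one can cover it by finitely many trivializations of $TM$, solve in each piece, and patch the local solutions using uniqueness on the overlaps.) Writing $X(t)=\sum_{i=1}^m X^i(t)\,\xi_i(t)$, the Leibniz rule from Proposition~\ref{prop:covder} gives
\[
\D^g X(t)=\sum_{i=1}^m \dot X^i(t)\,\xi_i(t)+\sum_{i=1}^m X^i(t)\,\D^g\xi_i(t).
\]
Each $\D^g\xi_i$ is a $C^{k-1}$ vector field along $\gamma$, so there exist $C^{k-1}$ functions $A_i^j$ on $[a,b]$ such that $\D^g\xi_i=\sum_j A_i^j\xi_j$. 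Hence the parallel transport equation $\D^g X=0$ is equivalent to the first order linear system
\[
\dot X^j(t)+\sum_{i=1}^m A_i^j(t)\,X^i(t)=0,\quad j=1,\dots,m,
\]
with initial data $X^i(t_0)$ prescribed by the coefficients of $v_0$ in the basis $\{\xi_i(t_0)\}_{i=1}^m$ of $T_{\gamma(t_0)}M$.

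Next, I would invoke the classical existence and uniqueness theorem for linear ODEs with continuous coefficients on a compact interval, which delivers a unique $C^1$ solution $(X^1,\dots,X^m)$ defined on all of $[a,b]$. The $C^k$ regularity of the assembled section $X=\sum_i X^i\xi_i$ then follows by a routine bootstrap: the equation expresses $\dot X^j$ as a $C^{k-1}$ combination of the unknowns, so if $X^j\in C^{\ell}$ for some $\ell\leq k-1$ then $\dot X^j\in C^{\ell}$ and hence $X^j\in C^{\ell+1}$; iterating starting from continuity reaches $C^k$ in finitely many steps. This yields the desired unique $X\in\sect^k(\gamma^*TM)$ with $X(t_0)=v_0$.

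I do not expect any genuine obstacle beyond recognizing that the parallel transport equation is \emph{linear} in $X$: linearity is precisely what prevents blow-up in finite time and allows the local solution to extend to the whole compact interval without any further hypothesis on $g$. Unlike the nonlinear geodesic equation of Proposition~\ref{prop:geodexist}, where solutions are only guaranteed on a small interval around $t_0$, here globality on $[a,b]$ is automatic. The only minor care needed is in the patching/triviality step, which is a standard consequence of paracompactness of $[a,b]$ and uniqueness of solutions of linear ODEs.
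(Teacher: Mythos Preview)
Your proposal is correct and follows exactly the approach the paper has in mind: the paper does not actually prove this proposition, stating only that it ``uses elementary ODE techniques and can be found, for instance in \cite{jost,petersen}.'' Your reduction to a linear first-order system via a global frame of $\gamma^*TM$, followed by the global existence/uniqueness theorem for linear ODEs on a compact interval and a regularity bootstrap, is precisely the standard argument those references contain.
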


This is another basic result, whose proof uses elementary ODE techniques and can be found, for instance in \cite{jost,petersen}. It is also easy to verify that $g$--parallel translation is an isometry of $(M,g)$, in the sense of Definition~\ref{def:isometries}. Having existence and uniqueness of geodesics with prescribed initial data, an important question is how do geodesics change under perturbations of initial data. This change is characterized by the {\em semi--Riemannian exponential map}, which will be defined using the {\em geodesic flow} of a metric.

\begin{definition}\label{def:geodflow}
The {\it geodesic flow}\index{Geodesic flow}\index{Geodesic!flow} of $g\in\met_\nu^k(M)$ is the flow $$\Phi^g:U\subset\R\times TM\la TM,$$ defined in an open subset $U$ of $\R\times TM$ that contains $\{0\}\times TM$, of the {\em unique vector field}\footnote{This vector field on $TM$ is called the {$g$--geodesic vector field}.} on the tangent bundle whose integral curves are of the form $t\mapsto(\gamma(t),\dot\gamma(t))$, where $\gamma$ is a $g$--geodesic, satisfying
\begin{itemize}
\item[(i)] $\gamma(t)=\pi\circ\Phi^g(t,(x,v))$ is the unique geodesic with initial conditions $\gamma(0)=x$ and $\dot\gamma(0)=v$ (see Proposition~\ref{prop:geodexist});
\item[(ii)] $\Phi^g(t,(x,cv))=\Phi^g(ct,(x,v))$, for all $c\in\R$ such that this equation makes sense.
\end{itemize}
\end{definition}

\begin{figure}[htf]
\begin{center}
\includegraphics[scale=0.8]{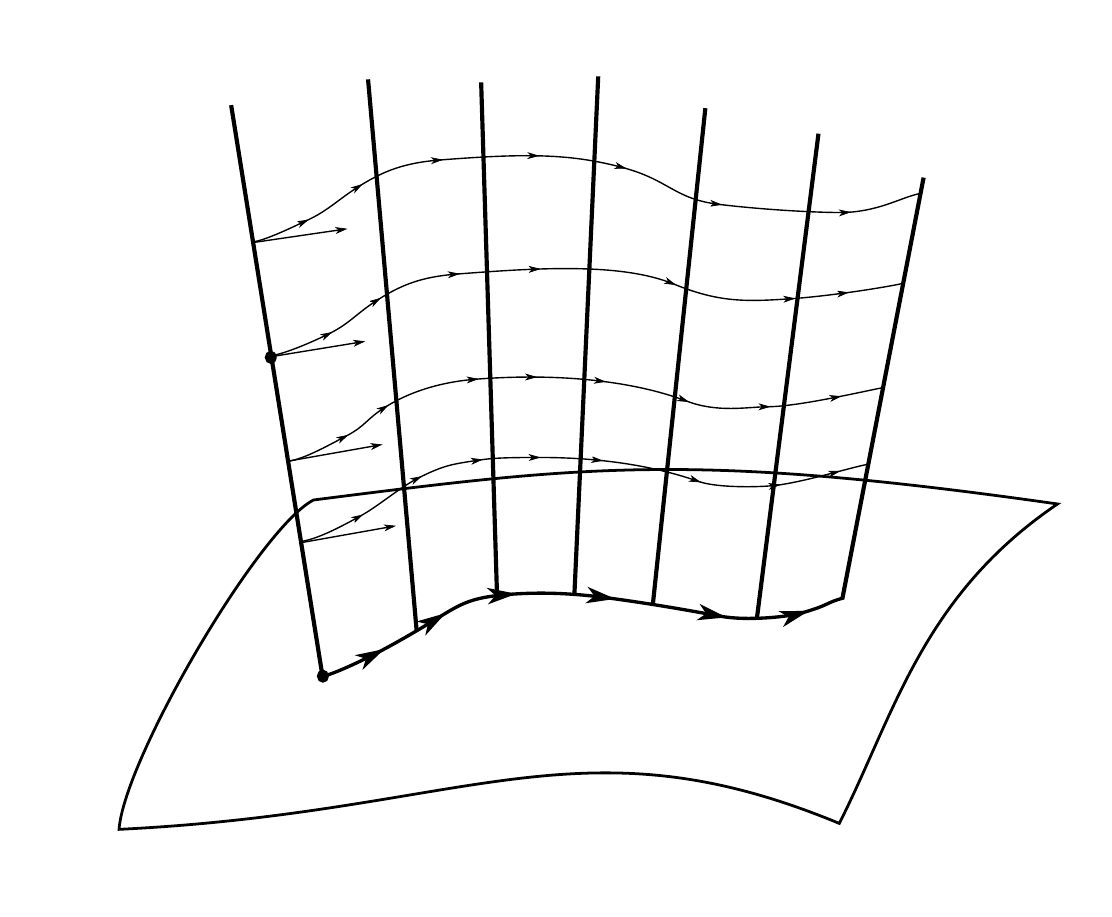}
\begin{pgfpicture}
\pgfputat{\pgfxy(-6.5,1.5)}{\pgfbox[center,center]{$\gamma(0)=x$}}
\pgfputat{\pgfxy(-7.7,4.5)}{\pgfbox[center,center]{$(x,v)$}}
\pgfputat{\pgfxy(-1.4,1.2)}{\pgfbox[center,center]{$M$}}
\pgfputat{\pgfxy(-4.5,2.2)}{\pgfbox[center,center]{$\gamma$}}
\pgfputat{\pgfxy(-0.6,5.1)}{\pgfbox[center,center]{$\Phi^g(\varepsilon,(x,v))$}}
\pgfputat{\pgfxy(-4.5,7.3)}{\pgfbox[center,center]{$TM$}}
\end{pgfpicture}
\end{center}
\caption{Representation of the geodesic flow $\Phi^g$ on $TM$, with a geodesic $\gamma(t)=\pi\circ\Phi^g(t,(x,v))$, for $t\in[0,\varepsilon]$, that satisfies $\gamma(0)=x$ and $\dot\gamma(0)=v$.}
\end{figure}

The flow $\Phi^g$ is well--defined as a consequence of Proposition~\ref{prop:geodexist}. Moreover, supposing that the geodesic vector field exists, it is easy to obtain conditions in local coordinates that this field must satisfy (corresponding to the geodesic equation mentioned in Definition~\ref{def:geodesic}). Defining the vector field as its solutions, elementary ODE results guarantee existence and $C^k$ regularity of $\Phi^g$. In addition, its domain $U$ is obviously related with the maximal intervals for solutions of the $g$--geodesic equation.

\begin{remark}
For instance, in the case of compact Riemannian manifolds it is easy to prove that the domain $U$ of $\Phi^g$ may be taken as the whole $\R\times TM$, since compact Riemannian manifolds are, in particular, {\em geodesically complete}. Since we are dealing with semi--Riemannian manifolds, completeness notions are not well--posed, and there are simple examples of compact semi--Riemannian manifolds whose geodesic flow has domain $U\neq\R\times TM$.
\end{remark}

In Chapter~\ref{chap4}, we will discuss the abstract meaning of generic properties aiming to study generic properties of the geodesic flow $\Phi^g$, in Chapters~\ref{chap5} and~\ref{chap6}. By a generic property of $\Phi^g$, we basically mean a property that is common to most metrics $g$ on $M$. In particular, such properties are stable under small perturbations, i.e., given a certain metric $g_0$, it is always possible to perturb it (in the adequate topology) and obtain a new metric $g_0+\varepsilon g$, such that the geodesic flow $\Phi^{g_0+\varepsilon g}$ satisfies this property. In this sense, genericity of certain properties suggests the typical dynamic behavior of the geodesic flow on certain manifold.

\begin{proposition}
Let $g\in\met_\nu^k(M)$. For each $x\in M$, there exists an open neighborhood $U$ of the origin of $T_xM$ such that it is possible to define the {\em $g$--exponential map}\index{Exponential map} by
\begin{eqnarray*}
\exp_x:U\subset T_xM&\la &M \\
v &\longmapsto &\Phi^g(1,(x,v)).
\end{eqnarray*}
\end{proposition}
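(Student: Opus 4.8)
The plan is to read the exponential map off directly from the geodesic flow $\Phi^g$. Recall from Definition~\ref{def:geodflow} that $\Phi^g$ is the local flow of the $g$--geodesic vector field on $TM$, so its domain is an open subset of $\R\times TM$ containing $\{0\}\times TM$, and for each fixed point of $TM$ the set of times at which the flow is defined is an open interval about $0$. The one extra observation needed is that the constant curve $\gamma\equiv x$ is a $g$--geodesic: indeed $\dot\gamma\equiv 0$, whence $\D^g\dot\gamma=0$, so by Definition~\ref{def:geodflow} its canonical lift is the integral curve $t\mapsto(x,0_x)$ of the geodesic vector field, which is consequently defined for all $t\in\R$. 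In particular the point $(1,(x,0_x))$ lies in the domain of $\Phi^g$.

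First I would exploit openness of that domain in $\R\times TM$: since $(1,(x,0_x))$ belongs to it, there are $\varepsilon>0$ and an open neighborhood $\mathcal W\subset TM$ of $(x,0_x)$ with $(1-\varepsilon,1+\varepsilon)\times\mathcal W$ contained in the domain of $\Phi^g$; in particular $\Phi^g(1,\cdot)$ is defined on all of $\mathcal W$. Setting $U:=\{\,v\in T_xM:(x,v)\in\mathcal W\,\}$ gives an open neighborhood of the origin $0_x$ in $T_xM$, and for every $v\in U$ the point $\Phi^g(1,(x,v))\in TM$ is defined; by item (i) of Definition~\ref{def:geodflow} its base point $\pi\bigl(\Phi^g(1,(x,v))\bigr)$ equals $\gamma(1)$ for the $g$--geodesic $\gamma$ with $\gamma(0)=x$, $\dot\gamma(0)=v$, which is precisely the claimed value $\exp_x(v)$ (the statement suppresses the projection $\pi\colon TM\to M$). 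If one also wants the regularity of $\exp_x$, it is inherited from that of $\Phi^g$, which in turn follows from the regularity of the geodesic vector field as governed by the Christoffel symbols of $g$ via \eqref{eq:geodequation}.

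Essentially the proof is a matter of unwinding definitions, and there is no serious obstacle: the only real point is that a neighborhood of $0_x$ in $T_xM$ on which $\exp_x$ makes sense is obtained by slicing a neighborhood of $(x,0_x)$ in $TM$, which is exactly what openness of the flow domain furnishes. One could alternatively work ray by ray using the homogeneity property (ii) of $\Phi^g$, namely $\Phi^g(1,(x,cv))=\Phi^g(c,(x,v))$, since each geodesic through $(x,v)$ is defined on an open interval about $0$; but to assemble these segments into a genuine neighborhood of $0_x$ one again invokes the uniformity provided by openness of the domain, so nothing is gained.
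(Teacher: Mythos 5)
Your argument is correct and is precisely the argument the paper leaves implicit: the constant geodesic at $x$ shows $(1,(x,0_x))$ lies in the open domain of $\Phi^g$, slicing a neighborhood of that point at $\{1\}\times T_xM$ yields the required $U$, and the regularity of $\exp_x$ is inherited from that of $\Phi^g$ as already established from the ODE discussion preceding the proposition. You are also right that the statement silently composes with the bundle projection $\pi\colon TM\to M$; this is exactly how the paper intends it.
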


It is simple to verify that $\exp_x$ is smooth. The exponential map can be clearly used as a local chart, and through this observation it is possible to define special neighborhoods with particular regularities, as follows.

\begin{definition}\label{def:normalradius}
From the Inverse Function Theorem, it follows that for each $x\in M$, there exist a neighborhood $V$ of the origin in $T_xM$ and a neighborhood $U$ of $x$, such that $\exp_x|_V:V\rightarrow U$ is a diffeomorphism. Such neighborhood $U$ is called a {\em $g$--normal neighborhood}\index{Normal neighborhood} of $p$. A $g$--normal neighborhood of $p$ is called {\em $g$--convex}\index{Convex neighborhood}\index{Normal neighborhood!convex} if it is a $g$--normal neighborhood of all of its points.\footnote{Convex neighborhoods of a given point exist for every semi--Riemannian metric $g$, and their size depends continuously on $g$ relatively to the $C^k$--topology, see O'Neill \cite{oneill}.}

For the auxiliary Riemannian metric $g_\mathrm R$, a positive number $r>0$ is called a {\em normal radius}\index{Normal radius} of a point $x\in M$ if $\exp_x(B(0,r))$ is a normal neighborhood of $x\in M$, where $B(0,r)$ denotes the open ball of radius $r$ around the origin of $T_xM$ with respect to the norm induced by $g_\mathrm R$. Finally, $r>0$ is called a {\em totally normal}\index{Normal radius!totally} radius for $x\in M$ if $r$ is a normal radius for $x$ and for all the points in the open set $\exp_x(B(0,r))$.
\end{definition}

We now discuss another essential concept in semi--Riemannian geometry, {\em curvature}\index{Curvature}. For this, consider again a fixed semi--Riemannian metric $g\in\met_\nu^k(M)$.

\begin{definition}
The {\em curvature tensor}\index{Curvature!of a metric} of $g$ is the $(1,3)$--tensor $R^g$ defined as the curvature tensor $R^{\nabla^g}$ of the Levi--Civita connection $\nabla^g$ of $g$, in the sense of Definition~\ref{def:symflat}. More precisely,
\begin{eqnarray}\label{eq:Rg}
R^g(X,Y)Z &=& \nabla^g_X\nabla^g_Y Z-\nabla^g_Y\nabla^g_X Z-\nabla^g_{[X,Y]}Z \\
&=&[\nabla^g_X,\nabla^g_Y]Z-\nabla^g_{[X,Y]}Z,\nonumber
\end{eqnarray}
\end{definition}

\begin{remark}
There is no convention in the literature for the sign of $R^g$. We choose to use the sign convention \eqref{eq:R}, the same adopted in \cite{jost,lee,petersen}. Other texts however may define the curvature tensor as $-R^g$. Notice that changing this choice of sign automatically implies changing other formulas such as the Jacobi equation \eqref{eq:jacobi}.
\end{remark}

\begin{definition}\label{def:flatmetric}
A metric $g\in\met_\nu^k(M)$ whose curvature tensor $R^g$ vanishes identically is called a {\em flat metric}.\index{Metric!flat}
\end{definition}

\begin{remark}
Usually, the curvature tensor $R^g$ is used together with $g$ in the form of the $(0,4)$--tensor $g(R^g(X,Y)Z,W)$. One can easily verify many different symmetries of this tensor, for instance,
\begin{itemize}
\item[(i)] $g(R^g(X,Y)Z,W)=-g(R^g(Y,X)Z,W)=g(R^g(Y,X)W,Z);$
\item[(ii)] $g(R^g(X,Y)Z,W)=g(R^g(Z,W)X,Y).$
\end{itemize}
\end{remark}

There are several possible interpretations of curvature. A first naive approach, immediate from Definition~\ref{def:symflat}, is that it measures second covariant derivatives' failure to commute. To present less trivial interpretations, we now introduce the concept of {\em Jacobi field}.

\begin{definition}\label{def:jacobifield}
Let $\gamma:[a,b]\to M$ be a $g$--geodesic. A {\em Jacobi field}\index{Jacobi field} along $\gamma$ with respect to $g$ is a vector field $J\in\sect^2(\gamma^*TM)$ that satisfies the {\em $g$--Jacobi equation}\index{Jacobi equation} along $\gamma$, given by
\begin{equation}\label{eq:jacobi}
(\D^g)^2J(t)=R^g(\dot\gamma(t),J(t))\dot\gamma(t), \quad t\in [a,b].
\end{equation}
\end{definition}

\begin{corollary}\label{cor:jacobick}
Let $g\in\met^k_\nu(M)$. If $\gamma:[a,b]\to M$ is a $g$--geodesic and $J$ is a $g$--Jacobi field along $\gamma$, then $J$ is of class $C^k$.
\end{corollary}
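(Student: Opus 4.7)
The plan is to mirror the bootstrap argument used for Corollary~1.36 (\texttt{geodck}). Working in a local chart with a frame $p(x):\R^m\to T_xM$, I would first rewrite the intrinsic Jacobi equation \eqref{eq:jacobi} as an explicit second--order ODE
$$\ddot{\tilde J}(t)=F\bigl(t,\tilde J(t),\dot{\tilde J}(t)\bigr)$$
for the coordinate representation $\tilde J$ of $J$. Applying the covariant derivative formula \eqref{eq:nablachr} twice along $\gamma$, the left--hand side $(\D^g)^2 J$ expands into $\ddot{\tilde J}$ plus the algebraic terms $2\,\chr^g(\gamma)(\dot\gamma,\dot{\tilde J})$, $\chr^g(\gamma)(\ddot\gamma,\tilde J)$, the time derivative $\frac{\dd}{\dd t}\chr^g(\gamma)(\dot\gamma,\tilde J)$ and the iterated Christoffel term $\chr^g(\gamma)(\dot\gamma,\chr^g(\gamma)(\dot\gamma,\tilde J))$, so that together with the right--hand side $R^g(\dot\gamma,J)\dot\gamma$ of \eqref{eq:jacobi} they provide a closed expression for $F$.

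The next step is to tally the regularities of the ingredients of $F$. Since $g\in\met_\nu^k(M)$, the Christoffel tensor $\chr^g$ is $C^{k-1}$ (as it involves one derivative of $g$, cf.~\eqref{eq:christoffeltensor}) and the curvature tensor $R^g$ is $C^{k-2}$ (as it involves two derivatives of $g$, cf.~\eqref{eq:Rg}). By Corollary~\ref{cor:geodck}, $\gamma$ is of class $C^{k+1}$, hence $\dot\gamma\in C^k$, $\ddot\gamma\in C^{k-1}$, $\chr^g(\gamma)\in C^{k-1}$ and $R^g(\gamma)\in C^{k-2}$.

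Finally, I would run the bootstrap on $\tilde J$. Assume inductively that $\tilde J\in C^j$ for some $2\leq j\leq k$. Inspection of each summand of $F$ shows that $t\mapsto F(t,\tilde J(t),\dot{\tilde J}(t))$ is of class $C^{\min(k-2,\,j-1)}$, the worst contribution coming from either the curvature term $R^g(\gamma)(\dot\gamma,\tilde J)\dot\gamma$ or the term $2\,\chr^g(\gamma)(\dot\gamma,\dot{\tilde J})$. Integrating the ODE twice yields $\tilde J\in C^{\min(k,\,j+1)}$. Starting from $j=2$, which is guaranteed by Definition~\ref{def:jacobifield}, and iterating, the index is gained one order at a time until it saturates at $k$.

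The main delicate point is verifying that the bootstrap actually reaches $C^k$ and not merely $C^{k-1}$: the curvature term $R^g(\dot\gamma,\tilde J)\dot\gamma$ is only of class $C^{k-2}$, but since passing from $\ddot{\tilde J}$ to $\tilde J$ gains exactly two orders of regularity, this loss of two orders from $g$ to $R^g$ is precisely compensated, so the saturation occurs at $C^k$ as claimed. Patching the local conclusions across a finite cover of $\gamma([a,b])$ by trivializing charts then gives $J\in\sect^k(\gamma^*TM)$, as required.
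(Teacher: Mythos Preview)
Your proposal is correct and follows essentially the same bootstrap strategy as the paper's proof. The paper argues more intrinsically, asserting that $(\D^g)^2 J\in C^j$ directly yields $J\in C^{j+2}$ (a two--order gain per step), which glosses over the $\dot J$ terms hidden in the coordinate expansion of $(\D^g)^2$; your explicit expansion correctly identifies a one--order gain per iteration and is in this respect the more careful of the two arguments.
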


\begin{proof}
From Corollary~\ref{cor:geodck}, since $\gamma$ is a $g$--geodesic it is of class $C^{k+1}$. Since $J$ is a $g$--Jacobi field along $\gamma$, it satisfies the $g$--Jacobi equation \eqref{eq:jacobi}. Notice that the curvature tensor $R^g$ is of class $C^{k-2}$, since it involves second derivatives of $g$ which is $C^k$, see \eqref{eq:Rg}. From Definition~\ref{def:jacobifield}, it follows that $J$ is of class $C^2$. Inductively, suppose that $J$ is of class $C^j$, for some $2\leq j\leq k-2$. Then the map $$t\longmapsto R^g(\dot\gamma(t),J(t))\dot\gamma(t)$$ is a composite of $C^{j}$ maps, hence of class $C^{j}$. Thus, from \eqref{eq:jacobi} it follows that $(\D^g)^2 J$ is of class $C^{j}$, hence $J$ is of class $C^{j+2}$. This argument works for $2\leq j\leq k-2$. Therefore, applying it for $j=k-2$, we may conclude that $J$ is of class $C^{k}$.
\end{proof}

\begin{remark}
If $\gamma$ is a $g$--geodesic, its tangent field $\dot\gamma$ satisfies the $g$--Jacobi equation \eqref{eq:jacobi}, since $\D^g\dot\gamma$ and $R^g(\dot\gamma,\dot\gamma)\dot\gamma$ vanish identically. Notice that this trivial example verifies the assertion of Corollary~\ref{cor:jacobick}, since from Corollary~\ref{cor:geodck}, $\gamma$ is $C^{k+1}$ hence $\dot\gamma$ is $C^k$. The solutions $J=\dot\gamma$ and $J=0$ are called the {\em trivial} solutions of the $g$--Jacobi equation.
\end{remark}

\begin{corollary}
If $g\in\met_\nu^\infty(M)$, then $g$--Jacobi fields along $g$--geodesics are smooth.
\end{corollary}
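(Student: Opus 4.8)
The plan is to deduce this directly from Corollary~\ref{cor:jacobick}, exactly as the previous corollary on smoothness of geodesics was deduced from Corollary~\ref{cor:geodck}. Fix a $g$--geodesic $\gamma:[a,b]\to M$ and a $g$--Jacobi field $J$ along $\gamma$, and let $k\in\N$ be arbitrary. Since $g\in\met_\nu^\infty(M)=\bigcap_{k\in\N}\met_\nu^k(M)$, in particular $g\in\met_\nu^k(M)$, so Corollary~\ref{cor:jacobick} applies and yields that $J$ is of class $C^k$.

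Because $k\in\N$ was arbitrary, $J$ is of class $C^k$ for every $k$, hence smooth by the definition of smoothness. I would phrase this in one short sentence, mirroring the proof of the analogous corollary for geodesics.

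There is no real obstacle here: the statement is a pure consequence of the already-established finite-regularity result together with the definition $\met_\nu^\infty(M)=\bigcap_k\met_\nu^k(M)$, so the only thing to be careful about is invoking Corollary~\ref{cor:jacobick} with the correct (arbitrary) value of $k$ before passing to the intersection over all $k$. No induction, no estimates, and no further geometric input are needed.
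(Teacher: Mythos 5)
Your proof is correct and follows essentially the same one-line argument the paper gives: $g\in\met_\nu^\infty(M)$ means $g\in\met_\nu^k(M)$ for every $k$, so Corollary~\ref{cor:jacobick} yields $J\in C^k$ for every $k$, i.e., $J$ is smooth. The paper's proof additionally cites Corollary~\ref{cor:geodck}, but since that result is already used inside the proof of Corollary~\ref{cor:jacobick}, your slightly leaner citation is equally valid.
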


\begin{proof}
The result follows directly from Corollaries~\ref{cor:geodck} and~\ref{cor:jacobick}, since $g$ and $J$ are of class $C^k$ for all $k\in\N$.
\end{proof}

\begin{remark}
The Jacobi equation \eqref{eq:jacobi} is obtained as a linearization of the geodesic equation \eqref{eq:geodequation}. Thus, Jacobi fields describe how quickly two geodesics with the same starting point {\em move away} one from each other.
\end{remark}

In this sense, the curvature tensor of a metric also contains information on the behavior of the geodesic flow, see Definition~\ref{def:geodflow}. Another possible interpretation is that curvature describes how parallel transport along a loop differs from the identity, see Example~\ref{ex:periodicgeod} and Proposition~\ref{prop:paralleltransport}. Finally, $R^g$ also measures non integrability of a special kind of distribution defined in the frame bundle. These fundamental interpretations of Riemannian curvature are explained for instance in \cite{bishop,jost}.

\begin{definition}\label{def:conjugate}
Two points $p,q\in M$ are said to be {\em $g$--conjugate}\index{Conjugate points} if there exists a $g$--geodesic $\gamma:[a,b]\to M$ with $\gamma(a)=p$ and $\gamma(b)=q$ and a $g$--Jacobi field $J:[a,b]\to M$ along $\gamma$ such that $J(a)=0$ and $J(b)=0$.
\end{definition}

Since the Jacobi equation \eqref{eq:jacobi} is a linearization of the geodesic equation \eqref{eq:geodequation}, two points are $g$--conjugate if there exists a $g$--geodesic $\gamma$ joining them and a {\em variation} of $\gamma$ by $g$--geodesics, whose variational field vanishes at the endpoints of $\gamma$, i.e., a map
\begin{equation}\label{eq:variationgammast}
(-\varepsilon,\varepsilon)\times [a,b]\ni (s,t)\longmapsto\gamma_s(t)\in M
\end{equation}
with $\gamma_s=\gamma$ for $s=0$ and $J(t)=\frac{\partial}{\partial s}\gamma_s(t)\big|_{s=0}$ satisfying $J(a)=0$ and $J(b)=0$. This variational field $J$ is the $g$--Jacobi field of Definition~\ref{def:conjugate}. In fact, it is easy to prove that every $g$--Jacobi field along $\gamma$ arises as the variational field of a variation of $\gamma$ by other $g$--geodesics, see \cite{jost,petersen}.

\begin{remark}
Notice that the above observation that conjugacy of $p$ and $q$ is equivalent to existence of a variation of $\gamma$ by other geodesics whose variational field vanishes at endpoints {\em does not imply} that if $p$ and $q$ are conjugate, then there exists more than one geodesic joining them. In fact, vanishing of the variational field at the endpoints only implies that the endpoints $\gamma_s(b)$ of the geodesics in the variation \eqref{eq:variationgammast} are $q$ {\em up to first order} in the parameter $s$. Indeed, it is not difficult to find examples of two conjugate points joined by only one $g$--geodesic, as illustrated in Figure \ref{fig:conjugatepoints1}.
\end{remark}

\begin{figure}[htf]
\begin{center}
\vspace{-0.5cm}
\includegraphics[scale=1]{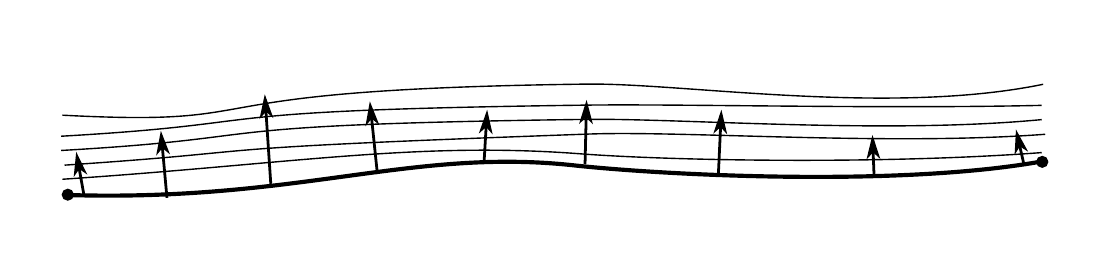}
\begin{pgfpicture}
\pgfputat{\pgfxy(-6,0.8)}{\pgfbox[center,center]{$\gamma$}}
\pgfputat{\pgfxy(-5.2,1.8)}{\pgfbox[center,center]{$J$}}
\pgfputat{\pgfxy(-2.5,2.1)}{\pgfbox[center,center]{$\gamma_s$}}
\pgfputat{\pgfxy(-11.1,0.8)}{\pgfbox[center,center]{$p$}}
\pgfputat{\pgfxy(-0.5,1)}{\pgfbox[center,center]{$q$}}
\end{pgfpicture}
\end{center}
\vspace{-0.3cm}
\caption{Conjugate points $p$ and $q$ joined by a unique geodesic $\gamma$, with variation by geodesics $\gamma_s$ and induced variational field $J$.}\label{fig:conjugatepoints1}
\end{figure}

\begin{example}\label{ex:sphereconjpoints}
Consider $\R^m$ endowed with the Euclidean metric and the embedded round $(m-1)$--sphere $S^{m-1}$. Then, any point $p\in S^{m-1}$ is conjugate to itself and to $-p$. In this case, these points are joined by infinitely many geodesics. A Jacobi field $J$ that vanishes at $p$ and $-p$ may be easily obtained as the variational field of a variation of any geodesic joining $p$ and $-p$ by other geodesics that join $p$ and $-p$, as shown in Figure \ref{fig:conjugatepoints2}.
\end{example}

\begin{figure}[htf]
\begin{center}
\includegraphics[scale=1]{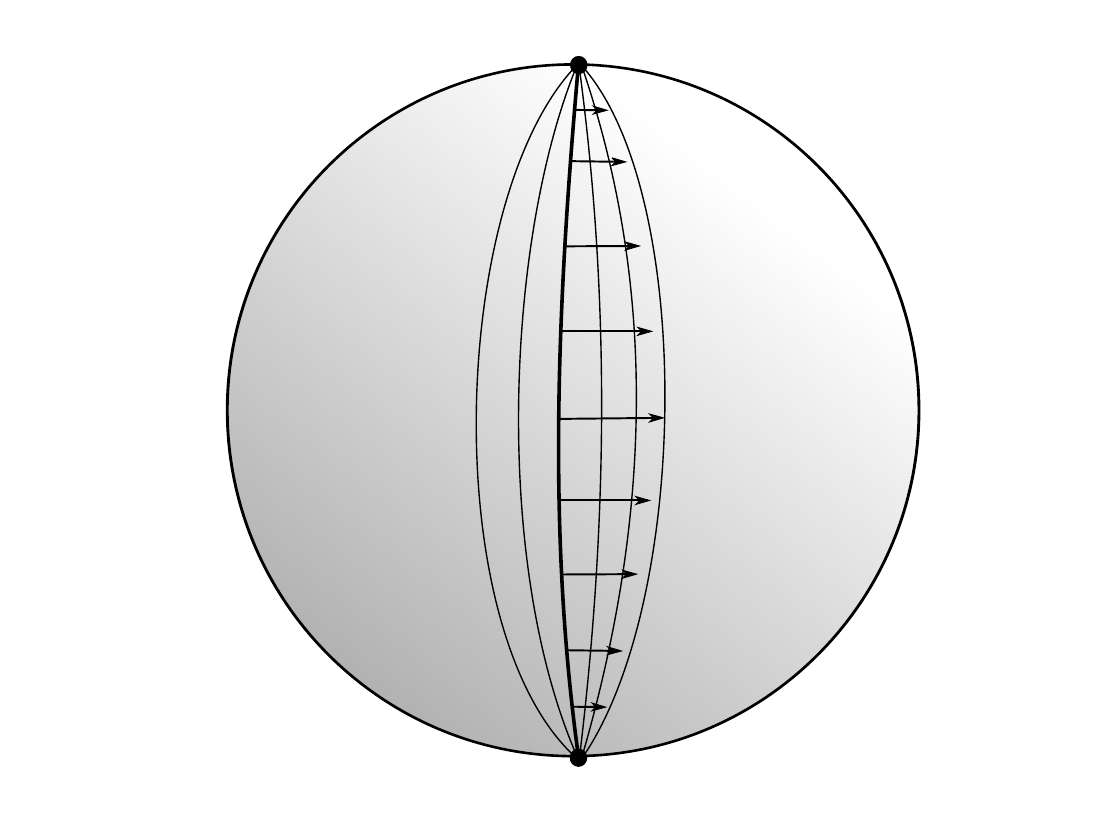}
\begin{pgfpicture}
\pgfputat{\pgfxy(-5.5,8.2)}{\pgfbox[center,center]{$p$}}
\pgfputat{\pgfxy(-5.5,0.3)}{\pgfbox[center,center]{$-p$}}
\pgfputat{\pgfxy(-4.3,4.4)}{\pgfbox[center,center]{$J$}}
\pgfputat{\pgfxy(-2.8,1.5)}{\pgfbox[center,center]{$S^{m-1}$}}
\end{pgfpicture}
\end{center}
\vspace{-0.3cm}
\caption{Antipodal points $p$ and $-p$ on $S^{m-1}$ with the round metric are conjugate along any geodesic joining them.}\label{fig:conjugatepoints2}
\end{figure}

\begin{proposition}\label{prop:conjugatecritexp}
Given $p\in M$, the set of points on $M$ that are $g$--conjugate to $p$ coincides with the critical values of the $g$--exponential map $\exp_p:T_pM\to M$.
\end{proposition}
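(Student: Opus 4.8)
The plan is to use the characterization of Jacobi fields as variational fields of geodesic variations together with the differential of the exponential map at a point of $T_pM$. Fix $p\in M$ and let $\gamma:[0,1]\to M$ be a $g$-geodesic with $\gamma(0)=p$, so that $\gamma(t)=\exp_p(tv)$ where $v=\dot\gamma(0)\in T_pM$. The key identity, which I would establish first, relates $\dd\exp_p$ to Jacobi fields: for $w\in T_v(T_pM)\cong T_pM$, the curve $s\mapsto\exp_p(v+sw)$ is a variation of $\gamma$ by $g$-geodesics all emanating from $p$, and its variational field $J_w(t)=\tfrac{\partial}{\partial s}\exp_p(t(v+sw))\big|_{s=0}$ is the unique $g$-Jacobi field along $\gamma$ with $J_w(0)=0$ and $\D^g J_w(0)=w$. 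This is the standard computation identifying $\dd\exp_p(v)w = J_w(1)$, using the naturality of the connection and the symmetry (torsion-freeness) of $\nabla^g$ from Theorem~\ref{thm:levicivita}; I would carry it out via the pull-back connection on the variation map, differentiating the geodesic equation in $s$ to land on the Jacobi equation \eqref{eq:jacobi}.

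With this in hand, the proof is a double inclusion. Suppose first that $q=\exp_p(v)$ is a critical value, so there is $v\in T_pM$ with $\exp_p(v)=q$ and $w\neq 0$ with $\dd\exp_p(v)w=0$. Then $J_w$ is a nonzero $g$-Jacobi field along $\gamma(t)=\exp_p(tv)$ with $J_w(0)=0$ and $J_w(1)=\dd\exp_p(v)w=0$; moreover $J_w$ is not identically zero since $\D^g J_w(0)=w\neq 0$, hence by Definition~\ref{def:conjugate}, $p$ and $q$ are $g$-conjugate. (Here I should note that the map $w\mapsto(J_w(0),\D^g J_w(0))$ is a linear isomorphism between solutions of the Jacobi equation and $T_pM\oplus T_pM$, by existence-uniqueness for the linear ODE \eqref{eq:jacobi}, so a Jacobi field vanishing at two points with $\D^g J(0)=0$ would be trivial.)

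Conversely, suppose $p$ and $q$ are $g$-conjugate, so there is a $g$-geodesic $\gamma:[0,1]\to M$ with $\gamma(0)=p$, $\gamma(1)=q$, and a nonzero $g$-Jacobi field $J$ with $J(0)=0=J(1)$. Write $v=\dot\gamma(0)$, so $q=\exp_p(v)$ and $\gamma(t)=\exp_p(tv)$. Set $w:=\D^g J(0)\in T_pM$; since $J(0)=0$ and $J$ is nontrivial, $w\neq 0$ by the isomorphism just mentioned. By uniqueness of solutions of the Jacobi equation with prescribed initial data, $J=J_w$, whence $\dd\exp_p(v)w = J_w(1)=J(1)=0$, so $\dd\exp_p(v)$ is singular and $q=\exp_p(v)$ is a critical value of $\exp_p$. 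This completes both inclusions.

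The main obstacle is the careful verification of the identity $\dd\exp_p(v)w=J_w(1)$, i.e. that the $s$-derivative of the geodesic variation $\exp_p(t(v+sw))$ really satisfies the Jacobi equation with the stated initial conditions. This requires interchanging $t$- and $s$-differentiation using the torsion-free property (to get $\D_s\partial_t = \D_t\partial_s$ on the variation map) and invoking the definition of the curvature tensor \eqref{eq:Rg}; everything else is bookkeeping with the linear ODE \eqref{eq:jacobi}. I expect no essential difficulty beyond making these manipulations precise, and I would cite \cite{jost,petersen,oneill} for the analogous computation in the Riemannian and semi-Riemannian settings.
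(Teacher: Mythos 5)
Your proposal is correct and takes essentially the same approach as the paper's proof: both directions hinge on the identity $\dd\exp_p(v)w=J_w(1)$ obtained from the variation $\gamma_s(t)=\exp_p(t(v+sw))$, together with the observation that a finite--dimensional linear endomorphism fails to be surjective iff it has nontrivial kernel. Your converse is slightly more explicit than the paper's (invoking ODE uniqueness for the Jacobi equation rather than appealing to the fact that every Jacobi field vanishing at $\gamma(0)$ is a variational field of a variation through $p$), but the underlying mechanism is identical.
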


\begin{proof}
Let $v\in T_pM$ be a critical point of $\exp_p:T_pM\to M$, denote $q=\exp_p v$ and consider the derivative
\begin{equation}\label{eq:dexppv}
\dd\exp_p(v):T_vT_pM\la T_qM.
\end{equation}
Since $T_pM$ is a vector space, let us identify $T_vT_pM\cong T_pM$. From the fact that $v$ is a critical point, it follows that the above map is not surjective. Since \eqref{eq:dexppv} is a linear map between finite--dimensional vector spaces, its nonsurjectivity implies that it has nontrivial kernel. Thus, let $w\in T_pM$ be a nonzero vector in the kernel of \eqref{eq:dexppv}, and consider the short segment $v(t)$ given by $$v:(-\varepsilon,\varepsilon)\ni s\longmapsto v+sw\in T_pM.$$ Notice that $v(0)=v$, and $v'(0)=w$. Consider the $g$--geodesic $\gamma:[0,1]\to M$ given by $\gamma(t)=\exp_p tv$ and the variation $$\gamma_s(t)=\exp_p tv(s).$$ Then the variational field $J=\frac{\partial}{\partial s}\gamma_s(t)\big|_{s=0}$ is a nontrivial $g$--Jacobi field along $\gamma$ that vanishes at the endpoints. Namely, $J(0)$ is clearly null and $J(1)$ coincides with the image by $\exp_p$ of $v'(0)=w$, that is in the kernel of this map. Therefore, $q=\exp_p v$ is $g$--conjugate to $p$ if $v$ is a critical point of $\exp_p$.

Conversely, let $J$ be a $g$--Jacobi field along $\gamma(t)=\exp_p tv$, where $q=\exp_p v$, that vanishes at the endpoints of $\gamma$. Then $J$ is the variational field of a certain variation $\gamma_s(t)$ of $\gamma$ by $g$--geodesics, i.e., there exists a curve $$(-\varepsilon,\varepsilon)\ni s\mapsto v(s)\in T_pM$$ such that $v(0)=v$ and $\gamma_s(t)=\exp_p tv(s)$. Thus $$J(t)=\dd\exp_p(tv)tv'(0),$$ and setting $t=1$, since $J(1)=0$, it follows that $v$ is a critical point of $\exp_p$.
\end{proof}

\begin{figure}[htf]
\begin{center}
\includegraphics[scale=1]{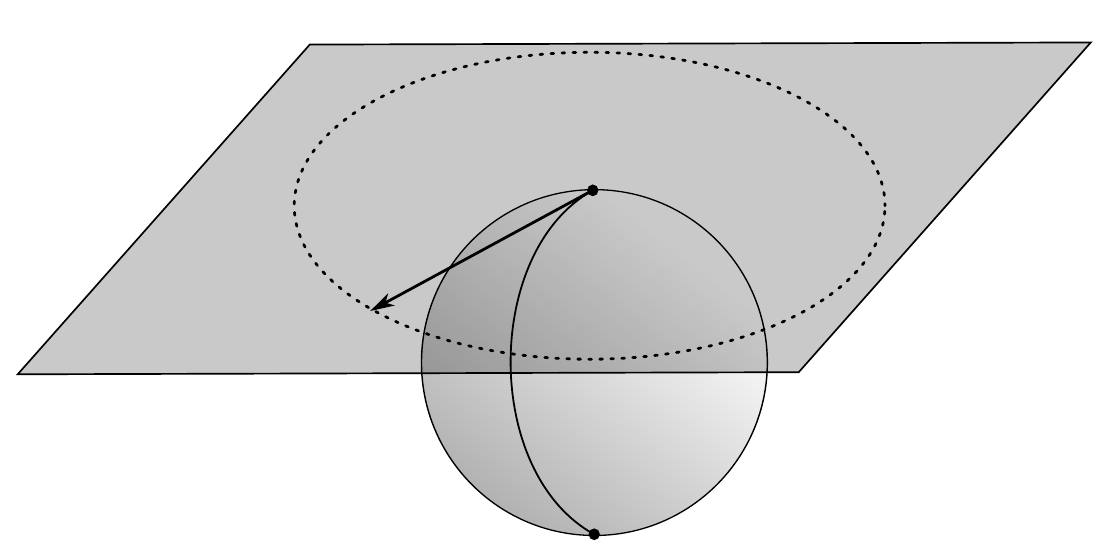}
\begin{pgfpicture}
\pgfputat{\pgfxy(-5.5,4)}{\pgfbox[center,center]{$p$}}
\pgfputat{\pgfxy(-5.5,-0.2)}{\pgfbox[center,center]{$-p$}}
\pgfputat{\pgfxy(-0.7,3.5)}{\pgfbox[center,center]{$T_pS^{m-1}$}}
\pgfputat{\pgfxy(-3.5,0.5)}{\pgfbox[center,center]{$S^{m-1}$}}
\end{pgfpicture}
\end{center}
\caption{In the case of $S^{m-1}$, all points in the $(m-2)$--sphere of radius $\pi$ around the origin of $T_pS^{m-1}$ are critical points of $\exp_p:T_pS^{m-1}\to S^{m-1}$. All these critical points are mapped to the antipodal conjugate point $-p$, see Example \ref{ex:sphereconjpoints}.}\label{fig:conjugatepoints3}
\end{figure}

\begin{remark}
This important notion of conjugacy between two points will be generalized in the sequel by the notion of {\em focality} between a point and a submanifold, or between two submanifolds, see Definitions~\ref{def:focalpoint} and~\ref{def:focalsubmanifolds}.
\end{remark}

\begin{definition}
The {\em Ricci curvature}\index{Curvature!Ricci} of $g$ is a $(0,2)$--tensor field given by the trace of the curvature endomorphism on its first and last indexes. More precisely, if $\{\xi_i(x)\}_{i=1}^m$ is a $g$--orthonormal frame, let $\delta_i=g(\xi_i,\xi_i)=\pm1$,
\begin{eqnarray*}
\Ric^g(X,Y) &=& \tr_g R^g(\,\cdot,X)Y \\
&=& \sum_{i=1}^m \delta_i g(R^g(\xi_i,X)Y,\xi_i).
\end{eqnarray*}
\end{definition}

\begin{remark}
In the Riemannian case, Ricci curvature should be thought as an approximation of the Laplacian of the metric, i.e., a measure of the volume distortion on $M$. For a more precise formulation of this interpretation, see Morgan and Tian \cite{MorganTian}.
\end{remark}

\begin{definition}
The {\em scalar curvature}\index{Curvature!scalar} of $g$ is a function $S^g$ given by the trace of the Ricci curvature. More precisely, if $\{\xi_i(x)\}_{i=1}^m$ is a $g$--orthonormal frame, let $\delta_i=g(\xi_i,\xi_i)=\pm1$,
\begin{eqnarray*}
S^g &=& \tr_g \Ric^g \\
&=& \sum_{i=1}^m \delta_i\Ric(\xi_i,\xi_i).
\end{eqnarray*}
\end{definition}

\begin{definition}
A metric $g$ is called an {\em Einstein metric}\index{Metric!Einstein}\index{Einstein manifold} if it satisfies the {\em Einstein equation}\index{Einstein equation}
\begin{equation}\label{eq:einstein}
\Ric^g-\tfrac12 S^gg+\Lambda g=8\pi T,
\end{equation}
where $\Lambda\in\R$ is the cosmological constant and $T$ is the\index{Energy--momentum tensor} {\em energy--momentum tensor}\footnote{This is a $(2,0)$--tensor on $M$ that contains all the information on the physical distribution of matter and energy in the space--time $M$. For instance, when dealing with a space--time without matter, i.e., a vacuum, this tensor vanishes identically. For a physical interpretation of $T$, see \cite{besse,hawking,gravitation}.} of $M$. Although \eqref{eq:einstein} may be considered for any manifold, it is usually studied on four--dimensional space--times,\footnote{Recall Definition~\ref{def:spacetime}.} i.e., for $\nu=1$ and $m=4$, due to the physical relevance of this particular case in general relativity.
\end{definition}

Einstein metrics appear in general relativity as perfect models for gravitational problems. It relates the simplest $(2,0)$--tensors on a space--time $M$. The constant $8\pi$ in the right--hand side of \eqref{eq:einstein} is responsible for an adequate scaling that allows to consider classic gravitation as a limit case of relativistic gravitation.

Let $g$ be an Einstein metric such that $(M,g)$ is a space--time. Points $x\in M$ are called {\em events}, and $g$--geodesics $\gamma:[a,b]\to M$ are either timelike, lightlike or spacelike, according to the causal character of its tangent field $\dot\gamma$, see Definition~\ref{def:causalchar}. General relativity asserts that a timelike geodesic corresponds to the path of an observer moving at less than the speed of light, only under influence of gravitational forces. Similarly, lightlike geodesics correspond to trajectories of a flash of light, and spacelike geodesics are the geometric equivalent to a trajectory with speed higher than the speed of light. The theory also states that particles with mass cannot move faster than light, hence spacelike geodesics are not admissible paths for the motion of an object.

\begin{remark}
Expanding the Einstein equation \eqref{eq:einstein} in local coordinates, we obtain a system of second--order PDEs. Einstein himself was not able to find examples of space--times $(M,g)$ that are exact solutions of \eqref{eq:einstein}, but only approximate linearized solutions. The first exact solution found was the {\em Schwartzschild metric}, which in coordinates $(t,r,\theta,\phi)$ is given by
\begin{equation}\label{eq:schwartzschild}
\dd s^2=-\left(1-\frac{2m}{r}\right)\dd t^2+\frac{\dd r^2}{1-\frac{2m}{r}}+r^2\dd\theta^2+r^2\sin^2\theta\dd\phi^2,
\end{equation}
assuming that the energy--momentum tensor $T$ and the cosmological constant $\Lambda$ vanish. This model describes the gravitational field outside a spherical non--rotating body of mass $m$ such as a (non--rotating) star, planet, or black hole.\footnote{The Schwartzschild black hole is characterized by a surrounding spherical surface, called the event horizon, which is situated at the Schwartzschild radius, often called the radius of a black hole. Any non--rotating and non--charged mass that is smaller than its Schwartzschild radius forms a black hole. The solution of the Einstein equations \eqref{eq:einstein} is valid for any mass $m$, so in principle, according to general relativity, a Schwartzschild black hole of any mass could exist if conditions became sufficiently favorable to allow for its formation.} It is also a good approximation to the gravitational field of a slowly rotating body like the Earth or Sun. Later, other solutions as {\em Robertson--Walker metrics} and {\em Kerr metrics} where obtained. This last models the gravitational field outside a rotating black hole, see \cite{bee,hawking,gravitation}.
\end{remark}

\begin{definition}\label{def:minkowski}
The {\em Minkowski space--time}\index{Space--time!Minkowski} is the Lorentzian manifold $(\R^4,\dd s_{M}^2)$, with the so--called {\em Minkowski metric},\index{Metric!Minkowski} that may be written in coordinates $(t,x,y,z)$ as
\begin{equation}\label{eq:minkowski}
\dd s_{M}^2=-\dd t^2+\dd x^2+\dd y^2+\dd z^2.
\end{equation}
\end{definition}

\begin{remark}
The Minkowski metric is flat, and is a trivial solution of the Einstein equation \eqref{eq:einstein} with vanishing cosmological constant and energy--momentum tensor. It hence models the gravitational field of a perfect vacuum, i.e., an {\em empty}\footnote{i.e., without matter.} space--time.
\end{remark}

\begin{definition}\label{def:asymptflat}
A semi--Riemannian metric $g\in\met_\nu^k(M)$ is said to be {\em asymptotically flat}\index{Metric!asymptotically flat} if there exists $h\in\sect^k_0(TM^*\vee TM^*)$ such that $g-h$ is a flat metric, see Definitions~\ref{def:tendstozero} and~\ref{def:flatmetric}.
\end{definition}

\begin{remark}\label{re:physics0}
In general relativity, it is common to consider Lorentzian metrics on $\R^4$ that are asymptotically flat, i.e., tend to the Minkowski metric \eqref{eq:minkowski} at infinity. The physical meaning of this asymptotically flatness can be described as follows. Since by the Einstein equation \eqref{eq:einstein}, curvature of space--time (that corresponds to gravitation) is a consequence of the presence of matter, the gravitational field of an asymptotically flat space--time, as well as any matter or other fields which may be present, become negligible in magnitude at large distances from some region. Recall that flatness of a space--time corresponds to absence of matter, hence the Minkowski space--time models perfect vacuum. In this sense, it is reasonable to consider space--times all of whose {\em non negligible} matter is present in some region, since this allows to model {\em isolated systems}, i.e., systems whose exterior influences can be neglected.

As an illustrative example, consider the problem of modeling the gravitational field around a single star. Instead of imagining a universe containing a single star and nothing else, it seems to be more physically meaningful to model the interior of the star together with an exterior region in which gravitational effects due to the presence of other objects, such as nearby stars, can be neglected. Since typical distances between astrophysical bodies tend to be much larger than the diameter of each body, this idealization usually helps to greatly simplify the construction and analysis of such models. For instance, the Schwartzschild metric \eqref{eq:schwartzschild} deals with such an idealized model of the gravitational field outside a spherical non--rotating body.

For more detailed interpretation of asymptotically flat space--times, see Hawking \cite{hawking}. Furthermore, a few stability results for the Minkowski space--time were studied by Christodoulou \cite{chris1,chris2}.
\end{remark}

We now approach a delicate matter concerning length of curves and distance maps in semi--Riemannian geometry. Using the auxiliary Riemannian metric $g_\mathrm R$, we may define the length of a curve as usual.
 
\begin{definition}\label{def:riemlenght}
The {\em $g_\mathrm R$--length}\index{Length}\index{Riemannian!length} of a curve $\gamma:[a,b]\rightarrow M$ is $$L_\mathrm R(\gamma)=\int_a^b \sqrt{g_\mathrm R(\dot\gamma(t),\dot\gamma(t))}\;\dd t.$$
\end{definition}
 
Notice however that replacing $g_\mathrm R$ with a semi--Riemannian metric $g\in\met_\nu^k(M)$ with index $\nu\neq0$, the integrand above is not well--defined. In particular, using this same length definition would imply that non constant lightlike curves would always have always null length. For this reason, in semi--Riemannian geometry it is more usual to deal with the {\em energy} of a curve, rather than its length.

\begin{definition}\label{def:energy}
The {\em $g$--energy}\index{Energy} of a curve $\gamma:[a,b]\rightarrow M$ is $$E_g(\gamma)=\tfrac12\int_a^b g(\dot\gamma(t),\dot\gamma(t))\;\dd t.$$
Notice that $E_g(\gamma)$ might be negative, for instance if $\gamma$ is timelike.
\end{definition}

In Chapter~\ref{chap35}, we will study the relation between $g$--geodesics and curves that minimize $g$--energy, which is totally analogous to the Riemannian case.

\begin{definition}\label{def:distance}
The {\em $g_\mathrm R$--distance}\index{Riemannian!distance}\index{Distance} of two points $p,q\in M$ is given by the infimum $d_\mathrm R(p,q)$ of lengths of all piecewise regular curve segments joining $p$ and $q$. 
\end{definition}

\begin{remark}
The pair $(M,d_\mathrm R)$ is a metric space, and the topology induced by this distance coincides with the topology from the atlas of $M$.
\end{remark}

It is also possible to define semi--Riemannian distance functions, nevertheless we will not use this concept in our applications. In the case of the Riemannian distance $d_\mathrm R$, completeness of the metric space $(M,d_\mathrm R)$ is related to a {\em geodesic} notion of completeness by the celebrated Hopf--Rinow Theorem, see \cite{jost,lee,petersen}. Several related modern topics of research deal with similar relations and completeness notions in the semi--Riemannian case. We will not discuss this topic, which is beyond the objectives of this text.

We end this section recalling some basic definitions regarding {\em submanifolds} of a semi--Riemannian manifold $(M,g)$. Consider the inclusion $i:P\hookrightarrow M$ of a submanifold $P\subset M$. The restriction $i^*g$ may be a degenerate\footnote{This happens in case there exists $x\in P$ such that $i^*g(x)$ is a degenerate symmetric bilinear form on $T_xP$, see Definition~\ref{def:nondegenerate}.} tensor, in which case the submanifold $P$ is called {\em degenerate}.\index{Submanifold!degenerate} Furthermore, as we will see in the next section, there exists topological obstructions to the existence of metrics of given index, hence if a submanifold $P$ has such obstructions, then any restriction $i^*g$ will necessarily degenerate at some point. In order to develop our results that concern submanifolds, {\em nondegeneracy} will be a necessary hypothesis.

\begin{definition}\label{def:nondegmet}
Consider $P$ a submanifold of $M$, $g\in\met_\nu^k(M)$, and $i:P\hookrightarrow M$ its inclusion. Then $P$ is said to be {\em $g$--nondegenerate}\index{Submanifold!nondegenerate} if the restricted metric tensor $i^*g$ is nondegenerate. The set of such metrics on $M$ is denoted
\begin{equation}\label{eq:nondegmet}
\met_{\nu}^k(M,P)=\{g\in\met_{\nu}^k(M): P \mbox{ is }g\mbox{--nondegenerate}\}.
\end{equation}
The submanifold $P$ is said to be {\em $g$--degenerate}\index{Submanifold!degenerate} for every $g\in\met_\nu^k(M)\setminus\met_\nu^k(M,P)$.
\end{definition}

\begin{remark}\label{re:metmpmightbeempty}
For $\nu=0$, trivially $\met_0^k(M,P)=\met_0^k(M)$ for any submanifold $P$. Nevertheless, if $0<\nu <m$, the subset $\met_{\nu}^k(M,P)$ might be empty, since there are topological obstructions to the existence of semi--Riemannian metrics of fixed index on $P$, which will be studied in the next section using characteristic classes, in particular the Euler class.
\end{remark}

\begin{proposition}
A submanifold $P$ of $M$ is $g$--degenerate if and only if there exists $p\in P$ such that\footnote{Recall that $T_p^0M$ is the $g$--light cone of $M$ at $p$, see Definition~\ref{def:unittangentbundle}.} $T_pP\cap T^0_pM\neq\{0\}$ and given any nonzero $v\in T_pP\cap T^0_pM$, the subspace $T_pP$ is contained in $T_v T^0_pM$.
\end{proposition}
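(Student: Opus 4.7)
The plan is to translate both sides of the equivalence into a single algebraic statement: that the radical of the restricted bilinear form $i^*g(p)$ is nontrivial at some $p\in P$, which is exactly the definition of $P$ being $g$-degenerate at $p$.

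The key preliminary computation is the identification of $T_vT^0_pM$ for nonzero $v\in T^0_pM$. The light cone $T^0_pM$ is the zero locus in the vector space $T_pM$ of the smooth quadratic function $f(w)=g(p)(w,w)$, whose differential at $v$ is $df(v)u=2g(p)(v,u)$. Nondegeneracy of $g(p)$ on $T_pM$ ensures that this linear functional is nonzero exactly when $v\neq 0$, so $T^0_pM\setminus\{0\}$ is a smooth hypersurface of $T_pM$ and $T_vT^0_pM=\ker df(v)=v^{\perp}$, where $\perp$ denotes $g(p)$-orthogonality in $T_pM$.

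With this identification, the geometric condition ``$T_pP\subseteq T_vT^0_pM$'' reads ``$g(p)(v,w)=0$ for every $w\in T_pP$''; combined with $v\in T_pP$, this says precisely that $v$ lies in the radical of $i^*g(p)$. For the forward implication, I would pick $p\in P$ at which $i^*g(p)$ is degenerate and choose any nonzero $v$ in its radical: automatically $g(p)(v,v)=0$, so $v\in T_pP\cap T^0_pM\setminus\{0\}$ and $T_pP\subseteq v^{\perp}=T_vT^0_pM$. The converse is equally direct: the hypothesized $v$ exhibits a nonzero element of the radical of $i^*g(p)$, so $i^*g(p)$ is degenerate and $P$ is $g$-degenerate by definition.

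The only delicate point is the identification $T_vT^0_pM=v^{\perp}$, which crucially uses nondegeneracy of the ambient metric $g(p)$ on $T_pM$ (so that $df(v)\neq 0$ away from the origin and the light cone is actually a smooth hypersurface there); beyond this routine step, the argument is a direct translation between two equivalent formulations of degeneracy, and I do not foresee any real obstacle.
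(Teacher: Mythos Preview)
Your proof is correct and follows essentially the same approach as the paper: both compute $T_vT^0_pM=v^{\perp_g}$ by differentiating the quadratic form $f(w)=g(p)(w,w)$, then observe that the condition $v\in T_pP$ together with $T_pP\subseteq v^{\perp_g}$ is exactly the statement that $v$ lies in the radical of $i^*g(p)$. Your treatment is slightly more careful in justifying why the light cone is a smooth hypersurface away from the origin (invoking nondegeneracy of $g$ to guarantee $df(v)\neq 0$), but the argument is otherwise identical to the paper's.
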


\begin{proof}
It suffices to prove that the subspace $T_pP$ of $T_pM$ intersects the kernel\footnote{See Definition~\ref{def:nondegenerate}.} of the bilinear form $g(p)|_{T_pP\times T_pP}$ non trivially if and only if $T_pP\cap T_p^0M\neq\{0\}$ and given any nonzero $v\in T_pP\cap T^0_pM$, the subspace $T_pP$ is contained in $T_v T^0_pM$. This implies that $g$ degenerates at $p\in P$, and hence $P$ is $g$--degenerate.

Suppose $T_pP$ intersects the kernel of $g(p)|_{T_pP\times T_pP}$ non trivially. Then there exists a nonzero $v\in T_pP$ such that $g(p)(v,w)=0$ for all $w\in T_pP$. In particular, $g(p)(v,v)=0$, hence $T_pP$ intersects $T^0_pM$ non trivially. Consider a nonzero $v\in T_pP\cap T_p^0M$. Deriving $g(p)(v,v)=0$, it is easy to see that the tangent space to $T_p^0M$ is given by $$T_vT^0_pM=v^{\perp_g}=\{w\in T_pM:g(p)(v,w)=0\}.$$ Thus, if $v$ is in the kernel of $g(p)|_{T_pP\times T_pP}$, then $T_pP\subset T_vT^0_vM$. The converse is obvious.
\end{proof}

\begin{definition}
Let $P$ be a submanifold of $M$ and $g\in\met_\nu^k(M,P)$. The {\em $g$--normal bundle}\index{Normal bundle} $TP^\perp$ to $P$ is the smooth sub bundle of the tangent bundle $TM$ whose base is $P$ and whose fibers at each $p\in P$ are given by $T_pP^{\perp_g}$, i.e., the orthogonal complement of $T_pP$ in $T_pM$ with respect to $g(p)$. In the presence of more than one metric on the ambient, when not clear from the context, we will include a subindex $^{\perp_g}$ to denote with reference to which metric normal objects should be considered.
\end{definition}

\begin{definition}\label{def:sfform}
If $g\in\met_{\nu}^k(M,P)$, the \emph{second fundamental form}\index{Second fundamental form}\index{Submanifold!second fundamental form} of $P$ in the normal direction $\eta\in TP^\perp$ is the symmetric bilinear tensor $\s^P_\eta\in\sect^k(TP^*\vee TP^*)$, given by
\begin{equation}\label{eq:sff}
\s^P_\eta(v,w)=g(\nabla^{g}_v \overline{w},\eta),
\end{equation}
where $\overline{w}$ is an extension\footnote{It is simple to verify that indeed this definition does not depend on the chosen extension of $w$.} of $w$ tangent to $P$. Using the fact that $P$ is nondegenerate, we will also identify $\s^P_\eta$ at a point $p\in P$ with the $g$--symmetric linear operator
\begin{eqnarray*}
&\s^P_\eta(p):T_pP\longrightarrow T_pP& \\
&g(\s^P_\eta(p)v,w)=\s^P_\eta(v,w), \quad  v,w\in T_pP,&
\end{eqnarray*}
using \eqref{ident:bilin}. This operator $\s^P_\eta$ is called the \emph{shape operator}\index{Shape operator}\index{Submanifold!shape operator} of $P$.

If the second fundamental form $\s_\eta^P$ vanishes identically for any normal direction $\eta$, then $P$ is called a \emph{totally geodesic}\index{Totally geodesic submanifold}\index{Submanifold!totally geodesic} submanifold. This property is equivalent to each $g$--geodesic of $P$ being a $g$--geodesic of $M$.
\end{definition}

\begin{definition}\label{def:focalpoint}
A point $q\in M$ is said to be {\em $g$--focal}\index{Focal!point} to a submanifold $P$ if there exists a $g$--geodesic $\gamma:[a,b]\to M$ with $\gamma(a)\in P$, $\dot\gamma(a)\in T_{\gamma(a)}P^\perp$ and $\gamma(b)=q$, and a $g$--Jacobi field $J:[a,b]\to M$ satisfying $J(a)\in T_{\gamma(a)}P$, $J(b)=0$ and $$\D^g J(a)+\s^P_{\gamma(a)}(J(a))\in T_{\gamma(a)}P^\perp.$$
\end{definition}

\begin{remark}
The above definition clearly generalizes the notion of conjugacy between two points, see Definition~\ref{def:conjugate}. Analogously to Proposition~\ref{prop:conjugatecritexp}, it is easy to prove that a point $q$ is focal to a submanifold $P$ if and only if it is a critical value of the normal $g$--exponential map $\exp^\perp:TP^\perp\to M$, given by the restriction of the $g$--exponential map to the $g$--normal bundle to $P$.
\end{remark}

\begin{example}
Consider $\R^m$ endowed with the Euclidean metric and the embedded round $(m-1)$--sphere $S^{m-1}$. Then, it is easy to verify that the origin of $\R^m$ is focal to $S^{m-1}$, since any geodesic orthogonal to $S^{m-1}$ admits a Jacobi field satisfying the conditions of Definition \ref{def:focalpoint}, as shown in figure below.

\begin{figure}[htf]
\begin{center}
\vspace{-0.4cm}
\includegraphics[scale=1]{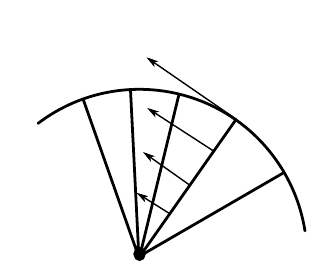}
\begin{pgfpicture}
\pgfputat{\pgfxy(0.1,1)}{\pgfbox[center,center]{$S^{m-1}$}}
\pgfputat{\pgfxy(-2.1,-0.2)}{\pgfbox[center,center]{$0$}}
\end{pgfpicture}
\end{center}
\end{figure}
\end{example}

\begin{definition}\label{def:focalsubmanifolds}
Two submanifolds $P$ and $Q$ of a semi--Riemannian manifold $(M,g)$ are said to be {\em $g$--focal}\index{Focal!submanifolds} if there exists a $g$--geodesic $\gamma:[a,b]\to M$ with $\gamma(a)\in P$, $\dot\gamma(a)\in T_{\gamma(a)}P^\perp$ and $\gamma(b)\in Q$, $\dot\gamma(b)\in T_{\gamma(b)}Q^\perp$ and a $g$--Jacobi field $J:[a,b]\to M$ along $\gamma$ satisfying $J(a)\in T_{\gamma(a)}P$, $J(b)\in T_{\gamma(b)}Q$ and
\begin{equation}\label{eq:focalsubmanifolds}
\begin{aligned}
\D^g J(a)+\s_{\dot\gamma(a)}^P(J(a)) &\in T_{\gamma(a)}P^\perp \\
\D^g J(b)+\s_{\dot\gamma(b)}^Q(J(b)) &\in T_{\gamma(b)}Q^\perp,
\end{aligned}
\end{equation}
where $^\perp$ is orthogonality with respect to the metrics on $P$ and $Q$ induced by $g$.
\end{definition}

Once more, the above definition clearly generalizes the previous notions of conjugacy between points and focality between a point and a submanifold. For a geometrical interpretation of focality between submanifolds we refer to Piccione and Tausk \cite{PicTauJMP}. Notice also that there are clear physical approaches to focality of two manifolds, for instance considering wavefronts.

\section{Topological obstructions to existence of metrics}
\label{sec:topobst}

Using partitions of the unity, it is not difficult to prove that every manifold can be endowed with a Riemannian metric\footnote{Recall Remark~\ref{re:fixedgr}.}, see for instance \cite{jost,lee,petersen}. Nevertheless, there are topological obstructions to the existence of {\em semi--Riemannian} metrics. A relevant topic in modern research is to determine practical necessary and sufficient topological conditions for the existence of semi--Riemannian metrics of a given index. An adequate approach for this type of problem consists of using obstruction theory and characteristic classes.

In this section, we prove a well--known condition of this type, see Proposition~\ref{prop:metricdistribution}. Nevertheless, this is not a computationally manageable condition for arbitrary indexes. For more specific indexes however, it is possible to improve such statement. Namely, we will explore the Lorentzian case $\nu=1$, in which the obstruction described in Proposition~\ref{prop:metricdistribution} is a well--known characteristic class. We will also relate it with celebrated topological invariants, for compact manifolds. Finally, we discuss some examples in low dimensions, particularly concerning existence of semi--Riemannian metrics on spheres, based in Steenrod \cite{steenrod}.

Notice that every result on obstructions to the existence of metrics may be applied to submanifolds of a given semi--Riemannian manifold. Hence, it may be regarded as a result on the obstruction to the nondegeneracy of submanifolds, see Definition~\ref{def:nondegmet} and Remark~\ref{re:metmpmightbeempty}.

\begin{proposition}\label{prop:metricdistribution}
A smooth manifold $M$ admits a $C^k$ semi--Riemannian metric $g\in\met_\nu^k(M)$ if and only if $M$ admits a $C^k$ distribution\footnote{See Example~\ref{ex:distribution}.} of rank $\nu$.
\end{proposition}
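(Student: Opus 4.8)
The plan is to use the fixed auxiliary Riemannian metric $g_\mathrm R$ as a bridge, trading a semi--Riemannian metric of index $\nu$ for the negative eigenbundle of an associated self--adjoint endomorphism field in one direction, and trading a rank--$\nu$ distribution for a $g_\mathrm R$--orthogonal splitting of $TM$ in the other.

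For the ``only if'' direction, suppose $g\in\met_\nu^k(M)$. Using the identification \eqref{ident:bilin} of bilinear forms with endomorphisms, I would define a $C^k$ section $A$ of $TM^*\otimes TM$ by $g(v,w)=g_\mathrm R(A(v),w)$ for $v,w\in T_xM$; then each $A(x)$ is $g_\mathrm R$--self--adjoint, invertible (since $g(x)$ is nondegenerate), and has exactly $\nu$ negative eigenvalues counted with multiplicity (since $g(x)$ has index $\nu$). The idea is to let $\mathcal D_x\subset T_xM$ be the sum of the eigenspaces of $A(x)$ for negative eigenvalues, a $\nu$--dimensional subspace. To see that $\mathcal D=\bigcup_{x\in M}\{x\}\times\mathcal D_x$ is a $C^k$ distribution (Example~\ref{ex:distribution}), I would realize it locally as the image of a $C^k$ field of spectral projections: near any $x_0$ the spectrum of $A$ stays in a fixed compact set disjoint from $0$, so one can pick a contour $\Gamma$ in the left half--plane enclosing the negative eigenvalues of $A(x)$ for all nearby $x$, and set
\begin{equation*}
P(x)=\frac{1}{2\pi i}\oint_\Gamma (zI-A(x))^{-1}\,\dd z,
\end{equation*}
a $C^k$ family of rank--$\nu$ projections with image $\mathcal D$. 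By the remarks following Definition~\ref{def:subbundle}, $\mathcal D$ is then a $C^k$ sub bundle of $TM$ of rank $\nu$.

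For the ``if'' direction, let $\mathcal D$ be a $C^k$ distribution of rank $\nu$. I would take its $g_\mathrm R$--orthogonal complement $\mathcal D^{\perp}$, again a $C^k$ sub bundle of $TM$, of rank $m-\nu$, so that $TM=\mathcal D\oplus\mathcal D^{\perp}$ is a $C^k$ Whitney sum (Proposition~\ref{prop:whitneysumtensor}); write $v=v_1+v_2$ for the corresponding $C^k$--varying decomposition. The plan is then to define $g(v,w)=-g_\mathrm R(v_1,w_1)+g_\mathrm R(v_2,w_2)$, which is manifestly a $C^k$ symmetric $(0,2)$--tensor. It is nondegenerate because it restricts to $-g_\mathrm R$ (negative definite) on $\mathcal D$, to $g_\mathrm R$ (positive definite) on $\mathcal D^{\perp}$, and makes $\mathcal D$ and $\mathcal D^{\perp}$ mutually $g$--orthogonal; hence $g\in\met_\nu^k(M)$ with index exactly $\dim\mathcal D=\nu$. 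The verifications that this $g$ has the stated properties and that $\mathcal D^{\perp}$ is $C^k$ are routine; the only genuinely delicate point is the $C^k$--dependence of the negative spectral projection in the first direction, which is why I would argue via the holomorphic functional calculus above (equivalently, via $P(x)=\tfrac12\bigl(I-A(x)|A(x)|^{-1}\bigr)$ with $|A(x)|=(A(x)^2)^{1/2}$, using smoothness of the square root on positive--definite symmetric endomorphisms) rather than by tracking eigenvectors, which would break where eigenvalue multiplicities jump.
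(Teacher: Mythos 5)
Your proof is correct and follows essentially the same route as the paper: both directions hinge on the auxiliary Riemannian metric $g_\mathrm R$, the ``only if'' direction extracts the negative spectral subbundle of the $g_\mathrm R$--symmetric endomorphism $A$ with $C^k$ regularity established via a holomorphic--functional--calculus (contour integral) projection, and the ``if'' direction builds $g$ by sign--flipping $g_\mathrm R$ on $\mathcal D$ versus $\mathcal D^\perp$. Your single--contour formulation of the spectral projection is marginally cleaner than the paper's per--eigenvalue contours, but the argument is the same.
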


\begin{proof}
Assume $\mathcal D\subset TM$ is a $C^k$ distribution of rank $\nu$ on $M$ and consider $g_\mathrm R$ an auxiliary smooth Riemannian metric on $M$, see Remark~\ref{re:fixedgr}. Define a section $g\in\sect^k(TM^*\vee TM^*)$ by setting
\begin{equation}
g(v,w)=\begin{cases}g_\mathrm R(v,w),& v,w\in\mathcal D^\perp\\[.3cm]
0,&v\in\mathcal D,w\in\mathcal D^\perp\\[.3cm]
-g_\mathrm R(v,w),&v,w\in\mathcal D,
\end{cases}
\end{equation}
where $^\perp$ clearly denotes $g_\mathrm R$--orthogonality. It is then a simple verification that $g\in\met_\nu^k(M)$.

Conversely, assume that $g\in\met_\nu^k(M)$ and let $A\in\sect^k(TM^*\otimes TM)$ be the unique $g_{\mathrm R}$--symmetric $(1,1)$--tensor on $M$ that represents $g$ in terms of $g_\mathrm R$, i.e., such that $$g=g_{\mathrm R}(A\cdot,\cdot).$$ Notice that at each $x\in M$, $A(x)$ is a symmetric $m\times m$ real matrix of index $\nu$, hence diagonalizable. Denote by $\sigma(A(x))\subset\R$ the set of eigenvalues of $A(x)$, and by $\eig_{A(x)}(\lambda_j)$ the eigenspace of $A(x)$ correspondent to $\lambda_j\in\sigma(A(x))$. Define a distribution $\mathcal D$ by
\begin{equation}
\mathcal D_x=\bigoplus_{\substack{\lambda_j\in\sigma(A(x)) \\ \lambda_j<0}} \eig_{A(x)}(\lambda_j), \quad x\in M.
\end{equation}
This is clearly a distribution of rank $\nu$ on $M$. Notice that proving that $\mathcal D$ is $C^k$ is equivalent to proving that the map
\begin{equation}\label{eq:xpx}
M\ni x\longmapsto (x,p_x)\in\bigcup_{q\in M} \{q\}\times\Lin(T_qM,\mathcal D_q),
\end{equation}
where $p_x:T_xM\to\mathcal D_x$ is the $g_\mathrm R(x)$--orthogonal projection onto $\mathcal D_x$, is a $C^k$ section of this vector bundle. This map \eqref{eq:xpx} can be clearly decomposed as
\begin{equation*}
x\xmapsto{\;\;A\;\;} A(x)\xmapsto{\;\;\eta\;\;} p_x,
\end{equation*}
where $\eta$ is the map that to each symmetric matrix $H\in\GL(m,\R)$ of index $\nu$ associates the orthogonal projection $\eta(H)\in\Lin(\R^m,\R^\nu)$ onto the direct sum of its negative eigenspaces. A standard functional analytical argument gives
\begin{equation}
\eta(H)=\sum_{j=1}^\nu \frac1{2\pi i}\oint_{\gamma_j}\frac{\dd z}{z-H},
\end{equation}
where $\{\gamma_j\}_{j=1}^\nu$ are smooth curves in the complex plane $\C$ that make one turn around each negative eigenvalues $\lambda_j$ of $H$ counterclockwisely, see figure below.\footnote{Note that in case $H$ is diagonal, from the Cauchy formula, the $j^{\mbox{\tiny th}}$ line integral is equal to the diagonal matrix with $1$ in the $j^{\mbox{\tiny th}}$ position and $0$ in the others. Hence, the sum that results $\eta(H)$ is the diagonal matrix with $1$ in the coordinates that correspond to negative eigenvalues of $H$ and $0$ in the others. This is exactly the matrix of the projection onto the direct sum of all negative eigenspaces of $H$.} Since $\eta$ is clearly smooth and $A$ is $C^k$, it follows that \eqref{eq:xpx} is also $C^k$, concluding the proof.
\end{proof}

\begin{figure}[htf]
\vspace{-1cm}
\begin{center}
\includegraphics[scale=1]{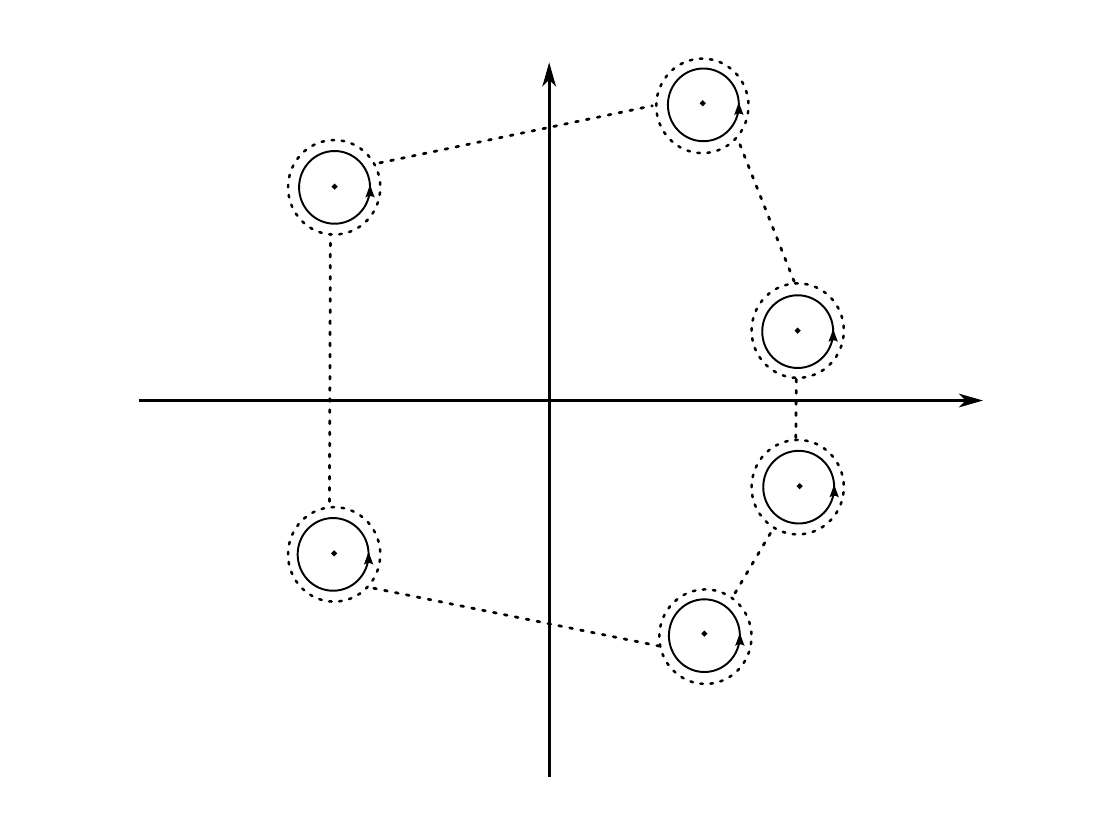}
\begin{pgfpicture}
\pgfputat{\pgfxy(-8.1,2.7)}{\pgfbox[center,center]{$\lambda_j$}}
\pgfputat{\pgfxy(-8.8,2.8)}{\pgfbox[center,center]{$\gamma_j$}}
\end{pgfpicture}
\end{center}
\vspace{-1cm}
\end{figure}

\begin{remark}
For the rest of this section, we drop the observations about regularity of metrics. Since the obstructions are topological and $M$ is assumed smooth, if $M$ has no obstructions to the existence of a $C^k$ semi--Riemannian metric of index $\nu$, it automatically admits semi--Riemannian metrics of this index of class $C^r$ for any other $r$. Thus, we shall omit the regularity of metrics in this section.
\end{remark}

\begin{remark}\index{$\nu$--topological obstruction}
In the sequel, by {\em having $\nu$--topological obstructions} we mean having obstructions to the existence of metrics of index $\nu$. In addition, if the index $\nu$ is evident from the context, it may be omitted.
\end{remark}

\begin{corollary}\label{cor:stupidcor}
If $M$ is {\em contractible}\footnote{i.e., has the same homotopy type of a point.}, then $M$ admits semi--Riemannian metrics of all possible signatures.
\end{corollary}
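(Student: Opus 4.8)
The plan is to reduce everything to Proposition~\ref{prop:metricdistribution} together with the triviality of the tangent bundle of a contractible manifold. By that proposition, $M$ carries a $C^k$ semi--Riemannian metric of index $\nu$ precisely when it carries a $C^k$ distribution of rank $\nu$, i.e.\ a rank--$\nu$ sub bundle $\mathcal D\subset TM$ (Example~\ref{ex:distribution}). So it suffices to exhibit, for every $\nu$ with $0\leq\nu\leq m=\dim M$, a smooth rank--$\nu$ sub bundle of $TM$; then Proposition~\ref{prop:metricdistribution} furnishes a metric of index $\nu$, and letting $\nu$ run over $\{0,1,\dots,m\}$ covers all possible signatures.

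First I would invoke the homotopy invariance of vector bundles: since $M$ is contractible, $\id_M$ is homotopic to a constant map $c:M\to\{x_0\}\hookrightarrow M$, hence $TM\cong\id_M^*TM\cong c^*TM$, and $c^*TM$ is a trivial bundle. Thus there is a global frame, equivalently a vector bundle isomorphism $\Phi:TM\to M\times\R^m$ with each $\Phi_x:T_xM\to\R^m$ linear. Here one uses that smooth manifolds are paracompact, so the classification and homotopy invariance theorems for vector bundles apply.

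With such a $\Phi$ fixed, for each $\nu\in\{0,\dots,m\}$ set $\mathcal D^\nu_x=\Phi_x^{-1}\big(\R^\nu\times\{0\}^{m-\nu}\big)\subset T_xM$. The constant sections $\Phi^{-1}(\cdot\,,e_1),\dots,\Phi^{-1}(\cdot\,,e_\nu)$ form a smooth global frame for $\mathcal D^\nu$, so by Definition~\ref{def:subbundle} this is a smooth sub bundle of $TM$ of rank $\nu$, that is, a distribution of rank $\nu$ on $M$. Applying Proposition~\ref{prop:metricdistribution} to $\mathcal D^\nu$ produces a metric in $\met_\nu^k(M)$ for every $k$, in view of the regularity remark following that proposition, which completes the argument.

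There is no serious obstacle here: the only point that deserves care is the appeal to triviality of $TM$ over a contractible base, which rests on homotopy invariance of vector bundles and hence on paracompactness of $M$, already implicit in this text through the use of partitions of unity. The degenerate cases $\mathcal D^0=\mathbf 0$ and $\mathcal D^m=TM$ recover a Riemannian and a negative--definite metric respectively, so these cause no trouble.
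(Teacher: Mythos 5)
Your proof is correct and follows the same route as the paper: triviality of vector bundles over a contractible base gives distributions of every rank, and Proposition~\ref{prop:metricdistribution} converts each into a metric of the corresponding index. You simply spell out the homotopy-invariance argument for why $TM$ is trivial and the explicit construction of the rank-$\nu$ sub bundles, details the paper takes for granted.
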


\begin{proof}
This is immediate from the fact that vector bundles with contractible basis are trivial. Hence $M$ admits distributions of all possible ranks, and the result follows from Proposition~\ref{prop:metricdistribution}.
\end{proof}

\begin{remark}
Corollary~\ref{cor:stupidcor} is not as cloddish as it seems. In fact, there are elaborate constructions of contractible smooth manifolds not homeomorphic to the Euclidean space, such as {\em Whitehead manifolds}.\footnote{Whitehead manifolds are contractible $3$--manifolds not homeomorphic to $\R^3$, and were discovered by Henri Whitehead in his attempts to prove the Poincar\'e conjecture, see Kirby \cite{kirby}.}
\end{remark}

We now treat the case $\nu=1$ of Lorentzian metrics on $M$. The correspondent obstruction may be characterized by a well--known {\em characteristic class}, i.e., a rule that associates a cohomology class of $M$ to each vector bundle $E$ over $M$, measuring how {\em twisted} it is, and particularly if it admits nontrivial sections. Existence of a such nontrivial section for $TM$ means that there is a globally defined non vanishing vector field on $M$. This clearly implies\footnote{Notice that the converse is not necessarily true, see Remark~\ref{re:linebundvec}.} the existence of a rank $1$ distribution on $M$ spanned by this field, which, by Proposition~\ref{prop:metricdistribution}, guarantees the existence of a Lorentzian metric on $M$. We shall later go back to general indexes $\nu$ in the end of this section.

The definition of the {\em Euler class} of an oriented vector bundle can be given in several different ways. Namely, one may give an axiomatic characterization, or an explicit formula using the curvature of a connection on this bundle, or finally a typical algebraic topology construction using the orientation class of this bundle. We shall adopt the last, see \cite{kirk,husemoller,kn2,milnor} for a more comprehensive study.

Let $E$ be an oriented $C^0$ vector bundle over $M$ of rank $r$, and denote $\dot E=E\setminus\mathbf 0_E$ the complementary of the null section in $E$, see Remark~\ref{re:nullsection}. Consider the inclusion of pairs $j_x:(E_x,\dot E_x)\to (E,\dot E)$, where $\dot E_x=E_x\setminus\{\mathbf 0_E (x)\}$, and the induced homomorphism $j_x^*:H^r(E,\dot E;\Z)\to H^r(E_x,\dot E_x;\Z)$ between the respective cohomologies. Standard arguments prove that there exists a unique $U\in H^r(E,\dot E;\Z)$, called {\em orientation class}\index{Orientation class} of $E$, such that $j_x^*(U)$ is a generator of $H^r(E_x,\dot E_x;\Z)$ for all $x\in M$, see for instance \cite{husemoller}. Denote by $i:(E,\emptyset)\hookrightarrow (E,\dot E)$ the inclusion and $i^*:H^*(E,\dot E;\Z)\to H^*(E;\Z)$ the restriction homomorphism induced between the respective cohomology rings.

\begin{definition}
The {\em Euler class}\index{Euler class} of an oriented vector bundle $E$ over $M$ of rank $r$ is the cohomology class $e(E)\in H^r(M,\Z)$ defined by $$e(E)=(\pi^*)^{-1}i^*(U),$$ where $U$ is the orientation class of $E$ and $\pi^*:H^r(M,\Z)\to H^r(E,\Z)$ is the homomorphism induced by the projection of $E$.
\end{definition}

\begin{remark}
In case $E$ is non orientable, it is necessary to use cohomology with {\em twisted coefficients} to obtain a substitute for the orientation class $U$ in the above case. One may define analogously the Euler class of non orientable vector bundles as a cohomology class in such twisted cohomology.
\end{remark}

\begin{remark}
Notice that for any $s\in\sect^0(E)$, we have $\pi\circ s=\id$ and hence $s^*=(\pi^*)^{-1}:H^*(E,\Z)\to H^*(M,\Z)$. Thus
\begin{eqnarray*}
e(E) &=& (\pi^*)^{-1}i^*(U)\\
&=& s^*\circ i^*(U)\\
&=& (i\circ s)^*(U).
\end{eqnarray*}
\end{remark}

The main reason to study the Euler class of oriented vector bundles over $M$ is the following.

\begin{theorem}\label{thm:obsteuler}
Let $E$ be an orientable vector bundle over $M$ of rank $r$. The {\em primary} obstruction to the existence of a nontrivial\footnote{i.e., $s\ne\mathbf 0_E$.} section $s\in\sect^0(E)$ is the Euler class $e(E)$. In case $r=m$, this is the {\em unique} obstruction.
\end{theorem}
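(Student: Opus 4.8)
This is a classical obstruction-theoretic statement; the proof would proceed along the standard lines of primary obstruction theory applied to the fiber bundle whose total space is $\dot E = E \setminus \mathbf 0_E$. The plan is to reduce the question of the existence of a nowhere-vanishing section of $E$ to a lifting problem: a section $s \in \sect^0(E)$ is nowhere vanishing precisely when it factors through the inclusion $\dot E \hookrightarrow E$, so we study sections of the sphere bundle $S(E)$ obtained by deformation retracting each punctured fiber $\dot E_x$ onto a sphere $S^{r-1}$ using the auxiliary metric $g_\mathrm R$. First I would fix a CW structure on $M$ and attempt to build a section of $S(E)$ skeleton by skeleton. Over the $0$-skeleton there is no obstruction since the fibers are nonempty; a section extends over each $j$-cell for $j \le r-1$ because the fiber $S^{r-1}$ is $(r-2)$-connected, so $\pi_{j-1}(S^{r-1}) = 0$ for $j \le r-1$ and the attaching-sphere obstruction vanishes. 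Thus a section always exists over the $(r-1)$-skeleton.

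The first genuine obstruction appears when extending over the $r$-cells: for each $r$-cell $e^r$, restricting the section already defined on $\partial e^r \cong S^{r-1}$ gives an element of $\pi_{r-1}(S^{r-1}) \cong \Z$, and assembling these over all $r$-cells yields a cellular cochain in $C^r(M; \pi_{r-1}(S^{r-1}))$, which in fact is a cocycle and whose cohomology class $\mathfrak o(E) \in H^r(M; \pi_{r-1}(S^{r-1}))$ is independent of the chosen section over the $(r-1)$-skeleton (this uses the standard difference-cochain argument; the coefficient system is twisted exactly when $E$ is non-orientable, which matches the earlier Remark on twisted coefficients). The core identification to carry out is then that this primary obstruction class $\mathfrak o(E)$ coincides with the Euler class $e(E)$ as defined in the excerpt via the orientation class $U \in H^r(E, \dot E; \Z)$. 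I would prove this by a direct comparison at the cochain level: evaluating $\mathfrak o(E)$ on an $r$-cell $e^r$ amounts to computing the degree of a map $S^{r-1} \to S^{r-1}$, which is the same as pairing $U$ with the relative fundamental class of $(e^r \times S^{r-1}, e^r \times S^{r-1} \cup \partial e^r \times \text{cone})$ — equivalently, using the formula $e(E) = (i \circ s)^*(U)$ from the preceding Remark, the obstruction to homotoping $s$ off the zero section over a cell is precisely the restriction of $U$.

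Finally, for the last assertion (when $r = m$), I would note that once a section has been constructed over the $m$-skeleton, i.e., over all of $M$ since $\dim M = m$, there are no further cells and hence no secondary or higher obstructions; the vanishing of $e(E) \in H^m(M; \Z)$ (or its twisted analogue) is therefore both necessary and sufficient for the existence of a nowhere-vanishing section of $E$. To conclude Theorem~\ref{thm:obsteuler} in the form stated, I would combine this with the observation that the obstruction cochain is well-defined up to coboundary, so its vanishing as a cohomology class is what governs extendability, and cite \cite{husemoller,milnor,kn2} for the standard details of the difference-cochain machinery. \textbf{Main obstacle.} The technical heart — and the step most likely to require care rather than citation — is the cochain-level identification of the primary obstruction with $e(E)$; once orientability is allowed to fail one must be scrupulous that the local coefficient system arising from the action of $\pi_1(M)$ on $\pi_{r-1}(S^{r-1})$ is exactly the one twisting the orientation class, and that the two sign conventions (obstruction-theoretic and the one implicit in the choice of generator of $H^r(E_x, \dot E_x; \Z)$) are compatible.
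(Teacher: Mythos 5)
Your proposal follows the same skeleton-by-skeleton obstruction-theoretic route that the paper sketches (and ultimately cites to Davis and Kirk): reduce to a section of the sphere bundle, use $(r-2)$-connectivity of the fiber to get past the $(r-1)$-skeleton, identify the first obstruction cocycle with the Euler class, and observe that when $r=m$ there are no higher cells so the primary obstruction is the only one. Your write-up is in fact more careful than the paper's two-sentence sketch — in particular you correctly isolate the cochain-level identification $\mathfrak o(E)=e(E)$ as the nontrivial step to be checked rather than just asserted — but the method is the same.
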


We will not give a proof of this result. Basically, it is done by studying the obstruction to lift a nontrivial section defined in the $n$--skeleton of a CW--complex to its $(n+1)$--skeleton. Up to minor identifications, the crucial fact in use is that a continuous function $f:S^{n-1}\to\R$ can be continuously extended to a function $\widetilde f:B^n\to\R$ defined on the $n$--ball $B^n$ that has boundary $S^{n-1}$ if and only if it is homotopic to a constant. Since $E_x\setminus\{\mathbf 0_E(x)\}$ has the same homotopy type of $S^{r-1}$, and $\pi_k(S^{r-1})=0$ for $k<r-1$, one easily verifies that there is no obstruction to lift a nontrivial section until reaching the $(r-1)$--skeleton. The final step to obtain the desired non vanishing section of $E$ is to lift it to the $r$--skeleton, which corresponds to $M$. In case $r=m$, the only obstruction is in this final step, and it is characterized by the vanishing of the Euler class. Nevertheless, for $r>m$, there are further obstructions, being $e(E)$ the first of them. See Davis and Kirk \cite{kirk} for a proof.

\begin{corollary}\label{cor:lorentzobst}
The obstruction class to the existence of Lorentzian metrics on $M$ is given by $e(TM)$.
\end{corollary}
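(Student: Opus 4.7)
The plan is to chain Proposition~\ref{prop:metricdistribution} together with Theorem~\ref{thm:obsteuler}, via the intermediate notion of a nowhere--vanishing vector field on $M$. First, I would apply Proposition~\ref{prop:metricdistribution} with $\nu=1$: the existence of a Lorentzian metric on $M$ is equivalent to the existence of a rank--$1$ distribution $\mathcal{D}\subset TM$, i.e., a line sub bundle. Second, I would show that the existence of such a rank--$1$ distribution on $M$ is equivalent to the existence of a nowhere--vanishing global section of $TM$; the easy direction is immediate, since any nowhere--vanishing $X\in\sect^0(TM)$ spans the rank--$1$ distribution $x\mapsto\R X(x)$. Finally, I would apply Theorem~\ref{thm:obsteuler} with $E=TM$, where the rank $r$ equals $m=\dim M$, to conclude that $e(TM)$ is the unique obstruction to the existence of such a nowhere--vanishing section. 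Combined with the previous steps, this identifies $e(TM)$ as the obstruction class for Lorentzian metrics on $M$.

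The main obstacle lies in the non--trivial direction of the second step: a rank--$1$ distribution $\mathcal{D}$ may be a non--orientable line bundle, and hence need not itself possess a nowhere--vanishing global section. To bypass this, I would pass to the orientation double cover $\pi:\widetilde{M}\to M$ of the line bundle $\mathcal{D}$. On $\widetilde{M}$, the pull--back $\pi^*\mathcal{D}$ is orientable, hence trivial as a line bundle, and admits a nowhere--vanishing section; since $\pi$ is a local diffeomorphism, this yields a nowhere--vanishing vector field on $\widetilde{M}$. In the compact case, combining this with the multiplicativity of the Euler characteristic under finite covers, $\chi(\widetilde{M})=2\chi(M)$, and the Poincar\'e--Hopf theorem applied on $\widetilde{M}$, one deduces $\chi(M)=0$, which in turn yields a nowhere--vanishing vector field on $M$ itself. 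In the non--compact case, the existence of a nowhere--vanishing vector field on $M$ follows instead from a standard partition--of--unity argument adapted to a handle decomposition of $M$.

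A secondary subtlety is that Theorem~\ref{thm:obsteuler} is stated for orientable bundles, whereas $TM$ itself may fail to be orientable; in that case one works with cohomology with twisted coefficients, in which $e(TM)$ is defined as indicated in the remark following that theorem. Modulo this convention, the three--step argument above identifies $e(TM)$ unambiguously as \emph{the} obstruction class to the existence of Lorentzian metrics on $M$.
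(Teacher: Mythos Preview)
Your proposal is correct and follows the same chain as the paper: Proposition~\ref{prop:metricdistribution} (Lorentzian $\Leftrightarrow$ rank-$1$ distribution), then nowhere-vanishing vector fields, then Theorem~\ref{thm:obsteuler}. The paper's actual proof is much terser---it writes out only the implication $e(TM)=0 \Rightarrow$ Lorentzian (a nowhere-vanishing field spans a line subbundle, then apply Proposition~\ref{prop:metricdistribution})---and defers the harder converse (line subbundle $\Rightarrow$ nowhere-vanishing field) to the subsequent Remark~\ref{re:linebundvec}. Your double-cover and Poincar\'e--Hopf argument for that converse is sound but anticipates machinery the paper only introduces afterward (the Gauss--Bonnet--Chern Theorem, Propositions~\ref{prop:noncompactlorentz} and~\ref{prop:existlorentzian}); it buys you a self-contained equivalence at this point in the text, whereas the paper's brevity is purchased by reading ``obstruction class'' directly off Theorem~\ref{thm:obsteuler} and postponing the fine print.
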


\begin{proof}
Vanishing of $e(TM)$ guarantees the existence of a globally defined section $v\in\sect^0(TM)$ that never vanishes. The conclusion follows by considering the rank $1$ distribution on $M$ spanned by $v$ and applying Proposition~\ref{prop:metricdistribution}.
\end{proof}

\begin{remark}\label{re:linebundvec}
In the proof of Corollary~\ref{cor:lorentzobst} we used a rank $1$ distribution spanned by a vector field. Nevertheless, not every rank $1$ distribution is of this form. More precisely, it is spanned by a non vanishing vector field if and only if it is orientable. It is also possible to prove that on a {\em simply connected} manifold (compact or not), every rank $1$ distribution is spanned from a globally defined non vanishing vector field. Let us briefly comment an example of rank $1$ distribution that is not spanned by any globally defined non vanishing vector field, discussed in Palomo and Romero \cite{romero}.

Consider $G=S^1\times\SO(3)$. Since $G$ is a Lie group, it is parallelizable, and hence every vector field $X\in\sect^k(TG)$ can be regarded as a map
\begin{equation}\label{eq:gvfield}
X:G\la\mathfrak g,
\end{equation}
where $\mathfrak g$ is a $4$--dimensional real vector space. Thus, every rank $1$ distribution $\mathcal D$ can be thought as a map $\mathcal D:G\to\R P^3\subset\mathfrak g$. Composing $\mathcal D$ with a fixed diffeomorphism $f:\R P^3\to\SO(3)$, it follows that every rank $1$ distribution $\mathcal D$ can be regarded as map $$f\circ\mathcal D:G\la\SO(3).$$ Consider the distribution $\mathcal D_2$ induced in this way by the projection $G\to\SO(3)$ on the second factor. Assuming that $\mathcal D_2$ is spanned by a vector field \eqref{eq:gvfield} on $G$ and using that $\mathfrak g\setminus\{0\}$ is simply connected, one can easily conclude\footnote{Consider the homomorphisms between the fundamental groups induced by these maps.} that $\SO(3)$ is simply connected, which is false. Therefore, $\mathcal D_2$ is {\em not} spanned by a globally defined non vanishing vector field on $G$.

Finally, the {\em existence} of a globally non vanishing vector field on $M$ is equivalent to the {\em existence} of a rank $1$ distribution on $M$, that may not be spanned by this vector field. This follows from the fact that both statements are equivalent to the vanishing of the Euler class $e(TM)$.
\end{remark}

\begin{proposition}\label{prop:noncompactlorentz}
Every non compact manifold admits a Lorentzian metric.
\end{proposition}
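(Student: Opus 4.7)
The plan is to reduce the existence of a Lorentzian metric to a purely cohomological statement via the obstruction theory already set up. By Corollary~\ref{cor:lorentzobst} (together with Proposition~\ref{prop:metricdistribution} applied with $\nu=1$), it suffices to produce a nowhere vanishing continuous section $X\in\sect^0(TM)$, since the line it spans is a rank $1$ distribution on $M$. Assuming $M$ is connected (the natural convention; otherwise one would have to impose the analogous condition on each component, as e.g.\ $S^2\sqcup\R$ shows), Theorem~\ref{thm:obsteuler} reduces the task to verifying that the Euler class $e(TM)$ vanishes, since $TM$ has rank equal to $\dim M=m$ and therefore the Euler class is the \emph{only} obstruction.

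The key step is then the following algebraic--topological fact, which I would invoke: for a connected, non--compact, topological $m$--manifold $M$, the top--dimensional cohomology with coefficients in the orientation local system $\mathcal{O}$ vanishes, i.e.\ $H^m(M;\mathcal{O})=0$. In the orientable case, $\mathcal{O}$ is the constant sheaf and this reads $H^m(M;\Z)=0$; in the non--orientable case, where $e(TM)$ naturally lives in twisted cohomology, the vanishing still holds. I would deduce it from Poincaré--Lefschetz duality, which identifies $H^m(M;\mathcal{O})$ with the Borel--Moore (locally finite) zeroth homology $H_0^{BM}(M;\Z)$; because $M$ is connected and non--compact, any $0$--cycle with locally finite support is the boundary of a locally finite $1$--chain running out to infinity along an end, so $H_0^{BM}(M;\Z)=0$.

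With $e(TM)=0$ in hand, Theorem~\ref{thm:obsteuler} produces the desired nowhere vanishing section $X\in\sect^0(TM)$, its span is a rank $1$ distribution, and Proposition~\ref{prop:metricdistribution} converts this distribution into a Lorentzian metric on $M$. The main delicacy I anticipate is the non--orientable case, which forces careful bookkeeping of twisted coefficients in the location of $e(TM)$ and in the duality isomorphism, though the vanishing conclusion is identical. As a more hands--on alternative, one could try to construct $X$ directly: take a proper Morse function $f:M\to\R$ (available precisely because $M$ is non--compact), consider the gradient of $f$ with respect to the auxiliary Riemannian metric $g_\mathrm{R}$, and cancel its zeros by pushing paired critical points off to infinity along unbounded gradient trajectories; this avoids obstruction theory entirely but is considerably more laborious and less conceptually transparent.
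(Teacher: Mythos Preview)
Your proposal is correct and follows essentially the same route as the paper: both reduce to the vanishing of $e(TM)$ via the vanishing of top-degree cohomology of a non-compact manifold, and then invoke Corollary~\ref{cor:lorentzobst}. The paper simply asserts that ``standard arguments prove that $H^m(M,\Z)=0$'' without further detail, whereas you supply that argument (via Poincar\'e--Lefschetz duality and Borel--Moore homology), address the twisted-coefficient subtlety in the non-orientable case, and correctly flag the connectedness assumption; your alternative Morse-theoretic construction is a genuine addition not present in the paper.
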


\begin{proof}
If $M$ is non compact, standard arguments prove that $H^m(M,\Z)=0$. Hence $e(TM)$ trivially vanishes and hence, from Corollary~\ref{cor:lorentzobst}, it follows that $M$ admits Lorentzian metrics.
\end{proof}

In order to better describe this obstruction in the compact case, we present an axiomatic characterization of the Euler class, that can be found for instance in Kobayashi and Nomizu \cite{kn2}.

\begin{proposition}\label{prop:knaxioms}
The Euler class $e$ for oriented vector bundles of rank $r$ is characterized by the following axioms.
\begin{itemize}
\item[(i)] $e(E)\in H^r(M,\Z)$ and $e(E)$ is trivial if $r$ is odd;
\item[(ii)] If $f:N\to M$ is a smooth map, then $$e(f^*E)=f^*(e(E));$$
\item[(iii)] If $E_1$ and $E_2$ are oriented vector bundles over $M$ of rank $2$, then $$e(E_1\oplus E_2)=e(E_1)\wedge e(E_2);$$
\item[(iv)] Let $E_\C$ be the natural complex line bundle over $\C P^1$. Then $e(E_\C)$ coincides with the first Chern class $c_1(E_\C)$.
\end{itemize}
Axioms (ii), (iii) and (iv) are called the {\em naturality}, {\em Whitney sum} and {\em normalization} axioms, respectively. If another characteristic class satisfies (i)--(iv), then it must coincide with the Euler class.
\end{proposition}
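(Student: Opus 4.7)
The plan is to proceed in two stages: first verify that the Euler class $e$, as defined above via the orientation class $U \in H^r(E, \dot E; \Z)$, satisfies axioms (i)--(iv); and second, establish uniqueness by reducing an arbitrary characteristic class satisfying (i)--(iv) to the case of complex line bundles via the splitting principle.

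For existence, I would check each axiom in turn. Axiom (i) is essentially built into the definition: since $U \in H^r(E, \dot E; \Z)$, the restriction $i^*(U)$ lies in $H^r(E; \Z)$, and applying $(\pi^*)^{-1}$ yields a class in $H^r(M; \Z)$. For odd $r$, the fiberwise map $-\id:E \to E$ is an orientation-reversing bundle isomorphism, which forces $e(E) = -e(E)$ and hence triviality. Axiom (ii) (naturality) follows from the naturality of the orientation class under the pullback bundle map $f^*E \to E$ covering $f$. Axiom (iii) (Whitney sum) is established by identifying the orientation class of $E_1 \oplus E_2$ with the external cross product $U_1 \times U_2$ via the Thom isomorphism, which upon restricting via $i^*$ and inverting $\pi^*$ becomes the cup product $e(E_1) \wedge e(E_2)$. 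Axiom (iv) (normalization) amounts to a direct comparison of the two constructions: both $e(E_\C)$ and $c_1(E_\C)$ are identified with the canonical generator of $H^2(\C P^1, \Z)$.

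For uniqueness, suppose $e'$ is any characteristic class satisfying (i)--(iv); the goal is to show $e'(E) = e(E)$ for every oriented vector bundle $E$. By (i), this is automatic for odd rank. For a rank $2$ oriented real bundle $E$, the orientation together with an auxiliary Euclidean metric (the choice of which is unique up to homotopy) endows $E$ with a complex line bundle structure, and every complex line bundle on $M$ arises as a pullback of the tautological line bundle on $\C P^\infty$ via a classifying map. Since the inclusion $\C P^1 \hookrightarrow \C P^\infty$ induces an isomorphism on $H^2$, axioms (ii) and (iv) pin down $e'$ on all complex line bundles, forcing $e'(E) = c_1(E) = e(E)$. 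For even rank $r = 2k$, I would invoke the splitting principle: there is a fibration $p: Y \to M$ with $p^*: H^*(M; \Z) \to H^*(Y; \Z)$ injective such that $p^*E$ splits as a Whitney sum of rank $2$ oriented bundles $L_1 \oplus \cdots \oplus L_k$. Axioms (ii) and (iii) then give $p^* e'(E) = e'(L_1) \wedge \cdots \wedge e'(L_k) = e(L_1) \wedge \cdots \wedge e(L_k) = p^* e(E)$, and injectivity of $p^*$ completes the argument.

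The main technical obstacle is twofold: on the existence side, the Whitney sum axiom requires the Thom isomorphism for oriented bundles together with careful sign bookkeeping for external products; on the uniqueness side, the splitting principle must be invoked with care, ensuring that the rank $2$ summands carry mutually compatible orientations so that the Whitney sum formula applies with consistent orientation conventions. Both are standard tools from characteristic class theory and do not introduce new difficulties in the present context.
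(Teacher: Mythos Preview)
The paper does not supply a proof of this proposition; it is stated and attributed to Kobayashi and Nomizu. Your two-stage outline --- verifying axioms (i)--(iv) for $e$ and then arguing uniqueness via the splitting principle --- is the standard approach and is correct in spirit.

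There is, however, a genuine subtlety in the uniqueness step. Axiom (iii) as stated requires \emph{both} $E_1$ and $E_2$ to have rank $2$, so it directly yields the Whitney sum formula only for rank-$4$ bundles. Your argument for general even rank $2k$ splits $p^*E \cong L_1 \oplus \cdots \oplus L_k$ and asserts $e'(L_1 \oplus \cdots \oplus L_k) = e'(L_1) \wedge \cdots \wedge e'(L_k)$, but iterating this requires applying the Whitney sum to pairs in which one summand has rank larger than $2$ as soon as $k \ge 3$, which the axiom as written does not cover. In most references the Whitney sum axiom is given for summands of arbitrary rank, and with that stronger form your argument goes through cleanly; with the literal formulation here, the axioms appear not to determine $e'$ in ranks $\ge 6$. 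The issue is thus arguably an imprecision in the stated axiom rather than a flaw in your strategy, but in a rigorous write-up you should flag it and either adopt the stronger form of (iii) or supply an additional argument bridging the gap.
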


\begin{corollary}\label{cor:lorentzodddim}
If $M$ is odd dimensional and orientable, then $M$ admits Lorentzian metrics.
\end{corollary}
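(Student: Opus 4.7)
The plan is to exploit the orientability hypothesis in order to bring the Euler class machinery of Proposition~\ref{prop:knaxioms} to bear on $TM$, and then combine axiom~(i) of that proposition with the obstruction result of Corollary~\ref{cor:lorentzobst}.

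First I would observe that, since $M$ is orientable, the tangent bundle $TM$ is an orientable vector bundle over $M$, whose rank equals $\dim M = m$. Fix any orientation on $TM$ compatible with the given orientation of $M$, so that the Euler class $e(TM) \in H^m(M,\Z)$ is well defined in the sense used in the preceding discussion (i.e.\ without having to pass to twisted coefficients). This is the only place the orientability hypothesis enters the argument.

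Next, invoke axiom~(i) of Proposition~\ref{prop:knaxioms}: for any oriented vector bundle of odd rank, the Euler class is trivial. Since $\operatorname{rank}(TM) = m$ is odd by hypothesis, this immediately gives $e(TM) = 0$ in $H^m(M,\Z)$.

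Finally, by Corollary~\ref{cor:lorentzobst}, the obstruction to the existence of a Lorentzian metric on $M$ is precisely $e(TM)$; hence its vanishing implies that there is a nowhere-vanishing continuous section of $TM$, which spans a rank $1$ distribution on $M$, and Proposition~\ref{prop:metricdistribution} then yields a Lorentzian metric on $M$. There is essentially no obstacle in this argument, since everything has already been set up: the only thing to check is that orientability of $M$ indeed produces an orientation on $TM$ that legitimizes the use of axiom~(i) without resorting to twisted coefficients. As a consistency check, note that in the non compact case Proposition~\ref{prop:noncompactlorentz} already yields the same conclusion (with no odd-dimensional or orientability assumption), so the content of the corollary is really in the compact odd-dimensional orientable case.
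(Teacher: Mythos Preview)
Your proof is correct and follows essentially the same approach as the paper: invoke axiom~(i) of Proposition~\ref{prop:knaxioms} to conclude $e(TM)=0$ from the odd rank of $TM$, then apply Corollary~\ref{cor:lorentzobst}. Your additional remarks on why orientability is needed and the consistency check with the non compact case are accurate but not required for the argument.
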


\begin{proof}
From axiom (i) in Proposition~\ref{prop:knaxioms}, $e(TM)=0$. The result is then immediate from Corollary~\ref{cor:lorentzobst}.
\end{proof}

\begin{corollary}
If $M=M_1\times M_2$ is a product manifold, then $M$ admits Lorentzian metrics if at least one of the factors $M_i$ admits Lorentzian metrics.
\end{corollary}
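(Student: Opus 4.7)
The plan is to construct the desired Lorentzian metric on $M$ explicitly as a direct sum, which is the cleanest route. Without loss of generality, assume $M_1$ admits a Lorentzian metric $g_1 \in \met_1^k(M_1)$. Since every smooth manifold admits a (smooth) Riemannian metric by a partition of unity argument (recall Remark~\ref{re:fixedgr}), I pick any Riemannian metric $g_2 \in \met_0^k(M_2)$ on $M_2$. Using the canonical identification
\begin{equation*}
T_{(x_1,x_2)}(M_1\times M_2) \cong T_{x_1}M_1 \oplus T_{x_2}M_2,
\end{equation*}
I define $g = g_1 \oplus g_2$ pointwise, so that $g\big((v_1,v_2),(w_1,w_2)\big) = g_1(v_1,w_1) + g_2(v_2,w_2)$.

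The verification is routine. At each point $(x_1,x_2) \in M$, the bilinear form $g_1(x_1) \oplus g_2(x_2)$ is block-diagonal with respect to the above splitting, with a nondegenerate index-$1$ block and a positive-definite block. Hence it is nondegenerate, and its index is $1 + 0 = 1$. Smoothness (of class $C^k$) follows from that of $g_1$ and $g_2$ together with smoothness of the projections $\pi_i \colon M \to M_i$, which give the inclusions $TM \cong \pi_1^* TM_1 \oplus \pi_2^* TM_2$.

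An alternative, more in the spirit of the preceding section, would be to invoke Proposition~\ref{prop:metricdistribution}: the Lorentzian metric $g_1$ on $M_1$ produces a rank $1$ distribution $\mathcal D_1 \subset TM_1$, whose pull-back $\pi_1^* \mathcal D_1 \subset \pi_1^* TM_1 \hookrightarrow TM$ is a rank $1$ distribution on $M$ (here the inclusion uses the canonical splitting above), and Proposition~\ref{prop:metricdistribution} then yields a Lorentzian metric on $M$. I do not anticipate any genuine obstacle: the only thing to be careful about is the interpretation of $g_1 \oplus g_2$ as a $(0,2)$-tensor on the product, which is handled by the standard splitting of the tangent bundle of a product manifold.
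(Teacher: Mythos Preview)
Your proof is correct. The paper's primary argument is different: it invokes Corollary~\ref{cor:lorentzobst} together with the Whitney sum axiom from Proposition~\ref{prop:knaxioms}, arguing that $e(TM)=e(TM_1)\wedge e(TM_2)$ vanishes as soon as one factor has trivial Euler class. However, the paper then explicitly remarks that one could instead ``build \emph{directly} the Lorentzian product metric by considering the direct sum of the Lorentzian metric on one factor and any Riemannian metric on the other factor''---which is exactly what you do. Your approach is more elementary and self-contained (it avoids the characteristic class machinery entirely), while the paper's Euler class argument fits the narrative of the surrounding section on topological obstructions. Your alternative via Proposition~\ref{prop:metricdistribution} is also sound and closer in spirit to the paper's framework.
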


\begin{proof}
Once more, we use Corollary~\ref{cor:lorentzobst}. From the Whitney sum axiom (i) in Proposition~\ref{prop:knaxioms}, $e(TM)=e(TM_1)\wedge e(TM_2)$ is trivial if either $e(TM_1)$ or $e(TM_2)$ is trivial. Notice that one could also build {\em directly} the Lorentzian product metric by considering the direct sum of the Lorentzian metric on one factor and any Riemannian metric on the other factor.
\end{proof}

\begin{corollary}
Suppose $M$ is orientable and has even dimension $m$. Let $\{e_i\}_{i=1}^m$ be a $g_\mathrm R$--orthonormal frame and define $$\Omega_{ij}(v,w)=g_\mathrm R(R^\mathrm R(v,w)e_j,e_i), \quad i,j=1,\ldots,m.$$ Then the Euler class of $TM$ is given by
\begin{equation}\label{eq:eulerclass}
e(TM)=\frac1{\left(\tfrac{m}{2}\right)!\pi^{m/2}2^m}\sum_{\sigma\in\mathfrak S_m}\sgn(\sigma) \Omega_{\sigma(1)\sigma(2)}\wedge\Omega_{\sigma(3)\sigma(4)} \wedge\ldots\wedge\Omega_{\sigma(m-1)\sigma(m)},
\end{equation}
where $\mathfrak S_m$ denotes the symmetric group on $m$ elements and $\sgn(\sigma)$ the sign of the permutation $\sigma$.
\end{corollary}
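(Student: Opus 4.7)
The plan is to recognize the right-hand side of~(\ref{eq:eulerclass}) as $(2\pi)^{-m/2}\mathrm{Pf}(\Omega)$, where $\mathrm{Pf}$ denotes the Pfaffian of the skew-symmetric matrix of curvature $2$-forms $\Omega_{ij}$, and then to verify via Chern--Weil theory that the resulting de Rham cohomology class $\widetilde e(TM)$ satisfies the four axioms of Proposition~\ref{prop:knaxioms}. Uniqueness of a characteristic class satisfying these axioms then forces $\widetilde e(TM)=e(TM)$, which is the asserted formula.

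First I would verify the skew-symmetry $\Omega_{ij}=-\Omega_{ji}$, a consequence of the standard symmetry of the Riemann tensor in a $g_\mathrm R$-orthonormal frame (coming from metric compatibility of $\nabla^\mathrm R$). Hence the sum in~(\ref{eq:eulerclass}) is, up to the combinatorial factor $2^{m/2}(m/2)!$, the Pfaffian of $\Omega$, an $\mathrm{SO}(m)$-invariant polynomial in a skew-symmetric matrix and so well-defined on the $\mathrm{SO}(m)$-principal bundle of oriented $g_\mathrm R$-orthonormal frames of $TM$. The standard Chern--Weil machinery then guarantees two facts: the resulting $m$-form is closed, by the second Bianchi identity $\dd\Omega+[\omega,\Omega]=0$ combined with the $\mathrm{ad}$-invariance of the Pfaffian; and its cohomology class is independent of $g_\mathrm R$ (equivalently, of the chosen metric $\mathrm{SO}(m)$-connection on $TM$), since any two such connections are joined by a smooth path $\nabla_t$, and the derivative along $t$ of the associated Pfaffian form is exact via the classical transgression formula. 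Call $\widetilde e$ the cohomology class thus obtained.

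Axioms (i)--(iii) are then routine to verify. Axiom (i) is immediate: the form has degree $m$, and the Pfaffian of any skew-symmetric matrix of odd size vanishes identically. Axiom (ii) follows because for a smooth map $f:N\to M$ the pulled-back connection on $f^*TM$ has curvature $f^*\Omega$, and Pfaffians commute with pullback of matrices of forms. Axiom (iii) follows because for the direct-sum connection on $E_1\oplus E_2$ the curvature matrix is block diagonal, and the elementary identity $\mathrm{Pf}(A\oplus B)=\mathrm{Pf}(A)\,\mathrm{Pf}(B)$ immediately yields $\widetilde e(E_1\oplus E_2)=\widetilde e(E_1)\wedge\widetilde e(E_2)$.

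The main obstacle is the normalization axiom (iv), which is what actually pins down the constant $(2\pi)^{-m/2}$. In rank two, formula~(\ref{eq:eulerclass}) simplifies to the single $2$-form $\frac{\Omega_{12}}{2\pi}$, and I would verify (iv) by an explicit integration over $\C P^1\cong S^2$: equipping the tautological line bundle $E_\C$ with the Chern connection induced by the Fubini--Study metric, one computes $\int_{\C P^1}\frac{\Omega_{12}}{2\pi}$ and compares it with $\int_{\C P^1}c_1(E_\C)=\pm 1$, reducing to the classical Gauss--Bonnet identity $\int_{S^2}K\,\dd A=2\pi\chi(S^2)$ for the round metric. The care required here lies entirely in matching signs and the convention for $c_1$ to those implicit in Proposition~\ref{prop:knaxioms}. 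Once (iv) is verified, Proposition~\ref{prop:knaxioms} yields $\widetilde e(TM)=e(TM)$ and the formula~(\ref{eq:eulerclass}) follows.
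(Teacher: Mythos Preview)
Your approach is correct and is precisely the one the paper takes: the paper's proof consists of the single sentence that formula~\eqref{eq:eulerclass} is obtained by verifying axioms (i)--(iv) of Proposition~\ref{prop:knaxioms}. You have simply fleshed out what that verification entails, with the Chern--Weil argument for well-definedness and the rank-two normalization computation being exactly the expected ingredients.
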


Formula \eqref{eq:eulerclass} is proved simply verifying that the axioms (i)--(iv) of Proposition~\ref{prop:knaxioms}.

\begin{remark}
We are being a bit sloppy about the coefficients of cohomologies above. Proposition~\ref{prop:knaxioms} states that $e(TM)\in H^m(M,\Z)$, and formula \eqref{eq:eulerclass} clearly gives an expression of a differential form, i.e., an element of $H^m(M,\R)$. There is however a natural identification between the cohomology rings $H^*(M,\Z)\hookrightarrow H^*(M,\R)$.
\end{remark}

We now state the celebrated Gauss--Bonnet--Chern Theorem, that relates the Euler class $e(TM)$ of the tangent bundle of $M$ with the Euler characteristic $\chi(M)$. It also extends the classic Gauss--Bonnet Theorem for $2$--manifolds to any even dimensional manifold. A complete proof can be found for instance in Mercuri, Piccione and Tausk \cite{picmertausk}.

\begin{gbcthm}\label{thm:gbcthm}\index{Theorem!Gauss--Bonnet--Chern}
Let $M$ be compact and oriented and consider $e(TM)$ the expression for the Euler class given by \eqref{eq:eulerclass}. Then
\begin{equation}
\int_M e(TM)=\chi(M).
\end{equation}
\end{gbcthm}

Thus, for compact orientable manifolds, $\chi(M)=0$ if and only if the Euler class of $TM$ is trivial. Hence, we may give the following characterization of the obstruction to the existence of Lorentzian metrics on compact orientable manifolds.

\begin{proposition}\label{prop:existlorentzian}
A compact orientable manifold $M$ admits Lorentzian metrics if and only if $\chi(M)=0$.
\end{proposition}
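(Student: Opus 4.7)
The plan is to combine Corollary~\ref{cor:lorentzobst}, which identifies $e(TM)$ as the obstruction to the existence of a Lorentzian metric, with the Gauss--Bonnet--Chern Theorem~\ref{thm:gbcthm}, which relates $e(TM)$ to $\chi(M)$, thereby reducing the statement to the cohomological equivalence $e(TM)=0\Longleftrightarrow\chi(M)=0$ in $H^m(M;\Z)$ valid for $M$ compact and oriented.

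For the direction $\chi(M)=0\Rightarrow M$ admits Lorentzian metrics, the Gauss--Bonnet--Chern Theorem gives $\int_M e(TM)=\chi(M)=0$. Since $M$ is compact and oriented of dimension $m$, evaluation on the fundamental class is an isomorphism $H^m(M;\Z)\cong\Z$, so the vanishing of the integral forces $e(TM)=0$ as a cohomology class. Corollary~\ref{cor:lorentzobst} then produces a Lorentzian metric on $M$.

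For the converse, I would start from $g\in\met_1^k(M)$. By Proposition~\ref{prop:metricdistribution} there is a rank-$1$ distribution $\mathcal D\subset TM$. If $\mathcal D$ happens to be orientable, then it is trivial as a line bundle, hence admits a nowhere-zero global section, which is a nowhere-zero element of $\sect^0(TM)$; Theorem~\ref{thm:obsteuler}, applied in the critical case $r=\dim M$ where $e(TM)$ is the \emph{unique} obstruction to a nontrivial section, then yields $e(TM)=0$, and Gauss--Bonnet--Chern delivers $\chi(M)=0$. In general, however, $\mathcal D$ need not be orientable, as illustrated in Remark~\ref{re:linebundvec}; to handle this I would pass to the orientation double cover $\pi\colon\widetilde M\to M$ of the line bundle $\mathcal D$. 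Then $\widetilde M$ is compact, inherits an orientation from $M$, and the pulled-back distribution $\pi^{*}\mathcal D$ is orientable by construction. Applying the previous case to $\widetilde M$ yields $\chi(\widetilde M)=0$, and multiplicativity of the Euler characteristic under finite covers, $\chi(\widetilde M)=2\chi(M)$, closes out $\chi(M)=0$.

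The main obstacle is precisely the potential non-orientability of the line distribution $\mathcal D$: a rank-$1$ sub-bundle of $TM$ does not in general arise as the span of a single globally defined nowhere-zero vector field, so one cannot directly jump from the existence of a Lorentzian metric to the vanishing of $e(TM)$. The orientation double cover circumvents this difficulty by trading a factor of $2$ in the Euler characteristic for the ability to consistently orient the timelike line field on the cover.
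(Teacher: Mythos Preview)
Your proof is correct and follows the same overall strategy as the paper, namely combining Corollary~\ref{cor:lorentzobst} with the Gauss--Bonnet--Chern Theorem~\ref{thm:gbcthm}. The paper's own proof is a one-line appeal to these two results, with the converse direction (Lorentzian metric $\Rightarrow e(TM)=0$) left to the equivalence asserted without proof in Remark~\ref{re:linebundvec}; your explicit orientation double-cover argument supplies exactly that missing detail and is the standard way to justify it.
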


\begin{proof}
This is immediate from Corollary~\ref{cor:lorentzobst} and the Gauss--Bonnet--Chern Theorem~\ref{thm:gbcthm}.
\end{proof}

Notice that this result gives a complete description of the obstruction for $\nu=1$. Recall that non compact orientable manifolds always admit Lorentzian metrics, see Proposition~\ref{prop:noncompactlorentz}. Odd dimensional compact manifolds also admit Lorentzian metrics, from Corollary~\ref{cor:lorentzodddim}. Finally, even dimensional compact manifolds admit Lorentzian metrics if and only if its Euler characteristic is different from $0$. In a low dimensional context, it is possible to give even more detailed results, for instance the following.

\begin{corollary}\label{cor:2dimlorentz}
The only two--dimensional compact manifolds that admit Lorentzian metrics are the torus $S^1\times S^1$ and the Klein bottle.
\end{corollary}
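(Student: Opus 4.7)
The plan is to combine the classification of compact surfaces with the machinery already established in the section, in particular Proposition~\ref{prop:existlorentzian} for the orientable case, and a lifting argument to the orientable double cover for the non-orientable case. Recall that by the standard classification, every compact surface is diffeomorphic either to an orientable surface $\Sigma_g$ of genus $g\geq 0$ with $\chi(\Sigma_g)=2-2g$, or to a non-orientable surface $N_h$ ($h\geq 1$) with $\chi(N_h)=2-h$. Among these, the only ones with vanishing Euler characteristic are $\Sigma_1=S^1\times S^1$ and $N_2=$ Klein bottle.

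\textbf{Orientable case.} If $M$ is an orientable compact surface admitting a Lorentzian metric, then by Proposition~\ref{prop:existlorentzian} we must have $\chi(M)=0$, forcing $M\cong S^1\times S^1$. Conversely, $S^1\times S^1$ admits non-vanishing vector fields (coming from either $S^1$ factor), so by Proposition~\ref{prop:metricdistribution} it carries Lorentzian metrics.

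\textbf{Non-orientable case.} Let $M$ be a non-orientable compact surface and let $\pi:\widetilde M\to M$ be its orientable double cover, which is again a compact (orientable) surface. If $g\in\met_1^k(M)$, then the pull-back $\pi^*g$ is a Lorentzian metric on $\widetilde M$, so Proposition~\ref{prop:existlorentzian} gives $\chi(\widetilde M)=0$; since $\chi(\widetilde M)=2\chi(M)$, this forces $\chi(M)=0$, and hence $M\cong N_2$ is the Klein bottle. For the converse, I would exhibit a rank~$1$ distribution on the Klein bottle directly: realizing it as $[0,1]^2/\!\sim$ with $(0,y)\sim(1,y)$ and $(x,0)\sim(1-x,1)$, the local vector field $\partial_x$ defines an unoriented line field, because the gluing $(x,0)\sim(1-x,1)$ sends $\partial_x$ to $-\partial_x$, leaving the spanned line invariant. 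This yields a global rank~$1$ sub-bundle of $T(\text{Klein bottle})$, and Proposition~\ref{prop:metricdistribution} then produces a Lorentzian metric.

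\textbf{Main obstacle.} The essential inputs have already been proved: Proposition~\ref{prop:metricdistribution} converts metric existence into distribution existence, and Proposition~\ref{prop:existlorentzian} handles the orientable compact case via the Euler characteristic. The only genuinely new point is the lifting step in the non-orientable case, where one must remark that the orientable double cover of a compact non-orientable surface is still compact and satisfies $\chi(\widetilde M)=2\chi(M)$, together with the explicit construction of the invariant line field on the Klein bottle. Neither is difficult, but the line-field construction is the only place where one cannot simply quote a previous result.
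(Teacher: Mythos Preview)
Your proof is correct and in fact more careful than the paper's, which consists of a single sentence: ``It is a classic result that these are the only two-dimensional compact manifolds whose Euler characteristic is zero.'' The paper thus implicitly invokes the full Poincar\'e--Hopf equivalence (non-vanishing vector field $\Leftrightarrow\chi=0$) for compact surfaces regardless of orientability, even though Proposition~\ref{prop:existlorentzian} was only stated and proved under an orientability hypothesis. Your double-cover argument for the non-orientable case, together with the explicit line field on the Klein bottle, closes this gap using only what was actually established in the section. The trade-off is that the paper's route is shorter if one is willing to cite the non-orientable Poincar\'e--Hopf theorem as classical, while yours is self-contained relative to the preceding results.
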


\begin{proof}
It is a classic result that these are the only two--dimensional compact manifolds whose Euler characteristic is zero.
\end{proof}

Except for the case $\nu=1$, it is in general a fairly difficult task to give universal necessary and sufficient conditions for the existence of semi--Riemannian metrics of index $\nu$ on $M$, or distributions of rank $\nu$ on $M$. One may try to characterize the obstruction to the existence of such distributions in the same fashion of Theorem~\ref{thm:obsteuler}. This would be done lifting nontrivial sections of a Grassmannian bundle $\gr_\nu(M)$ through $n$--skeletons. In addition, it would be necessary to compute the homotopy groups of $\gr_\nu(M)$, possibly using its homogeneous space structure and homotopy tools, such as the {\em Bott periodicity}. Finally, it is very likely that even for vector bundles of rank $m$, the obstruction is not unique, as in Theorem~\ref{thm:obsteuler}.

Nevertheless, in a low dimensional context it is still possible to use a few tricks. For instance, notice that if $g\in\met_\nu^k(M)$ then $-g\in\met_{m-\nu}^k(M)$. Therefore, 
\begin{equation}\label{dualitymetric}
\met_\nu^k(M)\ne\emptyset\;\;\mbox{ if and only if }\;\;\met_{m-\nu}^k(M)\ne\emptyset.
\end{equation}
In particular, this implies the following.

\begin{corollary}
If $M$ is a tri--dimensional compact orientable manifold, then $M$ admits semi--Riemannian metrics of all possible indexes.
\end{corollary}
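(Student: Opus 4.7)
The plan is to handle each possible index $\nu\in\{0,1,2,3\}$ separately, using results already established in this section together with the duality \eqref{dualitymetric}.

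First I would dispense with the Riemannian case $\nu=0$: by Remark~\ref{re:fixedgr}, every smooth manifold admits a Riemannian metric, so $\met_0^k(M)\neq\emptyset$ without needing the compactness or orientability hypotheses. The opposite extreme $\nu=m=3$ then follows immediately by applying \eqref{dualitymetric}: if $g\in\met_0^k(M)$, then $-g\in\met_3^k(M)$.

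Next, I would address the Lorentzian case $\nu=1$. Here the hypotheses of Corollary~\ref{cor:lorentzodddim} are met, since $M$ is orientable and $\dim M=3$ is odd; hence $\met_1^k(M)\neq\emptyset$. The remaining case $\nu=2$ is then obtained by a second application of the duality principle \eqref{dualitymetric}: since $\met_1^k(M)\neq\emptyset$ and $m-\nu=3-1=2$, we conclude $\met_2^k(M)\neq\emptyset$.

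There is no real obstacle to overcome in this argument; the tri\-dimensionality is used in a genuinely essential way only to ensure that the Lorentzian index $\nu=1$ falls into the odd-dimensional regime covered by Corollary~\ref{cor:lorentzodddim}, which in turn relies on axiom~(i) of Proposition~\ref{prop:knaxioms} (vanishing of the Euler class in odd rank). Orientability is used only to invoke that corollary (which ultimately traces back to the Gauss--Bonnet--Chern Theorem~\ref{thm:gbcthm} for handling the Euler characteristic obstruction, although in odd dimension this is not even needed). Once these four indices $\nu=0,1,2,3$ are exhausted, the statement is established.
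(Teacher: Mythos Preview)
Your proof is correct and follows essentially the same approach as the paper: handle $\nu=0$ trivially, obtain $\nu=1$ from Corollary~\ref{cor:lorentzodddim} using that $\dim M=3$ is odd, and then invoke the duality \eqref{dualitymetric} to cover the remaining indices $\nu=2,3$. The paper's proof is simply a more compressed version of the same argument.
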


\begin{proof}
Clearly $M$ admits metrics of index $\nu=0$. From Corollary~\ref{cor:lorentzodddim}, $M$ also admits metrics of index $\nu=1$. From \eqref{dualitymetric}, since $m=3$, it follows that $M$ admits metrics of all possible indexes $\nu=0,1,2,3$.
\end{proof}

Another approach to is to obtain more specific characterizations for simple and well--studied manifolds. The simplest $m$--dimensional connected non contractible\footnote{Recall Corollary~\ref{cor:stupidcor}.} manifold is the $m$--sphere $S^m$. A complete and detailed discussion on fiber bundles over spheres together with a classification of such bundles is given in Walschap \cite{walschap}. We end this section with the following two results by Steenrod on this topic, indirectly proven in \cite{steenrod} and \cite{steenrod2} respectively.

\begin{theorem}\label{thm:spheres1}
For the following values of $\nu$ and $m$, the $m$--sphere $S^m$ admits semi--Riemannian metrics of index $\nu$,
\begin{itemize}
\item[(i)] $m$ even, $\nu=0$ and $\nu=m$;
\item[(ii)] $m$ odd, $\nu\in\{0,1,m-1,m\}$;
\item[(iii)] $m\equiv3\mod4$, $0\leq\nu\leq 3$ and $m-3\leq\nu\leq m$;
\item[(iv)] $m\equiv7\mod8$, $0\leq\nu\leq 7$ and $m-7\leq\nu\leq m$.
\end{itemize}
\end{theorem}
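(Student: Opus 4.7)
The plan is to invoke Proposition~\ref{prop:metricdistribution} to reduce existence of a metric of index $\nu$ on $S^m$ to existence of a smooth rank-$\nu$ distribution on $S^m$, and to exploit the duality \eqref{dualitymetric} so that only the ranges $0\leq\nu\leq m/2$ need be treated. Items (i) and (ii) will then be essentially immediate: $\nu=0$ always works, $\nu=m$ follows by duality, and for $m$ odd Corollary~\ref{cor:lorentzodddim} supplies a Lorentzian metric, covering $\nu=1$ and (by duality) $\nu=m-1$.

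For (iii) and (iv) the strategy is to construct explicit orthonormal families of smooth tangent vector fields on $S^m$ via the normed division algebras $\mathbb{H}$ and $\mathbb{O}$. In case (iii) I would write $m+1=4k$ and identify $\mathbb{R}^{m+1}\cong\mathbb{H}^k$; then for each imaginary unit $u\in\{\mathbf{i},\mathbf{j},\mathbf{k}\}$ I would set
\begin{equation*}
X_u(p)=(up_1,\ldots,up_k),\qquad p=(p_1,\ldots,p_k)\in S^m\subset\mathbb{H}^k,
\end{equation*}
where the product is quaternion left multiplication applied coordinatewise. The composition-algebra identity $\langle ua,va\rangle=\langle u,v\rangle\,|a|^2$, obtained by polarizing $|ua|=|u|\,|a|$, combined with $\langle 1,u\rangle=0$, shows that each $X_u(p)\in T_pS^m$ and that $X_{\mathbf{i}},X_{\mathbf{j}},X_{\mathbf{k}}$ are orthonormal at every $p\in S^m$. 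They therefore span a rank-$3$ trivializable distribution on $S^m$, and selecting sub-families yields distributions of ranks $1$ and $2$, as required.

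For (iv), one writes $m+1=8k$, identifies $\mathbb{R}^{m+1}\cong\mathbb{O}^k$, fixes an orthonormal basis $e_1,\ldots,e_7$ of imaginary octonions, and defines analogously $X_{e_i}(p)=(e_ip_1,\ldots,e_ip_k)$ using octonion left multiplication in each coordinate. The same computation, now supported by the composition identity for $\mathbb{O}$, should produce seven everywhere-orthonormal smooth tangent vector fields on $S^m$, hence a rank-$7$ distribution and every rank $\nu\leq 7$ as a sub-distribution.

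The main obstacle will be executing (iv) cleanly in spite of the non-associativity of $\mathbb{O}$. The tangency relation $\langle p_\ell,e_ip_\ell\rangle=0$ and the orthonormality $\langle e_ip_\ell,e_jp_\ell\rangle=\delta_{ij}|p_\ell|^2$ each involve only two octonion generators in any single bilinear expression, so Artin's theorem, stating that the subalgebra of $\mathbb{O}$ generated by any two elements is associative, together with the norm multiplicativity $|xy|=|x|\,|y|$, reduces them to the associative quaternionic calculation. Once those two identities are in place, summation over $\ell$ yields orthonormality of $\{X_{e_i}\}_{i=1}^{7}$ throughout $S^m$ and the construction in (iv) proceeds verbatim, completing the theorem.
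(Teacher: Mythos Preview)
Your proof is correct and complete. The paper itself does not supply a proof of this theorem; it merely attributes the result to Steenrod \cite{steenrod}, stating just before the theorem that it is ``indirectly proven'' there. You have therefore written out what the paper leaves as a reference.

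Your route---reduce via Proposition~\ref{prop:metricdistribution} to constructing rank-$\nu$ distributions, handle the trivial cases $\nu\in\{0,m\}$ and the Lorentzian case by Corollary~\ref{cor:lorentzodddim}, then use the normed division algebras $\mathbb H$ and $\mathbb O$ to build explicit orthonormal frames of sizes $3$ and $7$ on $S^{4k-1}$ and $S^{8k-1}$---is exactly the classical construction underlying Steenrod's result (essentially the Hurwitz--Radon vector fields on spheres). One small remark: your invocation of Artin's theorem is harmless but unnecessary. The identity $\langle ua,va\rangle=\langle u,v\rangle|a|^2$ follows purely from polarizing the composition-algebra law $|ua|^2=|u|^2|a|^2$ in the variable $u$, and this law holds in $\mathbb O$ without any associativity hypothesis. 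So the tangency and orthonormality checks go through directly from norm multiplicativity, and no appeal to alternativity or Artin's theorem is needed.
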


\begin{remark}
Notice that (i) and (ii) above correspond respectively to the existence of Riemannian and Lorentzian metrics, using \eqref{dualitymetric}. Moreover, recall that (ii) is a consequence of Corollary~\ref{cor:lorentzodddim}.
\end{remark}

\begin{theorem}\label{thm:spheres2}
For the following values of $\nu$ and $m$, the $m$--sphere $S^m$ {\em does not} admit semi--Riemannian metrics of index $\nu$,
\begin{itemize}
\item[(i)] $m$ even, $1\leq\nu\leq m-1$;
\item[(ii)] $m+1\equiv0\mod2^r$, where $2^r$ is the {\em highest power}\footnote{i.e., $\frac{m+1}{2^r}$ is odd.} of $2$ dividing $m+1$, and $2^r\leq\nu\leq m-2^r$.
\end{itemize}
\end{theorem}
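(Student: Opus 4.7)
The plan is to reduce both parts, via Proposition~\ref{prop:metricdistribution}, to the non-existence of rank-$\nu$ distributions on $S^m$, and then to attack the two parts with different levels of topological machinery.

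For part (i), I would run a direct Euler class / Gauss--Bonnet--Chern argument in the spirit of Proposition~\ref{prop:existlorentzian}. Suppose a rank-$\nu$ distribution $\mathcal D\subset TS^m$ with $1\le\nu\le m-1$ existed. Taking $g_{\mathrm R}$-orthogonal complements via the auxiliary Riemannian metric of Remark~\ref{re:fixedgr} splits $TS^m=\mathcal D\oplus\mathcal D^{\perp_{g_{\mathrm R}}}$ into subbundles of ranks $\nu$ and $m-\nu$. Since $S^m$ is simply connected for $m\ge 2$, both summands are orientable and can be oriented compatibly with $TS^m$, so the Whitney sum axiom of Proposition~\ref{prop:knaxioms}(iii), extended in the standard way to arbitrary rank, yields
\begin{equation*}
e(TS^m)=e(\mathcal D)\wedge e(\mathcal D^{\perp_{g_{\mathrm R}}}).
\end{equation*}
However $e(\mathcal D)\in H^\nu(S^m;\Z)$ and $e(\mathcal D^{\perp_{g_{\mathrm R}}})\in H^{m-\nu}(S^m;\Z)$ both vanish, since $0<\nu<m$ and $S^m$ has trivial integral cohomology in intermediate degrees. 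Hence $e(TS^m)=0$, which by the Gauss--Bonnet--Chern Theorem~\ref{thm:gbcthm} forces $\chi(S^m)=0$, contradicting $\chi(S^m)=2$ for $m$ even.

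For part (ii) this trick breaks: the same degree considerations still force the relevant Euler classes to vanish, but now $m$ may be odd and then $\chi(S^m)=0$, so no top-dimensional contradiction is available. I would instead recast the problem as an obstruction-theoretic question for sections of the Grassmannian bundle $\gr_\nu(TS^m)\to S^m$ from Example~\ref{ex:grassmann}, identifying a rank-$\nu$ distribution with a reduction of the structure group of $TS^m$ from $\SO(m)$ (available because $S^m$ is orientable and simply connected for $m\ge 2$) to $\SO(\nu)\times \SO(m-\nu)$. With the minimal CW-decomposition of $S^m$ (one $0$-cell and one $m$-cell), the full obstruction to such a reduction collapses to a single class in $\pi_{m-1}(\gr_\nu(\R^m))$, namely the image of the clutching element $\tau\in\pi_{m-1}(\SO(m))$ of $TS^m$ under the connecting map of the fibration $\SO(\nu)\times\SO(m-\nu)\hookrightarrow\SO(m)\to\gr_\nu(\R^m)$.

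The hard part, and the genuine technical heart of the statement, is to show that this image of $\tau$ is nontrivial precisely when $2^r\le\nu\le m-2^r$, where $2^r$ is the $2$-primary part of $m+1$. The plan would be to chase $\tau$ simultaneously through the long exact homotopy sequence of the above fibration and through the tangential fibration $\SO(m-1)\hookrightarrow\SO(m)\to S^{m-1}$, which identifies $\tau$ as a lift of the generator of $\pi_{m-1}(S^{m-1})$. The $2$-primary part $2^r$ of $m+1$ enters via Bott periodicity and the classical unstable homotopy computations for Stiefel manifolds due to Steenrod, James and Whitehead, which together determine the $2$-torsion order of $\tau$ and hence the precise range of $\nu$ in which $\tau$ fails to lift to $\SO(\nu)\times\SO(m-\nu)$. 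This step rests on topological input well beyond what was developed in this chapter, which justifies the direct appeal to \cite{steenrod2} in the statement.
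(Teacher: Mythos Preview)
The paper does not prove this theorem; it merely attributes the result to Steenrod, referring the reader to \cite{steenrod2}. Your proposal therefore goes beyond what the paper itself offers.

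Your argument for part (i) is correct and considerably more informative than a bare citation: the splitting $TS^m=\mathcal D\oplus\mathcal D^{\perp_{g_{\mathrm R}}}$, orientability of both summands from simple connectedness of $S^m$ (the case $m\le1$ being vacuous), the general Whitney sum formula $e(E_1\oplus E_2)=e(E_1)\wedge e(E_2)$, and the vanishing of $H^\nu(S^m;\Z)$ for $0<\nu<m$ together force $e(TS^m)=0$, contradicting $\chi(S^m)=2$ via Theorem~\ref{thm:gbcthm}. The only caveat is that Proposition~\ref{prop:knaxioms}(iii) as stated in the paper is restricted to rank-$2$ summands, so you are importing the general Whitney sum formula from outside the paper's own development; this is of course standard (e.g.\ \cite{milnor}), and you flag the extension explicitly.

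For part (ii), your obstruction-theoretic framing via sections of $\gr_\nu(TS^m)$ and the clutching element $\tau\in\pi_{m-1}(\SO(m))$ of the tangent bundle is the correct setup and is essentially Steenrod's approach. You rightly identify the decisive and genuinely hard input as the $2$-primary unstable homotopy of Stiefel manifolds controlling when $\tau$ lifts to $\SO(\nu)\times\SO(m-\nu)$; this is precisely why the paper elects to cite \cite{steenrod2} rather than attempt anything self-contained.
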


\section{A few lemmas}

In this final section, we prove a few lemmas of semi--Riemannian geometry that will be used in the following chapters. We start with a few results concerning self intersection of geodesics and parallelism of Jacobi fields and tangent fields.

\begin{figure}[htf]
\begin{center}
\includegraphics[scale=0.8]{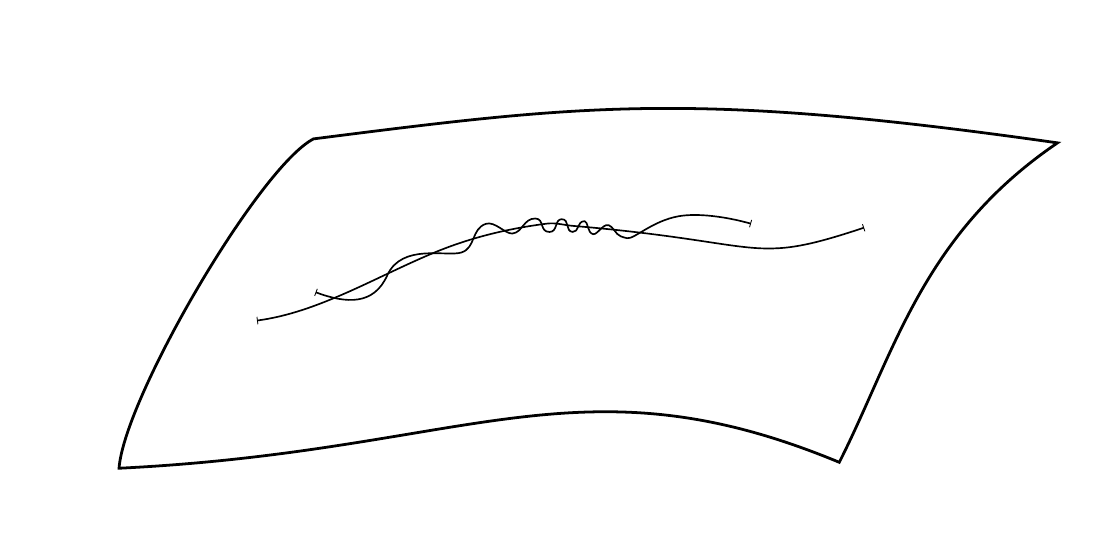}
\begin{pgfpicture}
\pgfputat{\pgfxy(-6.8,1.6)}{\pgfbox[center,center]{$\gamma_1$}}
\pgfputat{\pgfxy(-4,3)}{\pgfbox[center,center]{$\gamma_2$}}
\pgfputat{\pgfxy(-1.2,1.7)}{\pgfbox[center,center]{$M$}}
\end{pgfpicture}
\end{center}
\caption{Two $g$--geodesics intersect only finitely many times, unless one of them is an affine reparameterization of the other.}
\end{figure}

\begin{lemma}\label{le:finiteintersection}
Let $\gamma_i:[a_i,b_i]\rightarrow M$ two $g$--geodesics. Then the set of points where these geodesics intersect is finite, unless one is an affine reparameterization of the other.
\end{lemma}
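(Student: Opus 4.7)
The plan is to argue by contradiction and compactness, using the local injectivity of the exponential map to force the two geodesics to share a tangent direction at an accumulation point, at which stage the uniqueness result (Proposition~\ref{prop:geodexist}) will make one an affine reparameterization of the other. I may assume without loss of generality that both geodesics are non-constant, since otherwise the statement is trivial.

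Suppose the image intersection $\gamma_1([a_1,b_1])\cap\gamma_2([a_2,b_2])$ is infinite. Then I can pick a sequence of pairwise distinct points $p_n=\gamma_1(t_1^n)=\gamma_2(t_2^n)$. By compactness of $[a_1,b_1]\times[a_2,b_2]$, a subsequence of $(t_1^n,t_2^n)$ converges to some $(t_1^\star,t_2^\star)$, and by continuity $p^\star:=\gamma_1(t_1^\star)=\gamma_2(t_2^\star)$. Since the $p_n$ are distinct, the pairs $(t_1^n,t_2^n)$ are eventually different from $(t_1^\star,t_2^\star)$.

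Now I would choose a $g$--normal neighborhood $U$ of $p^\star$, corresponding to an open $V\subset T_{p^\star}M$ on which $\exp_{p^\star}$ is a diffeomorphism onto $U$ (Definition~\ref{def:normalradius}). For $n$ large, $(t_i^n-t_i^\star)\dot\gamma_i(t_i^\star)\in V$ and
\[
\gamma_i(t_i^n)=\exp_{p^\star}\bigl((t_i^n-t_i^\star)\dot\gamma_i(t_i^\star)\bigr),\qquad i=1,2,
\]
because $s\mapsto\gamma_i(t_i^\star+s)$ is the geodesic through $p^\star$ with velocity $\dot\gamma_i(t_i^\star)$. Injectivity of $\exp_{p^\star}$ on $V$ yields
\[
(t_1^n-t_1^\star)\,\dot\gamma_1(t_1^\star)=(t_2^n-t_2^\star)\,\dot\gamma_2(t_2^\star)
\]
for all large $n$. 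If $\dot\gamma_1(t_1^\star)$ and $\dot\gamma_2(t_2^\star)$ were linearly independent, both coefficients would vanish, contradicting $(t_1^n,t_2^n)\neq(t_1^\star,t_2^\star)$.

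Consequently, $\dot\gamma_2(t_2^\star)=c\,\dot\gamma_1(t_1^\star)$ for some $c\in\R$, which must be nonzero because both geodesics are non-constant. Applying Proposition~\ref{prop:geodexist} to the $g$--geodesic $t\mapsto\gamma_1(c(t-t_2^\star)+t_1^\star)$ and to $\gamma_2$, which share the initial conditions at $t=t_2^\star$, I conclude that they agree on the common domain, i.e.\ $\gamma_2$ is an affine reparameterization of $\gamma_1$. The only nontrivial step I anticipate is the bookkeeping in the normal-neighborhood argument, ensuring that the argument vectors really lie in $V$ and that one obtains the correct affine relation; everything else is a direct application of compactness and of results already proved in the excerpt.
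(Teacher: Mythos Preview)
Your proof is correct and follows essentially the same approach as the paper: argue by contradiction using compactness to find an accumulation point, use a normal neighborhood and injectivity of the exponential map there to force the tangent vectors to be linearly dependent, and then invoke uniqueness of geodesics with given initial data. Your version is more carefully written than the paper's rather terse sketch, which simply declares the linearly independent case an ``obvious contradiction to injectivity of the exponential map'' without spelling out the formula $(t_1^n-t_1^\star)\,\dot\gamma_1(t_1^\star)=(t_2^n-t_2^\star)\,\dot\gamma_2(t_2^\star)$ as you do.
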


\begin{proof}
Since the images of $\gamma_1$ and $\gamma_2$ are compact, if there were infinitely many intersection points, there would be an accumulation intersection point $p=\gamma_1(t)=\gamma_2(s)$. Consider $U$ a normal neighborhood of $p$. If $\dot{\gamma_1}(t)$ and $\dot{\gamma_2}(s)$ are linearly independent, since there are infinitely many points near $p$ such that $\gamma_1$ and $\gamma_2$ coincide in $U$, there is an obvious contradiction to injectivity of the exponential map on $U$. Otherwise, if $\dot{\gamma_1}(t)$ and $\dot{\gamma_2}(s)$ are linearly dependent, then $\gamma_1$ and $\gamma_2$ are affine reparameterizations of each other.
\end{proof}

\begin{proposition}\label{prop:selfintersections}
Let $\gamma:[0,1]\rightarrow M$ be a $g$--geodesic in $M$. If the set $$\mathcal{I}=\big\{(t,s)\in [0,1]\times [0,1] : t\neq s, \gamma(t)=\gamma(s)\big\}$$ is infinite, then $\gamma$ is a portion of a periodic geodesic with period $\omega<1$, see Example~\ref{ex:periodicgeod}.
\end{proposition}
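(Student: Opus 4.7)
The plan is to use compactness to find an accumulation point of $\mathcal{I}$, apply Lemma~\ref{le:finiteintersection} to two local arcs of $\gamma$ meeting there, and show that the resulting affine reparameterization identity forces $\gamma$ to satisfy a nontrivial translational invariance $\gamma(s+\omega)=\gamma(s)$ with $\omega<1$. The constant case being trivial, I assume $\gamma$ non-constant. Since $\mathcal{I}$ is infinite and lies in the compact square $[0,1]^2$, Bolzano--Weierstrass yields an accumulation point $(t_0,s_0)\in[0,1]^2$ of $\mathcal{I}$, and continuity gives $\gamma(t_0)=\gamma(s_0)$. The diagonal possibility $t_0=s_0$ is excluded because a non-constant affinely parameterized geodesic has $\dot\gamma$ nowhere zero (by parallelism of $\dot\gamma$), hence is locally injective around $t_0$, contradicting the existence of nearby distinct parameters with equal image.

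Assuming $t_0<s_0$ and choosing $\varepsilon>0$ small enough that $J_1=[t_0-\varepsilon,t_0+\varepsilon]\cap[0,1]$ and $J_2=[s_0-\varepsilon,s_0+\varepsilon]\cap[0,1]$ are disjoint, the restrictions $\gamma|_{J_1}$ and $\gamma|_{J_2}$ are geodesics sharing infinitely many image points. Lemma~\ref{le:finiteintersection} then provides an affine map $\phi(s)=\alpha s+\beta$ with $\phi(s_0)=t_0$ and $\gamma(s)=\gamma(\phi(s))$ on a neighborhood of $s_0$; by uniqueness of the geodesic Cauchy problem applied to the geodesics $s\mapsto\gamma(s)$ and $s\mapsto\gamma(\phi(s))$, the identity extends to every $s$ for which both sides are defined. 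Differentiating at $s_0$ yields $\dot\gamma(s_0)=\alpha\dot\gamma(t_0)$; combined with the conservation of $c:=g(\dot\gamma,\dot\gamma)$ along $\gamma$ and with $\gamma(s_0)=\gamma(t_0)$, this gives $c=\alpha^{2}c$.

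The decisive step is to show $\alpha=1$. For non-lightlike $\gamma$ we have $c\ne 0$, so $\alpha=\pm 1$; the value $\alpha=-1$ is ruled out because $\gamma(s)=\gamma(s_0+t_0-s)$ would force $\dot\gamma(m)=-\dot\gamma(m)=0$ at the midpoint $m=(s_0+t_0)/2\in(0,1)$, contradicting non-constancy. More generally, for any $\alpha\ne 1$, the unique fixed point $s^{*}=(t_0-\alpha s_0)/(1-\alpha)$ of $\phi$---when it lies in the maximal interval of definition of $\gamma$---satisfies $\dot\gamma(s^{*})=\alpha\dot\gamma(s^{*})$, hence $\dot\gamma(s^{*})=0$, again against non-constancy. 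To handle the lightlike case $c=0$, where $s^{*}$ may a priori lie outside the maximal interval, I would iterate the relation $\gamma=\gamma\circ\phi$ (or $\gamma\circ\phi^{-1}$ according to whether $|\alpha|<1$ or $|\alpha|>1$), exploiting that the iterates drive parameter values toward $s^{*}$ while $\dot\gamma$ along these iterates grows or decays geometrically, to extract a contradiction with the existence of $\gamma$ on $[0,1]$. This lightlike sub-case is the main technical obstacle I foresee.

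Once $\alpha=1$ is established, the identity reads $\gamma(s)=\gamma(s-\omega)$ for $\omega=s_0-t_0\in(0,1]$, so the maximal extension of $\gamma$ is a periodic geodesic of period $\omega$, and $\gamma$ itself is a portion of it. To sharpen to $\omega<1$, observe that within one prime period a periodic geodesic can admit only finitely many self-intersections---otherwise, the accumulation-and-reparameterization argument applied inside a prime period would yield a strictly smaller period, contradicting primality. Therefore, if the prime period of the periodic extension were exactly $1$, the set $\mathcal{I}\cap[0,1]^2$ would consist of $\{(0,1),(1,0)\}$ together with finitely many internal self-intersection pairs, hence be finite, contradicting the hypothesis. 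The prime period must therefore be strictly less than $1$, completing the proof.
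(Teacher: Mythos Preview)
Your approach is the same as the paper's: locate an accumulation point $(\bar t,\bar s)$ of $\mathcal I$, apply Lemma~\ref{le:finiteintersection} to short arcs near $\bar t$ and $\bar s$, and deduce periodicity. The paper is terse at the crucial step, writing ``both are restrictions of the same geodesic $\gamma_\varepsilon$, hence $\gamma_1(t+\omega)=\gamma_2(t)$''---that is, it simply asserts the affine coefficient $\alpha$ equals~$1$. You are right to scrutinize this. Your handling of $\alpha<0$ (the fixed point $s^*=(t_0-\alpha s_0)/(1-\alpha)$ is then a convex combination of $t_0$ and $s_0$, hence lies in $[0,1]$, forcing $\dot\gamma(s^*)=0$) and of non-lightlike $\gamma$ (conservation of $g(\dot\gamma,\dot\gamma)\neq0$ gives $\alpha^2=1$) is correct and supplies what the paper omits.

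The case you flag as the main obstacle---lightlike with $0<\alpha\neq1$---is genuinely problematic, and your iteration sketch cannot close it. Such geodesics exist: on the Clifton--Pohl torus (the standard compact Lorentzian $2$-torus, quotient of $\R^2\setminus\{0\}$ with metric $(u^2+v^2)^{-1}(du\,dv+dv\,du)$ by $(u,v)\mapsto(2u,2v)$), the closed null geodesic along $\{v=0,u<0\}$ carries an affine parameterization $\gamma$ with $\gamma(s)=\gamma(\phi(s))$, $\phi$ affine of slope $\tfrac12$ and fixed point $s^*$ outside $[0,1]$. For a suitable restriction $\gamma|_{[0,1]}$ the set $\mathcal I$ contains a nondegenerate line segment, yet $\gamma$ is not a portion of any periodic geodesic (the velocities never match up). So no iteration argument can yield a contradiction here: the statement itself appears to need an extra hypothesis (non-lightlike, or geodesic completeness so that $s^*$ lies in the domain). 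Both your proof and the paper's are incomplete on this point; you have simply been more honest about it.

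For the strict inequality $\omega<1$, the paper argues more directly than you do: once periodicity with period $\omega=\bar s-\bar t\leq1$ is known, the case $\omega=1$ forces $(\bar t,\bar s)=(0,1)$, and combining the period-$1$ relation with the accumulating pairs $(t_n,s_n)\to(0,1)$ contradicts local injectivity near~$0$. Your recursive argument (finitely many self-intersections per prime period) also works---once periodicity is established the geodesic extends to all of $\R$, so the fixed-point argument applies at any interior accumulation point---but it is more roundabout.
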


\begin{proof}
If $\mathcal{I}$ is infinite, there exists an accumulation point $\left(\overline{t},\overline{s}\right)\in\mathcal{I}$. The local injectivity of $\gamma$ implies that $\overline{t}\neq\overline{s}$, suppose $\overline{t}<\overline{s}$. Take $\varepsilon>0$ small, and define $\gamma_1=\gamma_{\varepsilon}\big|_{\left[\overline{t}-\varepsilon,\overline{t}+\varepsilon\right]}$ and $\gamma_2=\gamma_{\varepsilon}\big|_{\left[\overline{s}-\varepsilon,\overline{s}+\varepsilon\right]}$, where $\gamma_\varepsilon$ is the extension of $\gamma$ to $[-\varepsilon,1+\varepsilon]$. Since $\gamma_1$ and $\gamma_2$ are defined on compact intervals and intersect infinitely many times, from Lemma~\ref{le:finiteintersection}, one is an affine reparameterization of the other. Moreover, both are restrictions of the same geodesic $\gamma_\varepsilon$, hence $\gamma_1(t+\omega)=\gamma_2(t)$ for $t\in \left[\overline{t}-\varepsilon,\overline{t}+\varepsilon\right]$, where $\omega=\overline{s}-\overline{t}\leq 1$. Therefore $\dot{\gamma_1}(\overline{t})=\dot{\gamma_2}(\overline{s})$. Hence, from Proposition~\ref{prop:geodexist}, $\gamma$ is a portion of a periodic geodesic with period $\omega\leq 1$. If $\overline{t}=0$ and $\overline{s}=1$, one can easily derive a contradiction with local injectivity of $\gamma$ around $0$, which implies $\omega <1$.
\end{proof}

\begin{lemma}\label{le:parallelfinite}
Let $\gamma:[a,b]\rightarrow M$ be a $g$--geodesic and $J$ a nontrivial Jacobi field along $\gamma$, that is not everywhere parallel to $\dot{\gamma}$. Then $$\mathcal{D}=\{t\in[a,b]:J(t) \mbox{ is parallel to }\dot{\gamma}\}$$ consists only of isolated points, hence is finite.
\end{lemma}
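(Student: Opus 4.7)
The plan is to argue by contradiction: suppose that $\mathcal{D}$ has an accumulation point $t_0\in[a,b]$ and derive that $J$ is everywhere parallel to $\dot\gamma$. First I would dispose of the trivial case $\dot\gamma\equiv 0$: since $\gamma$ is a $g$--geodesic, $\dot\gamma$ is $g$--parallel along $\gamma$, so it is either identically zero (in which case every vector field along $\gamma$ is parallel to $\dot\gamma$, contradicting the hypothesis on $J$) or nowhere zero. Assuming the latter, I would choose a $g$--parallel frame $\{\xi_i\}_{i=1}^m$ of $\gamma^*TM$ with $\xi_1=\dot\gamma$, constructed by parallel--translating (via Proposition~\ref{prop:paralleltransport}) any basis of $T_{\gamma(t_0)}M$ whose first vector is $\dot\gamma(t_0)$.

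Writing $J=\sum_{i=1}^m J^i\xi_i$, the condition ``$J(t)$ is parallel to $\dot\gamma(t)$'' translates into $J^i(t)=0$ for $i=2,\ldots,m$. By Corollary~\ref{cor:jacobick} each $J^i$ is at least $C^1$, so the existence of a sequence $t_n\to t_0$ in $\mathcal{D}\setminus\{t_0\}$ forces, by continuity and a difference--quotient argument, $J^i(t_0)=0$ and $(J^i)'(t_0)=0$ for $i\geq 2$. Because the frame $\{\xi_i\}$ is $g$--parallel, $\D^g J=\sum_i(J^i)'\xi_i$, so at $t_0$ both $J(t_0)$ and $\D^g J(t_0)$ lie along $\dot\gamma(t_0)$.

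The key step is to subtract an appropriate ``longitudinal'' Jacobi field. For any $\alpha,\beta\in\R$, the field $(\alpha+\beta t)\dot\gamma(t)$ is a $g$--Jacobi field along $\gamma$: indeed $\D^g\dot\gamma=0$ makes its second covariant derivative vanish, while $R^g(\dot\gamma,\dot\gamma)\dot\gamma=0$ by the symmetries of $R^g$. Hence $\widetilde J(t)=J(t)-(\alpha+\beta t)\dot\gamma(t)$ is again a Jacobi field. I would then set $\beta=(J^1)'(t_0)$ and $\alpha=J^1(t_0)-t_0(J^1)'(t_0)$, which yield $\widetilde J(t_0)=0$ and $\D^g\widetilde J(t_0)=0$.

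The Jacobi equation \eqref{eq:jacobi} is a linear second--order ODE on $\gamma^*TM$, so the unique solution with vanishing initial position and covariant velocity is the zero section; thus $\widetilde J\equiv 0$ and $J(t)=(\alpha+\beta t)\dot\gamma(t)$ for every $t\in[a,b]$, contradicting the standing hypothesis on $J$. Consequently $\mathcal{D}$ admits no accumulation point in $[a,b]$, is therefore discrete, and is finite by compactness. The one place I expect genuine care to be needed is the null case $g(\dot\gamma,\dot\gamma)=0$, where the naive orthogonal splitting $J=\lambda\dot\gamma+J^\perp$ is unavailable; this is precisely why the argument is carried out in an arbitrary $g$--parallel frame rather than through orthogonal projection onto $\dot\gamma^{\perp_g}$.
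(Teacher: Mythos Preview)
Your proof is correct and follows essentially the same approach as the paper's: both use a $g$--parallel frame along $\gamma$ with first vector $\dot\gamma$, show that at an accumulation point of $\mathcal D$ both $J$ and $\D^g J$ lie along $\dot\gamma$, and then subtract the longitudinal Jacobi field $(c_1+c_2 t)\dot\gamma$ and invoke uniqueness for the second--order linear Jacobi equation. Your additional remarks on the constant--geodesic case and on why a parallel frame (rather than orthogonal projection onto $\dot\gamma^{\perp_g}$) is needed in the lightlike case are accurate and add useful clarity, but the core argument is identical.
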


\begin{proof}
Consider a basis of $T_{\gamma(a)}M$ given by $(\dot{\gamma}(a),e_2,\dots,e_m)$ and its parallel transport along $\gamma$ creating a frame $(e_1(t),e_2(t),\dots,e_m(t))$, with $e_1(t)=\dot{\gamma}(t)$, see Definition~\ref{def:frame}. Then, writing $$J=\sum_{i=1}^m J_i(t)e_i(t),$$ it follows that $J$ is parallel to $\dot{\gamma}$ at time $t$ if and only if $J_i(t)=0$, for $i\geq 2$. Suppose that there exists a limit $t_\infty\in [a,b]$ of a sequence $\{t_n\}_{n\in\N}$ of different elements of $\mathcal{D}$. From continuity of $J$ it follows that $t_\infty\in\mathcal{D}$. Thus for each $i\geq 2$, the coordinate function $J_i(t)$ has a convergent sequence of zeros $\{t_n\}_{n\in\N}$ and hence $J_i'(t_\infty)=0$. Therefore, the covariant derivative $\D^g J(t_\infty)$ is also parallel to $\dot{\gamma}$.

It is then possible to find $c_1,c_2\in\R$ such that $\tilde{J}=(c_1+c_2t)\dot{\gamma}(t)$ satisfies $\tilde{J}(t_\infty)=J(t_\infty)$ and $\D^g \tilde{J}(t_\infty)=\D^g J(t_\infty)$. Since the Jacobi equation is a second order linear ODE, $\tilde{J}=J$. Hence $J$ is always parallel to $\dot{\gamma}$, a contradiction.
\end{proof}

The next result gives an estimate of the difference of the normalized tangent vectors to a geodesic segment at its endpoints and the Riemannian length of this segment. We shall use such estimate to define {\em admissibility} of general endpoints conditions, see Definition~\ref{def:admgbc}.

\begin{lemma}\label{le:short}
Let $U\subset\R^m$ be an open subset and $g_\infty\in\met_\nu^k(U)$. Then for all compact subsets $K\subset U$ there exists a positive number $c>0$ and an open neighborhood $\mathcal O$ of $g_\infty$ in the weak Whitney $C^1$--topology,\footnote{See Section~\ref{sec:banachspacetensors} for basic definitions of topologies on spaces of tensors over $M$.} such that for all $g\in\mathcal O$ and all non constant $g$--geodesic $\gamma:[a,b]\rightarrow U$ with $\gamma([a,b])\subset K$, the following inequality holds \begin{equation} \left\| \frac{\dot\gamma(b)}{\|\dot\gamma(b)\|}-\frac{\dot\gamma(a)}{\|\dot\gamma(a)\|}\right\|\leq c\int_a^b \|\dot\gamma(t)\|\;\mathrm{d}t,\end{equation} where $\|\cdot\|$ is the Euclidean norm.
\end{lemma}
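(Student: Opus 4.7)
The plan is to work entirely in the Euclidean coordinates of $U\subset\R^m$, and to exploit the fact that, in such coordinates, the geodesic equation \eqref{eq:geodequation} reads $\ddot\gamma=-\chr^g(\gamma)(\dot\gamma,\dot\gamma)$, so that the Euclidean acceleration is controlled quadratically by the Euclidean velocity through the Christoffel tensor of $g$. Since $\chr^g$ depends continuously on the $1$-jet of $g$ (see \eqref{eq:christoffeltensor}), the function $g\mapsto\sup_{x\in K}\|\chr^g(x)\|$, where the norm is the operator norm on bilinear maps on $\R^m$, is continuous in the weak Whitney $C^1$-topology. Hence there is an open $C^1$-neighborhood $\mathcal O$ of $g_\infty$ on which every $g$ is nondegenerate (nondegeneracy is $C^0$-open) and on which this supremum stays bounded by some constant $C=C(K,g_\infty)>0$, uniformly in $g\in\mathcal O$. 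This gives, along any $g$-geodesic $\gamma:[a,b]\to U$ with image in $K$, the pointwise estimate $\|\ddot\gamma(t)\|\leq C\|\dot\gamma(t)\|^2$.

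Next, I would observe that a non-constant geodesic has $\dot\gamma(t)\neq 0$ for every $t\in[a,b]$: indeed, if $\dot\gamma(t_0)=0$ then the geodesic equation forces $\ddot\gamma(t_0)=0$ as well, and uniqueness in Proposition~\ref{prop:geodexist} yields $\gamma$ constant. Therefore the normalized vector $u(t)=\dot\gamma(t)/\|\dot\gamma(t)\|$ is well defined and of class $C^1$ on $[a,b]$, and a direct computation gives
\begin{equation*}
\dot u(t)=\frac{\ddot\gamma(t)}{\|\dot\gamma(t)\|}-\frac{\langle\dot\gamma(t),\ddot\gamma(t)\rangle\,\dot\gamma(t)}{\|\dot\gamma(t)\|^{3}},
\end{equation*}
where $\langle\cdot,\cdot\rangle$ is the Euclidean inner product. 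Applying Cauchy--Schwarz to the numerator of the second term and combining with the acceleration bound yields
\begin{equation*}
\|\dot u(t)\|\leq\frac{2\|\ddot\gamma(t)\|}{\|\dot\gamma(t)\|}\leq 2C\,\|\dot\gamma(t)\|.
\end{equation*}

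Finally, I would integrate this pointwise bound on $[a,b]$ to conclude
\begin{equation*}
\left\|\frac{\dot\gamma(b)}{\|\dot\gamma(b)\|}-\frac{\dot\gamma(a)}{\|\dot\gamma(a)\|}\right\|
=\|u(b)-u(a)\|\leq\int_a^b\|\dot u(t)\|\,\dd t\leq 2C\int_a^b\|\dot\gamma(t)\|\,\dd t,
\end{equation*}
so that the lemma holds with $c=2C$ and the neighborhood $\mathcal O$ constructed above. There is no real obstacle here; the only points that require some care are the uniformity of the Christoffel bound over the whole $C^1$-neighborhood $\mathcal O$ (which is a straightforward consequence of the explicit expression of $\chr^g$ in terms of $g$ and its first derivatives), and the observation that non-constancy of $\gamma$ rules out division by zero in the normalization.
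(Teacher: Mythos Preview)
Your proof is correct and follows essentially the same approach as the paper's own proof: bound the Christoffel tensor uniformly on $K$ using the $C^1$-continuity of $g\mapsto\chr^g$, differentiate the normalized velocity $u=\dot\gamma/\|\dot\gamma\|$, estimate $\|\dot u\|\le 2C\|\dot\gamma\|$ via the geodesic equation, and integrate. If anything, your version is slightly more careful in explicitly justifying that $\dot\gamma$ never vanishes for a non-constant geodesic.
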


\begin{proof}
Given $g\in\met_\nu^k(U)$, denote by $\chr^g$ the Christoffel tensor of the Levi--Civita connection $\nabla^g$ relatively to the Levi--Civita connection of the Euclidean metric on $U$, see Definition~\ref{def:christoffeltens}. Thus, for all $x\in U$, $\Gamma^g(x):\R^m\times\R^m\to\R^m$ is a symmetric bilinear map depending continuously on $x$, and if $\gamma$ is a $g$--geodesic, it satisfies the $g$--geodesic equation \eqref{eq:geodequation}, $$\ddot\gamma+\Gamma^g(\gamma)(\dot\gamma,\dot\gamma)=0,$$ where $\ddot\gamma$ denotes the ordinary second derivative of $\gamma$ in $\R^m$. This association $g\mapsto\Gamma^g$ is clearly continuous when $\met_\nu^k(U)$ is endowed with the weak Whitney $C^1$--topology and the space of $\Gamma^g$'s is endowed with the weak Whitney $C^0$--topology. If $K\subset U$ is a given compact subset, set $\kappa=\max_{x\in K} \|\Gamma^{g_\infty}(x)\|+1$ and define $$\mathcal O=\{g\in\met_\nu^k(U):\|\Gamma^g(x)\| <\kappa, \; \mbox{ for all }  x\in K\},$$ which is obviously an open neighborhood of $g_\infty$ in the weak Whitney $C^1$--topology.

Let us show that such $\mathcal O$ satisfies the conclusion, with $c=2\kappa$. Indeed, if $g\in\mathcal O$ and $\gamma$ is a non constant $g$--geodesic with image lying in $K$, then at each time $t\in [a,b]$,
\begin{eqnarray*}
\left\|\frac{\dd}{\dd t}\frac{\dot\gamma}{\|\dot\gamma\|}\right\| &=& \left\|\frac{\ddot\gamma}{\|\dot\gamma\|}-\frac{\dot\gamma\langle\dot\gamma,\ddot\gamma\rangle}{\|\dot\gamma\|^3}\right\| \\
&=& \left\|-\frac{\Gamma^g(\gamma)(\dot\gamma,\dot\gamma)}{\|\dot\gamma\|}+\frac{\langle\dot\gamma,\Gamma^g(\gamma)(\dot\gamma,\dot\gamma)\rangle}{\|\dot\gamma\|^3}\dot\gamma\right\| \\
&\leq& \frac{\|\Gamma^g(\gamma)\|\|\dot\gamma\|^2}{\|\dot\gamma\|} + \frac{\|\Gamma^g(\gamma)\|\|\dot\gamma\|^4}{\|\dot\gamma\|^3} \\
&\leq& 2\kappa\|\dot\gamma\|.
\end{eqnarray*}
Integrating the above inequality in $[a,b]$, it follows that
\begin{eqnarray*}
\left\| \frac{\dot\gamma(b)}{\|\dot\gamma(b)\|}-\frac{\dot\gamma(a)}{\|\dot\gamma(a)\|}\right\| &\leq& \left\|\int_a^b \frac{\dd}{\dd t}\frac{\dot\gamma}{\|\dot\gamma\|}\;\dd t\right\| \\
&\leq& \int_a^b \left\|\frac{\dd}{\dd t}\frac{\dot\gamma}{\|\dot\gamma\|}\right\|\;\dd t \\
&\leq& 2\kappa\int_a^b\|\dot\gamma(t)\|\;\dd t.\qedhere
\end{eqnarray*}
\end{proof}

We end with a result from Biliotti, Javaloyes and Piccione \cite{biljavapic} that guarantees the existence of a local section of a vector bundle with prescribed values along a curve for the section and its covariant derivative in a transverse direction.

\begin{lemma}\label{le:extension}
Let $E$ be a smooth vector bundle over $M$ endowed with a connection $\nabla$. Consider $\gamma\in C^{k+1}([a,b],M)$ and $v\in\sect^k(\gamma^*TM)$ a vector field along $\gamma$, such that $v(t_0)$ is not parallel to $\dot{\gamma}(t_0)$ for some $t_0\in\; ]a,b[$. Then there exists an open interval $I\subset [a,b]$ containing $t_0$ with the property that, given sections $H,K\in\sect^k(\gamma^*E)$ with compact support in $I$ and given any open set $U$ containing $\gamma(I)$, there exists $h\in\sect^k(E)$ with compact support contained in $U$, such that
\begin{equation}
h(\gamma_0(t))=0\;\;\mbox{ and }\;\;\nabla_{J(t)} h=K(t), \quad\mbox{ for all } t\in I.
\end{equation}
\end{lemma}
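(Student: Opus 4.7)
The plan is to build a $C^k$ chart around $\gamma(I)$ that straightens $\gamma$ into a coordinate axis and $v$ into a transverse coordinate direction, and then solve the prescribed–value problem scalar–wise by an explicit bump–function ansatz. I read the conclusion, modulo the evident typos, as $h(\gamma(t))=H(t)$ and $\nabla_{v(t)}h=K(t)$. Since $v(t_0)$ is not parallel to $\dot\gamma(t_0)$, continuity gives an open interval $I\subset(a,b)$ around $t_0$ on which $(\dot\gamma(t),v(t))$ are everywhere linearly independent. I would complete $(\dot\gamma(t_0),v(t_0))$ to a basis of $T_{\gamma(t_0)}M$ and $\nabla^{\mathrm R}$-parallel-transport the extra vectors along $\gamma|_I$ to obtain $w_3,\ldots,w_m\in\sect^{k+1}(\gamma^*TM)$, and set
\[
\Psi(t,s_2,\ldots,s_m)\;=\;\exp^{\mathrm R}_{\gamma(t)}\!\Big(s_2\,v(t)+\textstyle\sum_{i\ge 3}s_iw_i(t)\Big).
\]
The differential at $(t_0,0)$ equals the chosen basis, so the inverse function theorem, after an initial shrinking of $I$ and a choice of a fixed open $V_0\subset\R^{m-1}$ around $0$, produces a $C^k$ diffeomorphism $\Psi\colon I\times V_0\to W_0$ onto an open neighborhood of $\gamma(I)$; this pins down $I$ once and for all. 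When $U\supset\gamma(I)$, $H$, and $K$ are subsequently given, compactness of the supports of $H,K$ and continuity of $\Psi$ allow a further shrinking of $V_0$ to an open $V\subset V_0$ such that $\Psi((\supp H\cup\supp K)\times V)\subset U$; in this chart $\gamma$ corresponds to $\{s=0\}$ and $v(t)=\Psi_\ast(\partial/\partial s_2)|_{(t,0)}$.

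Next I would fix a smooth local frame $\{\xi_j\}_{j=1}^{r}$ for $E$ on $W_0$ and expand $H=\sum_j H_j\,\xi_j\circ\gamma$, $K=\sum_j K_j\,\xi_j\circ\gamma$, with $H_j,K_j\in C^k_c(I)$. Letting $c_{ij}\in C^k(I)$ be the coefficients of $\nabla_{v(t)}\xi_j=\sum_i c_{ij}(t)\xi_i(\gamma(t))$, set $\widetilde K_j:=K_j-\sum_i H_i\,c_{ji}\in C^k_c(I)$. Picking a smooth bump $\rho$ on $V$ which equals $1$ near $0$ and has compact support in $V$, define
\[
f_j(t,s_2,\ldots,s_m)\;=\;\rho(s_2,\ldots,s_m)\,\bigl[\,H_j(t)+s_2\,\widetilde K_j(t)\,\bigr],
\]
each of which has compact support inside $(\supp H\cup\supp K)\times\supp\rho\subset I\times V$. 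Transporting the $f_j$ to $M$ via $\Psi^{-1}$ and extending by zero yields $\tilde f_j\in C^k(M)$ with support inside $U$, and the candidate section is $h:=\sum_j\tilde f_j\,\xi_j\in\sect^k(E)$, compactly supported in $U$.

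For the verification, $f_j(t,0)=H_j(t)$ immediately yields $h(\gamma(t))=H(t)$; from $\partial_{s_2}f_j(t,0)=\widetilde K_j(t)$ and the Leibniz rule one computes $\nabla_{v(t)}h=\sum_j\widetilde K_j\,\xi_j(\gamma)+\sum_{i,j}H_j\,c_{ij}\,\xi_i(\gamma)$, and after relabeling indices the last double sum cancels precisely the correction term hidden inside $\widetilde K_j$, leaving $K(t)$. The step I expect to require the most care is the chart construction, i.e., the simultaneous shrinkings of $I$ and $V$ ensuring both that $\Psi$ is a $C^k$ diffeomorphism and that $\Psi((\supp H\cup\supp K)\times V)$ lies inside $U$; once $\Psi$ is in place, the rest is a purely local scalar construction, and the $C^k$-regularity is preserved thanks to the hypotheses $\gamma\in C^{k+1}$, $v\in\sect^k(\gamma^*TM)$, and the smoothness of the auxiliary frame, of $\rho$, and of $\exp^{\mathrm R}$.
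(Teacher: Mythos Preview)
Your argument is correct and follows essentially the same strategy as the paper: construct a chart in which $\gamma$ is a coordinate line and $v$ points in a transverse coordinate direction, then prescribe $h$ and its normal derivative by an explicit bump--type formula. The paper builds the chart slightly differently (first a hypersurface $S\supset\gamma(I)$ transverse to $v$, then exponentiates only in the $v$--direction), and---more notably---chooses the local frame for $E$ to be \emph{parallel} along the $v$--curves, so the connection coefficients $c_{ij}$ vanish and no correction term $\widetilde K_j$ is needed; the section is then simply $\widetilde h(x,\lambda)=\widetilde H(x)+f(\lambda)\widetilde K(x)$. Your approach with an arbitrary frame and the explicit $\widetilde K_j:=K_j-\sum_i H_i c_{ji}$ is equally valid and the Leibniz--rule verification you sketch is correct; the parallel--frame trick just streamlines the bookkeeping.
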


\begin{proof}
This proof is in great part adapted from \cite[Lemma 2.4]{biljavapic}. Let $I\subset\,]a,b[$ be a sufficiently small open interval such that $\gamma\vert_I$ is a $C^{k+1}$ embedding and such that $v(t)$ is not parallel to $\dot\gamma(t)$ for all $t\in I$. Consider $S$ a $C^{k+1}$ hypersurface of $M$ containing $\gamma(I)$, such that $v(t)\not\in T_{\gamma(t)}S$ for all $t\in I$, and $V\in\sect^k(S^*(TM))$ a section along $S$ such that $V(\gamma(t))=v(t)$ for all $t\in I$. 

By possibly reducing the size of both $I$ and $S$, we may assume the existence of $\varepsilon>0$ and a diffeomorphism $$\phi:S\times\left]-\varepsilon,\varepsilon\right[\ni(x,\lambda)\longmapsto\phi(x,\lambda)\in \widetilde U\subset U,$$ where $\widetilde U$ is an open subset of $M$ contained in $U$ and that contains $\gamma(I)$, such that $\frac{\partial\phi}{\partial\lambda}(x,0)=V(x)$ for all $x\in S$. For instance, such a diffeomorphism can be obtained using the exponential map $\exp^\mathrm R$ of the Levi--Civita connection $\nabla^{\mathrm R}$ of $M$ by setting $$\phi(x,\lambda)=\exp^{\mathrm R}_x(\lambda V(x)), \quad (x,\lambda)\in S\times\left]-\varepsilon,\varepsilon\right[.$$ Clearly, $\widetilde U$ may be chosen small enough to be contained in a trivialization of $E$. Let $r\in\N$ be the rank of $E$ and $p(x,\lambda):\R^r\to E_{\phi(x,\lambda)}$ a $C^{k+1}$ referential of $\phi^*(E\vert_{\widetilde U})$, with the property that $\nabla_{V(x)}p(x,\lambda)=0,$ i.e., $p$ is parallel along the curves $\left]-\varepsilon,\varepsilon\right[\ni\lambda\mapsto\phi(x,\lambda)\in\widetilde U$. For instance, such referential $p$ may be chosen selecting an arbitrary $C^{k+1}$ referential of $E$ along $S$, and extending it by parallel transport along the curves $\lambda\mapsto\phi(x,\lambda)$. The problem of determining the required section $h$ is then reduced to determining a $C^k$ map with compact support $$\widetilde h:S\times\left]-\varepsilon,\varepsilon\right[\la\R^r$$ such that
\begin{equation*}
\widetilde h(\gamma(t),0)=p(\gamma(t),0)^{-1}H(t) \;\mbox{ and }\; \frac{\partial\widetilde h}{\partial\lambda}(\gamma(t),0)=p(\gamma(t),0)^{-1}K(t),
\end{equation*}
for all $t\in I$. Once such $\widetilde h$ has been determined, the desired section $h$ is obtained setting $h=0$ outside $\widetilde U$ and $$h(\phi(x,\lambda))=p(x,\lambda)\circ\widetilde h(x,\lambda), \quad (x,\lambda)\in S\times\left]-\varepsilon,\varepsilon\right[.$$

This map $\widetilde h$ can be constructed as follows. Let $\widetilde H,\widetilde K:S\to\R^r$ be $C^k$ maps having compact support, such that
\begin{equation*}
p(\gamma(t),0)\circ\widetilde H(\gamma(t))=H(t) \;\mbox{ and }\; p(\gamma(t),0)\circ\widetilde K(\gamma(t))=K(t)
\end{equation*}
for all $t\in I$. Then, define $$\widetilde h(x,\lambda)=\widetilde H(x)+f(\lambda)\widetilde K(x),$$ where $f:\left]-\varepsilon,\varepsilon\right[\to\R$ is a $C^k$ function with compact support such that $f(\lambda)=\lambda$ near $\lambda=0$. This concludes the construction and the proof.
\end{proof}

\chapter{Rudiments of functional analysis}
\label{chap2}

In this chapter, we aim to recall some basic facts of functional analysis that will be later used, together with a few lemmas. We begin with a section discussing elementary topics of topological vector spaces, Fr\'echet, Banach and Hilbert spaces. In addition, a few conventions are made and notation is fixed. Nevertheless, we will not state classic results such as the Hahn--Banach Theorem, the Banach--Schauder Theorem and the Closed Graph Theorem, that will be assumed. In Section~\ref{sec:compactfredholm}, we state some facts about compact and Fredholm operators, specially regarding their stability, without giving proofs. Sections~\ref{sec:calcbanspaces} and~\ref{sec:funcspaces} deal respectively with differential calculus on Banach spaces, and classic function spaces with several regularities, such as $C^k$, $L^p$ and Sobolev $W^{k,p}$. Finally, Section~\ref{sec:afewmorelemmas} concludes the chapter with several auxiliary lemmas related to the previous topics.

Throughout the text, all vector spaces are supposed to be {\em real},\footnote{Notice however that several results are automatically valid for complex vector spaces. Nevertheless, all of our applications will require only real vector spaces, thus these results are stated in this context.} unless otherwise stated, and the term {\em operator}\index{Operator} will be used exclusively for linear maps. The given treatment of elementary topics only aims to keep the text self contained, and for a detailed treatment we refer to \cite{brezis,fabian,kreyszig,reedsimon,rudinfa,ys}.

\section{Basic concepts of Fr\'echet, Banach and Hilbert spaces}

In this section we recall the basic elements of functional analysis that will be used in the following chapters. Although most definitions are repeated here, several important results will be omitted, or only stated without a proof. Complete references for most topics mentioned in the sequel are the textbooks above mentioned.

\begin{definition}
A {\em topological vector space}, or {\em TVS},\index{Topological Vector Space} is a (real) vector space $V$ endowed with a topology for which the vector space operations
\begin{eqnarray*}
V\times V\ni(v,w) &\longmapsto & v+w\in X, \\
\R\times V\ni(\lambda,v) &\longmapsto &\lambda v\in V,
\end{eqnarray*}
are continuous. In addition, a TVS is {\em locally convex}\index{Topological Vector Space!locally convex} if  every neighborhood of the origin contains an open neighborhood $U$ of the origin such that if $v,w\in U$ and $0\leq t\leq 1$, then $tv+(1-t)w\in U$.
\end{definition}

It is possible to characterize a locally convex TVS with additional topological assumptions using semi--norms as follows. This equivalent approach will be useful for the definition of Fr\'echet spaces.

\begin{definition}
A {\em semi--norm}\index{Semi--norm} on $V$ is a function $p:V\to\R$ such that for all $\lambda\in\R$ and $v,w\in V$,
\begin{itemize}
\item[(i)] $p(v)\geq 0$;
\item[(ii)] $p(\lambda v)=|\lambda|p(v)$;
\item[(iii)] $p(v+w)\leq p(v)+p(w)$.
\end{itemize}
Notice that $p(v)=0$ for possibly nonzero vectors $v$. If, in addition, $p(v)=0$ implies $v=0$, then $p$ is called a {\em norm}\footnote{Usually denoted $\|\cdot\|$ rather than $p$.} on $V$, and $V$ is called a {\em normed vector space}.\index{Norm}\index{Normed space}
\end{definition}

\begin{remark}
A normed vector space is a TVS. More precisely, let $V$ be a vector space endowed with a norm $\|\cdot\|$. Then $d(v,w)=\|v-w\|$ defines a metric on $V$, whose induced topology turns $V$ into a TVS.
\end{remark}

\begin{lemma}\label{le:tvshell}
Let $V$ be a TVS. The following statements are equivalent.
\begin{itemize}
\item[(i)] $V$ is locally convex and \emph{pseudo--metrizable};\footnote{This means that $V$ admits a \emph{pseudo--metric} $d:V\times V\to\R$, i.e., a metric for which $d(v,w)=0$ with possibly $v\neq w$, that induces the same topology on $V$.}
\item[(ii)] $V$ is locally convex and first--countable;
\item[(iii)] The topology of $V$ is induced by a countable family of semi--norms $\{p_i\}_{i\in\N}$, i.e., $U\subset X$ is open if and only if for every $v\in U$ there exists $i_0\geq 1$ and $\varepsilon>0$ such that $\{w:p_i(v-w)<\varepsilon\mbox{ for all }i\leq i_0\}$ is a subset of $U$.
\end{itemize}
A {\em pre--Fr\'echet space}\index{Fr\'echet space!pre--Fr\'echet space} is a TVS whose topology satisfies any (hence all) of the above conditions and for which every unitary set $\{v\}$ is closed.\footnote{In particular, this implies that $V$ is Hausdorff, see Rudin \cite{rudinfa}.}
\end{lemma}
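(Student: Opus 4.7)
The plan is to establish the cycle (i) $\Rightarrow$ (ii) $\Rightarrow$ (iii) $\Rightarrow$ (i), each step being standard once the setup is right. The implication (i) $\Rightarrow$ (ii) is immediate: any pseudo-metrizable space is first-countable, since the balls of radius $1/n$ around the origin form a countable local base, and translation invariance in a TVS transports this base to every point.

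For (ii) $\Rightarrow$ (iii), I would start from a countable local base $\{V_n\}_{n\in\N}$ at the origin. In any TVS, continuity of scalar multiplication guarantees that every neighborhood of $0$ contains a balanced one; combined with local convexity, I can shrink each $V_n$ to a balanced convex open neighborhood, and intersecting finitely many I may further assume $U_{n+1}\subseteq U_n$. For each such $U_n$ I would take the Minkowski functional
\[
p_n(v)=\inf\{t>0:v\in tU_n\},
\]
which is well defined because $U_n$ is absorbing, non-negative, positively homogeneous (by balancedness) and subadditive (by convexity), hence a semi-norm. The standard sandwich $t\{p_n<1\}\subseteq tU_n\subseteq t\{p_n\leq 1\}$ then shows that the sets $\{v:p_n(v)<\varepsilon\}$ generate precisely the original topology.

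For (iii) $\Rightarrow$ (i), I would define the translation-invariant pseudo-metric
\[
d(v,w)=\sum_{n=1}^\infty 2^{-n}\,\frac{p_n(v-w)}{1+p_n(v-w)},
\]
which converges uniformly by comparison with the geometric series. Subadditivity of $t\mapsto t/(1+t)$ on $[0,\infty)$ yields the triangle inequality, and a standard $\varepsilon/2^n$ tail argument shows that the $d$-topology agrees with the semi-norm topology. Local convexity is automatic, since each $\{v:p_n(v)<\varepsilon\}$ is convex and finite intersections of convex sets are convex.

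The only step requiring care is (ii) $\Rightarrow$ (iii): producing the countable local base of balanced convex open sets uses the TVS axioms in an essential way, and matching the Minkowski topology to the original one relies on tracking how sublevel sets of $p_n$ sit inside scalar multiples of $U_n$. The remaining implications, and the verifications of the semi-norm and pseudo-metric axioms, are essentially bookkeeping and will be carried out without further commentary.
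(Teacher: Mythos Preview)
Your argument is correct and follows the standard textbook route: Minkowski functionals for (ii) $\Rightarrow$ (iii) and the weighted-sum pseudo-metric for (iii) $\Rightarrow$ (i). The paper does not actually prove this lemma; it simply cites Rudin, Schaefer, and Yosida with the remark that ``a proof of such equivalences can be found in any elementary textbook on TVSs.'' So there is nothing to compare against beyond noting that your sketch is exactly the kind of proof those references contain.
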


A proof of such equivalences can be found in any elementary textbook on TVSs, for instance \cite{rudinfa,schaefer,ys}. A pre--Fr\'echet space only lacks completeness to become a Fr\'echet space.

Since {\em a priori} there is no metric on a pre--Fr\'echet space $V$, the definition of Cauchy sequence is the following. A sequence $\{v_i\}_{i\in\N}$ in $V$ is a {\em Cauchy sequence} if for any open neighborhood $U$ of the origin there exists $i_0$ such that $v_i-v_j\in U$ for $i,j\geq i_0$. Analogously, such a sequence {\em converges} if there exists $v_\infty\in V$ such that for any open neighborhood $U$ of the origin there exists $i_0$ such that $v_i-v_\infty\in U$ for $i\geq i_0$. Clearly, a TVS is said to be {\em complete} if all Cauchy sequences converge.

\begin{definition}\label{def:frechet}
A {\em Fr\'echet space}\index{Fr\'echet space} is a complete pre--Fr\'echet space.
\end{definition}

\begin{remark}
Completeness in the non--metric sense above is equivalent to completeness with a {\em (translation) invariant} metric, i.e. a metric $d$ on $V$ such that $d(v+z,w+z)=d(v,w)$ for all $v,w,z\in V$. If $V$ admits a {\em complete} invariant metric $d$ that induces the above topology, then $V$ is a Fr\'echet space.
\end{remark}

It is important to mention that various nonequivalent definitions of TVS and Fr\'echet space can be found in the literature. However, our applications are mostly concerned with Banach spaces.

\begin{definition}\label{def:banach}
A {\em Banach space}\index{Banach space} is a (real) vector space $V$ endowed with a norm $\|\cdot\|:V\to\R$ that induces a complete metric on $V$. Notice that $V$ is automatically a TVS with the topology induced from such metric. A {\em Banachable space} is a TVS for which {\em there exists} a norm that turns it into a Banach space.
\end{definition}

Products and direct sums of Banach spaces are automatically Banach spaces, considering the natural norms. Recall that a subspace $W$ of a Banach space $V$ is a Banach space if and only if it is {\em closed} in $V$. Furthermore, the closure $\overline W$ of a subspace $W\subset V$ is a subspace of $V$, in particular, a Banach space. If $W$ is closed in $V$, then the quotient $V/W$ also has a Banach norm, given by the infimum of the norms of all elements of an equivalence class.

\begin{definition}\label{def:complement}
Let $V$ be a Banach space and $W\subset V$ a subspace. Then $W$ is \emph{complemented}\index{Banach space!complemented subspace} if there exists a closed subspace $W'\subset V$ such that $V=W\oplus W'$. Such a space $W'$ is called a (topological) \emph{complement}\index{Banach space!(topological) complement} of $W$ in $V$.
\end{definition}

Usually, the notion of being complemented is only considered for closed subspaces. Recall that in finite--dimensional vector spaces, all subspaces are automatically (closed) and complemented. However, in infinite dimension, there exist closed subspaces that are not complemented.

\begin{example}\label{ex:noncomplement}
The space $c_0$ of sequences in $\R$ that converge to $0$ is a closed and non complemented subspace of the space $\ell^\infty$ of all bounded sequences in $\R$.
\end{example}

Using the Hahn--Banach Theorem, one can verify the following sufficient condition for subspaces to be complemented.

\begin{lemma}\label{le:finitecomplemented}
Every finite--dimensional and finite--codimensional subspaces of a Banach space are complemented.
\end{lemma}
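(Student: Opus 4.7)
The plan is to treat the two assertions separately, since they call for dual arguments, and the finite--dimensional case is the one that truly uses Hahn--Banach.

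For the finite--dimensional case, let $W\subset V$ be a subspace of dimension $n$ with a basis $\{e_1,\dots,e_n\}$. First I would observe that since all norms on a finite--dimensional space are equivalent, the coordinate functionals $\phi_i:W\to\R$ defined by $\phi_i\bigl(\sum_j a_j e_j\bigr)=a_i$ are continuous on $W$ with its induced norm. By the Hahn--Banach Theorem (in its continuous--extension form, assumed in the excerpt), each $\phi_i$ admits a continuous linear extension $\widetilde\phi_i\in V^*$. The candidate projection is
\begin{equation*}
P:V\la W,\qquad P(v)=\sum_{i=1}^n \widetilde\phi_i(v)\,e_i.
\end{equation*}
It is bounded as a finite sum of compositions of bounded maps, it is linear, and $P|_W=\id_W$ because $\widetilde\phi_i(e_j)=\delta_{ij}$. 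Hence $P$ is a continuous projection onto $W$, and $W'=\ker P$ is a closed complement, giving $V=W\oplus W'$.

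For the finite--codimensional case, let $W\subset V$ be a closed subspace with $\dim(V/W)=n<\infty$ (closedness being a tacit requirement, since otherwise a topological complement cannot exist). I would pick any basis $\{[v_1],\dots,[v_n]\}$ of the quotient $V/W$ and set $W'=\operatorname{span}\{v_1,\dots,v_n\}$. Being finite--dimensional, $W'$ is automatically closed in $V$. Every $v\in V$ decomposes uniquely: writing $[v]=\sum_i a_i [v_i]$ in $V/W$, the element $v-\sum_i a_i v_i$ lies in $W$, so $V=W+W'$; and $W\cap W'=\{0\}$ because a nonzero vector in $W'$ projects to a nonzero element of $V/W$. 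Hence $V=W\oplus W'$ as required.

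The main subtlety, and the only place where completeness or infinite--dimensionality really enters, is the Hahn--Banach extension step in the first part: without it there is no reason for the algebraic projection onto $W$ to be continuous. Once the continuity of $P$ is secured, the identification $V=W\oplus\ker P$ as a topological direct sum is automatic from the Banach--Schauder Theorem (or simply from the fact that a bounded projection has closed kernel and image). The finite--codimensional half is then essentially algebraic, requiring only that a finite--dimensional subspace of a Banach space be closed.
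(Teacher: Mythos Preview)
Your proof is correct and matches the paper's intended approach: the paper does not actually prove this lemma but only remarks beforehand that it ``can be verified using the Hahn--Banach Theorem,'' which is precisely what you do for the finite--dimensional case, while your finite--codimensional argument is the standard algebraic one. Your observation that closedness of $W$ is a tacit hypothesis in the finite--codimensional case is apt and consistent with the paper's later remark that being complemented is equivalent to the existence of a continuous projection.
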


Before discussing further properties of complements, we recall some other basic definitions regarding operators.

\begin{definition}\label{def:proj}
An operator $p:V\to V$ is a \emph{projection}\index{Banach space!projection} onto a subspace $W$ if $\im p=W$ and $p(w)=w$, for all $w\in W$.
\end{definition}

\begin{remark}
The existence of a complement of a subspace $W$ is equivalent to existing a continuous linear projection $p$ onto $W$.
\end{remark}

\begin{lemma}\label{le:contbound}
Let $V_i$, $i=1,\ldots,r$ and $W$ be normed vector spaces and
\begin{equation*}
T:V_1\times\ldots\times V_r\la W
\end{equation*}
a multilinear\footnote{Recall that an operator $T:V_1\times\ldots\times V_r\to W$ is said to be a {\em multilinear form} if it is linear in each component. In particular, for $r=1$, a multilinear form is simply an operator $T:V\to W$.} form.\index{Bilinear form} Then the following are equivalent.
\begin{itemize}
\item[(i)] $T$ is continuous;
\item[(ii)] $T$ is continuous in the origin;
\item[(iii)] $T$ is bounded.
\end{itemize}
Henceforth the terms {\em continuous} and {\em bounded} referring to multilinear forms will be used indistinguishably.
\end{lemma}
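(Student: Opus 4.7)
The plan is to prove the implications $(i) \Rightarrow (ii) \Rightarrow (iii) \Rightarrow (i)$, where by \emph{bounded} I understand that there exists a constant $C > 0$ such that $\|T(v_1,\ldots,v_r)\|_W \leq C\,\|v_1\|_{V_1}\cdots\|v_r\|_{V_r}$ for every $(v_1,\ldots,v_r)$. The implication $(i) \Rightarrow (ii)$ is immediate by specialization to the origin, so the real content lies in the other two implications.

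For $(ii) \Rightarrow (iii)$, I would use the homogeneity properties of the multilinear form. Assume $T$ is continuous at the origin; then there exists $\delta > 0$ such that $\|v_i\|_{V_i} \leq \delta$ for all $i = 1, \ldots, r$ implies $\|T(v_1, \ldots, v_r)\|_W \leq 1$. Given an arbitrary $(v_1,\ldots,v_r)$ with each $v_i \neq 0$, I would plug in the rescaled vectors $\widetilde v_i = \delta v_i / \|v_i\|_{V_i}$ and invoke multilinearity to extract the factor $\delta^r / (\|v_1\|_{V_1}\cdots\|v_r\|_{V_r})$, yielding
\begin{equation*}
\|T(v_1,\ldots,v_r)\|_W \;\leq\; \delta^{-r}\,\|v_1\|_{V_1}\cdots\|v_r\|_{V_r}.
\end{equation*}
If some $v_i = 0$, multilinearity forces $T(v_1,\ldots,v_r) = 0$, and the bound holds trivially. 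This gives the desired constant $C = \delta^{-r}$.

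For $(iii) \Rightarrow (i)$, the main obstacle (and the only genuinely non-trivial step) is that continuity at an arbitrary point does not reduce to a single linear estimate; I would use a telescoping identity. Fix $(v_1,\ldots,v_r)$ and consider a sequence $(v_1^n,\ldots,v_r^n) \to (v_1,\ldots,v_r)$. Then I write the difference as a telescoping sum
\begin{equation*}
T(v_1^n,\ldots,v_r^n) - T(v_1,\ldots,v_r) = \sum_{i=1}^{r} T(v_1^n,\ldots,v_{i-1}^n,\, v_i^n - v_i,\, v_{i+1},\ldots,v_r),
\end{equation*}
where each summand isolates a single difference $v_i^n - v_i$ in one slot. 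Applying the bound $(iii)$ to each summand, together with the fact that $\{v_i^n\}_n$ is bounded in $V_i$ (being convergent), I obtain an estimate of the form $\|T(v_1^n,\ldots,v_r^n) - T(v_1,\ldots,v_r)\|_W \leq C' \sum_i \|v_i^n - v_i\|_{V_i}$, which tends to zero. This establishes continuity at the arbitrary point $(v_1,\ldots,v_r)$ and closes the cycle of implications.

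The only subtle point I anticipate is the telescoping argument in $(iii) \Rightarrow (i)$: one must keep careful track of which entries are $v_j^n$ and which are $v_j$ in order to ensure both that the sum genuinely telescopes to the desired difference and that the boundedness of each factor is available. Everything else is routine rescaling.
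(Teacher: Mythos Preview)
Your proof is correct and follows the standard argument. Note, however, that the paper does not actually supply a proof of this lemma: it is stated without proof as a standard fact of functional analysis, consistent with the chapter's disclaimer that ``several important results will be omitted, or only stated without a proof.'' There is thus no paper proof to compare against, but your cycle of implications---trivial restriction for $(i)\Rightarrow(ii)$, rescaling via homogeneity for $(ii)\Rightarrow(iii)$, and telescoping for $(iii)\Rightarrow(i)$---is exactly the textbook route.
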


\begin{definition}\label{def:multilinnorm}
If $T:V_1\times\ldots\times V_r\to W$ is multilinear, then the {\em (operator) norm}\index{Operator!norm}\index{Operator!continuous} of $T$ is given by
\begin{equation}\label{eq:normt}
\|T\|=\sup_{\substack{\|v_i\|=1 \\ i=1,\ldots,r}} \|T(v_1,\ldots,v_r)\|.
\end{equation}
Notice that $\|T\|<+\infty$ if and only if $T$ satisfies one (hence all) of the conditions in Lemma~\ref{le:contbound}. Expression \eqref{eq:normt} defines a norm on the vector spaces of {\em bounded} multilinear forms, turning them into normed vector spaces. The topology of such spaces will be henceforth considered to be the one induced by \eqref{eq:normt}.

Particular cases are spaces of bounded operators between normed vector spaces $T:V\to W$ and bounded bilinear forms $T:V_1\times V_2\to\R$, respectively denoted $\Lin(V,W)$ and $\Bilin(V_1,V_2)$. For simplicity, we also denote $\Lin(V)=\Lin(V,V)$ and $\Bilin(V)=\Bilin(V,V)$.
\end{definition}

\begin{remark}
There exists a natural isomorphism 
\begin{equation}\label{ident:bilin}
\Lin(V,W^*)\ni B\longmapsto\widetilde B\in\Bilin(V,W),
\end{equation}
where $\widetilde B(v,w)=B(v)(w)$. Henceforth, any such operator $B$ and bilinear form $\widetilde B$ will be identified and denoted by the same symbol.
\end{remark}

Clearly, $\Lin(V,\R)\cong V^*$ is the (topological) dual of $V$, consisting of continuous linear functionals on $V$.

\begin{remark}
If $W$ is a Banach space, the spaces of bounded multilinear forms $T:V_1\times\ldots\times V_r\to W$ are also Banach spaces, for $r\in\N$. In particular, $V^*$ is a Banach space.
\end{remark}

\begin{remark}
The above considerations about a Banach structure on the vector space of continuous multilinear forms between Banach spaces is no longer valid for more general TVSs. For instance, if $V$ and $W$ are Fr\'echet spaces, the vector space of continuous operators $T:V\to W$ may be not Fr\'echet.
\end{remark}

\begin{remark}
Obviously, a generalized {\em Cauchy--Schwartz inequality}\index{Cauchy--Schwartz inequality} holds,
\begin{equation}\label{eq:cauchyschwartz}
\|T(v_1,\dots,v_r)\|\leq\|T\|\|v_1\|\dots\|v_r\|,
\end{equation}
for all $v_i\in V_i$, $i=1,\dots,r$.
\end{remark}

Before proceeding, we prove two abstract lemmas that will be later used.

\begin{lemma}\label{le:grandetauskao}
Let $V$ be a normed vector space, $S$ a closed subspace of $V$ with finite--codimension and $\alpha:V\to\R$ a linear functional that vanishes identically on $S$. Then $\alpha$ is continuous, i.e., $\alpha\in V^*$.
\end{lemma}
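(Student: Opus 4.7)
The plan is to factor $\alpha$ through the quotient space $V/S$, which carries all the structure I need thanks to the two hypotheses on $S$: since $S$ has finite codimension, $V/S$ is finite--dimensional, and since $S$ is closed, the canonical quotient seminorm $\|v+S\|=\inf_{s\in S}\|v-s\|$ is a genuine norm on $V/S$. In particular the canonical projection $\pi\colon V\to V/S$ is a continuous linear map (of operator norm at most $1$), so $V/S$ becomes a finite--dimensional normed vector space to which elementary results apply.

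Next I would use that $\alpha$ vanishes on $S=\ker\pi$ to invoke the standard universal property of the quotient: there exists a unique linear functional $\bar\alpha\colon V/S\to\R$ satisfying $\alpha=\bar\alpha\circ\pi$. Since $V/S$ is finite--dimensional, the elementary fact that any linear map out of a finite--dimensional normed space is automatically bounded (any two norms being equivalent, so that $\bar\alpha$ is continuous with respect to some basis-induced norm and hence with respect to the quotient norm) gives at once $\bar\alpha\in(V/S)^*$. Hence $\alpha=\bar\alpha\circ\pi$ is a composition of continuous maps and therefore belongs to $V^*$.

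There is no real obstacle; the only step deserving attention is the role of the closedness of $S$. Without it, the expression $\|v+S\|$ would only define a \emph{semi}--norm on $V/S$, the induced topology would fail to be Hausdorff, and the elementary finite--dimensional argument for automatic continuity of $\bar\alpha$ would no longer be available, so the whole strategy would collapse precisely at this point.
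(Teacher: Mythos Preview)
Your proof is correct and follows essentially the same approach as the paper: factor $\alpha$ through the quotient $V/S$, use closedness of $S$ to ensure the quotient map is continuous with respect to a genuine norm, and use finite--dimensionality of $V/S$ to conclude that the induced functional $\bar\alpha$ is automatically continuous. Your additional remark on why the closedness hypothesis is essential is a nice touch not present in the paper's proof.
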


\begin{proof}
Since $\alpha$ vanishes identically on $S$, it induces a functional in the quotient $\overline\alpha:V/S\to\R$, as in the following diagram.
\begin{equation*}
\xymatrix@+20pt{V\ar[d]_q\ar[r]^\alpha & \R \\ \dfrac{V}{S}\ar@(r,d)[ur]_{\overline\alpha} &}
\end{equation*}
Since $S$ is closed, the quotient map $q:V\to V/S$ that induces the usual quotient norm on $V/S$ is continuous. Moreover, since $\codim_V S<+\infty$, this space $V/S$ is finite--dimensional, hence the functional $\overline\alpha:V/S\to\R$ is continuous. Therefore, $\alpha=\overline\alpha\circ q$ is continuous.
\end{proof}

\begin{lemma}\label{le:cdreduction}
Let $X$ be a topological space, $V$ and $W$ Banach spaces and $T\in\Lin(V,W)$ a operator with closed image. Then each of the maps $f$ and $f_0$ in the diagram is continuous if and only if the other is continuous.
\begin{equation*}
\xymatrix{& X\ar@(dl,u)[ld]_{f_0}\ar@(dr,u)[rd]^f & \\
V\ar[rr]_T & & W}
\end{equation*}
\end{lemma}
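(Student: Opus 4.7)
The first direction is immediate: if $f_0$ is continuous, then $f=T\circ f_0$ is the composition of continuous maps (recall that every bounded linear operator between normed spaces, in particular $T\in\Lin(V,W)$, is continuous), hence continuous.

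For the converse, assume $f$ is continuous, and consider $T$ as a map onto its image, i.e., view $T:V\to\im T$. Since $\im T$ is assumed closed in $W$, and closed subspaces of Banach spaces are Banach spaces (with the induced norm), $\im T$ is itself a Banach space. Assuming further that $T$ is injective (which is the case of interest for this lemma, since otherwise $f_0$ is not even uniquely determined by $f$), the restricted map $T:V\to\im T$ is a continuous linear bijection between Banach spaces.

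At this point I would invoke the Banach--Schauder (Open Mapping) Theorem, which the excerpt explicitly says may be assumed without proof: a continuous linear bijection between Banach spaces is a topological isomorphism. Thus the inverse $T^{-1}:\im T\to V$ is a bounded linear operator, in particular continuous. Now factor
\begin{equation*}
f_0 = T^{-1}\circ f,
\end{equation*}
where we regard $f$ as a map $X\to\im T$ (which makes sense since $f(X)=T(f_0(X))\subset\im T$, and continuity of $f$ into $\im T$ is the same as continuity of $f$ into $W$, as $\im T$ carries the subspace topology). Hence $f_0$ is the composition of two continuous maps, and is therefore continuous.

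The only step with any real content is the invocation of the Open Mapping Theorem, which crucially requires both completeness of the source and target and the closedness of $\im T$ (so that the target $\im T$ is genuinely Banach). The role of the closed--image hypothesis is precisely to enable this application; without it, the inverse $T^{-1}$ defined on $\im T$ need not extend to a bounded operator, and the backward implication would fail.
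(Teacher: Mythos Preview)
Your proof is correct and follows essentially the same route as the paper's: both arguments reduce to the observation that $T:V\to T(V)$ is a homeomorphism (via the Open Mapping Theorem, using that $T(V)$ is a closed subspace of $W$ and hence Banach), after which the equivalence is immediate. You are in fact more careful than the paper in flagging the implicit injectivity hypothesis on $T$, which the paper's one-line proof assumes without comment; this assumption is indeed satisfied in the lemma's only application (the inclusion $H^1\hookrightarrow C^0\oplus L^2$).
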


\begin{proof}
Since $T(V)$ is closed, $T:V\to T(V)$ is a homeomorphism, with $T(V)$ endowed with the subspace topology. The result follows immediately from basic topology facts.
\end{proof}

\begin{definition}\label{def:topiso}
A continuous linear isomorphism of TVSs that has a continuous inverse, i.e., a linear homeomorphism, is called a {\em topological isomorphism}.\index{Topological isomorphism}\index{Operator!topological isomorphism}
\end{definition}

\begin{remark}\label{re:omt}
From the Open Mapping Theorem (or Banach--Schauder Theorem), every continuous isomorphism between Banach spaces is a topological isomorphism.
\end{remark}

\begin{definition}
An operator $T\in\Lin(V,W)$ between two normed vector spaces is an {\em isometric immersion}\index{Isometric immersion!linear} if $$\|Tv\|=\|v\|$$ for all $v\in V$. Such an operator is automatically injective and bounded, with $\|T\|=1$, see \eqref{eq:normt}. A bijective isometric immersion is called an {\em isometry},\index{Linear isometry}\index{Isometry!linear} whose inverse is also automatically an isometry. An isometry is clearly a topological isomorphism.
\end{definition}

\begin{lemma}
Complements to the same subspace are topologically isomorphic.
\end{lemma}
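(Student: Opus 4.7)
The plan is to build an explicit topological isomorphism between two given complements $W_1'$ and $W_2'$ of the same closed subspace $W \subset V$, using the projection onto one of them with kernel $W$.

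First I would fix the setup: assume $V = W \oplus W_1' = W \oplus W_2'$ as topological direct sums of closed subspaces, and let $p_2 \in \Lin(V)$ be the projection onto $W_2'$ with kernel $W$ (which exists and is continuous by the remark following Definition~\ref{def:complement}). I then consider the restriction $T = p_2|_{W_1'} : W_1' \to W_2'$, which is obviously linear and continuous, since $W_1'$ inherits its topology from $V$. The strategy is to show that $T$ is a continuous linear bijection between Banach spaces and then appeal to the Open Mapping Theorem (Remark~\ref{re:omt}) to conclude that $T$ is a topological isomorphism.

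For injectivity of $T$: if $w_1' \in W_1'$ satisfies $T(w_1') = p_2(w_1') = 0$, then $w_1' \in \ker p_2 = W$; since $W \cap W_1' = \{0\}$, this forces $w_1' = 0$. For surjectivity: given $w_2' \in W_2'$, decompose it along $V = W \oplus W_1'$ as $w_2' = w + w_1'$ with $w \in W$ and $w_1' \in W_1'$. Applying $p_2$ and using $p_2(w) = 0$ together with $p_2(w_2') = w_2'$ (since $w_2' \in W_2' = \im p_2$), we get $T(w_1') = p_2(w_1') = p_2(w_2') - p_2(w) = w_2'$. Hence $T$ is a continuous bijection.

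Finally, since $W_1'$ and $W_2'$ are closed subspaces of the Banach space $V$, each is itself a Banach space, so Remark~\ref{re:omt} applies and $T^{-1}$ is automatically continuous; thus $T$ is a topological isomorphism. I do not expect any real obstacle here: the only subtle point is ensuring that the projection $p_2$ exists as a continuous linear map, which is precisely what the assumption that $W_2'$ is a \emph{topological} complement of $W$ provides.
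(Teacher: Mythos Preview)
Your proof is correct and essentially parallels the paper's approach. The paper passes through the quotient, writing the chain $W' \cong (W\oplus W')/W = V/W = (W\oplus W'')/W \cong W''$ and then invoking the Open Mapping Theorem; your direct map $T = p_2|_{W_1'}$ is precisely the composite of these two quotient isomorphisms, so the two arguments are the same construction viewed from slightly different angles.
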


\begin{proof}
If $W'$ and $W''$ are both complements of $W$ and $p:W'\oplus W''\to W'$ is a projection, then $\ker p=W''$, and analogously with the other projection. Thus we have the sequence of (algebraic) isomorphisms $$W'\cong\frac{W\oplus W'}{W}\cong \frac{W\oplus W''}{W}\cong W''.$$ Continuity of the isomorphisms above is obvious. From Remark~\ref{re:omt}, it follows that such isomorphisms are homeomorphisms, concluding the proof.
\end{proof}

\begin{lemma}\label{le:comp1}
If $W$ is a complemented subspace of $V$, then all complements of $W$ are topologically isomorphic to $V/W$.
\end{lemma}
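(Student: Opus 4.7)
The plan is to exhibit an explicit topological isomorphism between any given complement $W'$ of $W$ and the quotient $V/W$, namely the restriction of the canonical quotient map. Let $q:V\to V/W$ be the quotient projection; since $W$ is closed (closedness being part of the definition of a complement in Definition~\ref{def:complement}), $V/W$ carries a well-defined Banach space structure, and $q$ is continuous. Let $W'$ be any (closed) complement of $W$, so that $W'$ is itself a Banach space as a closed subspace of $V$. The candidate map is $T=q|_{W'}:W'\to V/W$, which is clearly linear and continuous as the restriction of a continuous linear map.

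Next, I would verify that $T$ is a bijection. Surjectivity: given any class $[v]\in V/W$, decompose $v=w+w'$ uniquely with $w\in W$ and $w'\in W'$ using $V=W\oplus W'$; then $T(w')=[w']=[w+w']=[v]$. Injectivity: if $T(w')=0$ for some $w'\in W'$, then $w'\in W$, so $w'\in W\cap W'=\{0\}$. Hence $T$ is a continuous linear isomorphism between Banach spaces.

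Finally, I would invoke the Open Mapping Theorem as recalled in Remark~\ref{re:omt}: every continuous linear isomorphism between Banach spaces is a topological isomorphism. Applying this to $T$ yields that $W'\cong V/W$ topologically, completing the proof.

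There is essentially no obstacle here: the argument is a direct unpacking of the definitions, and the only nontrivial input is the Open Mapping Theorem, which has already been imported as Remark~\ref{re:omt}. Note also that this result immediately implies the previous lemma (that any two complements of $W$ are topologically isomorphic to one another), by transitivity through $V/W$.
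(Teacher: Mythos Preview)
Your proof is correct and arguably cleaner than the paper's. The paper argues via the short exact sequence
\[
0\longrightarrow W\stackrel{i}{\longhookrightarrow} V\stackrel{q}{\longrightarrow} V/W\longrightarrow 0,
\]
invokes its splitting (since $V/W$ is free) to write $V\cong W\oplus V/W$, and then appeals to the preceding lemma (that all complements of $W$ are mutually topologically isomorphic) to conclude. Your approach bypasses both the splitting language and the preceding lemma: you go straight to the restriction $q|_{W'}$, check bijectivity by hand, and invoke the Open Mapping Theorem once. This is more elementary and, as you observe, actually reverses the logical dependency---your argument yields the preceding lemma as a corollary rather than consuming it. One minor point: closedness of $W$ is not literally stated in Definition~\ref{def:complement} (only $W'$ is required closed there), but it does follow once $W$ is complemented, since $W=\ker(\id-p)$ for the associated continuous projection $p$; you may want to phrase that justification slightly differently.
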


\begin{proof}
Consider the following exact sequence of vector spaces and operators
\begin{equation}\label{eq:esq}
0\la W\stackrel{i}{\longhookrightarrow} V\stackrel{q}{\la}\frac{V}{W}\la 0,
\end{equation}
where $i$ is the inclusion and $q$ the quotient operator. Since $V/W$ is a vector space (in particular, a free module), the above sequence splits. Thus $V\cong W\oplus V/W$. From Lemma~\ref{le:comp1}, all complements to $W$ are topologically isomorphic, and the proof is complete.
\end{proof}

\begin{definition}\label{def:hilbert}
A {\em Hilbert space}\index{Hilbert space} is a (real) vector space $H$ endowed with an inner product\footnote{i.e., a symmetric positive--definite bilinear form.} $\langle\,\cdot,\cdot\,\rangle:H\times H\to\R$ whose corresponding norm turns $H$ into a Banach space. A {\em Hilbertable space} is a TVS for which {\em there exists} an inner product that turns it into a Hilbert space.
\end{definition}

\begin{remark}
Analogously to Banach spaces, products, direct sums and quotients of Hilbert spaces are Hilbert spaces, with the natural inner products.
\end{remark}

For any subspace $W$ of a Hilbert space $H$, define $$W^\perp=\{v\in V:\langle v,w\rangle=0 \mbox{ for all } w\in W\},$$ which is always a {\em closed} subspace. If $W$ is closed, then $W^\perp$ is a complement of $W$ in the sense of Definition~\ref{def:complement}, called its \emph{orthogonal complement}\index{Hilbert space!orthogonal complement}. Hence all closed subspaces of a Hilbert space are complemented, which is obviously not true for general Banach spaces. Indeed, if all closed subspaces of a Banach space $V$ are complemented, then $V$ is Hilbertable, see Brezis \cite{brezis}.

\begin{remark}
A subspace $W$ of a Hilbert space $H$ is dense if and only if $W^\perp=\{0\}$. Consequently, $(W^{\perp})^\perp=\overline{W}$.
\end{remark}

The {\em orthogonal} projection onto a subspace $W$ is a projection in the sense of Definition~\ref{def:proj} that will be denoted $$p_W:V\longrightarrow W.$$ If $v_1,v_2\in H$ are such that $\langle v_1,v_2\rangle=0$, then $$\|v_1+v_2\|^2=\|v_1\|^2+\|v_2\|^2.$$ In particular, this implies that $p_W$ has unitary norm \eqref{eq:normt}. Moreover, $p_W(v)$ is the global minimum of the function $H\owns x\mapsto d(x,v)\in\R$.

Using the inner product, each vector $v\in H$ induces a bounded functional
\begin{equation}\label{eq:hilbertdual}
H\owns v\longmapsto\langle v,\cdot\,\rangle\in H^*,
\end{equation}
which is a linear isometric immersion as a consequence of Cauchy--Schwartz inequality. A converse is given by the following well--known result.

\begin{rieszthm}\label{thm:riesz}\index{Theorem!Riesz Representation}
If $H$ is a Hilbert space, then \eqref{eq:hilbertdual} is an isometry.
\end{rieszthm}

Consequently, the dual $H^*$ of a Hilbert space $H$ is canonically identified (isometrically) with $H$, and henceforth we will implicitly use
\begin{equation}\label{ident:dual}
H\cong H^*
\end{equation}
for any Hilbert spaces $H$.

\begin{definition}\label{def:represents}
Let $H_1$ and $H_2$ be Hilbert spaces, and $B:H_1\times H_2\to\R$ a bilinear form. The unique operator $T_B:H_1\to H_2$ such that
\begin{equation}\label{eq:brepres}
B(v,w)=\langle T_Bv,w\rangle
\end{equation}
for all $v\in H_1$ and $w\in H_2$ is called the operator that {\em represents}\index{Operator!that represents a bilinear form}\index{Bilinear form!represented by an operator} $B$ (in terms of $\langle\cdot,\cdot\rangle$).
\end{definition}

\begin{remark}\label{re:isombilinlin}
The operator $\Bilin(H_1,H_2)\ni B\mapsto T_B\in\Lin(H_1,H_2)$ is an isometry, considering the norms given by \eqref{eq:normt} in Definition~\ref{def:multilinnorm}.
\end{remark}

The Riesz Representation Theorem~\ref{thm:riesz} also allows to associate to each operator $T\in\Lin(H)$ its {\em adjoint} operator\index{Operator!adjoint} $T^*\in\Lin(H)$, uniquely defined by
\begin{equation}\label{eq:derrr}
\langle Tv,w\rangle=\langle v,T^*w\rangle, \quad v,w\in H.
\end{equation}
Namely, for each $w\in H$, the functional $\langle T\,\cdot,w\rangle$ corresponds by \eqref{eq:hilbertdual} to a unique $T^*w$, such that \eqref{eq:derrr} holds. From the above property, it is easy to derive several elementary consequences, among which the following important relation between kernel and image of an operator and its adjoint,
\begin{equation}\label{eq:ateminhamaesabe}
\ker T^*=(\im T)^\perp.
\end{equation}
Notice also that an operator and its adjoint have the same norm \eqref{eq:normt}.

\begin{definition}
If an operator $T\in\Lin(H)$ coincides with its adjoint $T^*=T$, then it is said to be {\em self--adjoint}.\index{Operator!self--adjoint}
\end{definition}

\begin{remark}
Notice that if $T\in\Lin(H)$ is self--adjoint, then \eqref{eq:ateminhamaesabe} reads $\ker T=(\im T)^\perp$.
\end{remark}

\begin{lemma}\label{le:symselfadjoint}
If $B\in\Bilin(H)$ is a symmetric bilinear form, then the unique operator $T_B$ that represents it with respect to $\langle\cdot,\cdot\rangle$ is self--adjoint.
\end{lemma}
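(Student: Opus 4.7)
The plan is to unpack the two defining properties in sequence: the equation \eqref{eq:brepres} that characterizes $T_B$, and the definition \eqref{eq:derrr} of the adjoint. I would start by writing, for arbitrary $v,w\in H$, the chain
\begin{equation*}
\langle T_B v, w\rangle = B(v,w) = B(w,v) = \langle T_B w, v\rangle = \langle v, T_B w\rangle,
\end{equation*}
where the first equality is \eqref{eq:brepres}, the second is the symmetry hypothesis on $B$, the third is \eqref{eq:brepres} applied with the roles of the arguments swapped, and the last is the symmetry of the inner product on $H$.

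Having this identity for all $v,w\in H$, I would then invoke the uniqueness statement implicit in the definition of the adjoint: the relation \eqref{eq:derrr} characterizes $T_B^*$ uniquely as the operator satisfying $\langle T_B v, w\rangle = \langle v, T_B^* w\rangle$ for all $v,w$. Comparing with the identity just obtained, one concludes $T_B^* = T_B$, which is the desired self--adjointness.

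There is essentially no obstacle here; the proof is a direct two--line manipulation and requires no estimates, no limiting arguments, and no appeal to the Riesz Representation Theorem beyond what was already used to produce $T_B$ itself. The only minor thing to be careful about is to cite the existence and uniqueness of $T_B$ from the preceding Definition~\ref{def:represents} (which in turn rests on Theorem~\ref{thm:riesz}), so that the computation above is well--posed.
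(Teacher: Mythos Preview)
Your proposal is correct and takes essentially the same approach as the paper, which simply says the result follows immediately by comparing \eqref{eq:brepres} and \eqref{eq:derrr} using the symmetry of $B$. You have merely unpacked this one-line remark into the explicit chain of equalities.
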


\begin{proof}
This follows immediately by comparing \eqref{eq:brepres} and \eqref{eq:derrr}, using that $B$ is symmetric.
\end{proof}

\begin{definition}\label{def:nondegenerate}
For any bilinear form $B\in\Bilin(H)$, $$\ker B=\ker T_B.$$ In case this space is trivial, $B$ is called {\em nondegenerate}.\index{Bilinear form!nondegenerate}
\end{definition}

Observe that $B\in\Bilin(H,H)$ on a Hilbert space $H$ is nondegenerate if and only if the operator
\begin{equation}\label{eq:BHBHBHBH}
H\ni x\longmapsto B(x,\cdot\,)\in H^*
\end{equation}
is injective. Equivalently, $B$ is nondegenerate if the operator that represents $B$ with respect to the Hilbert space inner product of $H$ is injective.

\begin{definition}\label{def:stronglynondegenerate}
A continuous bilinear form $B\in\Bilin(H)$ is called {\em strongly nondegenerate}\index{Bilinear form!strongly nondegenerate} if the operator \eqref{eq:BHBHBHBH} is an isomorphism, or equivalently, if the operator that represents $B$ with respect to the Hilbert space inner product of $H$ is an isomorphism.
\end{definition}

\begin{lemma}\label{le:polarization}
Suppose $B\in\Bilin(H)$ is a symmetric bilinear form. Then the following {\em polarization formula}\index{Polarization formula} holds
\begin{equation}\label{eq:polarization}
B(v,w)=\tfrac12\big(B(v+w,v+w)-B(v,v)-B(w,w)\big),\quad v,w\in H.
\end{equation}
\end{lemma}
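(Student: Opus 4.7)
The plan is a direct computation using the two defining properties of $B$, namely bilinearity and symmetry, so no functional-analytic machinery (continuity, Hilbert structure, etc.) is actually needed here; the hypothesis that $B \in \Bilin(H)$ is only used insofar as $B$ is bilinear, and in particular the statement and proof would work verbatim for any symmetric bilinear form on any real vector space.

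First I would expand the quadratic term $B(v+w, v+w)$ by applying bilinearity in each slot separately. Expanding in the first slot gives
\[
B(v+w, v+w) = B(v, v+w) + B(w, v+w),
\]
and then expanding each of these in the second slot yields
\[
B(v+w, v+w) = B(v,v) + B(v,w) + B(w,v) + B(w,w).
\]
Next I would invoke the symmetry hypothesis $B(v,w) = B(w,v)$ to combine the two mixed terms, obtaining
\[
B(v+w, v+w) = B(v,v) + 2\,B(v,w) + B(w,w).
\]

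Finally, I would solve for $B(v,w)$, which gives
\[
B(v,w) = \tfrac{1}{2}\big(B(v+w, v+w) - B(v,v) - B(w,w)\big),
\]
establishing \eqref{eq:polarization}. There is no real obstacle: the entire argument reduces to two applications of bilinearity followed by one application of symmetry, and the scalar $\tfrac{1}{2}$ is legitimate since we are working over $\R$ (so $2$ is invertible, which is the only algebraic subtlety worth flagging).
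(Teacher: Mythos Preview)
Your proof is correct and takes essentially the same approach as the paper, which simply says the identity ``follows at once by expanding the right--hand side of \eqref{eq:polarization} using bilinearity of $B$.'' You have written out this expansion explicitly and made the use of symmetry (needed to merge $B(v,w)+B(w,v)$ into $2B(v,w)$) explicit, which the paper leaves implicit.
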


\begin{proof}
Follows at once by expanding the right--hand side of \eqref{eq:polarization} using bilinearity of $B$.
\end{proof}

\section{Compact and Fredholm operators}
\label{sec:compactfredholm}

A linear endomorphism of a finite--dimensional vector space is surjective if and only if it is injective. This is clearly false for operators between infinite--dimensional spaces. In this section, we briefly recall that such important property still holds (see Lemma~\ref{le:fred0}) for a special class of operators between Banach spaces, that include {\em sufficiently small} perturbations of isomorphisms, namely, Fredholm operators. To this aim, we also recall the concept of compact operator.

Furthermore, we state a few well--known results on stability of this property in the space of continuous operators, without giving proofs. In addition, although some of the following constructions can be identically done in the case of locally convex TVSs, we will restrict our attention to Banach spaces. In Section~\ref{sec:infinitedimmnflds}, this notion of Fredholmness will also be extended to a nonlinear context of Banach manifolds, see Definition~\ref{def:nonlinfred}. Complete proofs of most results stated in this section may be found in any basic functional analysis textbook, such as \cite{brezis,fabian,rudinfa,ys}.

\begin{lemma}\label{le:cpcop}
Let $K\in\Lin(V,W)$ be an operator between Banach spaces. The following are equivalent.
\begin{itemize}
\item[(i)] The image by $K(B_V)$ of the unitary ball $B_V$ of $V$ (centered in the origin) is {\em relatively compact}\footnote{A {\em relatively compact}\index{Relatively compact} subset of a topological space is a subset whose closure is compact. Moreover, for subsets $A$ of a {\em complete} metric space, such as Banach space, being relatively compact is equivalent to being {\em totally limited}, i.e. for all $\varepsilon>0$ there exists a finite cover of $A$ of subsets whose diameter is less then $\varepsilon$.} in $W$;
\item[(ii)] If $A\subset V$ is any limited subset, then $K(A)\subset W$ is relatively compact;
\item[(iii)] For any sequence $\{v_n\}_{n\in\N}$ in $V$, the sequence $\{Kv_n\}_{n\in\N}$ in $W$ admits a convergent subsequence.
\end{itemize}
\end{lemma}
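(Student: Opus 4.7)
The plan is to prove the cycle of implications (i) $\Rightarrow$ (ii) $\Rightarrow$ (iii) $\Rightarrow$ (i), exploiting the linearity of $K$ and the metric structure of $W$ (so that compactness in $W$ coincides with sequential compactness).

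For (i) $\Rightarrow$ (ii), I would observe that any bounded $A\subset V$ is contained in $rB_V$ for some $r>0$. By linearity, $K(A)\subset rK(B_V)$, and since the map $w\mapsto rw$ is a homeomorphism of $W$, it sends the relatively compact set $K(B_V)$ to the relatively compact set $rK(B_V)$. Thus $K(A)$ is a subset of a relatively compact set, hence relatively compact. The step (ii) $\Rightarrow$ (iii) is immediate once one reads (iii) in the intended sense of a \emph{bounded} sequence $\{v_n\}$ (if $\{v_n\}$ is not assumed bounded the statement is false, as $K=\id$ on an infinite-dimensional space already shows; I would make this reading explicit): the set $\{v_n:n\in\N\}$ is then bounded in $V$, so $\{Kv_n:n\in\N\}$ is relatively compact by (ii), and in a metric space every sequence in a relatively compact set admits a convergent subsequence.

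The main content is (iii) $\Rightarrow$ (i). Here the idea is to prove that $\overline{K(B_V)}$ is sequentially compact, which since $W$ is a metric space is equivalent to compactness. Take any sequence $\{w_n\}\subset\overline{K(B_V)}$; for each $n$ pick $w_n'\in K(B_V)$ with $\|w_n-w_n'\|<1/n$ and write $w_n'=Kv_n$ for some $v_n\in B_V$. The sequence $\{v_n\}$ is bounded by construction, so (iii) yields a subsequence $\{Kv_{n_k}\}=\{w_{n_k}'\}$ converging to some $w_\infty\in W$; since $\|w_{n_k}-w_{n_k}'\|\to 0$, the subsequence $\{w_{n_k}\}$ also converges to $w_\infty$, which automatically lies in the closed set $\overline{K(B_V)}$. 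Hence $\overline{K(B_V)}$ is sequentially compact, i.e.\ compact, proving (i).

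The main potential obstacle is the subtle point in (iii): as stated the condition is literally false for unbounded sequences in infinite dimension, so I would either interpret it as restricted to bounded sequences (the classical characterization of compact operators) or explicitly point out the convention. Apart from that, the argument is essentially bookkeeping: completeness of $W$ is used only to identify relative compactness with sequential relative compactness, and linearity of $K$ is used only in the scaling argument of (i) $\Rightarrow$ (ii).
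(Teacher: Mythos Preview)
The paper does not actually prove this lemma: Section~\ref{sec:compactfredholm} explicitly states that results there are given without proof and refers to standard textbooks (Brezis, Fabi\'an et al., Rudin, Yosida). So there is no paper proof to compare against.

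Your argument is the standard one and is correct. You are also right to flag the issue with (iii): as literally written---``for any sequence $\{v_n\}$ in $V$''---the condition is strictly stronger than (i) and (ii) and in fact forces $K$ to have finite rank in infinite dimensions (your counterexample $K=\id$ shows it fails for noncompact $K$, and e.g.\ $K(e_n)=\tfrac1n e_n$ on $\ell^2$ with $v_n=ne_n$ shows it can fail even for compact $K$). The intended reading is certainly ``for any \emph{bounded} sequence,'' which is how the lemma is used downstream (cf.\ the paper's Definition~\ref{def:cpcop}), and with that correction your cycle (i) $\Rightarrow$ (ii) $\Rightarrow$ (iii) $\Rightarrow$ (i) goes through exactly as you wrote it.

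One minor stylistic point: in (iii) $\Rightarrow$ (i) you don't really need the $1/n$-approximation trick, since it suffices to show every sequence in $K(B_V)$ has a subsequence converging in $W$ (relative compactness is equivalent to every sequence having a convergent subsequence in the ambient space, not necessarily with limit in the set). But your version is perfectly fine and arguably cleaner in that it directly verifies sequential compactness of the closure.
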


\begin{definition}\label{def:cpcop}
An operator $K\in\Lin(V,W)$ between Banach spaces is a {\em compact operator}\index{Compact operator}\index{Operator!compact} if any (hence all) of the conditions in Lemma~\ref{le:cpcop} is satisfied. The vector space of all compact operators $K\in\Lin(V,W)$ will be denoted $\cpc(V,W)$, and for simplicity $\cpc(V)=\cpc(V,V)$.
\end{definition}

We now give a central result of compact operators, whose proof can be found in \cite{fabian,kreyszig}.

\begin{proposition}\label{prop:propcpcop}
Let $V,W$ and $Z$ be Banach spaces. Then $\cpc(V,W)$ is a closed subspace of $\Lin(V,W)$, hence a Banach space. Let $T_1\in\Lin(V,W)$ and $T_2\in\Lin(W,Z)$. Then $T_2T_1\in\cpc(V,Z)$ if either $T_1$ or $T_2$ is compact. In particular, $\cpc(V)$ is an {\em ideal} of $\Lin(V)$ under the composition product.
\end{proposition}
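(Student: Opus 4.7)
The plan is to verify each claim in the statement separately, using the equivalent characterizations of compactness given in Lemma~\ref{le:cpcop}. Throughout, I will denote by $B_V$ the unit ball of $V$ centered at the origin, and similarly for $W$.

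First I would show that $\cpc(V,W)$ is a vector subspace of $\Lin(V,W)$. If $K_1,K_2\in\cpc(V,W)$ and $\lambda\in\R$, then $K_1(B_V)$ and $K_2(B_V)$ are relatively compact, so $(\lambda K_1+K_2)(B_V)\subset \lambda\,\overline{K_1(B_V)}+\overline{K_2(B_V)}$ lies inside a set which is the continuous image of the compact set $\overline{K_1(B_V)}\times\overline{K_2(B_V)}$ under addition and scalar multiplication, hence compact. This gives condition (i) of Lemma~\ref{le:cpcop} for $\lambda K_1+K_2$.

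Next I would verify that $\cpc(V,W)$ is closed in $\Lin(V,W)$, which together with the previous step yields the Banach space structure (as closed subspaces of Banach spaces are Banach, from the discussion after Definition~\ref{def:banach}). The main step is the following totally bounded argument: let $K_n\to K$ in $\Lin(V,W)$ with each $K_n\in\cpc(V,W)$, and fix $\varepsilon>0$. Choose $n$ such that $\|K_n-K\|<\varepsilon/2$. Since $K_n(B_V)$ is relatively compact in the complete space $W$, it is totally bounded, so there exist finitely many points $w_1,\dots,w_N\in W$ such that $K_n(B_V)\subset\bigcup_{j=1}^N B(w_j,\varepsilon/2)$. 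For any $v\in B_V$, pick $j$ with $\|K_n v-w_j\|<\varepsilon/2$; then $\|Kv-w_j\|\leq\|Kv-K_nv\|+\|K_nv-w_j\|<\varepsilon$. Hence $K(B_V)$ is totally bounded in the complete space $W$, therefore relatively compact, and $K\in\cpc(V,W)$. The expected main obstacle here is simply being careful with the triangle inequality and invoking completeness of $W$ to pass from totally bounded to relatively compact.

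Finally, for the composition statement, I would argue via the same characterizations. If $T_1\in\cpc(V,W)$ and $T_2\in\Lin(W,Z)$, then $T_1(B_V)$ is relatively compact, and since $T_2$ is continuous, $T_2(\overline{T_1(B_V)})$ is compact; hence $(T_2T_1)(B_V)\subset T_2(\overline{T_1(B_V)})$ is relatively compact, so $T_2T_1\in\cpc(V,Z)$. If instead $T_2\in\cpc(W,Z)$ and $T_1\in\Lin(V,W)$, then $T_1$ is bounded, so $T_1(B_V)$ is a bounded subset of $W$; by condition (ii) of Lemma~\ref{le:cpcop} applied to $T_2$, the set $T_2(T_1(B_V))$ is relatively compact in $Z$, and again $T_2T_1\in\cpc(V,Z)$. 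The ideal property of $\cpc(V)$ in $\Lin(V)$ is then immediate, by setting $V=W=Z$ in both cases above and combining with the vector subspace property already established.
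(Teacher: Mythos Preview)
Your proof is correct and follows the standard textbook approach. The paper itself does not prove this proposition: it simply states the result and refers the reader to \cite{fabian,kreyszig} for a proof. Your argument supplies exactly what those references would give---the totally bounded argument for closedness and the direct use of the characterizations in Lemma~\ref{le:cpcop} for the composition property---so there is nothing to compare against in the paper's own treatment.
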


\begin{remark}\label{re:cpcinductive}
From a simple inductive argument, a finite composition of bounded operators is compact provided that at least one of the factors is compact.
\end{remark}

\begin{definition}\label{def:fredholmop}
An operator $T\in\Lin(V,W)$ between Banach spaces is a {\em Fredholm operator}\index{Fredholm!operator}\index{Operator!Fredholm} if the subspaces $\ker T$ and $\coker T$ are finite--dimensional.\footnote{Recall that $\coker T=W/\im T$. Hence $\coker T$ has finite dimension if and only if $\im T$ has finite codimension, since $\dim\coker T=\codim\im T$.} The {\em Fredholm index}\index{Fredholm!operator!index}\index{Operator!Fredholm!index} of $T$ is then defined by
\begin{equation}\label{eq:indt}
\ind(T)=\dim\ker T-\dim\coker T.
\end{equation}
\end{definition}

\begin{example}
Topological isomorphisms\footnote{See Definition~\ref{def:topiso}.} are clearly Fredholm operators of index zero.
\end{example}

\begin{remark}\label{re:fredcomp}
Applying Lemma~\ref{le:finitecomplemented}, it follows that if $T\in\Lin(V,W)$ is Fredholm, both $\ker T$ and $\im T$ are complemented subspaces.
\end{remark}

\begin{lemma}\label{le:fred0}
If $T\in\Lin(V,W)$ is a Fredholm operator of index zero, it is injective if and only if it is surjective.
\end{lemma}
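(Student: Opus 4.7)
The plan is to read off the claim directly from the definition of Fredholm index. By Definition~\ref{def:fredholmop}, the hypothesis that $\ind(T)=0$ unpacks as $\dim\ker T=\dim\coker T$, where $\coker T=W/\im T$. Both of these numbers are finite, so the equivalence between ``$\ker T=\{0\}$'' and ``$\coker T=\{0\}$'' is really just the statement ``$\dim\ker T=0$ iff $\dim\coker T=0$'', which is immediate from $\dim\ker T=\dim\coker T$.

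More concretely, I would argue in two lines. First, suppose $T$ is injective. Then $\ker T=\{0\}$, hence $\dim\ker T=0$; by the index zero hypothesis this forces $\dim\coker T=0$, i.e.\ $W/\im T=\{0\}$, and therefore $\im T=W$, which is surjectivity. Conversely, if $T$ is surjective, then $\im T=W$, so $\coker T=\{0\}$ and $\dim\coker T=0$; the hypothesis $\ind(T)=0$ then gives $\dim\ker T=0$, i.e.\ $\ker T=\{0\}$, which is injectivity.

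There is essentially no obstacle: the statement is a bookkeeping consequence of the definition \eqref{eq:indt}, and no auxiliary functional analytic facts (closedness of the image, Hahn--Banach, open mapping, etc.) are invoked. The only tiny point worth noting in the write-up is that the implication ``$W/\im T=\{0\}\Rightarrow \im T=W$'' is tautological and does not require any appeal to the closedness of $\im T$ that is implicit for Fredholm operators (which was already recorded in Remark~\ref{re:fredcomp}).
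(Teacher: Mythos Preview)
Your proof is correct and takes essentially the same approach as the paper: both argue directly from the definition \eqref{eq:indt} that $\ind(T)=0$ forces $\dim\ker T=\dim\coker T$, so one vanishes if and only if the other does. The paper's version is simply more terse, compressing your two directions into a single sentence.
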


\begin{proof}
This is an obvious consequence of the definition of index \eqref{eq:indt}, since if $$\ind(T)=\dim\ker T-\codim\im T$$ is zero, the kernel of $T$ is trivial if and only if the image of $T$ is the whole $W$, hence $T$ is injective if and only if it is surjective.
\end{proof}

In this sense, the index of a Fredholm operator measures the difference between its non injectivity and non surjectivity. Let us remark the case of self--adjoint operators.

\begin{lemma}\label{le:selfadjointzero}
Let $H$ be a Hilbert space and $T\in\Lin(H)$ a self--adjoint Fredholm operator. Then the $\ind(T)=0$.
\end{lemma}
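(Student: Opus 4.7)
The plan is to exploit the relation \eqref{eq:ateminhamaesabe}, which in the self-adjoint case gives $\ker T=(\im T)^{\perp}$, and then to identify the cokernel of $T$ with this orthogonal complement.

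First I would observe that, since $T$ is Fredholm, its image is closed in $H$. This is the one auxiliary input I need beyond what is already in the excerpt: with $\ker T$ closed (automatic for continuous $T$) and $\coker T$ finite-dimensional, one picks a finite-dimensional algebraic complement $F$ of $\im T$ in $H$ and considers the continuous bijection
\begin{equation*}
V/\ker T\oplus F\ni([v],f)\longmapsto Tv+f\in H,
\end{equation*}
which is a topological isomorphism by the Open Mapping Theorem (Remark~\ref{re:omt}); in particular $\im T$ is closed.

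Next I would invoke the orthogonal decomposition of a Hilbert space with respect to a closed subspace: $H=\im T\oplus(\im T)^{\perp}$. Combining this with \eqref{eq:ateminhamaesabe} applied to $T^{*}=T$, namely $\ker T=\ker T^{*}=(\im T)^{\perp}$, yields the orthogonal decomposition
\begin{equation*}
H=\im T\oplus\ker T.
\end{equation*}
Hence $\coker T=H/\im T$ is canonically topologically isomorphic to $\ker T$, so in particular $\dim\coker T=\dim\ker T$, and by the definition \eqref{eq:indt} we conclude $\ind(T)=0$.

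The only nonroutine step is the closedness of $\im T$, which I chose to derive in passing; everything else is then a direct consequence of self-adjointness via \eqref{eq:ateminhamaesabe} and of the special status of orthogonal complements in Hilbert spaces, both already recorded in the text. I do not foresee any genuine obstacle: the whole argument collapses into the single identification $\coker T\cong\ker T$.
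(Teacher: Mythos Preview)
Your proof is correct and follows essentially the same route as the paper: use \eqref{eq:ateminhamaesabe} together with $T^*=T$ to obtain $\ker T=(\im T)^{\perp}$, and then identify $\dim\coker T=\codim\im T=\dim(\im T)^{\perp}=\dim\ker T$. The only difference is that you insert an explicit argument for the closedness of $\im T$ (which the paper isolates as the separate Proposition~\ref{prop:fredclosed}), and a minor typo: in your displayed map the domain should read $H/\ker T\oplus F$ rather than $V/\ker T\oplus F$.
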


\begin{proof}
If $T$ is self--adjoint,
\begin{eqnarray*}
\dim\ker T &=& \dim\ker T^*\\
&\stackrel{\eqref{eq:ateminhamaesabe}}{=}& \dim (\im T)^\perp \\
&=& \codim\im T\\
&=& \dim\coker T.\qedhere
\end{eqnarray*}
\end{proof}

Another important property of Fredholm operators is that their image is always a closed subspace of the counter domain, as proved in the next result.

\begin{proposition}\label{prop:fredclosed}
Let $T\in\Lin(V,W)$ be a Fredholm operator. Then the image $\im T$ is closed.
\end{proposition}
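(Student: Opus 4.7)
The plan is to realize $\im T$ as the image of a closed subspace under a topological isomorphism, so that its closedness becomes automatic. The key observation is that, up to complementing $\ker T$ on the source and $\im T$ on the target (both of which are allowed because the defects are finite-dimensional), $T$ looks like a continuous linear bijection between Banach spaces, to which the Open Mapping Theorem (Banach--Schauder) may be applied.

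More concretely, I would proceed as follows. First, since $\ker T$ is finite-dimensional, by Lemma~\ref{le:finitecomplemented} it admits a closed topological complement $V_0 \subset V$, so that $V = \ker T \oplus V_0$; then the restriction $T|_{V_0}\colon V_0 \to \im T$ is a continuous linear bijection. Next, since $\coker T = W/\im T$ is finite-dimensional, I can pick a basis $[w_1],\dots,[w_n]$ of $\coker T$ and lift it to vectors $w_1,\dots,w_n \in W$; let $F = \operatorname{span}(w_1,\dots,w_n)$. Then $F$ is finite-dimensional (hence a closed subspace of $W$, in particular Banach), and by construction
\begin{equation*}
W = \im T + F \qquad \text{and} \qquad \im T \cap F = \{0\}
\end{equation*}
as vector spaces.

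The crucial step is to consider the operator
\begin{equation*}
\phi\colon V_0 \times F \la W, \qquad \phi(v,f) = Tv + f.
\end{equation*}
Since $V_0$ is closed in $V$ and $F$ is Banach, the product $V_0 \times F$ is a Banach space with the product norm, and $\phi$ is continuous (it is the sum of $T|_{V_0}$ composed with the projection onto $V_0$ and the inclusion $F \hookrightarrow W$). The decomposition above shows that $\phi$ is a bijection. By the Open Mapping Theorem, $\phi$ is a topological isomorphism (cf.\ Remark~\ref{re:omt}).

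Finally, $V_0 \times \{0\}$ is closed in $V_0 \times F$, so its image $\phi(V_0 \times \{0\}) = T(V_0) = \im T$ is closed in $W$, which is exactly what we wanted. There is no serious obstacle here: the only step requiring any care is verifying that $F$ can be chosen so that $\im T \oplus F = W$ holds algebraically (without already knowing $\im T$ is closed), but this is automatic from the finite-dimensionality of $\coker T$ and is in fact independent of topology. The whole argument is essentially a packaging of the Open Mapping Theorem.
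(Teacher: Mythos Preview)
Your proof is correct and follows essentially the same approach as the paper: choose a finite-dimensional complement of $\im T$ in $W$, build a surjective operator onto $W$ from a Banach space, and invoke the Open Mapping Theorem. The only minor difference is that the paper works directly with the surjection $V\oplus S\to W$, $(v,s)\mapsto Tv+s$, and uses the quotient-map property, whereas you first complement $\ker T$ to upgrade the map to a bijection $V_0\times F\to W$; both variants are standard and equally valid.
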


\begin{proof}
From Remark~\ref{re:fredcomp}, there exists a finite--dimensional complement $S\subset W$ of $\im T$. Consider the operator
\begin{eqnarray*}
\tilde T:V\oplus S &\la& W \\
(v,s) &\longmapsto & (Tv,s).
\end{eqnarray*}
This operator is clearly surjective, hence open, by the Open Mapping Theorem. Therefore it is a quotient map, i.e. $X$ is open (respectively, closed) in $W$ if and only if $\tilde T^{-1}(X)$ is open (respectively, closed) in $V\oplus S$. Since $\tilde T^{-1}(T(V))=V\oplus \{0\}$ is clearly closed, also $T(V)$ is closed.
\end{proof}

Fredholmness of isomorphisms are {\em stable} in many different ways. For instance, sufficiently small perturbations of isomorphisms with respect to the norm \eqref{eq:normt} are still Fredholm, preserving also the index equal to zero. Another important and well--known stability result is the following, whose proof can be found in \cite{fabian,ys}.

\begin{proposition}\label{prop:fredholmisocomp}
Let $V$ be a Banach space, $T\in\Lin(V)$ a topological isomorphism and $K\in\cpc(V)$ a compact operator. Then $T+K\in\Lin(V)$ is a Fredholm operator and $$\ind(T+K)=\ind(T)=0.$$
\end{proposition}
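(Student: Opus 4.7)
The plan is to reduce the general statement to the case $T = I$ by factoring
\[
T + K = T \circ (I + T^{-1}K),
\]
where $T^{-1} \in \Lin(V)$ since $T$ is a topological isomorphism, and $T^{-1}K \in \cpc(V)$ by Proposition~\ref{prop:propcpcop} (compactness is an ideal). Thus it suffices to establish that for every compact $K_0 \in \cpc(V)$ the operator $I + K_0$ is Fredholm of index zero; the factorization then gives $T + K$ as the composition of the isomorphism $T$ (Fredholm of index $0$) with $I + T^{-1}K$, and a short argument identifies its kernel with $\ker(I+T^{-1}K)$ and its cokernel with $\coker(I+T^{-1}K)$ via $T$.

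First I would verify that $N := \ker(I + K_0)$ is finite--dimensional. On $N$ one has $v = -K_0 v$, so the identity of $N$ agrees with the restriction of the compact operator $-K_0$. This forces the closed unit ball of $N$ to be relatively compact, which by the classical Riesz criterion is possible only when $\dim N < +\infty$. Then I would show $\im(I+K_0)$ is closed. By Lemma~\ref{le:finitecomplemented}, pick a closed complement $W$ of $N$ in $V$ and consider the restriction $(I+K_0)|_W$, which is injective. The key step is the inequality $\|(I+K_0)w\| \geq c\,\|w\|$ for some $c>0$ on $W$: otherwise there exist unit vectors $w_n \in W$ with $(I+K_0)w_n \to 0$; by compactness of $K_0$, a subsequence of $K_0 w_n$ converges to some $u$, so $w_n \to -u$ with $\|u\|=1$; then $-u \in W$ (since $W$ is closed) and $(I+K_0)(-u) = 0$, contradicting $W \cap N = \{0\}$. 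The inequality yields closedness of $(I+K_0)(W) = \im(I+K_0)$.

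For finite--dimensionality of $\coker(I+K_0)$, the cleanest route is via duality: by Schauder's theorem the adjoint $K_0^*$ is compact on $V^*$, so the argument above applied to $I+K_0^*$ shows $\ker(I+K_0^*)$ is finite--dimensional. Because $\im(I+K_0)$ is closed, $\coker(I+K_0) \simeq \ker(I+K_0^*)$, which is therefore finite--dimensional. This establishes Fredholmness.

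The hard step will be showing $\ind(I+K_0) = 0$. I would argue by continuity of the index: the map $[0,1] \ni t \mapsto I + tK_0 \in \Lin(V)$ is continuous in the operator norm, and each $tK_0$ is compact, so by the preceding steps every $I + tK_0$ is Fredholm. Since the Fredholm index is locally constant on the space of Fredholm operators (a standard consequence of stability of Fredholmness under small norm perturbations together with the Banach--Schauder theorem, which one would have to record as a lemma here), the index is constant on $[0,1]$; evaluating at $t = 0$ gives $\ind(I+K_0) = \ind(I) = 0$. If one prefers to avoid invoking homotopy invariance, an alternative is the Riesz--Schauder ascent/descent argument: show that the chains $\ker(I+K_0)^n$ and $\im(I+K_0)^n$ stabilize at a common finite index, and deduce $\dim \ker(I+K_0) = \codim \im(I+K_0)$ from the resulting algebraic decomposition of $V$. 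Either way, combining with Step 1 yields the statement for $T+K$.
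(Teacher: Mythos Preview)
Your argument is correct and follows the standard textbook route (Riesz--Schauder theory combined with either index continuity or the ascent/descent argument). Note, however, that the paper does not actually prove this proposition: it is stated as a well--known stability result with references to \cite{fabian,ys}, so there is no ``paper's proof'' to compare against. Your write-up would serve as a self-contained justification that the paper chose to omit.

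Two small remarks. First, your appeal to local constancy of the Fredholm index is exactly the content of Proposition~\ref{prop:fredstable}, which in the paper's ordering is stated \emph{after} the present proposition (also without proof); since both are taken as black boxes from the literature this is not a logical problem, but if you want your proof to be fully self-contained you should either prove that lemma first or follow through with the ascent/descent alternative you mention. Second, the cokernel step relies on Schauder's theorem ($K_0$ compact $\Rightarrow K_0^*$ compact), which is not recorded anywhere in the text, so you would need to state it explicitly if you intend the argument to stand on its own.
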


\begin{remark}
In fact, it is possible to state a more general version of the above result as follows. If $T\in\Lin(V)$ is a Fredholm operator and $K\in\cpc(V)$ is compact, then $T+K\in\Lin(V)$ is a Fredholm operator with the same index of $T$.
\end{remark}

Stability of Fredholmness may be stated in a stronger sense as follows.

\begin{proposition}\label{prop:fredstable}
Let $V$ and $W$ be Banach spaces. The subset of $\Lin(V,W)$ formed by Fredholm operators is open in the topology induced by \eqref{eq:normt}. More precisely, given a Fredholm operator $T_0\in\Lin(V,W)$, there exists $\varepsilon>0$ such that if $T\in\Lin(V,W)$ satisfies $\|T_0-T\|<\varepsilon$, then $T$ is also Fredholm and $\ind(T)=\ind(T_0)$.
\end{proposition}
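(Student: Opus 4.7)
The plan is to reduce $T$ near $T_0$ to a block form which makes Fredholmness and the index computation transparent. First, since $T_0$ is Fredholm, $\ker T_0$ is finite--dimensional and $\im T_0$ is closed (Proposition~\ref{prop:fredclosed}) of finite codimension. By Lemma~\ref{le:finitecomplemented}, both $\ker T_0$ and $\im T_0$ admit closed topological complements, so we can fix decompositions
\begin{equation*}
V=\ker T_0\oplus V_1,\qquad W=\im T_0\oplus W_1,
\end{equation*}
with $W_1$ finite--dimensional. The restriction $A_0:=T_0\vert_{V_1}\colon V_1\to\im T_0$ is a continuous bijection between Banach spaces, hence a topological isomorphism by the Open Mapping Theorem (Remark~\ref{re:omt}).

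Next, with respect to these direct sum decompositions I would write any $T\in\Lin(V,W)$ in block form as
\begin{equation*}
T=\begin{pmatrix}A & B\\ C & D\end{pmatrix},\quad A\in\Lin(V_1,\im T_0),\ B\in\Lin(\ker T_0,\im T_0),\ C\in\Lin(V_1,W_1),\ D\in\Lin(\ker T_0,W_1).
\end{equation*}
For $T_0$ this reduces to $A=A_0$, $B=0$, $C=0$, $D=0$. The blocks depend continuously on $T$ in the operator norm (each is the composition of $T$ with bounded projections and inclusions). Since the set of topological isomorphisms in $\Lin(V_1,\im T_0)$ is open (this is a standard consequence of the Neumann series, which I would state explicitly), there exists $\varepsilon>0$ such that $\|T-T_0\|<\varepsilon$ forces $A$ to be a topological isomorphism.

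The main step is then an algebraic reduction: for such $T$, consider the continuous operators
\begin{equation*}
L=\begin{pmatrix}I & 0\\ -CA^{-1} & I\end{pmatrix}\in\Lin(W),\qquad R=\begin{pmatrix}I & -A^{-1}B\\ 0 & I\end{pmatrix}\in\Lin(V),
\end{equation*}
each of which is a topological isomorphism (its inverse is obtained by flipping the sign of the off--diagonal block). A direct block computation gives
\begin{equation*}
L\,T\,R=\begin{pmatrix}A & 0\\ 0 & D-CA^{-1}B\end{pmatrix}.
\end{equation*}
Since $A$ is an isomorphism and $\Sigma:=D-CA^{-1}B\colon\ker T_0\to W_1$ acts between the finite--dimensional spaces $\ker T_0$ and $W_1$, the block--diagonal operator $LTR$ is clearly Fredholm, with $\ker(LTR)\cong\ker\Sigma$ and $\coker(LTR)\cong\coker\Sigma$, whence
\begin{equation*}
\ind(LTR)=\dim\ker\Sigma-\dim\coker\Sigma=\dim\ker T_0-\dim W_1=\ind(T_0).
\end{equation*}
Finally, because $L$ and $R$ are topological isomorphisms, Fredholmness and the index are invariant under the composition $T\mapsto LTR$ (kernel and cokernel are only relabelled), so $T$ itself is Fredholm with $\ind(T)=\ind(T_0)$.

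The main obstacle I anticipate is not any single step individually but the bookkeeping of the block decomposition: one must verify that each block depends continuously on $T$ (so that the smallness of $\|T-T_0\|$ propagates to the smallness of $B,C,D$ and to the closeness of $A$ to $A_0$) and that the transformations $L,R$ really are topological isomorphisms whose action on kernel, cokernel, and index is trivial. Once this linear--algebraic reduction is carefully set up, the conclusion is immediate.
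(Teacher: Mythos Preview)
Your proof is correct. The Schur complement reduction via the block decomposition $V=\ker T_0\oplus V_1$, $W=\im T_0\oplus W_1$ is a standard and clean route to this result, and all the steps you outline are valid: continuity of the blocks follows from boundedness of the projections, openness of invertible operators follows from Lemma~\ref{le:invertibility} (the Neumann series), and the index computation $\dim\ker\Sigma-\dim\coker\Sigma=\dim\ker T_0-\dim W_1$ is just rank--nullity in finite dimensions.

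Note, however, that the paper does not actually prove this proposition: it is one of the results in Section~\ref{sec:compactfredholm} that are stated without proof, with references to \cite{fabian,rudinfa,tausk,ys}. So there is no ``paper's own proof'' to compare against. Your argument is self--contained and uses only results already available in the text (Proposition~\ref{prop:fredclosed}, Lemma~\ref{le:finitecomplemented}, Remark~\ref{re:omt}, Lemma~\ref{le:invertibility}), so it would fit naturally if one wished to include a proof.
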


\begin{remark}
It is easy to verify that the subset of Fredholm operators {\em of given index} between two Banach spaces is a connected component of the above set of Fredholm operators. This follows from the local continuity of the Fredholm index given by Proposition \ref{prop:fredstable}. Thus, for each index, the subset of Fredholm operators of that index form an open subset of the space of all continuous operators between these Banach spaces.
\end{remark}

Let us finish this section stating an another result on composition of Fredholm operators. Recall that proofs of most results in this section may be found in \cite{fabian,rudinfa,tausk,ys}.

\begin{proposition}
Let $V,W$ and $Z$ be Banach spaces. If $T\in\Lin(V,W)$ and $S\in\Lin(W,Z)$ are Fredholm operators, then $ST\in\Lin(V,Z)$ is also Fredholm, and $$\ind(ST)=\ind(S)+\ind(T).$$
\end{proposition}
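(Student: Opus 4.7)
The plan is to first establish that $ST$ is Fredholm by directly bounding the dimensions of its kernel and cokernel, and then to derive the index formula via a natural six-term exact sequence relating the kernels and cokernels of $S$, $T$, and $ST$.

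First I would show $\dim \ker(ST) < \infty$. The inclusion $\ker T \subset \ker(ST)$ is clear, and the operator $T$ sends $\ker(ST)$ into $\ker S \cap \im T \subset \ker S$ with kernel exactly $\ker T$. This yields the estimate $\dim \ker(ST) \leq \dim \ker T + \dim \ker S < \infty$. For the cokernel, I would use that $\im(ST) \subset \im S$ and that $S$ induces a well-defined surjection $\coker T \to \im S/\im(ST)$ sending $[w] \mapsto [Sw]$; hence $\dim \coker(ST) \leq \dim (\im S/\im(ST)) + \dim \coker S \leq \dim \coker T + \dim \coker S < \infty$. This shows $ST$ is Fredholm; note that closedness of $\im(ST)$ then follows automatically from Proposition~\ref{prop:fredclosed}.

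For the index formula, I would construct the exact sequence
\begin{equation*}
0 \la \ker T \stackrel{\iota}{\la} \ker(ST) \stackrel{T}{\la} \ker S \stackrel{\pi}{\la} \coker T \stackrel{\bar S}{\la} \coker(ST) \stackrel{q}{\la} \coker S \la 0,
\end{equation*}
where $\iota$ is the inclusion, $\pi$ sends $w \in \ker S$ to its class in $\coker T = W/\im T$, the map $\bar S([w]) = [Sw]$ (well-defined since $S(\im T) \subset \im(ST)$), and $q$ is the canonical projection $Z/\im(ST) \to Z/\im S$. Verifying exactness at each term is a straightforward diagram-chase: at $\ker(ST)$ it is the identity $\ker T = \ker(T|_{\ker(ST)})$; at $\ker S$ one checks $\im T \cap \ker S = \ker \pi$; at $\coker T$ one identifies $\ker \bar S = (\ker S + \im T)/\im T = \im \pi$; at $\coker(ST)$ one has $\ker q = \im S/\im(ST) = \im \bar S$; and surjectivity of $q$ is immediate.

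Since all six terms are finite-dimensional, the alternating sum of their dimensions vanishes, giving
\begin{equation*}
\dim \ker T - \dim \ker(ST) + \dim \ker S - \dim \coker T + \dim \coker(ST) - \dim \coker S = 0,
\end{equation*}
which rearranges precisely to $\ind(ST) = \ind(T) + \ind(S)$. The main obstacle, in my view, is not any single difficult step but rather the bookkeeping to verify exactness at all four interior positions without confusing which subspaces live in which ambient space; once the maps are set up correctly, the rest is formal.
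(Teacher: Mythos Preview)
Your proof is correct. The paper itself does not prove this proposition; it merely states the result and refers the reader to standard functional analysis textbooks (Fabi\'an et al., Rudin, Yosida), so there is no ``paper's own proof'' to compare against. Your argument via the six-term exact sequence
\[
0 \la \ker T \la \ker(ST) \la \ker S \la \coker T \la \coker(ST) \la \coker S \la 0
\]
is one of the standard textbook approaches, and your verification of exactness at each node is accurate. The preliminary finiteness bounds are also fine, though strictly speaking once the exact sequence is in hand they become redundant: the sequence itself already forces $\ker(ST)$ and $\coker(ST)$ to be finite-dimensional, since they sit between finite-dimensional spaces in an exact sequence.
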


\section{Calculus on Banach spaces}
\label{sec:calcbanspaces}

In this section we briefly recall some basic aspects of differential calculus on Banach spaces. This will be the {\em linear} basis to develop calculus on Banach {\em manifolds}, in Section~\ref{sec:infinitedimmnflds}. Most concepts are immediate generalizations of their finite--dimensional counterparts, hence the correspondent discussion will be relatively short.

\begin{definition}\label{def:diffBanach}
Let $V$ and $W$ be Banach spaces, $U\subset V$ an open subset and $f:U\to W$ a map. It is said that $f$ is {\em differentiable}\index{Differentiable map} at a point $x\in U$ if there exists a continuous operator $T:V\to W$ such that the map $r$ defined in $$f(x+h)=f(x)+T(h)+r(h)$$ satisfies $\lim_{h\to0}\frac{r(h)}{\Vert h\Vert}=0$.
\end{definition}

\begin{remark}
Let $f$ be differentiable at $x\in V$. It is easy to verify that $$T(v)=\lim_{t\to0}\frac{f(x+tv)-f(x)}t,$$ for all $v\in V$. Hence $T$ is unique when it exists, and thus $T$ is called the {\em differential}\index{Derivative of a map} of $f$ at $x$, denoted $T=\dd f(x)$.
\end{remark}

\begin{remark}\label{re:diffinBanachable}
The statement {\em $f$ is differentiable at $x$ and $\dd f(x)=T$} is clearly invariant under substitution of the norms in $V$ and $W$ by equivalent ones. In particular, differentiability is a well--defined notion for Banachable spaces.
\end{remark}

\begin{definition}\label{def:differentialBanach}
If $f$ is differentiable at every point of $U$, we say that $f$ is {\em differentiable in $U$} and in such case, it is possible to consider the map
\begin{eqnarray*}
\dd f:U &\la& \Lin(V,W) \\
x &\longmapsto &\dd f(x),
\end{eqnarray*}
called the {\em differential}, or {\em derivative}, of $f$.
\end{definition}

Since $\Lin(V,W)$ is again a Banach space, one may ask whether $\dd f$ is a differentiable map. If it is, we obtain a {\em second (ordinary) derivative} $$\dd^2f=\dd(\dd f):U\la\Lin(V,\Lin(V,W)).$$ In general, if $f$ can be differentiated $k$ times, we can consider its {\em $k^{\mbox{\tiny th}}$ (ordinary) derivative},\index{Derivative of a map!higher order} defined recursively by $\dd^kf=\dd\big(\dd^{k-1}f)$, which is a map of the form $$\dd^kf:U\la\underbrace{\Lin(V,\Lin(V,\cdots,\Lin}_{\text{$k$ $\Lin$'s}}(V,W))\cdots).$$ The counter domain of $\dd^kf$ may be identified with a simpler space. More precisely, there is an isometry of this space with the Banach space of all continuous $k$--multilinear forms $B:V\times\ldots\times V\to W$.

\begin{definition}\label{def:ckbanach}
Analogously to the finite--dimensional case, a map $f:U\subset V\to W$ that is $k$ times differentiable (in the sense of Definition~\ref{def:diffBanach}) and has continuous $k^{\mbox{\tiny th}}$ derivative $\dd^kf$ is said to be of {\em class $C^k$}.\index{$C^k$ map} In addition, if $f$ is of class $C^k$ for all $k\in\N$, then $f$ is {\em of class $C^\infty$}.
\end{definition}

A general theory of differentiable calculus on Banach spaces can be developed analogously to the finite--dimensional case with the above basis. More precisely, extended versions of elementary results as the chain rule, the mean value inequality, Schwartz's Theorem (on the symmetry of the higher order derivatives), the Inverse Function Theorem and others can be easily proved. For a detailed exposition on this subject, we refer to Lang \cite{lang}.

Some further aspects of this theory will appear in Section~\ref{sec:infinitedimmnflds}, in the context of Banach manifolds. Namely, we will define {\em critical} and {\em regular} points and explore the classic concepts of {\em degeneracy} and {\em transversality} in this context.

We end this section with a technical analytical result, namely a weak differentiation principle, that gives a practical method for proving differentiability of maps between Banach spaces in concrete examples.

\begin{definition}\label{def:separapontos}
Let $Y$ be a Banach space. A {\em separating family}\index{Separating family} for $Y$ is a set $\mathcal F$ of continuous operators $\lambda:Y\to Z_\lambda$, with $Z_\lambda$ a Banach space, such that for each non zero $v\in Y$ there exists $\lambda\in\mathcal F$ with $\lambda(y)\ne0$.
\end{definition}

\begin{lemma}\label{le:weakdiffprinc}
Let $X$ and $Y$ be Banach spaces, $f:U\to Y$ a map defined on an open subset $U\subset X$ and $\mathcal F$ a separating family for $Y$. If there exists a continuous map $g:U\to\Lin(X,Y)$ such that for every $x\in U$, $v\in X$, $\lambda\in\mathcal F$, the directional derivative $\frac{\partial(\lambda\circ f)}{\partial v}(x)$ exists and equals $\lambda(g(x)v)$, then $f$ is $C^1$ and $\dd f=g$.
\end{lemma}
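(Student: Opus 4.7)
The plan is to promote the directional information supplied by the separating family into an $Y$-valued integral identity for $f$, using the fundamental theorem of calculus for Banach-valued functions of one real variable, and then conclude by the standard mean-value type estimate using continuity of $g$. First, fix $x_0\in U$ and take $v\in X$ with $\|v\|$ small enough that the segment $x_0+sv$, $s\in[0,1]$, lies in $U$; set $\sigma(s)=f(x_0+sv)$. For each $\lambda\in\mathcal F$, the hypothesis yields
\begin{equation*}
(\lambda\circ\sigma)'(s)=\lambda\left(g(x_0+sv)v\right),
\end{equation*}
which is continuous in $s$ because $g:U\to\Lin(X,Y)$ is continuous and evaluation is bilinear continuous; hence $\lambda\circ\sigma\in C^1([0,1],Z_\lambda)$, and the Banach-valued fundamental theorem of calculus gives
\begin{equation*}
\lambda\left(f(x_0+v)-f(x_0)\right)=\int_0^1\lambda\left(g(x_0+sv)v\right)\dd s.
\end{equation*}

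Next I would introduce the $Y$-valued Riemann integral $I=\int_0^1 g(x_0+sv)v\,\dd s$, which exists by continuity of the integrand on the compact interval $[0,1]$, and use that every $\lambda\in\mathcal F$ is continuous and linear to commute it with the integral, obtaining $\lambda(I)=\int_0^1\lambda(g(x_0+sv)v)\dd s=\lambda(f(x_0+v)-f(x_0))$. Thus $\lambda(f(x_0+v)-f(x_0)-I)=0$ for every $\lambda\in\mathcal F$, and since $\mathcal F$ is separating in the sense of Definition~\ref{def:separapontos}, this forces the key identity
\begin{equation*}
f(x_0+v)-f(x_0)=\int_0^1 g(x_0+sv)v\,\dd s \quad\text{in } Y.
\end{equation*}

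Subtracting the constant integral $\int_0^1 g(x_0)v\,\dd s=g(x_0)v$ and applying the standard norm estimate for Banach-valued integrals produces
\begin{equation*}
\left\|f(x_0+v)-f(x_0)-g(x_0)v\right\|\leq \|v\|\int_0^1\|g(x_0+sv)-g(x_0)\|\,\dd s.
\end{equation*}
Continuity of $g$ at $x_0$ guarantees that for any $\varepsilon>0$ there exists $\delta>0$ such that $\|v\|<\delta$ implies $\|g(x_0+sv)-g(x_0)\|<\varepsilon$ uniformly in $s\in[0,1]$, which makes the right-hand side at most $\varepsilon\|v\|$. This is precisely Fréchet differentiability of $f$ at $x_0$ with $\dd f(x_0)=g(x_0)$, and since $g$ is continuous by hypothesis, $f$ is automatically of class $C^1$.

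The main obstacle is not computational but conceptual: the hypothesis only gives directional information one line at a time, so the separating property is what allows us to upgrade the scalar-level identities ``$\lambda(a)=\lambda(b)$ for all $\lambda\in\mathcal F$'' into the vector equality $a=b$ in $Y$, and hence convert pointwise Gateaux-type information into the $Y$-valued integral identity. Everything else reduces to the Banach-valued fundamental theorem of calculus applied to $\lambda\circ\sigma$ and the elementary continuity estimate for $g$.
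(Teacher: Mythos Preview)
Your proof is correct and follows essentially the same approach as the paper's: both apply the fundamental theorem of calculus to $t\mapsto\lambda(f(x_0+tv))$, commute $\lambda$ with the $Y$-valued integral $\int_0^1 g(x_0+sv)v\,\dd s$, invoke the separating property to obtain the vector identity $f(x_0+v)-f(x_0)=\int_0^1 g(x_0+sv)v\,\dd s$, and conclude via the continuity of $g$. The only cosmetic difference is that the paper bounds the remainder by $\sup_{t\in[0,1]}\|g(x_0+tv)-g(x_0)\|\cdot\|v\|$ rather than your integral bound, which is equivalent.
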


\begin{proof}
Let $x\in U$ be fixed and consider the map $r$ defined in $$f(x+h)=f(x)+g(x)h+r(h).$$ From Definition~\ref{def:diffBanach}, it suffices to prove that $\lim_{h\to 0}\frac{r(h)}{\|h\|}=0$. For a sufficiently small $h$, the closed line segment $[x,x+h]$ is contained in $U$. It follows from the hypotheses on $\mathcal F$ that for each $\lambda\in\mathcal F$ the curve $$[0,1]\ni t\longmapsto(\lambda\circ f)(x+th)$$ is differentiable, with $$\frac{\dd}{\dd t}(\lambda\circ f)(x+th)=\lambda(g(x+th)h).$$

From the Fundamental Theorem of Calculus\footnote{This can be rigorously done only with a theory of integration for Banach space valued curves. One possibility is to use the {\em Bochner integral} (see \cite{ys}), however there are also simpler approaches in this case. For instance, one can use the notion of {\em weak integration}.},
\begin{eqnarray*}
\lambda(r(h)) &=& \int_0^1\frac{\dd}{\dd t}(\lambda\circ f)(x+th)\;\dd t-\lambda(g(x)h) \\
&=& \lambda\left(\int_0^1g(x+th)h\;\dd t-g(x)h\right).
\end{eqnarray*}
Since $\mathcal F$ separates points in $Y$, it follows that
\begin{eqnarray*}
\|r(h)\| &=& \left\|\int_0^1g(x+th)h\;\dd t-g(x)h\right\| \\
&=& \left\|\left(\int_0^1 g(x+th)-g(x)\;\dd t\right)h\right\| \\
&\le & \left(\sup_{t\in[0,1]}\|g(x+th)-g(x)\|\right)\|h\|.
\end{eqnarray*}
From continuity of $g$, $\lim_{h\to 0}\frac{r(h)}{\|h\|}=0$, which concludes the proof.
\end{proof}

\section{Function spaces}
\label{sec:funcspaces}

In this section, we recall the definitions of several classic function spaces of various regularities. The main purpose of this part of the text is to establish notations and make a few conventions. For a more detailed treatment of this subject, we refer to \cite{brezis,pugh,rudinfa,tausk,ys}.

In addition to basic notions of $C^k$ and $L^p$ spaces, we give a slightly longer description of Sobolev $W^{k,p}$ and $H^k$ spaces, however not following the usual approach using distributional derivatives. This is only possible since the domain of the considered maps will always be one--dimensional, and an equivalent approach using absolutely continuous maps is hence feasible. Finally, a few celebrated results on compactness or density of some immersions between the mentioned spaces are recalled, without proofs.

\begin{definition}\label{def:czero}
Let $C_b([a,b],\R^m)$ denote the vector space of {\em bounded}\footnote{The subindex $b$ stands for {\em bounded}, and has no relation with the upper end of the real interval $[a,b]$.} maps $f:[a,b]\to\R^m$, with the {\em uniform convergence norm}\index{Uniform convergence norm}
\begin{equation}
\normc{0}{f}=\sup_{x\in [a,b]} \|f(x)\|,
\end{equation}
where $\|\cdot\|$ denotes an arbitrary norm on $\R^m$. This is clearly a Banach space, see \cite{pugh} for a proof.
\end{definition}

\begin{lemma}\label{le:convderivadas}
Let $\{f_i:[a,b]\to\R^m\}_{i\in\N}$ be a sequence of $C^k$ maps that converges locally uniformly to a map $f_\infty\in C^0([a,b],\R^m)$, such that also the first $k$ derivatives $\{f_i^{(j)}\}_{i\in\N}$ converge locally uniformly to $f^j_\infty\in C^0([a,b],\R^m)$, for $1\leq j\leq k$. Then $f_\infty\in C^k([a,b],\R^m)$ and $f_\infty^{(j)}=f^j_\infty$.
\end{lemma}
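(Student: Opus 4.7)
The plan is to argue by induction on $k$, with the base case $k=1$ being the crux and the inductive step a routine iteration.

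First I would reduce locally uniform convergence to uniform convergence on $[a,b]$. Since $[a,b]$ is compact, any cover by open neighborhoods on which the convergence is uniform admits a finite subcover; by taking maxima of the finitely many $N$'s involved, one obtains uniform convergence of $f_i$ and of each $f_i^{(j)}$ on the whole of $[a,b]$. Henceforth I may assume $f_i\to f_\infty$ and $f_i^{(j)}\to f_\infty^j$ uniformly on $[a,b]$.

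For the base case $k=1$, I would start from the Fundamental Theorem of Calculus applied to each $f_i\in C^1([a,b],\R^m)$:
\begin{equation*}
f_i(t)=f_i(a)+\int_a^t f_i'(s)\,\dd s, \qquad t\in[a,b].
\end{equation*}
Because $\{f_i'\}_{i\in\N}$ converges to $f_\infty^1$ uniformly on the compact interval $[a,s]$, one has $\int_a^t f_i'(s)\,\dd s\to\int_a^t f_\infty^1(s)\,\dd s$, and since $f_i(a)\to f_\infty(a)$ and $f_i(t)\to f_\infty(t)$, passing to the limit yields
\begin{equation*}
f_\infty(t)=f_\infty(a)+\int_a^t f_\infty^1(s)\,\dd s.
\end{equation*}
As $f_\infty^1$ is continuous by hypothesis, the Fundamental Theorem of Calculus implies $f_\infty\in C^1([a,b],\R^m)$ with $f_\infty'=f_\infty^1$, proving the base case.

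For the inductive step, assuming the statement holds up to order $k-1$, apply the induction hypothesis to the sequence $\{f_i\}_{i\in\N}$ to conclude $f_\infty\in C^{k-1}([a,b],\R^m)$ with $f_\infty^{(j)}=f_\infty^j$ for $0\le j\le k-1$. Now apply the base case $k=1$ to the sequence $\{f_i^{(k-1)}\}_{i\in\N}$, which is a sequence of $C^1$ maps converging uniformly to $f_\infty^{k-1}=f_\infty^{(k-1)}$, and whose derivatives $\{f_i^{(k)}\}_{i\in\N}$ converge uniformly to the continuous map $f_\infty^k$. This gives $f_\infty^{(k-1)}\in C^1([a,b],\R^m)$ with $(f_\infty^{(k-1)})'=f_\infty^k$, i.e., $f_\infty\in C^k([a,b],\R^m)$ and $f_\infty^{(k)}=f_\infty^k$, completing the induction.

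There is essentially no obstacle here: the only nontrivial ingredients are the commutation of uniform limits with Riemann integration on a compact interval, the Fundamental Theorem of Calculus, and the compactness argument upgrading local uniform convergence to uniform convergence on $[a,b]$. All of these are standard.
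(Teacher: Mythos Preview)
Your proof is correct and is precisely the standard argument: the paper does not supply its own proof of this lemma, instead writing ``A proof of this result is elementary and can be found, for instance in \cite{pugh,rudin}.'' Your induction on $k$, with the base case handled via the Fundamental Theorem of Calculus and the commutation of uniform limits with integration on compacta, is exactly the proof one finds in those references.
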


A proof of this result is elementary and can be found, for instance in \cite{pugh,rudin}. A more sophisticated version of this lemma concerning $C^k$ sections of vector bundles will be given in Lemma~\ref{le:climage}.

\begin{definition}\label{def:ck}
The subset $C^0([a,b],\R^m)$ of continuous maps is a closed subspace of $C_b([a,b],\R^m)$, hence a Banach space. For each positive integer $k$, define $C^k([a,b],\R^m)$\index{$C^k$ map} to be the vector space of maps $f:[a,b]\to\R^m$ of class $C^k$. From Lemma~\ref{le:convderivadas}, the injective operator
\begin{eqnarray}\label{eq:icksomakc0}
C^k([a,b],\R^m) &\longhookrightarrow& \bigoplus_{j=0}^{k}C^0([a,b],\R^m)\\
f &\longmapsto & \left(f,f',\dots,f^{(k)}\right)\nonumber
\end{eqnarray}
has closed image and hence induces a TVS structure on $C^k([a,b],\R^m)$ making it a Banachable space.

An explicit Banach norm for this space is the so--called {\em $C^k$--norm}\index{$C^k$ norm}\index{Norm!$C^k$}\footnote{With respect to the norm $\|\cdot\|$ in $\R^m$.}
\begin{equation}\label{eq:normck}
\normc{k}{f}=\max_{0\leq j\leq k}\left\{\sup_{x\in [a,b]}\|f^{(j)}(x)\|\right\}=\max_{0\leq j\leq k}\normc{0}{f^{(j)}},
\end{equation}
where $\|\cdot\|$ denotes an arbitrary norm on $\R^m$. Endowing the counter domain with such norm, \eqref{eq:icksomakc0} is an isometric immersion, and $C^k([a,b],\R^m)$ endowed with \eqref{eq:normck} is a Banach space.
\end{definition}

\begin{remark}
Another norm on $C^k([a,b],\R^m)$ equivalent to \eqref{eq:normck} is
\begin{equation*}
\|f\|=\sum_{j=0}^k \normc{0}{f^{(j)}}.
\end{equation*}
\end{remark}

\begin{remark}
Notice that for any fixed $t_0\in [a,b]$ there exists a topological isomorphism
\begin{eqnarray*}
C^k([a,b],\R^m) &\la & (\R^m)^k\oplus C^0([a,b],\R^m) \\
f &\longmapsto& \left(f(t_0),f'(t_0),\dots,f^{(k-1)}(t_0),f^{(k)}\right)
\end{eqnarray*}
that induces other norms in $C^k([a,b],\R^m)$ equivalent to \eqref{eq:normck}, for instance
\begin{equation}\label{eq:normckeq}
\|f\|=\max\big\{\|f(t_0)\|,\|f'(t_0)\|,\dots,\|f^{(k-1)}(t_0)\|,\normc{0}{f^{(k)}}\big\}.
\end{equation}
\end{remark}

\begin{remark}\label{re:ckck-1}
For every positive integer $k$, the inclusion map $$C^k([a,b],\R^m)\longhookrightarrow C^{k-1}([a,b],\R^m)$$ is a compact operator, in particular continuous (see Definition~\ref{def:cpcop}). Inductively, from Proposition~\ref{prop:propcpcop}, for all $1\le j\le k$ the inclusions $$C^k([a,b],\R^m)\longhookrightarrow C^{k-j}([a,b],\R^m)$$ are also compact.
\end{remark}

\begin{remark}\label{re:cinftyfrechet}
Consider the countable intersection $$C^\infty([a,b],\R^m)=\bigcap_{k\in\N}C^k([a,b],\R^m).$$ Maps in this subspace are said to be {\em smooth}, or of class $C^\infty$. Every attempt to endow this space with a Banach space norm similar to norms in $C^k([a,b],\R^m)$ such as \eqref{eq:normck} or \eqref{eq:normckeq} trivially fail. In fact, $C^\infty([a,b],\R^m)$ is not a Banach space, but only a Fr\'echet space, see Definition~\ref{def:frechet}. The sequence of norms $\{\|\cdot\|_{C^k}\}_{k\in\N}$, given by \eqref{eq:normck}, gives a countable family of semi--norms that induce the topology of $C^\infty([a,b],\R^m)$, see Lemma~\ref{le:tvshell}. In addition, there are classic results on the density of $C^\infty([a,b],\R^m)$ in other spaces of functions with less regularity, see the Stone--Weierstrass Theorem~\ref{thm:stoneweierstrass}, Proposition~\ref{prop:cinftylp} and Corollaries~\ref{cor:cinftywkp} and~\ref{cor:cinftyhk}.
\end{remark}

An important subspace of $C^\infty([a,b],\R^m)$ is $C^\infty_c(\,]a,b[,\R^m)$, formed by maps that have compact support contained in $\,]a,b[$, see \eqref{eq:support}. This subspace will be used for some variational lemmas in Section~\ref{sec:afewmorelemmas}.

We now mention an important class of spaces of functions that are basic in analysis. These were first introduced by Riesz in the beginning of the twentieth century. Recall that in this text, measurability and integrals are in the {\em Lebesgue} sense, and by \emph{for almost all} (or \emph{almost everywhere} and {\em almost always}) we mean outside a set of Lebesgue measure zero. This handy convention will be used throughout the text.

\begin{definition}\label{def:lp}
Let $f:[a,b]\to\R^m$ be a measurable map. For every $p\in\left[1,+\infty\right[\;$, define the {\em $L^p$--norm}\index{Norm!$L^p$}\index{$L^p$ norm}\footnote{With respect to the norm $\|\cdot\|$ in $\R^m$.}
\begin{equation}\label{eq:normlp}
\norml{p}{f}=\left(\int_a^b\|f(t)\|^p\,\dd t\right)^\frac 1p\in[0,+\infty],
\end{equation}
where $\|\cdot\|$ denotes an arbitrary norm on $\R^m$. Maps $f:[a,b]\to\R^m$ with finite $L^p$--norm are called {\em $L^p$ maps}.\index{$L^p$ map}

The {\em Minkowski inequality}\index{Minkowski inequality} states that for every measurable maps $f,g:[a,b]\to\R^m$
$$\norml{p}{f+g}\leq\norml{p}{f}+\norml{p}{g}.$$
It is also easy to see that $\norml{p}{f}=0$ if and only if $f(t)=0$ for almost all $t\in[a,b]$. Hence the set of all measurable maps $f:[a,b]\to\R^m$ with $\norml{p}{f}<+\infty$ is a subspace of the vector space of all $\R^m$--valued maps on $[a,b]$, endowed with the semi--norm \eqref{eq:normlp}. Consider the induced norm, also denoted $\norml{p}{\cdot}$, on the vector space $L^p([a,b],\R^m)$ defined as the quotient\footnote{This means that an element of $L^p([a,b],\R^m)$ is an equivalence class of $L^p$ functions, where the equivalence relation $\sim$ is $f\sim g\Leftrightarrow\text{$f=g$ almost everywhere}$. Nevertheless, the elements of $L^p([a,b],\R^m)$ are usually thought as functions, with a subtle abuse of notation.} by such subspace. Endowed with such norm, $L^p([a,b],\R^m)$ is a Banach space. For a proof see for instance \cite{kreyszig,ys}.

Notice that the topology on $L^p([a,b],\R^m)$ does not depend on the choice of the norm $\|\cdot\|$ on $\R^m$. In addition, if this norm is induced by an inner product $\langle\cdot,\cdot\rangle$ on $\R^m$ and if $p=2$, then the $L^p$--norm is induced by the {\em $L^2$--inner product}
\begin{equation}\label{eq:productldois}
\langle f,g\rangle_{L^2}=\int_a^b\big\langle f(t),g(t)\big\rangle\,\dd t.
\end{equation}
Thus $L^2([a,b],\R^m)$ endowed with $\langle\cdot,\cdot\rangle_{L^2}$ is a Hilbert space.
\end{definition}

\begin{example}\label{thm:multC0L2}
If $B:\R^m\times\R^n\to\R^p$ is a bilinear form, then 
\begin{eqnarray*}
&\widehat B:C^0([a,b],\R^m)\times L^2([a,b],\R^n)\longrightarrow L^2([a,b],\R^p)&\\
&\widehat B(f,g)(t)=B(f(t),g(t)), \quad t\in[a,b]&
\end{eqnarray*}
is bilinear and continuous. More precisely, $$\big\Vert\widehat B(f,g)\big\Vert_{L^2}^2=\int_a^bB(f(t),g(t))^2\,\dd t\le\Vert B\Vert^2\Vert f\Vert^2_{C^0}\int_a^b\big\Vert g(t)\big\Vert^2\,\dd t,$$ and therefore $\big\Vert\widehat B\big\Vert\le\Vert B\Vert$. In particular, we shall use the continuity of
\begin{eqnarray}
&\widehat B:C^0([a,b],\Lin(\R^m,\R^n))\times L^2([a,b],\R^m)\la L^2([a,b],\R^n)&\label{eq:compositeC0L2L2} \\
&\widehat B(T,f)(t)=T(t)f(t),\quad t\in[a,b].&\nonumber
\end{eqnarray}
\end{example}

\begin{definition}\label{def:abscont}
A map $f:[a,b]\to\R^m$ is said to be {\em absolutely continuous}\index{Absolutely continuous map} if for every $\varepsilon>0$ there exists $\delta>0$ such that if $\,]x_i,y_i[,i=1,\dots,r$ are disjoint open intervals contained in $[a,b]$ with $\sum_{i=1}^r y_i-x_i<\delta$ then $$\sum_{i=1}^r\|f(y_i)-f(x_i)\|<\varepsilon.$$
\end{definition}

The notion of absolutely continuous map is characterized in the following result, whose proof can be found in Rudin \cite{rudin}.

\begin{proposition}\label{prop:abscontcharact}
A map $f:[a,b]\to\R^m$ is absolutely continuous if and only if the following conditions hold.
\begin{itemize}
\item[(i)] the derivative $$f'(t)=\lim\limits_{h\to0}\frac{f(t+h)-f(t)}h$$ exists for almost every $t\in [a,b]$;
\item[(ii)] the (almost everywhere defined) map $f':[a,b]\to\R^m$ is integrable;
\item[(iii)] for all $t\in[a,b]$, $$f(t)=f(a)+\int_a^t f'(s)\;\dd s.$$
\end{itemize}
Moreover, if $\phi:[a,b]\to\R^m$ is an integrable map, then the map $f:[a,b]\to\R^m$ defined by $f(t)=\int_a^t\phi(s)\;\dd s$ is absolutely continuous and $f'=\phi$ almost everywhere.
\end{proposition}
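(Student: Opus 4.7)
The plan is to treat each implication separately and reduce to the scalar case by working componentwise (since absolute continuity, pointwise differentiability a.e., integrability, and the reconstruction formula for $\R^m$-valued maps are all equivalent to the corresponding componentwise statements under any equivalent norm on $\R^m$). Henceforth assume $m=1$.

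For the easier direction, I would first prove the ``moreover'' clause. Given $\phi\in L^1([a,b],\R)$ and setting $f(t)=\int_a^t\phi(s)\,\dd s$, absolute continuity follows from the classical \emph{absolute continuity of the Lebesgue integral}: for every $\varepsilon>0$ there is $\delta>0$ such that $\int_E|\phi|\,\dd s<\varepsilon$ whenever the Lebesgue measure of $E\subset[a,b]$ is less than $\delta$. Applied to the disjoint union $E=\bigsqcup_{i=1}^r\,]x_i,y_i[\,$ in Definition~\ref{def:abscont}, this yields
\begin{equation*}
\sum_{i=1}^r|f(y_i)-f(x_i)|\;\le\;\sum_{i=1}^r\int_{x_i}^{y_i}|\phi|\,\dd s\;=\;\int_E|\phi|\,\dd s\;<\;\varepsilon.
\end{equation*}
That $f'=\phi$ a.e.\ is exactly the Lebesgue differentiation theorem, which I would take as a black box; this also shows that (i), (ii), (iii) hold when $f$ has the integral form.

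For the harder direction, assume $f:[a,b]\to\R$ is absolutely continuous. The first key step is that $f$ has bounded variation: splitting $[a,b]$ into finitely many pieces of length less than the $\delta$ corresponding to $\varepsilon=1$ in Definition~\ref{def:abscont}, the total variation of $f$ on each piece is at most $1$, so $f$ is BV on $[a,b]$. By the Jordan decomposition $f=f_1-f_2$ with $f_1,f_2$ monotone nondecreasing, and Lebesgue's theorem on differentiation of monotone functions gives (i) and also that $f'$ is integrable, hence (ii). The remaining content is the reconstruction formula (iii), which is the main obstacle: the ``Cantor staircase'' example shows that monotone a.e.-differentiability alone does not suffice, and one genuinely needs absolute continuity.

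The decisive lemma for (iii) is: \emph{if $g:[a,b]\to\R$ is absolutely continuous with $g'=0$ a.e., then $g$ is constant.} I would prove it by a Vitali covering argument: fix $c,d\in[a,b]$, $\varepsilon>0$, and the $\delta$ from absolute continuity for $\varepsilon$; the set where $g'=0$ can be covered by arbitrarily short intervals $[x_i,y_i]$ on which $|g(y_i)-g(x_i)|<\eta(y_i-x_i)$ with $\eta$ arbitrarily small, and a Vitali selection together with the absolute continuity applied to the complementary gaps of total measure less than $\delta$ forces $|g(d)-g(c)|<(d-c)\eta+\varepsilon$; letting $\eta,\varepsilon\to0$ gives $g(d)=g(c)$. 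Applying this lemma to
\begin{equation*}
g(t)=f(t)-f(a)-\int_a^t f'(s)\,\dd s,
\end{equation*}
which is absolutely continuous (difference of an absolutely continuous function and an integral, already handled above) and satisfies $g'=0$ a.e.\ by the Lebesgue differentiation theorem applied to the integral term, forces $g\equiv g(a)=0$, establishing (iii). This last Vitali-type argument is the technical heart of the proof; everything else is either a reduction or an application of standard measure-theoretic facts.
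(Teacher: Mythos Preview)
The paper does not actually prove this proposition; it simply states that ``a proof can be found in Rudin \cite{rudin}'' and moves on. Your outline is correct and is essentially the standard argument one finds in Rudin's \emph{Real and Complex Analysis}: reduce to the scalar case, handle the integral direction via absolute continuity of the Lebesgue integral and the Lebesgue differentiation theorem, and for the converse use the BV/Jordan decomposition together with the key lemma that an absolutely continuous function with a.e.\ vanishing derivative is constant (proved by a Vitali covering argument). So there is nothing to compare against, and your proposal stands as a valid self-contained proof of a result the paper chose to black-box.
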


\begin{definition}\label{def:wkp}
For every positive integer $k$ and $p\in [1,+\infty[\;$, define\index{$W^{k,p}$ map}\index{Sobolev class $W^{k,p}$}
\begin{equation*}
W^{k,p}([a,b],\R^m)=\left\{f\in C^{k-1}([a,b],\R^m):\begin{array}{c} f^{(k-1)} \mbox{ absolutely continuous}\\ \mbox{and }f^{(k)}\in L^p([a,b],\R^m)\end{array}\right\}.
\end{equation*}
In particular, $W^{1,1}([a,b],\R^m)$ is the space of all absolutely continuous maps, and $W^{0,p}([a,b],\R^m)=L^p([a,b],\R^m)$. An adapted version of Lemma~\ref{le:convderivadas} guarantees that the injective operator
\begin{eqnarray}\label{eq:wkpc0lp}
W^{k,p}([a,b],\R^m) &\longhookrightarrow& \bigoplus_{j=0}^{k}C^0([a,b],\R^m)\oplus L^p([a,b],\R^m) \\
f &\longmapsto & \left(f,f',\dots,f^{(k)}\right)\nonumber
\end{eqnarray}
has closed image and hence induces a TVS structure on $W^{k,p}([a,b],\R^m)$ making it a Banachable space. An explicit Banach norm for this space is, for instance,\index{$W^{k,p}$ norm}\index{Norm!$W^{k,p}$}
\begin{equation}\label{eq:normwkp}
\|f\|_{W^{k,p}}=\normc{k-1}{f}+\norml{p}{f^{(k)}}.
\end{equation}
\end{definition}

\begin{remark}\label{re:wkpderivativeae}
Notice that from Proposition~\ref{prop:abscontcharact}, if $f\in W^{k,p}([a,b],\R^m)$, then $f^{(k)}$ is defined for almost all $t\in [a,b]$. This defines an element of $L^p([a,b],\R^m)$, since elements of this space are equivalence classes defined by the equivalence relation of being equal almost everywhere in $[a,b]$. Henceforth, we will omit the term {\em almost everywhere}, and by any equality or definition involving the $k^{\mbox{\tiny th}}$ derivative of a map in $W^{k,p}([a,b],\R^m)$ we implicitly assume that it is to be thought at {\em almost every} point in $[a,b]$, although no direct mention to this fact will be made.
\end{remark}

\begin{remark}\label{re:wkpiso}
Notice that for any fixed $t_0\in [a,b]$ there exists a topological isomorphism
\begin{eqnarray*}
W^{k,p}([a,b],\R^m) &\longhookrightarrow & (\R^m)^k\oplus L^p([a,b],\R^m) \\
f &\longmapsto& \left(f(t_0),f'(t_0),\dots,f^{(k-1)}(t_0),f^{(k)}\right)
\end{eqnarray*}
that induces other norms in $W^{k,p}([a,b],\R^m)$ equivalent to \eqref{eq:normwkp}, for instance
\begin{equation}
\|f\|=\norml{p}{f^{(k)}}+\sum_{j=0}^{k-1}\|f^{(j)}(t_0)\|.
\end{equation}
\end{remark}

\begin{definition}\label{def:hk}
For $p=2$, the Banachable space $W^{k,p}([a,b],\R^m)$ described in Definition~\ref{def:wkp} is denoted $$H^k([a,b],\R^m)=W^{k,2}([a,b],\R^m)$$ and its elements are called maps of {\em Sobolev class $H^k$}.\index{Sobolev class $H^k$!map}\index{$H^k$ map} This is a Hilbertable space, that can be endowed, for instance, with the inner product given by
\begin{equation}\label{eq:productwkp}
\langle f,g\rangle=\langle f^{(k)},g^{(k)}\rangle_{L^2}+\sum_{j=0}^{k-1}\langle f^{(j)}(t_0),g^{(j)}(t_0)\rangle.
\end{equation}
\end{definition}

\begin{remark}\label{re:eqhk}
Notice that for any fixed $t_0\in [a,b]$ the topological isomorphism described in Remark~\ref{re:wkpiso} in the case of $H^k([a,b],\R^m)$ is given by
\begin{eqnarray}\label{eq:imhkl2}
H^k([a,b],\R^m) &\longhookrightarrow & (\R^m)^k\oplus L^2([a,b],\R^m) \\
f &\longmapsto& \left(f(t_0),f'(t_0),\dots,f^{(k-1)}(t_0),f^{(k)}\right).\nonumber
\end{eqnarray}
It clearly induces other inner products in $H^k([a,b],\R^m)$ equivalent to \eqref{eq:productwkp}, for instance\index{$H^k$ norm}\index{Norm!$H^k$}
\begin{equation}\label{eq:innprodH1}
\langle f,g\rangle=\langle f(t_0),g(t_0)\rangle +\langle f,g\rangle_{L^2}.
\end{equation}
\end{remark}

\begin{remark}
The restriction of the $L^2$--inner product of $L^2([a,b],\R^m)$ to $H^1([a,b],\R^m)$ gives a limited inner product, however not equivalent to \eqref{eq:productwkp} with $k=1$. In other words, such restriction induces a different topology on $H^1([a,b],\R^m)$.
\end{remark}

\begin{remark}\label{re:h1c0l2}
Notice that \eqref{eq:wkpc0lp} for $k=1$ and $p=2$ is given by
\begin{equation*}
H^1([a,b],\R^m)\ni f\longmapsto (f,f')\in C^0([a,b],\R^m)\oplus L^2([a,b],\R^m).
\end{equation*}
As mentioned in Definition~\ref{def:wkp}, an adaptation of Lemma~\ref{le:convderivadas} guarantees that this is a injective operator with closed image. Furthermore, this consideration implies that $H^1([a,b],\R^m)$ is a Banachable space, that can be endowed with a norm that induces the same topology as \eqref{eq:productwkp}, given by \eqref{eq:normwkp}, i.e., $$\|f\|=\normc{0}{f}+\norml{2}{f'}$$
\end{remark}

\begin{remark}
Definitions~\ref{def:czero},~\ref{def:ck},~\ref{def:lp},~\ref{def:wkp} and~\ref{def:hk} were given considering maps with counter domain $\R^m$, however $\R^m$ could be obviously replaced with any finite--dimensional vector space $V$, with the additional hypothesis that the norm of $V$ comes from an inner product in the case of formulas \eqref{eq:productldois} and \eqref{eq:productwkp}. The topologies on the spaces given in such definitions does not depend on the choice of a norm on $V$.

In addition, the correspondent topologies on these spaces will be henceforth called {\em $C^k$--topology}, {\em $C^\infty$--topology}, {\em $L^p$--topology}, {\em $H^k$--topology} and so on, depending on the regularity of the space dealt with.\index{Topology!$C^k$}\index{Topology!$C^\infty$}\index{Topology!$L^p$}\index{Topology!$H^k$}
\end{remark}

Most of the above function spaces are related in several ways. The following classic results give a few inclusions between these function spaces, some of which are compact or have dense image, as studied in the sequel.

\begin{proposition}\label{prop:inclusions}
The following inclusion maps are continuous:
\begin{itemize}
\item[(i)] $C^l([a,b],\R^m)\hookrightarrow C^k([a,b],\R^m)$, for $0\leq k\leq l$;
\item[(ii)] $C^0([a,b],\R^m)\hookrightarrow L^p([a,b],\R^m)$, for $p\in[1,+\infty]$;
\item[(iii)] $L^q([a,b],\R^m)\hookrightarrow L^p([a,b],\R^m)$, for $1\leq p\leq q\leq+\infty$;
\item[(iv)] $W^{k,q}([a,b],\R^m)\hookrightarrow W^{k,p}([a,b],\R^m)$, for $1\leq p\leq q\leq+\infty$, $k\geq 1$;
\item[(v)] $W^{k+1,p}([a,b],\R^m)\hookrightarrow C^k([a,b],\R^m)$, for $p\in[1,+\infty]$, $k\geq 0$;
\item[(vi)] $C^k([a,b],\R^m)\hookrightarrow W^{k,p}([a,b],\R^m)$, for $p\in[1,+\infty]$, $k\geq 0$.
\end{itemize}
In particular, setting $p=2$ in {\rm (v)} and {\rm (vi)}, the inclusions $H^{k+1}([a,b],\R^m)\hookrightarrow C^k([a,b],\R^m)$ and $C^k([a,b],\R^m)\hookrightarrow H^k([a,b],\R^m)$ are continuous, for $k\geq0$.
\end{proposition}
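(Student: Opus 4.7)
The plan is to handle the six inclusions by elementary norm estimates, grouping them into three clusters: the purely $C^k$ estimate (i), the $L^p$-type estimates (ii)--(iv), and the Sobolev embeddings (v)--(vi).

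First I would dispatch (i) directly from the definition \eqref{eq:normck}: when $k\le l$, the maximum defining $\|f\|_{C^k}$ is taken over a subset of the indices used for $\|f\|_{C^l}$, so $\|f\|_{C^k}\le\|f\|_{C^l}$ and the inclusion has operator norm at most $1$. For (ii), since $[a,b]$ is compact, the crude estimate $\|f\|_{L^p}^p=\int_a^b\|f(t)\|^p\,\dd t\le(b-a)\|f\|_{C^0}^p$ yields $\|f\|_{L^p}\le(b-a)^{1/p}\|f\|_{C^0}$, establishing continuity with explicit constant; the case $p=+\infty$ is immediate. For (iii), I would apply H\"older's inequality with exponents $q/p$ and its conjugate to $\|f\|^p\cdot 1$, obtaining $\|f\|_{L^p}\le(b-a)^{1/p-1/q}\|f\|_{L^q}$; again the case $q=+\infty$ is separate but trivial. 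Item (iv) then follows by composing: if $f\in W^{k,q}$, the first $k-1$ derivatives already provide the $C^{k-1}$ norm, and the $k^{\mathrm{th}}$ derivative lies in $L^q\hookrightarrow L^p$ by (iii), so from the explicit norm \eqref{eq:normwkp} one gets $\|f\|_{W^{k,p}}\le\|f\|_{C^{k-1}}+(b-a)^{1/p-1/q}\|f^{(k)}\|_{L^q}\le C\|f\|_{W^{k,q}}$.

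For (v), the key is the case $k=0$, i.e.\ $W^{1,p}([a,b],\R^m)\hookrightarrow C^0([a,b],\R^m)$. If $f\in W^{1,p}$, Proposition \ref{prop:abscontcharact} gives $f(t)=f(a)+\int_a^tf'(s)\,\dd s$, and by H\"older,
\begin{equation*}
\|f(t)\|\le\|f(a)\|+(b-a)^{1-1/p}\|f'\|_{L^p},
\end{equation*}
so using the equivalent norm of Remark~\ref{re:wkpiso} (with $t_0=a$), $\|f\|_{C^0}\le\max\{1,(b-a)^{1-1/p}\}(\|f(a)\|+\|f'\|_{L^p})$. The general case $k\ge 1$ follows by applying this to $f^{(k)}\in W^{1,p}$ to conclude $f^{(k)}\in C^0$, combined with the obvious continuity of the lower-order pieces via \eqref{eq:normckeq}. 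Finally (vi) is a direct consequence of (ii): if $f\in C^k$ then $f^{(j)}\in C^0$ for $j\le k$, with the first $k-1$ derivatives controlled by $\|f\|_{C^{k-1}}\le\|f\|_{C^k}$ and $\|f^{(k)}\|_{L^p}\le(b-a)^{1/p}\|f^{(k)}\|_{C^0}\le(b-a)^{1/p}\|f\|_{C^k}$, hence $\|f\|_{W^{k,p}}\le(1+(b-a)^{1/p})\|f\|_{C^k}$.

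The only mildly delicate point is (v): one must use the characterization of absolutely continuous maps (Proposition~\ref{prop:abscontcharact}) to turn a pointwise bound on $f$ into a uniform one via H\"older, and then combine this with the fact that the norm \eqref{eq:normck} is equivalent, for a fixed basepoint $t_0$, to the norm of Remark~\ref{re:wkpiso} that packages the initial jet with the top derivative. Everything else is routine H\"older manipulation and reading off the inequalities from the explicit norms \eqref{eq:normck}, \eqref{eq:normlp} and \eqref{eq:normwkp}; the continuity of each inclusion amounts to exhibiting such a constant. I would also note in passing that, restricted to $p=2$, each estimate involving $W^{k,p}$ automatically gives the corresponding $H^k$ statement.
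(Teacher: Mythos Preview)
Your proposal is correct. The paper itself does not give a proof of this proposition, treating it as a classical result and only remarking afterward that precise operator-norm estimates are possible; your elementary H\"older-based argument supplies exactly such estimates and is fully in the spirit of that remark. One minor simplification: with the specific norm \eqref{eq:normwkp}, item (v) is in fact trivial, since $\|f\|_{W^{k+1,p}}=\|f\|_{C^k}+\|f^{(k+1)}\|_{L^p}\ge\|f\|_{C^k}$ directly, so the bootstrap via the base case $k=0$ and Remark~\ref{re:wkpiso} is not strictly needed (though it would be if one started from the equivalent norm of that remark instead).
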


\begin{remark}
It is actually possible to give very precise estimates for the norms \eqref{eq:normt} of the above inclusions, depending on the norms chosen in each space.
\end{remark}

The following is a classic result of basic analysis that asserts that every continuous real function may be uniformly approximated by smooth functions, more precisely by polynomials. See \cite{pugh,rudin} for a proof.

\begin{stwthm}\label{thm:stoneweierstrass}\index{Theorem!Stone--Weierstrass}
The space $C^\infty([a,b],\R^m)$ is dense in $C^0([a,b],\R^m)$.
\end{stwthm}

\begin{proposition}\label{prop:cinftylp}
If $p\in[1,+\infty[$, then the space $C^\infty([a,b],\R^m)$ is dense in $L^p([a,b],\R^m)$. In particular, $C^k([a,b],\R^m)$ is dense in $L^p([a,b],\R^m)$ for all $k\geq 0$.
\end{proposition}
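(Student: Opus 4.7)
The plan is to proceed via the chain of approximations
$L^p \rightsquigarrow \text{step functions} \rightsquigarrow C^0 \rightsquigarrow C^\infty$, controlling the $L^p$--norm at each step. Fix $f\in L^p([a,b],\R^m)$ and $\varepsilon>0$; the goal is to produce $h\in C^\infty([a,b],\R^m)$ with $\|f-h\|_{L^p}<\varepsilon$.

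First I would invoke the standard measure--theoretic fact that simple functions are dense in $L^p$ on any finite measure space, together with the inner regularity of Lebesgue measure on $[a,b]$, which allows one to replace any measurable set of finite measure by a finite disjoint union of intervals up to arbitrarily small symmetric difference. Combining these produces a step function $s = \sum_{i=1}^N v_i\chi_{I_i}$, with $v_i\in\R^m$ and $I_i\subset[a,b]$ intervals, such that $\|f-s\|_{L^p}<\varepsilon/3$.

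Next, each characteristic function $\chi_{[c,d]}$ can be approximated in $L^p$ by an explicit continuous piecewise--linear ``trapezoid'' $\varphi_\delta$ that equals $1$ on $[c+\delta,d-\delta]$, $0$ outside $[c,d]$, and interpolates linearly on the two intervals of length $\delta$; a direct estimate gives $\|\chi_{[c,d]}-\varphi_\delta\|_{L^p}\le (2\delta)^{1/p}$. Summing such trapezoids with coefficients $v_i$, and taking $\delta>0$ small enough, one obtains $g\in C^0([a,b],\R^m)$ with $\|s-g\|_{L^p}<\varepsilon/3$.

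Finally, apply the Stone--Weierstrass Theorem~\ref{thm:stoneweierstrass} componentwise to $g$: there exists $h\in C^\infty([a,b],\R^m)$ with $\|g-h\|_{C^0} < \varepsilon/\bigl(3(b-a)^{1/p}\bigr)$. By Proposition~\ref{prop:inclusions}(ii), we have $\|g-h\|_{L^p}\le (b-a)^{1/p}\|g-h\|_{C^0}<\varepsilon/3$. The triangle inequality then yields $\|f-h\|_{L^p}<\varepsilon$, proving density of $C^\infty$ in $L^p$. The ``in particular'' assertion for $C^k$ is immediate from the inclusion $C^\infty\subset C^k$ for every $k\ge 0$.

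The main obstacle is the first step: invoking density of simple functions in $L^p$ combined with regularity of Lebesgue measure, which is the genuinely measure--theoretic input. Once one is dealing with step functions, the remainder is a routine matter of mollification/trapezoidal smoothing combined with the already available Stone--Weierstrass Theorem. An alternative would be to bypass the step--function stage entirely by convolving $f$ (extended by zero to $\R$) with a smooth mollifier $\rho_\varepsilon\in C^\infty_c(\R)$ and appealing to the classical continuity of translation in $L^p$ to conclude $\|\rho_\varepsilon * f - f\|_{L^p}\to 0$; this is conceptually cleaner but shifts the non--triviality to the continuity of translation, which itself rests on density of continuous functions in $L^p$.
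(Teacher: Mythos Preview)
The paper does not supply a proof of this proposition; it is stated as a classical fact, in the same spirit as the Stone--Weierstrass Theorem~\ref{thm:stoneweierstrass} immediately preceding it (for which the paper simply refers to \cite{pugh,rudin}). There is therefore nothing to compare against.

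That said, your argument is correct and entirely standard. The chain $L^p \to$ step functions $\to C^0 \to C^\infty$ works as you describe: the first step is the genuine measure--theoretic input (density of simple functions plus regularity of Lebesgue measure on intervals), the trapezoid estimate is elementary, and the last step uses Stone--Weierstrass together with the continuous inclusion $C^0\hookrightarrow L^p$ from Proposition~\ref{prop:inclusions}(ii). Your remark about the mollifier alternative is also apt; that is in fact the route taken in several of the references the paper cites (e.g.\ Brezis~\cite{brezis}). Either approach is appropriate here.
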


We now explore a couple of corollaries that follow immediately from the two above results using Proposition~\ref{prop:inclusions}.

\begin{corollary}\label{cor:cinftywkp}
If $p\in[1,+\infty[$, $k\geq1$, then $C^\infty([a,b],\R^m)$ is dense in $W^{k,p}([a,b],\R^m)$. In particular, for all $j\geq 0$, $C^{k+j}([a,b],\R^m)$ is dense in $W^{k,p}([a,b],\R^m)$.
\end{corollary}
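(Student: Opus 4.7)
The plan is to reduce the problem to the known density statement of Proposition~\ref{prop:cinftylp} by exploiting the topological isomorphism highlighted in Remark~\ref{re:wkpiso}. Concretely, fix $t_0 = a$ and use the equivalent $W^{k,p}$-norm
\[
\|f\|_* = \|f^{(k)}\|_{L^p} + \sum_{j=0}^{k-1}\|f^{(j)}(a)\|,
\]
so that convergence in $W^{k,p}([a,b],\R^m)$ amounts to convergence of the top derivative in $L^p$ together with convergence of the lower-order ``initial data'' at $a$.

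First, given $f\in W^{k,p}([a,b],\R^m)$, apply Proposition~\ref{prop:cinftylp} to $f^{(k)}\in L^p([a,b],\R^m)$ to obtain a sequence $\{g_n\}_{n\in\N}\subset C^\infty([a,b],\R^m)$ with $g_n\to f^{(k)}$ in the $L^p$-norm. Next, I would build the approximating smooth functions $f_n$ by prescribing that they share with $f$ the same jet at $a$ up to order $k-1$ and have $k$-th derivative equal to $g_n$. Explicitly, define
\[
f_n(t) = \sum_{j=0}^{k-1} \frac{(t-a)^j}{j!}\, f^{(j)}(a) + \int_a^t \frac{(t-s)^{k-1}}{(k-1)!}\, g_n(s)\,\dd s, \quad t\in [a,b].
\]
By Proposition~\ref{prop:abscontcharact} applied iteratively (and the fact that $g_n$ is smooth), $f_n\in C^\infty([a,b],\R^m)$ with $f_n^{(j)}(a) = f^{(j)}(a)$ for $0\le j\le k-1$ and $f_n^{(k)} = g_n$.

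Then, in the equivalent norm $\|\cdot\|_*$,
\[
\|f_n-f\|_* = \|g_n - f^{(k)}\|_{L^p} + \sum_{j=0}^{k-1}\|f_n^{(j)}(a) - f^{(j)}(a)\| = \|g_n - f^{(k)}\|_{L^p} \xrightarrow{n\to\infty} 0,
\]
which is the required density. Finally, the ``in particular'' statement is immediate from the chain of continuous inclusions
\[
C^\infty([a,b],\R^m) \hookrightarrow C^{k+j}([a,b],\R^m) \hookrightarrow W^{k,p}([a,b],\R^m),
\]
the last of which is guaranteed by Proposition~\ref{prop:inclusions}(vi): a subset containing a dense subset is itself dense. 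There is really no hard step here; the only mild subtlety is to make a careful choice of the anchoring point $t_0=a$ so that prescribing the initial jet forces the lower-order-derivative terms in the norm to vanish exactly, turning the problem into a pure $L^p$ approximation of $f^{(k)}$.
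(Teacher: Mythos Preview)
Your proof is correct. The paper itself does not supply a proof of this corollary, only remarking that it follows immediately from the Stone--Weierstrass Theorem~\ref{thm:stoneweierstrass} and Proposition~\ref{prop:cinftylp} using Proposition~\ref{prop:inclusions}; your argument is exactly the natural way to make this precise, exploiting the topological isomorphism of Remark~\ref{re:wkpiso} to reduce the approximation to Proposition~\ref{prop:cinftylp} applied to the top derivative while matching the lower-order jet at $a$ exactly.
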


Setting $p=2$, we obtain the analogous result for $H^k([a,b],\R^m)$.

\begin{corollary}\label{cor:cinftyhk}
If $p\in[1,+\infty[$, $k\geq1$, then $C^\infty([a,b],\R^m)$ is dense in $H^k([a,b],\R^m)$. In particular, for all $j\geq 0$, $C^{k+j}([a,b],\R^m)$ is dense in $H^k([a,b],\R^m)$.
\end{corollary}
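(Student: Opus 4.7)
The plan is to obtain Corollary~\ref{cor:cinftyhk} as an immediate specialization of Corollary~\ref{cor:cinftywkp}. Recall from Definition~\ref{def:hk} that $H^k([a,b],\R^m)$ is, by definition, the space $W^{k,2}([a,b],\R^m)$, endowed with a Hilbertable topology equivalent to the Banachable $W^{k,2}$-topology (this equivalence was noted in Remark~\ref{re:eqhk} via the isomorphism \eqref{eq:imhkl2}). Since $2\in[1,+\infty[$, the hypothesis of Corollary~\ref{cor:cinftywkp} is met.

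First I would invoke Corollary~\ref{cor:cinftywkp} with $p=2$ and the given integer $k\geq 1$, obtaining that $C^\infty([a,b],\R^m)$ is dense in $W^{k,2}([a,b],\R^m)$ with respect to the $W^{k,2}$-topology induced by \eqref{eq:normwkp}. Since the $H^k$-topology on $H^k([a,b],\R^m)=W^{k,2}([a,b],\R^m)$ coincides with the $W^{k,2}$-topology (both being induced by equivalent norms, as explicitly identified via \eqref{eq:productwkp} and the isomorphism in Remark~\ref{re:eqhk}), density in one implies density in the other. This gives the main statement.

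For the second assertion, note the chain of continuous inclusions
\begin{equation*}
C^\infty([a,b],\R^m)\longhookrightarrow C^{k+j}([a,b],\R^m)\longhookrightarrow H^k([a,b],\R^m),
\end{equation*}
where the second inclusion is continuous by Proposition~\ref{prop:inclusions}(vi) with $p=2$ (and the obvious observation that $C^{k+j}\hookrightarrow C^k$ for $j\geq 0$). Since the composite has dense image, and the image factors through $C^{k+j}([a,b],\R^m)$, the image of $C^{k+j}([a,b],\R^m)$ in $H^k([a,b],\R^m)$ contains this dense subset and is therefore itself dense.

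There is essentially no obstacle here: the whole content of the corollary is a triviality once Corollary~\ref{cor:cinftywkp} is available, the only subtle points being the identification of topologies (Hilbertable vs.\ Banachable on $H^k$, both equivalent to the one from \eqref{eq:normwkp}) and the elementary topological fact that a set containing a dense subset is dense.
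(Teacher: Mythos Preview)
Your proof is correct and takes essentially the same approach as the paper: the text introduces this corollary with the sentence ``Setting $p=2$, we obtain the analogous result for $H^k([a,b],\R^m)$,'' so the intended argument is precisely the specialization of Corollary~\ref{cor:cinftywkp} that you carry out. Your treatment of the second assertion via the inclusion chain is more explicit than what the paper writes, but it is the natural way to unpack the ``in particular'' clause.
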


The above density results are extremely useful to prove properties of functions with regularities weaker then $C^k$, as Sobolev class $H^k$. Most of the times, it is only possible to infer a certain formula for maps in $C^k([a,b],\R^m)$. Since this is a dense subset of $H^k([a,b],\R^m)$, if the formula is known to be continuous, it follows\footnote{It is an elementary topological fact that if two continuous maps between metric spaces coincide in a dense subset, then they must coincide everywhere.} that it holds for the entire $H^k([a,b],\R^m)$.

We now focus on the study of the Hilbert space $H^k([a,b],\R^m)$, on which the infinite--dimensional manifold of {\em Sobolev curves on a finite--dimensional manifold $M$} will be modeled, see Definition~\ref{def:H1abM} and Section~\ref{sec:hum}. For this, we recall (a very simple corollary of) the celebrated {\em Sobolev Embedding Theorem}, whose proof can be found, for instance, in \cite{adams,mazja}. Once more, it deals with some of the inclusions mentioned in Proposition~\ref{prop:inclusions}, regarding its compactness, see Definition~\ref{def:cpcop}.

\begin{proposition}\label{prop:hkckmenosum}
For all $k\geq 0$, the inclusion map $H^{k+1}([a,b],\R^m)\hookrightarrow C^k([a,b],\R^m)$ is a compact operator.
\end{proposition}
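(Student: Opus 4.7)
The plan is to establish the statement by reducing to the base case $k=0$ and then applying the Arzelà--Ascoli theorem. Concretely, let $\{f_n\}_{n\in\N}$ be a bounded sequence in $H^{k+1}([a,b],\R^m)$ with $\|f_n\|_{H^{k+1}}\le C$; I want to extract a subsequence convergent in $C^k([a,b],\R^m)$, thereby fulfilling condition (iii) of Lemma~\ref{le:cpcop}.

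First I would handle the base case $k=0$, i.e., the compactness of $H^1([a,b],\R^m)\hookrightarrow C^0([a,b],\R^m)$. Continuity of this inclusion (Proposition~\ref{prop:inclusions}(v)) immediately gives a uniform bound $\|f_n\|_{C^0}\le C'$, so $\{f_n\}$ is pointwise bounded. For equicontinuity, I would use that $H^1$ maps are absolutely continuous (Proposition~\ref{prop:abscontcharact}), so for $a\le s<t\le b$ the fundamental theorem of calculus combined with the Cauchy--Schwartz inequality yields
\begin{equation*}
\|f_n(t)-f_n(s)\|=\left\|\int_s^t f_n'(\tau)\,\dd\tau\right\|\le\sqrt{t-s}\,\|f_n'\|_{L^2}\le C\sqrt{t-s}.
\end{equation*}
This is a uniform Hölder estimate with exponent $1/2$, hence $\{f_n\}$ is equicontinuous. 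Arzelà--Ascoli then produces a $C^0$--convergent subsequence, establishing compactness in the case $k=0$.

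Next I would reduce the general case to this one. If $\{f_n\}$ is bounded in $H^{k+1}([a,b],\R^m)$, then for each $0\le j\le k$ the derivative $f_n^{(j)}$ is bounded in $H^{k+1-j}([a,b],\R^m)\subset H^1([a,b],\R^m)$, with norm controlled by $\|f_n\|_{H^{k+1}}$ (using Proposition~\ref{prop:inclusions}(iv) or directly from the equivalent norm \eqref{eq:normwkp}). Applying the base case to $\{f_n^{(k)}\}$ extracts a $C^0$--convergent subsequence; applying it again to $\{f_n^{(k-1)}\}$ on that subsequence extracts a further $C^0$--convergent sub-subsequence, and so on down to $j=0$. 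After a finite iteration (or a single diagonal extraction), I obtain a subsequence along which every derivative $f_n^{(j)}$, $0\le j\le k$, converges in $C^0([a,b],\R^m)$. By Lemma~\ref{le:convderivadas}, the $C^0$--limit $f_\infty$ of $f_n$ is in $C^k([a,b],\R^m)$ with $f_\infty^{(j)}$ equal to the $C^0$--limit of $f_n^{(j)}$, so the convergence is in the $C^k$--norm \eqref{eq:normck}.

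No step looks genuinely hard; the only place where one has to be slightly careful is the reduction, to make sure the sequence of nested subsequences is actually extracted properly (a standard diagonal argument) and that the limit object has the right regularity, which is exactly what Lemma~\ref{le:convderivadas} delivers. The conceptual heart of the proof is the Hölder $1/2$ estimate coming from Cauchy--Schwartz applied to $\int_s^t f_n'$, which is what converts an $L^2$ bound on the derivative into equicontinuity.
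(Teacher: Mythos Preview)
Your proof is correct. The paper does not actually give a proof of this proposition: it states the result as a corollary of the Sobolev Embedding Theorem and refers the reader to Adams and Mazja for details. Your argument is therefore not a comparison against a proof in the paper but rather a self-contained substitute for the cited references.

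That said, your route is exactly the standard elementary one in the one-dimensional setting: the H\"older-$\tfrac12$ estimate from Cauchy--Schwartz gives equicontinuity, Arzel\`a--Ascoli handles the base case $H^1\hookrightarrow C^0$, and the reduction via successive subsequence extraction plus Lemma~\ref{le:convderivadas} lifts this to arbitrary $k$. This is more direct than invoking the full Sobolev embedding machinery, and it fits well with the tools already developed in the paper. One minor stylistic point: the finite iteration of subsequence extractions (for $j=k,k-1,\ldots,0$) is genuinely finite, so no diagonal argument is needed---you can simply take nested subsequences $k+1$ times.
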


\begin{corollary}\label{cor:hkckj}
For all $1\leq j\leq k$, the inclusions $H^k([a,b],\R^m)\hookrightarrow C^{k-j}([a,b],\R^m)$ are compact operators.
\end{corollary}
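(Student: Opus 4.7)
The plan is to deduce this corollary from Proposition~\ref{prop:hkckmenosum} by factoring each inclusion through a composition involving one compact factor, and then invoking the ideal property of compact operators established in Proposition~\ref{prop:propcpcop}.

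First, I would handle the base case $j=1$. Applying Proposition~\ref{prop:hkckmenosum} with $k$ replaced by $k-1$ yields that the inclusion
\begin{equation*}
H^k([a,b],\R^m)\longhookrightarrow C^{k-1}([a,b],\R^m)
\end{equation*}
is a compact operator. This gives the statement for $j=1$ directly.

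For $2\le j\le k$, I would factor the inclusion as
\begin{equation*}
H^k([a,b],\R^m)\longhookrightarrow C^{k-1}([a,b],\R^m)\longhookrightarrow C^{k-j}([a,b],\R^m),
\end{equation*}
where the first arrow is the compact inclusion just obtained, and the second arrow is continuous by Proposition~\ref{prop:inclusions}(i) (applied with $0\le k-j\le k-1$). By Proposition~\ref{prop:propcpcop}, a bounded operator composed with a compact one is compact, so the composition, which equals the inclusion $H^k([a,b],\R^m)\hookrightarrow C^{k-j}([a,b],\R^m)$, is compact.

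There is no real obstacle here: the corollary is a formal consequence of the deep input (the Sobolev embedding of Proposition~\ref{prop:hkckmenosum}) together with the soft functional-analytic fact that compact operators form a two-sided ideal. The only thing to be careful about is checking that the range $1\le j\le k$ is covered by the indices used, which it is, since $k-1\ge k-j$ exactly when $j\ge 1$.
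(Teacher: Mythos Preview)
Your proof is correct and takes essentially the same approach as the paper: factor through the compact inclusion $H^k\hookrightarrow C^{k-1}$ from Proposition~\ref{prop:hkckmenosum} and use the ideal property of compact operators from Proposition~\ref{prop:propcpcop}. The only cosmetic difference is that the paper cites Remark~\ref{re:ckck-1} (which notes that $C^{k-1}\hookrightarrow C^{k-j}$ is itself compact) rather than Proposition~\ref{prop:inclusions}(i) for continuity, but either suffices once one factor is compact.
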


\begin{proof}
This is a simple consequence of Propositions~\ref{prop:propcpcop},~\ref{prop:hkckmenosum} and Remark~\ref{re:ckck-1}.
\end{proof}

\begin{lemma}\label{le:h1c0extension}
Let $B:H^1([a,b],\R^m)\times H^1([a,b],\R^m)\to\R$ be a continuous bilinear form. If $B$ admits a continuous extension $$\widehat B:H^1([a,b],\R^m)\times C^0([a,b],\R^m)\la\R,$$ then $B$ is represented by a compact operator $K\in\cpc(H^1([a,b],\R^m))$, see Definitions~\ref{def:represents} and~\ref{def:cpcop}.
\end{lemma}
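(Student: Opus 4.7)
The strategy is to produce the operator $K$ from the Riesz Representation Theorem~\ref{thm:riesz} (or Definition~\ref{def:represents}): there exists a unique bounded $K\in\Lin(H^1([a,b],\R^m))$ with $\langle Kv,w\rangle_{H^1}=B(v,w)$ for all $v,w\in H^1$, and one then has to prove that $K$ is compact. The two ingredients to be exploited are (i) the quantitative \emph{asymmetric estimate}
\[
|B(v,w)|=|\widehat B(v,w)|\leq C\,\|v\|_{H^1}\,\|w\|_{C^0},\quad v,w\in H^1([a,b],\R^m),
\]
provided by the continuity of the extension $\widehat B$ (Lemma~\ref{le:contbound}), and (ii) the compactness of the Sobolev embedding $H^1([a,b],\R^m)\hookrightarrow C^0([a,b],\R^m)$, which is precisely Proposition~\ref{prop:hkckmenosum} with $k=0$.

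With these tools in hand, I would verify compactness of $K$ through the sequential characterization in Lemma~\ref{le:cpcop}. Given a bounded sequence $\{v_n\}_{n\in\N}$ in $H^1$, boundedness of $K$ ensures that $\{Kv_n\}$ is bounded in $H^1$ as well; by the compact embedding, one extracts a subsequence $\{Kv_{n_j}\}_{j\in\N}$ that converges, and hence is Cauchy, in $C^0$. The key computation then is
\begin{align*}
\|Kv_{n_j}-Kv_{n_l}\|_{H^1}^2
&= \bigl\langle K(v_{n_j}-v_{n_l}),\,Kv_{n_j}-Kv_{n_l}\bigr\rangle_{H^1}\\
&= B\bigl(v_{n_j}-v_{n_l},\,Kv_{n_j}-Kv_{n_l}\bigr)\\
&= \widehat B\bigl(v_{n_j}-v_{n_l},\,Kv_{n_j}-Kv_{n_l}\bigr)\\
&\leq C\,\|v_{n_j}-v_{n_l}\|_{H^1}\,\|Kv_{n_j}-Kv_{n_l}\|_{C^0}.
\end{align*}
Since the first factor stays bounded (because $\{v_n\}$ is) while the second tends to $0$ as $j,l\to\infty$, the subsequence $\{Kv_{n_j}\}$ is Cauchy in $H^1$, and hence convergent. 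This proves $K\in\cpc(H^1([a,b],\R^m))$.

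The main obstacle, or rather the main insight, is to align correctly the two pieces of information: the compactness of the embedding converts $H^1$-boundedness of $\{Kv_n\}$ into $C^0$-convergence on the \emph{output} $Kv_{n_j}$, so the asymmetric estimate coming from $\widehat B$ must be applied in exactly the form where the factor $\|\cdot\|_{C^0}$ multiplies this output while the merely bounded $H^1$-difference $v_{n_j}-v_{n_l}$ occupies the first slot. Once the extension $\widehat B$ is recognized as encoding precisely this asymmetry, and the Sobolev embedding as the natural source of compactness, the argument reduces to the short direct computation above; in particular, no appeal to Schauder's theorem on adjoints of compact operators is needed.
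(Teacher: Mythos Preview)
Your argument is correct, and it takes a genuinely different route from the paper. The paper proceeds abstractly: it regards the extension $\widehat B$ as a bounded linear map $C^0([a,b],\R^m)\to\big[H^1([a,b],\R^m)\big]^*\cong H^1([a,b],\R^m)$ via the Riesz identification, and then observes that the representing operator for $B$ is simply the composite $\widehat B\circ i$, where $i:H^1\hookrightarrow C^0$ is the compact Sobolev embedding of Proposition~\ref{prop:hkckmenosum}. Compactness of $K$ then falls out of the ideal property of compact operators (Proposition~\ref{prop:propcpcop}). Your proof instead verifies the sequential criterion directly, using the asymmetric bound $|\widehat B(v,w)|\le C\|v\|_{H^1}\|w\|_{C^0}$ to convert $C^0$-Cauchyness of the image subsequence into $H^1$-Cauchyness. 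The paper's argument is shorter and more structural---it isolates exactly \emph{why} compactness holds (a factorization through a compact map)---while yours is more self-contained and, as you note, sidesteps any need to worry about which slot of the bilinear form the Riesz map acts on or to invoke Schauder's theorem on adjoints.
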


\begin{proof}
Assume the operator $\widehat B$ that extends $B$ in the following diagram is continuous. From Corollary~\ref{cor:hkckj}, the inclusion $i$ is a compact operator.
$$\xymatrix@+17.5pt{
C^0([a,b],\R^m)\ar[r]^(0.33){\widehat B}& \big[H^1([a,b],\R^m)\big]^*\cong H^1([a,b],\R^m) \\
H^1([a,b],\R^m)\ar@{^{(}->}[u]^i \ar@(r,d)[ur]_{\widehat B\circ i} &
}$$
From Proposition~\ref{prop:propcpcop}, the composite map $\widehat B\circ i$ is compact. Since $\widehat B$ coincides with $B$ on its domain, it follows that the bilinear form $B$ is represented by $K=\widehat B\circ i$, which is a compact operator of $H^1([a,b],\R^m)$.
\end{proof}

The following result will be used in Section~\ref{sec:hum} to establish (smooth) compatibility of charts of $H^1([a,b],M)$.

\begin{theorem}\label{thm:curvesalphasmooth}
Let $U\subset\R\times\R^m$ be an open subset and consider the set of all Sobolev $H^1$ curves $\gamma:[a,b]\to\R^m$ whose graph is contained in $U$,
\begin{equation*}
\curves U=\left\{\gamma\in H^1([a,b],\R^m):(t,\gamma(t))\in U, \, \mbox{ for all } t\in[a,b]\right\}.
\end{equation*}
Given a $C^k$ map $\alpha:U\to\R^n$, define
\begin{eqnarray}\label{eq:curvesalpha}
&\curves\alpha:\curves U\la H^1([a,b],\R^n)&\\
&\curves\alpha(\gamma)(t)=\alpha(t,\gamma(t)), \quad t\in [a,b]&\nonumber
\end{eqnarray}
Then $\curves U$ is open in $H^1([a,b],\R^m)$ and $\curves\alpha$ is a $C^{k-1}$ map. In addition, if $k\geq 2$, for all $\gamma\in\curves U$, $v\in H^1([a,b],\R^n)$ and $t\in [a,b]$,
\begin{equation}\label{eq:diffcurvesalpha}
\dd\curves\alpha(\gamma)(v)(t)=\frac{\partial\alpha}{\partial x}(t,\gamma(t))v(t).
\end{equation}
\end{theorem}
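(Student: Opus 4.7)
The plan is to establish the three claims in succession: openness of $\curves U$, well-definedness of $\curves\alpha$ into $H^1([a,b],\R^n)$, and the $C^{k-1}$ regularity (with the explicit derivative formula when $k\ge 2$) by induction on $k$.

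First I would verify that $\curves U$ is open in $H^1([a,b],\R^m)$. Given $\gamma_0\in\curves U$, its graph is a compact subset of the open set $U\subset\R\times\R^m$, so there is $\delta>0$ such that every point at Euclidean distance less than $\delta$ from this graph still lies in $U$. Since the inclusion $H^1([a,b],\R^m)\hookrightarrow C^0([a,b],\R^m)$ is continuous (Remark~\ref{re:h1c0l2}), some $H^1$-ball around $\gamma_0$ consists of curves whose $C^0$-distance from $\gamma_0$ is less than $\delta$, and these therefore belong to $\curves U$. Next, for well-definedness, fix $\gamma\in\curves U$: the map $t\mapsto\alpha(t,\gamma(t))$ is continuous by composition, and since $\gamma$ is absolutely continuous, so is the composite, with almost-everywhere derivative $\partial_t\alpha(t,\gamma(t))+\partial_x\alpha(t,\gamma(t))\dot\gamma(t)$. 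The first summand is continuous, hence in $L^2$; the second is the pointwise product of a $C^0$ map with $\dot\gamma\in L^2$, which lies in $L^2$ by Example~\ref{thm:multC0L2}. Therefore $\curves\alpha(\gamma)\in H^1([a,b],\R^n)$.

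For the regularity I would induct on $k$. The base case $k=1$ is continuity of $\curves\alpha$: if $\gamma_j\to\gamma$ in $H^1$, then $\gamma_j\to\gamma$ in $C^0$ (Proposition~\ref{prop:hkckmenosum}) and $\dot\gamma_j\to\dot\gamma$ in $L^2$. Uniform convergence of $\alpha(\cdot,\gamma_j(\cdot))$ to $\alpha(\cdot,\gamma(\cdot))$ follows from uniform continuity of $\alpha$ on compact neighborhoods, while convergence of the derivatives in $L^2$ is obtained by splitting
\begin{equation*}
\partial_t\alpha(\cdot,\gamma_j)+\partial_x\alpha(\cdot,\gamma_j)\dot\gamma_j-\partial_t\alpha(\cdot,\gamma)-\partial_x\alpha(\cdot,\gamma)\dot\gamma
\end{equation*}
and applying the $C^0\times L^2\to L^2$ continuity of pointwise multiplication from Example~\ref{thm:multC0L2} to control the cross-term. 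This shows $\curves\alpha$ is continuous.

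For the inductive step $k\ge 2$, I would apply the weak differentiation principle (Lemma~\ref{le:weakdiffprinc}) with the separating family $\{\ev_t:t\in[a,b]\}$ of point evaluations on $H^1([a,b],\R^n)$, each continuous via the inclusion $H^1\hookrightarrow C^0$. Since $\ev_t\circ\curves\alpha$ is simply $\gamma\mapsto\alpha(t,\gamma(t))$, the finite-dimensional chain rule gives $\partial_v(\ev_t\circ\curves\alpha)(\gamma)=\partial_x\alpha(t,\gamma(t))v(t)$, so the candidate derivative is the operator $v\mapsto\curves{\partial_x\alpha}(\gamma)\cdot v$, where $\cdot$ denotes pointwise matrix--vector multiplication. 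The crux is to observe that pointwise multiplication defines a continuous bilinear map
\begin{equation*}
\mu:H^1([a,b],\Lin(\R^m,\R^n))\times H^1([a,b],\R^m)\la H^1([a,b],\R^n),
\end{equation*}
because distributing the derivative on $A(t)v(t)$ produces one term $\dot A\cdot v$ of type $L^2\cdot C^0$ and one term $A\cdot\dot v$ of type $C^0\cdot L^2$, each in $L^2$ by Example~\ref{thm:multC0L2}, with a norm estimate $\|\mu(A,v)\|_{H^1}\le C\|A\|_{H^1}\|v\|_{H^1}$. Continuous bilinear maps between Banach spaces are smooth, so the associated map $A\mapsto\mu(A,\cdot)$ is smooth from $H^1([a,b],\Lin(\R^m,\R^n))$ to $\Lin(H^1([a,b],\R^m),H^1([a,b],\R^n))$. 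Since $\partial_x\alpha$ is $C^{k-1}$, the inductive hypothesis gives that $\curves{\partial_x\alpha}$ is $C^{k-2}$, so $\dd\curves\alpha$ is a composition of $C^{k-2}$ maps and hence $C^{k-2}$; this means $\curves\alpha$ is $C^{k-1}$, closing the induction and yielding the formula \eqref{eq:diffcurvesalpha}.

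The main obstacle I expect is the verification that pointwise multiplication really defines a continuous bilinear map at the $H^1$ level with a uniform estimate: one must handle the vector-valued / matrix-valued form of Example~\ref{thm:multC0L2} carefully, distribute the weak derivative correctly, and ensure the resulting $L^2$ bounds are uniform so that the weak differentiation principle applies. Once this bilinear continuity is in place, the inductive step is essentially mechanical.
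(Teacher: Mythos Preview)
Your proposal is correct and takes essentially the same approach as the paper: openness via the continuous inclusion $H^1\hookrightarrow C^0$, the weak differentiation principle (Lemma~\ref{le:weakdiffprinc}) with the separating family of evaluation maps, the key continuous bilinear pointwise multiplication $H^1\times H^1\to H^1$ (the paper's operator $\mathcal O$ in \eqref{eq:LINLINLIN} is precisely the curried form of your $\mu$), and induction via $\curves{\partial_x\alpha}$. The only cosmetic difference is that the paper argues continuity of $\curves\alpha$ structurally via Lemma~\ref{le:cdreduction} rather than through sequences, but this is not a substantive divergence.
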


\begin{proof}
This proof is in great part adapted from the proof of \cite[Theorem 4.2.16]{picmertausk}. It will be given through the following two claims.

\begin{claim}\label{cl:curvesalphacontinua}
The subset $\curves U$ is open in $H^1([a,b],\R^m)$ and $\curves\alpha$ is continuous.
\end{claim}

Since $U\subset\R\times\R^m$ is open,
\begin{equation*}
\Curves{C^0}{U}=\left\{\gamma\in C^0([a,b],\R^m):(t,\gamma(t))\in U, \, \mbox{ for all } t\in[a,b]\right\}.
\end{equation*}
is an open subset of $C^0([a,b],\R^m)$. Moreover, from Proposition~\ref{prop:hkckmenosum} the inclusion $i:H^1([a,b],\R^m)\hookrightarrow C^0([a,b],\R^m)$ is continuous, hence $$\curves U=i^{-1}(\Curves{C^0}{U})$$ is an open subset of $H^1([a,b],\R^m)$.

Furthermore, from Remark~\ref{re:h1c0l2} and Lemma~\ref{le:cdreduction}, continuity of $\curves\alpha$ follows from continuity of the composite maps
\begin{gather}
\label{eq:composite1}
\curves U \xrightarrow{\;\;\curves\alpha\;\;} H^1([a,b],\R^m)\xhookrightarrow{\;\;i\;\;} C^0([a,b],\R^m) \\
\label{eq:composite2}
\curves U\xrightarrow{\;\;\curves\alpha\;\;}H^1([a,b],\R^m)\xrightarrow{\;\;\dd\;\;} L^2([a,b],\R^m).
\end{gather}

Moreover, from continuity of $\alpha:U\to\R^n$, it follows that the map
\begin{eqnarray*}
&\Curves{C^0}{\alpha}:\Curves{C^0}{U} \la C^0([a,b],\R^m)&\\
&\Curves{C^0}{\alpha}(\gamma)(t)=\alpha(t,\gamma(t)), \quad t\in [a,b]&
\end{eqnarray*}
is continuous. In addition, from Proposition~\ref{prop:hkckmenosum}, $i$ is also continuous, hence \eqref{eq:composite1} is continuous.

As for continuity of \eqref{eq:composite2}, evaluating it explicitly on $\gamma\in\curves U$,
$$\frac{\dd}{\dd t}\curves\alpha(\gamma)(t)=\frac{\partial\alpha}{\partial t}(t,\gamma(t))+\frac{\partial\alpha}{\partial
x}(t,\gamma(t))\dot\gamma(t).$$
Thus \eqref{eq:composite2} is given by the sum of the restriction of $\Curves{C^0}{\frac{\partial\alpha}{\partial t}}$ to $\curves U$ and of the derivation map $\dd:\curves U\to L^2([a,b],\R^m)$. More precisely, \eqref{eq:composite2} is given by
$$\xymatrix@+30pt{
\curves{U} \ar@(rd,l)[rrd]_{\eqref{eq:composite2}} \ar[rr]^(0.33){\Curves{C^0}{\frac{\partial\alpha}{\partial x}}\;\oplus\;\dd} & & C^0([a,b],\Lin(\R^m,\R^n))\oplus L^2([a,b],\R^n)\ar@(d,u)[d]^{\eqref{eq:compositeC0L2L2}} \\
& & L^2([a,b],\R^n)
}$$
and hence is continuous. This concludes the proof of Claim~\ref{cl:curvesalphacontinua}.

We now use the weak differentiation principle given in Lemma~\ref{le:weakdiffprinc} to establish differentiability of $\curves\alpha$.

\begin{claim}\label{cl:curvesalphaC1}
If $\alpha:U\to\R^n$ is a map of class $C^2$ defined on an open subset $U\subset\R^m$ then $\curves\alpha$ is of class $C^1$ and \eqref{eq:diffcurvesalpha} holds.
\end{claim}

The separating family $\mathcal F$ for $H^1([a,b],\R^n)$ is the family of {\em evaluation maps}. For every $t\in[a,b]$, consider
\begin{eqnarray*}
\ev_t:H^1([a,b],\R^n)&\la &\R^n \\
\gamma &\longmapsto & \gamma(t)
\end{eqnarray*}
and $\mathcal F=\{\ev_t:t\in[a,b]\}$. Let $g$ be given by
\begin{eqnarray*}
&g:\curves U\longrightarrow\Lin(H^1([a,b],\R^m),H^1([a,b],\R^n))& \\
&g(\gamma)(v)(t)=\displaystyle\frac{\partial\alpha}{\partial x}(t,\gamma(t))v(t),\quad t\in[a,b],&
\end{eqnarray*}
for all $\gamma\in\curves U$, $v\in H^1([a,b],\R^m)$. It is then clear that $$\frac{\partial(\ev_t\circ\curves\alpha)}{\partial v}(\gamma)=\frac{\dd}{\dd s}\alpha(t,\gamma(t)+sv(t))\Big\vert_{s=0}=g(\gamma)(v)(t).$$ The only nontrivial part of the proof, which we omit, is the continuity of $g$. Such continuity follows from the continuity of $$\curves{\frac{\partial\alpha}{\partial x}}:\curves U\la H^1([a,b],\Lin(\R^m,\R^n)).$$ More precisely,
\begin{eqnarray}\label{eq:LINLINLIN}
&\mathcal O:H^1([a,b],\Lin(\R^m,\R^n))\longrightarrow\Lin(H^1([a,b],\R^m), H^1([a,b],\R^n))\nonumber & \\
&\mathcal O(T)(v)(t)=T(t)v(t), \quad t\in[a,b], &
\end{eqnarray}
is a continuous operator. For details on how to prove such continuity statements, we refer to Piccione, Mercuri and Tausk \cite[Section 4.2]{picmertausk}. Notice that $\dd\curves\alpha$ is equal to the composition of $\curves{\frac{\partial\alpha}{\partial x}}$ with the continuous operator \eqref{eq:LINLINLIN}. Applying Lemma~\ref{le:weakdiffprinc} with the above setting, it follows that $\curves\alpha$ is $C^1$ and that \eqref{eq:diffcurvesalpha} holds, concluding the proof of Claim~\ref{cl:curvesalphaC1}.

Finally, a simple inductive argument on $k$ guarantees that we may replace $C^2$ and $C^1$ with $C^k$ and $C^{k-1}$, respectively, in Claim~\ref{cl:curvesalphaC1}. This concludes the proof of Theorem~\ref{thm:curvesalphasmooth}.
\end{proof}

\section{A few more lemmas}
\label{sec:afewmorelemmas}

Once more, we end the chapter with some lemmas that will be later used. These are functional analysis results that are easily proved and repeated here for the sake of self--containment.

\begin{proposition}\label{prop:cksep}
The space $C^0(K,\R)$ endowed with the uniform convergence norm is separable if and only if $K$ is metrizable.
\end{proposition}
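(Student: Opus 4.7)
The plan is to treat the two implications separately, assuming (as the notation and the use of the uniform convergence norm suggest) that $K$ is a compact Hausdorff space.

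For the implication ($\Leftarrow$), suppose $K$ is metrizable and fix a compatible metric $d$. Since a compact metric space is totally bounded, it admits a countable dense subset $\{x_n\}_{n\in\N}$. Consider the countable family $\mathcal F=\{1,d_1,d_2,\dots\}\subset C^0(K,\R)$ where $d_n(x)=d(x,x_n)$. This family separates points of $K$: if $x\ne y$ then $d_n(x)\ne d_n(y)$ for any $x_n$ sufficiently close to $x$. The unital subalgebra $\A\subset C^0(K,\R)$ generated by $\mathcal F$ therefore satisfies the hypotheses of the Stone--Weierstrass Theorem~\ref{thm:stoneweierstrass}, and hence is dense in $C^0(K,\R)$. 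Taking $\Q$-linear combinations of finite products of elements of $\mathcal F$ yields a countable dense subset of $\A$, and by transitivity a countable dense subset of $C^0(K,\R)$, proving separability.

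For the implication ($\Rightarrow$), suppose $\{f_n\}_{n\in\N}\subset C^0(K,\R)$ is countable and dense in the uniform topology. Normalizing, we may assume $\normc{0}{f_n}\leq 1$ for every $n$. Define
\begin{equation*}
\rho(x,y)=\sum_{n=1}^\infty 2^{-n}\,|f_n(x)-f_n(y)|,\quad x,y\in K.
\end{equation*}
This series is uniformly convergent and defines a continuous pseudometric on $K$. To upgrade it to a metric I would argue that $\{f_n\}$ separates points: since $K$ is compact Hausdorff, Urysohn's lemma provides, for any $x\ne y$, a function $f\in C^0(K,\R)$ with $f(x)\ne f(y)$, and by density some $f_n$ approximates $f$ uniformly within $\tfrac13|f(x)-f(y)|$, so that $f_n(x)\ne f_n(y)$. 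Hence $\rho$ is a metric on $K$.

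To finish, note that $\rho$ is continuous with respect to the original topology of $K$, so the identity map $(K,\tau)\to(K,\rho)$ is continuous. Since $(K,\tau)$ is compact and $(K,\rho)$ is Hausdorff, this continuous bijection is automatically a homeomorphism, so $\tau$ coincides with the metric topology induced by $\rho$; in particular $K$ is metrizable. The main obstacle is the point-separation step in the forward direction: one has to combine Urysohn with the density of $\{f_n\}$ carefully, and make sure the pseudometric $\rho$ actually generates the original topology rather than a coarser one, which is where compactness of $K$ together with Hausdorffness plays the decisive role.
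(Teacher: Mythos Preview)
Your proof is essentially correct; note that the paper does not actually prove this proposition but simply refers the reader to Fabi\'an et al.~\cite{fabian} for a proof. Your argument follows the standard route one finds in such references: Stone--Weierstrass plus a countable separating family for one direction, and a metric built from a countable dense family of functions together with the compact--Hausdorff homeomorphism trick for the other.

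One small point of care in the direction $(\Rightarrow)$: after normalizing the dense sequence $\{f_n\}$ so that $\normc{0}{f_n}\le 1$, the resulting sequence need not remain dense in $C^0(K,\R)$, yet your point-separation step invokes density. The fix is immediate---either argue point-separation first using the \emph{original} dense family (Urysohn plus density as you wrote), observe that scaling each $f_n$ by a nonzero constant preserves point-separation, and only then normalize to define $\rho$; or simply replace the summand $2^{-n}|f_n(x)-f_n(y)|$ by $2^{-n}\min(1,|f_n(x)-f_n(y)|)$ and avoid normalization altogether. With this adjustment the argument is complete.
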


For a proof of this classic result, see Fabi\'an et al. \cite{fabian}. We now prove some elementary analytical lemmas that involve some of the spaces of functions studied in Section~\ref{sec:funcspaces}.

\begin{lemma}\label{le:milagre}
Let $f,g:[a,b]\to\R^m$ be continuous maps, with $f$ absolutely continuous, and suppose that $f'=g$ almost everywhere. Then $f\in C^1([a,b],\R^m)$ and $f'=g$.
\end{lemma}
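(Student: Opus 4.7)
The plan is to combine the absolute continuity characterization (Proposition~\ref{prop:abscontcharact}) with the classical Fundamental Theorem of Calculus for continuous integrands. The hypothesis that $f$ is absolutely continuous gives us the integral representation
\[
f(t)=f(a)+\int_a^t f'(s)\,\dd s, \quad t\in[a,b],
\]
where $f'$ is defined almost everywhere and is integrable. The assumption $f'=g$ almost everywhere then lets us swap $f'$ for $g$ inside the integral (the integral is unchanged upon modification on a null set), yielding
\[
f(t)=f(a)+\int_a^t g(s)\,\dd s, \quad t\in[a,b].
\]

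Now I would invoke the continuity of $g$ to upgrade the everywhere–defined map $t\mapsto\int_a^t g(s)\,\dd s$ from merely absolutely continuous to $C^1$: by the classical Fundamental Theorem of Calculus for continuous integrands, the derivative of this map exists at \emph{every} point $t\in[a,b]$ and equals $g(t)$. Consequently $f$ is differentiable everywhere on $[a,b]$ with derivative $g$; since $g$ is continuous by hypothesis, $f\in C^1([a,b],\R^m)$ and $f'=g$ as genuine (not almost everywhere) functions.

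I do not expect a serious obstacle here: the whole argument is a one–line reduction to the continuous–integrand version of the Fundamental Theorem of Calculus, the only mild subtlety being that the derivative $f'$ provided by Proposition~\ref{prop:abscontcharact} is an almost–everywhere object, so one must pass through the integral representation in order to compare it pointwise with the continuous function $g$. Once inside the integral, replacing $f'$ with $g$ is automatic because the Lebesgue integral ignores null sets.
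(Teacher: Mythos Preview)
Your proof is correct and follows essentially the same approach as the paper: use Proposition~\ref{prop:abscontcharact} to write $f(t)=f(a)+\int_a^t f'(s)\,\dd s$, replace $f'$ by $g$ in the integral since they agree almost everywhere, and then apply the Fundamental Theorem of Calculus for continuous integrands to conclude $f'=g$ everywhere and $f\in C^1$.
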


\begin{proof}
From Proposition~\ref{prop:abscontcharact}, the derivative $f':[a,b]\to\R^m$ exists almost everywhere, and by hypothesis coincides almost everywhere with $g$, which is continuous. Thus,
\begin{eqnarray*}
f(t)&=&f(a)+\int_a^t f'(s)\;\dd s\\
&=& f(a)+\int_a^t g(s)\;\dd s.
\end{eqnarray*}
Therefore, from the Fundamental Theorem of Calculus, $f$ is differentiable and has continuous derivative $g$. Hence $f\in C^1([a,b],\R^m)$, concluding the proof.
\end{proof}

\begin{corollary}\label{cor:milagre}
Let $f:[a,b]\to M$ be an absolutely continuous map and $g\in\sect^0(f^*TM)$, with $\dd f=g$ almost everywhere. Then $f\in C^1([a,b],M)$ and $\dd f=g$.
\end{corollary}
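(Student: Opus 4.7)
The plan is to reduce the statement to the Euclidean case handled by Lemma~\ref{le:milagre} through local charts, using that the conclusion is inherently local in $t$.

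First I would fix an arbitrary $t_0\in[a,b]$ and choose a smooth chart $(U,\varphi)$ of $M$ around $f(t_0)$, with $\varphi:U\to\varphi(U)\subset\R^m$. Since $f$ is continuous and $[a,b]$ is compact, there exists a closed subinterval $I\subset[a,b]$ containing $t_0$ in its (relative) interior such that $f(I)\subset U$. Define the local representative $\tilde f=\varphi\circ f:I\to\R^m$, which is absolutely continuous (absolute continuity is preserved under composition with a smooth map on a compact set, as $\varphi$ is Lipschitz on any compact subset of $U$ containing $f(I)$). Similarly, define $\tilde g:I\to\R^m$ by $\tilde g(t)=\dd\varphi(f(t))\,g(t)$, which is continuous because $g$ is a continuous section of $f^*TM$ and $\dd\varphi$ is smooth.

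Next I would verify that $\tilde f'=\tilde g$ almost everywhere on $I$. At any point $t$ where $f$ is differentiable and $\dd f(t)=g(t)$ (which is almost every $t$ by hypothesis), the chain rule for absolutely continuous curves gives $\tilde f'(t)=\dd\varphi(f(t))\,\dd f(t)=\dd\varphi(f(t))\,g(t)=\tilde g(t)$. Hence $\tilde f$ and $\tilde g$ satisfy the hypotheses of Lemma~\ref{le:milagre}, which yields $\tilde f\in C^1(I,\R^m)$ with $\tilde f'=\tilde g$ everywhere on $I$. Composing with $\varphi^{-1}$, we conclude that $f|_I=\varphi^{-1}\circ\tilde f$ is of class $C^1$, and at every $t\in I$,
\[
\dd f(t)=\dd\varphi^{-1}(\tilde f(t))\,\tilde f'(t)=\dd\varphi^{-1}(\tilde f(t))\,\dd\varphi(f(t))\,g(t)=g(t).
\]

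Finally, since $t_0\in[a,b]$ was arbitrary and the $C^1$ condition and the identity $\dd f=g$ are local, covering $[a,b]$ by finitely many such intervals $I$ yields $f\in C^1([a,b],M)$ and $\dd f=g$ on all of $[a,b]$. The only mildly delicate point is verifying that $\varphi\circ f$ is absolutely continuous in the Euclidean sense from the abstract hypothesis that $f$ is absolutely continuous on $M$; this is standard since on the compact set $f(I)$ the chart $\varphi$ is bi-Lipschitz relative to any background Riemannian distance such as $d_\mathrm R$. No further obstacle arises.
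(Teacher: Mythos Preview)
Your proof is correct and follows exactly the approach the paper indicates: reduce to the Euclidean case via local charts and apply Lemma~\ref{le:milagre}. The paper's proof is a two-line sketch (``Considering local charts it is possible to reduce this problem to Euclidean space. The proof then follows directly from Lemma~\ref{le:milagre}''), and you have simply filled in the details of that reduction carefully.
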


\begin{proof}
Considering local charts it is possible to reduce this problem to Euclidean space. The proof then follows directly from Lemma~\ref{le:milagre}.
\end{proof}

\begin{lemma}\label{le:distder0}
Let $\alpha\in L^1([a,b],\R^m)$ and consider an inner product $\langle\,\cdot,\cdot\,\rangle$ on $\R^m$. Suppose that for every $\lambda\in C^\infty_c(\,]a,b[,\R^m)$,
\begin{equation}\label{eq:distder0hyp}
\int_a^b \langle\alpha(t),\lambda'(t)\rangle\;\dd t=0.
\end{equation}
Then $\alpha$ is constant almost everywhere.
\end{lemma}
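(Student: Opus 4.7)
The plan is to reduce to the scalar case and then apply the classical variational lemma after exhibiting enough test functions as derivatives.

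First, I would reduce to the scalar case. Fix a $\langle\cdot,\cdot\rangle$-orthonormal basis $\{e_1,\dots,e_m\}$ of $\R^m$ and write $\alpha=\sum_i\alpha_i e_i$ with $\alpha_i\in L^1([a,b],\R)$. Applying the hypothesis to $\lambda(t)=\mu(t)e_j$ for an arbitrary $\mu\in C^\infty_c(\,]a,b[,\R)$, orthonormality yields $\int_a^b\alpha_j(t)\mu'(t)\,\dd t=0$. Thus it suffices to prove the scalar statement: if $\alpha\in L^1([a,b],\R)$ satisfies $\int_a^b\alpha\mu'\,\dd t=0$ for all $\mu\in C^\infty_c(\,]a,b[,\R)$, then $\alpha$ is constant almost everywhere.

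The key observation for the scalar case is the following characterization of which test functions arise as derivatives: a function $\phi\in C^\infty_c(\,]a,b[,\R)$ admits a primitive in $C^\infty_c(\,]a,b[,\R)$ if and only if $\int_a^b\phi(t)\,\dd t=0$, since the candidate primitive $\mu(t)=\int_a^t\phi(s)\,\dd s$ is smooth, vanishes near $a$ by compact support of $\phi$, and vanishes near $b$ precisely when the total integral of $\phi$ is zero. Consequently, fixing once and for all a function $\psi_0\in C^\infty_c(\,]a,b[,\R)$ with $\int_a^b\psi_0=1$, any $\psi\in C^\infty_c(\,]a,b[,\R)$ can be split as $\psi=\phi+c_\psi\psi_0$ where $c_\psi:=\int_a^b\psi\,\dd t$ and $\phi:=\psi-c_\psi\psi_0$ has zero mean. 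Applying the hypothesis to the primitive of $\phi$ gives $\int_a^b\alpha\phi\,\dd t=0$, whence
\begin{equation*}
\int_a^b\alpha(t)\psi(t)\,\dd t=c_\psi\int_a^b\alpha(t)\psi_0(t)\,\dd t=c\int_a^b\psi(t)\,\dd t,
\end{equation*}
with $c:=\int_a^b\alpha\psi_0\,\dd t\in\R$. Therefore $\int_a^b(\alpha(t)-c)\psi(t)\,\dd t=0$ for every $\psi\in C^\infty_c(\,]a,b[,\R)$.

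To conclude, I would invoke the fundamental lemma of the calculus of variations (Du Bois--Reymond): if $f\in L^1([a,b],\R)$ annihilates every $\psi\in C^\infty_c(\,]a,b[,\R)$ in the integral pairing, then $f=0$ almost everywhere. Applied to $f=\alpha-c$, this yields $\alpha=c$ a.e., so each scalar component $\alpha_i$ is constant a.e., and hence $\alpha$ itself is constant a.e. The main technical ingredient, beyond the elementary primitive trick above, is this variational lemma, whose standard proof proceeds either by mollification (convolving with an approximate identity makes $f*\rho_\varepsilon$ pointwise zero on compact subsets while $f*\rho_\varepsilon\to f$ in $L^1_{\mathrm{loc}}$) or by approximating characteristic functions of measurable sets by $C^\infty_c$ functions to conclude $\int_E f=0$ for every measurable $E$; either route is routine and would not be reproved here.
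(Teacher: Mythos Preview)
Your proof is correct and follows essentially the same route as the paper's: reduce to the scalar case componentwise, characterize the derivatives of compactly supported test functions as precisely those with vanishing integral, use a fixed test function of unit integral to split off the constant, and conclude via the fundamental lemma of the calculus of variations. The organization and notation differ slightly, but the ideas and steps are identical.
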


\begin{proof}
First, consider $\alpha$ and $\lambda$ expressed in coordinates $\alpha=(\alpha_i)_{i=1}^m$ and $\lambda=(\lambda_i)_{i=1}^m$, so that $$\int_a^b \langle\alpha(t),\lambda'(t)\rangle\;\dd t=\int_a^b\sum_{i=1}^m \alpha_i\lambda'_i\;\dd t=\sum_{i=1}^m \int_a^b\alpha_i\lambda'_i\;\dd t$$ Obviously, the above expression vanishes if and only if each integral with the product of the $i^{\mbox{\tiny th}}$ coordinates of $\alpha$ and $\lambda'$ vanishes. Thus, we reduce the problem to the case $m=1$, where the counter domain the considered maps is one--dimensional and the inner product $\langle\alpha(t),\lambda'(t)\rangle$ is an ordinary product of real functions.

Second, notice that we may use the Fundamental Theorem of Calculus characterize the derivative of maps $\lambda\in C^\infty_c(\,]a,b[,\R)$ as follows,
\begin{equation*}
\mathfrak T=\big\{\lambda':\lambda\in C^\infty_c(\,]a,b[,\R)\big\}=\left\{\mu\in C^\infty_c(\,]a,b[,\R):\int_a^b \mu(t)\;\dd t=0\right\},
\end{equation*}
see figure below. Hence, \eqref{eq:distder0hyp} is equivalent to $\int_a^b \alpha(t)\mu(t)\;\dd t=0$ for all $\mu\in C^\infty_c(\,]a,b[,\R)$ such that $\int_a^b \mu(t)\;\dd t=0$.
\begin{figure}[htf]
\begin{center}
\vspace{-0.5cm}
\includegraphics[scale=1.2]{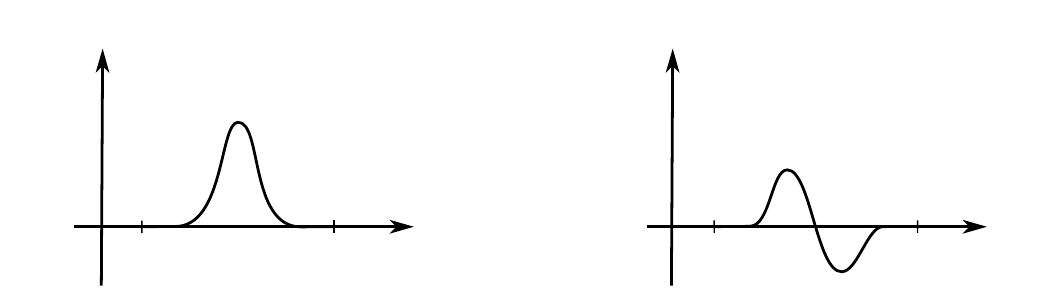}
\begin{pgfpicture}
\pgfputat{\pgfxy(-4.55,1)}{\pgfbox[center,center]{$a$}}
\pgfputat{\pgfxy(-2.2,1)}{\pgfbox[center,center]{$b$}}
\pgfputat{\pgfxy(2.55,1)}{\pgfbox[center,center]{$a$}}
\pgfputat{\pgfxy(4.9,1)}{\pgfbox[center,center]{$b$}}
\pgfputat{\pgfxy(-4.7,3.5)}{\pgfbox[center,center]{$\lambda$}}
\pgfputat{\pgfxy(2.3,3.5)}{\pgfbox[center,center]{$\mu$}}
\end{pgfpicture}
\vspace{-0.5cm}
\end{center}
\end{figure}

\noindent
Consider $\lambda_0\in C^\infty_c(\,]a,b[,\R)$ such that $\int_a^b \lambda_0(t)\;\dd t=1$, and set $c=\int_a^b \alpha(t)\lambda_0(t)\;\dd t$. Then,
\begin{equation}\label{eq:distder0eq1}
\int_a^b (\alpha(t)-c)\lambda_0(t)\;\dd t=\int_a^b \alpha(t)\lambda_0(t)\;\dd t-c\int_a^b \lambda_0(t)\;\dd t=c-c=0
\end{equation}
In addition, by hypothesis
\begin{equation}\label{eq:distder0eq2}
\int_a^b (\alpha(t)-c)\mu\;\dd t=\int_a^b \alpha(t)\mu(t)\;\dd t -c\int_a^b \mu(t)\;\dd t=0.
\end{equation}
Notice that $C^\infty_c(\,]a,b[,\R)$ naturally decomposes as the direct sum of $\mathfrak T$ and the one--dimensional subspace spanned by $\lambda_0$. Thus, $\mu$ and $\lambda_0$ span the entire $C^\infty_c(\,]a,b[,\R)$, hence from \eqref{eq:distder0eq1} and \eqref{eq:distder0eq2}, it follows that $$\int_a^b (\alpha(t)-c)\xi(t)\;\dd t=0$$ for all $\xi\in C^\infty_c(\,]a,b[,\R)$. From the Fundamental Lemma of Calculus of Variations, $\alpha(t)-c$ must be null almost everywhere, hence $\alpha$ is constant almost everywhere.
\end{proof}

\begin{remark}
The result in Lemma~\ref{le:distder0} has a clear interpretation in terms of {\em distributions}.\footnote{Not to be confused with subbundles of the tangent bundle. In mathematical analysis, distributions are objects that generalize functions, making it possible to differentiate functions whose derivative does not exist in the classical sense. In particular, any locally integrable function has a distributional derivative. Distributions are widely used to formulate generalized solutions of PDEs.} Namely, it states that if a distributional derivative is zero, then the distribution is almost everywhere constant.
\end{remark}

Let us now recall some basic properties of operators in Banach spaces.

\begin{lemma}\label{le:invertibility}
Let $V$ be a Banach space and $T\in\Lin(V)$ such that $\|T\|<1$, where $\|\cdot\|$ is the operator norm \eqref{eq:normt}. Then $\id-T\in\Lin(V)$ is an invertible operator.
\end{lemma}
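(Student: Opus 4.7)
The plan is to construct the inverse explicitly via the Neumann series $S = \sum_{n=0}^{\infty} T^{n}$, where $T^{0} = \id$, and then verify by a telescoping argument that $S$ is a two-sided inverse of $\id - T$.

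First I would establish convergence of the series in $\Lin(V)$. Since $V$ is a Banach space, $\Lin(V)$ is itself a Banach space under the operator norm \eqref{eq:normt}; this was recorded immediately after Definition~\ref{def:multilinnorm}. By submultiplicativity of the operator norm (an immediate consequence of the Cauchy--Schwartz-type inequality \eqref{eq:cauchyschwartz}), one has $\|T^{n}\| \le \|T\|^{n}$ for every $n \ge 0$. Since $\|T\| < 1$, the geometric series $\sum_{n=0}^{\infty} \|T\|^{n}$ converges, so $\sum_{n=0}^{\infty} T^{n}$ is absolutely convergent in $\Lin(V)$. Completeness of $\Lin(V)$ then gives a well-defined limit $S \in \Lin(V)$.

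Next I would verify that $S$ inverts $\id - T$ from both sides. Writing $S_{N} = \sum_{n=0}^{N} T^{n}$ for the partial sums, a direct telescoping calculation yields
\begin{equation*}
(\id - T) S_{N} = S_{N} (\id - T) = \id - T^{N+1}.
\end{equation*}
Since $\|T^{N+1}\| \le \|T\|^{N+1} \to 0$, we have $T^{N+1} \to 0$ in $\Lin(V)$; moreover, left and right composition by the fixed operator $\id - T$ are continuous on $\Lin(V)$, so passing to the limit $N \to \infty$ in the previous identity gives $(\id - T) S = S (\id - T) = \id$. Hence $\id - T$ is invertible with continuous inverse $S$.

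There is no serious obstacle here: the only conceptual ingredient is the completeness of $\Lin(V)$, which permits passage from absolute convergence of the Neumann series to actual convergence. Everything else is a routine manipulation of geometric sums and continuity of composition. In fact, one sees as a bonus that $\|S\| \le (1 - \|T\|)^{-1}$, which is a useful quantitative statement even though it is not required by the lemma as stated.
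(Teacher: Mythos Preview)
Your proof is correct and follows essentially the same Neumann series argument as the paper: construct $S=\sum_{n\ge 0}T^n$ via absolute convergence in the Banach space $\Lin(V)$, then use the telescoping identity $(\id-T)S_N=S_N(\id-T)=\id-T^{N+1}$ and pass to the limit. Your version is in fact slightly more careful in justifying the limit passage and records the correct bound $\|S\|\le(1-\|T\|)^{-1}$, whereas the paper's displayed inequality $\sum_{n\ge0}\|T\|^n=\exp\|T\|$ is a harmless typo (the sum is $(1-\|T\|)^{-1}$, not $\exp\|T\|$).
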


\begin{proof}
The operator power series $\sum_{n=0}^{+\infty} T^n$ is absolutely convergent, since $$\left\|\sum_{n=0}^{+\infty} T^n\right\|\leq\sum_{n=0}^{+\infty} \|T\|^n=\exp\|T\|<+\infty.$$ Since $V$ is Banach, $\sum_{n=0}^{+\infty} T^n$ converges in $\Lin(V)$. For each $N\in\N$, $$\lim_{N\to +\infty}(\id-T)\left(\sum_{n=0}^N T^n\right)= \lim_{N\to +\infty}\id-T^{N+1}=\id$$ and analogously for $\left(\sum_{n=0}^N T^n\right)(\id-T)$. Hence the operator $\id-T$ is invertible, with inverse given by
\begin{equation*}
(\id-T)^{-1}=\sum_{n=0}^{+\infty} T^n\in\Lin(V).\qedhere
\end{equation*}
\end{proof}

\begin{lemma}\label{le:tsobreab}
Let $V$ and $W$ be Banach spaces. Then the following is an open subset of $\Lin(V,W)$,
\begin{equation*}
\mathcal O=\{T\in\Lin(V,W): \im T=W \mbox{ and } \ker T\mbox{ is complemented}\}.
\end{equation*}
\end{lemma}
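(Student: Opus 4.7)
The plan is to show that for any $T_0 \in \mathcal{O}$, a whole open neighborhood of $T_0$ in $\Lin(V,W)$ lies in $\mathcal{O}$, by exploiting a topological complement of $\ker T_0$ as a ``witness'' that persists under small perturbations.

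First I would fix $T_0 \in \mathcal{O}$ and pick a closed topological complement $S$ of $\ker T_0$, which exists by hypothesis. The restriction $T_0|_S : S \to W$ is continuous, injective (since $S \cap \ker T_0 = \{0\}$) and surjective (since $\im T_0 = W$ and $T_0$ vanishes on $\ker T_0$), so by the Open Mapping Theorem / Remark~\ref{re:omt}, $T_0|_S$ is a topological isomorphism from $S$ onto $W$.

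Next, I would use the fact that the set of topological isomorphisms in $\Lin(S,W)$ is open in the operator norm topology (a standard consequence of Lemma~\ref{le:invertibility}: if $A_0 : S \to W$ is an isomorphism and $\|A - A_0\| < \|A_0^{-1}\|^{-1}$, then $A_0^{-1}A = \id_S - A_0^{-1}(A_0 - A)$ has norm perturbation less than $1$, hence is invertible, and so $A$ is an isomorphism). Since the restriction map $\Lin(V,W) \to \Lin(S,W)$, $T \mapsto T|_S$, is continuous of norm $\le 1$, there exists $\varepsilon > 0$ such that whenever $\|T - T_0\| < \varepsilon$, the restriction $T|_S : S \to W$ is still a topological isomorphism.

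Finally, I would check that any such $T$ lies in $\mathcal{O}$. Surjectivity is immediate since $W = T(S) \subset \im T$. For the complement property, define
\begin{equation*}
p : V \la V, \qquad p(v) = (T|_S)^{-1}\bigl(T(v)\bigr) \in S \subset V.
\end{equation*}
Then $p$ is continuous, $p(s) = s$ for all $s \in S$, hence $p$ is a continuous projection with image $S$. Moreover $p(v) = 0$ if and only if $T(v) = 0$, so $\ker p = \ker T$, and therefore $V = \ker p \oplus \im p = \ker T \oplus S$ as a topological direct sum. Thus $S$ is a closed topological complement of $\ker T$, which shows $T \in \mathcal{O}$ and completes the proof of openness.

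The only mildly delicate step is the stability of the isomorphism $T_0|_S$ under perturbation; everything else is formal manipulation once the complement $S$ has been chosen. I expect no serious obstacle, since both ingredients (openness of the set of isomorphisms and existence of the complement) are already available from the preceding material.
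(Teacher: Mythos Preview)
Your proof is correct and takes essentially the same approach as the paper: both exploit the stability of an isomorphism under small perturbations via Lemma~\ref{le:invertibility}. The paper phrases this through the equivalent characterization that $T\in\mathcal O$ if and only if $T$ has a continuous right inverse $S_0$ (and then shows $S_0(TS_0)^{-1}$ is a right inverse for nearby $T$), whereas you work directly with the complement $S$ and the restriction $T|_S$; these are the same data, since a right inverse is precisely $(T_0|_S)^{-1}$ followed by the inclusion $S\hookrightarrow V$.
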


\begin{proof}
First, notice that $T\in\mathcal O$ if and only if $T$ admits a right inverse, i.e., if there exists $S\in\Lin(W,V)$ with $TS=\id$. Indeed, the existence of such $S$ is equivalent to surjectivity of $T$, and it is easily seen that $\im S$ is a closed complement to $\ker T$.

Let $T_0\in\mathcal O$ and consider $S_0\in\Lin(W,V)$ its right inverse and $\varepsilon<\frac{1}{\|S_0\|}$, where the norm considered is the usual \eqref{eq:normt}. We claim that if $T\in\Lin(V,W)$ satisfies $\|T-T_0\|<\varepsilon$, then $T\in\mathcal O$. Since $T_0S_0=\id$,
\begin{eqnarray*}
\|\id-TS_0\| &=& \|T_0S_0-TS_0\| \\
&\leq &\|T-T_0\|\|S_0\| \\
&<& 1.
\end{eqnarray*}
Lemma~\ref{le:invertibility} applied to $\id-TS_0$ gives that $TS_0$ is invertible. Thus, the operator $S_0(TS_0)^{-1}\in\Lin(W,V)$ is a right inverse for $T$, implying that $T\in\mathcal O$. This concludes the proof that $\mathcal O$ is open in $\Lin(V,W)$.
\end{proof}

The following is a direct consequence of the universal property of tensor products of vector spaces, which can be found for instance in Ash \cite{ash}.

\begin{lemma}\label{le:upt}
Let $Z$, $V_1,\dots,V_n$ and $W_1,\dots,W_n$ be finite--dimensional real vector spaces and consider the following diagram
\begin{equation}\label{eq:diagtb}
\xymatrix@+5pt{
V_1\times\ldots\times V_n\times W_1\times\ldots\times W_n \ar[rrr]^(.7){T}\ar[d]_\otimes & & & Z\\
V_1\otimes\ldots\otimes V_n\times W_1\otimes\ldots\otimes W_n \ar@(r,d)[rrru]_B & &
}
\end{equation}
Given any multilinear form $T$, there exists a unique bilinear form $B$ that makes this diagram commutative.
\end{lemma}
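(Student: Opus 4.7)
The plan is to invoke the universal property of the tensor product twice (equivalently, once in $2n$ variables). Recall that the universal property states that for any multilinear form $\phi\colon U_1\times\ldots\times U_k\to Z$ into a vector space $Z$, there exists a unique linear map $\widetilde\phi\colon U_1\otimes\ldots\otimes U_k\to Z$ with $\widetilde\phi\circ\otimes=\phi$, where $\otimes\colon U_1\times\ldots\times U_k\to U_1\otimes\ldots\otimes U_k$ is the canonical multilinear map. This is a standard property of tensor products of finite-dimensional vector spaces.

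First I would regard $T$ as a multilinear form in $2n$ variables and apply the universal property to obtain a unique linear map
\[
\widetilde T\colon V_1\otimes\ldots\otimes V_n\otimes W_1\otimes\ldots\otimes W_n\la Z
\]
satisfying $\widetilde T(v_1\otimes\ldots\otimes v_n\otimes w_1\otimes\ldots\otimes w_n)=T(v_1,\ldots,v_n,w_1,\ldots,w_n)$ on decomposable tensors. Next, I would use the canonical bilinear identification
\[
\iota\colon (V_1\otimes\ldots\otimes V_n)\times(W_1\otimes\ldots\otimes W_n)\la V_1\otimes\ldots\otimes V_n\otimes W_1\otimes\ldots\otimes W_n,
\]
defined on decomposable tensors by $\iota(v_1\otimes\ldots\otimes v_n,\,w_1\otimes\ldots\otimes w_n)=v_1\otimes\ldots\otimes v_n\otimes w_1\otimes\ldots\otimes w_n$ and extended by bilinearity (its existence is itself an instance of the universal property applied in each factor). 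Setting $B=\widetilde T\circ\iota$ yields a bilinear form that makes the diagram \eqref{eq:diagtb} commute, since on an arbitrary element of the domain of $T$ we have $B(\otimes(v_1,\ldots,v_n),\otimes(w_1,\ldots,w_n))=\widetilde T(v_1\otimes\ldots\otimes v_n\otimes w_1\otimes\ldots\otimes w_n)=T(v_1,\ldots,v_n,w_1,\ldots,w_n)$.

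For uniqueness, commutativity of \eqref{eq:diagtb} forces $B$ to take the prescribed values on pairs of decomposable tensors. Since decomposable tensors span $V_1\otimes\ldots\otimes V_n$ and $W_1\otimes\ldots\otimes W_n$, respectively, and $B$ is bilinear, these values determine $B$ on all of its domain, giving uniqueness.

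Since this is a direct double application of the universal property, there is no real obstacle; the only point that requires minor care is checking that the construction of $\iota$ (or equivalently the identification of the $2n$-fold tensor product with the iterated tensor product of the two $n$-fold tensor products) is performed coherently with the multilinear structure. Both constructions are completely standard in finite dimension, and the argument transfers verbatim to the infinite-dimensional algebraic setting (no topological completion is needed here, since the tensor products in \eqref{eq:diagtb} are algebraic).
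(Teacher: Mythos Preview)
Your proposal is correct and takes essentially the same approach as the paper, which simply states that the lemma is ``a direct consequence of the universal property of tensor products of vector spaces'' and refers to Ash \cite{ash} without further detail. Your write-up supplies the standard argument behind that citation.
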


\begin{lemma}\label{le:lemmatransvthm}
Let $V_1,V_2$ and $H$ be Banach spaces, $L\in\Lin(V_1\oplus V_2,H)$ a surjective operator, and $\Pi\in\Lin(V_1\oplus V_2,V_1)$ the projection onto the first variable. Then the composite operators
\begin{eqnarray}
L|_{V_2}: &V_2 &\longhookrightarrow \; V_1\oplus V_2 \, \xrightarrow{\;\; L\;\;} \, H \label{eq:tcomp1} \\
\Pi|_{\ker L}: &\ker L &\longhookrightarrow \; V_1\oplus V_2 \, \xrightarrow{\;\;\Pi\;\;} \, V_1 \label{eq:tcomp2}
\end{eqnarray}
have isomorphic kernels and cokernels. In particular, $L|_{V_2}$ is surjective if and only if $\Pi|_{\ker L}$ is surjective. Moreover, $L|_{V_2}$ is Fredholm if and only if $\Pi|_{\ker L}$ is Fredholm, and in this case, both have the same index. In addition, if either (hence both) the above operators is Fredholm, then $\ker L$ is complemented in $V_1\oplus V_2$.
\end{lemma}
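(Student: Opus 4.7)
The plan is to reduce everything to direct manipulation of the splitting $L(v_1,v_2)=L_1(v_1)+L_2(v_2)$, where $L_i := L|_{V_i}$, so that the operator in \eqref{eq:tcomp1} is exactly $L_2$. The kernel identification is then immediate: the assignment $v_2 \mapsto (0,v_2)$ provides a topological isomorphism from $\ker L_2$ onto $\ker(\Pi|_{\ker L})$, since both spaces coincide set-theoretically with $\{v_2\in V_2 : L(0,v_2)=0\}$ and inherit the same subspace topology from $V_1\oplus V_2$.

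For the cokernels, I would first observe that $\im(\Pi|_{\ker L}) = L_1^{-1}(\im L_2)$, because $(v_1,v_2)\in\ker L$ is equivalent to $L_1(v_1)=-L_2(v_2)\in\im L_2$. The composite $V_1 \xrightarrow{L_1} H \to H/\im L_2$ therefore has kernel exactly $\Pi(\ker L)$, and it is surjective by the standing hypothesis that $L$ itself is surjective: for any $h\in H$ one writes $h = L_1(v_1)+L_2(v_2)$, so $h\equiv L_1(v_1)\bmod\im L_2$. This produces a canonical isomorphism $V_1/\Pi(\ker L)\cong H/\im L_2$ of cokernels. The equivalence of surjectivities in \eqref{eq:tcomp1} and \eqref{eq:tcomp2} reads off directly from this identification; and once the kernels and cokernels are finite-dimensional the algebraic isomorphisms are automatically topological, so Fredholmness of either operator forces Fredholmness of the other together with equality of the indices.

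For the final assertion that $\ker L$ is complemented in $V_1\oplus V_2$ under Fredholmness, I would construct a continuous right inverse of $L$. Fredholmness of $L_2$ together with Lemma~\ref{le:finitecomplemented} and Proposition~\ref{prop:fredclosed} yields closed topological decompositions $V_2 = \ker L_2 \oplus V_2'$ and $H = \im L_2 \oplus F$, and $L_2|_{V_2'}:V_2'\to\im L_2$ is a topological isomorphism by the Open Mapping Theorem. Surjectivity of $L$ forces $p_F\circ L_1:V_1\to F$ to be onto the finite-dimensional space $F$, so I can pick a finite-dimensional $V_1'\subset V_1$ for which $p_F\circ L_1$ restricts to an isomorphism $V_1'\to F$. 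For $h=h_1+h_2\in\im L_2\oplus F$, set $v_1:=(p_F\circ L_1|_{V_1'})^{-1}(h_2)$ and $v_2:=(L_2|_{V_2'})^{-1}\bigl(h_1 - p_{\im L_2}L_1(v_1)\bigr)$; then $s(h):=(v_1,v_2)$ is a continuous linear right inverse of $L$, and $s\circ L$ is a continuous projection with kernel $\ker L$, so $V_1\oplus V_2 = \ker L\oplus\im s$.

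The only step requiring genuine care is this construction of the right inverse: the kernel and cokernel identifications amount to a short diagram chase, but assembling the finite-dimensional data coming from Fredholmness of $L_2$ together with the surjectivity of $L$ so as to split off $\ker L$ continuously is where the actual topological content of the lemma sits.
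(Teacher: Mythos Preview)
Your proof is correct. The kernel and cokernel identifications match the paper's argument almost exactly; the paper phrases the cokernel isomorphism via the common intermediate quotient $(V_1\oplus V_2)/(V_2+\ker L)$, to which both cokernels are shown to be isomorphic, while you go directly via the surjection $V_1\to H/\im L_2$ with kernel $\Pi(\ker L)$ --- these are the same computation in different clothing.

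The complementedness argument is where you take a genuinely different route. The paper builds an explicit complement $S_1\oplus S_2$ of $\ker L$ by hand: $S_1$ a closed complement of $\Pi(\ker L)$ in $V_1$, and $S_2$ a closed complement of $\ker L\cap(\{0\}\oplus V_2)$ in $\{0\}\oplus V_2$, and then checks the direct sum property elementwise. You instead assemble the same Fredholm data ($V_2=\ker L_2\oplus V_2'$, $H=\im L_2\oplus F$, a finite-dimensional $V_1'$ mapping isomorphically onto $F$) into a continuous right inverse $s$ of $L$, and deduce the splitting from the projection $s\circ L$. Your approach is cleaner conceptually --- it is the standard ``surjection with a bounded right inverse splits'' principle (cf.\ the paper's Lemma~\ref{le:tsobreab}) --- and avoids the elementwise verification, at the cost of a slightly more delicate construction of $s$.
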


\begin{proof}
By direct comparison, it is easy to conclude that $\{0\}\oplus\ker L|_{V_2}=\ker \Pi|_{\ker L}$, thus $L|_{V_2}$ and $\Pi|_{\ker L}$ have isomorphic kernels. Moreover, $L$ induces an isomorphism
\begin{equation}
\frac{V_1\oplus V_2}{V_2+\ker L}\xrightarrow{\;\;\cong\;\;}\frac{H}{L(V_2)}=\coker L|_{V_2},
\end{equation}
and $\Pi$ induces an isomorphism
\begin{equation}
\frac{V_1\oplus V_2}{V_2+\ker L}\xrightarrow{\;\;\cong\;\;}\frac{V_1}{\Pi(\ker L)}=\coker \Pi|_{\ker L}.
\end{equation}
Therefore, $\coker L|_{V_2}$ and $\coker \Pi|_{\ker L}$ are isomorphic. In particular, surjectivity of $L|_{V_2}$ is equivalent to surjectivity of $\Pi|_{\ker L}$, since these are in turn equivalent to $\coker L|_{V_2}$, and hence $\coker \Pi|_{\ker L}$, being trivial. This also proves that $L|_{V_2}$ is Fredholm if and only if $\Pi|_{\ker L}$ is Fredholm, and in this case both have the same index, since they have isomorphic kernels and cokernels.

Suppose now that $\Pi|_{\ker L}$ is a Fredholm operator. Then $\Pi(\ker L)$ is a closed and complemented subspace of $V_1\oplus V_2$, since it has finite codimension. Thus, there exists a closed subspace $S_1$ of $V_1$ such that $V_1=S_1\oplus\Pi(\ker L)$. In addition, $\ker \Pi|_{\ker L}=\ker L\cap(\{0\}\oplus V_2)$ is finite--dimensional, hence complemented in $\{0\}\oplus V_2$. Let $S_2$ be a closed subspace of $\{0\}\oplus V_2$ such that $S_2\oplus [\ker L \cap(\{0\}\oplus V_2)]=\{0\}\oplus V_2$.

We claim that $S_1\oplus S_2$ is a complement of $\ker L$ in $V_1\oplus V_2$. In fact, suppose $(x,y)\in (S_1\oplus S_2)\cap\ker L$. Then $L(x,y)=0$, hence $x\in S_1\cap\Pi(\ker L)=\{0\}$, i.e., $x=0$. Moreover, $(0,y)\in S_2\cap\ker L=\{0\}$, thus $y=0$. Therefore, $(S_1\oplus S_2)\cap\ker L=\{0\}$. Finally, let $(x,y)\in V_1\oplus V_2$. Since $x\in V_1$, there exist $x'\in V_1$ and $s\in S_1$ such that $x=x'+s$, where $(x',y')\in\ker L$ for some $y'\in V_2$. Since $y'-y\in V_2$, there exists $(0,z)\in\ker L$ and $s'\in S_2$ such that $(0,y'-y)=(0,z)+(0,s)$. Therefore, $(x,y)=(s,0)-(0,s')+(x',y'-z)$, where $(s,0)\in S_1$, $(0,s')\in S_2$ and $(x',y'-z)\in\ker L$. Thus $(S_1\oplus S_2)\oplus\ker L=V_1\oplus V_2$, concluding the proof.
\end{proof}

%
%

We end this section proving a series of lemmas for operators of the form $L_1\oplus L_2:V\oplus H\to H$, where $V$ is a Banach space and $H$ is a Hilbert space, to be used in the proof of the Abstract Genericity Criterion~\ref{crit:abstractgenericity}. In our applications, $L_2\in\Lin(H)$ will be a self--adjoint Fredholm operator.

\begin{lemma}\label{le:abs1}
Let $V$ be a Banach space, $H$ a Hilbert space, $L_1:V\to H$ and $L_2:H\to H$ continuous operators, with $\im L_2$ closed,\footnote{Recall that if $L_2$ is Fredholm, this is automatically verified as a consequence of Proposition~\ref{prop:fredclosed}.} and consider their direct sum \begin{eqnarray*} L=L_1\oplus L_2:V\oplus H &\longrightarrow & H\\ (v,h) &\longmapsto & L_1v+L_2h.\end{eqnarray*} Then, $L$ is surjective if and only if the projection onto $(\im L_2)^\perp$ is surjective, i.e., \begin{equation}\label{lemma1eq1}p_{(\im L_2)^\perp}\im L_1=(\im L_2)^\perp.\end{equation} In addition, if $L_2$ is self--adjoint and $p_{\ker L_2}\im L_1$ is closed in $\ker L_2$,\footnote{This hypothesis is also automatically verified in case $L_2$ is Fredholm.} then $L$ is surjective if and only if for all $h\in\ker L_2\setminus\{0\}$ there exists $v\in V$ such that $\langle L_1v,h\rangle\neq 0$.
\end{lemma}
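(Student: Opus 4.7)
The plan is to prove both equivalences by reducing surjectivity of $L$ to a statement about orthogonal projections of $\im L_1$ onto suitable closed subspaces of $H$, and then to specialize under self--adjointness.

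For the first assertion, I would observe that $\im L = \im L_1 + \im L_2$ as subsets of $H$. Since $\im L_2$ is closed in the Hilbert space $H$, one has the orthogonal decomposition $H = \im L_2 \oplus (\im L_2)^\perp$, and hence $\im L_1 + \im L_2 = H$ is equivalent to $(\im L_2)^\perp \subseteq \im L_1 + \im L_2$. Applying $p_{(\im L_2)^\perp}$ to both sides and using $p_{(\im L_2)^\perp}(\im L_2) = \{0\}$ gives precisely \eqref{lemma1eq1}. The reverse implication is elementary: given $h \in H$, decompose $h = h_1 + h_2$ with $h_1 \in \im L_2$ and $h_2 \in (\im L_2)^\perp$; by \eqref{lemma1eq1} pick $v \in V$ with $p_{(\im L_2)^\perp}(L_1v) = h_2$, write $L_1 v = h_2 + L_2 u'$, and note $h = L_1 v + L_2(u - u')$ where $h_1 = L_2 u$.

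For the second assertion, the hypothesis that $L_2$ is self--adjoint together with \eqref{eq:ateminhamaesabe} gives $(\im L_2)^\perp = \ker L_2^* = \ker L_2$, so that by the first part $L$ is surjective if and only if $W := p_{\ker L_2} \im L_1 = \ker L_2$. Since $W$ is by hypothesis a closed subspace of the Hilbert space $\ker L_2$, it coincides with $\ker L_2$ if and only if its orthogonal complement inside $\ker L_2$ is trivial. The key computation is that for any $h \in \ker L_2$ and $v \in V$,
\begin{equation*}
\langle p_{\ker L_2} L_1 v, h\rangle = \langle L_1 v, h\rangle,
\end{equation*}
because $h \in \ker L_2$ is orthogonal to $(\ker L_2)^\perp$. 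Thus $h$ belongs to the orthogonal complement of $W$ in $\ker L_2$ precisely when $\langle L_1 v, h\rangle = 0$ for every $v \in V$, and the triviality of this complement becomes the stated nondegeneracy condition.

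The one nontrivial role is played by the closedness hypothesis on $p_{\ker L_2} \im L_1$: without it, the standard Hilbert space fact $W^{\perp\perp} = \overline{W}$ would only yield that the pairing condition characterizes density of $W$ in $\ker L_2$ rather than equality, and the equivalence with surjectivity of $L$ would break. The rest of the argument is bookkeeping with orthogonal decompositions, and I do not expect any obstacle beyond being careful with in which ambient space the orthogonal complements are taken.
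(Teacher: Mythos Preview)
Your proof is correct and follows essentially the same approach as the paper: both arguments use the orthogonal decomposition $H=\im L_2\oplus(\im L_2)^\perp$ to reduce surjectivity of $L$ to \eqref{lemma1eq1}, then invoke self--adjointness to identify $(\im L_2)^\perp=\ker L_2$ and use the closedness of $p_{\ker L_2}\im L_1$ together with the identity $\langle p_{\ker L_2}L_1v,h\rangle=\langle L_1v,h\rangle$ for $h\in\ker L_2$ to obtain the pairing criterion. Your closing remark on why closedness is essential is a nice addition not made explicit in the paper.
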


\begin{proof}
If \eqref{lemma1eq1} holds, given $k\in H$, consider $p_{(\im L_2)^\perp}k$. From \eqref{lemma1eq1}, there exists $v\in V$ such that \begin{equation}\label{lemma1eq2}
p_{(\im L_2)^\perp}k=p_{(\im L_2)^\perp}L_1v.
\end{equation} Hence $k-L_1v\in\im L_2$. Therefore, there exists $h\in H$ such that $k=L_1v+L_2h=L(v,h)$. Conversely, if $L$ is surjective, given $k\in (\im L_2)^\perp$ there exists $v\in V$ and $h\in H$ such that $k=L_1v+L_2h$. Applying $p_{(\im L_2)^\perp}$, we obtain $$k=p_{(\im L_2)^\perp}L_1v,$$ thus \eqref{lemma1eq1} is verified.

In addition, suppose $L_2$ self--adjoint and $p_{\ker L_2}\im L_1$ closed in $\ker L_2$. Then $(\im L_2)^\perp=\ker L_2$, and $$p_{\ker L_2}\circ L_1:V\longrightarrow\ker L_2$$ is not surjective if and only if $(\im(p_{\ker L_2}\circ L_1))^\perp\neq 0$, which is equivalent to existing $h\in\ker L_2\setminus\{0\}$ such that $\langle p_{\ker L_2}L_1v,h\rangle=\langle L_1v,h\rangle=0$, for all $v\in V$. This concludes the proof.
\end{proof}

\begin{lemma}\label{le:abs2}
Let $L:U\to V$ be an operator between vector spaces, and $S\subset V$ a subspace with finite codimension. Then $L^{-1}(S)$ has finite codimension in $U$ and $$\codim_U L^{-1}(S)=\codim_V S- \codim_V (\im L+S),$$ where by $\codim_B A=\dim B/A$ we mean the codimension of $A$ in $B$.
\end{lemma}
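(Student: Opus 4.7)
The plan is to reduce the statement to a single application of the first isomorphism theorem, so there is really only one idea to explain. I would start by forming the composition
\begin{equation*}
\overline L : U \xrightarrow{\;\;L\;\;} V \xrightarrow{\;\;q\;\;} V/S,
\end{equation*}
where $q : V \to V/S$ is the canonical projection. Since $S$ has finite codimension in $V$, the quotient $V/S$ is finite-dimensional, so $\im \overline L$ is automatically a finite-dimensional subspace of $V/S$.

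Next I would identify the kernel and image of $\overline L$. By definition, $\ker \overline L = \{u \in U : L(u) \in S\} = L^{-1}(S)$, and $\im \overline L = q(\im L) = (\im L + S)/S$. By the first isomorphism theorem this gives a linear isomorphism
\begin{equation*}
U / L^{-1}(S) \;\cong\; (\im L + S)/S.
\end{equation*}
In particular, $L^{-1}(S)$ has finite codimension in $U$, namely $\codim_U L^{-1}(S) = \dim (\im L + S)/S$.

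The final step is then a dimension count inside the finite-dimensional space $V/S$. Applying the standard identity $\dim W/T = \dim W - \dim T$ to the subspace $(\im L + S)/S \subseteq V/S$, one gets
\begin{equation*}
\dim (\im L + S)/S \;=\; \dim V/S - \dim \bigl( (V/S) \big/ ((\im L + S)/S) \bigr).
\end{equation*}
By the third isomorphism theorem the last quotient is canonically isomorphic to $V/(\im L + S)$, so its dimension is exactly $\codim_V(\im L + S)$. Combining everything yields the claimed formula. There is no real obstacle here: the entire proof is a formal manipulation with quotient spaces, and the finite-codimensional hypothesis on $S$ is used only to guarantee that the codimensions involved are finite so that the subtraction makes sense.
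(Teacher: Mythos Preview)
Your proof is correct and follows essentially the same approach as the paper: both compose $L$ with the quotient map $q:V\to V/S$, identify the kernel as $L^{-1}(S)$ and the image as $(\im L+S)/S$, apply the first isomorphism theorem to get $U/L^{-1}(S)\cong(\im L+S)/S$, and then use the third isomorphism theorem (phrased in the paper via a split short exact sequence) to perform the dimension count in $V/S$.
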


\begin{proof} 
Denote by $q:V\to V/S$ the quotient map. Then $q\circ L:U\to V/S$ has kernel $L^{-1}(S)$. Thus $q\circ L$ induces an injective operator $T$ on the quotient, having the same image of $q\circ L$, such that the following diagram commutes (vertical arrows are projections).
$$\xymatrix@+17.5pt{
U \ar[d]\ar[r]^L & V\ar[d]^q \\
\displaystyle\frac{U}{L^{-1}(S)}\ar[r]^(.55)T &\displaystyle\frac{V}{S}}$$
Since $S$ has finite codimension in $V$ and $T$ is injective, $L^{-1}(S)$ has finite codimension in $U$. Then the following sequence of vector spaces and operators is exact, and analogously to \eqref{eq:esq}, splits.
$$0\xrightarrow{\;\;\;\;}\frac{U}{L^{-1}(S)}\xrightarrow{\;\;T\;\;}V/S\xrightarrow{\;\;\;\;}\frac{V/S}{\im T}\xrightarrow{\;\;\;\;} 0.$$ Hence, \begin{eqnarray}\label{lemma2eq1}
V/S &\cong& \frac{U}{L^{-1}(S)}\oplus\frac{V/S}{\im T} \nonumber\\
&\cong& \frac{U}{L^{-1}(S)}\oplus\frac{V/S}{\im (q\circ L)}.\end{eqnarray} In addition,
\begin{equation}\label{lemma2eq2}
\frac{V}{\im L+S}\cong\frac{V/S}{\im (q\circ L)},
\end{equation} since the map \begin{eqnarray*}V &\la &\frac{V/S}{\im (q\circ L)} \\
v &\longmapsto &(v+S)+\im (q\circ L)\end{eqnarray*} is clearly surjective and has kernel $\im L+S$. Finally, it follows that
\begin{eqnarray*}
\codim_V S &=& \dim V/S \\
&\stackrel{\eqref{lemma2eq1}}{=}& \dim\frac{U}{L^{-1}(S)}+\dim\frac{V/S}{\im (q\circ L)} \\
&\stackrel{\eqref{lemma2eq2}}{=}& \codim_U L^{-1}(S)+\codim_V(\im L+S).\qedhere
\end{eqnarray*}
\end{proof}

\begin{proposition}\label{prop:sumcomplement}
Let $U$, $V$ and $W$ be Banach spaces, $L_1:U\to W$, $L_2:V\to W$ continuous operators, with $\ker L_2$ complemented in $V$ and $\im L_2$ finite codimensional in $W$.\footnote{Once more, notice that if $L_2$ is Fredholm, these hypotheses are automatically verified.} Consider the direct sum $L=L_1\oplus L_2:U\oplus V\to W$, as in Lemma~\ref{le:abs1}. Then $\ker L$ is complemented in $U\oplus V$.
\end{proposition}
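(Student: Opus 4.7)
The plan is to exhibit $\ker L$ as the graph of a continuous linear map defined on a closed subspace, so that a natural closed complement appears explicitly.

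First I would set $T=L_1^{-1}(\im L_2)\subset U$. Because $\im L_2$ is closed and finite codimensional in $W$, continuity of $L_1$ shows $T$ is closed, and Lemma~\ref{le:abs2} (applied with $S=\im L_2$) gives that $\codim_U T$ is finite. Hence by Lemma~\ref{le:finitecomplemented} there is a finite--dimensional closed complement $U'$ with $U=T\oplus U'$. Using the hypothesis, write also $V=\ker L_2\oplus V'$ with $V'$ closed. Since $\im L_2$ is closed in $W$, it is itself a Banach space, and the restriction $L_2|_{V'}\colon V'\to\im L_2$ is a continuous linear bijection between Banach spaces, hence a topological isomorphism by the Open Mapping Theorem (Remark~\ref{re:omt}).

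Next, define $\phi\colon T\to V'$ by
\begin{equation*}
\phi(u)=-\bigl(L_2|_{V'}\bigr)^{-1}(L_1u),
\end{equation*}
which is continuous. A direct check shows
\begin{equation*}
\ker L=\bigl\{(u,\phi(u)+k)\,:\,u\in T,\;k\in\ker L_2\bigr\}.
\end{equation*}
Indeed, if $(u,v)\in\ker L$ write $u=t+u'$ with $t\in T,u'\in U'$ and $v=k+v'$ with $k\in\ker L_2,v'\in V'$; then $L_1u'=-L_1t-L_2v'\in\im L_2$ forces $u'\in T\cap U'=\{0\}$, and the $V'$--component is determined to be $\phi(t)$.

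Consequently, setting $X=T\oplus(\ker L_2)$ and $Y=U'\oplus V'$, the decomposition $U\oplus V=X\oplus Y$ holds with both summands closed, and $\ker L$ is the graph of the continuous linear map $\Phi\colon X\to Y$ given by $\Phi(u,k)=(0,\phi(u))$. A graph of a continuous linear map admits its target as a closed topological complement: if $y\in Y\cap\ker L$, write $y=x+\Phi(x)$ for some $x\in X$, so $x=y-\Phi(x)\in X\cap Y=\{0\}$ and $y=0$; and any $x+y\in X\oplus Y$ splits as $(x+\Phi(x))+(y-\Phi(x))\in\ker L+Y$. Thus $Y=U'\oplus V'$ is a closed complement of $\ker L$ in $U\oplus V$.

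The only delicate point is the topological isomorphism statement for $L_2|_{V'}$, which requires knowing that $\im L_2$ is closed in $W$; this is guaranteed by the finite codimensionality hypothesis. Once this is in place, everything else follows from routine graph-type arguments.
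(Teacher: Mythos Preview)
Your proof is correct and takes a cleaner route than the paper's. You recognize $\ker L$ as the graph of the continuous linear map $\Phi\colon T\oplus\ker L_2\to U'\oplus V'$, $\Phi(t,k)=(0,\phi(t))$, which makes $Y=U'\oplus V'$ an obvious closed complement. The paper instead builds its $U'$ differently---first choosing a finite-dimensional complement $Z$ of $\im L_1\cap\im L_2$ inside $\im L_1$, then taking $U'$ to be a complement of $\ker L_1$ in $L_1^{-1}(Z)$---and verifies by a direct two-part algebraic computation that $U'\oplus V'$ complements $\ker L$. One can check that the two constructions yield the same space (the paper's $U'$ also complements your $T$ in $U$), so the endpoints agree; your graph argument is simply more transparent. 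The cost is that you invoke the Open Mapping Theorem to invert $L_2|_{V'}$, which needs $\im L_2$ closed. This does follow from finite codimensionality---the proof of Proposition~\ref{prop:fredclosed} uses only that the image admits a finite-dimensional complement, not the full Fredholm hypothesis---but it would be worth making that reference explicit rather than just asserting it. The paper's bare-hands verification sidesteps this issue entirely, never needing continuity of the inverse.
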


\begin{proof}
Consider the quotient map $q:W\to W/\im L_2$. Obviously, the restriction $q|_{\im L_1}:\im L_1\to W/\im L_2$ has kernel $\im L_1\cap\im L_2$ and hence induces an injective operator $$\frac{\im L_1}{\im L_1\cap\im L_2}\la\frac{W}{\im L_2}.$$ Since the counter domain is finite--dimensional, it follows that $\im L_1\cap\im L_2$ has finite codimension in $\im L_1$, hence is complemented, see Lemma~\ref{le:finitecomplemented}. Let $Z\subset W$ be a (finite--dimensional) complement of $\im L_1\cap\im L_2$ in $\im L_1$. Then $L_1$ induces an injective operator $$\frac{L_1^{-1}(Z)}{\ker L_1}\la Z,$$ and hence $\ker L_1$ is complemented in $L_1^{-1}(Z)$, since it has finite codimension in this space (see Lemma~\ref{le:finitecomplemented}). Let $U'$ be a (finite--dimensional) complement of $\ker L_1$ in $L_1^{-1}(Z)$ and $V'$ a complement of $\ker L_2$ in $V$. We claim that $U'\oplus V'$ is a complement of $\ker L$ in $U\oplus V$.

In fact, if $(x,y)\in (U'\oplus V')\cap\ker L$, since $U'\subset L_1^{-1}(Z)$, then $L_1(x)\in Z$. In addition, $L_1(x)=-L_2(y)\in\im L_2$, hence $L_1(x)\in Z\cap (\im L_1\cap\im L_2)=\{0\}$. This implies $L_1(x)=L_2(y)=0$, hence $x\in\ker L_1\cap U'=\{0\}$ and $y\in\ker L_2\cap V'=\{0\}$. Therefore $(U'\oplus V')\cap\ker L=\{0\}$.

Moreover, for any $(x,y)\in U\oplus V$, choose $u\in U$ and $z\in Z$, such that $L_1(x)=L_1(u)+z$ and $L_1(u)\in\im L_2$. Since $z\in Z\subset\im L_1$, there exists $a\in U'$ such that $z=L_1(a)$. Thus $x=u+a+b$, for some $b\in\ker L_1$. Since $L_1(u)\in\im L_2$, there exists $w\in V'$ such that $L_1(u)=L_2(w)$. Then $y=c+v$, for some $c\in\ker L_2$ and $v\in V'$. It follows that $(a,v+w)\in U'\oplus V'$, $L(u+b,c-w)=L_1(u)-L_2(w)=0$ and $(x,y)=(a,v+w)+(u+b,c-w).$ Therefore $(U'\oplus V')+\ker L=U\oplus V$. This concludes the proof that $U'\oplus V'$ is a complement of $\ker L$ in $U\oplus V$.
\end{proof}

\chapter{Banach and Hilbert manifolds}
\label{chap3}

In this chapter, we will discuss elementary concepts and results on infinite--dimensional manifolds and their role in global analysis. More precisely, we will focus on three important topics, namely spaces of sections of vector bundles over non compact manifolds, the Sobolev $H^1$ curves on a non compact manifold, and actions of Lie groups on Hilbert manifolds.

In Section~\ref{sec:infinitedimmnflds}, we define basic concepts and explore some classic transversality results in the context of Banach manifolds. In the following section, we deal with the space of $C^k$ sections of tensor bundles over a finite--dimensional non compact manifold $M$. The main result of this section, Proposition~\ref{prop:affineworks}, gives a separable Banach manifold structure to a set of $C^k$ semi--Riemannian metrics on $M$. Further comments on the similarity of this domain of metrics and usual metrics considered in general relativity are given in Remark~\ref{re:physics}. In Section~\ref{sec:hum}, we study the Hilbert manifold structure of the set of Sobolev $H^1$ curves on $M$. Finally, in Section~\ref{sec:actions}, basic notions of actions of finite--dimensional Lie groups on Hilbert manifolds are given, and the special case of the reparameterization action of $S^1$ on the Hilbert manifold of Sobolev $H^1$ periodic curves on $M$ is studied, see Example~\ref{ex:s1h1}.

As in the previous chapters, $M$ is considered throughout the text as a possibly non compact smooth $m$--dimensional manifold, endowed with an auxiliary Riemannian metric $g_\mathrm R$.

\section{Infinite--dimensional manifolds}
\label{sec:infinitedimmnflds}

In this section, the definitions of Banach and Hilbert manifolds are given, as well as a few key facts regarding transversality. This brief exposition aims to recall fundamentals of this theory making the text self contained, and by no means to give a full treatment of the subject. A thorough discussion of fundamentals of infinite--dimensional differential geometry can be found in Lang \cite{lang}.

\begin{definition}\label{def:chartatlasmnfld}
Let $X$ be a set. A {\em local chart}\index{Chart} on $X$ is a pair $(U,\varphi)$, where $U\subset X$ and $\varphi:U\to \varphi(U)$ is a bijection between $U$ and an open subset $\varphi(U)$ of some Banach space. Two charts $(U,\varphi)$ and $(V,\psi)$ are said to be {\em $C^k$ compatible}\index{Chart! $C^k$ compatible} if either $U\cap V=\emptyset$ or $$\psi\circ\varphi^{-1}:\varphi(U\cap V)\longrightarrow\psi(U\cap V)$$ is a $C^k$ diffeomorphism between open sets. A $C^k$ {\em atlas}\index{Atlas} on $X$ is a set of pairwise $C^k$ compatible charts on $X$, whose domains cover $X$. Finally, a {\em $C^k$ Banach manifold}\index{Banach manifold} is a set $X$ endowed with a maximal $C^k$ atlas.
\end{definition}

\begin{definition}
A {\em $C^k$ Hilbert manifold}\index{Hilbert manifold} $X$ is a Banach manifold whose $C^k$ maximal atlas has charts with Hilbert spaces as counter domain.
\end{definition}

\begin{remark}
A Banach (or Hilbert) manifold $X$ will be always supposed to be \emph{smooth}, i.e., $C^k$ for all $k\in\N$, unless otherwise specified\footnote{For instance, in some further applications using transversality of $C^k$ maps, the obtained manifolds will be $C^k$ and not smooth.}. We will call \emph{charts} only charts that belong to the given maximal atlas of $X$. In addition, given $x\in X$ a point in a Banach manifold, a local chart $(U,\varphi)$ of $X$ with $x\in U$ is called a {\em local chart around $x$}.
\end{remark}

\begin{remark}\label{re:cdomaincharts}
Notice that charts of an atlas on $X$ are not supposed to share the same counter domain. However, differentiating the compatibility condition it follows that counter domains are linearly isomorphic. Since the set of points on $X$ for which there exists a chart with counter domain linearly isomorphic to some fixed Banach (or Hilbert) space is open and closed, each connected component of $X$ admits an atlas with charts taking value on the same space.
\end{remark}

\begin{remark}
Analogously to the finite--dimensional case, an atlas induces a topology on $X$, such that domains of charts are open subsets and charts are homeomorphisms. At this point, there is no reason to assume any separation or countability axiom on this topology.
\end{remark}

The notion of $C^k$ map can be clearly extended from the context of Banach spaces (see Definition~\ref{def:ckbanach}) to Banach manifolds using charts, since differentiability is a local matter. In addition, central results of differential calculus on Banach spaces are automatically valid on Banach manifolds, analogously to the finite--dimensional case. 

\begin{remark}\label{re:txx}
Regarding the tangent space $T_xX$ to an infinite--dimensional manifold $X$ at the point $x$, it can be obtained as the set of equivalence classes of curves passing through $x$, or equivalence classes of triples $(U,\varphi,v)$, where $(U,\varphi)$ is a chart, $x\in U$ and $v\in\varphi(U)$, see respectively Mercuri, Piccione and Tausk \cite{picmertausk} and Lang \cite{lang}. In both cases, the result is a Banachable space. Analogously, for Hilbert manifolds, the tangent space at any point is a Hilbertable space (see Definitions~\ref{def:banach} and~\ref{def:hilbert}).
\end{remark}

Let us recall some basic concepts analogous to the finite--dimensional case. For the following definitions, consider $f:X\to Y$ a $C^k$ map between Banach manifolds, with $k\geq 1$. Observe that the differential $\dd f(x)$ is naturally defined\footnote{Analogously to the finite--dimensional case, the differential of a $C^k$ map at a point is defined using local charts and its differentials, and the representation of the map. Thus, the definition is locally the same as Definition~\ref{def:differentialBanach}.} as a continuous operator $$\dd f(x):T_xX\la T_{f(x)}Y.$$

\begin{definition}\label{def:critical}
A point $x\in X$ is a {\em critical point}\index{Critical point} of $f$ if $\dd f(x)$ is not surjective or if $\ker\dd f(x)$ is not complemented. The set of all critical points of $f$, called the {\em critical set}\index{Critical point!set} of $f$, is denoted $\crit(f)$. A value $y\in\im f$ is a {\em critical value} of $f$ if there exists a critical point $x$ in its preimage $f^{-1}(\{y\})$.\index{Critical value}
\end{definition}

\begin{remark}
Notice that if $f:X\to\R$ is a function, the condition that $\dd f(x)$ is not surjective is equivalent to $\dd f(x)$ being zero, since the counter domain is one--dimensional.
\end{remark}

\begin{lemma}\label{le:critclosed}
Let $f:X\to Y$ be a $C^k$ map between Banach manifolds. Then the critical set $\crit(f)$ is closed in $X$.
\end{lemma}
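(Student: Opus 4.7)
The plan is to prove the complementary statement, namely that the set of regular points $\mathcal R = X \setminus \crit(f)$ is open in $X$, by showing that each regular point admits an open neighborhood consisting entirely of regular points. Since regularity at a point $x$ is precisely the condition that $\dd f(x) \in \Lin(T_xX, T_{f(x)}Y)$ is surjective with complemented kernel, this is exactly the local form of the openness result established in Lemma~\ref{le:tsobreab}, reduced to a chart representation.

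First I would fix $x_0 \in \mathcal R$ and pick local charts $(U,\varphi)$ around $x_0$ and $(V,\psi)$ around $f(x_0)$ with $f(U) \subset V$, so that the local representation $\widetilde f = \psi \circ f \circ \varphi^{-1}: \varphi(U) \to \psi(V)$ is a $C^k$ map between open subsets of Banach spaces, say $E$ and $F$. Under the canonical identifications of tangent spaces described in Remark~\ref{re:txx}, being regular at $x$ is equivalent to $\dd\widetilde f(\varphi(x)) \in \Lin(E,F)$ being surjective with complemented kernel. In particular, $\dd\widetilde f(\varphi(x_0))$ belongs to the set $\mathcal O \subset \Lin(E,F)$ defined in Lemma~\ref{le:tsobreab}.

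Next I would invoke continuity of the derivative map $\dd\widetilde f : \varphi(U) \to \Lin(E,F)$, which holds because $\widetilde f$ is $C^k$ with $k \geq 1$. Combined with openness of $\mathcal O$ in $\Lin(E,F)$ from Lemma~\ref{le:tsobreab}, the preimage $(\dd\widetilde f)^{-1}(\mathcal O)$ is an open neighborhood of $\varphi(x_0)$ in $\varphi(U)$. Pulling back via the homeomorphism $\varphi^{-1}$ produces an open neighborhood $W \subset U$ of $x_0$ in $X$ on which every point is regular. Hence $\mathcal R$ is open and $\crit(f) = X \setminus \mathcal R$ is closed.

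The only subtle point is to make sure the chart identifications really transfer the algebraic conditions of surjectivity and complementedness of $\dd f(x)$ faithfully to $\dd\widetilde f(\varphi(x))$; this is immediate since the identifications $T_xX \cong E$ and $T_{f(x)}Y \cong F$ induced by the charts are topological isomorphisms (Definition~\ref{def:topiso}), so they preserve both the image and the property of a subspace being complemented. With this observation the argument is essentially a direct translation of Lemma~\ref{le:tsobreab} to the manifold setting, and no serious obstacle arises.
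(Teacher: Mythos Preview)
Your proof is correct and takes essentially the same approach as the paper: both reduce to a chart representation and invoke Lemma~\ref{le:tsobreab}. The only cosmetic difference is that the paper argues sequentially (a limit of critical points is critical, using that the complement of $\mathcal O$ is closed), whereas you argue directly that the regular set is open via continuity of $\dd\widetilde f$ and openness of $\mathcal O$; these are dual formulations of the same idea.
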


\begin{proof}
Consider a sequence of critical points $\{x_n\}_{n\in\N}$ that converge to $x_\infty\in X$. Considering a local chart around $x_\infty$, we may assume that $f$ is defined between Banach spaces. Thus, we may consider the sequence of continuous operators $\{\dd f(x_n)\}_{n\in\N}$ between these spaces. Since $x_n$ are critical points, either $\dd f(x_n)$ is non surjective or $\ker\dd f(x_n)$ is not complemented. From Lemma~\ref{le:tsobreab}, the subset of continuous operators with at least one of these properties is closed, hence the limit $\dd f(x_\infty)$ is either non surjective or has non complemented kernel. Therefore, $x_\infty\in\crit(f)$, concluding the proof.
\end{proof}

\begin{definition}\label{def:regular}
A point $x\in X$ is a {\em regular point}\index{Regular!point} of $f$ if $\dd f(x)$ is surjective and $\ker \dd f(x)$ is a complemented subspace. A value $y\in Y$ is a {\em regular value} of $f$ if every point $x \in f^{-1}(y)$ in its preimage is a regular point of $f$.\index{Regular!value} Notice that if $y\notin\im f$, then $y$ is automatically a regular value of $f$.
\end{definition}

\begin{remark}
We consider points $x\in X$ such that $\ker\dd f(x)$ is not complemented\footnote{Recall that subspaces of Banach spaces may not be complemented, see Definition~\ref{def:complement} and Example~\ref{ex:noncomplement}.} to be {\em critical points}. This is in order to have the property that a point $x\in X$ is either regular or critical. In the regular case, a closed complement of $\ker\dd f(x)$ will be necessary for instance in Proposition~\ref{prop:regularvalue} to have that the preimage of a regular value is a submanifold, see Definition~\ref{def:submnfldchart}.
\end{remark}

We now approach the matter of immersions, embeddings and submersions of Banach manifolds, where further assumptions must be made relatively to the finite--dimensional setting. Namely, such assumptions deal again with the problem that in infinite--dimensional TVS, a vector subspace is not necessarily closed, see Definition~\ref{def:complement} and Lemma~\ref{le:finitecomplemented}.

\begin{definition}\label{def:immersion}
A $C^k$ map $f:X\to Y$ between Banach manifolds is an \emph{immersion at $x\in X$}\index{Banach manifold!immersion} if $\dd f(x):T_xX\to T_{f(x)}Y$ is injective and its image is a closed and complemented subspace (see Definition~\ref{def:complement}). If this property holds for all $x\in X$, then $f$ is said to be an {\em immersion}. In addition, if $f:X\to f(X)$ is a homeomorphism, where $f(X)$ is endowed with the subspace topology, then $f$ is called an {\em embedding}.\index{Banach manifold!embedding}
\end{definition}

\begin{lemma}\label{le:invemb}
Let $f:X\to Y$ be a $C^k$ map between Banach manifolds. If $f$ admits a $C^1$ left inverse $g:Y\to X$, then $f$ is an embedding.
\end{lemma}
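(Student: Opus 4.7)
The plan is to extract both conditions in Definition~\ref{def:immersion} together with the topological embedding condition directly from the existence of the $C^1$ left inverse $g$, by differentiating the identity $g\circ f=\id_X$ at each point and invoking basic Banach space linear algebra.

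First, I would differentiate $g\circ f=\id_X$ at an arbitrary $x\in X$ using the chain rule to obtain
\begin{equation*}
\dd g(f(x))\circ\dd f(x)=\id_{T_xX}.
\end{equation*}
This immediately gives injectivity of $\dd f(x)$ (any continuous operator with a continuous left inverse is injective). To verify that $\im\dd f(x)$ is closed and complemented in $T_{f(x)}Y$, I would set
\begin{equation*}
p=\dd f(x)\circ\dd g(f(x))\in\Lin(T_{f(x)}Y),
\end{equation*}
and observe that $p\circ p=\dd f(x)\circ\bigl(\dd g(f(x))\circ\dd f(x)\bigr)\circ\dd g(f(x))=p$, so $p$ is a continuous projection in the sense of Definition~\ref{def:proj}. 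Its image is $\im\dd f(x)$ (the inclusion $\im p\subset\im\dd f(x)$ is obvious, and the reverse one follows from $p\circ\dd f(x)=\dd f(x)$). Hence $\im\dd f(x)$ is a closed subspace admitting $\ker p$ as topological complement, see the remark after Definition~\ref{def:proj}. This establishes that $f$ is an immersion at every $x\in X$.

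Second, to promote $f$ to an embedding in the sense of Definition~\ref{def:immersion}, I need that $f:X\to f(X)$ be a homeomorphism onto its image equipped with the subspace topology. Injectivity of $f$ follows at once from $g\circ f=\id_X$, and $f$ is continuous since it is $C^k$. The inverse map $f^{-1}:f(X)\to X$ is precisely the restriction $g|_{f(X)}$, which is continuous as a restriction of the $C^1$ map $g$; by general topology this yields continuity with respect to the subspace topology on $f(X)$. Combining these facts, $f$ is an immersion and a topological embedding, hence an embedding in the sense of Definition~\ref{def:immersion}.

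No substantial obstacle is expected here; the entire argument is essentially a chain-rule computation plus the elementary fact that the image of a continuous idempotent is a closed complemented subspace. The only minor point to keep in mind is to verify the projection identity $p^2=p$ correctly, so as not to confuse the order of the two factors $\dd f(x)$ and $\dd g(f(x))$, since only one of the two possible compositions equals the identity.
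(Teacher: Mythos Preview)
Your proof is correct and follows essentially the same approach as the paper: differentiate $g\circ f=\id_X$ to get injectivity of $\dd f(x)$, verify closedness and complementedness of $\im\dd f(x)$, and use $g|_{f(X)}$ as the continuous inverse. Your projection argument via $p=\dd f(x)\circ\dd g(f(x))$ is a slightly cleaner packaging than the paper's separate sequence argument for closedness plus the observation that $\ker\dd g(f(x))$ is a complement, but the underlying content is identical (indeed $\ker p=\ker\dd g(f(x))$ since $\dd f(x)$ is injective).
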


\begin{proof}
Since $f$ admits a left inverse, it is injective. Differentiating the identity $g\circ f=\id$ at $x\in X$, it follows that $$\dd g(f(x))\dd f(x)=\id,$$ hence also $\dd f(x)$ is injective, since it admits the left inverse $\dd g(f(x))$.

Moreover, $\im\dd f(x)$ is a closed subspace. Indeed, if $\{w_n\}_{n\in\N}$ is a convergent sequence in $T_{f(x)}Y$ to $w_\infty$, the sequence $\{v_n\}_{n\in\N}$, given by $v_n=\dd g(f(x))w_n$, converges to $v_\infty=\dd g(f(x))w_\infty\in T_xX$ due to continuity of this left inverse of $\dd f(x)$. Therefore $w_\infty=\dd f(x)v_\infty$ is in the image of $\dd f(x)$, which is hence closed. In addition, $\ker\dd g(f(x))$ is clearly a closed complement of $\im\dd f(x)$. Thus $f$ is an injective immersion.

Finally, the inverse of the bijection $f:X\to f(X)$ coincides with $g|_{f(X)}$, which is a continuous map. Therefore $f:X\to f(X)$ is bijective, continuous and has continuous inverse, and hence is a homeomorphism. Therefore $f$ is an embedding, concluding the proof.
\end{proof}

\begin{corollary}\label{cor:secembed}
Let $E$ be a $C^1$ (Banach) vector bundle\footnote{As remarked in Chapter~\ref{chap1}, most definitions given for bundles over finite--dimensional smooth manifolds can be extended to an infinite--dimensional context. Since this extension is absolutely natural and intuitive, we will not give details of the theory of fiber bundles and connections over Banach manifolds, stressing however its analogy with its finite--dimensional counterpart presented in Chapter~\ref{chap1}.} over a Banach manifold $X$. Then every section $s\in\sect^k(E)$ is an embedding.
\end{corollary}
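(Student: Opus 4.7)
The plan is to invoke Lemma~\ref{le:invemb} with a natural left inverse arising from the bundle structure itself. A section $s\in\sect^k(E)$ satisfies $\pi\circ s=\id_X$ by the very definition of a section (Definition~\ref{def:section}, extended to the Banach context), so the projection $\pi:E\to X$ of the bundle is already a left inverse of $s$. Since $E$ is a $C^1$ vector bundle, $\pi$ is of class $C^1$ (in fact smooth, from the definition of the manifold structure on the total space via trivializations), so the hypotheses of Lemma~\ref{le:invemb} are fully met.

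More explicitly, first I would note that $s:X\to E$ is of class $C^k$ by hypothesis, and that $\pi:E\to X$ is a $C^1$ map between Banach manifolds. Then the composition identity $\pi\circ s=\id_X$ places us precisely in the situation of Lemma~\ref{le:invemb}, which immediately yields that $s$ is an embedding. The conclusion packages three facts simultaneously: $s$ is injective (since it has a left inverse set-theoretically), $\dd s(x)$ is injective with closed complemented image at every $x\in X$ (with $\ker\dd\pi(s(x))$ providing an explicit closed complement, namely the vertical space $\mathrm{Ver}_{s(x)}E$), and $s$ is a homeomorphism onto its image (since $\pi$ restricted to $s(X)$ is its continuous inverse).

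I do not expect any serious obstacle here: the corollary is essentially a direct unpacking of Lemma~\ref{le:invemb} once one recognizes $\pi$ as the canonical left inverse. The only mildly subtle point is to observe that the definition of section carries over verbatim from the finite-dimensional setting of Definition~\ref{def:section} to the Banach bundle context alluded to in the footnote of Section~\ref{sec:fiberbundlesconnections}, and that the projection $\pi$ of a $C^1$ vector bundle is automatically at least $C^1$, so Lemma~\ref{le:invemb} applies without additional regularity assumptions on $s$ beyond $k\ge 1$. For $k=0$ the statement still holds in the weaker sense that $s$ is a topological embedding, but the differentiability-based formulation of Definition~\ref{def:immersion} requires $k\ge 1$, which we implicitly assume.
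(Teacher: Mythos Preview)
Your proposal is correct and takes essentially the same approach as the paper: observe that $\pi\circ s=\id_X$ makes the projection $\pi$ a $C^1$ left inverse of $s$, and then invoke Lemma~\ref{le:invemb}. The paper's proof is just the one-line version of this; your additional remarks about the vertical space as the explicit complement and the $k\ge 1$ caveat are accurate elaborations but not needed for the argument.
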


\begin{proof}
Given $s:X\to E$ a section, if $\pi:E\to X$ is the projection of $E$, then $\pi\circ s=\id$. Hence $s$ admits a $C^1$ left inverse, and thus, from Lemma~\ref{le:invemb} is an embedding.
\end{proof}

\begin{remark}\label{re:nullsectionsubmnfld}
Consider again $E$ a $C^1$ (Banach) vector bundle over a Banach manifold $X$. Corollary~\ref{cor:secembed} gives a formal proof of the legitimacy of the identification of the null section $\mathbf 0_E$ with the base manifold $X$, addressed in the finite--dimensional case in Remark~\ref{re:nullsection}. Since $$\mathbf 0_E:X\ni x\longmapsto (x,0_x)\in E,$$ where $0_x\in E_x$ is the zero, the image of the embedding $\mathbf 0_E:X\to E$ consists of points of the form $(x,0_x)$, which is hence canonically identified with $x\in X$. 

Henceforth, using this identification, we shall refer to the null section $\mathbf 0_E$ both as a section or submanifold of $E$.
\end{remark}

\begin{definition}\label{def:submersion}
A $C^k$ map $f:X\to Y$ between Banach manifolds is a \emph{submersion at $x\in X$}\index{Banach manifold!submersion} if $x$ is a regular point of $f$, i.e., if the differential $\dd f(x):T_xX\to T_{f(x)}Y$ is surjective and $\ker\dd f(x)$ is complemented in $T_xX$. If this property holds for all $x\in X$, then $f$ is said to be a {\em submersion}.
\end{definition}

\begin{remark}\label{re:localforms}
There are statements on the local form of immersions and of submersions similar to their finite--dimensional correspondents that hold under the generalized definitions above. These are {\em local} results and follow from the Inverse Function Theorem (in Banach manifolds), analogously to the usual finite--dimensional case. More precisely, consider a $C^k$ map $f:X\to Y$ between Banach manifolds. If $f$ is an immersion at $x\in X$, given a local chart around $x$, there exists a local chart around $f(x)$ such that $f$ is represented in these local charts by a linear inclusion of (open subsets of) Banach spaces. Similarly, if $f$ is a submersion at $x\in X$, given a local chart around $f(x)$, there exists a local chart around $x$ such that $f$ is represented in these local charts by a linear projection of (open subsets of) Banach spaces.
\end{remark}

\begin{remark}
Observe that if $X$ and $Y$ in Definition~\ref{def:submersion} are Hilbert manifolds, all closed subspaces are automatically complemented (by its {\em orthogonal} complement) and the definition coincide with the finite--dimensional version. In Definition~\ref{def:immersion} however, even if $X$ and $Y$ are Hilbert manifolds, the image of the differential $\dd i(x)$ may not be a closed subspace.
\end{remark}

Let us consider the particular case of $C^k$ {\em functions} defined on Banach manifolds, i.e., maps $f:X\to\R$ of class $C^k$. At each $x\in X$, the differential $\dd f(x):T_xX\to\R$ is a bounded linear functional, hence $\dd f$ may be regarded as a $C^{k-1}$ section of the cotangent bundle of $X$, $$\dd f:X\la TX^*.$$ Standard arguments prove that analogously to the finite--dimensional case, Banach vector bundles always admit connections that satisfy the same conditions of Definition~\ref{def:connection}. Thus, we may consider a connection $\nabla$ on $TX^*$.

\begin{definition}\label{def:hessnabla}
Let $\nabla$ be a connection on $TX^*$. The {\em $\nabla$--Hessian}\index{$\nabla$--Hessian} of a function $f\in C^k(X)$ if given by the $(0,2)$--tensor
\begin{equation}
\hess^\nabla(f)=\nabla(\dd f)\in\sect^{k-2}(TX^*\otimes TX^*).
\end{equation}
\end{definition}

\begin{lemma}
For any connection $\nabla$ on $TX^*$ and vector fields $X,Y\in\sect^{k-2}(TX)$,
\begin{equation}\label{eq:hessnablaf}
\hess^\nabla(f)(X,Y)=X(Y(f))-\nabla_X Y(f).
\end{equation}
In addition, $\nabla$ is symmetric,\footnote{See Definition~\ref{def:symflat}.} if and only if $\hess^\nabla$ is symmetric.
\end{lemma}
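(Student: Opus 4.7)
The plan is to unwind the definition of $\hess^\nabla(f)$ via the induced connection on the cotangent bundle $TX^*$, compute the antisymmetric part, and recognize it as a pairing of $\dd f$ with the torsion of $\nabla$. Since the excerpt just finished setting up the induced cotangent connection in the finite-dimensional setting (Proposition~\ref{prop:cotangentconnection}), and since that proof relied only on the Leibniz rule together with the identification $\nabla_Y\varphi = Y(\varphi)$ for real-valued functions (both of which are axiomatic, cf.\ Definition~\ref{def:connection} and Example~\ref{ex:derfunctions}), the same formula carries over verbatim to the Banach manifold setting considered here.

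First I would write, by the definition of $\hess^\nabla$ and the induced connection formula,
\begin{equation*}
\hess^\nabla(f)(X,Y) = \bigl(\nabla(\dd f)\bigr)(X,Y) = \nabla_X\!\bigl(\dd f(Y)\bigr) - \dd f(\nabla_X Y).
\end{equation*}
Since $\dd f(Y)=Y(f)$ is a $C^{k-1}$ function, its covariant derivative along $X$ is the ordinary directional derivative $X(Y(f))$, while $\dd f(\nabla_X Y) = (\nabla_X Y)(f)$. This yields formula~\eqref{eq:hessnablaf}.

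For the symmetry claim I would antisymmetrize the expression just obtained:
\begin{align*}
\hess^\nabla(f)(X,Y) - \hess^\nabla(f)(Y,X) &= \bigl[X(Y(f)) - Y(X(f))\bigr] - \bigl(\nabla_X Y - \nabla_Y X\bigr)(f) \\
&= [X,Y](f) - \bigl(\nabla_X Y - \nabla_Y X\bigr)(f) \\
&= -\,T^\nabla(X,Y)(f),
\end{align*}
where $T^\nabla$ is the torsion of $\nabla$ from Definition~\ref{def:symflat}. If $\nabla$ is symmetric, then $T^\nabla\equiv 0$ and $\hess^\nabla(f)$ is symmetric for every $f\in C^k(X)$. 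Conversely, if $\hess^\nabla(f)$ is symmetric for all $f$, the identity forces the vector field $T^\nabla(X,Y)$ to annihilate every $C^k$ function; since in a local chart one can produce functions with arbitrarily prescribed differential at a given point, this implies $T^\nabla(X,Y) = 0$ for all $X,Y$, i.e.\ $\nabla$ is symmetric.

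The computations are purely algebraic and local, so no real obstacle arises; the only point worth flagging is to justify the transfer of Proposition~\ref{prop:cotangentconnection} to the Banach manifold framework, which is immediate from the abstract Definition~\ref{def:connection}, and to note that functions separate vector fields on $X$ (via local charts) so that the passage from $T^\nabla(X,Y)(f)\equiv 0$ to $T^\nabla(X,Y)=0$ is legitimate.
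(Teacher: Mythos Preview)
Your proof is correct and follows essentially the same route as the paper: unwind the definition of the induced cotangent connection to get \eqref{eq:hessnablaf}, then antisymmetrize and identify the difference as $-T^\nabla(X,Y)(f)$. The paper's proof is slightly terser, stopping at the identity $\hess^\nabla(f)(X,Y) - \hess^\nabla(f)(Y,X) = -T^\nabla(X,Y)(f)$ and declaring the equivalence; your version is more explicit about the converse implication (varying $f$ to force $T^\nabla=0$), which is a welcome clarification rather than a different argument.
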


\begin{proof}
From the Definition~\ref{def:hessnabla} and elementary properties of connections,
\begin{eqnarray*}
\hess^\nabla(f)(X,Y) &=& (\nabla(\dd f))(X,Y)\\
&=& (\nabla_X \dd f)(Y) \\
&\stackrel{\eqref{eq:nablarform}}{=}& X(\dd f(Y))-\dd f(\nabla_X Y) \\
&=& X(Y(f))-\nabla_X Y(f),
\end{eqnarray*}
which proves \eqref{eq:hessnablaf}. Moreover,
\begin{eqnarray*}
\hess^\nabla(f)(X,Y) - \hess^\nabla(f)(Y,X)&\stackrel{\eqref{eq:hessnablaf}}{=}& X(Y(f))-\nabla_X Y(f) \\
&& \hspace{0.5cm}-Y(X(f))+\nabla_Y X(f)\\
&=& -\big(\nabla_X Y-\nabla_Y X-[X,Y]\big)(f) \\
&\stackrel{\eqref{eq:T}}{=}& -T^\nabla(X,Y)(f),
\end{eqnarray*}
hence $\hess^\nabla$ is symmetric if and only if $\nabla$ is symmetric.
\end{proof}

\begin{corollary}\label{cor:nablavsf}
If $x_0\in X$ is a critical point of $f\in C^k(X)$, then \eqref{eq:hessnablaf} does not depend on the choice of the connection $\nabla$.
\end{corollary}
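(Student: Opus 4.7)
The plan is to exploit formula \eqref{eq:hessnablaf}, which expresses $\hess^\nabla(f)(X,Y)$ as a sum of two terms: one purely in terms of $X$, $Y$ and $f$, and one of the form $\nabla_X Y(f) = \mathrm{d}f(\nabla_X Y)$, which is the only place where the connection enters. The strategy is to show that the connection-dependent term vanishes at a critical point, so that the value of $\hess^\nabla(f)$ at $x_0$ reduces to the connection-free quantity $X(Y(f))(x_0)$.

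More precisely, I would first fix two connections $\nabla$ and $\nabla'$ on $TX^*$ (or equivalently, on $TX$ via the identification used in \eqref{eq:hessnablaf}), and pick extensions of the tangent vectors at $x_0$ to local vector fields $X,Y$ defined in a neighbourhood of $x_0$. Applying \eqref{eq:hessnablaf} to both connections and subtracting gives
\begin{equation*}
\hess^\nabla(f)(X,Y) - \hess^{\nabla'}(f)(X,Y) = -\bigl(\nabla_X Y - \nabla'_X Y\bigr)(f) = -\mathrm{d}f\bigl(\chr(X,Y)\bigr),
\end{equation*}
where $\chr = \nabla - \nabla'$ is the Christoffel tensor, cf.\ Definition~\ref{def:christoffeltens}; crucially, $\chr(X,Y)$ is tensorial in both arguments, so it depends only on the pointwise values $X(x_0)$, $Y(x_0)$.

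Now I would invoke the hypothesis that $x_0$ is a critical point of $f$, which by Definition~\ref{def:critical} (applied to the function case, see the remark following it) means $\mathrm{d}f(x_0)=0$. Evaluating the displayed equation above at $x_0$ therefore yields
\begin{equation*}
\hess^\nabla(f)(x_0)(X,Y) - \hess^{\nabla'}(f)(x_0)(X,Y) = -\mathrm{d}f(x_0)\bigl(\chr(x_0)(X,Y)\bigr) = 0.
\end{equation*}
Since $\nabla$ and $\nabla'$ were arbitrary, this proves independence of the choice of connection. There is no real obstacle here: the only subtle point is to notice that one may interpret \eqref{eq:hessnablaf} pointwise because $\chr(X,Y)$ is tensorial, so the argument does not depend on the choice of local extensions of the tangent vectors at $x_0$. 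In finite dimensions this is a standard observation; in the Banach manifold setting it goes through verbatim because connections on Banach vector bundles satisfy the same Leibniz rule as in Definition~\ref{def:connection}.
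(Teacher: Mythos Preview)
Your proof is correct and takes essentially the same approach as the paper: both rely on $\dd f(x_0)=0$ to kill the connection-dependent term in \eqref{eq:hessnablaf}. The paper's version is marginally more direct, observing that $\nabla_X Y(f)(x_0)=\dd f(x_0)[(\nabla_X Y)(x_0)]=0$ itself vanishes, so $\hess^\nabla(f)(x_0)(X,Y)=X(Y(f))(x_0)$ is already connection-free; your argument via the Christoffel tensor of two connections reaches the same conclusion with one extra step.
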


\begin{proof}
Since $x_0$ is a critical point, $\dd f(x_0)=0$. Thus, for any $X,Y\in\sect^{k-2}(TX^*)$,
\begin{eqnarray}\label{eq:connectionsagreeat0}
\hess^\nabla(f)(x_0)(X,Y) &=& X(Y(f))(x_0)-\nabla_X Y(f)(x_0) \nonumber\\
&=& X(Y(f))(x_0)-\dd f(x_0)[(\nabla_X Y)(x_0)] \\
&=& X(Y(f))(x_0).\nonumber\qedhere
\end{eqnarray}
\end{proof}

Therefore, we may establish the following.

\begin{definition}\label{def:hess}
Consider a function $f\in C^k(X)$ and $x_0$ a critical point of $f$. Then the {\em Hessian}\index{Hessian} of $f$ is defined as the symmetric bilinear form $$\hess(f)(x_0)=\hess^\nabla(f)(x_0),$$ for {\em any choice}\footnote{From Corollary~\ref{cor:nablavsf}, the above definition does not depend on this choice.} of connection $\nabla$ on $TX^*$.
\end{definition}

\begin{remark}\label{re:ttx}
Corollary~\ref{cor:nablavsf} is used above to guarantee that $\hess(f)(x_0)$ is well--defined on critical points $x_0$ of $f$ without the use of a connection on $TX^*$. Let us use a different approach to verify the same result.

Recall that from Remark~\ref{re:connectionhor}, the choice of a connection on $TX^*$ is equivalent to the choice of a horizontal subbundle of $TX^*$ with certain properties. Notice also that at a critical point $x_0$, the section $\dd f(x_0)$ coincides with the null section $\mathbf 0_{TX^*}$. Precisely in this case it is possible to have a canonical choice of horizontal (and vertical) subbundle of $TX^*$, using the identification of $\mathbf 0_{TX^*}$ with $X$ as submanifold of $TX^*$, see Remark~\ref{re:nullsectionsubmnfld}. Namely, there is a canonical decomposition of $T_{(x_0,0_{x_0})}\mathbf 0_{TX^*}$ in horizontal and vertical parts, where $0_{x_0}\in T_{x_0}X^*$ the zero of this vector space, respectively tangent to $\mathbf 0_{TX^*}$ and to the fibers of $TX^*$. This is totally analogous to its finite--dimensional counterpart \eqref{eq:tangentnullsec}, that gives a decomposition in horizontal and vertical parts of the tangent space to a vector bundle at its null section.

More precisely, the tangent space to the null section of $TX^*$ at $(x_0,0_{x_0})$ is canonically identified as
\begin{equation}\label{eq:ttxident1}
T_{(x_0,0_x)}\mathbf 0_{TX^*}\cong T_{x_0}X,
\end{equation}
and hence
\begin{equation}\label{eq:ttxident2}
T_{(x_0,0_{x_0})}TX^*\cong T_{x_0}X\oplus T_{x_0}X^*.
\end{equation}
The existence of this natural choice of horizontal and vertical parts for $T_{(x_0,0_{x_0})}TX^*$ at critical points $x_0$ guarantees that $\hess(f)(x_0)$ is well--defined without a connection. Indeed, this is equivalent to state that all connections agree at the null section, as proved with \eqref{eq:connectionsagreeat0}.

Such decomposition of the tangent space to cotangent bundle at the null section will also be used several times in the sequel for other purposes.
\end{remark}

\begin{remark}\label{re:equivalenthess}
There are another (equivalent) ways of defining $\hess(f)$ on a critical point $x_0$ without using a connection on $TX^*$. Namely, let $$\hess(f)(x_0)(v,v)=\dfrac{\dd^2}{\dd t^2}(f\circ\gamma)\Big|_{t=0},$$ where $\gamma:(-\varepsilon,\varepsilon)\to X$ is a $C^2$ curve with $\gamma(0)=x_0$ and $\dot\gamma(0)=v$. Polarizing the above formula, one obtains an equivalent definition of $\hess(f)(x_0)$.

In the same sense, since we have the $C^{k-1}$ map $\dd f:X\to TX^*$, it is possible to derive again this map at critical points $x_0$ of $f$ obtaining
\begin{eqnarray}\label{eq:hessfx0}
\dd^2 f(x_0): T_{x_0}X &\la &T_{(x_0,0_{x_0})}TX^*\cong T_{x_0}X\oplus T_{x_0}X^* \nonumber \\
v &\longmapsto & \big(v,\hess(f)(x_0)(v,\cdot\,)\big).
\end{eqnarray}
This could also be adopted as definition of $\hess(f)(x_0)$ at a critical point.
\end{remark}

\begin{remark}\label{re:hessnoncrit}
We will only use the Hessian of functions {\em on its critical points}. Although Definition~\ref{def:hess} extends the concept of Hessian of functions in Euclidean space to Banach manifolds, differently from the finite--dimensional case, $\hess(f)$ cannot be defined in general without the use of a connection, as in Definition~\ref{def:hessnabla}.
\end{remark}

With the notion of Hessian of a function on its critical points, we may now classify critical points according to its degeneracy.

\begin{definition}\label{def:degnondegstdeg}
Let $x_0\in X$ be a critical point of a function $f\in C^k(X)$, and consider the Hessian of $f$ under identification \eqref{ident:bilin},
\begin{equation}\label{eq:hessfx0op}
\hess(f)(x_0):T_{x_0}X\la T_{x_0}X^*.
\end{equation}
Then the critical point $x_0$ is said to be
\begin{itemize}
\item[(i)] {\em degenerate}\index{Critical point!degenerate} if $\hess(f)(x_0)$ has nontrivial kernel;
\item[(ii)] {\em nondegenerate}\index{Critical point!nondegenerate} if $\hess(f)(x_0)$ is injective;
\item[(iii)] {\em strongly nondegenerate}\index{Critical point!strongly nondegenerate} if $\hess(f)(x_0)$ is an isomorphism.
\end{itemize}
The function $f\in C^k(X)$ is said to be a {\em Morse function}\index{Morse function} if all of its critical points are strongly nondegenerate.
\end{definition}

Using an analogy with the finite--dimensional case, this gives a qualitative description of the behavior of $f$ near these critical points. For instance, in the nondegenerate case, if the Hessian of a function $f:\R^m\to\R$ is positive--definite or negative--definite at a critical point, then this point is a local minimum or maximum of $f$, respectively. Degeneracy occurs for instance when the critical point is a saddle point. In the infinite--dimensional context however, there are several degrees of degeneracy, since non injectivity and non surjectivity of the Hessian are not equivalent (unless it is a Fredholm operator, recall~\ref{def:fredholmop}).

\begin{lemma}
Let $X$ be a Hilbert manifold and suppose $f\in C^k(X)$ has only nondegenerate critical points. If at every critical point $x_0$ of $f$ the operator $\hess(f)$ is Fredholm, then $f$ is Morse.
\end{lemma}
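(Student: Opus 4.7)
The plan is to combine the Hilbert space duality $T_{x_0}X \cong T_{x_0}X^*$ with the fact that the Hessian at a critical point is symmetric, so that it is represented by a self-adjoint operator on the tangent space, and then invoke the standard observation that a self-adjoint Fredholm operator has index zero.

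First, I would fix a critical point $x_0$ of $f$ and view $\hess(f)(x_0)$ under the identification \eqref{eq:hessfx0op} as a continuous operator $T_{x_0}X\to T_{x_0}X^*$. Using the Riesz identification $T_{x_0}X\cong T_{x_0}X^*$ coming from the Hilbertable structure (Theorem~\ref{thm:riesz} and \eqref{ident:dual}), this is the operator that represents the symmetric bilinear form $\hess(f)(x_0)\in\Bilin(T_{x_0}X)$ in the sense of Definition~\ref{def:represents}. Since $\hess(f)(x_0)$ is symmetric (Definition~\ref{def:hess} and the comment preceding it, via a symmetric connection), Lemma~\ref{le:symselfadjoint} guarantees that the representing operator is self-adjoint.

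Next, I would apply the hypothesis that $\hess(f)(x_0)$ is Fredholm. By Lemma~\ref{le:selfadjointzero}, a self-adjoint Fredholm operator on a Hilbert space has Fredholm index zero. Lemma~\ref{le:fred0} then says that such an operator is surjective if and only if it is injective. But the assumption that $x_0$ is a \emph{nondegenerate} critical point means precisely that $\hess(f)(x_0)$ is injective (Definition~\ref{def:degnondegstdeg}(ii)). Hence it is also surjective, and is therefore a continuous bijection between Banachable spaces; by the Open Mapping Theorem (Remark~\ref{re:omt}) it is a topological isomorphism.

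Thus every critical point $x_0$ of $f$ is strongly nondegenerate in the sense of Definition~\ref{def:degnondegstdeg}(iii), so $f$ is a Morse function. There is no real obstacle here: the argument is a clean packaging of three previously established facts (symmetry $\Rightarrow$ self-adjointness, self-adjoint + Fredholm $\Rightarrow$ index zero, index zero + injective $\Rightarrow$ isomorphism). The only care needed is to make sure the Hessian is treated as a genuine operator via the Hilbert space identification, which is precisely what permits the self-adjointness statement to be applied.
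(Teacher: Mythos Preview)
Your proof is correct and follows essentially the same approach as the paper's: both use that the Hessian represents a symmetric bilinear form and is hence self-adjoint, apply Lemma~\ref{le:selfadjointzero} to get index zero, and then Lemma~\ref{le:fred0} to upgrade injectivity to bijectivity. Your version is slightly more detailed (explicitly invoking the Riesz identification and the Open Mapping Theorem), but the argument is the same.
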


\begin{proof}
Let $x_0$ be any critical point of $f$. Since it is nondegenerate, the operator $\hess(f)(x_0)$ regarded as \eqref{eq:hessfx0op} has trivial kernel. Assuming this is a Fredholm operator, since it represents a symmetric bilinear form, is clearly self--adjoint. From Lemma~\ref{le:selfadjointzero}, it has index zero. Thus, from Lemma~\ref{le:fred0}, it is also surjective, hence $x_0$ is strongly nondegenerate. Therefore, $f$ is Morse.
\end{proof}

Let us now define the nonlinear version of Fredholmness.

\begin{definition}\label{def:nonlinfred}
A $C^k$ map $f:X\to Y$ between Banach manifolds is a \emph{nonlinear Fredholm map}\index{Fredholm!nonlinear map} if $\dd f(x):T_xX\to T_{f(x)}Y$ is a Fredholm map for all $x\in X$. The \emph{index}\index{Fredholm!nonlinear map!index} $\ind(f)$ at each connected component of $X$ is defined as the index $\ind(\dd f(x))$ of the linear Fredholm map $\dd f(x)$ at any $x$ in such connected component, since from continuity of the index, it is constant in each connected component of $M$.
\end{definition}

\begin{remark}
For further applications, $X$ will be connected and hence the index of any nonlinear Fredholm map will not depend on a choice of connected component as in the general case of the above definition.
\end{remark}

In order to continue our brief exposition of basic elements of infinite--dimensional differential geometry, we introduce the concept of submanifolds.

\begin{definition}\label{def:submnfldchart}
Consider a subset $S$ of a Banach manifold $X$. A chart $(U,\varphi)$ of $X$ is a {\em submanifold chart}\index{Chart!submanifold} of $S$ if there exists a closed complemented subspace $Y\subset X$ such that $$\varphi(U\cap S)=\varphi(U)\cap Y.$$ If there exists a $C^k$ atlas of $X$ with submanifold charts whose domain cover $S$, then $S$ is a $C^k$ {\em embedded Banach submanifold}\index{Banach manifold!submanifold} of $X$.
\end{definition}

\begin{remark}\label{re:submnflds}
Obviously, the submanifold charts of a submanifold $S$ can be restricted to form a $C^k$ atlas of $S$, so that $S$ is also a $C^k$ Banach manifold. The inclusion map $i:S\hookrightarrow X$ is a $C^k$ embedding. In addition, for each $x\in S$, the differential $\dd i(x)$ identifies the tangent space $T_xS$ with a closed complemented subspace of $T_xX$.

Conversely, if an inclusion $i:S\hookrightarrow X$ satisfies the above properties, then $i(S)$ is a $C^k$ embedded submanifold of $X$.
\end{remark}

We now deal with \emph{transversality}\footnote{For a detailed discussion of transversality in the finite--dimensional context, together with applications to differential topology and Morse theory, see Guillemin and Pollack \cite{guilleminpollack}. A concise extension of most results to the infinite--dimensional case is given in Lang \cite{lang}.}, that will play a fundamental role in the sequel. It can be naively seen as a condition to generalize the following basic result concerning regular values of a $C^k$ map, whose proof can be found in Lang \cite{lang} using simply the local form of submersions.

\begin{proposition}\label{prop:regularvalue}
Consider a $C^k$ map $f:X\to Y$ between Banach manifolds. If $c\in Y$ is a regular value\footnote{See Definition~\ref{def:regular}.} of $f$, then $f^{-1}(c)$ is a $C^k$ Banach submanifold of $X$. Moreover, its tangent space at any $x\in f^{-1}(c)$ is given by the complemented Banach subspace of $T_xX$ $$T_xf^{-1}(c)=\ker\left(\dd f(x)\right).$$
\end{proposition}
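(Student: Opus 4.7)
The plan is to reduce the problem to a purely local one, since being a submanifold is defined chart by chart (Definition~\ref{def:submnfldchart}), and then to exploit the local form of submersions recalled in Remark~\ref{re:localforms}. First, I would fix $x_0 \in f^{-1}(c)$, a chart around $x_0$ in $X$, and a chart around $c = f(x_0)$ in $Y$, so as to replace $f$ by a $C^k$ map $\tilde f : U \to V$ between open subsets of Banach spaces $E$ and $F$, with $x_0$ and $c$ sent to $0$. By hypothesis, $c$ is a regular value, so $\dd \tilde f(x_0) : E \to F$ is surjective and its kernel $K = \ker \dd\tilde f(x_0)$ is complemented: fix a closed complement $K'$, so that $E = K \oplus K'$.

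Next I would apply the local form of submersions. Concretely, one defines
\[
\Phi : U \la K \oplus F, \qquad \Phi(v) = \bigl(p_K(v),\, \tilde f(v)\bigr),
\]
where $p_K$ denotes the projection onto $K$ along $K'$. Its differential at $x_0$ is $v \mapsto (p_K(v), \dd \tilde f(x_0)v)$, which is a topological isomorphism $E \to K \oplus F$ because $\dd\tilde f(x_0)|_{K'}$ is an isomorphism onto $F$ (Remark~\ref{re:omt}) and $p_K$ is the identity on $K$. Thus, by the Inverse Function Theorem on Banach spaces, $\Phi$ restricts to a $C^k$ diffeomorphism between an open neighborhood $\widetilde U \subset U$ of $x_0$ and an open neighborhood $W \subset K \oplus F$ of $0$. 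In the new chart $(\widetilde U, \Phi)$ the map $\tilde f$ is just the projection onto the second factor, so
\[
\Phi\bigl(\widetilde U \cap \tilde f^{-1}(0)\bigr) = W \cap (K \oplus \{0\}) = \Phi(\widetilde U) \cap (K \oplus \{0\}).
\]
Since $K$ is closed and complemented in $E$, this exhibits $(\widetilde U, \Phi)$ as a submanifold chart for $f^{-1}(c)$ around $x_0$ in the sense of Definition~\ref{def:submnfldchart}. Composing with the original charts, one obtains a submanifold chart of $X$ around $x_0$. Letting $x_0$ range over $f^{-1}(c)$ produces a $C^k$ atlas of submanifold charts, so $f^{-1}(c)$ is a $C^k$ embedded Banach submanifold of $X$.

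Finally, the identification of the tangent space is automatic from this construction: under the chart $\Phi$ the submanifold $f^{-1}(c)$ corresponds to the subspace $K \oplus \{0\}$, whose tangent space at $0$ is $K = \ker \dd \tilde f(x_0)$. Transporting back via the original charts gives $T_{x_0} f^{-1}(c) = \ker \dd f(x_0)$, and this subspace is closed and complemented in $T_{x_0}X$ by hypothesis. The only real work is verifying that $\Phi$ is indeed a local diffeomorphism, that is, applying the Banach-space Inverse Function Theorem to $\Phi$; this hinges on the existence of the closed complement $K'$ in the definition of a regular point (Definition~\ref{def:regular}), which is precisely why one had to demand complementedness of $\ker \dd f(x)$ rather than just surjectivity. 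I do not foresee any other obstacle, as the rest is formal transport of structure through charts.
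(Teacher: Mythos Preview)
Your proof is correct and follows exactly the route the paper indicates: the paper does not give a detailed argument but simply cites Lang and the local form of submersions (Remark~\ref{re:localforms}), and what you have written is precisely the standard derivation of that local form via the Inverse Function Theorem, together with the immediate identification of the tangent space. There is nothing to add.
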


\begin{remark}\label{re:regularvaluehilbert}
The above result has an obvious version for Hilbert manifolds, with identical proof.
\end{remark}

A natural extension of Proposition~\ref{prop:regularvalue} is to consider the preimage $f^{-1}(Z)$ of a submanifold $Z\subset Y$, instead of the preimage of a regular value $f^{-1}(c)$. As we will prove, \emph{transversality} of $f$ to $Z$ is a sufficient condition for $f^{-1}(Z)$ to be a submanifold of $X$.

\begin{definition}\label{def:transversality}
Consider a $C^k$ map $f:X\to Y$ between Banach manifolds, and $Z\subset Y$ a (smooth) submanifold. Then {\em $f$ is transverse to $Z$ at $x$} if $\dd f(x)^{-1}\left[T_{f(x)}Z\right]$ is complemented in $T_xX$ and
\begin{equation}\label{eq:transversality}
\im \dd f(x) +T_{f(x)}Z=T_{f(x)}Y.
\end{equation}
If this happens for all $x\in f^{-1}(Z)$, then $f$ is said to be {\em transverse} to $Z$.\index{Transverse map}
\end{definition}

\begin{remark}\label{re:transversalityeq}
Equation \eqref{eq:transversality} is clearly equivalent to surjectivity of the following composite map, for all $x\in f^{-1}(Z)$
\begin{equation}\label{eq:transversality2}
T_xX\xrightarrow{\;\;\dd f(x)\;\;} T_{f(x)}Y\xrightarrow{\;\;q\;\;}\dfrac{T_{f(x)}Y}{T_{f(x)}Z},
\end{equation}
where $q$ denotes the quotient map. Notice that if $f$ is a submersion (recall Definition~\ref{def:submersion}), then it automatically satisfies this condition.
\end{remark}

\begin{remark}\label{re:abouttransv}
The name \emph{transversality} is justified with the situation where $f:X\hookrightarrow Y$ is an immersion. Transversality of $f$ to a submanifold $Z\subset Y$ is equivalent to the submanifolds $f(X)$ and $Z$ being {\em transverse},\index{Transverse submanifolds} i.e., the sum of their tangent spaces at any $y\in f(X)\cap Z$ is the entire $T_yY$, see Figure \ref{fig:transversality}. In addition, it is easy to see that endowing the space $C^k(X,Y)$ with an adequate natural topology, transversality to $Z$ is an {\em open} condition. This means that if $f$ is transverse to $Z$, there exists an open neighborhood of $f$ of $C^k$ maps transverse to $Z$. Transversality to a fixed submanifold is also {\em open} in another sense, see Remark~\ref{re:transvopen}. In Chapter~\ref{chap4}, we will also prove that under certain hypotheses {\em transversality} is a generic phenomenon (see the Transversality Theorem~\ref{thm:transversality}).

\begin{figure}[htf]
\begin{center}
\vspace{-0.3cm}
\includegraphics[scale=1]{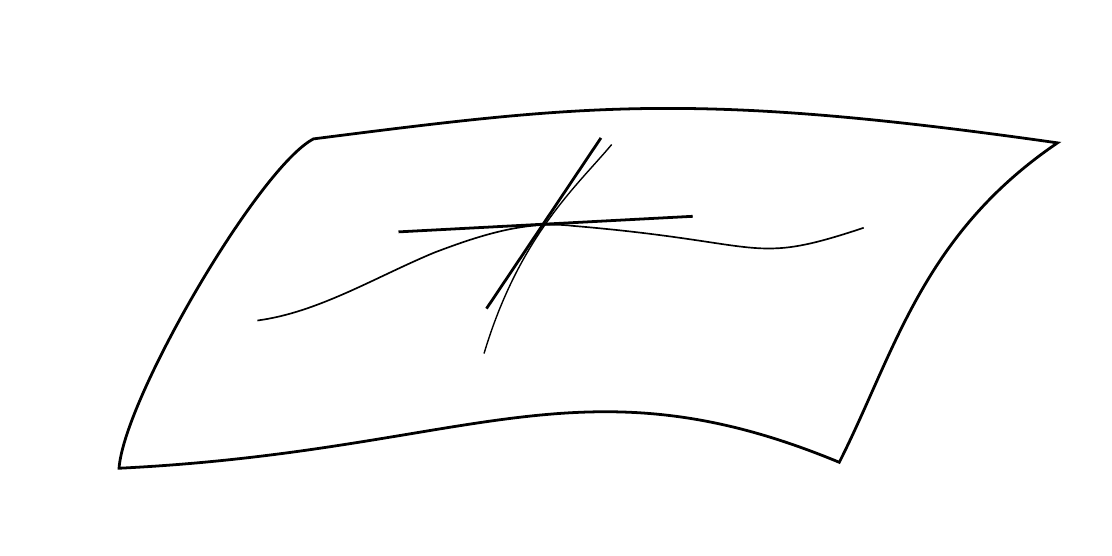}
\begin{pgfpicture}
\pgfputat{\pgfxy(-6.4,1.7)}{\pgfbox[center,center]{$f(X)$}}
\pgfputat{\pgfxy(-4,2.8)}{\pgfbox[center,center]{$Z$}}
\pgfputat{\pgfxy(-1.7,1.8)}{\pgfbox[center,center]{$Y$}}
\end{pgfpicture}
\end{center}
\vspace{-0.1cm}
\caption{Transverse submanifolds $f(X)$ and $Z$ of $Y$.}\label{fig:transversality}
\end{figure}
\end{remark}

\begin{proposition}\label{prop:transvsubmnfld}
If a $C^k$ map $f:X\to Y$ is transverse to a (smooth) submanifold $Z\subset Y$, then $f^{-1}(Z)$ is a $C^k$ embedded submanifold of $X$, and the tangent space at any $x\in f^{-1}(Z)$ is given by
\begin{equation}\label{eq:tf-1}
T_xf^{-1}(Z)=\dd f(x)^{-1}\left[T_{f(x)}Z\right].
\end{equation}
\end{proposition}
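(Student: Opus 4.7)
The plan is to reduce this to Proposition~\ref{prop:regularvalue} by describing $Z$ locally as the zero set of a submersion, and then showing that the composite with $f$ has $0$ as a regular value. Since being a submanifold is a local property (recall Definition~\ref{def:submnfldchart} and Remark~\ref{re:submnflds}), I would fix $x_0 \in f^{-1}(Z)$ and work in a neighborhood of $f(x_0)$.

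First, using a submanifold chart $(V,\psi)$ of $Z$ around $f(x_0)$, there exists a closed complemented subspace $S \subset T_{f(x_0)}Y$ (roughly, $\psi(V\cap Z)$ sits as $\psi(V)\cap S$). Let $W$ be a topological complement of $S$ in the model space of $V$, and let $p: \psi(V) \to W$ be the projection onto $W$ along $S$. Composing $\psi$ with $p$, I would obtain a $C^k$ submersion $\pi: V \to W$ such that $V\cap Z = \pi^{-1}(0)$ and $\ker\dd\pi(f(x_0))=T_{f(x_0)}Z$ (equivalently, $\pi$ is the local representative of the quotient map $T_{f(x_0)}Y\to T_{f(x_0)}Y/T_{f(x_0)}Z$ appearing in Remark~\ref{re:transversalityeq}).

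The next step is to consider $g = \pi \circ f : f^{-1}(V) \to W$ and verify that $0$ is a regular value of $g$. At any $x \in g^{-1}(0) = f^{-1}(Z)\cap f^{-1}(V)$, the chain rule gives $\dd g(x) = \dd\pi(f(x)) \circ \dd f(x)$, and this is exactly the composite \eqref{eq:transversality2} up to identifying $W$ with $T_{f(x)}Y/T_{f(x)}Z$. Surjectivity of $\dd g(x)$ is then immediate from Remark~\ref{re:transversalityeq}. Moreover, $\ker \dd g(x) = \dd f(x)^{-1}[\ker \dd \pi(f(x))] = \dd f(x)^{-1}[T_{f(x)}Z]$, which by hypothesis is complemented in $T_xX$; thus $0$ is a regular value in the sense of Definition~\ref{def:regular}. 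Applying Proposition~\ref{prop:regularvalue} to $g$ shows that $f^{-1}(Z)\cap f^{-1}(V)$ is a $C^k$ embedded submanifold of $X$ with tangent space at $x$ given by $\ker\dd g(x) = \dd f(x)^{-1}[T_{f(x)}Z]$, proving both claims locally, hence globally.

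The only mildly delicate point is the construction of the local submersion $\pi$: one must make sure that the projection along $S$ genuinely realizes $V\cap Z$ as $\pi^{-1}(0)$ and has surjective differential with kernel exactly $T_{f(x_0)}Z$. This is where the assumption that $S$ is \emph{complemented} (built into Definition~\ref{def:submnfldchart}) is essential, so that $p$ is a bounded projection and $\pi$ is a genuine $C^k$ submersion. Once $\pi$ is in hand, the remainder of the argument is just the chain rule together with the definitions.
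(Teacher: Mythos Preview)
Your proof is correct and follows essentially the same route as the paper's: both localize via a submanifold chart to write $Z\cap V=\pi^{-1}(0)$ for a submersion $\pi$ (the paper's $p_2\circ\psi$ is your $\pi$), compose with $f$, and apply Proposition~\ref{prop:regularvalue} after checking that $0$ is a regular value of $g=\pi\circ f$. The only cosmetic difference is that you invoke the quotient--map reformulation of transversality (Remark~\ref{re:transversalityeq}) for surjectivity of $\dd g(x)$, whereas the paper does the equivalent element chase explicitly.
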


\begin{proof}
Since the condition of being a submanifold is local, it suffices to prove that for any $x\in f^{-1}(Z)$, there exists an open neighborhood $U\ni x$, such that $f^{-1}(Z)\cap U$ is a $C^k$ submanifold. For a fixed $x\in f^{-1}(Z)$, there exists a (smooth) submanifold chart $(V,\psi)$ of $Z$ around $f(x)$ in $Y$, with $\psi:V\to V_1\oplus V_2$, $\psi(Z\cap V)\subset V_1$. Let $p_2:V_1\oplus V_2\to V_2$ be the projection. Then $Z\cap V=(p_2\circ\psi)^{-1}(0)$, and $0\in V_2$ is a regular value for $p_2\circ\psi$. Since $f$ is continuous, there exists an open neighborhood $U\ni x$ such that $f(U)\subset V$.

Consider the composite map $g$ given by
\begin{equation}
g:U\xrightarrow{\;\;f\;\;} V\xrightarrow{\;\;\psi\;\;} V_1\times V_2\xrightarrow{\;\;p_2\;\;} V_2,
\end{equation}
i.e., $g=p_2\circ\psi\circ f$, which is clearly $C^k$. We claim that $\dd g(x)$ is surjective. In fact, for each $a\in T_0 V_2$, since $0$ is a regular value for $p_2\circ\psi$, there exists $b\in T_{f(x)}Y$, such that $\dd (p_2\circ\psi)(f(x))b=a$. From transversality of $f$ to $Z$ (recall \eqref{eq:transversality}), there exists $c\in T_xX$ and $d\in T_{f(x)}Z$ such that $\dd f(x)c+d=b$. Since $p_2\circ\psi$ is constant on $Z$, $\dd (p_2\circ\psi)(f(x))d=0$. It follows that 
\begin{eqnarray*}
\dd g(x)c &=& \dd (p_2\circ\psi\circ f)(x)c \\
&=& \dd (p_2\circ\psi)(f(x))\dd f(x)c \\
&=& \dd (p_2\circ\psi)(f(x))\dd f(x)c+\dd(p_2\circ\psi)(f(x))d \\
&=& \dd (p_2\circ\psi)(f(x))b \\
&=& a.
\end{eqnarray*}

Thus, reducing $U$ if necessary, $g$ is a $C^k$ submersion at all points of $g^{-1}(0)$, which is hence a regular value of $g$. From Proposition~\ref{prop:regularvalue}, it follows that $g^{-1}(0)=f^{-1}(Z)\cap U$ is a $C^k$ submanifold of $X$, which proves that $f^{-1}(Z)$ is a $C^k$ submanifold, as discussed above. In addition, $$T_xf^{-1}(Z)=T_xg^{-1}(0)=\ker\dd g(x).$$ Since $(p_2\circ\psi)^{-1}(0)=Z\cap V$ and $0$ is a regular value for this map, applying the second part of Proposition~\ref{prop:regularvalue}, it follows that $$T_{f(x)}Z=\ker\dd (p_2\circ\psi)(f(x)).$$ Moreover, $\dd g(x)=\dd (p_2\circ\psi)(f(x))\dd f(x)$, hence $v\in\ker\dd g(x)$ if and only if $\dd f(x)v\in\ker\dd (p_2\circ\psi)(f(x))$. This implies that $T_xf^{-1}(Z)$ coincides with $\dd f(x)^{-1}\left[T_{f(x)}Z\right]$, concluding the proof. Notice that $T_x f^{-1}(Z)$ is an automatically complemented subspace as a consequence of Definition~\ref{def:transversality}.
\end{proof}

\begin{remark}\label{re:transvsubmnfldhilbert}
Analogously to Remark~\ref{re:regularvaluehilbert}, the above result has an obvious version for Hilbert manifolds, with identical proof.
\end{remark}

Let us give an example of a nontrivial use of transversality to characterize Morse functions.

\begin{proposition}\label{prop:morseifftransv}
Consider $X$ a Hilbert manifold and $f\in C^k(X)$ a function such that its Hessian, regarded as \eqref{eq:hessfx0op}, $$\hess(f)(x_0):T_{x_0}X\la T_{x_0}X^*\cong T_{x_0}X$$ is a Fredholm operator at every critical point $x_0$ of $f$. Then $f$ is Morse if and only if $\dd f$ is transverse to the null section $\mathbf 0_{TX^*}$ of the cotangent bundle.
\end{proposition}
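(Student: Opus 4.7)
The plan is to reduce the transversality condition at critical points to surjectivity of the Hessian operator, and then use the Fredholm plus self-adjointness assumption to upgrade surjectivity to strong nondegeneracy via Lemmas~\ref{le:selfadjointzero} and~\ref{le:fred0}.

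First I would observe that, under the identification of $\mathbf{0}_{TX^*}$ with $X$ as a submanifold of $TX^*$ (Remark~\ref{re:nullsectionsubmnfld}), the preimage $(\dd f)^{-1}(\mathbf{0}_{TX^*})$ is exactly $\crit(f)$, since $x\in (\dd f)^{-1}(\mathbf{0}_{TX^*})$ iff $\dd f(x)=0$. Therefore the transversality condition in Definition~\ref{def:transversality} needs to be checked only at critical points $x_0$ of $f$.

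Next I would compute the differential $\dd(\dd f)(x_0)$ at such a critical point. By Remark~\ref{re:ttx} we have the canonical splitting $T_{(x_0,0_{x_0})}TX^*\cong T_{x_0}X\oplus T_{x_0}X^*$, under which $T_{(x_0,0_{x_0})}\mathbf{0}_{TX^*}$ corresponds to $T_{x_0}X\oplus\{0\}$. By Remark~\ref{re:equivalenthess} (formula \eqref{eq:hessfx0}), the differential reads
\[
\dd^2 f(x_0): T_{x_0}X\ni v\longmapsto \bigl(v,\hess(f)(x_0)(v,\cdot)\bigr)\in T_{x_0}X\oplus T_{x_0}X^*.
\]
Consequently
\[
\im \dd^2 f(x_0)+T_{(x_0,0_{x_0})}\mathbf{0}_{TX^*}=T_{x_0}X\oplus \im \hess(f)(x_0),
\]
so that condition \eqref{eq:transversality} at $x_0$ is equivalent to surjectivity of $\hess(f)(x_0):T_{x_0}X\to T_{x_0}X^*$. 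The complementedness requirement is automatic: the preimage $(\dd^2 f(x_0))^{-1}[T_{x_0}X\oplus\{0\}]$ equals $\ker\hess(f)(x_0)$, which is closed (since the Hessian is continuous) and hence, in the Hilbert space $T_{x_0}X$, admits an orthogonal complement.

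Finally, I would upgrade surjectivity to isomorphism. Composing $\hess(f)(x_0)$ with the Riesz isometry \eqref{ident:dual} produces a continuous operator on $T_{x_0}X$ which is self-adjoint by Lemma~\ref{le:symselfadjoint} (the Hessian is symmetric) and Fredholm by hypothesis; hence it has index zero by Lemma~\ref{le:selfadjointzero}. By Lemma~\ref{le:fred0}, for such an operator surjectivity is equivalent to injectivity, and therefore to being a topological isomorphism, i.e.\ to $x_0$ being strongly nondegenerate in the sense of Definition~\ref{def:degnondegstdeg}. Assembling the pieces, $\dd f$ is transverse to $\mathbf{0}_{TX^*}$ iff every critical point of $f$ is strongly nondegenerate, i.e.\ iff $f$ is Morse. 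No step is particularly delicate; the only subtlety worth flagging is the correct use of the canonical horizontal/vertical decomposition at the zero section (Remark~\ref{re:ttx}), which is what allows one to identify $\dd^2 f(x_0)$ with the Hessian without invoking a connection.
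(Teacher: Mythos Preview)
Your proof is correct and follows essentially the same route as the paper: identify $(\dd f)^{-1}(\mathbf 0_{TX^*})$ with $\crit(f)$, use the canonical horizontal/vertical splitting at the zero section to reduce transversality to surjectivity of $\hess(f)(x_0)$, and then invoke the self-adjoint Fredholm index-zero argument (Lemmas~\ref{le:selfadjointzero} and~\ref{le:fred0}). Your handling of the complementedness condition is in fact slightly more precise than the paper's: you correctly identify $(\dd^2 f(x_0))^{-1}[T_{(x_0,0_{x_0})}\mathbf 0_{TX^*}]$ as $\ker\hess(f)(x_0)$ rather than all of $T_{x_0}X$, though either description yields complementedness.
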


\begin{proof}
Recall that $\dd f$ may be regarded as section $\dd f\in\sect^{k-1}(TX^*)$. First, notice that $(\dd f)^{-1}(\mathbf 0_{TX^*})$ is precisely the set of critical points of $f$. Let $x_0$ be one such critical point. We will use the decomposition of $T_{(x_0,0_{x_0})}TX^*$ in horizontal and vertical parts given in Remark~\ref{re:ttx}. Namely, from \eqref{eq:ttxident2}, the operator $\dd^2 f(x_0)$ is given by \eqref{eq:hessfx0}. Therefore, $\big(\dd^2 f(x_0)\big)^{-1}\left[T_{(x_0,0_{x_0})}\mathbf 0_{TX^*}\right]$ is automatically complemented on $T_{x_0}X$, since from \eqref{eq:ttxident1} and \eqref{eq:hessfx0} it is possible to identify this space with the whole $T_{x_0}X$.

Thus, the condition that $\dd f$ be transverse to $\mathbf 0_{TX^*}$ is now precisely the surjectivity of the following composite map for every critical point $x_0$ of $f$, see Remark~\ref{re:transversalityeq}.
\begin{equation}\label{eq:transversalityhessf}
T_{x_0}X\xrightarrow{\;\;\dd^2 f(x_0)\;\;} T_{(x_0,0_{x_0})}TX^*\xrightarrow{\;\;\;\;}\dfrac{T_{(x_0,0_{x_0})}TX^*}{T_{(x_0,0_{x_0})}\mathbf 0_{TX^*}}\stackrel{\eqref{eq:ttxident1}\;\eqref{eq:ttxident2}}{\cong} T_{x_0}X^*
\end{equation}
It is easy to see that this composite map coincides with $\hess(f)(x_0)$. Since this is a self--adjoint Fredholm operator, from Lemma~\ref{le:selfadjointzero} it has index zero. Thus, it is injective if and only if it is surjective. This proves that $\dd f$ is transverse to $\mathbf 0_{TX^*}$ if and only if every critical point $x_0$ is strongly nondegenerate. Hence $f$ is Morse if and only if $\dd f$ is transverse to $\mathbf 0_{TX^*}$.
\end{proof}

\begin{corollary}
Let $f\in C^k(X)$ be a function on a Hilbert manifold $X$. Then if $\dd f:X\to TX^*$ is a nonlinear Fredholm map, transversality of $\dd f$ to $\mathbf 0_{TX^*}$ is equivalent to $f$ being Morse.
\end{corollary}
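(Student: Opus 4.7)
The plan is to reduce the statement to Proposition \ref{prop:morseifftransv} by showing that its hypothesis---Fredholmness of $\hess(f)(x_0)$ at every critical point $x_0$---is already contained in the stronger assumption that $\dd f:X\to TX^*$ is a nonlinear Fredholm map. Once this is verified, the corollary follows at once.

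The first step is to exploit the canonical splitting available at a critical point. At any critical point $x_0$ one has $\dd f(x_0)=(x_0,0_{x_0})\in\mathbf 0_{TX^*}$, so Remark~\ref{re:ttx} and formula \eqref{eq:hessfx0} provide the identification
\begin{equation*}
T_{(x_0,0_{x_0})}TX^*\;\cong\;T_{x_0}X\oplus T_{x_0}X^*,
\end{equation*}
with respect to which the differential of the section $\dd f$ at $x_0$ acquires the explicit block form
\begin{equation*}
\dd(\dd f)(x_0)(v)\;=\;\bigl(v,\,\hess(f)(x_0)(v,\cdot\,)\bigr).
\end{equation*}
Thus $\hess(f)(x_0)$ appears as the ``vertical'' component of the linearization of $\dd f$ at its zero, while the ``horizontal'' component is the identity of $T_{x_0}X$.

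The second step is a kernel--cokernel comparison between $\dd(\dd f)(x_0)$ and $\hess(f)(x_0)$ based on this block form, assuming $\dd(\dd f)(x_0)$ is Fredholm. Here I would exploit that, by Lemma~\ref{le:symselfadjoint}, the operator representing the symmetric bilinear form $\hess(f)(x_0)$ via the Hilbert inner product (identification \eqref{ident:dual}) is self-adjoint, and that its image and cokernel are controlled by those of $\dd(\dd f)(x_0)$ through the natural continuous map $(a,b)\mapsto b-\hess(f)(x_0)(a,\cdot)$ whose kernel is precisely the image of $\dd(\dd f)(x_0)$. From Fredholmness of $\dd(\dd f)(x_0)$ I would extract finite-dimensionality of the relevant quotient, and then use self-adjointness together with the relation $\ker T=(\overline{\im T})^\perp$ from \eqref{eq:ateminhamaesabe} to pass from finite codimensionality of $\im\hess(f)(x_0)$ to its closedness and to the matching finite-dimensionality of $\ker\hess(f)(x_0)$. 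This yields Fredholmness of $\hess(f)(x_0)$, with index zero by Lemma~\ref{le:selfadjointzero}.

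The main obstacle I anticipate lies in this second step, specifically in ruling out the subtlety that a subspace of finite codimension in a Banach space need not be closed in general; here self-adjointness is what will rescue the argument, through the orthogonality relation $(\ker\hess(f)(x_0))^\perp=\overline{\im\hess(f)(x_0)}$ which upgrades finite codimensionality to closedness once the cokernel is known to be finite dimensional. Having secured that $\hess(f)(x_0)$ is Fredholm at each critical point, Proposition~\ref{prop:morseifftransv} applies verbatim and delivers the equivalence between $f$ being Morse and $\dd f$ being transverse to $\mathbf 0_{TX^*}$.
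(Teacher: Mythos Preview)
Your overall strategy is the same as the paper's: verify that $\hess(f)(x_0)$ is Fredholm at every critical point $x_0$ and then invoke Proposition~\ref{prop:morseifftransv}. The paper dispatches this in a single sentence without further justification.

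Your second step, however, overlooks that the map $(a,b)\mapsto b-\hess(f)(x_0)(a,\cdot)$ you introduce is \emph{surjective} onto $T_{x_0}X^*$ (set $a=0$). Since its kernel is exactly $\im\dd(\dd f)(x_0)$, the cokernel of $\dd(\dd f)(x_0)$ is isomorphic to all of $T_{x_0}X^*$, not to $\coker\hess(f)(x_0)$. Thus Fredholmness of $\dd(\dd f)(x_0)$ already forces $T_{x_0}X$ itself to be finite-dimensional, after which $\hess(f)(x_0)$ is trivially Fredholm as an operator between finite-dimensional spaces. The self-adjointness and closedness considerations you flag as the main obstacle are then entirely unnecessary. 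Your plan does reach the correct conclusion, but the mechanism you sketch---extracting finite codimensionality of $\im\hess(f)(x_0)$ and upgrading via self-adjointness---misreads what your own computation actually yields; the hypothesis collapses the problem to finite dimensions outright, which also explains why the paper's proof can afford to be a one-liner.
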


\begin{proof}
If $\dd f:X\to TX^*$ is a nonlinear Fredholm map, then $\hess(f)(x_0):T_{x_0}X\to T_{x_0}X^*\cong T_{x_0}X$ is Fredholm at every critical point $x_0$ of $f$, and Proposition~\ref{prop:morseifftransv} applies.
\end{proof}

To end our discussion of transversality, we prove the following auxiliary result of independent interest which asserts that the transversality condition is open.

\begin{lemma}\label{le:transvopen}
Let $f:X\to Y$ be a $C^k$ map between Banach manifolds and $Z$ a Banach submanifold of $Y$. Then the following is an open subset of $f^{-1}(Z)$,
\begin{equation}
\mathfrak A=\{x\in f^{-1}(Z):f \mbox{ is transverse to }Z \mbox{ at } x\}
\end{equation}
\end{lemma}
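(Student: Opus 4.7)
The plan is to reduce the openness of $\mathfrak{A}$ to the openness (in the domain of a single map) of the set of regular points of an auxiliary $C^k$ map, which has already been established in Lemma~\ref{le:critclosed}.

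First, I would fix an arbitrary $x_0\in\mathfrak{A}$ and produce a small chart neighborhood adapted to $Z$, exactly as in the proof of Proposition~\ref{prop:transvsubmnfld}. Namely, pick a submanifold chart $\psi:V\to V_1\oplus V_2$ of $Z$ around $f(x_0)$, with $\psi(Z\cap V)\subset V_1\oplus\{0\}$, and choose an open neighborhood $U\ni x_0$ in $X$ small enough that $f(U)\subset V$. Then form the $C^k$ map
\[
g=p_2\circ\psi\circ f:U\la V_2,
\]
so that $g^{-1}(0)=f^{-1}(Z)\cap U$ and, for every $x\in g^{-1}(0)$,
\[
\ker\dd g(x)=\dd f(x)^{-1}\bigl[\ker\dd(p_2\circ\psi)(f(x))\bigr]=\dd f(x)^{-1}\bigl[T_{f(x)}Z\bigr].
\]

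The key observation is the following equivalence, valid for each $x\in g^{-1}(0)$: $f$ is transverse to $Z$ at $x$ in the sense of Definition~\ref{def:transversality} if and only if $x$ is a regular point of $g$ in the sense of Definition~\ref{def:regular}. Indeed, the complemented-kernel clause matches directly by the identity above, and the surjectivity clause is exactly the one proved in Proposition~\ref{prop:transvsubmnfld}: using the fact that $\dd(p_2\circ\psi)(f(x))$ is a surjection with kernel $T_{f(x)}Z$, condition \eqref{eq:transversality} is equivalent to surjectivity of $\dd g(x)=\dd(p_2\circ\psi)(f(x))\circ\dd f(x)$.

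Once this equivalence is in hand, Lemma~\ref{le:critclosed} applied to $g$ says that $\crit(g)$ is closed in $U$, hence $U\setminus\crit(g)$ (the set of regular points of $g$) is open in $U$. Intersecting with $g^{-1}(0)=f^{-1}(Z)\cap U$ gives that
\[
\mathfrak{A}\cap U=\bigl(U\setminus\crit(g)\bigr)\cap f^{-1}(Z)
\]
is open in $f^{-1}(Z)\cap U$, and therefore open in $f^{-1}(Z)$. Since $x_0\in\mathfrak{A}$ was arbitrary, $\mathfrak{A}$ is open in $f^{-1}(Z)$. I do not expect any real obstacle: the only subtle point is matching the two clauses of transversality (surjectivity plus complementedness) with the two clauses of regularity for $g$, and this is an immediate bookkeeping consequence of the identifications produced by the submanifold chart.
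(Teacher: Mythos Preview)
Your proof is correct and follows essentially the same approach as the paper: both localize via a submanifold chart for $Z$, form the auxiliary map $g=p_2\circ\psi\circ f$ (the paper calls it $\phi\circ f$), and identify transversality of $f$ to $Z$ at $x$ with regularity of $x$ for $g$. The only cosmetic difference is that you invoke Lemma~\ref{le:critclosed} as a black box to conclude openness of the regular set, whereas the paper unpacks that step directly via Lemma~\ref{le:tsobreab} and a sequential argument.
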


\begin{proof}
Recall that, from Definition~\ref{def:transversality}, $x\in\mathfrak A$ if and only if $\dd f(x)^{-1}\left[T_{f(x)}Z\right]$ is complemented in $T_xX$ and
$\im \dd f(x) +T_{f(x)}Z=T_{f(x)}Y.$ Given $x_0\in\mathfrak A$, we will prove that there exists $U$ an open neighborhood of $x_0$, with $U\cap f^{-1}(Z)\subset\mathfrak A$.

Since $Z$ is a submanifold, there exists $\varphi:V\to B$ a submanifold chart\footnote{See Definition~\ref{def:submnfldchart}.} of $Z$ around $f(x_0)$, where $B$ is a Banach space. Standard arguments prove that this submanifold chart may be taken satisfying $V\cap Z=\phi^{-1}(0)$, where $\phi:V\to B$ is a smooth map having $0\in B$ as regular value. Consider
\begin{equation*}
U=\{x\in f^{-1}(V):\im\dd(\phi\circ f)(x)=B\mbox{ and } \ker\dd(\phi\circ f)(x) \mbox{ is complemented}\}
\end{equation*}
We claim that this is an open neighborhood of $x_0\in X$. Indeed, let $\{x_n\}_{n\in\N}$ be a sequence of elements of $X\setminus U$ that converges to $x_\infty\in X$. By taking a local chart around $x_\infty$, we may assume that $\varphi\circ f$ is defined between open subsets of Banach spaces. Lemma~\ref{le:tsobreab} then implies that $x_\infty\in X\setminus U$, hence $U$ is open in $X$. Finally, $U$ clearly satisfies $x_0\in U\cap f^{-1}(Z)\subset\mathfrak A$, concluding the proof that $\mathcal A$ is open.
\end{proof}

\begin{remark}
Lemma~\ref{le:transvopen} asserts that given any submanifold $Z$ of $Y$, the subset $\mathfrak A$ of the preimage $f^{-1}(Z)$ where $f$ is transverse to $Z$ is open in $f^{-1}(Z)$. This means that even if $f^{-1}(Z)$ is not a submanifold of $X$, the {\em natural} part of $f^{-1}(Z)$ that is candidate to be a submanifold of $X$ is given by the intersection of an open subset of $X$ and $f^{-1}(Z)$. This has important consequences and also illustrates the situation that occurs in the presence of singularities, for instance in the case of the so--called {\em good orbifolds}, see Alexandrino and Bettiol \cite{ilgaag}.
\end{remark}

\begin{remark}\label{re:transvopen}
Another interesting interpretation of Lemma~\ref{le:transvopen} is that the condition of transversality to a fixed submanifold $Z$ is an {\em open} condition. Indeed, suppose $X$ is a submanifold of $Y$ and consider $f:X\to Y$ the inclusion. Then the set of points $x\in X\cap Z$ where these submanifolds are transverse is open in $X\cap Z$.
\end{remark}

\begin{proposition}\label{prop:dovalorinterm}
Let $X$ and $Y$ be Banach manifolds, $A$ a topological space and $f:A\times X\to Y$ a continuous map. Suppose there exists $S$ a submanifold of $Y$ with $\codim_Y S=1$, $a_0\in A$ and $x_0\in X$ such that $f(a_0,x_0)\in S$, $f(a_0,\cdot\,):X\to Y$ is of class $C^1$ and $$T_{f(a_0,x_0)}Y=T_{f(a_0,x_0)}S\oplus\im\frac{\partial f}{\partial x}(a_0,x_0).$$ Then there exists an open neighborhood $U$ of $a_0$ in $A$ such that for all $a\in U$, $S\cap\im f(a,\cdot\,)\neq\emptyset$.
\end{proposition}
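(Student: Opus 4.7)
The strategy is to reduce the statement to the intermediate value theorem applied along a carefully chosen one--parameter family in $X$. Since $\codim_Y S=1$, I can choose a smooth submanifold chart around $y_0:=f(a_0,x_0)$ that realises $S\cap V=\phi^{-1}(0)$, where $V\subset Y$ is an open neighbourhood of $y_0$ and $\phi:V\to\R$ is a smooth submersion with $\ker\dd\phi(y_0)=T_{y_0}S$. Set $g:f^{-1}(V)\to\R$, $g(a,x)=\phi(f(a,x))$; this is continuous, and $g(a_0,\cdot)$ is $C^1$ near $x_0$ with
\[
\frac{\partial g}{\partial x}(a_0,x_0)=\dd\phi(y_0)\circ\frac{\partial f}{\partial x}(a_0,x_0).
\]

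The hypothesis $T_{y_0}Y=T_{y_0}S\oplus\im\frac{\partial f}{\partial x}(a_0,x_0)$ says precisely that $\im\frac{\partial f}{\partial x}(a_0,x_0)$ is a topological complement to $\ker\dd\phi(y_0)$, so $\dd\phi(y_0)$ restricts to an isomorphism on this image and the bounded linear functional $\frac{\partial g}{\partial x}(a_0,x_0):T_{x_0}X\to\R$ is nonzero. Thus there exists $v\in T_{x_0}X$ with $\frac{\partial g}{\partial x}(a_0,x_0)v>0$. Pick any $C^1$ curve $\gamma:(-\delta,\delta)\to X$ with $\gamma(0)=x_0$ and $\dot\gamma(0)=v$ (for instance, using a local chart around $x_0$). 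Then $h(t):=g(a_0,\gamma(t))$ is $C^1$ with $h(0)=0$ and $h'(0)>0$, so there is $\varepsilon\in(0,\delta)$ with $\gamma([-\varepsilon,\varepsilon])\subset f(a_0,\cdot)^{-1}(V)$ and $h(-\varepsilon)<0<h(\varepsilon)$.

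Continuity of $g$ at the two points $(a_0,\gamma(\pm\varepsilon))$ now yields an open neighbourhood $U$ of $a_0$ in $A$ such that, shrinking $U$ if necessary so that $\{a\}\times\gamma([-\varepsilon,\varepsilon])\subset f^{-1}(V)$ for all $a\in U$ (using continuity of $f$ on the compact set $\{a_0\}\times\gamma([-\varepsilon,\varepsilon])$ and a standard tube argument), one has $g(a,\gamma(-\varepsilon))<0<g(a,\gamma(\varepsilon))$ for every $a\in U$. For each such $a$ the map $t\mapsto g(a,\gamma(t))$ is continuous on $[-\varepsilon,\varepsilon]$, so by the intermediate value theorem there exists $t_a\in(-\varepsilon,\varepsilon)$ with $\phi(f(a,\gamma(t_a)))=0$, i.e.\ $f(a,\gamma(t_a))\in S\cap V\subset S\cap\im f(a,\cdot)$, which is the desired conclusion.

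The only mild obstacle is that for $a\neq a_0$ we cannot assume $f(a,\cdot)$ is differentiable, so the implicit function theorem is not available; however, the $C^1$ hypothesis at $a_0$ is used solely to produce the transverse direction $v$, and once the strict inequality $h'(0)>0$ has been promoted to a strict sign change at the two endpoints $\gamma(\pm\varepsilon)$, only joint continuity of $f$ is needed to propagate the sign change to nearby $a$ and invoke the classical intermediate value theorem.
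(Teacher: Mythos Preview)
Your proof is correct and follows essentially the same route as the paper's: both localize to a real-valued defining function for $S$ (the paper's linear functional $\alpha$, your submersion $\phi$), pick a one-dimensional direction in $X$ where the composed derivative is nonzero (the paper restricts $X$ to a line, you choose a curve $\gamma$), observe the resulting sign change at $\pm\varepsilon$, and then use joint continuity to propagate the sign change to nearby $a$ and apply the intermediate value theorem. Your tube-lemma step making sure $\{a\}\times\gamma([-\varepsilon,\varepsilon])\subset f^{-1}(V)$ is a detail the paper leaves implicit, but the overall argument is the same.
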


\begin{proof}
Since the matter is local, by taking local charts we may assume without loss of generality that $X$ is an open subset of a Banach space, $x_0\in X$ the origin of this Banach space, $Y$ a Banach space and $S$ a closed subspace\footnote{In fact, for this to be possible it suffices to choose a local chart around $f(a_0,x_0)$ that is a submanifold chart of $S$ in $Y$, see Definition~\ref{def:submnfldchart}.} of $Y$. Since $\codim_Y S=1$ and $S$ is closed, there exists a continuous functional $\alpha\in Y^*$ such that $S=\ker\alpha$.

Without loss of generality, we may also suppose that $X$ is an open subset of $\R$. In fact, let $q:Y\to Y/S$ be the quotient map. Since $$Y=S\oplus\im\frac{\partial f}{\partial x}(a_0,0),$$ also $q\circ\frac{\partial f}{\partial x}(a_0,0)\vert_{T_xX}$ is surjective. From $\dim Y/S=1$, it follows that $q\circ\frac{\partial f}{\partial x}(a_0,0)\vert_{T_xX}$ is already surjective when restricted to (an adequate) one--dimensional subspace. Thus, we may assume\footnote{Notice that proving the result for a {\em smaller} $X$ automatically implies that the property remains valid for any {\em larger} $X$, since an enlargement of $X$ weakens the assertion $S\cap\im f(a,\cdot\,)\neq\emptyset$.} $X\subset\R$.

Consider now the composite $\widetilde f=\alpha\circ f:A\times X\subset A\times\R\to\R$. Then, since $f(a_0,0)\in S$, it follows that $\widetilde f(a_0,0)=0$ and the operator $\frac{\partial\widetilde f}{\partial x}(a_0,0):\R\to\R$, given by $\alpha\circ\frac{\partial f}{\partial x}(a_0,0)$ is surjective, for $\frac{\partial f}{\partial x}(a_0,0)$ is surjective. Hence it is an isomorphism, since it is surjective between vector spaces of the same (finite) dimension. Thus $\frac{\partial\widetilde f}{\partial x}(a_0,0)\neq 0$, hence there exists $\delta>0$ such that for $0<\varepsilon<\delta$, $$\widetilde f(a_0,-\varepsilon)\widetilde f(a_0,\varepsilon)<0.$$ From continuity of $\widetilde f$, it follows that there exists a neighborhood $U$ of $a_0\in A$ such that for $a\in U$, $\widetilde f(a_0,-\varepsilon)\widetilde f(a_0,\varepsilon)<0,$ hence $0\in\im\widetilde f(a,\cdot\,)$. Since $S=\ker\alpha$, this is equivalent to $S\cap\im f(a,\cdot\,)\neq\emptyset$, concluding the proof.
\end{proof}

\begin{remark}\label{re:codimalta}
Although it may seem that the hypothesis $\codim_Y S=1$ was strongly used, it may be replaced by $\codim_Y S=n<+\infty$. The functional $\alpha\in Y^*$ must then be replaced by a continuous operator $\alpha:Y\to\R^n$ with $\ker\alpha=S$, which has codimension $n$. Moreover, $X$ is assumed an open subset of $\R^n$, instead of $\R$, and the final argument of sign change when passing through the zero becomes a topological degree argument. Continuity of the topological degree then implies the existence of the desired open neighborhood of $a_0$, concluding the proof.
\end{remark}

We end this section briefly introducing {\em Riemann--Hilbert structures} on Hilbert manifolds. This is a natural generalization of Riemannian structures on finite--dimensional manifolds, recalled in Section~\ref{sec:metricsetc}. We will only define the concept of Riemannian metric in the infinite--dimensional setting, as a natural extension of Definition~\ref{def:srmetric}. For a comprehensive study of infinite--dimensional Riemannian geometry, see Lang \cite{lang}.

Obviously, since we shall endow each tangent space $T_xX$ of the infinite--dimensional manifold $X$ with an inner product, $X$ must be supposed a Hilbert manifold. Analogously to Definition~\ref{def:section} and Remark~\ref{re:secvetsp}, consider $\sect^k(TX^*\otimes TX^*)$ the vector space of $C^k$ sections $$B:X\la TX^*\otimes TX^*$$ of the tensor bundle $TX^*\otimes TX^*$ over the Hilbert manifold $X$. Consider also symmetric and skew--symmetric subbundles, analogously to Definition~\ref{def:symskewsym}.

\begin{definition}\label{def:metricinfinitemnfld}
A section $G\in\sect^k(TX^*\vee TX^*)$ is a {\em $C^k$ Riemannian metric}\index{Riemannian!metric!infinite--dimensional} on a Hilbert manifold $X$ if for all $x\in X$, the bilinear form $$G(x):T_xX\times T_xX\la\R$$ is $G(x)$ is a Hilbert inner product on the Hilbertable space $T_xX$.

A Hilbert manifold $X$ endowed with a Riemannian metric is said to be endowed with a {\em Riemann--Hilbert structure}.\index{Riemann--Hilbert structure}
\end{definition}

\begin{remark}\label{re:infinitedimmetricspace}
As in the finite--dimensional case, the presence of a Riemannian metric on a Hilbert manifold $X$ induces a metric space structure on $X$, and the definition of distance is similar to Definition~\ref{def:distance}.
\end{remark}

\begin{remark}\label{re:metricinfinitesmfld}
If $S\subset X$ is a Hilbert submanifold\footnote{See Definition~\ref{def:submnfldchart}.} of $X$, the restriction $G|_S$ is a section of the subbundle $TS^*\vee TS^*$ that automatically satisfies conditions in Definition~\ref{def:metricinfinitemnfld} over the Hilbert submanifold $S$. Therefore, the restriction of a Riemannian metric on a Hilbert manifold to a Hilbert submanifold gives a Riemannian metric on this submanifold.
\end{remark}

Although we will not discuss further topics of infinite--dimensional Riemannian geometry, it is possible to extend most definitions of Section~\ref{sec:metricsetc} to this context. For instance, notions of covariant derivative, geodesics and completeness can be defined. In particular, this is commonly used to analyze groups of diffeomorphisms of finite--dimensional manifolds, which are infinite--dimensional groups whose operation is left composition, which is continuous, but not differentiable. {\em Geodesics} in such groups are one--parameter families of diffeomorphisms that satisfy appropriate conditions, and its study allows to infer several conclusions about the underlying finite--dimensional manifold.

\section{Banach spaces of tensors}
\label{sec:banachspacetensors}

In this section, we are interested in endowing (affine subspaces of) the vector space $\sect^k(TM^*\vee TM^*)$ of $C^k$ symmetric $(0,2)$--tensors with a Banach space structure, recall Definitions~\ref{def:symskewsym} and~\ref{def:banach}. Such necessity arises from the fact that genericity of geodesic flow properties will be stated in terms of $C^k$ semi--Riemannian metrics, that are objects of this space of sections. However, the set $\met_\nu^k(M)$ lacks an adequate topological structure that allows to study the intended generic properties. For this reason, $\met_\nu^k(M)$ is seen as a subset of $\sect^k(TM^*\vee TM^*)$ whose topological and differentiable structures will be inherited from the intersection
\begin{equation}\label{eq:s}
S\cap\met_\nu^k(M)
\end{equation}
with a Banach subspace $S$ of $\sect^k(TM^*\vee TM^*)$. For technical reasons, we will need $S$ to be also separable among some other convenient conditions, see Definition~\ref{def:ckwhitbanachspace} and Remark~\ref{re:emptyinterior}.

\begin{remark}
The subject of this section is part of a widely explored area, namely Banach manifold structures on sets of sections of vector bundles.\footnote{This theory obviously contains special cases of sets of maps, for instance $C^k(M)$, see Example~\ref{ex:functions}.} The classic theory for sections of bundles over {\em compact} manifolds was developed mostly by Palais \cite{palais}. Recently, several extensions of this work to the {\em non compact} setting have been studied in \cite{eells,pictau}. In particular, we refer to Piccione and Tausk \cite{pictau} for a comprehensive description of the manifold structure of sets of maps between non compact manifolds.
\end{remark}

A few important considerations are worth mentioning, regarding the process to induce the desired structures on $\met_\nu^k(M)$ using \eqref{eq:s}.

First, the results in this section regarding Banach structures on vector spaces $\sect^k(E)$ of sections could be analogously done, at least its great majority, for any vector bundle $E$ over $M$, provided that $E$ has a connection, an inner product in each fiber $E_x$ and $M$ has a Riemannian metric. Nevertheless, we will only treat the case of tensor bundles
\begin{equation}\label{eq:tensorbundle}
E=(\otimes^s TM^*)\otimes (\otimes^r TM),
\end{equation}
recall Definitions~\ref{def:vectorbundle},~\ref{def:tensorbundle} and~\ref{def:connection}. By $E$ in this section, we always mean such a tensor bundle. In this particular case, both the necessary connection on $E$ and the inner product in each fiber $E_x$ are induced by the presence of (any) Riemannian metric on $M$ (see Theorem~\ref{thm:tensorconnection} and Definition~\ref{def:fiberproduct}).

Some further descriptions of $\sect^k(E)$ will be only studied for the particular case necessary for our applications, given by $r=0$ and $s=2$, i.e. $E=TM^*\otimes TM^*$; more precisely, concerning just symmetric sections. As a rule, results are usually stated in the most general context possible, however we stress that constructions of this section will be used exclusively in the case above mentioned of $\sect^k(TM^*\vee TM^*)$.

Second, the whole space $\sect^k(TM^*\vee TM^*)$ does not have a canonical Banach space structure for non compact base manifolds $M$, see Remarks~\ref{re:dependgr} and~\ref{re:dependgr2}. Since we are interested in the $C^k$ topology on the space of metrics, we shall define an appropriate $C^k$--norm of tensors, see \eqref{eq:sectionnorm}, and study the subspace $\sect^k_b(TM^*\vee TM^*)$ of sections with finite norm, see Definition~\ref{def:sectionnorm}). This will be proved to be a non separable Banach space (see Proposition~\ref{prop:banachspaceofsections} and Remark~\ref{re:separability}). In order to gain separability of the space of tensors, we shall make further restrictions to the subspace $\sect^k_0(TM^*\vee TM^*)$ of tensors whose norm {\em tends to zero at infinity}, see Definition~\ref{def:tendstozero} and Proposition~\ref{prop:tensorszeroseparable}. This will be our {\em typical} subspace $S$ used in \eqref{eq:s}. More generally, we describe sufficient abstract conditions on such a subspace $S$ in order to induce the adequate structures on $\met_\nu^k(M)$, see Definition~\ref{def:ckwhitbanachspace} and Remark~\ref{re:iiiiii}.

Third, we will fix an auxiliary metric $g_\mathrm A\in\met_\nu^k(M)$ such that the eigenvalue with minimum absolute value of the $g_\mathrm R$--symmetric operator $(g_\mathrm A)_x$ stays uniformly away from $0$. The appropriate space of metrics will be taken as an affine subspace of $\sect^k_b(TM^*\vee TM^*)$ containing $g_\mathrm A$, typically given by $$g_\mathrm A+\sect^k_0(TM^*\vee TM^*),$$ see Figure \ref{fig:sectionspaces}. This will guarantee that the intersection of such affine Banach space with $\met_\nu^k(M)$ is open and nonempty. Under these conditions genericity statements on metrics make sense. Without displacing $\sect^k_0(TM^*\vee TM^*)$ from the origin, the intersection with $\met_\nu^k(M)$ would have empty interior (see Remark~\ref{re:emptyinterior}), hence genericity on open subsets would be an empty statement. We also observe that this is a common setting for studying semi--Riemannian metrics, inspired by asymptotically flat space--times, see Remarks~\ref{re:physics0} and~\ref{re:physics}.

Fourth, the choice of $C^k$ regularity instead of smooth conditions is due to the fact that smooth assumptions would give rise to a Fr\'echet structure rather than a Banach structure in $\sect^\infty(TM^*\vee TM^*)$, see Remark~\ref{re:sectinftyfrechet}. Since the main tools necessary, such as the Sard--Smale Theorem~\ref{thm:sardsmale}, are only available for (separable) Banach spaces, we restrict to the $C^k$ case. However, once genericity of a certain property is established in the $C^k$--topology for all $k\geq k_0$, it is possible to apply a standard argument to extend it to the smooth case. This will be done for our genericity statements on Sections~\ref{sec:smooth1} and~\ref{sec:smooth2}.

Recall that $g_\mathrm R$ denotes a fixed smooth Riemannian metric on $M$ and $E$ a tensor bundle over $M$ given by \eqref{eq:tensorbundle}. We now briefly discuss how $g_\mathrm R$ induces a natural inner product, hence also a norm (called the {\em Hilbert--Schmidt norm}),\index{Hilbert--Schmidt norm} on each fiber
\begin{equation}\label{eq:tensorbundlefiber}
E_x=(\otimes^s T_xM^*)\otimes (\otimes^r T_xM).
\end{equation}

\begin{wrapfigure}{l}{2cm}
\vspace{-0.7cm}
\xymatrix@+15pt{T_xM^* \ar[r]^{g_\mathrm R(x)^{-1}} & T_xM\ar[d]^{i_x} \\
T_xM\ar[u]^{i_x}\ar[r]_{g_\mathrm R(x)} & T_xM^*
}
\vspace{-0.5cm}
\end{wrapfigure}
Since each tangent space $T_xM$ has the inner product $g_\mathrm R(x)$, the dual space $T_xM^*$ has the dual inner product $g_\mathrm R(x)^{-1}$. It is the only linear map that makes the diagram commutative, where $i_x:T_xM\to T_xM^*$ is the natural isometry that maps each vector $v$ to the functional $g_\mathrm R(x)(v,\cdot\,)$. Notice that using the identifications \eqref{ident:bilin}, $i_x=g_\mathrm R(x)$. Now we use Lemma~\ref{le:upt} with $n=r+s$, setting both $V_i$'s and $W_i$'s equal to $T_xM$ $r$ times and to $T_xM^*$ $s$ times. More precisely, consider the map $$T:\underbrace{\prod_{i=1}^s T_xM^*\times\prod_{j=1}^r T_xM}_{V_1\times\dots\times V_n}\times \underbrace{\prod_{i=1}^s T_xM^*\times\prod_{j=1}^r T_xM}_{W_1\times\dots\times W_n}\la \R$$ \begin{multline*}
(v_1^*,\dots,v_s^*,v_1,\dots,v_r;w_1^*,\dots,w_s^*,w_1,\dots,w_r) \longmapsto \\
\longmapsto \prod_{j=1}^r g_\mathrm R(x)(v_j,w_j)\prod_{i=1}^s g_\mathrm R(x)^{-1}(v_i^*,w_i^*).
\end{multline*}
This map $T$ is clearly multilinear. From Lemma~\ref{le:upt}, there exists a bilinear map $B_x:E_x\times E_x\to\R$ such that the diagram \eqref{eq:diagtb} commutes. Taking basis of the considered spaces (see Remark~\ref{re:tbasis}) it is an easy verification $B_x$ is an inner product on $E_x$, i.e. symmetric and positive--definite, as claimed above. Such inner product is usually called a {\em Hilbert--Schmidt inner product}\index{Hilbert--Schmidt inner product} of multilinear forms on $T_xM$, that is induced by the fixed Riemannian metric $g_\mathrm R$.

\begin{remark}\label{re:tbasis}
Suppose $\{e_i(x)\}_{i=1}^m$ is a $g_\mathrm R(x)$--orthonormal basis of $T_xM$ and $\{e^*_i(x)\}_{i=1}^m$ the dual basis on $T_xM^*$. Elementary tensor calculus shows that $$e_{j_1}^*(x)\otimes\ldots\otimes e_{j_s}^*(x)\otimes e_{i_1}(x)\otimes\ldots\otimes e_{i_r}(x)$$ with $i_a,j_b\in\{1,\ldots,m\}$ for $1\leq a\leq r$ and $1\leq b\leq s$ form a $B_x$--orthonormal basis of $E_x$.
\end{remark}

\begin{remark}
The Hilbert--Schmidt inner product $B_x$ may be also described as follows. From Remark~\ref{re:tbasis}, a choice of a $g_\mathrm R(x)$--orthonormal basis of $T_xM$ induces a choice of $B_x$--orthonormal basis in each tensor power $E_x$ of $T_xM$. Thus it is possible to calculate traces of operators in such spaces. Elementary linear algebra calculations show that for each $X,Y\in E_x$,
\begin{equation}\label{eq:btr}
B_x(X,Y)=\tr(X^*Y).
\end{equation}
\end{remark}

\begin{definition}\label{def:fiberproduct}
Consider the Hilbert--Schmidt inner product $$B_x:E_x\times E_x\la\R$$ described above. The vector space norm on $E_x$ induced by $B_x$ will be denoted $\|\cdot\|_\mathrm R$, without reference to the \emph{base point} $x$.
\end{definition}

\begin{remark}
The subindex $_\mathrm R$ stress the dependence on the fixed Riemannian metric $g_\mathrm R$ of $M$, see Remarks~\ref{re:dependgr} and~\ref{re:dependgr2}.
\end{remark}

Notice that using the above results it is possible to analyze the growth control of derivatives of tensors in $\sect^k(E)$. Consider the Levi--Civita connection $\nabla^\mathrm R$ of $g_\mathrm R$. From Theorem~\ref{thm:tensorconnection}, $\nabla^\mathrm R$ induces a connection on $E$, denoted by the same symbol. Furthermore, from Corollary~\ref{cor:jcovder} it makes sense to compute the $j^{\mbox{\tiny th}}$ covariant derivative $(\nabla^\mathrm R)^j K$ of any $K\in\sect^k(E)$, provided that $j\leq k$. Finally, Definition~\ref{def:fiberproduct} allows to compute the norm of such covariant derivatives.

We are now ready to define a norm on (a subspace of) $\sect^k(E)$. Essentially, this is a natural generalization of the $C^k$--norm of maps between Euclidean spaces \eqref{eq:normck}, replacing maps with tensors and standard derivatives with covariant derivatives.

\begin{definition}\label{def:sectionnorm}
Denote by $\sect^k_b(E)$ the vector subspace consisting of sections $K\in\sect^k(E)$ such that\index{Section!$C^k$ norm}
\begin{equation}\label{eq:sectionnorm}
\|K\|_k=\max_{0\leq j\leq k}\Big\{\sup_{x\in M} \Big\|(\nabla^\mathrm{R})^j K(x)\Big\|_\mathrm{R}\Big\}
\end{equation}
is bounded.\footnote{If $M$ is compact, clearly $\sect_b^k(E)=\sect^k(E)$.} Then \eqref{eq:sectionnorm} defines a norm on $\sect_b^k(E)$, and this vector space will be implicitly assumed to be endowed with the norm \eqref{eq:sectionnorm}.
\end{definition}

It is a standard verification that \eqref{eq:sectionnorm} is a well--defined norm on $\sect_b^k(E)$, similar to the case of $C^k$--norms of maps between Euclidean spaces, described in Definition~\ref{def:ck}, see for instance Rudin \cite{rudin}. Before proving completeness of this normed vector space of tensors, we recall the following well--known result, that is a natural generalization of Lemma~\ref{le:convderivadas}.

\begin{lemma}\label{le:climage}
Let $\{s_n\}_{n\in\N}$ be a sequence of sections in $\sect_b^k(E)$ that converges locally uniformly to a limit section $s_\infty\in\sect_b^0(E)$, such that also the covariant derivatives $\{(\nabla^\mathrm R)^j s_n\}_{n\in\N}$ converge locally uniformly to sections $s^j_\infty\in\sect_b^0(E)$, for $1\leq j\leq k$. Then $s_\infty\in\sect_b^k(E)$ and $(\nabla^\mathrm R)^j s_\infty=s^j_\infty$.
\end{lemma}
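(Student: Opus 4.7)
The plan is to reduce the statement to the classical finite-dimensional version, Lemma~\ref{le:convderivadas}, by working in local trivializations and exploiting the decomposition of a connection into an ordinary derivative plus a Christoffel-type correction, as in Example~\ref{ex:nablasection} and Remark~\ref{re:chrg}. Since the conclusion $s_\infty\in\sect_b^k(E)$ and $(\nabla^\mathrm R)^j s_\infty = s^j_\infty$ is of a local-plus-uniform-bound nature, it suffices to prove the identity of sections locally around every $x\in M$ and then observe separately that the uniform bound on $M$ comes for free.

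First, I would fix an arbitrary $x_0\in M$, choose an open neighborhood $U\subset M$ around $x_0$ small enough to be contained in a trivialization of $E$ and to carry a smooth frame $p(x):\R^N\to E_x$ (where $N$ is the rank of $E$). Via $p$, each section $s\in\sect^k(E\vert_U)$ is identified with the $C^k$ map $\widetilde s:U\to\R^N$ defined by $\widetilde s(x)=p(x)^{-1}(s(x))$ as in \eqref{eq:tildes}, and the covariant derivative satisfies $\nabla^\mathrm R s = \dd^p s + \chr\cdot s$, where $\chr$ is the (smooth, hence locally bounded together with all its derivatives) Christoffel tensor of $\nabla^\mathrm R$ relatively to $p$, see \eqref{eq:chrnablap} and \eqref{eq:nablachr}. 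In this way, locally uniform convergence $s_n\to s_\infty$ and $\nabla^\mathrm R s_n\to s^1_\infty$ translates into locally uniform convergence of $\widetilde s_n\to\widetilde s_\infty$ and, solving $\dd^p s_n = \nabla^\mathrm R s_n - \chr\cdot s_n$, of the ordinary derivatives $\dd\widetilde s_n$ to some continuous map on $U$.

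At this point Lemma~\ref{le:convderivadas} applies directly to $\widetilde s_n:U\to\R^N$: it yields that $\widetilde s_\infty$ is of class $C^1$ and that its ordinary derivative is the locally uniform limit of $\dd\widetilde s_n$. Translating back through $p$, this shows that $s_\infty$ admits a first covariant derivative on $U$ and that $\nabla^\mathrm R s_\infty = s^1_\infty$ on $U$. Since $U$ and $x_0$ were arbitrary, the identity $\nabla^\mathrm R s_\infty = s^1_\infty$ holds globally on $M$. An obvious induction on $j$, applied to the sequences $\{(\nabla^\mathrm R)^{j-1}s_n\}_{n\in\N}$ that converge locally uniformly to $s^{j-1}_\infty$ together with their covariant derivatives converging to $s^j_\infty$, gives $(\nabla^\mathrm R)^j s_\infty = s^j_\infty$ for all $1\leq j\leq k$ and the fact that $s_\infty$ is of class $C^k$.

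To conclude $s_\infty\in\sect_b^k(E)$, I only need to check boundedness of $\|(\nabla^\mathrm R)^j s_\infty\|_\mathrm R$ uniformly on $M$, but this is automatic: by hypothesis $s^j_\infty\in\sect_b^0(E)$ for each $0\leq j\leq k$, and we just proved $(\nabla^\mathrm R)^j s_\infty = s^j_\infty$. Hence \eqref{eq:sectionnorm} is finite and $s_\infty$ lies in $\sect_b^k(E)$. The only subtle point in the argument is the passage from covariant to ordinary derivatives in a local frame, i.e.\ verifying that the correction terms $\chr\cdot s_n$ converge locally uniformly; but this follows from the local boundedness of $\chr$ together with the assumed locally uniform convergence of $s_n$, so no genuine obstacle arises.
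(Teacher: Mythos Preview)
Your proposal is correct and follows essentially the same approach as the paper: localize via a trivialization, use the Christoffel tensor to relate $\nabla^\mathrm R$ to ordinary differentiation, and apply the Euclidean result (Lemma~\ref{le:convderivadas}) together with an induction on $j$. The paper's own proof is only a sketch along these exact lines; your version is in fact slightly more explicit, in particular spelling out why $s_\infty\in\sect_b^k(E)$ follows from the hypothesis $s^j_\infty\in\sect_b^0$, which the paper leaves implicit.
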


\begin{proof}
We will only give a brief sketch of this proof. Since the matter is local\footnote{i.e., it suffices to prove that for each $x\in M$, $s_\infty$ is $C^k$ in a neighborhood of $x$ and $(\nabla^\mathrm R)^j s_\infty(x)=s^j_\infty(x)$ for $1\leq j\leq k$.}, by taking a chart around each point, without loss of generality we can assume that we are in an open subset of $\R^m$, contained in a trivialization of $E$ (see Remark~\ref{re:containedtrivial}). From an elementary result of analysis in Euclidean spaces, given a locally uniformly convergent sequence of $C^1$ maps $f_n:U\subset\R^m\to\R^p$, provided that also the derivatives $f'_n$ converge locally uniformly, the limit $f_\infty$ is $C^1$ and $f'_\infty=\lim_{n\in\N} f'_n$ (see Lemma~\ref{le:convderivadas}). By induction, this is true replacing the class $C^1$ with $C^k$ and the standard derivative $f'$ with higher order derivatives $D^\alpha f$, with multi--indexes $|\alpha|\leq k$.

With our identifications, the connection $\nabla^\mathrm R$ gives a connection in the trivial bundle $U\times\R^p$. For any connection $\nabla$ in $U\times\R^p$, there exists a $(1,2)$--tensor $\chr$ such that for every $n\in\N$, $$\nabla f_n=\dd f_n+\chr(\,\cdot,f_n),$$ see \eqref{eq:chrnablap} in Definition~\ref{def:christoffeltens}. Analogously, $\nabla^j f_n$ are related to $D^\alpha f_n$, $|\alpha|\leq j$ in terms of the same tensor $\chr$. Therefore, applying the result stated above for the sequence $\{s_n\}_{n\in\N}$ and using the relations between the standard and covariant derivatives of $s_n$, it follows that the limit section is also $C^k$ and $(\nabla^\mathrm R)^j s_\infty=\lim_{n\in\N} (\nabla^\mathrm R)^j s_n$, for $1\leq j\leq k$.
\end{proof}

\begin{corollary}\label{cor:splitimmersion}
Consider the map
\begin{eqnarray*}
\mathfrak{i}_b:\sect_b^k(E) &\longhookrightarrow& \sect^0_b\left(\bigoplus_{j=0}^k \left({TM^*}^{(j)}\otimes E\right) \right) = \bigoplus_{j=0}^k \sect^0_b\left({TM^*}^{(j)}\otimes E\right) \\
s &\longmapsto & \left(s,(\nabla^\mathrm R)s,\dots,(\nabla^\mathrm R)^{k-1} s,(\nabla^\mathrm R)^{k}s\right),
\end{eqnarray*}
where ${TM^*}^{(j)}$ denotes, as usual, the tensor bundle given by the $j^{\mbox{\tiny th}}$ tensor power of $TM^*$. Endowing the counter domain with the norm given by the maximum of the norms of each component, the above map is a linear isometric immersion with closed image.
\end{corollary}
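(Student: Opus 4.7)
The plan is to verify the three properties separately in the order: linearity, isometry, closed image. Linearity is immediate because each map $s \mapsto (\nabla^{\mathrm R})^j s$ is $\R$-linear, so their direct sum is linear. An isometric immersion is automatically injective, so we get the ``immersion'' part for free once isometry is established.

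For the isometry, I would simply compare norms. The codomain is equipped with the norm $\|(s_0,\ldots,s_k)\| = \max_{0 \le j \le k} \|s_j\|_0$ (maximum of the component norms from Definition~\ref{def:sectionnorm} with $k=0$), and $\|s_j\|_0 = \sup_{x\in M}\|s_j(x)\|_{\mathrm R}$. Plugging in $s_j = (\nabla^{\mathrm R})^j s$ gives
\begin{equation*}
\|\mathfrak{i}_b(s)\| = \max_{0 \le j \le k}\sup_{x \in M}\|(\nabla^{\mathrm R})^j s(x)\|_{\mathrm R} = \|s\|_k,
\end{equation*}
which is exactly \eqref{eq:sectionnorm}. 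So $\mathfrak{i}_b$ is a linear isometric immersion.

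The main (and only nontrivial) part is to show that the image is closed. Here the approach is to invoke Lemma~\ref{le:climage}. Suppose $\{\mathfrak{i}_b(s_n)\}_{n\in\N}$ is a sequence in the image that converges in the codomain to some element $(s^0_\infty, s^1_\infty, \ldots, s^k_\infty)$, with each $s^j_\infty \in \sect^0_b({TM^*}^{(j)} \otimes E)$. By definition of the codomain norm, $(\nabla^{\mathrm R})^j s_n \to s^j_\infty$ uniformly on $M$, hence in particular locally uniformly; setting $s_\infty := s^0_\infty$, Lemma~\ref{le:climage} applies and yields $s_\infty \in \sect_b^k(E)$ together with $(\nabla^{\mathrm R})^j s_\infty = s^j_\infty$ for every $1 \le j \le k$. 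Thus the limit equals $\mathfrak{i}_b(s_\infty)$ and so lies in the image, proving closedness.

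I do not anticipate any real obstacle: the only subtle point is making sure that the convergence in the codomain norm (which is uniform on $M$) is strong enough to feed into Lemma~\ref{le:climage} (which only requires local uniform convergence), and that the limit of the zeroth component indeed serves as the required limit section $s_\infty$ whose covariant derivatives recover the other components. Both are immediate from the definition of the max norm on the direct sum.
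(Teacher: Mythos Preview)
Your proposal is correct and follows exactly the same approach as the paper: the isometry is read off directly from the definition of $\|\cdot\|_k$, and closedness of the image is obtained by applying Lemma~\ref{le:climage} to a sequence $\mathfrak{i}_b(s_n)$ converging in the codomain. You have simply spelled out in more detail what the paper summarizes in two sentences.
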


The verification that $\mathfrak{i}_b$ is a linear isometric immersion is immediate from the norms considered in each space. Furthermore, Lemma~\ref{le:climage} guarantees that any (globally) uniformly convergent sequence of elements in $\im\mathfrak i_b$ has its limit also in $\im\mathfrak i_b$, which is hence closed.

\begin{proposition}\label{prop:banachspaceofsections}
The vector space $\sect_b^k(E)$ is a Banach space.
\end{proposition}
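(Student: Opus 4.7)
The plan is to leverage Corollary~\ref{cor:splitimmersion} and reduce completeness of $\sect_b^k(E)$ to the (easier) completeness of $\sect_b^0$ of a tensor bundle. Since $\mathfrak i_b$ is a linear isometric immersion with closed image onto a closed subspace of a finite direct sum of $\sect_b^0$ spaces, it suffices to check that $\sect_b^0(F)$ is a Banach space for every tensor bundle $F$ over $M$ equipped with the fiberwise Hilbert--Schmidt norm $\|\cdot\|_\mathrm R$. A finite direct sum of Banach spaces with the max norm is Banach, a closed subspace of a Banach space is Banach, and an isometry transports the Banach property; so once $\sect_b^0$ is handled, $\sect_b^k(E)$ inherits completeness.

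First I would verify completeness of $\sect_b^0(F)$. Given a Cauchy sequence $\{s_n\}_{n\in\N}$ in $\sect_b^0(F)$, the estimate $\|s_n(x)-s_m(x)\|_\mathrm R\le\|s_n-s_m\|_0$ (valid at each $x\in M$) shows that $\{s_n(x)\}$ is Cauchy in the finite--dimensional normed space $(F_x,\|\cdot\|_\mathrm R)$, hence converges to some $s_\infty(x)\in F_x$. Moreover the Cauchy condition is uniform in $x$, so $s_n\to s_\infty$ uniformly on $M$, which is in particular locally uniform. Continuity of $s_\infty$ follows from a standard $\varepsilon/3$--argument, applied in a fixed local trivialization of $F$ around each point (this is where having $F$ as a $C^k$ vector bundle with continuous trivializations is used). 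Boundedness of $\|s_\infty\|_0$ follows from the Cauchy condition plus boundedness of $\|s_N\|_0$ for any sufficiently large~$N$. Hence $s_\infty\in\sect_b^0(F)$ and $\|s_n-s_\infty\|_0\to 0$.

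Next I would assemble the argument for general $k$. The counter--domain $\bigoplus_{j=0}^k\sect_b^0\bigl({TM^*}^{(j)}\otimes E\bigr)$ in Corollary~\ref{cor:splitimmersion} is a finite direct sum of Banach spaces (by the previous paragraph, applied to the tensor bundles ${TM^*}^{(j)}\otimes E$), so it is itself Banach with the max norm. Because $\mathfrak i_b$ is an isometric immersion with closed image, its image is a closed subspace of a Banach space, hence Banach; and the isometric bijection $\mathfrak i_b:\sect_b^k(E)\to\im\mathfrak i_b$ transports the Banach norm back. Concretely, a Cauchy sequence $\{K_n\}$ in $\sect_b^k(E)$ yields, for each $0\le j\le k$, a Cauchy sequence $\{(\nabla^\mathrm R)^j K_n\}$ in $\sect_b^0({TM^*}^{(j)}\otimes E)$ with some limit $K^j_\infty$; closedness of $\im\mathfrak i_b$, which is exactly the content of Lemma~\ref{le:climage}, forces $K_\infty:=K^0_\infty$ to be of class $C^k$ with $(\nabla^\mathrm R)^j K_\infty=K^j_\infty$, so $K_\infty\in\sect_b^k(E)$ and $\|K_n-K_\infty\|_k\to 0$.

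The main (and essentially only) obstacle is Lemma~\ref{le:climage}, which is already stated in the excerpt and does the nontrivial work of promoting locally uniform convergence of a section together with its first $k$ covariant derivatives to $C^k$ regularity of the limit, plus identification of the limiting covariant derivatives. Given that lemma, the completeness proof is essentially bookkeeping with norms and isometries, so no further subtle step is needed.
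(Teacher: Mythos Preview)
Your proposal is correct and follows essentially the same approach as the paper: reduce to the case $k=0$ via the isometric immersion $\mathfrak i_b$ of Corollary~\ref{cor:splitimmersion}, prove completeness of $\sect_b^0(F)$ by the standard pointwise-limit plus uniform-convergence argument, and then use closedness of $\im\mathfrak i_b$ (i.e., Lemma~\ref{le:climage}) to transport completeness back to $\sect_b^k(E)$. Your treatment is slightly more explicit about continuity of the limit section (the $\varepsilon/3$ argument in a local trivialization), but the overall structure is identical to the paper's.
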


\begin{proof}
We first reduce the problem to the case $k=0$. From Corollary~\ref{cor:splitimmersion}, $\im\mathfrak i_b\subset\bigoplus_{j=0}^k \sect^0_b({TM^*}^{(j)}\otimes E)$ is closed, hence it suffices to prove that $\sect^0_b(E')$ is complete for any tensor bundle $E'$. The closed subspace $\im\mathfrak i_b$ will then be a complete space, which is isometric to $\sect_b^k(E)$, concluding the proof.

At this point, the proof is a simple generalization of elementary completeness results of continuous function spaces between Euclidean spaces with the uniform convergence norm. Consider a Cauchy sequence $\{s_n\}_{n\in\N}$ in $\sect^0_b(E')$, with respect to the uniform convergence norm. Then for each $x\in M$ and $n,m\in\N$, $$\|s_n(x)-s_m(x)\|_\mathrm R\leq\sup_{x\in M}\|s_n(x)-s_m(x)\|_\mathrm R=\|s_n-s_m\|_0,$$ hence $\{s_n(x)\}_{n\in\N}$ is Cauchy in $E'_x$, for all $x\in M$. From completeness of the finite--dimensional vector space $E'_x$, there exists $$s_\infty(x)=\lim_{n\in\N} s_n(x).$$ Define $s_\infty\in\sect^0(E')$ by the expression above, for all $x\in M$. It only remains to prove that $\{s_n\}_{n\in\N}$ converges uniformly to $s_\infty$ and that $s_\infty\in\sect_b^0(E')$. For each $\varepsilon>0$ there exists $N\in\N$, such that if $n,m\geq N$, 
\begin{equation*}
\|s_n-s_m\|_0<\tfrac{\varepsilon}{2}.
\end{equation*} Furthermore, for each $x\in M$, there exists $m=m(x)\geq N$, such that $$\|s_m(x)-s_\infty(x)\|_\mathrm R<\tfrac{\varepsilon}{2}.$$ Thus, if $n\geq N$, for all $x\in M$, 
\begin{eqnarray}
\|s_n(x)-s_\infty(x)\|_\mathrm R &\leq& \|s_n(x)-s_{m(x)}(x)\|_\mathrm R+\|s_{m(x)}(x)-s_\infty(x)\|_\mathrm R \nonumber \\
&<& \tfrac{\varepsilon}{2}+\tfrac{\varepsilon}{2}=\varepsilon.\label{eq:banachspaceofsectionsconvergence}
\end{eqnarray}
Hence $\|s_\infty\|_0<+\infty$, since for all $x\in M$, from \eqref{eq:banachspaceofsectionsconvergence}, $$\|s_N(x)-s_\infty(x)\|_\mathrm R<\varepsilon,$$ and $\|s_N(x)\|_0<+\infty$. Therefore $s_\infty\in\sect_b^0(E')$. Finally, from \eqref{eq:banachspaceofsectionsconvergence}, the Cauchy sequence $\{s_n\}_{n\in\N}$ converges uniformly to $s_\infty$, concluding the proof that $\sect_b^0(E')$ is complete.
\end{proof}

\begin{remark}\label{re:sectinftyfrechet}
Consider the countable intersection $$\sect^\infty_b(E)=\bigcap_{k\in\N}\sect^k_b(E).$$ Sections in this subspace are smooth, or of class $C^\infty$. Analogously to the case of $C^\infty([a,b],\R^m)$ described in Remark~\ref{re:cinftyfrechet}, this is not a Banach space, as $\sect^k_b(E)$. In fact, it is a Fr\'echet space, see Definition~\ref{def:frechet}. The sequence of norms $\{\|\cdot\|_{k}\}_{k\in\N}$, given by \eqref{eq:sectionnorm}, gives a countable family of semi--norms that induce the topology of $\sect_b^\infty(E)$, see Lemma~\ref{le:tvshell}.

Although it would be desirable to deal with smooth sections instead of $C^k$ sections, most of our tools apply to Banach spaces {\em only}. Thus, we will analyze genericity of some properties of sections, particularly metric tensors, in the $C^k$--topology, rather than the $C^\infty$--topology. Nevertheless, as mentioned in the beginning of this chapter, it will be later possible to establish the same results in the $C^\infty$--topology, using standard intersection arguments described in Sections~\ref{sec:smooth1} and~\ref{sec:smooth2}.

In addition, density results such as the Stone--Weierstrass Theorem~\ref{thm:stoneweierstrass}, Proposition~\ref{prop:cinftylp} and Corollaries~\ref{cor:cinftywkp} and~\ref{cor:cinftyhk} automatically extend to this context of sections of vector bundles, see the Stone--Weierstrass Theorem~\ref{thm:stw2}, Proposition~\ref{prop:cinftylp2} and Corollary~\ref{cor:cinftyhk2}.
\end{remark}

\begin{remark}\label{re:separability}
The Banach space $\sect_b^k(E)$ is non separable. A proof of this result is elementary and very similar to the proof that the Banach space $\ell_\infty$ of bounded sequences of real numbers with the $\sup$ norm is non separable. Both use the same classic technique for proving that a Banach space is non separable.

Suppose it is possible to construct an uncountable subset $S$ of a Banach space $X$ such that the distance between any two points of $S$ is a strictly positive number. Considering sufficiently small open balls of $X$ centered in each point of $S$, one concludes any dense subset $D$ of $X$ is automatically uncountable, since each open ball of $X$ contains at least one element of $D$. Hence, under this hypothesis, $X$ is not separable.
\end{remark}

\begin{remark}\label{re:dependgr}
Although it is not natural to consider spaces of sections endowed with a structure that depends\footnote{Notice that \eqref{eq:sectionnorm} involves the norms $\|\cdot\|_\mathrm R$, which are induced by $g_\mathrm R$ in the fibers of tensor bundles over $M$. {\em A priori}, different choices of $g_\mathrm R$ would give rise to different Banach space structures on $\sect^k_b(E)$.} on the choice of a Riemannian metric $g_\mathrm R$ on $M$, this is the best possible setting for the desired applications.

In case $M$ is compact, it is easy to prove that norms of the form \eqref{eq:sectionnorm} on $\sect_b^k(E)$ are always equivalent\footnote{Hence $\sect^k_b(E)$ is naturally a Banachable space, provided $M$ is compact. In this text, we are interested mostly with {\em non compact} manifolds, hence all the present discussion is necessary.}, for different choices of $g_\mathrm R$. Namely, if $g'_\mathrm R$ is another Riemannian metric, from continuity of both metrics and compactness of $M$, there exist positive constants $c_1,c_2\in\R$ such that $$c_1g_\mathrm R(v,v)\leq g'_\mathrm R(v,v)\leq c_2g_\mathrm R(v,v),$$ for all $v\in T_xM$.\footnote{Clearly, this is always locally true, since for each $x\in M$, both $(g_\mathrm R)(x)$ and $(g'_\mathrm R)(x)$ are inner products in the Euclidean space $T_xM$, hence equivalent, i.e., there exist positive constants $c_1(x)$ and $c_2(x)$ such that $c_1(x)(g_\mathrm R)(x)(v,v)\leq (g'_\mathrm R)(x)(v,v)\leq c_2(x)(g_\mathrm R)(x)(v,v)$, for all $v\in T_xM$. From continuity of the metrics, there exist positive continuous functions $c_1,c_2:M\to\R$ such that the above inequality holds for every $x\in M$. Then define $c_1=\min_{x\in M} c_1(x)$ and $c_2=\max_{x\in M} c_2(x)$.} Analogously, one has a similar comparison between the Levi--Civita connections of $g_\mathrm R$ and $g'_\mathrm R$, hence between the $j^{\mbox{\tiny th}}$ covariant derivatives of sections with respect to such connections. Using these inequalities it follows that the norms given by the expression \eqref{eq:sectionnorm} using either $g_\mathrm R$ or $g'_\mathrm R$ are equivalent.
\end{remark}

\begin{remark}\label{re:dependgr2}
Another \emph{would--be approach} to deal with the dependence of the structure of $\sect_b^k(E)$ on the fixed Riemannian metric $g_\mathrm R$ is to endow $\sect_b^k(E)$ with the compact--open topology, relinquishing the Banach space structure discussed above. Nevertheless, this would give rise to a Fr\'echet structure on $\sect_b^k(E)$, and our applications use in a nontrivial way several hypothesis that are no longer valid passing from Banach spaces to Fr\'echet spaces, for instance those necessary to apply the Sard--Smale Theorem.
\end{remark}

The Banach space structure described in Proposition~\ref{prop:banachspaceofsections} is a particular case of the following concept considered by Biliotti, Javaloyes and Piccione \cite{biljavapic}.

\begin{definition}\label{def:ckwhitbanachspace}
A vector subspace $\mathds E$ of $\sect^k(E)$ is called a {\it $C^k$ Whitney type Banach space of sections of $E$}\index{$C^k$ Whitney type Banach space of sections}\index{Section!$C^k$ Whitney type Banach space}\index{Banach space!$C^k$ Whitney type} if
\begin{itemize}
\item[(i)] there exists a bounded linear inclusion $i:\mathds E\hookrightarrow\sect^k_b(E)$;
\item[(ii)] $\mathds E$ contains all sections in $\sect^k(E)$ having compact support;
\item[(iii)] $\mathds E$ is endowed with a Banach space norm $\|\cdot\|_{\mathds E}$ with the property that $\|\cdot\|_{\mathds E}$--convergence of a sequence implies convergence in the weak Whitney $C^k$--topology.\footnote{Recall that $\sect^k(E)$ endowed with the weak Whitney $C^k$--topology (i.e., the topology of uniform convergence of the first $k$ derivatives in compact subsets) is locally homeomorphic to a Fr\'echet space, hence first countable. Therefore this topology can be characterized by convergence of sequences. However, $\sect^k(E)$ may be a non separable (or equivalently, non second countable) space. Examples of both separable and non separable $C^k$ Whitney type Banach spaces of sections are given in the sequel.}
\end{itemize}
The third condition means that given any sequence $\{s_n\}_{n\in\N}$ in $\mathds E$ and an element $s_\infty\in\mathds{E}$ such that $\lim_{n\in\N} \|s_n-s_\infty\|_\mathds{E}=0$, then for each compact set $K\subset M$, the sequence of restrictions $\{s_n|_K\}_{n\in\N}$ tends uniformly to $s_\infty|_K$ in the $C^k$--topology as $n$ tends to $\infty$.
\end{definition}

\begin{remark}\label{re:iiiiii}
These conditions are sufficient to endow the intersection $\mathds E\cap\met_\nu^k(M)$ with the adequate topological structure. More precisely, (i) guarantees continuity of left composition with bundle morphisms with non compact base, see Piccione and Tausk \cite{pictau}. This will be used, for instance, to prove that the generalized energy functional considered in Section~\ref{sec:genenfunc} is sufficiently differentiable. In addition, (ii) will be used for technical constructions regarding compact support sections to be used for a local metric perturbation argument. Finally, (iii) endows $\mathds E$ with a sufficiently fine topology that allows the sequences we are interested in to converge at the same time it {\em detects small perturbations}.
\end{remark}

\begin{example}\label{ex:sectb}
The Banach space structure on $\sect_b^k(E)$ given in Proposition~\ref{prop:banachspaceofsections} clearly satisfies (i) and (ii). Furthermore, $\|\cdot\|_k$--convergence clearly implies uniform convergence of the first $k$ derivatives on compact subsets, verifying (iii). Hence $\sect_b^k(E)$ is an example of $C^k$ Whitney type Banach space of sections.
\end{example}

We now approach a delicate matter concerning \emph{separability} of $C^k$ Whitney type Banach spaces of sections, which will be a necessary hypothesis in our applications. It is not difficult to prove that $\sect_b^k(E)$ is \emph{not} separable, see Remark~\ref{re:separability}. However, the next result gives an example of a separable subspace of $\sect_b^k(E)$.

\begin{proposition}\label{prop:tensorszeroseparable}
Let $\sect_0^k(E)$ be the subspace of $\sect_b^k(E)$ consisting of tensors all of whose covariant derivatives tend to zero at infinity\footnote{i.e., for any $\varepsilon>0$, there exists a compact subset $K\subset M$ such that $\|(\nabla^\mathrm R)^j s(x)\|_\mathrm R<\varepsilon$ for all $x\in M\setminus K$ and $0\leq j\leq k$.} (see Definition~\ref{def:tendstozero}). Then $\sect_0^k(E)$ is a {\em separable} $C^k$ Whitney type Banach space of sections.
\end{proposition}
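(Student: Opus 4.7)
The plan is to first dispose of the Banach space and Whitney type conditions, and then concentrate all the work on separability via a reduction to the case $k=0$.

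First, I would show that $\sect_0^k(E)$ is a closed subspace of $\sect_b^k(E)$, hence itself a Banach space under the restricted norm $\|\cdot\|_k$. Indeed, if $\{s_n\}_{n\in\N}\subset\sect_0^k(E)$ converges in $\|\cdot\|_k$ to $s_\infty\in\sect_b^k(E)$, then for each $\varepsilon>0$ one picks $n$ so large that $\|s_n-s_\infty\|_k<\varepsilon/2$ and a compact $K\subset M$ so that $\|(\nabla^\mathrm R)^j s_n(x)\|_\mathrm R<\varepsilon/2$ for all $x\in M\setminus K$ and all $j\le k$; the triangle inequality then yields $\|(\nabla^\mathrm R)^j s_\infty(x)\|_\mathrm R<\varepsilon$ outside $K$, so $s_\infty\in\sect_0^k(E)$. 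Conditions (i)--(iii) of Definition~\ref{def:ckwhitbanachspace} are then immediate: (i) the identity inclusion $\sect_0^k(E)\hookrightarrow\sect_b^k(E)$ is an isometric immersion, hence bounded; (ii) sections with compact support have all covariant derivatives vanishing outside a compact set, so they trivially belong to $\sect_0^k(E)$; (iii) $\|\cdot\|_k$--convergence dominates uniform convergence of the first $k$ covariant derivatives on any compact subset.

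The core of the argument is separability. My strategy is to reduce it to the case $k=0$ via Corollary~\ref{cor:splitimmersion}. Define $E^{(j)}=({TM^*}^{(j)})\otimes E$. The isometric immersion $\mathfrak{i}_b$ of Corollary~\ref{cor:splitimmersion} restricts to an isometric immersion
\begin{equation*}
\mathfrak{i}_0:\sect_0^k(E)\longhookrightarrow\bigoplus_{j=0}^k\sect_0^0\big(E^{(j)}\big)
\end{equation*}
whose image is closed (the closedness in the larger space $\bigoplus_{j=0}^k\sect_b^0(E^{(j)})$ was already established, and intersection with the closed subspace $\bigoplus_{j=0}^k\sect_0^0(E^{(j)})$ preserves closedness). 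Since a closed subspace of a separable Banach space is separable, it suffices to prove that $\sect_0^0(E')$ is separable for any tensor bundle $E'$ over $M$.

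For this, I would fix a locally finite exhaustion $M=\bigcup_{n\in\N}K_n$ by compact subsets with $K_n\subset\operatorname{int}(K_{n+1})$, together with smooth cutoffs $\chi_n\in C^\infty(M)$, $0\le\chi_n\le 1$, $\chi_n\equiv 1$ on $K_n$ and $\supp\chi_n\subset K_{n+1}$. For any $s\in\sect_0^0(E')$ and any $\varepsilon>0$, choosing $n$ with $\|s(x)\|_\mathrm R<\varepsilon$ for $x\notin K_n$ gives
\begin{equation*}
\|\chi_n s-s\|_0=\sup_{x\in M}\|(1-\chi_n(x))s(x)\|_\mathrm R\le\sup_{x\notin K_n}\|s(x)\|_\mathrm R<\varepsilon,
\end{equation*}
so compactly supported continuous sections are dense in $\sect_0^0(E')$. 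Crucially, the $C^k$ norm of the cutoff is not involved here, which is exactly why the reduction to $k=0$ was made. It then remains to show, for each $n$, that the Banach space $\sect^0_{K_n}(E')$ of continuous sections supported in $K_n$ is separable. Covering $K_n$ by finitely many compact sets contained in domains of trivializations of $E'$ and using a subordinate partition of unity, one writes each such section as a finite sum of sections with support in a single trivialization; on each chart the section corresponds to an element of $C^0(K,\R^r)$ for some compact metrizable $K$ and some $r\in\N$, which is separable by Proposition~\ref{prop:cksep}. Collecting countable dense subsets of each $\sect^0_{K_n}(E')$ yields a countable dense subset of $\sect_0^0(E')$.

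I expect the only subtle point to be the reduction to $k=0$: without it, one would be forced to construct an exhaustion with cutoff functions whose $C^k$ norms are uniformly controlled, which depends on the geometry of $(M,g_\mathrm R)$. The use of the isometric embedding $\mathfrak{i}_0$ bypasses this difficulty entirely and makes the proof essentially a topological fact about $C^0$--type function spaces on a second countable space.
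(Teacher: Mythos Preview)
Your proof is correct and follows essentially the same approach as the paper: reduce to $k=0$ via the isometric immersion of Corollary~\ref{cor:splitimmersion}, then use density of compactly supported sections in $\sect_0^0(E')$, an exhaustion by compacts, and finally separability of $C^0(K,\R^r)$ for compact metrizable $K$ via Proposition~\ref{prop:cksep}. The only minor technical difference is that you use a partition of unity to localize sections into trivializations, whereas the paper uses restriction maps $\sect^0_{K_n}(E')\hookrightarrow\bigoplus_i C^0(K_n^i,E')$; both devices accomplish the same reduction.
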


\begin{proof}
The verification that $\sect_0^k(E)$ endowed with the norm \eqref{eq:sectionnorm} satisfies the conditions of Definition~\ref{def:ckwhitbanachspace} is simple and similar to the case of $\sect_b^k(E)$ discussed in Example~\ref{ex:sectb}. We will only prove its separability.

First, we observe that once more it is possible to reduce the problem to the case $k=0$. Consider the restriction of the linear map $\mathfrak i_b$ defined in Corollary~\ref{cor:splitimmersion} to the subspace $\sect_0^k(E)$,
\begin{eqnarray*}
\sect_0^k(E) &\longhookrightarrow& \sect^0_0\left(\bigoplus_{j=0}^k \left({TM^*}^{(j)}\otimes E\right) \right) = \bigoplus_{j=0}^k \sect^0_0\left({TM^*}^{(j)}\otimes E\right) \\
s &\longmapsto & \left(s,(\nabla^\mathrm R)s,\dots,(\nabla^\mathrm R)^{k-1} s,(\nabla^\mathrm R)^{k}s\right).
\end{eqnarray*}
For the same reasons in Corollary~\ref{cor:splitimmersion}, this is an isometric immersion when the domain is endowed with the norm \eqref{eq:sectionnorm} and the counter domain with the norm given by the maximum of the uniform convergence norms of each component. Thus, proving that $\bigoplus_{j=0}^k \sect^0_0({TM^*}^{(j)}\otimes E)$ is separable automatically implies that $\sect_0^k(E)$ is separable.\footnote{In fact, this follows from the simple observation below, which is clearly true in the particular cases of normed vector spaces. \begin{remark} Let $i:A\hookrightarrow B$ be an isometric immersion of metric spaces. If $B$ is separable, then $A$ is also separable. This can be easily proved from the fact that all subsets of a separable metric space are also separable, see for instance Kaplansky \cite{kaplansky}.\end{remark}} Therefore it suffices to prove separability of $\sect_0^0(E')$ endowed with the uniform convergence norm, where $E'$ is some tensor bundle over $M$.

Second, denote $\sect^0_c(E')\subset\sect^0_b(E')$ the subspace of sections with compact support. It is easy to see that $\sect^0_c(E')$ is dense in $\sect^0_0(E')$, analogously to elementary results of analysis in Euclidean spaces concerning maps with compact support and maps that tend to zero at infinity. Hence, we reduced the problem to proving separability of $\sect^0_c(E')$.

Third, for each compact set $K\subset M$, let
\begin{equation}
\sect^0_K(E')=\left\{s\in\sect^0(E'):\supp s\subset K\right\},
\end{equation}
and consider an exhaustion of $M$ by compact subsets, i.e., a sequence of compact subsets $\{K_n\}_{n\in\N}$ of $M$, with $K_n$ contained in the interior of $K_{n+1}$ for all $n\in\N$, and $M=\bigcup_{n\in\N} K_n$. It is clear that $\sect^0_c(E')=\bigcup_{n\in\N}\sect^0_{K_n}(E')$. Since $\sect^0_c(E')$ is given by the {\em countable} union of $\sect^0_{K_n}(E')$'s, once more we have reduced the problem, to prove that each $\sect^0_{K_n}(E')$ is separable.

Since each $K_n$ is compact, it is possible to split it in the disjoint union $K_n=\bigsqcup_{i=1}^{r_n} K_n^i$, with $K_n^i\subset K_n$ compact and contained in a trivialization $\alpha_i$ of $E'$ (see Definition~\ref{def:vectorbundle} and Remark~\ref{re:containedtrivial}). Thus one has the following sequence of isometric immersions given by restriction maps $$\sect^0_{K_n}(E')\longhookrightarrow C^0(K_n,E')\longhookrightarrow\bigoplus_{i=1}^{r_n} C^0(K_n^i,E'),$$ where the last term is endowed with the norm given by the maximum of the uniform convergence norms of each component. Again, with such isometric immersions, we reduced the problem to proving that each $C^0(K_n^i,E')$ with the uniform convergence norm is separable.

For $K_n^i$ is contained in the domain of a trivialization $\alpha_i$ of $E'$, there is a natural identification $$C^0(K_n^i,E')\cong C^0(K_n^i,\R^m)\cong\bigoplus_{l=1}^m C^0(K_n^i,\R),$$ where $m$ is the dimension of the fibers of $E'$. Since $K_n^i$ are compact and metrizable, it follows from Proposition~\ref{prop:cksep} that $C^0(K_n^i,\R)$ with the uniform convergence norm are separable, concluding the proof.
\end{proof}

\begin{remark}\label{re:emptyinterior}
We now obtain a candidate to {\em typical} $C^k$ Whitney type Banach space of sections that endows $\met_\nu^k(M)$ with the adequate structures. Namely, consider $\sect^k_0(TM^*\vee TM^*)$, see Definitions~\ref{def:tendstozero} and~\ref{def:symskewsym}. From Proposition~\ref{prop:tensorszeroseparable}, this is a separable $C^k$ Whitney type Banach space.

Nevertheless, if $M$ is non compact, the intersection $$\mathcal{F}=\sect^k_0(TM^*\vee TM^*)\cap\met_\nu^k(M)$$ has empty interior in the topology of $\sect^k_0(TM^*\vee TM^*)$, hence all genericity statements concerning open subsets of $\mathcal{F}$ are automatically empty. This can be easily proved as follows. Let $g\in\mathcal F$. As mentioned in the proof of Proposition~\ref{prop:tensorszeroseparable}, the subset $\sect^k_c(TM^*\vee TM^*)$ of sections in $\sect^k_0(TM^*\vee TM^*)$ with compact support is dense. Therefore there exists a sequence $\{s_n\}_{n\in\N}$ in $\sect^k_c(TM^*\vee TM^*)$ that tends to $g$. Since each element $s_n$ has compact support, it is a {\em degenerate} symmetric tensor outside its support. Hence $s_n$ cannot be a metric. Therefore, $g$ is not an interior point and $\mathcal F$ fails to have nonempty interior.
\end{remark}

The easiest solution for this problem is considering an {\em affine subspace} of $\sect^k_b(TM^*\vee TM^*)$ isomorphic to a separable $C^k$ Whitney type Banach space of sections $\mathds E$, for instance a suitable displacement of the subspace $\sect^k_0(TM^*\vee TM^*)$ by a metric $g_\mathrm A$, that satisfies a condition similar to \eqref{eq:boundedfromzero}. In order to prove that this indeed solves the problem, we first need some auxiliary results.

\begin{lemma}\label{le:symhs}
Let $s\in\sect^k(TM^*\vee TM^*)$ and fix $x\in M$. Denote by $\lambda_j$, $j=1,\ldots,m$ the eigenvalues of the $g_\mathrm R$--symmetric operator $s(x)$ of $T_xM$. Then
\begin{equation}\label{eq:symhs}
\|s(x)\|_\mathrm R=\sqrt{\sum_{j=0}^m \lambda_j^2}.
\end{equation}
\end{lemma}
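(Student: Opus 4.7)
The plan is to unpack the definition of $\|\cdot\|_\mathrm{R}$ given in Definition~\ref{def:fiberproduct} and compute it in a basis adapted to the eigenstructure of $s(x)$. Recall that $s(x)$, viewed as a symmetric bilinear form on $T_xM$, corresponds via $g_\mathrm{R}(x)$ to a $g_\mathrm{R}(x)$-symmetric operator (denoted by the same symbol); since $s(x)$ is symmetric with respect to an inner product, it is diagonalizable in a $g_\mathrm{R}(x)$-orthonormal basis.

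First I would choose a $g_\mathrm{R}(x)$-orthonormal basis $\{e_i(x)\}_{i=1}^m$ of $T_xM$ consisting of eigenvectors of the operator $s(x)$, with $s(x)e_i(x)=\lambda_i e_i(x)$. Let $\{e_i^*(x)\}_{i=1}^m$ denote the dual basis on $T_xM^*$, which is the $g_\mathrm{R}(x)^{-1}$-orthonormal basis consisting of $i_x(e_i(x))=g_\mathrm{R}(x)(e_i(x),\cdot\,)$. By Remark~\ref{re:tbasis} applied with $r=0$ and $s=2$, the family $\{e_i^*(x)\otimes e_j^*(x)\}_{i,j=1}^m$ is a $B_x$-orthonormal basis of $(T_xM^*)^{\otimes 2}$, in which $\vee^2 T_xM^*$ sits as a subspace.

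Next I would compute the components of $s(x)$ in this basis. Since $s(x)$ is identified with the bilinear form $(v,w)\mapsto g_\mathrm{R}(x)(s(x)v,w)$,
\[
s_{ij}:=s(x)(e_i(x),e_j(x))=g_\mathrm{R}(x)(s(x)e_i(x),e_j(x))=\lambda_i\,\delta_{ij},
\]
so that $s(x)=\sum_{i=1}^m \lambda_i\, e_i^*(x)\otimes e_i^*(x)$. The Hilbert--Schmidt norm then reads
\[
\|s(x)\|_\mathrm{R}^2=B_x(s(x),s(x))=\sum_{i,j=1}^m s_{ij}^{\,2}=\sum_{i=1}^m \lambda_i^{\,2},
\]
from which \eqref{eq:symhs} follows by taking square roots. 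There is essentially no obstacle here: this is a direct linear algebra computation once the adapted basis is fixed. If desired, the same formula can be read off from the alternative expression $B_x(X,Y)=\tr(X^*Y)$ in \eqref{eq:btr}, since $\|s(x)\|_\mathrm{R}^2=\tr(s(x)^2)=\sum_i \lambda_i^2$ for a symmetric operator.
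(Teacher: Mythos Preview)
Your proof is correct and takes essentially the same approach as the paper: both pass to a $g_{\mathrm R}(x)$-orthonormal basis and compute the Hilbert--Schmidt norm in coordinates. The only cosmetic difference is that you choose an eigenbasis from the start so the matrix is already diagonal, whereas the paper takes an arbitrary orthonormal basis, invokes the trace formula $\|s(x)\|_{\mathrm R}^2=\tr(A^*A)$ from \eqref{eq:btr}, and then conjugates by an orthogonal matrix $P$ to reach the diagonal form $D$; your version simply absorbs that conjugation into the choice of basis.
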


\begin{proof}
Choose a $g_\mathrm R$--orthonormal basis of $T_xM$. Such choice gives an isomorphism between $T_xM$ and $\R^m$. From Remark~\ref{re:tbasis}, this choice also induces a choice of orthonormal basis in all tensor powers of $T_xM$, of the form \eqref{eq:tensorbundlefiber}, with respect to the Hilbert--Schmidt inner product induced by $g_\mathrm R$. Thus not only $T_xM$, but also its tensor powers can be now identified with tensor powers of $\R^m$. In the rest of the proof we shall use such isomorphisms as identifications.\footnote{These identifications obviously depend on the choice of the $g_\mathrm R$--orthonormal basis of $T_xM$, that determines an isomorphism of $T_xM$ with the Euclidean space. Notice however that the proof is a linear algebra fact, and could be done abstractly for finite--dimensional real vector spaces.}

Denote by $A$ the $m\times m$ real matrix that represents the symmetric operator $s(x)$ in the orthonormal basis above described. Since $A$ is symmetric, there exists an orthogonal matrix $P$ such that the conjugation $D=P^*AP$ is a diagonal matrix. Clearly, this diagonal matrix is formed by eigenvalues of the $g_\mathrm R$--symmetric operator $s(x)$, i.e.,
$$D=\left[ \begin{array}{l l l}
\lambda_1 &  & \\
& \ddots & \\
& & \lambda_m\end{array}
\right]$$
From \eqref{eq:btr},
\begin{eqnarray*}
\|s(x)\|^2_\mathrm R &=& \tr(A^*A) \\
&=& \tr(P^*A^*AP) \\
&=& \tr(P^*A^*PP^*AP) \\
&=& \tr(D^*D) \\
&=& \sum_{j=0}^m \lambda_j^2.\qedhere
\end{eqnarray*}
\end{proof}

Since each $s\in\sect^k(TM^*\vee TM^*)$ at $x\in M$ is a symmetric bilinear form $s(x):T_xM\times T_xM\to\R$, one may also compute the usual operator norm of $s(x)$, given by \eqref{eq:normt} and simply denoted $\|\cdot\|$. Since $T_xM$ is a finite--dimensional real vector space, this norm is given by
\begin{equation}
\|s(x)\|=\max_{\substack{\|v_i\|_\mathrm R=1 \\ i=1,2}} s(x)(v_1,v_2).
\end{equation}

The Hilbert--Schmidt norm and the usual operator norm are clearly equivalent, since we are in finite--dimensional vector spaces. More precisely, the following result gives the constants of such equivalence.

\begin{corollary}
For each $s\in\sect^k(TM^*\vee TM^*)$ and $x\in M$,
\begin{equation}\label{eq:snormt}
\|s(x)\|\le\|s(x)\|_\mathrm R\le\sqrt{m}\|s(x)\|.
\end{equation}
\end{corollary}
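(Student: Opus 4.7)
The plan is to diagonalize and compare the two norms eigenvalue by eigenvalue, using Lemma~\ref{le:symhs} for the Hilbert--Schmidt side and a standard identification of the operator norm of a symmetric endomorphism with its spectral radius on the other side.

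More concretely, I would first fix $x\in M$ and observe that under the identification \eqref{ident:bilin} the symmetric bilinear form $s(x):T_xM\times T_xM\to\R$ corresponds to the $g_\mathrm R$--symmetric operator already used in Lemma~\ref{le:symhs}, which by the spectral theorem admits a $g_\mathrm R$--orthonormal eigenbasis $\{e_j\}_{j=1}^m$ with real eigenvalues $\lambda_1,\ldots,\lambda_m$. For any unit vectors $v_1=\sum_j a_j e_j$ and $v_2=\sum_j b_j e_j$ (i.e.\ with $\sum_j a_j^2=\sum_j b_j^2=1$), Cauchy--Schwartz yields
\begin{equation*}
s(x)(v_1,v_2)=\sum_{j=1}^m \lambda_j a_j b_j\le \max_j|\lambda_j|\,\sqrt{\textstyle\sum_j a_j^2}\,\sqrt{\textstyle\sum_j b_j^2}=\max_j|\lambda_j|,
\end{equation*}
with equality when $v_1=v_2=e_{j_0}$ for $\lambda_{j_0}$ of maximal modulus. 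Hence $\|s(x)\|=\max_j|\lambda_j|$.

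With this identity in hand the two inequalities in \eqref{eq:snormt} become the elementary chain
\begin{equation*}
\max_j\lambda_j^2\;\le\;\sum_{j=1}^m\lambda_j^2\;\le\;m\,\max_j\lambda_j^2,
\end{equation*}
the left inequality being obvious and the right one just the trivial bound of a sum of $m$ nonnegative terms by $m$ times the largest. Taking square roots and invoking \eqref{eq:symhs} of Lemma~\ref{le:symhs} together with the spectral radius formula $\|s(x)\|=\max_j|\lambda_j|$ derived above gives exactly \eqref{eq:snormt}.

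There is no real obstacle here: the only mildly substantive step is the spectral--radius characterization of $\|s(x)\|$ for a symmetric bilinear form, and even that is routine once we diagonalize. Consequently the proof will be short, and the only care needed is to apply Lemma~\ref{le:symhs} to the same choice of eigenbasis used to express the operator norm.
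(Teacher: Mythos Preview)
Your proposal is correct and follows essentially the same approach as the paper: identify $\|s(x)\|$ with $\max_j|\lambda_j|$ (which the paper simply attributes to elementary linear algebra, while you supply the Cauchy--Schwartz argument), then compare with \eqref{eq:symhs} via the trivial chain $\max_j\lambda_j^2\le\sum_j\lambda_j^2\le m\max_j\lambda_j^2$. Your write-up is in fact more detailed than the paper's own proof.
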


\begin{proof}
Using elementary linear algebra, \eqref{eq:snormt} can be also given by
\begin{equation*}
\|s(x)\|=\max_{j} |\lambda_j|,
\end{equation*}
where $\lambda_j$ are the eigenvalues of the $g_\mathrm R$--symmetric operator $s(x)$ of $T_xM$. The inequalities \eqref{eq:snormt} follow from direct comparison of \eqref{eq:symhs} and the above equation.
\end{proof}

We now give a simple condition for nondegeneracy of $C^k$ symmetric $(0,2)$--tensors on $M$, given it has sufficiently small distance to a {\em suitably nondegenerate} tensor uniformly on $M$. By {\em suitably nondegenerate} tensor we mean a tensor $s\in\sect^k_b(TM^*\vee TM^*)$ all of whose eigenvalues $\lambda_j$ at any $x\in M$ have absolute value bounded\footnote{Notice that such limitation on eigenvalues $\lambda_j$ of symmetric tensors $s$ can be expressed in terms of $\|s(x)\|_\mathrm R$ or $\|s(x)\|$. Moreover, from \eqref{eq:snormt}, both $\|\cdot\|_\mathrm R$ and $\|\cdot\|$ give {\em essentially} the same conditions.} away from zero, uniformly on $x$. More precisely, using identifications \eqref{ident:bilin}, the above condition is equivalent to $s(x):T_xM\to T_xM^*$ satisfying $$0<\tfrac{1}{c}\leq\min_{0\leq j\leq m} |\lambda_j|,$$ for some $c>0$, uniformly on $M$.

A fancy way of expressing this condition on $s$ is requiring that
\begin{equation}\label{eq:boundedfromzero}
\sup_{x\in M}\|s(x)^{-1}\|_\mathrm R\leq c<+\infty.
\end{equation}
Notice that $s(x)^{-1}:T_xM^*\to T_xM$ lies in the same of $s(x)$, using identifications \eqref{ident:dual} on the finite--dimensional vector space $T_xM$.

\begin{lemma}\label{le:salvador}
Fix $x\in M$. For all constants $c'>c>0$, there exists $\varepsilon>0$ such that if $s\in\sect^k_b(TM^*\vee TM^*)$ satisfies $\|s(x)^{-1}\|_\mathrm R\leq c$, then for any $s'\in\sect^k_b(TM^*\vee TM^*)$,
\begin{equation}\label{eq:nondegeq1}
\|s'(x)-s(x)\|_\mathrm R<\varepsilon
\end{equation}
implies that $s'(x)$ is nondegenerate and $\|s'(x)^{-1}\|_\mathrm R\leq c'$.
\end{lemma}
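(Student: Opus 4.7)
The plan is to reduce the statement to the classical Neumann series argument in the finite--dimensional vector space of operators on $T_xM$. Viewing both $s(x)$ and $s'(x)$ as elements of $\Lin(T_xM,T_xM^*)$ via the identification \eqref{ident:bilin}, the hypothesis $\|s(x)^{-1}\|_\mathrm{R}\le c$ implies in particular that $s(x)$ is invertible, so I can factor
\begin{equation*}
s'(x)=s(x)-\bigl(s(x)-s'(x)\bigr)=s(x)\bigl[\id_{T_xM}-T\bigr],\qquad T=s(x)^{-1}\bigl(s(x)-s'(x)\bigr),
\end{equation*}
and the whole problem boils down to controlling the Hilbert--Schmidt norm of $T$ and inverting $\id-T$.

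First I would observe that the Hilbert--Schmidt norm on operators of a finite--dimensional inner product space is submultiplicative, i.e.\ $\|AB\|_\mathrm{R}\le\|A\|_\mathrm{R}\|B\|_\mathrm{R}$, which together with the hypothesis gives
\begin{equation*}
\|T\|_\mathrm{R}\le\|s(x)^{-1}\|_\mathrm{R}\,\|s(x)-s'(x)\|_\mathrm{R}\le c\,\varepsilon.
\end{equation*}
Since the operator norm is bounded above by the Hilbert--Schmidt norm (by \eqref{eq:snormt}), choosing $\varepsilon<1/c$ forces $\|T\|<1$, so Lemma~\ref{le:invertibility} applies to the finite--dimensional Banach space $\Lin(T_xM)$ and yields
\begin{equation*}
(\id-T)^{-1}=\sum_{n=0}^{+\infty}T^n,
\end{equation*}
in particular $\id-T$ is invertible, hence so is $s'(x)=s(x)(\id-T)$, which gives the nondegeneracy of $s'(x)$.

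For the quantitative bound I would estimate, using again submultiplicativity of $\|\cdot\|_\mathrm{R}$,
\begin{equation*}
\|s'(x)^{-1}\|_\mathrm{R}=\|(\id-T)^{-1}s(x)^{-1}\|_\mathrm{R}\le\sum_{n=0}^{+\infty}\|T\|_\mathrm{R}^n\,\|s(x)^{-1}\|_\mathrm{R}\le\frac{c}{1-c\varepsilon},
\end{equation*}
and then solve $\frac{c}{1-c\varepsilon}\le c'$ for $\varepsilon$, which gives the explicit choice
\begin{equation*}
\varepsilon\le\frac{c'-c}{c\,c'}.
\end{equation*}
Taking any such $\varepsilon>0$ (which is positive precisely because $c'>c$) completes the proof. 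The only mildly delicate step is the submultiplicativity bound $\|AB\|_\mathrm{R}\le\|A\|_\mathrm{R}\|B\|_\mathrm{R}$, which one can either cite as a standard finite--dimensional linear--algebra fact (proved via the trace description \eqref{eq:btr} and Cauchy--Schwartz) or replace by the chain $\|AB\|_\mathrm{R}\le\sqrt{m}\,\|AB\|\le\sqrt{m}\,\|A\|\,\|B\|\le\sqrt{m}\,\|A\|_\mathrm{R}\|B\|_\mathrm{R}$ coming from \eqref{eq:snormt}, which merely changes the explicit constant in the final choice of $\varepsilon$ but not the conclusion.
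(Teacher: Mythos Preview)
Your proof is correct and shares its core with the paper's argument: both use the same factorization $s'(x)=s(x)(\id-T)$ with $T=s(x)^{-1}(s(x)-s'(x))$ and the same appeal to Lemma~\ref{le:invertibility} to conclude invertibility of $s'(x)$. The difference lies in how the quantitative bound $\|s'(x)^{-1}\|_\mathrm{R}\le c'$ is obtained. You estimate the Neumann series directly and arrive at the explicit bound $\|s'(x)^{-1}\|_\mathrm{R}\le c/(1-c\varepsilon)$, then solve for $\varepsilon$, producing the closed form $\varepsilon=(c'-c)/(cc')$. The paper instead invokes continuity of the inversion map $\iota:\GL(T_xM)\to\GL(T_xM)$ to find a $\delta>0$ with $\|s'(x)-s(x)\|_\mathrm{R}<\delta\Rightarrow\|s'(x)^{-1}-s(x)^{-1}\|_\mathrm{R}<c'-c$, sets $\varepsilon=\min\{\delta,1/c\}$, and concludes by the triangle inequality. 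Your route has the advantage that the resulting $\varepsilon$ visibly depends only on $c$ and $c'$, exactly as the statement demands; the paper's $\delta$, coming from pointwise continuity of inversion at the specific operator $s(x)$, a priori depends on $s(x)$ as well, and the uniformity over all $s$ with $\|s(x)^{-1}\|_\mathrm{R}\le c$ would strictly speaking require one more sentence (which in fact would amount to your Neumann-series estimate). So your argument is both more self-contained and slightly sharper in this respect.
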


\begin{proof}
Let $c'>c>0$ be given and consider $s\in\sect^k_b(TM^*\vee TM^*)$ such that $\|s(x)^{-1}\|_\mathrm R\leq c$. Recall that from identifications \eqref{ident:bilin} and \eqref{ident:dual}, by $s(x)^{-1}$ we mean the inverse of the operator $$s(x):T_xM\la T_xM^*\cong T_xM.$$ Since $\GL(T_xM)$ is a Lie group, the inversion map $$\iota:\GL(T_xM)\ni g\longmapsto g^{-1}\in\GL(T_xM)$$ is continuous. Hence, there exists $\delta>0$ such that if $s'\in\sect^k_b(TM^*\vee TM^*)$ satisfies $\|s'(x)-s(x)\|_\mathrm R<\delta$ then 
\begin{equation}\label{eq:salvadoreq1}
\|\iota(s'(x))-\iota(s(x))\|_\mathrm R=\|s'(x)^{-1}-s(x)^{-1}\|_\mathrm R<c'-c.
\end{equation}

Set $\varepsilon=\min\left\{\delta,\frac{1}{c}\right\}$ and consider $s'\in\sect^k_b(TM^*\vee TM^*)$ be such that \eqref{eq:nondegeq1} holds. Elementary computations give $$s'(x)=s(x)(\id+s(x)^{-1}(s'(x)-s(x))).$$ From Lemma~\ref{le:invertibility}, setting $T=-s(x)^{-1}(s'(x)-s(x))$, in order to prove nondegeneracy of $s'(x)$, it suffices to prove that $\|T\|<1$. Notice that this is not the {\em Hilbert--Schmidt norm}, but the {\em operator norm}. Using Cauchy--Schwartz inequality \eqref{eq:cauchyschwartz} and inequalities \eqref{eq:snormt},
\begin{eqnarray*}
\|T\|&=& \|s(x)^{-1}(s'(x)-s(x))\| \\
&\leq & \|s(x)^{-1}\|\|s'(x)-s(x)\| \\
&\leq & \|s(x)^{-1}\|_\mathrm R\|s'(x)-s(x)\|_\mathrm R \\
&<& c\,\frac{1}{c}=1.
\end{eqnarray*}
Thus $s'(x):T_xM\to T_xM^*\cong T_xM$ is invertible, hence nondegenerate as a symmetric bilinear form of $T_xM$ (see Definition~\ref{def:nondegenerate}).

In addition, from \eqref{eq:salvadoreq1},
\begin{eqnarray*}
\|s'(x)^{-1}\|_\mathrm R-\|s(x)^{-1}\|_\mathrm R &\leq & \Big|\|s'(x)^{-1}\|_\mathrm R-\|s(x)^{-1}\|_\mathrm R\Big| \\
&\leq & \|s'(x)^{-1}-s(x)^{-1}\|_\mathrm R \\
&<& c'-c.
\end{eqnarray*}
Hence $\|s'(x)^{-1}\|_\mathrm R\leq c'$, concluding the proof.
\end{proof}

We are now ready to give a solution for the problem presented in Remark~\ref{re:emptyinterior} regarding emptiness of the interior of $$\mathcal F=\sect_0^k(TM^*\vee TM^*)\cap\met_\nu^k(M).$$ This will be done replacing $\sect_0^k(TM^*\vee TM^*)$ with a suitable displacement of this subspace, originating an {\em affine subspace} of $\sect_b^k(TM^*\vee TM^*)$.

\begin{proposition}\label{prop:affineworks}
Let $g_\mathrm A\in\met_\nu^k(M)$ be an auxiliary metric satisfying \eqref{eq:boundedfromzero}, i.e.
\begin{equation*}
\sup_{x\in M}\|g_\mathrm A^{-1}(x)\|_\mathrm R<+\infty.
\end{equation*}
Let $\mathds E$ be a $C^k$ Whitney type Banach space of sections of $E=TM^*\vee TM^*$ that tend to zero at infinity, see Definitions~\ref{def:ckwhitbanachspace} and~\ref{def:tendstozero}. Consider the affine space $g_\mathrm A+\mathds E$, with the topology induced by the translation of $g_\mathrm A$. Then the following is a (nonempty) open subset of this affine space
\begin{equation}\label{eq:agnu}
\A_{g_\mathrm A,\nu}=(g_\mathrm A+\mathds E)\cap\met_\nu^k(M).
\end{equation}
\end{proposition}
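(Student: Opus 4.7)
The plan is to verify both nonemptyness and openness of $\A_{g_\mathrm A,\nu}$. Nonemptyness is immediate: since $0 \in \mathds E$, we have $g_\mathrm A = g_\mathrm A + 0 \in \A_{g_\mathrm A,\nu}$. For openness, fix an arbitrary $g = g_\mathrm A + h \in \A_{g_\mathrm A,\nu}$ and aim to produce $\delta > 0$ such that every $h' \in \mathds E$ with $\|h' - h\|_{\mathds E} < \delta$ yields $g' = g_\mathrm A + h' \in \met_\nu^k(M)$. The strategy is threefold: first upgrade the pointwise bound \eqref{eq:boundedfromzero} on $g_\mathrm A^{-1}$ to an analogous uniform bound on $g^{-1}$; next apply Lemma~\ref{le:salvador} pointwise with constants chosen uniformly in $x$; and finally exploit the local constancy of the index to conclude that $g'$ has index $\nu$.

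First I would establish that $C_0 := \sup_{x \in M}\|g(x)^{-1}\|_\mathrm R < +\infty$. Let $c_0 = \sup_x \|g_\mathrm A(x)^{-1}\|_\mathrm R$, finite by hypothesis, and pick $c > c_0$. Lemma~\ref{le:salvador} produces $\varepsilon_0 > 0$ (depending only on $c_0$ and $c$) such that $\|g_\mathrm A(x)^{-1}\|_\mathrm R \leq c_0$ and $\|g(x) - g_\mathrm A(x)\|_\mathrm R < \varepsilon_0$ together imply $\|g(x)^{-1}\|_\mathrm R \leq c$. Since $h \in \mathds E$ tends to zero at infinity, there is a compact set $K \subset M$ with $\|h(x)\|_\mathrm R = \|g(x) - g_\mathrm A(x)\|_\mathrm R < \varepsilon_0$ on $M \setminus K$, yielding the bound $c$ outside $K$; on $K$, continuity of $x \mapsto \|g(x)^{-1}\|_\mathrm R$ and compactness give another bound, and the maximum of the two is the desired $C_0$.

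Next, choose $C_1 > C_0$ and let $\varepsilon > 0$ be the corresponding constant from Lemma~\ref{le:salvador}, which depends only on $C_0$ and $C_1$ and hence may be taken independent of $x$. By condition (i) of Definition~\ref{def:ckwhitbanachspace}, the inclusion $\mathds E \hookrightarrow \sect^k_b(E)$ is bounded, so there is $\delta > 0$ with $\|h' - h\|_{\mathds E} < \delta$ implying $\|h' - h\|_0 < \varepsilon$, and consequently $\sup_{x \in M}\|g'(x) - g(x)\|_\mathrm R < \varepsilon$. Pointwise application of Lemma~\ref{le:salvador} then gives that each $g'(x)$ is nondegenerate, with the uniform bound $\|g'(x)^{-1}\|_\mathrm R \leq C_1$.

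The remaining step is to show that $g'(x)$ has index $\nu$ at every $x \in M$. Here I would use the standard fact that on the open subset of nondegenerate symmetric bilinear forms the index is locally constant, since eigenvalues depend continuously on the form and none crosses zero within the nondegenerate locus. Concretely, consider the segment $g_t = g + t(g' - g)$, $t \in [0,1]$; by the same argument as above (shrinking $\delta$ if needed so that the whole segment lies within the $\varepsilon$-ball around $g$ in $\sect^0_b(E)$), each $g_t(x)$ is nondegenerate for every $t$ and every $x$. Hence $t \mapsto \operatorname{ind}(g_t(x))$ is integer-valued and continuous, thus constant, so $\operatorname{ind}(g'(x)) = \operatorname{ind}(g(x)) = \nu$ for every $x \in M$. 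The main technical hurdle is the uniformization in Step~1 — extracting the global bound $C_0$ from the asymptotic decay of $h$ together with the auxiliary hypothesis on $g_\mathrm A$; once this uniform control is secured, the rest of the argument is a routine application of Lemma~\ref{le:salvador} combined with continuity of the index.
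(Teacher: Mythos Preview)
Your proof is correct and follows essentially the same approach as the paper: both first upgrade the uniform bound on $g_\mathrm A^{-1}$ to one on $g^{-1}$ via Lemma~\ref{le:salvador} combined with the decay-at-infinity hypothesis and a compactness argument, then apply Lemma~\ref{le:salvador} again (together with the bounded inclusion $\mathds E\hookrightarrow\sect_b^k(E)$) to obtain pointwise nondegeneracy of nearby tensors, and finally use a path argument to pin down the index. The only cosmetic difference is that you parameterize the path explicitly as a straight segment, whereas the paper phrases it as ``any continuous path in the ball''; note also that no shrinking of $\delta$ is actually needed for the segment, since $\|g_t-g\|_0=t\|g'-g\|_0<\varepsilon$ automatically.
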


\begin{remark}
The above condition \eqref{eq:boundedfromzero} on $g_\mathrm A$ implies that the eigenvalue with minimum absolute value of the $g_\mathrm R$--symmetric operator $(g_\mathrm A)_x$ stays uniformly away from $0$. Notice that, once more, these considerations depend on the choice of the fixed Riemannian metric $g_\mathrm R$, see Remarks~\ref{re:dependgr} and~\ref{re:dependgr2}.
\end{remark}

\begin{proof}
The subset $\A_{g_\mathrm A,\nu}$ is clearly nonempty since $g_\mathrm A\in\A_{g_\mathrm A,\nu}$. Fix any $g\in\A_{g_\mathrm A,\nu}$. We will prove that $g$ is an interior point of $\A_{g_\mathrm A,\nu}$ in the topology induced by $\mathcal E$, which is hence an open subset of $g_\mathrm A+\mathds E$.

First, we use Lemma~\ref{le:salvador} to prove that also $g$ satisfies \eqref{eq:boundedfromzero}. Set $$c_1=\sup_{x\in M}\|g_\mathrm A(x)^{-1}\|_\mathrm R$$ and $c'_1=c_1+1$. From Lemma~\ref{le:salvador}, there exists $\varepsilon_1>0$ such that if for some $x\in M$, $\|g(x)-g_\mathrm A(x)\|_\mathrm R<\varepsilon_1$, then $g(x)$ is nondegenerate,\footnote{This is clearly redundant, since $g\in\A_{g_\mathrm A,\nu}$ was already taken as a metric.} and $$\|g(x)^{-1}\|_\mathrm R<c'_1=c_1+1<+\infty.$$ Since $g-g_\mathrm A\in\mathds E$ is a tensor that tends to zero at infinity (see Definition~\ref{def:tendstozero}), it follows that there exists a compact subset $K\subset M$ such that for $x\in M\setminus K$, $\|g(x)-g_\mathrm A(x)\|_\mathrm R<\varepsilon_1$, hence $\|g(x)^{-1}\|<c'_1$. In addition, from continuity of $g$, the function $$x\longmapsto \|g(x)^{-1}\|_\mathrm R$$ is clearly continuous hence bounded from above by $L>0$ for $x\in K$. Thus, for all $x\in M$, $\|g(x)^{-1}\|_\mathrm R\leq\max\,\{L,c'_1\}$. Therefore, any $g\in\A_{g_\mathrm A,\nu}$ satisfies
\begin{equation*}
\sup_{x\in M}\|g(x)^{-1}\|_\mathrm R<+\infty.
\end{equation*}

Second, we use Lemma~\ref{le:salvador} again to prove that $g$ is an interior point of $\A_{g_\mathrm A,\nu}$. From the last inequality, there exists a finite constant $c_2>0$, $$c_2=\sup_{x\in M}\|g(x)^{-1}\|_\mathrm R.$$ From Lemma~\ref{le:salvador} with $c_2$ and $c'_2=c_2+1$, there exists $\varepsilon_2>0$ such that if $h\in\sect_b^k(TM^*\vee TM^*)$ satisfies $\|h-g\|_k<\varepsilon_2$, in particular, $\|h(x)-g(x)\|_\mathrm R<\varepsilon_2$ for all $x\in M$, then $h(x)$ is nondegenerate for all $x\in M$. Therefore if $\|h-g\|_k<\varepsilon_2$, then $h$ is a semi--Riemannian metric on $M$.

Third, we prove that the index of $h$ is also $\nu$, hence $h\in\met_\nu^k(M)$ provided that $\|h-g\|_k<\varepsilon_2$. Observe that the same argument used above to prove nondegeneracy of $h$ works for any other $s\in\sect_b^k(TM^*\vee TM^*)$ with $\|s-g\|_k<\varepsilon_2$. Hence any continuous path joining $g$ and $h$ inside the open ball $B_k(g,\varepsilon_2)$ of $\sect_b^k(TM^*\vee TM^*)$ with center $g$ and radius $\varepsilon_2$ is entirely formed by {\em nondegenerate} tensors, i.e. metrics. This implies that $h$ has index $\nu$, since $\met_\nu^k(M)$ is path--connected and if at some point $s_0$ in a continuous path of metrics from $g$ to $h$ inside $B_k(g,\varepsilon_2)$ there was a change in the number of negative eigenvalues, then $s_0$ would obviously be a degenerate tensor.

From continuity of the inclusion $\mathds E\hookrightarrow\sect^k_b(TM^*\vee TM^*)$, there exists an open neighborhood $U$ of $g-g_\mathrm A\in\mathds E$, such that if $h-g_\mathrm A\in U$, then $\|h-g\|_k<\varepsilon_2$. Thus, if $h-g_\mathrm A\in U$, then $h\in\met_\nu^k(M)$. Therefore $g_\mathrm A+V$ is the desired open neighborhood of $g$ in $g_\mathrm A+\mathds E$, with $U\subset\met_\nu^k(M)$. This concludes the proof that $g$ is an interior point and that $\A_{g_\mathrm A,\nu}$ is an open set.
\end{proof}

\begin{figure}[htf]
\begin{center}
\vspace{-0.4cm}
\includegraphics[scale=1]{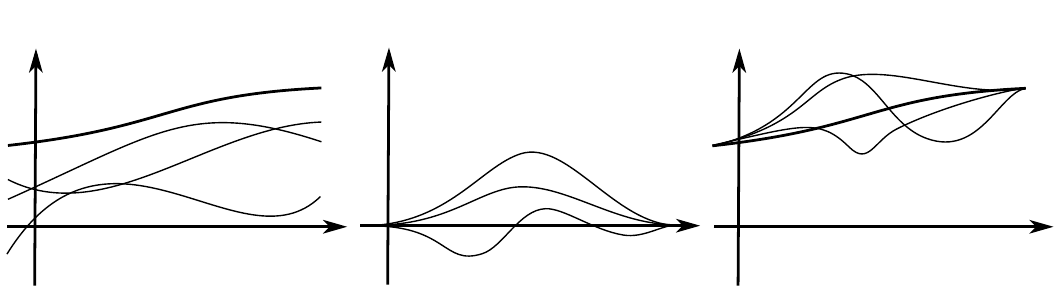}
\begin{pgfpicture}
\pgfputat{\pgfxy(-8.4,2.45)}{\pgfbox[center,center]{$g_\mathrm A$}}
\pgfputat{\pgfxy(-1.3,2.45)}{\pgfbox[center,center]{$g_\mathrm A$}}
\pgfputat{\pgfxy(-9,0.15)}{\pgfbox[center,center]{$\sect^k_b(E)$}}
\pgfputat{\pgfxy(-5.25,0.15)}{\pgfbox[center,center]{$\sect^k_0(E)$}}
\pgfputat{\pgfxy(-1.62,0.15)}{\pgfbox[center,center]{$g_\mathrm A+\sect^k_0(E)$}}
\end{pgfpicture}
\end{center}
\caption{Sections of $E=TM^*\vee TM^*$ that respectively are bounded, tend to zero and are asymptotically equal to $g_\mathrm A$, illustrated as functions.}\label{fig:sectionspaces}
\end{figure}

\begin{remark}\label{re:physics}
The setting $\A_{g_\mathrm A,\nu}$ established above for the domain of semi--Riemannian metrics generalizes a particularly fashionable concept among physicists. In general relativity, it is common to consider asymptotically flat space--times,\footnote{See Definition~\ref{def:asymptflat} and Remark~\ref{re:physics0}.} i.e., Lorentzian manifolds whose curvature vanishes at large distances from some region. This means that at large distances, the geometry becomes essentially the same as that of a Minkowski space--time, see Definition~\ref{def:minkowski}. Observe that the Minkowski metric \eqref{eq:minkowski} satisfies \eqref{eq:boundedfromzero}, hence can be taken as $g_\mathrm A$ in Proposition~\ref{prop:affineworks}. Moreover, notice that since tensors in $\mathds E$ tend to zero at infinity, any metric of the form $g=g_\mathrm A+g'$, with $g'\in\mathds E$, is asymptotically equal to $g_\mathrm A$ at infinity. In particular, the curvature of $g$ is asymptotically equal to the curvature of $g_\mathrm A$ at infinity. Thus, our set of metrics $$\A_{g_\mathrm A,\nu}=(g_\mathrm A+\mathds E)\cap\met_\nu^k(M)$$ established in Proposition~\ref{prop:affineworks} for much more general $g_\mathrm A$'s and $\mathds E$'s, is a natural extension of asymptotically flat space--times, that occur for $\nu=1$ and $m=4$, see Definition~\ref{def:asymptflat}.
\end{remark}

\begin{remark}
The open subset $\A_{g_\mathrm A,\nu}$ studied above will be the domain of metrics in our applications concerning generic properties of the geodesic flow. Namely, genericity of these properties will be proved in this separable Banach manifold, since it has all the necessary structure. In addition, some particular generic properties of metrics that will be studied in Chapter~\ref{chap6} require certain submanifolds to be nondegenerate. In this case, due to possible topological obstructions\footnote{See Section~\ref{sec:topobst}.} to nondegeneracy of certain submanifolds of $M$, the actual domain of metrics used will be an open subset of $\A_{g_\mathrm A,\nu}$. 
\end{remark}

\begin{wrapfigure}{r}{2cm}
\vspace{-0.5cm}
\xymatrix@+15pt{
& TM\ar[d]\\
[a,b]\ar[ur]^v\ar[r]^\gamma & M
}
\vspace{-0.5cm}
\end{wrapfigure}
We end this section with a few remarks on some special Banach spaces of tensors, namely spaces $\sect^k(\gamma^*TM)$ of $C^k$ vector fields\index{Vector field!along a curve} along $C^l$ curves $\gamma:[a,b]\to M$, provided that $k\leq l$, see Example~\ref{ex:vectoralongcurve}. Notice that since $E=TM$ is trivially a tensor bundle over $M$, these are a particular case of the Banach spaces $\sect^k_b(E)$ studied above. More precisely, Proposition~\ref{prop:banachspaceofsections} also implies that $\sect^k(\gamma^*TM)$ is a Banach space, since its elements are bounded sections of the pull--back bundle\footnote{See Remark~\ref{re:restbundle}.} $\gamma^*TM$, for this bundle has compact base $\gamma([a,b])\subset M$ and its continuous sections are hence bounded.

There is a clear similarity between the Banach spaces $\sect^k(\gamma^*TM)$ and $C^k([a,b],\R^m)$, since elements of both are $C^k$ maps having the interval $[a,b]$ as domain and taking values on a vector space, or a family $\{T_{\gamma(t)}M\}_{t\in [a,b]}$ of vector spaces. Indeed, let $\{e_i(t)\}_{i=1}^m$ be a $C^k$ referential along $\gamma$. Then any vector field $v\in\sect^k(\gamma^*TM)$ can be expressed in terms of this referential, $$v(t)=\sum_{i=1}^m \lambda_i(t)e_i(t),$$ with $C^k$ coordinate functions $\lambda_i:[a,b]\to\R$ for $1\leq i\leq m$. Thus $v\in\sect^k(\gamma^*TM)$ may be identified with $\lambda=(\lambda_i)_{i=1}^m\in C^k([a,b],\R^m)$. Notice that this identification is {\em not} canonical, since it depends on the choice of a referential along $\gamma$. Through this identification, it is possible to transfer most results about $C^k([a,b],\R^m)$ to $\sect^k(\gamma^*TM)$, such as the Stone--Weierstrass Theorem that gives density of $\sect^\infty(\gamma^*TM)$ in $\sect^0(\gamma^*TM)$, see Theorem~\ref{thm:stw2}.

Moreover, we may also consider vector fields along curves with weaker regularities, as $L^p$ or Sobolev class $H^k$.

\begin{definition}\label{def:campol2}
Consider $\gamma:[a,b]\to M$ a continuous curve. A vector field $v\in\sect(\gamma^*TM)$ is {\em of class $L^2$}\index{Vector field!along a curve!$L^2$} if the curve
\begin{equation}
[a,b]\ni t\longmapsto (\gamma(t),v(t))\in TM
\end{equation}
is measurable and for every local chart $\varphi:U\to\varphi(U)$ of $TM$ and for every interval $[c,d]\subset[a,b]$ with $\gamma([c,d])\subset U$, the curve $$[c,d]\ni t\longmapsto \dd\varphi(\gamma(t))v(t)\in\R^{2m}$$ is class $L^2$ in the sense of Definition~\ref{def:lp}. The vector subspace of $\sect(\gamma^*TM)$ formed by $L^2$ vector fields along $\gamma$ is denoted $\sect^{L^2}(\gamma^*TM)$.
\end{definition}

\begin{definition}\label{def:campoh1}
Consider $\gamma:[a,b]\to M$ a curve of Sobolev class $H^k$. A vector field $v\in\sect^0(\gamma^*TM)$ is {\em of Sobolev class $H^k$}\index{Sobolev class $H^k$!vector field}\index{Vector field!along a curve!$H^k$} if the curve
\begin{equation}\label{eq:vaolongodegamma}
[a,b]\ni t\longmapsto (\gamma(t),v(t))\in TM
\end{equation}
is continuous and for every local chart $\varphi:U\to\varphi(U)$ of $TM$ and for every interval $[c,d]\subset[a,b]$ with $\gamma([c,d])\subset U$, the curve $$[c,d]\ni t\longmapsto \dd\varphi(\gamma(t))v(t)\in\R^{2m}$$ is of Sobolev class $H^k$ in the sense of Definition~\ref{def:hk}. The vector subspace of $\sect^0(\gamma^*TM)$ formed by Sobolev class $H^k$ vector fields along $\gamma$ is denoted $\sect^{H^k}(\gamma^*TM)$.
\end{definition}

\begin{remark}
In Section~\ref{sec:hum}, it will be clear that this is equivalent to \eqref{eq:vaolongodegamma} being a Sobolev $H^k$ curve in a more general sense, see Definition~\ref{def:H1abM}.
\end{remark}

The identifications established between $C^k([a,b],\R^m)$ and $\sect^k(\gamma^*TM)$ in the case of a $C^l$ curve can also be made\footnote{Notice that regularity of vector fields along $\gamma:[a,b]\to M$ are {\em at most} the same as the regularity of $\gamma$. For instance, concepts as a Sobolev $H^1$ vector field along a continuous curve, or a $C^3$ vector field along a $C^2$ curve do note make sense. In case the regularity of a curve is not specified, it is implicit that it is at least equal to the regularity of the considered vector fields along it.} in the context of $L^2$ or Sobolev $H^k$ curves. Namely, $\sect^{L^2}(\gamma^*TM)$ and $\sect^{H^k}(\gamma^*TM)$ are Banach spaces respectively identified with $L^2([a,b],\R^m)$ and $H^k([a,b],\R^m)$. Notice however that these identifications are {\em not} canonical, since they depend on the choice of a referential along $\gamma$. Through these, it is possible to state similar results to Propositions~\ref{prop:inclusions} and~\ref{prop:cinftylp} and Corollary~\ref{cor:cinftyhk}, as follows. Notice that the proof of these results is immediate from their versions regarding maps in Euclidean space.

\begin{proposition}\label{prop:inclusions2}
The following inclusion maps are continuous:
\begin{itemize}
\item[(i)] $\sect^l(\gamma^*TM)\hookrightarrow\sect^k(\gamma^*TM)$, for $\gamma$ of class $C^l$ and $0\leq k\leq l$;
\item[(ii)] $\sect^0(\gamma^*TM)\hookrightarrow\sect^{L^2}(\gamma^*TM)$, for $\gamma$ of class $C^0$;
\item[(iii)] $\sect^k(\gamma^*TM)\hookrightarrow\sect^{H^k}(\gamma^*TM)$, for $\gamma$ of class $C^k$ and $k\geq 0$.
\end{itemize}
\end{proposition}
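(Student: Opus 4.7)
The plan is to reduce each of (i), (ii), (iii) to its Euclidean counterpart, which is furnished by Proposition~\ref{prop:inclusions}, via a trivialization argument using a global frame along $\gamma$. The key observation is that since $[a,b]$ is contractible, the pull--back vector bundle $\gamma^*TM$ is trivial with the appropriate regularity: for $\gamma$ of class $C^l$ (resp.\ $H^k$, resp.\ $C^0$) one can produce a $C^l$ (resp.\ $C^{k-1}$, which is legitimate by Proposition~\ref{prop:inclusions}~(v), resp.\ $C^0$) global frame $p(t)\colon\R^m\to T_{\gamma(t)}M$, $t\in[a,b]$. To build such a frame, I would cover the compact image $\gamma([a,b])\subset M$ by finitely many trivializations of $TM$, partition $[a,b]$ accordingly into finitely many subintervals, trivialize $\gamma^*TM$ locally on each subinterval, and then patch the local frames together inductively at the endpoints using the fact that a frame at a single point extends to a frame on the adjoined subinterval (this is possible precisely because $[a,b]$ is one--dimensional; the patching happens at finitely many points, causing no regularity loss).

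Once such a frame $p$ is fixed, every vector field $v$ along $\gamma$ corresponds to a unique $\R^m$--valued function $\widetilde v(t)=p(t)^{-1}v(t)$ on $[a,b]$, and this correspondence is a linear bijection between the three pairs of spaces:
\begin{equation*}
\Phi_p\colon\sect^j(\gamma^*TM)\la C^j([a,b],\R^m),\quad \sect^{L^2}(\gamma^*TM)\la L^2([a,b],\R^m),
\end{equation*}
and $\sect^{H^k}(\gamma^*TM)\to H^k([a,b],\R^m)$, for $j=0,\ldots,l$ and $k$ as needed. The norms in the section spaces were defined (Definitions~\ref{def:campol2} and~\ref{def:campoh1}) via composition with local charts of $TM$, while $\Phi_p$ uses the chosen global frame; since $\gamma([a,b])$ is compact and the transition functions between local charts of $TM$ and the global frame $p$ are of the required regularity and bounded (together with all their derivatives up to the relevant order), $\Phi_p$ is in fact a topological isomorphism in each case.

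With the identifications above in place, the three inclusions in Proposition~\ref{prop:inclusions2} fit into commutative diagrams
\begin{equation*}
\xymatrix@+10pt{
\sect^l(\gamma^*TM)\ar[r]\ar[d]_{\Phi_p}^{\cong} & \sect^k(\gamma^*TM)\ar[d]^{\Phi_p}_{\cong} \\
C^l([a,b],\R^m)\ar[r] & C^k([a,b],\R^m)
}
\end{equation*}
and similarly for the $L^2$ and $H^k$ inclusions, where the bottom arrows are the Euclidean inclusions, whose continuity is guaranteed by Proposition~\ref{prop:inclusions}~(i), (ii) and (vi) respectively. Continuity of the top arrows then follows by Lemma~\ref{le:cdreduction} (or simply by composition), since the vertical maps are topological isomorphisms.

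The main obstacle is the construction of a global frame along $\gamma$ with the correct regularity, in particular handling the $H^k$ case: the curve $\gamma$ itself is only of Sobolev class $H^k$, so no smooth frame can be expected along it. The resolution is to note that $\gamma$ is continuous (indeed $C^{k-1}$ when $k\geq 1$), so $\gamma^*TM$ is at least a continuous bundle over $[a,b]$ and admits a continuous, even $C^{k-1}$, global trivialization by the patching procedure described above; this regularity is exactly what is needed for the identification $\Phi_p$ to interchange the $H^k$--topology of $\sect^{H^k}(\gamma^*TM)$ (defined via $C^0$ charts of $TM$ composed with $\gamma$) with the ordinary Sobolev $H^k$--topology on $[a,b]$, since the change of frame acts by multiplication by a $C^{k-1}$ matrix--valued function, which is a bounded operator on $H^k([a,b],\R^m)$.
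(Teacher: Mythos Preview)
Your proposal is correct and follows essentially the same approach as the paper, which simply remarks that these results are ``immediate from their versions regarding maps in Euclidean space'' (Proposition~\ref{prop:inclusions}) via the non--canonical identification of the section spaces with $C^k([a,b],\R^m)$, $L^2([a,b],\R^m)$ and $H^k([a,b],\R^m)$ given by a choice of frame along $\gamma$. One minor point: your discussion of the ``main obstacle'' in the last paragraph is unnecessary, since in (iii) the hypothesis is that $\gamma$ is of class $C^k$ (not merely $H^k$), so a $C^k$ global frame along $\gamma$ is directly available and no regularity subtlety arises.
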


\begin{stwthm}\label{thm:stw2}\index{Theorem!Stone--Weierstrass}
If $\gamma$ is of class $C^\infty$, then $\sect^\infty(\gamma^*TM)$ is dense in $\sect^0(\gamma^*TM)$.
\end{stwthm}

\begin{proposition}\label{prop:cinftylp2}
If $\gamma$ is smooth, then the space $\sect^\infty(\gamma^*TM)$ is dense in $\sect^{L^2}(\gamma^*TM)$. In particular, if $\gamma$ is of class $C^k$, then $\sect^k(\gamma^*TM)$ is dense in $\sect^{L^2}(\gamma^*TM)$.
\end{proposition}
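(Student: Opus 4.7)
The plan is to reduce the statement to its Euclidean counterpart, Proposition~\ref{prop:cinftylp}, by using a global smooth (respectively $C^k$) trivialization of the pull--back bundle $\gamma^\ast TM$ over the interval $[a,b]$.

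First, I would construct a global smooth frame $\{e_j(t)\}_{j=1}^m$ of $\gamma^\ast TM$ along the smooth curve $\gamma:[a,b]\to M$. This can be done by parallel transporting any basis $\{e_j(a)\}_{j=1}^m$ of $T_{\gamma(a)}M$ along $\gamma$ using the auxiliary Levi--Civita connection $\nabla^{\mathrm R}$ of $g_{\mathrm R}$, see Proposition~\ref{prop:paralleltransport}. Since the parallel transport equation is a linear ODE whose coefficients depend smoothly on $t$ (through $\gamma$ and $\dot\gamma$), the resulting frame is smooth. In the case that $\gamma$ is only of class $C^k$, the coefficients of this ODE are of class $C^{k-1}$, hence the frame obtained is of class $C^k$ (equivalently, since $[a,b]$ is contractible, $\gamma^\ast TM$ is trivial as a vector bundle in the appropriate class).

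Second, this frame induces a linear isomorphism
\begin{equation*}
\Phi:\sect^{L^2}(\gamma^\ast TM)\longrightarrow L^2([a,b],\R^m),\qquad \Phi(v)=(\lambda_1,\ldots,\lambda_m),
\end{equation*}
where $v(t)=\sum_{j=1}^m\lambda_j(t)e_j(t)$, as observed in the paragraph preceding Proposition~\ref{prop:inclusions2}. Since the transition matrices between this frame and any local trivialization of $TM$ along $\gamma$ are continuous on the compact interval $[a,b]$, hence uniformly bounded together with their inverses, $\Phi$ is a topological isomorphism of Banach spaces. Moreover, $\Phi$ clearly restricts to a topological isomorphism from $\sect^\infty(\gamma^\ast TM)$ onto $C^\infty([a,b],\R^m)$ and, in the $C^k$ case, from $\sect^k(\gamma^\ast TM)$ onto $C^k([a,b],\R^m)$.

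Third, I would simply invoke Proposition~\ref{prop:cinftylp} with $p=2$, which asserts that $C^\infty([a,b],\R^m)$ is dense in $L^2([a,b],\R^m)$. Since $\Phi^{-1}$ is a homeomorphism carrying $C^\infty([a,b],\R^m)$ onto $\sect^\infty(\gamma^\ast TM)$ and $L^2([a,b],\R^m)$ onto $\sect^{L^2}(\gamma^\ast TM)$, density is preserved, and we conclude that $\sect^\infty(\gamma^\ast TM)$ is dense in $\sect^{L^2}(\gamma^\ast TM)$. The very same argument, replacing $C^\infty$ with $C^k$ and using the $C^k$ frame constructed above, yields the second assertion. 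There is no real obstacle in this plan; the only point deserving care is that the identification $\Phi$ does preserve the Banach topology of both spaces, which follows from the compactness of $[a,b]$ and the continuity of the change of frame.
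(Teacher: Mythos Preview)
Your proof is correct and follows exactly the approach indicated in the paper: use a global frame along $\gamma$ to identify $\sect^{L^2}(\gamma^\ast TM)$ with $L^2([a,b],\R^m)$ (and the smooth/$C^k$ sections with the corresponding function spaces), then invoke Proposition~\ref{prop:cinftylp}. The paper merely states that ``the proof of these results is immediate from their versions regarding maps in Euclidean space'' via the frame identification discussed just before Proposition~\ref{prop:inclusions2}, so you have simply filled in the details of what the paper leaves implicit.
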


\begin{corollary}\label{cor:cinftyhk2}
If $\gamma$ is smooth, then the space $\sect^\infty(\gamma^*TM)$ is dense in $\sect^{H^k}(\gamma^*TM)$, for $k\geq 1$. In particular, if $\gamma$ is of class $C^{k+j}$ for some $j\geq 0$, then $\sect^{k+j}(\gamma^*TM)$ is dense in $\sect^{H^k}(\gamma^*TM)$.
\end{corollary}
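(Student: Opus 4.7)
The plan is to reduce the statement to its Euclidean counterpart (Corollary~\ref{cor:cinftyhk}) by choosing an appropriate global frame along $\gamma$, which trivializes the pull--back bundle $\gamma^*TM$ and identifies the relevant spaces of vector fields along $\gamma$ with classical spaces of maps $[a,b]\to\R^m$.

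First I would construct a global frame of the appropriate regularity along $\gamma$. Since the pull--back bundle $\gamma^*TM$ is a vector bundle over the contractible base $[a,b]$, it is trivial, and admits a global frame $\{e_i(t)\}_{i=1}^m$ with the same regularity as $\gamma$; for $\gamma$ smooth this frame is smooth, and for $\gamma$ of class $C^{k+j}$ it is of class $C^{k+j}$. Such a frame may be produced concretely by picking any basis of $T_{\gamma(a)}M$ and parallel transporting it along $\gamma$ with respect to the fixed auxiliary Riemannian connection $\nabla^\mathrm{R}$, which produces a frame of the same regularity as $\gamma$ (cf.\ Proposition~\ref{prop:paralleltransport}).

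Next, I would use the chosen frame to define the identification
\begin{equation*}
\Phi:\sect^{H^k}(\gamma^*TM)\la H^k([a,b],\R^m),\qquad v=\sum_{i=1}^m\lambda_i\,e_i\longmapsto(\lambda_1,\dots,\lambda_m),
\end{equation*}
which in view of Definition~\ref{def:campoh1} (and its analogue for $C^l$ regularity) is a Banach space isomorphism when the frame is at least of class $C^k$. Concretely, writing out the condition of Definition~\ref{def:campoh1} in the frame and using that frame transitions between local charts of $TM$ along $\gamma$ are of class $C^{k+j}$ (or smooth), one sees that $v\in\sect^{H^k}(\gamma^*TM)$ if and only if each coordinate $\lambda_i$ belongs to $H^k([a,b],\R)$, and analogously $\Phi$ restricts to an isomorphism $\sect^{k+j}(\gamma^*TM)\cong C^{k+j}([a,b],\R^m)$ and $\sect^\infty(\gamma^*TM)\cong C^\infty([a,b],\R^m)$ in the smooth case.

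Finally, given $v\in\sect^{H^k}(\gamma^*TM)$ and $\varepsilon>0$, I would apply Corollary~\ref{cor:cinftyhk} to the coordinate map $\Phi(v)\in H^k([a,b],\R^m)$: there exists a smooth (resp.\ $C^{k+j}$) map $\lambda^\varepsilon\in C^\infty([a,b],\R^m)$ (resp.\ $C^{k+j}([a,b],\R^m)$) with $\|\lambda^\varepsilon-\Phi(v)\|_{H^k}<\varepsilon/\|\Phi^{-1}\|$, and then $v^\varepsilon=\Phi^{-1}(\lambda^\varepsilon)$ is a smooth (resp.\ $C^{k+j}$) vector field along $\gamma$ with $\|v^\varepsilon-v\|_{H^k}<\varepsilon$. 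The only mild point to watch, which is the main (very modest) obstacle, is verifying that the norms transported by $\Phi$ are equivalent to the intrinsic $H^k$--norm on $\sect^{H^k}(\gamma^*TM)$ built out of covariant derivatives; this is handled by the covariant derivative formula~\eqref{eq:nablachr} relating $\D^\mathrm{R}$ to ordinary derivatives in the frame via a fixed $C^{k+j-1}$ Christoffel tensor along $\gamma$, which is bounded together with its derivatives on the compact interval $[a,b]$, yielding the required two--sided estimates between $\|v\|_{H^k}$ and $\|\Phi(v)\|_{H^k}$.
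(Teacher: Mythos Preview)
Your proposal is correct and follows exactly the approach indicated in the paper: the text states just before this corollary that ``the proof of these results is immediate from their versions regarding maps in Euclidean space,'' via the (non--canonical) identification of $\sect^{H^k}(\gamma^*TM)$ with $H^k([a,b],\R^m)$ given by a frame along $\gamma$. You have simply written out in detail what the paper leaves implicit, including the mild point about equivalence of norms through the covariant derivative formula.
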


\section[Hilbert manifold of Sobolev $H^1$ curves]{\texorpdfstring{Hilbert manifold of Sobolev $H^1$ curves}{Hilbert manifold of Sobolev H1 curves}}
\label{sec:hum}

In this section, we endow the set of Sobolev curves $H^1([a,b],M)$ on a $m$--dimensional (smooth) manifold $M$ with a separable Hilbert manifold structure, following closely the approach of Mercuri, Piccione and Tausk \cite{picmertausk}. We also describe the structure of the tangent spaces $T_\gamma H^1([a,b],M)$ and endow $H^1([a,b],M)$ with a (infinite--dimensional) Riemann metric, see Definition~\ref{def:metricinfinitemnfld}.

\begin{definition}\label{def:H1abM}
A curve $\gamma:[a,b]\to M$ on $M$ is of {\em Sobolev class $H^k$} if it is continuous and for every local chart $\varphi:U\to\varphi(U)$ of $M$ and for every interval $[c,d]\subset[a,b]$ with $\gamma([c,d])\subset U$, the curve $\varphi\circ\gamma\vert_{[c,d]}:[c,d]\to\R^m$ is of class $H^k$ in the sense of Definition~\ref{def:hk}. The set of all Sobolev $H^k$ curves $\gamma:[a,b]\to M$ is denoted $H^k([a,b],M)$.
\end{definition}

\begin{remark}\label{re:covderh1}
If $\gamma:[a,b]\to M$ is a curve of Sobolev class $H^1$, there exists a version of covariant derivative operator\index{Covariant derivative} $$\D^g:\sect^k(\gamma^*TM)\la\sect^{k-1}(\gamma^*TM)$$ of Proposition~\ref{prop:covder} defined\footnote{Notice that if $\gamma$ has this regularity, it is meaningless to consider $C^k$ vector fields along $\gamma$.} on $\sect^{H^1}(\gamma^*TM)$. More precisely, any $v\in\sect^{H^1}(\gamma^*TM)$ can be regarded as a curve $v:[0,1]\to TM$ of Sobolev class $H^1$, with $v(t)\in T_{\gamma(t)}M$, for all $t\in [0,1]$, recall Definition~\ref{def:campoh1}. From Proposition~\ref{prop:abscontcharact}, it is easy to conclude\footnote{Formally, it would be necessary to consider a referential and express the covariant derivative $\D^\mathrm R v$ locally in terms of ordinary derivatives in Euclidean space, and then use Proposition~\ref{prop:abscontcharact}.} that the covariant derivative $\D^\mathrm Rv$ is defined {\em almost everywhere} using formulas \eqref{eq:nablachr} and \eqref{eq:dgx}. This gives a continuous covariant derivative operator $$\D^\mathrm R:\sect^{H^1}(\gamma^*TM)\la\sect^{L^2}(\gamma^*TM).$$ Henceforth, when dealing with covariant derivatives of Sobolev $H^1$ vector fields we always mean in this form, and almost everywhere.
\end{remark}

\begin{definition}\label{def:familycharts}
A {\em one--parameter family of (smooth) charts}\index{Chart!one--parameter family} on $M$ is a smooth map $\fibr\varphi:U\to\R^{m+1}$ defined on an open subset $U$ of $\R\times M$ such that $$\fibr\varphi:U\ni(t,x)\longmapsto(t,\varphi(t,x))\in\R\times\R^m$$ is a smooth diffeomorphism onto an open subset $\fibr\varphi(U)\subset\R\times\R^m$. For each $t\in\R$, set $U_t=\{x\in M:(t,x)\in U\}$, which is an open (and possibly empty) subset of $M$, and $$\varphi_t:U_t\ni x\longmapsto\varphi(t,x)\in\R^m.$$ A one--parameter family of charts with $\varphi_t$ and $U_t$ defined as above will be denoted by the pair $\varphi=(\varphi_t,U_t)$.
\end{definition}

\begin{remark}
Clearly, if $\varphi=(\varphi_t,U_t)$ is a one--parameter family of charts, then $\varphi_t:U_t\to\varphi_t(U_t)$ is a local chart on $M$ for every $t$. Conversely, it follows from the Inverse Function Theorem that if $\varphi$ is smooth and each $\varphi_t$ is a local chart, then $\varphi$ is a one--parameter family of charts.
\end{remark}

\begin{definition}\label{def:curvesalphaM}
If $U\subset\R\times M$ is an open subset, consider the set of Sobolev $H^1$ curves $\gamma:[a,b]\to M$ whose graph is contained in
$U$, $$\curves U=\left\{\gamma\in H^1([a,b],M):(t,\gamma(t))\in U, \mbox{ for all }t\in[a,b]\right\}.$$ If $N$ is a smooth manifold and $\alpha:U\to N$ is smooth, it is possible to define
\begin{equation}\label{eq:curvesalphaM}
\begin{aligned}
\curves\alpha:\curves U&\la H^1([a,b],N)\\
\curves\alpha(\gamma)(t)&=\alpha(t,\gamma(t)), \quad t\in [a,b],
\end{aligned}
\end{equation}
analogously to \eqref{eq:curvesalpha}.
\end{definition}

\begin{figure}[htf]
\begin{center}
\vspace{-0.4cm}
\includegraphics[scale=1]{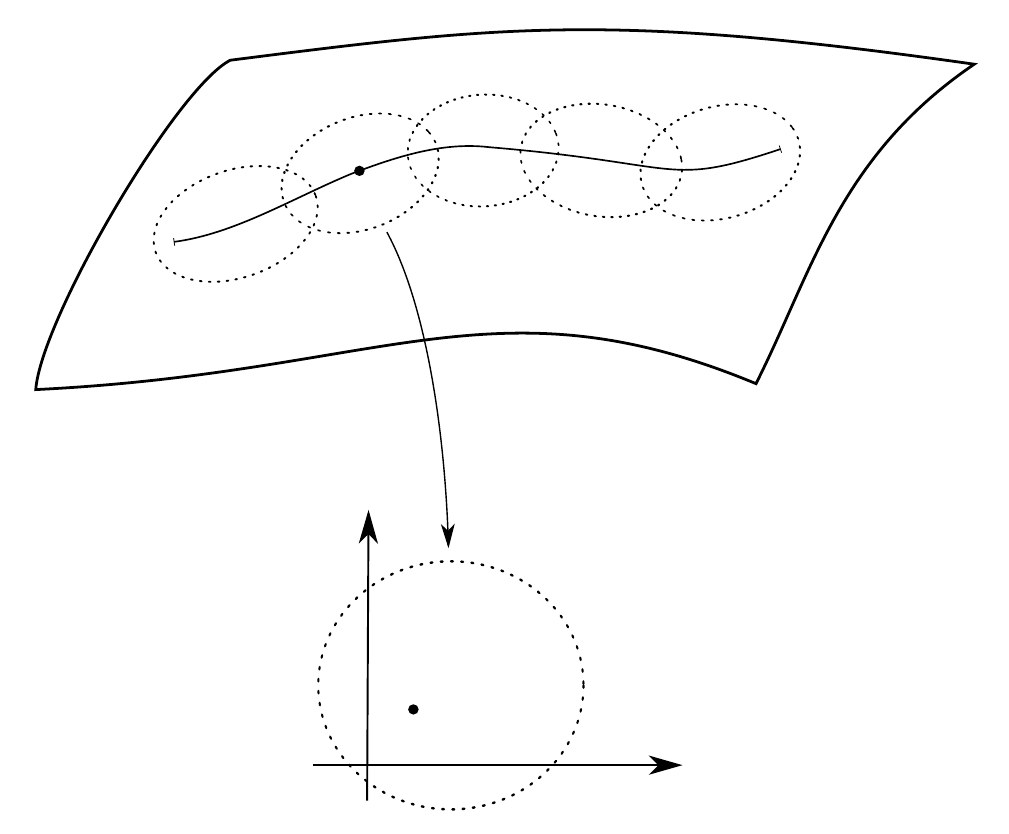}
\begin{pgfpicture}
\pgfputat{\pgfxy(-7.2,7.5)}{\pgfbox[center,center]{$U_t$}}
\pgfputat{\pgfxy(-6.6,6.4)}{\pgfbox[center,center]{$\gamma(t)$}}
\pgfputat{\pgfxy(-2.1,7)}{\pgfbox[center,center]{$\gamma$}}
\pgfputat{\pgfxy(-5.5,4.5)}{\pgfbox[center,center]{$\varphi_t$}}
\pgfputat{\pgfxy(-4,2.6)}{\pgfbox[center,center]{$\R^m$}}
\pgfputat{\pgfxy(-5.5,1)}{\pgfbox[center,center]{$\varphi(t,x)$}}
\pgfputat{\pgfxy(-1.6,5.8)}{\pgfbox[center,center]{$M$}}
\end{pgfpicture}
\end{center}
\caption{A one--parameter family $\varphi=(\varphi_t,U_t)$ of smooth charts on $M$ and a curve $\gamma:[a,b]\to M$ with graph contained in $U\subset \R\times M$.}
\end{figure}

Notice that if $\varphi=(\varphi_t,U_t)$ is a one--parameter family of charts in $M$, then $\curves\varphi$ gives a bijection from $\curves U$ to $\curves{\varphi(U)}$.

\begin{lemma}
Consider two one--parameter families of charts $\varphi=(\varphi_t,U_t)$ and $\psi=(\psi_t,V_t)$. If $U\cap V\ne\emptyset$ then it is possible to define a {\em transition function} from $\varphi$ to $\psi$ by
\begin{equation}\label{eq:alphaM}
\begin{aligned}
\alpha:\underbrace{\bigcup_{t\in\R}\{t\}\times\varphi_t(U_t\cap V_t)}_{\fibr\varphi(U\cap V)} &\la\underbrace{\bigcup_{t\in\R}\{t\}\times\psi_t(U_t\cap V_t)}_{\fibr\psi(U\cap V)}\\
(t,v) &\longmapsto (t,(\psi_t\circ\varphi_t^{-1})(v))
\end{aligned}
\end{equation}
The transition function $\alpha=\fibr\psi\circ(\fibr\varphi)^{-1}$ is a smooth diffeomorphism between open subsets of $\R\times\R^m$, and $$\curves\alpha=\curves\psi\circ(\curves\varphi)^{-1}:\curves{\fibr\varphi(U\cap V)}\longrightarrow\curves{\fibr\psi(U\cap V)}$$ is a smooth diffeomorphism between open subsets of $H^1([a,b],\R^m)$.
\end{lemma}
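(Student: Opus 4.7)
The plan is to split the claim into two independent parts: first the finite-dimensional statement about $\alpha$, then the infinite-dimensional one about $\curves\alpha$, with the latter reduced entirely to Theorem~\ref{thm:curvesalphasmooth}.

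For the finite-dimensional part, I would simply observe that $\fibr\varphi:U\to\R\times\R^m$ and $\fibr\psi:V\to\R\times\R^m$ are smooth diffeomorphisms onto open subsets by Definition~\ref{def:familycharts}. Restricting them to $U\cap V$, which is open in $\R\times M$, gives smooth diffeomorphisms onto the open subsets $\fibr\varphi(U\cap V)$ and $\fibr\psi(U\cap V)$ of $\R\times\R^m$. Hence $\alpha=\fibr\psi\circ(\fibr\varphi)^{-1}$ is a smooth diffeomorphism between these open subsets, with inverse $\alpha^{-1}=\fibr\varphi\circ(\fibr\psi)^{-1}$. The fact that $\alpha$ preserves the first coordinate (as made explicit in \eqref{eq:alphaM}) is immediate from the definition of a one-parameter family of charts.

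For the infinite-dimensional part, I would first verify the pointwise identity $\curves\alpha=\curves\psi\circ(\curves\varphi)^{-1}$. For $\gamma\in\curves{\fibr\varphi(U\cap V)}\subset H^1([a,b],\R^m)$ and $t\in[a,b]$, direct unwinding gives
\begin{equation*}
(\curves\varphi)^{-1}(\gamma)(t)=\varphi_t^{-1}(\gamma(t)),\quad \bigl(\curves\psi\circ(\curves\varphi)^{-1}\bigr)(\gamma)(t)=\psi_t\bigl(\varphi_t^{-1}(\gamma(t))\bigr)=\alpha(t,\gamma(t)),
\end{equation*}
which is precisely $\curves\alpha(\gamma)(t)$ by \eqref{eq:curvesalphaM}. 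Next, I apply Theorem~\ref{thm:curvesalphasmooth} to the smooth map $\alpha$ between open subsets of $\R\times\R^m$: its domain $\curves{\fibr\varphi(U\cap V)}$ is open in $H^1([a,b],\R^m)$, and since $\alpha$ is of class $C^k$ for every $k$, the map $\curves\alpha$ is of class $C^{k-1}$ for every $k$, i.e.\ smooth. Applying the same theorem to $\alpha^{-1}$ yields that $\curves{\alpha^{-1}}$ is smooth on the open set $\curves{\fibr\psi(U\cap V)}$.

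Finally, the identity $\curves{\alpha^{-1}}\circ\curves\alpha=\curves{\alpha^{-1}\circ\alpha}=\curves{\id}=\id$ (and symmetrically on the other side), which is immediate from the pointwise definition \eqref{eq:curvesalphaM}, shows that $\curves\alpha$ and $\curves{\alpha^{-1}}$ are mutual inverses. Thus $\curves\alpha$ is a smooth diffeomorphism between the open subsets $\curves{\fibr\varphi(U\cap V)}$ and $\curves{\fibr\psi(U\cap V)}$ of $H^1([a,b],\R^m)$. I do not expect a genuine obstacle here: all the analytic content—openness of $\curves U$ and smoothness of $\curves\alpha$ in the $H^1$-topology, including the need to handle the $L^2$-valued derivative via the bilinear composition \eqref{eq:compositeC0L2L2} and the weak differentiation principle of Lemma~\ref{le:weakdiffprinc}—has already been absorbed into Theorem~\ref{thm:curvesalphasmooth}.
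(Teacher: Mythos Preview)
Your proof is correct and follows the same approach as the paper: the finite-dimensional claim is immediate from Definition~\ref{def:familycharts}, and the infinite-dimensional claim reduces to Theorem~\ref{thm:curvesalphasmooth} applied to both $\alpha$ and $\alpha^{-1}$. Your write-up is in fact more careful than the paper's, which compresses the argument to two sentences and does not explicitly mention applying the theorem to $\alpha^{-1}$ or checking that $\curves{\alpha^{-1}}$ is the inverse of $\curves\alpha$.
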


\begin{proof}
From Definitions~\ref{def:H1abM} and~\ref{def:curvesalphaM}, the map \eqref{eq:alphaM} is clearly a smooth diffeomorphism. Applying Theorem~\ref{thm:curvesalphasmooth}, it follows that $\curves\alpha$ is also a smooth diffeomorphism.
\end{proof}

\begin{corollary}\label{cor:atlash1}
For every one--parameter family of charts $\varphi=(\varphi_t,U_t)$, the pair $(\curves{U},\curves\varphi)$, with
\begin{equation}\label{eq:curvesvarphiM}
\begin{aligned}
\curves\varphi:\curves U\subset &H^1([a,b],M)\la H^1([a,b],\R^m) \\
\curves\varphi(\gamma)(t) &=\varphi(t,\gamma(t)), \quad t\in [a,b],
\end{aligned}
\end{equation}
is a chart on $H^1([a,b],M)$, see Definition~\ref{def:curvesalphaM}. Moreover, charts of this form are pairwise smoothly compatible.
\end{corollary}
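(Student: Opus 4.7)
The plan is to verify the two assertions by reducing everything to Theorem~\ref{thm:curvesalphasmooth} and the transition function lemma stated immediately before the corollary. Since at this stage $H^1([a,b],M)$ is just a set (the topology is to be induced by the atlas we are constructing), being a ``chart'' in the sense of Definition~\ref{def:chartatlasmnfld} only requires $\curves\varphi$ to be a bijection between $\curves U$ and an open subset of the Banach space $H^1([a,b],\R^m)$.

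First I would show that $\curves\varphi$ takes values in $\curves{\fibr\varphi(U)}$ and is a bijection onto this set, with inverse $\curves{\fibr\varphi^{-1}}$. The formula $(\curves\varphi)^{-1}(\eta)(t)=\varphi_t^{-1}(\eta(t))$ clearly defines a set-theoretic inverse; the only point to check is that for $\eta\in\curves{\fibr\varphi(U)}$ the resulting curve $\gamma(t)=\varphi_t^{-1}(\eta(t))$ actually lies in $H^1([a,b],M)$ in the sense of Definition~\ref{def:H1abM}. Continuity is immediate from smoothness of $\fibr\varphi^{-1}$. For the Sobolev condition, given any smooth chart $\psi:V\to\psi(V)$ of $M$ and any interval $[c,d]\subset[a,b]$ with $\gamma([c,d])\subset V$, the composite $\psi\circ\gamma|_{[c,d]}$ equals $\curves\alpha(\eta|_{[c,d]})$ with $\alpha(t,x)=\psi(\varphi_t^{-1}(x))$ smooth on its domain of definition. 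An application of Theorem~\ref{thm:curvesalphasmooth} then shows that this composite is in $H^1([c,d],\R^m)$, giving $\gamma\in\curves U$.

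Next I would observe that $\curves{\fibr\varphi(U)}$ is open in $H^1([a,b],\R^m)$: this is exactly the first assertion of Theorem~\ref{thm:curvesalphasmooth} applied to the open set $\fibr\varphi(U)\subset\R\times\R^m$. Combined with the bijection above, this establishes that $(\curves U,\curves\varphi)$ satisfies the definition of a chart.

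Finally, for smooth compatibility, let $\varphi=(\varphi_t,U_t)$ and $\psi=(\psi_t,V_t)$ be two one--parameter families of charts with $U\cap V\neq\emptyset$. The intersection $\curves U\cap\curves V$ is $\curves{U\cap V}$, and $\curves\varphi$ sends it bijectively to $\curves{\fibr\varphi(U\cap V)}$, which is open in $H^1([a,b],\R^m)$ by the previous step. On this set the transition map factors as $\curves\psi\circ(\curves\varphi)^{-1}=\curves\alpha$, where $\alpha=\fibr\psi\circ(\fibr\varphi)^{-1}$ is the smooth transition function of \eqref{eq:alphaM}. The lemma preceding the corollary then yields that $\curves\alpha$ is a smooth diffeomorphism between open subsets of $H^1([a,b],\R^m)$, proving compatibility. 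No step presents a real obstacle here; the only delicate point is making sure the $H^1$ regularity in Definition~\ref{def:H1abM} (phrased via \emph{fixed} charts of $M$) is reconciled with our parametrized charts $\varphi_t$, which is precisely what Theorem~\ref{thm:curvesalphasmooth} is designed to handle.
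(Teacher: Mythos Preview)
Your proposal is correct and matches the paper's intended argument; the paper gives no explicit proof, treating the corollary as an immediate consequence of the bijection remark after Definition~\ref{def:curvesalphaM}, Theorem~\ref{thm:curvesalphasmooth} (for openness of the image), and the preceding lemma (for compatibility). Your write-up simply makes explicit the one point the paper glosses over, namely why $(\curves\varphi)^{-1}(\eta)$ lands in $H^1([a,b],M)$ in the sense of Definition~\ref{def:H1abM}, which you handle correctly via Theorem~\ref{thm:curvesalphasmooth}.
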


In order to obtain a differentiable atlas for $H^1([a,b],M)$, see Definition~\ref{def:chartatlasmnfld}, we now need to show that the domains of such charts $\curves\varphi$ cover $H^1([a,b],M)$. This will be a consequence of the following result.

\begin{proposition}\label{prop:existVBN}
Given a continuous curve $\gamma:[a,b]\to M$ on a differentiable manifold $M$ then there exists a one--parameter family of charts $\varphi=(\varphi_t,U_t)$ on $M$ such that $U$ contains the graph of $\gamma$.
\end{proposition}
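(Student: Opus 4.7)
The plan is to produce $\varphi$ as the inverse of a smooth exponential map along a smooth curve that approximates $\gamma$. First, I would exploit the auxiliary Riemannian metric $g_\mathrm R$ on $M$ to get a continuous positive function $r:M\to\,]0,+\infty[\,$ such that at every $x\in M$, the map $\exp_x^{\mathrm R}$ restricted to the open $g_\mathrm R$--ball $B(0_x,r(x))\subset T_xM$ is a smooth diffeomorphism onto an open neighborhood of $x$ (for instance, any continuous positive lower bound for the totally normal radius of Definition~\ref{def:normalradius}; such a bound exists by a standard partition of unity argument).

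Next, I would smooth $\gamma$: using the fact that smooth curves are dense in $C^0([a,b],M)$ with respect to the uniform $g_\mathrm R$--distance (which can be proved by a Whitney embedding into some $\R^N$, uniform approximation of the components by polynomials, and projection back to $M$ via a tubular neighborhood of its image), I would pick a smooth curve $\widetilde\gamma:[a,b]\to M$ so close to $\gamma$ that
\begin{equation*}
d_\mathrm R\bigl(\gamma(t),\widetilde\gamma(t)\bigr)<\tfrac12 r\bigl(\widetilde\gamma(t)\bigr),\qquad t\in[a,b].
\end{equation*}
Along $\widetilde\gamma$ I would then pick a smooth $g_\mathrm R$--orthonormal frame $\{e_i(t)\}_{i=1}^m$ of $T_{\widetilde\gamma(t)}M$ (for instance the $\nabla^\mathrm R$--parallel transport of a fixed orthonormal basis of $T_{\widetilde\gamma(a)}M$, recall Proposition~\ref{prop:paralleltransport}), which provides smooth linear isometries $I_t:\R^m\to T_{\widetilde\gamma(t)}M$.

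With this data I would define
\begin{equation*}
U_t=\exp^{\mathrm R}_{\widetilde\gamma(t)}\bigl(B\bigl(0_{\widetilde\gamma(t)},r(\widetilde\gamma(t))\bigr)\bigr),\qquad \varphi_t=I_t^{-1}\circ\bigl(\exp^{\mathrm R}_{\widetilde\gamma(t)}\bigr)^{-1}:U_t\la\R^m,
\end{equation*}
and $U=\bigcup_{t\in\R}\{t\}\times U_t$ (extending $\widetilde\gamma$ and the frame smoothly to all of $\R$ by constants outside a slightly larger interval if one wishes $U$ genuinely open in $\R\times M$). Openness of $U$ in $\R\times M$ is immediate from continuity of $(t,x)\mapsto d_\mathrm R(\widetilde\gamma(t),x)-r(\widetilde\gamma(t))$, and the closeness estimate above guarantees $\gamma(t)\in U_t$ for every $t$, hence $\mathrm{graph}(\gamma)\subset U$. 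Smoothness of $\fibr\varphi:(t,x)\mapsto(t,\varphi_t(x))$ follows from joint smoothness of the Riemannian exponential map on $TM$ together with the smoothness of $t\mapsto\widetilde\gamma(t)$ and of $t\mapsto I_t$; the Inverse Function Theorem, applied fibrewise, together with the fact that each $\varphi_t$ is a bijective local diffeomorphism, shows that $\fibr\varphi$ is a smooth diffeomorphism onto its (open) image in $\R\times\R^m$, so that $\varphi=(\varphi_t,U_t)$ is a one--parameter family of charts in the sense of Definition~\ref{def:familycharts}.

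The main obstacle I expect is the uniform approximation step: one must produce a smooth $\widetilde\gamma$ pointwise $\tfrac12 r(\widetilde\gamma(\cdot))$--close to the merely continuous $\gamma$. This is the only genuinely nontrivial ingredient and is where the density of smooth curves in $C^0([a,b],M)$ (with a pointwise smallness estimate governed by a continuous positive function on $M$) must be invoked; once this approximation is in hand, the rest of the construction is a routine application of the Riemannian exponential map.
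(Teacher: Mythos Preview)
Your approach is essentially the same as the paper's: approximate $\gamma$ by a smooth curve, take a parallel frame along it, and use $\varphi_t=(\text{frame isomorphism})\circ(\exp_{\widetilde\gamma(t)}^\mathrm R)^{-1}$ on normal balls. The paper streamlines one step you left slightly circular---your closeness condition $d_\mathrm R(\gamma(t),\widetilde\gamma(t))<\tfrac12 r(\widetilde\gamma(t))$ has a threshold depending on the yet-to-be-chosen $\widetilde\gamma$---by first extending $\gamma$ continuously to a compact interval and taking $r>0$ to be a \emph{constant} totally normal radius for the compact image $\gamma([a-1,b+1])$, so the approximation requirement becomes simply $d_\mathrm R(\gamma(t),\mu(t))<r$.
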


\begin{proof}
Consider the auxiliary Riemannian metric $g_\mathrm R$ on $M$. It is a standard argument in Riemannian geometry (see, for instance, \cite{jost,kn1,petersen}) that for every compact subset $K\subset M$ there exists $r>0$ that is a totally normal radius for all points of $K$, see Definition~\ref{def:normalradius}.

Consider an arbitrary continuous extension of $\gamma$ to a curve defined in $\R$. Let $r>0$ be a totally normal radius for all points of the compact set $K=\gamma([a-1,b+1])$. By standard approximation arguments, see Hirsch \cite{hirsch}, there exists a smooth curve $\mu:\left]a-1,b+1\right[\to M$ such that $d_\mathrm R(\gamma(t),\mu(t))<r$ for all $t\in\left]a-1,b+1\right[$, where $d_\mathrm R$ is the Riemannian distance\footnote{Recall Definition \ref{def:distance}.} on $M$.

Choose an arbitrary parallel frame along $\mu$, such that $\sigma_t:T_{\mu(t)}M\to\R^m$ is an isomorphism for all $t\in\left]a-1,b+1\right[$. Let $$U_t=\exp_{\mu(t)}(B(0,r)),$$ where $B(0,r)$ is an open ball of radius $r$ around the origin of $T_{\mu(t)}M$. Define $$\varphi_t=\sigma_t\circ\exp_{\mu(t)}^{-1}:U_t\la\R^m$$ as the composition of the inverse of the diffeomorphism $\exp:B(0,r)\to U_t$ with the isomorphism $\sigma_t$, for all $t\in\left]a-1,b+1\right[$. It is a simple verification that $\varphi=(\varphi_t,U_t)$ is a one--parameter family of charts such that $U$ contains the graph of $\gamma$.
\end{proof}

\begin{corollary}\label{cor:atlascobre}
For every $\gamma\in H^1([a,b],M)$, there exists a (smooth) local chart $(\curves{U},\curves\varphi)$ of the form \eqref{eq:curvesvarphiM} around $\gamma$.
\end{corollary}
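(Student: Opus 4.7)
The plan is to combine the two preceding results—Proposition~\ref{prop:existVBN} and Corollary~\ref{cor:atlash1}—which together already furnish all the ingredients. The only nontrivial input needed is that any $H^1$ curve is in particular continuous, so that the approximation/exponential-map construction of Proposition~\ref{prop:existVBN} applies.

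First, I would fix $\gamma \in H^1([a,b],M)$. By Definition~\ref{def:H1abM}, every curve of Sobolev class $H^1$ is continuous, hence $\gamma:[a,b]\to M$ is a continuous curve in the sense required by Proposition~\ref{prop:existVBN}. Applying that proposition, I obtain a one--parameter family of charts $\varphi=(\varphi_t,U_t)$ on $M$ such that the open set $U\subset\R\times M$ contains the graph of $\gamma$, that is, $(t,\gamma(t))\in U$ for all $t\in[a,b]$.

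Then, by Definition~\ref{def:curvesalphaM}, this says precisely that $\gamma\in\curves{U}$. By Corollary~\ref{cor:atlash1}, the pair $(\curves{U},\curves\varphi)$, with $\curves\varphi$ defined as in \eqref{eq:curvesvarphiM}, is a (smooth) local chart on $H^1([a,b],M)$, and moreover any two such charts are smoothly compatible. Since $\gamma\in\curves{U}$, this is a local chart around $\gamma$ of the desired form, which completes the proof.

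There is no real obstacle here: the entire content of the corollary is that the charts provided by Corollary~\ref{cor:atlash1} do in fact cover $H^1([a,b],M)$, and this is immediate once Proposition~\ref{prop:existVBN} is applied to the underlying continuous curve. Combined with Corollary~\ref{cor:atlash1}, this yields a smooth atlas on $H^1([a,b],M)$ (separability and the Hilbertable structure on tangent spaces are then addressed in subsequent material).
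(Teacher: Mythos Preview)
Your proof is correct and follows exactly the approach intended by the paper: the corollary is stated without explicit proof precisely because it is an immediate consequence of Proposition~\ref{prop:existVBN} (applied to the continuous curve underlying $\gamma$) combined with Corollary~\ref{cor:atlash1}. Your write-up makes this explicit and there is nothing to add.
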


\begin{theorem}\label{thm:atlasparaH1}
The set $H^1([a,b],M)$ is a smooth Hilbert manifold, locally modeled on the Hilbert space $H^1([a,b],\R^m)$.
\end{theorem}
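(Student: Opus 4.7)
The plan is to assemble everything into a genuine smooth atlas and verify the remaining point-set properties (Hausdorff, second-countable) so that Definition~\ref{def:chartatlasmnfld} really applies. By Corollary~\ref{cor:atlascobre}, every $\gamma\in H^1([a,b],M)$ lies in the domain of a chart of the form $(\curves U,\curves\varphi)$ arising from a one--parameter family of charts $\varphi=(\varphi_t,U_t)$; by Corollary~\ref{cor:atlash1}, any two such charts are smoothly compatible with transition functions mapping open subsets of $H^1([a,b],\R^m)$ to open subsets of $H^1([a,b],\R^m)$. Thus the first step is simply to declare $\mathcal A=\{(\curves U,\curves\varphi):\varphi\text{ a one--parameter family of charts on }M\}$ to be a smooth atlas modeled on the Hilbert space $H^1([a,b],\R^m)$ (recall Definition~\ref{def:hk} and Remark~\ref{re:eqhk}), and to take its maximal extension. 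This directly gives the smooth Hilbert manifold structure in the sense of Definition~\ref{def:chartatlasmnfld}, with model space $H^1([a,b],\R^m)$ (the connected--component caveat in Remark~\ref{re:cdomaincharts} is harmless since the model space is fixed).

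Next I would verify that the induced topology on $H^1([a,b],M)$ is Hausdorff. Given two distinct curves $\gamma_1,\gamma_2\in H^1([a,b],M)$, if there is some $t_0\in[a,b]$ with $\gamma_1(t_0)\neq\gamma_2(t_0)$, one separates them using disjoint coordinate neighborhoods of $\gamma_i(t_0)$ in $M$ and pulls back; if instead $\gamma_1\neq\gamma_2$ as $H^1$ maps but $\gamma_1=\gamma_2$ pointwise on some small closed interval around every $t$, one uses Proposition~\ref{prop:existVBN} to find a common one--parameter family of charts $\varphi$ whose domain contains both graphs, and then separates the corresponding images $\curves\varphi(\gamma_1)\neq\curves\varphi(\gamma_2)$ in the Hausdorff Hilbert space $H^1([a,b],\R^m)$.

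For second countability and hence separability, the idea is to produce a \emph{countable} subatlas whose domains still cover $H^1([a,b],M)$. Since $M$ is second countable, one can choose a countable atlas $\{(V_i,\psi_i)\}_{i\in\N}$ of $M$ and, by the totally normal radius construction used in the proof of Proposition~\ref{prop:existVBN}, a countable family of one--parameter families of charts $\{\varphi^{(n)}=(\varphi_t^{(n)},U_t^{(n)})\}_{n\in\N}$ built from smooth approximations $\mu_n$ ranging over a countable dense subset of $C^0([a,b],M)$ (with appropriately shrinking totally normal radii). Any $\gamma\in H^1([a,b],M)$ is continuous, hence admits a uniform approximation by some $\mu_n$ inside the required totally normal radius, so the graph of $\gamma$ lies in the corresponding $U^{(n)}$. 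This places $\gamma$ in the domain of $\curves{\varphi^{(n)}}$ and yields a countable atlas; since the model space $H^1([a,b],\R^m)$ is separable (cf.\ Remark~\ref{re:eqhk} together with separability of $L^2$), $H^1([a,b],M)$ itself is second countable and separable.

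The main obstacle is the last point: checking that a \emph{countable} collection of one--parameter families of charts suffices to cover $H^1([a,b],M)$, because the construction in Proposition~\ref{prop:existVBN} involves a choice of totally normal radius that depends on the compact image of $\gamma$. The key observation that resolves this is that every $\gamma$ is continuous on a compact interval, hence uniformly approximable by any $C^0$--dense countable family of smooth curves, and that the condition $d_\mathrm R(\gamma(t),\mu(t))<r$ needed in the proof of Proposition~\ref{prop:existVBN} is open in the $C^0$ topology. A diagonal enumeration over shrinking radii $r=1/k$ and over the countable dense set of $\mu_n$'s therefore produces the required countable covering atlas, completing the verification.
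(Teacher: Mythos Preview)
Your first paragraph is exactly the paper's proof: cite Corollaries~\ref{cor:atlash1} and~\ref{cor:atlascobre} to conclude that the collection $\{(\curves U,\curves\varphi)\}$ is a smooth atlas modeled on $H^1([a,b],\R^m)$, and take the maximal extension. That is all that is required.

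The remaining three paragraphs are unnecessary for this theorem. Look again at Definition~\ref{def:chartatlasmnfld} and the remark immediately following it: the paper's notion of Banach/Hilbert manifold is just a set with a maximal $C^k$ atlas, and the remark explicitly says ``there is no reason to assume any separation or countability axiom on this topology.'' So Hausdorffness and second countability are not part of what you need to verify here. (Incidentally, your Hausdorff case split is confused: elements of $H^1([a,b],M)$ are genuine continuous maps, not equivalence classes, so if $\gamma_1(t)=\gamma_2(t)$ for all $t$ then $\gamma_1=\gamma_2$; your ``case 2'' is vacuous and ``case 1'' alone suffices.)

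Separability is handled separately in the paper as Corollary~\ref{cor:h1separable}, and by a different method than yours: rather than building a countable atlas, the paper uses the Whitney embedding $M\hookrightarrow\R^{2m}$ to embed $H^1([a,b],M)$ into the separable space $H^1([a,b],\R^{2m})$ via Proposition~\ref{prop:embh1emb}. Your countable--atlas idea is plausible but more delicate to make rigorous (the dependence of the totally normal radius on the curve is exactly the issue you flagged), whereas the Whitney route sidesteps it entirely.
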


\begin{proof}
From Corollaries~\ref{cor:atlash1} and~\ref{cor:atlascobre}, the set $\{\curves\varphi\}_\varphi$, where $\varphi$ runs over all possible one--parameter families of charts on $M$, is a smooth atlas for $H^1([a,b],M)$.
\end{proof}

Regarding separability of $H^1([a,b],M)$, we have the following result.

\begin{proposition}\label{prop:embh1emb}
If $f:M\hookrightarrow N$ is a $C^k$ embedding of finite--dimensional smooth manifolds, then the following is a $C^k$ embedding of Hilbert manifolds
\begin{eqnarray*}
&\widehat{f}:H^1([a,b],M)\xhookrightarrow{\;\;\;\;} H^1([a,b],N)& \\
&\widehat f(\gamma)(t)=f(\gamma(t)), \quad t\in [a,b].&
\end{eqnarray*}
\end{proposition}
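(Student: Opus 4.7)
The plan is to reduce the proposition to Lemma~\ref{le:invemb} by producing a $C^1$ left inverse for $\widehat{f}$, after first checking that $\widehat{f}$ is well-defined and sufficiently smooth. That $\widehat{f}(\gamma)=f\circ\gamma$ lies in $H^1([a,b],N)$ is a local verification: on an interval $[c,d]$ where $\gamma([c,d])$ lies in the domain of a chart $\varphi$ of $M$ and $f(\gamma([c,d]))$ lies in the domain of a chart $\psi$ of $N$, the coordinate representation of $f\circ\gamma$ is the composition of the smooth bounded-derivative map $\psi\circ f\circ\varphi^{-1}$ with the $H^1$ curve $\varphi\circ\gamma$, which is again absolutely continuous with $L^2$ derivative by the chain rule. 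Hence $\widehat{f}$ maps $H^1([a,b],M)$ into $H^1([a,b],N)$.

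Next I would establish that $\widehat{f}$ is $C^k$ by computing its expression in atlas charts. Given $\gamma\in H^1([a,b],M)$, apply Proposition~\ref{prop:existVBN} to obtain a one-parameter family of charts $\varphi=(\varphi_t,U_t)$ on $M$ whose total domain $U\subset\R\times M$ contains the graph of $\gamma$, and a one-parameter family $\psi=(\psi_t,V_t)$ on $N$ whose total domain contains the graph of $f\circ\gamma$. Shrinking $U$ if necessary (using continuity of $f$ along the compact image of $\gamma$), we may assume $f(U_t)\subset V_t$ for every $t$, so that the smooth transition map
\[
\alpha:\fibr\varphi(U)\la\fibr\psi(f(U)),\qquad \alpha(t,x)=\big(t,\psi_t(f(\varphi_t^{-1}(x)))\big),
\]
is defined on an open subset of $\R\times\R^m$ and takes values in $\R\times\R^{\dim N}$; moreover $\alpha$ is $C^k$ because $f,\varphi,\psi$ all are. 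The local representation of $\widehat{f}$ in the charts $\curves{\varphi},\curves{\psi}$ of Corollary~\ref{cor:atlash1} is then precisely $\curves\alpha$, which by Theorem~\ref{thm:curvesalphasmooth} has the required regularity. Differentiating via \eqref{eq:diffcurvesalpha} further shows that $\dd\widehat{f}(\gamma)v=\dd f\circ v$ for every $v\in T_\gamma H^1([a,b],M)$, identified as an $H^1$ section of $\gamma^*TM$.

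To produce the $C^1$ left inverse, I would invoke the tubular neighborhood theorem: since $f:M\hookrightarrow N$ is a $C^k$ embedding (with $k$ large enough to apply the theorem), there exist an open neighborhood $W\subset N$ of $f(M)$ and a $C^k$ retraction $r:W\to M$ with $r\circ f=\id_M$. The set $\curves{\R\times W}=H^1([a,b],W)$ is open in $H^1([a,b],N)$, and the same chart argument as above, now applied with transition maps built from $r$, shows that the map
\[
\widehat{r}:H^1([a,b],W)\la H^1([a,b],M),\qquad \widehat{r}(\delta)=r\circ\delta,
\]
is at least $C^1$. Since clearly $\widehat{r}\circ\widehat{f}=\id_{H^1([a,b],M)}$, Lemma~\ref{le:invemb} applies and yields that $\widehat{f}$ is a $C^k$ embedding. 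Note that injectivity, closedness, and complementedness of the image of $\dd\widehat{f}(\gamma)$ are automatic consequences of this lemma, though they can also be verified directly from the decomposition $T_{f(\gamma(t))}N=\dd f(T_{\gamma(t)}M)\oplus \ker\dd r(f(\gamma(t)))$ promoted pointwise to $H^1$ sections.

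The main obstacle I anticipate is the bookkeeping in passing from the single-chart hypothesis of Theorem~\ref{thm:curvesalphasmooth} to the global statement about $\widehat{f}$, because one cannot in general cover the graphs of $\gamma$ and $f\circ\gamma$ by a \emph{single} chart pair in which $f$ is represented globally; Proposition~\ref{prop:existVBN} and its construction via the exponential map along a smooth approximating curve is precisely what circumvents this difficulty, producing the one-parameter chart families on $M$ and $N$ that are compatible with $f$ along the entire graph of $\gamma$. The only other delicate point is ensuring the tubular neighborhood $W$ is chosen large enough that $H^1([a,b],W)$ contains the entire image of $\widehat{f}$, which follows from compactness of $\gamma([a,b])$ and hence of $f(\gamma([a,b]))$ inside $f(M)\subset W$.
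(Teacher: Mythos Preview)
Your approach is essentially the same as the paper's: build a retraction onto $f(M)$ (the paper cites submanifold charts rather than the tubular neighborhood theorem, but the effect is identical), lift it to a left inverse $\widehat r$ of $\widehat f$, and invoke Lemma~\ref{le:invemb}; your version supplies the well-definedness and regularity details that the paper dispatches with the word ``clearly''. One small caveat worth noting: Theorem~\ref{thm:curvesalphasmooth} as stated yields only $C^{k-1}$ for $\curves\alpha$ when $\alpha$ is $C^k$, so a literal application gives $\widehat f$ of class $C^{k-1}$ rather than $C^k$---this does not affect the embedding conclusion via Lemma~\ref{le:invemb}, and the paper itself does not address the point.
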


\begin{proof}
Since $f(M)$ is an embedded submanifold of $N$, it is easy to see that there exists an open neighborhood $U$ of $f(M)$ and a $C^k$ {\em retraction}\footnote{Recall that if $X$ is a topological space and $A\subset X$ is a subspace, a continuous map $r:X\to A$ is a {\em retraction} if the restriction of $r$ to $A$ is the identity map, i.e., $r(a)=a$ for all $a\in A$.} $r:V\to f(M)$, constructed for instance using submanifold charts given in Definition~\ref{def:submnfldchart}. Consider the map
\begin{eqnarray*}
&\widehat r:H^1([a,b],U)\la H^1([a,b],M) & \\
&\widehat r(\gamma)(t)=r(\gamma(t)), \quad t\in [a,b].&
\end{eqnarray*}
This is clearly a $C^k$ left inverse for $\widehat f:H^1([a,b],M)\to H^1([a,b],V)$. From Lemma~\ref{le:invemb}, $\widehat f$ is an embedding. Finally, since $H^1([a,b],V)$ is endowed with the topology of subspace of $H^1([a,b],N)$, the map $\widehat f$ can be regarded as an embedding into the larger space $H^1([a,b],N)$.
\end{proof}

\begin{corollary}\label{cor:h1separable}
The Hilbert manifold $H^1([a,b],M)$ is separable.
\end{corollary}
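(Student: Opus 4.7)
The plan is to exploit Proposition~\ref{prop:embh1emb} together with the Whitney embedding theorem to reduce separability of $H^1([a,b],M)$ to separability of $H^1([a,b],\R^n)$ for some $n$, which is a well-known fact about separable Hilbert spaces.

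First I would invoke the Whitney embedding theorem to obtain a smooth embedding $f:M\hookrightarrow\R^n$ for some $n$. Then by Proposition~\ref{prop:embh1emb}, the induced map
\begin{equation*}
\widehat f:H^1([a,b],M)\longhookrightarrow H^1([a,b],\R^n)
\end{equation*}
is a smooth embedding of Hilbert manifolds, hence in particular a homeomorphism onto its image $\widehat f(H^1([a,b],M))$ with the subspace topology.

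Next I would argue that $H^1([a,b],\R^n)$ is separable. This follows from Remark~\ref{re:eqhk}, which exhibits the topological isomorphism
\begin{equation*}
H^1([a,b],\R^n)\;\cong\;\R^n\oplus L^2([a,b],\R^n),
\end{equation*}
together with the classical fact that $L^2([a,b],\R^n)$ is a separable Hilbert space (a countable dense subset can be obtained from Proposition~\ref{prop:cinftylp}, since $C^\infty([a,b],\R^n)$ is $L^2$-dense and itself contains a countable dense family such as polynomials with rational coefficients, by the Stone--Weierstrass Theorem~\ref{thm:stoneweierstrass}). Since a finite direct sum of separable spaces is separable, $H^1([a,b],\R^n)$ is separable.

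Finally, I would conclude by noting that separability is inherited by subspaces of separable metric spaces: since $\widehat f$ is a homeomorphism onto $\widehat f(H^1([a,b],M))\subset H^1([a,b],\R^n)$, the latter subspace is separable, and therefore so is $H^1([a,b],M)$. I do not foresee any real obstacle here; the only mild subtlety is producing a smooth (proper) embedding of the possibly non compact $M$ into some Euclidean space, which is precisely the content of the Whitney embedding theorem.
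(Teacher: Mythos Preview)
Your proposal is correct and follows essentially the same approach as the paper: Whitney embedding into Euclidean space, then Proposition~\ref{prop:embh1emb}, then the isomorphism $H^1([a,b],\R^n)\cong\R^n\oplus L^2([a,b],\R^n)$, separability of $L^2$, and finally the hereditary property of separability in metric spaces.
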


\begin{proof}
Since $M$ is a smooth $m$--dimensional manifold, from the Whitney Embedding Theorem\footnote{This is a classic result on existence of embeddings in Euclidean space, see \cite{hirsch,whitney}.}, there exists a (smooth) embedding $$f:M\la\R^{2m}.$$ From Proposition~\ref{prop:embh1emb}, $f$ induces a smooth embedding $\widehat f:H^1([a,b],M)\to H^1([a,b],\R^{2m})$. Using the topological isomorphism \eqref{eq:imhkl2}, one has the following composite embedding $$H^1([a,b],M)\xhookrightarrow{\;\;\widehat f\;\;} H^1([a,b],\R^{2m})\xrightarrow{\;\;\cong\;\;} (\R^{2m})^k \oplus L^2([a,b],\R^{2m}).$$ Separability of $L^2([a,b],\R^{2m})$ is a classic result, see Reed and Simon \cite{reedsimon}. Since all the spaces above are metrizable, their separability is a hereditary property\footnote{Recall that a metric space is separable if and only if it is second--countable. Obviously, second--countability is a hereditary property.}, and this concludes the proof.
\end{proof}

\begin{remark}
The proof given above using the Whitney Embedding Theorem is an {\em indirect} proof. It is also possible to give a more direct proof of this result using the continuous inclusion of $H^1([a,b],M)$ in $C^0([a,b],M)$ given in Corollary~\ref{cor:hkckj}. One can construct explicitly a countable dense subset of $C^0([a,b],M)$ using {\em distance functions}, with an argument similar to the one used to prove Proposition~\ref{prop:cksep}, for instance in Fabi\'an et al. \cite{fabian}.
\end{remark}

We now study the tangent space to $H^1([a,b],M)$ at some curve $\gamma$, which is a Hilbertable space that can be constructed in various abstract ways. For instance, one could use equivalence classes of curves or any other general construction for tangent spaces of Hilbert manifolds (see Remark~\ref{re:txx}).

In order to obtain a more concrete description of $T_\gamma H^1([a,b],M)$, consider for each $t_0\in[a,b]$ the evaluation map\index{Evaluation map}
\begin{eqnarray}
\ev_{t_0}:H^1([a,b],M) &\la& M \\
\gamma &\longmapsto &\gamma(t_0).\nonumber
\end{eqnarray}
If $\varphi=(\varphi_t,U_t)$ is a one--parameter family of charts in $M$, then the following diagram is commutative.
\begin{equation}
\xymatrix@+20pt{
H^1([a,b],M) &\ar@{_{(}->}[l] \curves U\ar[d]_{\ev_{t_0}}\ar[r]^{\curves\varphi} & \curves{\varphi(U)}\ar[d]^{\ev_{t_0}} \ar@{^{(}->}[r] & H^1([a,b],\R^m) \\
&U_t\ar[r]_{\varphi_{t_0}} & \varphi_t(U_t) &
}\label{eq:Evals}
\end{equation}

\begin{remark}\label{re:evalsmooth}
From \eqref{eq:Evals}, the map $$\ev_{t_0}:H^1([a,b],M)\la M$$ is represented in the local charts $\curves\varphi$ and $\varphi_{t_0}$ by $$\ev_{t_0}:H^1([a,b],\R^m)\la\R^m.$$ Hence $\ev_{t_0}:H^1([a,b],M)\to M$ is smooth for every $t_0\in[a,b]$.
\end{remark}

We now identify the tangent bundle $TH^1([a,b],M)$ with the Hilbert manifold $H^1([a,b],TM)$ of curves on $TM$, which will allow to identify the tangent space $T_\gamma H^1([a,b],M)$ with a space of sections of $\gamma^*TM$, i.e., vector fields along $\gamma$.

\begin{proposition}\label{prop:TH1H1T}
For every $\gamma\in H^1([a,b],M)$ and $\mathfrak v\in T_\gamma H^1([a,b],M)$, consider
\begin{equation}\label{eq:deval}
v(t)=\dd(\ev_t)(\gamma)\mathfrak v, \quad t\in[a,b]
\end{equation}
so that $v:[a,b]\to TM$ is a vector field along $\gamma$, i.e. a section of $\gamma^*TM$. Then the curve $v:[a,b]\to TM$ is of Sobolev class $H^1$ and the following is a smooth diffeomorphism of Hilbert manifolds.
\begin{equation}\label{eq:TH1H1T}
TH^1([a,b],M)\ni\mathfrak v\longmapsto v\in H^1([a,b],TM).
\end{equation}
\end{proposition}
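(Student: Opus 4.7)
The plan is to reduce the statement to a local computation in Euclidean charts, exploiting that both the domain and codomain are built from $H^1$ curves on $M$ via the one--parameter family construction of Definition~\ref{def:familycharts} and Corollary~\ref{cor:atlash1}. The key observation is that, once a one--parameter family of charts $\varphi=(\varphi_t,U_t)$ on $M$ whose domain contains the graph of $\gamma$ has been fixed (which exists by Proposition~\ref{prop:existVBN}), the corresponding derivatives $(\dd\varphi_t,TU_t)$ form a smooth one--parameter family of charts on the manifold $TM$, and hence also define a smooth chart $\curves{\dd\varphi}$ on $H^1([a,b],TM)$.

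First I would unravel the local picture. The chart $\curves\varphi:\curves U\to\curves{\fibr\varphi(U)}\subset H^1([a,b],\R^m)$ identifies $T_\gamma H^1([a,b],M)$ with the Hilbert space $H^1([a,b],\R^m)$; more globally, it induces a chart on the tangent bundle $TH^1([a,b],M)$ taking values in $\curves{\fibr\varphi(U)}\times H^1([a,b],\R^m)$. The commutative diagram \eqref{eq:Evals} shows that, in the chart $\curves\varphi$ on the source and in the chart $\varphi_t$ on the target, the evaluation map $\ev_t$ is represented simply by the linear evaluation $H^1([a,b],\R^m)\ni f\mapsto f(t)\in\R^m$. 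Differentiating this linear representation, if $\mathfrak v\in T_\gamma H^1([a,b],M)$ corresponds to $\widetilde v\in H^1([a,b],\R^m)$ via $\dd\curves\varphi(\gamma)$, then
\begin{equation*}
\dd\varphi_t(\gamma(t))\bigl[\dd(\ev_t)(\gamma)\mathfrak v\bigr]=\widetilde v(t),\qquad t\in[a,b].
\end{equation*}
Hence $v(t)=\dd(\ev_t)(\gamma)\mathfrak v$ is expressed in the chart $\dd\varphi_t$ of $TM$ by the $H^1$ curve $t\mapsto(\curves\varphi(\gamma)(t),\widetilde v(t))\in\R^m\oplus\R^m$, so $v:[a,b]\to TM$ is automatically a curve of Sobolev class $H^1$ in the sense of Definition~\ref{def:H1abM}.

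Next I would check that \eqref{eq:TH1H1T} is a bijection and is smoothly represented in these atlases. Under the natural isometric identification $H^1([a,b],\R^{2m})\cong H^1([a,b],\R^m)\oplus H^1([a,b],\R^m)$, the chart $\curves{\dd\varphi}$ on $H^1([a,b],TM)$ maps a curve $v$ lying over $\gamma\in\curves U$ to the pair $(\curves\varphi(\gamma),\widetilde v)$, where $\widetilde v(t)=\dd\varphi_t(\gamma(t))v(t)$. The computation above shows that, in these two charts, the map $\mathfrak v\mapsto v$ is the identity on $\curves{\fibr\varphi(U)}\times H^1([a,b],\R^m)$. Since both atlases are smooth (Corollary~\ref{cor:atlash1} applied to $M$ and to $TM$, using Theorem~\ref{thm:curvesalphasmooth} for the compatibility of charts), and their domains cover $TH^1([a,b],M)$ and $H^1([a,b],TM)$ respectively, \eqref{eq:TH1H1T} is represented locally by an identity map of open subsets of a Hilbert space. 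It is therefore a smooth diffeomorphism.

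The only genuinely nontrivial ingredient is the passage from the local identification at each fixed chart to a well--defined global map, which requires that the transition functions between charts of the form $\curves{\dd\varphi}$ on $H^1([a,b],TM)$ agree with those induced by the tangent functor applied to the transition functions on $H^1([a,b],M)$. This is where I expect the main bookkeeping obstacle: one must verify that if $\alpha=\fibr\psi\circ(\fibr\varphi)^{-1}$ is a transition function between one--parameter families of charts on $M$, then $\dd\alpha$ is the corresponding transition function on $TM$, and that the induced diffeomorphism $\curves{\dd\alpha}$ on the $H^1$ level coincides with the derivative of $\curves\alpha$ under the identification $TH^1([a,b],\R^m)\cong H^1([a,b],\R^m)\times H^1([a,b],\R^m)$. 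Granted this naturality, everything assembles into a globally defined smooth diffeomorphism, finishing the proof.
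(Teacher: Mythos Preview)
Your proof is correct and follows essentially the same approach as the paper: both arguments fix a one--parameter family of charts $\varphi=(\varphi_t,U_t)$ on $M$, use $(\dd\varphi_t,TU_t)$ as a one--parameter family of charts on $TM$, and then show, by differentiating the commutative diagram \eqref{eq:Evals}, that \eqref{eq:TH1H1T} is represented by the identity with respect to the charts $\dd\curves\varphi$ on $TH^1([a,b],M)$ and $\curves{\dd\varphi}$ on $H^1([a,b],TM)$. Your additional discussion of the compatibility of transition functions is a welcome elaboration of a point the paper leaves implicit.
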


\begin{proof}
To prove the above claim, we will verify that the representation of \eqref{eq:TH1H1T} with appropriate local charts is a smooth diffeomorphism. Let $\varphi=(\varphi_t,U_t)$ be a one--parameter family of charts in $M$. For every $t$, we have that $\dd\varphi_t:TU_t\to\varphi_t(U_t)\times\R^m$ is a local chart in $TM$ defined on the open subset $TU_t\subset TM$. The differential of $\curves\varphi$ gives a local chart
\begin{equation}\label{eq:TH1H1Tdphi}
\dd\curves\varphi:T\curves U\longrightarrow\curves{\varphi(U)}\times H^1([a,b],\R^m)
\end{equation}
on the tangent bundle $TH^1([a,b],M)$.

In addition, it is easy to see that $\psi=(\dd\varphi_t,TU_t)$ is a one--parameter family of charts in $TM$, and
\begin{equation}\label{eq:TH1H1Tcpsi}
\curves\psi:\curves{TU}\la\curves{\varphi_t(U_t)\times\R^m}\cong\curves{\varphi(U)}\times H^1([a,b],\R^m),
\end{equation}
is a local chart on $H^1([a,b],TM)$. Differentiating \eqref{eq:Evals} it follows that the following diagram is commutative.
$$\xymatrix@C-10pt{
T\curves U\ar@(d,dl)[dr]\ar[rr]^{\eqref{eq:TH1H1T}} && \curves{TU}\ar@(d,dr)[dl]\\
&\curves{\varphi(U)}\times H^1([a,b],\R^m)}$$
where \eqref{eq:TH1H1T} is considered restricted to the appropriate domain and counter domain, and the vertical arrows are given by the maps \eqref{eq:TH1H1Tdphi} and \eqref{eq:TH1H1Tcpsi} respectively. Thus \eqref{eq:TH1H1T} is represented by the identity with respect to local charts.
\end{proof}

\begin{remark}\label{re:TH1H1T}
Henceforth, the we will use \eqref{eq:TH1H1T} as an identification $$TH^1([a,b],M)\cong H^1([a,b],TM),$$ and, in particular, for each $\gamma\in H^1([a,b],M)$, the following are also identified $$T_\gamma H^1([a,b],M)\cong\sect^{H^1}(\gamma^*TM).$$
\end{remark}

Using this identification of $TH^1([a,b],M)$, we may establish certain properties of a special {\em double} evaluation map, namely the {\em endpoints map}. These will later be used to describe the adequate setting for endpoints conditions on geodesics variational problems, see Lemma~\ref{le:opqsmnfld} and Proposition~\ref{prop:opmsubmfld}.

\begin{proposition}\label{prop:endpointsmap}
The following {\em endpoints map}\index{Endpoints map} is a smooth submersion.
\begin{equation}\label{eq:ev01}
\begin{aligned}
\ev_{01}=(\ev_0,\ev_1):H^1([0,1],M) &\la M\times M \\
\gamma &\longmapsto (\gamma(0),\gamma(1)).
\end{aligned}
\end{equation}
\end{proposition}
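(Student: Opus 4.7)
The plan is to verify that $\ev_{01}$ is smooth, compute its differential at an arbitrary $\gamma$ using the identification of tangent spaces established in Proposition~\ref{prop:TH1H1T}, and then check the two conditions in Definition~\ref{def:submersion}: surjectivity of $\dd\ev_{01}(\gamma)$ and that its kernel is complemented.

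Smoothness follows immediately from Remark~\ref{re:evalsmooth}: each of the coordinate maps $\ev_0$ and $\ev_1$ is smooth, so the pair $(\ev_0,\ev_1)$ is smooth as a map into the product $M\times M$. For the differential, fix $\gamma\in H^1([0,1],M)$ and use the identification
$$T_\gamma H^1([0,1],M)\cong \sect^{H^1}(\gamma^*TM)$$
from Remark~\ref{re:TH1H1T}. Under this identification, \eqref{eq:deval} gives $\dd\ev_t(\gamma)v=v(t)$, and therefore
$$\dd\ev_{01}(\gamma):\sect^{H^1}(\gamma^*TM)\longrightarrow T_{\gamma(0)}M\oplus T_{\gamma(1)}M,\qquad v\longmapsto (v(0),v(1)).$$

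For surjectivity, given $(u_0,u_1)\in T_{\gamma(0)}M\oplus T_{\gamma(1)}M$, the task is to produce an $H^1$ vector field $v$ along $\gamma$ with $v(0)=u_0$ and $v(1)=u_1$. I would argue this inside a chart: by Proposition~\ref{prop:existVBN} there exists a one--parameter family of charts $\varphi=(\varphi_t,U_t)$ whose open set $U$ contains the graph of $\gamma$, yielding the chart $(\curves U,\curves\varphi)$ of Corollary~\ref{cor:atlash1} around $\gamma$ and the compatible chart $(TU,\,\dd\varphi)$ on $TM$ used in the proof of Proposition~\ref{prop:TH1H1T}. In these coordinates, $\ev_{01}$ is represented by
$$H^1([0,1],\R^m)\longrightarrow\R^m\oplus\R^m,\qquad f\longmapsto (f(0),f(1)),$$
and the representation of $\dd\ev_{01}(\gamma)$ is exactly the same pair of evaluations. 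Given $(w_0,w_1)\in\R^m\oplus\R^m$, the affine curve $f(t)=(1-t)w_0+tw_1$ lies in $H^1([0,1],\R^m)$ and satisfies $f(0)=w_0$, $f(1)=w_1$. This proves surjectivity in the chart representation, hence of $\dd\ev_{01}(\gamma)$.

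Finally, the kernel $\ker\dd\ev_{01}(\gamma)=\{v\in\sect^{H^1}(\gamma^*TM):v(0)=v(1)=0\}$ is closed (it is the preimage of $\{0\}$ under the continuous operator $v\mapsto (v(0),v(1))$), and since $\sect^{H^1}(\gamma^*TM)$ is a Hilbertable space, this closed subspace is automatically complemented by its orthogonal complement with respect to any Hilbert inner product on the tangent space (indeed, explicitly, an obvious finite--dimensional complement is the $2m$--dimensional subspace spanned by images of the curves of the form $t\mapsto (1-t)u_0+tu_1$ read off through the chart). No real obstacle arises: the only mildly delicate point is formally transferring the elementary chart--level surjectivity back to the abstract description of $\dd\ev_{01}(\gamma)$ via Proposition~\ref{prop:TH1H1T}, which is a bookkeeping exercise using the commutative diagram \eqref{eq:Evals}.
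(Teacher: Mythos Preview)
Your proof is correct and follows essentially the same approach as the paper: smoothness from Remark~\ref{re:evalsmooth}, the differential formula $v\mapsto(v(0),v(1))$ via \eqref{eq:deval}, surjectivity by exhibiting a section with prescribed endpoint values, and complementedness from the Hilbert structure (the paper also invokes finite codimension via Lemma~\ref{le:finitecomplemented}). The only difference is that where the paper simply asserts ``it is easy to construct'' a vector field with given endpoint values, you supply the explicit chart construction reducing to the affine interpolant in $H^1([0,1],\R^m)$; this is added detail rather than a different route.
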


\begin{proof}
From Remark~\ref{re:evalsmooth}, the endpoints map $\ev_{01}$ is smooth. Moreover, from \eqref{eq:deval}, using identification \eqref{eq:TH1H1T} explained in Remark~\ref{re:TH1H1T}, it follows that
\begin{equation}\label{eq:dev01}
\begin{aligned}
\dd(\ev_{01})(\gamma):T_\gamma H^1([0,1],M)&\la T_{(\gamma(0),\gamma(1))}(M\times M)\\
v &\longmapsto (v(0),v(1)).
\end{aligned}
\end{equation}
Given any $(v_0,v_1)\in T_{(\gamma(0),\gamma(1))}(M\times M)$, it is easy to construct a vector field\footnote{Recall Remark~\ref{re:TH1H1T}.} $v\in\sect^{H^1}(\gamma^*TM)$ with $v(0)=v_0$ and $v(1)=v_1$. This implies that $\dd(\ev_{01})(\gamma)$ is surjective at all $\gamma\in H^1([0,1],M)$. Moreover, $\ker\dd(\ev_{01})(\gamma)$ has finite codimension, and hence is automatically complemented\footnote{Actually, being a closed subspace of a Hilbert space, it is obviously complemented.} (see Lemma~\ref{le:finitecomplemented}). Thus, $\ev_{01}$ is a submersion, concluding the proof.
\end{proof}

\begin{corollary}\label{cor:h1s1}
The set $H^1(S^1,M)$ of closed curves on $M$ of Sobolev class $H^1$ is a smooth separable Hilbert manifold.
\end{corollary}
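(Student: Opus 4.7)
The plan is to realize $H^1(S^1,M)$ as the preimage of the diagonal $\Delta\subset M\times M$ under the endpoints map $\ev_{01}$ and then apply the preimage--of--a--submanifold theorem together with the fact that this preimage inherits separability.

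First, I would identify
\begin{equation*}
H^1(S^1,M)=\big\{\gamma\in H^1([0,1],M):\gamma(0)=\gamma(1)\big\}=\ev_{01}^{-1}(\Delta),
\end{equation*}
where $\Delta=\{(x,x):x\in M\}\subset M\times M$. Since elements of $H^1([0,1],M)$ are continuous (by the Sobolev embedding corresponding to Corollary~\ref{cor:hkckj}), the condition of being well--defined and $H^1$ on the quotient $S^1=[0,1]/(0\sim1)$ reduces precisely to the pointwise coincidence of the two endpoints; no extra condition on the almost--everywhere derivative is needed. The diagonal $\Delta$ is a smooth embedded closed submanifold of $M\times M$, diffeomorphic to $M$ via $x\mapsto(x,x)$.

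Next, I would invoke Proposition~\ref{prop:endpointsmap}, which asserts that $\ev_{01}:H^1([0,1],M)\to M\times M$ is a smooth submersion. As noted in Remark~\ref{re:transversalityeq}, any submersion is automatically transverse to every submanifold of its target; in particular $\ev_{01}$ is transverse to $\Delta$. By the Hilbert manifold version of Proposition~\ref{prop:transvsubmnfld} (see Remark~\ref{re:transvsubmnfldhilbert}), the preimage $\ev_{01}^{-1}(\Delta)=H^1(S^1,M)$ is a smooth embedded Hilbert submanifold of $H^1([0,1],M)$. Using the description \eqref{eq:dev01} of $\dd(\ev_{01})(\gamma)$, the tangent space at $\gamma$ is, under the identification of Remark~\ref{re:TH1H1T},
\begin{equation*}
T_\gamma H^1(S^1,M)=\dd(\ev_{01})(\gamma)^{-1}[T_{(\gamma(0),\gamma(0))}\Delta]=\big\{v\in\sect^{H^1}(\gamma^*TM):v(0)=v(1)\big\},
\end{equation*}
as one would expect for periodic Sobolev vector fields.

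Finally, for separability, $H^1([0,1],M)$ is a separable Hilbert manifold by Corollary~\ref{cor:h1separable}; since its topology is metrizable (being locally modelled on $H^1([0,1],\R^m)$), separability is hereditary to subspaces, and in particular to the embedded submanifold $H^1(S^1,M)$, which therefore is itself separable. The only mildly delicate step is the identification in the first paragraph: one must check that the pointwise periodicity $\gamma(0)=\gamma(1)$ of the continuous representative really captures the $H^1$ structure on $S^1$ obtained by charts of the form \eqref{eq:curvesvarphiM} adapted to one--parameter families of charts on $M$ along a periodic curve; but this is handled exactly as in Proposition~\ref{prop:existVBN}, choosing the one--parameter family of charts over an interval slightly larger than $[0,1]$ using an auxiliary smooth loop close to $\gamma$ in $g_\mathrm R$--distance.
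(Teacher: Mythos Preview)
Your proof is correct and follows essentially the same approach as the paper: realize $H^1(S^1,M)$ as $\ev_{01}^{-1}(\Delta)$, use that $\ev_{01}$ is a smooth submersion (hence transverse to $\Delta$), apply Proposition~\ref{prop:transvsubmnfld}, and inherit separability from $H^1([0,1],M)$ via metrizability. Your additional remarks on the tangent space and on the identification with curves on $S^1$ are consistent with what the paper records in the remark following the corollary.
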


\begin{proof}
Let $\Delta\subset M\times M$ be the diagonal submanifold. From Proposition~\ref{prop:endpointsmap}, the endpoints map $\ev_{01}$ given by \eqref{eq:ev01} is a smooth submersion. In particular, $\ev_{01}$ is transverse to $\Delta$, see Remark~\ref{re:transversalityeq} and Definition~\ref{def:transversality}. Thus, from Proposition~\ref{prop:transvsubmnfld}, $$H^1(S^1,M)=\ev_{01}^{-1}(\Delta)$$ is a smooth submanifold of $H^1([0,1],M)$, hence a smooth Hilbert manifold. Separability of $H^1(S^1,M)$ follows from Corollary~\ref{cor:h1separable}, since these are first--countable spaces, and hence separability is hereditary, since it is equivalent to second--countability.
\end{proof}

\begin{remark}
Moreover, $H^1(S^1,M)$ may be regarded as the set of Sobolev $H^1$ maps from $S^1$ to $M$, in a similar sense to Definition~\ref{def:H1abM}. This follows from the fact that there is a clear identification between the circle and the quotient of an interval by its endpoints, and a curve $\gamma:[0,2\pi]\to M$ passes to the quotient $S^1$ in the following diagram
\begin{equation*}
\xymatrix@+10pt{[0,2\pi]\ar@(r,lu)[rd]^\gamma\ar[d] & \\ \dfrac{[0,2\pi]}{\{0,2\pi\}}\cong S^1\ar[r]_(.6){\overline\gamma} & M}
\end{equation*}
if and only if $\gamma(0)=\gamma(2\pi)$. The curve $\overline\gamma$ is of the same Sobolev class of $\gamma$ as a consequence of the following fact. If the restrictions of a continuous map to subsets that form a finite partition of its domain are of Sobolev class $H^1$, then this map must also be of Sobolev class $H^1$, see Rudin \cite{rudin}.

Therefore, there is a clear identification between elements $\gamma\in\ev_{01}^{-1}(\Delta)$ and {\em Sobolev $H^1$ curves} $\gamma:S^1\to M$, in the sense that $\gamma$ is absolutely continuous (see Definition~\ref{def:abscont} and Proposition~\ref{prop:abscontcharact}) and $\dot\gamma\in \sect^{L^2}(\gamma^*TM)$.
\end{remark}

\begin{proposition}\label{prop:RiemannH1}
Let $(M,g_\mathrm R)$ be a finite--dimensional Riemannian manifold and $\nabla^\mathrm R$ its Levi--Civita connection. For each $\gamma\in H^1([a,b],M)$,
\begin{equation}\label{eq:RiemannH1}
\llangle v,w\rrangle_\gamma=g_\mathrm R(v(a),w(a))+\int_a^b g_\mathrm R(\D^\mathrm R v,\D^\mathrm R w)\,\dd t
\end{equation}
defines\footnote{Recall Remark~\ref{re:covderh1}.} a Hilbert space inner product on $T_\gamma H^1([a,b],M)$. Moreover, the family $$H^1([a,b],M)\ni\gamma\longmapsto\llangle\cdot,\cdot\rrangle_\gamma,$$ defines a Riemannian metric on $H^1([a,b],M)$.
\end{proposition}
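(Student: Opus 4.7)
The plan is to verify, in order, that for each fixed $\gamma$ the expression \eqref{eq:RiemannH1} defines a Hilbert space inner product on $T_\gamma H^1([a,b],M)\cong\sect^{H^1}(\gamma^*TM)$, and then that the assignment $\gamma\mapsto\llangle\cdot,\cdot\rrangle_\gamma$ is a smooth section of the bundle of continuous symmetric bilinear forms on $TH^1([a,b],M)$.

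First I would check well-definedness, bilinearity, symmetry and continuity. Under the identification of Remark~\ref{re:TH1H1T}, an element $v\in T_\gamma H^1([a,b],M)$ is an $H^1$ vector field along $\gamma$, so from Remark~\ref{re:covderh1} the covariant derivative $\D^\mathrm R v\in\sect^{L^2}(\gamma^*TM)$ is well defined almost everywhere, and the map $v\mapsto\D^\mathrm R v$ is continuous; similarly $v\mapsto v(a)$ is continuous by Remark~\ref{re:evalsmooth}. Symmetry and bilinearity follow from the corresponding properties of $g_\mathrm R$. Positive semi-definiteness is obvious. Positive-definiteness: if $\llangle v,v\rrangle_\gamma=0$ then $v(a)=0$ and $\D^\mathrm R v=0$ almost everywhere, so $v$ is a parallel vector field along $\gamma$ vanishing at $a$, hence $v\equiv 0$ by uniqueness of the parallel transport ODE along the absolutely continuous curve $\gamma$.

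Next I would show that $\llangle\cdot,\cdot\rrangle_\gamma$ is actually a Hilbert space inner product, i.e., its norm is equivalent to the topology on $T_\gamma H^1([a,b],M)$. Choose a $\nabla^\mathrm R$-parallel $g_\mathrm R$-orthonormal frame $\{e_i(t)\}_{i=1}^m$ along $\gamma$, obtained by solving the parallel transport ODE with $L^2$ coefficients starting from an orthonormal basis of $T_{\gamma(a)}M$; each $e_i$ lies in $\sect^{H^1}(\gamma^*TM)$. Writing $v=\sum_i\lambda_i e_i$, we have $v\leftrightarrow\lambda=(\lambda_i)\in H^1([a,b],\R^m)$ by the identification discussed at the end of Section~\ref{sec:banachspacetensors}, and $\D^\mathrm R v=\sum_i\lambda_i'\, e_i$ because the $e_i$ are parallel. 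Hence
\begin{equation*}
\llangle v,v\rrangle_\gamma=\sum_{i=1}^m\lambda_i(a)^2+\int_a^b\sum_{i=1}^m\lambda_i'(t)^2\,\dd t,
\end{equation*}
which is precisely the Hilbert inner product \eqref{eq:innprodH1} on $H^1([a,b],\R^m)$ under the topological isomorphism of Remark~\ref{re:eqhk}. Thus the norm defined by $\llangle\cdot,\cdot\rrangle_\gamma$ is equivalent to the $H^1$-norm on $T_\gamma H^1([a,b],M)$, so it is complete and induces the correct Hilbertable topology.

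Finally I would verify smoothness of the family $\gamma\mapsto\llangle\cdot,\cdot\rrangle_\gamma$, which is the main technical point. Let $\varphi=(\varphi_t,U_t)$ be a one-parameter family of charts on $M$ and consider the local chart $\curves\varphi:\curves U\to\curves{\varphi(U)}\subset H^1([a,b],\R^m)$ of $H^1([a,b],M)$ given in \eqref{eq:curvesvarphiM}, with the corresponding identification $T\curves U\cong\curves{\varphi(U)}\times H^1([a,b],\R^m)$ from \eqref{eq:TH1H1Tdphi}. In such coordinates the Riemannian metric $g_\mathrm R$ is represented by a smooth map $(t,x)\mapsto G(t,x)$ into symmetric positive matrices, and the Christoffel tensor by a smooth map $(t,x)\mapsto\Gamma(t,x)$; by the covariant derivative formula \eqref{eq:nablachr} the operator $\D^\mathrm R$ reads
\begin{equation*}
\D^\mathrm R v(t)=\dot v(t)+\Gamma(t,\gamma(t))(\dot\gamma(t),v(t)).
\end{equation*}
Substituting into \eqref{eq:RiemannH1}, the inner product becomes a finite sum of terms built from (i) the evaluation at $t=a$, which is a smooth map into $\R^m$, (ii) compositions of the form $\curves G(\gamma)$ and $\curves\Gamma(\gamma)$, which are smooth maps of $\gamma\in\curves{U}$ with values respectively in $H^1([a,b],\vee^2(\R^m)^*)$ and $H^1([a,b],\Lin(\R^m\otimes\R^m,\R^m))$ by Theorem~\ref{thm:curvesalphasmooth}, and (iii) pointwise bilinear operations between $H^1$, $C^0$ and $L^2$ sections of the type described in Example~\ref{thm:multC0L2}. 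Each of these ingredients is smooth or bounded bilinear in the appropriate spaces, and the composition of smooth maps with bounded multilinear maps is smooth; collecting terms, the map
\begin{equation*}
\curves U\ni\gamma\longmapsto\llangle\cdot,\cdot\rrangle_\gamma\in\Bilin(H^1([a,b],\R^m))
\end{equation*}
is smooth, which in turn gives a smooth section of the symmetric bilinear form bundle over $H^1([a,b],M)$. Together with the fiberwise Hilbert inner product property established above, this shows that \eqref{eq:RiemannH1} is a Riemannian metric on $H^1([a,b],M)$ in the sense of Definition~\ref{def:metricinfinitemnfld}. The most delicate step, and the one I would expect to write out most carefully, is the smoothness in local charts, since it requires a bookkeeping of the several continuous/smooth pointwise operations between $H^1$, $C^0$ and $L^2$ spaces of sections of tensor bundles along $\gamma$.
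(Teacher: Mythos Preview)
The paper does not actually prove this proposition: immediately after the statement it writes ``For a proof of Proposition~\ref{prop:RiemannH1} we refer to Mercuri, Piccione and Tausk \cite[Proposition 4.4.10]{picmertausk}.'' There is therefore no in-text proof to compare against.

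Your proposal is correct and follows the standard route one would expect from the cited reference. The fiberwise argument via a $\nabla^\mathrm R$-parallel $g_\mathrm R$-orthonormal frame is exactly the right way to reduce to the model inner product \eqref{eq:innprodH1} on $H^1([a,b],\R^m)$, and the identification is indeed a topological isomorphism (this is what the end of Section~\ref{sec:banachspacetensors} records). For the smoothness part, your scheme---pulling everything into a chart $\curves\varphi$, expanding $\D^\mathrm R v$ via \eqref{eq:nablachr}, and then invoking Theorem~\ref{thm:curvesalphasmooth} for $\gamma\mapsto\curves G(\gamma)$, $\gamma\mapsto\curves\Gamma(\gamma)$ together with the continuous bilinear pairings of Example~\ref{thm:multC0L2}---is the correct bookkeeping, and you are right that this is the most laborious step. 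One small point worth making explicit when you write it out: the term involving $\dot\gamma$ is only $L^2$, so the dependence on $\gamma$ of the map $\gamma\mapsto\dot\gamma$ (which is bounded linear from $H^1$ to $L^2$) must be combined with the $C^0$-valued map $\gamma\mapsto\Gamma(\cdot,\gamma(\cdot))$ through a bilinear pairing $C^0\times L^2\to L^2$ as in \eqref{eq:compositeC0L2L2}; you allude to this, but it is the place where regularity could slip if one is careless.
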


For a proof of Proposition~\ref{prop:RiemannH1} we refer to Mercuri, Piccione and Tausk \cite[Proposition 4.4.10]{picmertausk}. Observe also that for $M=\R^m$, the above Riemannian metric coincides with \eqref{eq:innprodH1}, in Remark~\ref{re:eqhk}. We also observe that, from \cite[Theorem 4.4.16]{picmertausk}, the Riemann--Hilbert manifold $H^1([a,b],M)$ endowed with \eqref{eq:RiemannH1} is {\em complete} provided that $(M,g_\mathrm R)$ is complete. This is later used in the same reference to prove that the standard energy functional with domain $H^1([a,b],M)$ satisfies the Palais--Smale condition. For more details on this topic, we refer to Mercuri, Piccione and Tausk \cite{picmertausk}.

\section{Actions of Lie groups on Hilbert manifolds}
\label{sec:actions}

In this section, we briefly recall some basic concepts related to actions of Lie groups\index{Lie group}\footnote{By {\em Lie group} we mean a (possibly infinite--dimensional) smooth Hilbert manifold endowed with a group structure, such that the map $G\times G\ni (g_1,g_2)\mapsto g_1g_2^{-1}\in G$ is smooth. We will also denote $\mathfrak g\cong T_eG$ the tangent space at the identity, that carries a Lie algebra structure given by the Lie bracket $[\cdot,\cdot]:\mathfrak g\times\mathfrak g\to\mathfrak g$.} on Hilbert manifolds. As a general idea, the existence of an (isometric) action of a group $G$ on a space $Y$ means that $Y$ has {\em symmetries of $G$ type}. For instance, if $G=S^1$, it means that $Y$ is {\em rotationally symmetric}, see an example in Figure \ref{fig:rotationallysym}. This general idea can be extrapolated to an infinite--dimensional context, as we will remark in the sequel. Moreover, in the presence of such symmetries codified in the form of a group action, it is possible to simplify the study of {\em invariant} functionals. These represent variational problems with symmetries, in the sense that the related functional is invariant under a certain group action. This is the case in many different problems of geometric calculus of variations, and analysis of criticality and degeneracy can be severely simplified by a clever use of such symmetries, as it will be shown in Chapter~\ref{chap5}.

\begin{figure}[htf]
\begin{center}
\vspace{-0.4cm}
\includegraphics[scale=1]{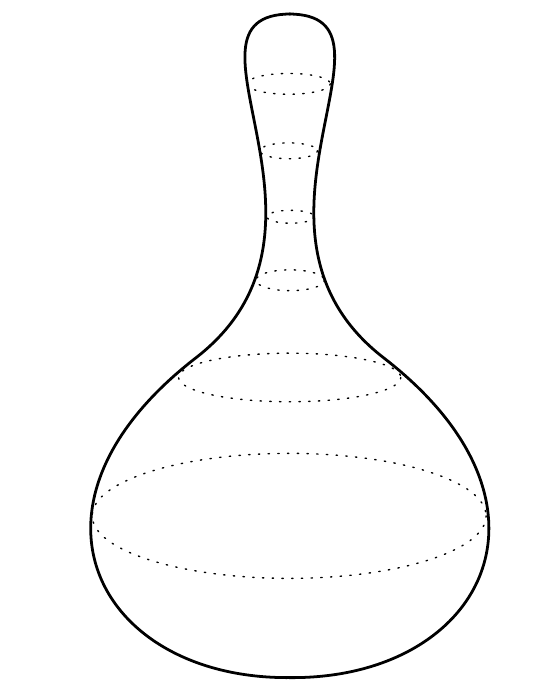}
\end{center}
\caption{An example of rotationally symmetric object. This symmetry may be characterized as an action of the Lie group $S^1$.}\label{fig:rotationallysym}
\end{figure}

Results in this direction and also concerning Morse theory on Hilbert manifolds acted upon by Lie groups, the so--called {\em equivariant} Morse theory, have been extensively studied in the literature.\footnote{Actually, most results are on {\em linear} actions of Lie groups on Hilbert {\em spaces}. The nonlinear version of this problem, for actions on Hilbert {\em manifolds}, still leaves several open questions.} These have applications for instance in the problem of determining if compact manifolds have infinitely many geometrically distinct periodic geodesics. In connection with this topic, in Section~\ref{sec:equivariantgenericity} we give an abstract result on nondegeneracy of critical points of an invariant functional, that is later used in the proof of the Bumpy Metric Theorem~\ref{thm:bumpy}. For a finite--dimensional introduction to the subject, we refer to \cite{ilgaag,duistermaat,liedover,gov0,gov1,gov3}, and for an infinite--dimensional discussion and applications of equivariant Morse theory, we refer to \cite{pacella,palaismorsehilb,palaissymcrit,PalaisTerng}. In this section, $G$ denotes a finite--dimensional Lie group with Lie algebra $\mathfrak g$ and $Y$ denotes a Hilbert manifold.

\begin{definition}
A {\em (left) action}\footnote{{\em Right actions} are maps of the form $Y\times G\to Y$, analogously defined.}\index{Action} of $G$ on $Y$ is a map $\mu:G\times Y\rightarrow Y$ such that
\begin{itemize}
\item[(i)] $\mu(e,y)=y$, for all $y\in Y$;
\item[(ii)] $\mu(g_1,\mu(g_2,y))=\mu(g_1g_2,y)$, for all $g_1,g_2\in G, y\in Y$.
\end{itemize}
The action is said to be of class $C^k$ or smooth, if $\mu$ is a respectively $C^k$ or smooth map.
\end{definition}

\begin{example}
A simple example is the following. Let $H$ be a Hilbert space and consider the map $$\mu:\Lin(V)\times V\ni (T,v)\longmapsto Tv\in V.$$ Another important example is the \emph{adjoint action} of a Lie group $G$ on its Lie algebra $\mathfrak{g}$, given by the adjoint representation $$\Ad:G\times\mathfrak{g}\ni (g,X)\longmapsto \Ad(g)X=\frac{\dd}{\dd t}\left(g\exp(tX)g^{-1}\right)\Big|_{t=0}\in\mathfrak{g}.$$ Other typical examples are actions of a Lie subgroup $H\subset G$ on $G$ by left multiplication or conjugation. All of these are smooth actions.
\end{example}

\begin{example}\label{ex:hkmn}
Consider $M$ and $N$ finite--dimensional smooth Riemannian manifolds, $M$ compact and $H^k(M,N)$ the Hilbert manifold of embeddings $M\hookrightarrow N$ of Sobolev class $H^k$. Then it is easy to see that the group $\Iso(N)$ of isometries of the ambient space acts smoothly on the Hilbert manifold $H^k(M,N)$ by left composition.
\end{example}

The next example will be explored in more details along this section, since it is in connection with the periodic and iterate geodesics problem.

\begin{example}\label{ex:s1h1}
Let $G=S^1$ and $Y=H^1(S^1,M)$, recall Corollary~\ref{cor:h1s1}. Then $G$ clearly acts\index{Action!reparameterization}\index{Reparameterization action} on $Y$ by right composition, reparameterizing curves, i.e.,
\begin{equation}\label{eq:mus1h1}
\begin{aligned}
\rho:S^1\times H^1(S^1,M)&\la H^1(S^1,M)\\
\rho(e^{i\theta},\gamma)(z)&=\gamma(e^{i\theta}z), \quad z\in S^1.
\end{aligned}
\end{equation}

\begin{figure}[htf]
\begin{center}
\vspace{-0.6cm}
\includegraphics[scale=1]{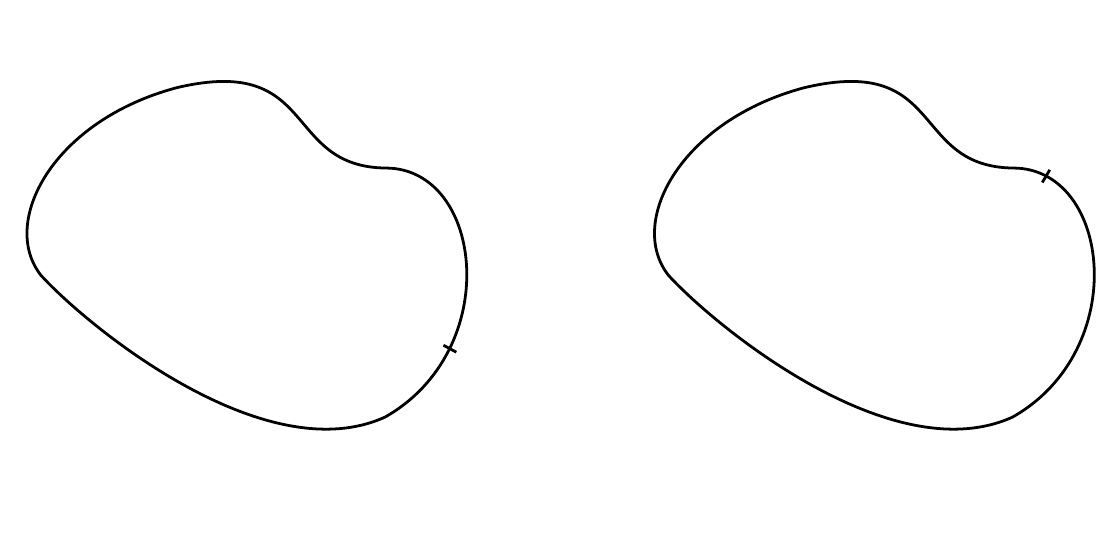}
\begin{pgfpicture}
\pgfputat{\pgfxy(-9,0.3)}{\pgfbox[center,center]{$\gamma$}}
\pgfputat{\pgfxy(-6.2,2)}{\pgfbox[center,center]{$\gamma(z)$}}
\pgfputat{\pgfxy(-1.4,3.4)}{\pgfbox[center,center]{$\gamma(e^{i\theta}z)$}}
\pgfputat{\pgfxy(-2.3,0.3)}{\pgfbox[center,center]{$\rho(e^{i\theta},\gamma)$}}
\end{pgfpicture}
\end{center}
\caption{Reparameterization $\rho(e^{i\theta},\gamma)$ of $\gamma$, that {\em rotates} $\gamma$ of $e^{i\theta}$.}\label{fig:s1action}
\end{figure}

This action is continuous but not differentiable. Indeed, if it were differentiable, for any $\gamma\in H^1(S^1,M)$ and $w\in S^1$, the following composite map would also be differentiable $$S^1\xrightarrow{\;\;\rho_\gamma\;\;} H^1(S^1,M)\xrightarrow{\;\;\ev_{w}\;\;} M,$$ where $\rho_\gamma(z)=\rho(z,\gamma)$ and $\ev_{w}(\gamma)=\gamma(w)$ is the evaluation map at $w$. Nevertheless, this composite map is given by $$S^1\ni z\longmapsto\gamma(zw)\in M,$$ and is a simple reparameterization of $\gamma$, hence not differentiable, since $\gamma\in H^1(S^1,M)$ is only {\em differentiable almost everywhere}, see Proposition~\ref{prop:abscontcharact}. Moreover, there would also be a regularity problem, since the derivative $\dot\gamma$ would be in $\sect^{L^2}(\gamma^*TM)$, and not in the {\em correct} tangent space $\sect^{H^1}(\gamma^*TM)$.

However, notice that the same action $\rho$ regarded in different spaces, for instance $$\rho:S^1\times H^2(S^1,M)\la H^1(S^1,M),$$ does not have these pathologies. In fact, the above map is of class $C^1$, as we will see in Remark~\ref{re:weakreg}.
\end{example}

In case of lack of regularity of the action, as in the above example, there are weaker assumptions that can be usually made. These in general related to regularity of some {\em auxiliary maps}, defined as follows.

\begin{definition}
Given an action $\mu:G\times Y\to Y$, consider
\begin{equation}\label{eq:muaux}
\begin{aligned}
\mu^g:Y&\la Y\quad& \mu_y:G&\la Y \\
y&\longmapsto\mu(g,y)\quad& g&\longmapsto\mu(g,y).
\end{aligned}
\end{equation}
In case the action is $C^k$, these maps are also clearly $C^k$ and their derivatives at $y\in Y$ and $e\in G$ are respectively
\begin{equation}\label{eq:ddmuaux}
\begin{aligned}
\dd\mu^g(y):T_yY&\la T_yY\quad& \dd\mu_y(e):\mathfrak g&\la T_yY \\
\dd\mu^g(y)&=\frac{\partial\mu}{\partial y}(g,y)\quad& \dd\mu_y(e)&=\frac{\partial\mu}{\partial g}(e,y).
\end{aligned}
\end{equation}
\end{definition}

\begin{definition}\label{def:actdiffeos}
An action $\mu:G\times Y\to Y$ is said to be an {\em action by diffeomorphisms}\index{Action!by diffeomorphisms} if $\mu^g:Y\to Y$ is a diffeomorphism of $Y$ for all $g\in G$.
\end{definition}

Let us recall the definition of some basic objects related to an action.

\begin{definition}\label{def:actionobjects}
Given an action $\mu:G\times Y\to Y$, the subgroup
\begin{equation*}
G_y=\{g\in G:\mu(g,y)=y\}
\end{equation*}
is called the {\em isotropy group}\index{Isotropy group}\index{Action!isotropy group} or {\em stabilizer}\index{Action!stabilizer}\index{Stabilizer} of $y\in Y$ and 
\begin{equation*}
G(y)=\{\mu(g,y):g\in G\}=\im \mu_y
\end{equation*}
is called the {\em orbit}\index{Orbit}\index{Action!orbit} of $y\in Y$.

A subset $S\subset Y$ is said to be {\em $G$--invariant}\index{$G$--invariant subset}\index{Action!invariant subset} if $\mu^g(S)\subset S$ for all $g\in G$. In particular, orbits are obviously $G$--invariant. In addition, if $\bigcap_{y\in Y} G_y=\{e\}$, the action is said to be {\em effective}\index{Action!effective} and if $G_y=\{e\}$, for all $y\in M$, it is said to be {\em free}.\index{Action!free} Finally, if given $x,y\in Y$ there exists $g\in G$ with $\mu(g,x)=y$, the action is said to be {\em transitive}.\index{Action!transitive}
\end{definition}

Every orbit $G(y)$ of a $C^k$ action is an {\em immersed} submanifold of $Y$, in a sense weaker than Definition~\ref{def:submnfldchart}. In fact, the map $\mu_y:G\to Y$ is constant on $G_y$ cosets and hence {\em passes to the quotient} inducing a map $\overline{\mu_y}:G/G_y\to Y$. It maps each class $gG_y$ to $\mu_y(g)$, and is clearly well--defined in this way. Furthermore, it is injective and has image equal to the orbit $G(y)$, since it coincides with the image of $\mu_y$.

\begin{wrapfigure}{l}{2cm}
\vspace{-0.5cm}
\xymatrix@+15pt{
G\ar[r]^{\mu_y}\ar[d] & Y \\
\dfrac{G}{G_y}\ar@(r,d)[ru]_{\overline{\mu_y}}
}
\vspace{-0.5cm}
\end{wrapfigure}
This allows to identify $G(y)$ with the quotient\footnote{Notice that, in general, although isotropy groups are Lie subgroups, they are not {\em normal subgroups}. This means that the quotient $G/G_y$ in general is not a Lie group. Nevertheless, here we regard $G/G_y$ as a quotient manifold.} manifold $G/G_y$, as shown in the diagram, where the vertical arrow denotes the quotient map. Since the action is $C^k$, the map $\overline{\mu_y}$ is in fact a $C^k$ immersion. Thus, orbits $G(y)$ are $C^k$ {\em immersed} submanifolds of the Hilbert manifold $Y$, but may not have the induced topology from the ambient, i.e., its inclusion is an immersion, not an embedding. Notice that without further assumptions, the orbits in general need not be closed in $Y$ and even the tangent spaces $T_yG(y)$ need not be closed in $T_yY$.

A way to ensure these properties is to make further properness and Fredholmness assumptions, for instance assuming that for every $y\in Y$, the map $\mu_y:G\to Y$ is a nonlinear Fredholm map, see Definition~\ref{def:nonlinfred}. In this case, each isotropy group $G_y$ has finite dimension and each orbit $G(y)$ has finite codimension (hence has closed and complemented tangent space). More details on general abstract theory of actions of Lie groups on Hilbert manifolds can be found in Palais and Terng \cite{PalaisTerng}.

\begin{figure}[htf]
\begin{center}
\includegraphics[scale=1]{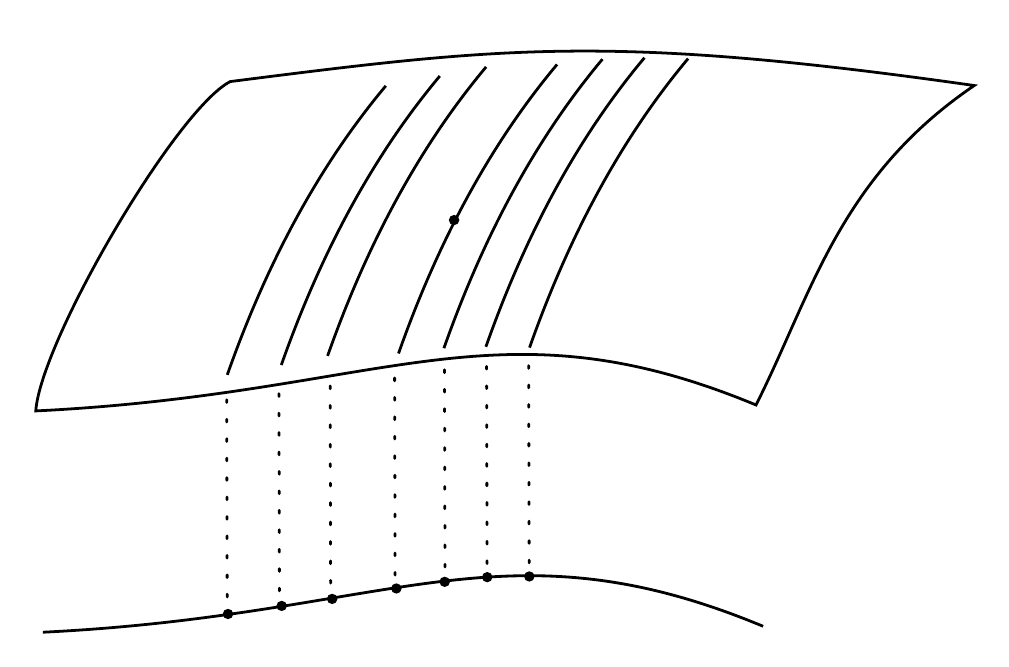}
\begin{pgfpicture}
\pgfputat{\pgfxy(-4.7,6.6)}{\pgfbox[center,center]{$G(y)$}}
\pgfputat{\pgfxy(-6.1,4.5)}{\pgfbox[center,center]{$y$}}
\pgfputat{\pgfxy(-1.4,3.6)}{\pgfbox[center,center]{$Y$}}
\pgfputat{\pgfxy(-2.1,0.3)}{\pgfbox[center,center]{$Y/G$}}
\end{pgfpicture}
\end{center}
\caption{Partition of $Y$ by orbits and orbit space $Y/G$.}\label{fig:orbitsubmnfld}
\end{figure}

Moreover, if two orbits $G(x)$ and $G(y)$ have nontrivial intersection, then they obviously coincide. Hence, orbits of an action of $G$ on $Y$ constitute a {\em partition} of $Y$ by immersed submanifolds, and we may consider the {\em orbit space}\index{Orbit!space} given by the quotient $Y/G$. Since the action is continuous, it is possible to endow $Y/G$ with a quotient topology. Nevertheless, this is in general {\em not a manifold}.\footnote{Under some additional and very restrictive hypotheses, the orbit space is a manifold. For instance, if the action if {\em free} and {\em proper}, then this quotient is a manifold. Usually, the way to deal with orbit spaces is to use the concept of {\em orbifold}, which is essentially a manifold with {\em well--behaved} singularities. More precisely, it has an underlying open dense subset which is a manifold. For an introduction to this subject, we refer to \cite{ilgaag,moerdijk}.}

\begin{remark}\label{re:closediso}
If $\mu:G\times Y\to Y$ is a continuous action, then clearly all isotropy groups are {\em closed} subgroups of $G$. If $G$ is finite--dimensional, then this implies that $G_y$ is a {\em Lie subgroup} of $G$, in particular a Lie group, see Alexandrino and Bettiol \cite{ilgaag}. Nevertheless, closed subgroups of infinite--dimensional Lie groups need not be Lie subgroups.\footnote{A counter--example was given by Bourbaki in 1975, and can be found in \cite{adamsratiuschmid}. Let us briefly describe it, for the reader's convenience. Consider $\ell_2$ the Hilbert space of real sequences $(x_1,x_2,\ldots)$ such that $\sum_{n\in\N} x_n^2<+\infty$. Define $$G_n=\{x\in\ell_2:x_m\in\tfrac1m\Z,1\leq m\leq n\}$$ and observe that $G_n$ is a closed Lie subgroup of $\ell_2$ for all $n\in\N$. Consequently, $H=\bigcap_{n\in\N} G_n$ is a closed subgroup. Nevertheless, it is possible to prove that $H$ is totally disconnected and not discrete, therefore cannot be a submanifold, hence a Lie subgroup.} It is possible to prove that if the closed subgroup is also {\em locally compact}, then it is a finite--dimensional Lie subgroup. Further properness assumptions on the action imply this property for isotropy subgroups, however we will mainly deal with finite--dimensional Lie groups.
\end{remark}

Henceforth, unless otherwise stated, assume that all Lie groups $G$ are finite--dimensional. In particular, orbits of $C^k$ actions are finite--dimensional $C^k$ immersed submanifolds of $Y$ and hence have closed and complemented tangent spaces. Moreover, isotropy subgroups of a continuous action are Lie subgroups. Notice however that no assumptions are being made on $Y$, which is a (possibly infinite--dimensional) Hilbert manifold acted upon by $G$.

Let us explore some of these objects in the case of the reparameterization action \eqref{eq:mus1h1} described in Example~\ref{ex:s1h1},
\begin{eqnarray*}
&\rho:S^1\times H^1(S^1,M)\la H^1(S^1,M)&\\
&\rho(e^{i\theta},\gamma)(z)=\gamma(e^{i\theta}z), \quad z\in S^1.&
\end{eqnarray*}
Recall this is a continuous action, but not differentiable. Nevertheless, the next result gives further regularity properties of this action.

\begin{lemma}\label{le:mus1h1diffeos}
The action $\rho:S^1\times H^1(S^1,M)\to H^1(S^1,M)$ is an action by diffeomorphisms.\footnote{Recall Definition~\ref{def:actdiffeos}.} More precisely, for each $z\in S^1$, $$\rho^z:H^1(S^1,M)\ni\gamma\longmapsto\rho(z,\gamma)=\gamma(z\,\cdot)\in H^1(S^1,M)$$ is a global diffeomorphism, whose derivative at $\gamma\in H^1(S^1,M)$ is given by
\begin{eqnarray}\label{eq:dmuz}
\dd\rho^z(\gamma): T_\gamma H^1(S^1,M) &\la &T_{\rho^z(\gamma)}H^1(S^1,M)\\
v&\longmapsto& v(z\,\cdot).\nonumber
\end{eqnarray}
\end{lemma}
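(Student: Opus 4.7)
The plan is to establish the three assertions in sequence: bijectivity of each $\rho^z$, its smoothness, and the derivative formula \eqref{eq:dmuz}.

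First, bijectivity follows directly from the action axioms: $\rho^z \circ \rho^{z^{-1}} = \rho^{e} = \id$ and likewise $\rho^{z^{-1}} \circ \rho^z = \id$, so $\rho^{z^{-1}}$ is a two-sided inverse for $\rho^z$. Hence it suffices to prove that $\rho^z$ is smooth, as its inverse is then also of the same form and likewise smooth.

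For smoothness, I would fix $z = e^{i\theta}\in S^1$ and $\gamma_0 \in H^1(S^1,M)$, and build compatible one-parameter families of charts around $\gamma_0$ and $\rho^z(\gamma_0)$ using Proposition~\ref{prop:existVBN}. Concretely, choose $\varphi=(\varphi_t,U_t)$ with $U$ containing the graph of $\gamma_0$ (viewed as a periodic curve on $\R$), and then define a second family $\widetilde\varphi = (\widetilde\varphi_t,\widetilde U_t)$ by $\widetilde\varphi_t = \varphi_{t+\theta}$, $\widetilde U_t = U_{t+\theta}$. Since the graph of $\rho^z(\gamma_0)$ at time $t$ is $(t,\gamma_0(t+\theta)) \in \{t\}\times U_{t+\theta}$, the family $\widetilde\varphi$ is an admissible chart around $\rho^z(\gamma_0)$, and by Corollary~\ref{cor:atlash1} yields a smooth local chart $\curves{\widetilde\varphi}$. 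A direct computation gives
\begin{equation*}
\curves{\widetilde\varphi}\bigl(\rho^z(\gamma)\bigr)(t) \;=\; \widetilde\varphi_t\bigl(\gamma(t+\theta)\bigr) \;=\; \varphi_{t+\theta}\bigl(\gamma(t+\theta)\bigr) \;=\; \curves\varphi(\gamma)(t+\theta),
\end{equation*}
so in these charts $\rho^z$ is represented by the time-shift operator $\xi\mapsto \xi(\,\cdot+\theta)$ on $H^1(S^1,\R^m)$. This operator is linear and an isometry with respect to any shift-invariant $H^1$ inner product on periodic curves, and in particular is smooth; hence $\rho^z$ is smooth in a neighborhood of $\gamma_0$. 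Since $\gamma_0$ was arbitrary, $\rho^z$ is a global smooth diffeomorphism.

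Finally, for the derivative formula, I would exploit the compatibility of $\rho^z$ with the evaluation maps. For each $w\in S^1$ one has the identity $\ev_w \circ \rho^z = \ev_{zw}$ of smooth maps into $M$; differentiating at $\gamma$ and applying this to a tangent vector $\mathfrak v \in T_\gamma H^1(S^1,M)$ gives
\begin{equation*}
\dd\ev_w\bigl(\rho^z(\gamma)\bigr)\circ \dd\rho^z(\gamma)\mathfrak v \;=\; \dd\ev_{zw}(\gamma)\mathfrak v.
\end{equation*}
Under the identification $TH^1(S^1,M)\cong H^1(S^1,TM)$ from Proposition~\ref{prop:TH1H1T} (via $v(t)=\dd\ev_t(\gamma)\mathfrak v$), the right-hand side is $v(zw)$ while the left-hand side is $\dd\rho^z(\gamma)\mathfrak v$ evaluated at $w$. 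This proves \eqref{eq:dmuz}. The only real technical issue is the bookkeeping in the smoothness step, where one must be careful about the periodic identification $S^1\cong\R/\Z$ so that the one-parameter families of charts are genuinely $1$-periodic in $t$, but this is a routine adjustment since both $\varphi$ and $\widetilde\varphi$ have the same periodicity structure.
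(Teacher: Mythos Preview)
Your proof is correct and the derivative computation via evaluation maps is identical to the paper's. The organization differs somewhat: the paper dismisses smoothness with ``standard arguments,'' then computes \eqref{eq:dmuz}, and finally concludes $\rho^z$ is a diffeomorphism by combining the Inverse Function Theorem (the derivative \eqref{eq:dmuz} being an isomorphism gives a local diffeomorphism) with injectivity from the left inverse $\rho^{z^{-1}}$. You instead establish bijectivity directly from the action axioms and supply an explicit smoothness argument by representing $\rho^z$ in compatible one-parameter chart families as the linear shift $\xi\mapsto\xi(\cdot+\theta)$ on $H^1(S^1,\R^m)$. Your route is more self-contained and avoids the appeal to the Inverse Function Theorem, at the cost of the periodicity bookkeeping you flag (which is indeed routine once the chart family is chosen $1$-periodic in $t$); the paper's version is terser but leaves the smoothness step to the reader.
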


\begin{proof}
Fix $z\in S^1$. Standard arguments prove that $\rho^z$ is differentiable.\footnote{Actually, one possibility is computing the candidate to $\dd\rho^z(\gamma)$ as follows, and then proving it satisfies the definition of derivative of $\rho^z$.} To compute its derivative, we can use evaluation maps in the following way. Let $w\in S^1$ and consider the composite
\begin{equation*}
H^1(S^1,M)\xrightarrow{\;\;\rho^z\;\;}H^1(S^1,M)\xrightarrow{\;\;\ev_w\;\;}M
\end{equation*}
which maps each $\gamma$ to $\gamma(zw)$, hence coincides with $\ev_{zw}$. Its derivative at $\gamma\in H^1(S^1,M)$ is then given by
\begin{equation*}
\begin{aligned}
\dd(\ev_{zw})(\gamma)v &= \dd(\ev_w\circ\rho^z)(\gamma)v\\
&= \dd(\ev_w)(\rho^z(\gamma))\dd\rho^z(\gamma)v\\
&=\big[\dd\rho^z(\gamma)v\big](w),
\end{aligned}
\end{equation*}
for all $v\in T_\gamma H^1(S^1,M)$. In addition, from \eqref{eq:deval},
\begin{equation*}
\dd(\ev_{zw})(\gamma)v = v(zw).
\end{equation*}
Thus, it follows that $\big[\dd\rho^z(\gamma)v\big](w)=v(zw)$, i.e., $$\dd\rho^z(\gamma)v=v(z\,\cdot),$$ hence \eqref{eq:dmuz} holds. Furthermore, notice that this is clearly a continuous and invertible operator. Hence, from the Inverse Function Theorem, $\rho^z:H^1(S^1,M)\to H^1(S^1,M)$ is a local diffeomorphism. In addition, $\rho^z$ is clearly injective, since it admits the left inverse $\rho^{z^{-1}}$. Therefore, $\rho^z$ is a global diffeomorphism, concluding the proof.\footnote{Notice that for each $z\in S^1$, the inverse $\left(\rho^z\right)^{-1}$ is given by $\rho^{z^{-1}}$, which is also a global diffeomorphism, by the same argument.}
\end{proof}

\begin{lemma}\label{le:cyclicgroup}
Let $S^1_\gamma$ be the isotropy subgroup of a non constant curve $\gamma:S^1\to M$. Then $S^1_\gamma$ is a finite cyclic subgroup of $S^1$, hence isomorphic to $\Z_n$ for some $n\in\N$.
\end{lemma}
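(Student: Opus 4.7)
The plan is to combine three observations: that $S^1_\gamma$ is a closed subgroup of $S^1$, that closed proper subgroups of $S^1$ are necessarily finite cyclic, and that the case $S^1_\gamma=S^1$ is excluded by the non-constancy of $\gamma$.

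First, since the reparameterization action $\rho\colon S^1\times H^1(S^1,M)\to H^1(S^1,M)$ is continuous (see Example~\ref{ex:s1h1}), for any fixed $\gamma$ the orbit map $\rho_\gamma\colon S^1\to H^1(S^1,M)$ is continuous, and therefore $S^1_\gamma=\rho_\gamma^{-1}(\{\gamma\})$ is closed in $S^1$ (cf.\ Remark~\ref{re:closediso}). Since $S^1$ is a finite-dimensional Lie group, $S^1_\gamma$ is in fact a closed Lie subgroup.

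Next, I would classify the closed subgroups of $S^1$. The Lie algebra of $S^1$ is one-dimensional, so any Lie subgroup has dimension $0$ or $1$. A $1$-dimensional connected Lie subgroup of $S^1$ must equal the identity component of $S^1$, hence all of $S^1$. A $0$-dimensional closed subgroup is discrete; being a discrete subgroup of the compact group $S^1$, it is finite, and since every finite subgroup of $S^1$ consists of roots of unity, it is cyclic, isomorphic to $\Z_n$ for some $n\in\N$.

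Finally, I would rule out the possibility $S^1_\gamma=S^1$. Indeed, if $S^1_\gamma=S^1$, then $\rho(z,\gamma)=\gamma$ for every $z\in S^1$, which means $\gamma(zw)=\gamma(w)$ for all $z,w\in S^1$. Evaluating at $w=1$ yields $\gamma(z)=\gamma(1)$ for every $z\in S^1$, contradicting the hypothesis that $\gamma$ is non-constant. Hence $S^1_\gamma$ is a proper closed subgroup, and by the classification above it must be finite cyclic, isomorphic to $\Z_n$ for some $n\in\N$. The only subtlety worth flagging is the classification of closed subgroups of $S^1$, but this is entirely standard and there is no real obstacle in the argument.
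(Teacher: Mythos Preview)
Your proof is correct and follows essentially the same approach as the paper: show closedness of $S^1_\gamma$ via continuity of the action (Remark~\ref{re:closediso}), rule out $S^1_\gamma=S^1$ using non-constancy, and invoke the classification of proper closed subgroups of $S^1$. The only cosmetic difference is in justifying that last classification: the paper argues that an infinite proper subgroup of $S^1$ would be dense (hence not closed), and that finite subgroups of a field are cyclic; you instead use the Lie-theoretic dimension count and the fact that discrete subgroups of a compact group are finite.
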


\begin{proof}
Recall that the isotropy subgroup $S^1_\gamma$ is the subgroup of $S^1$ formed by elements $e^{i\theta}\in S^1$ such that $\gamma(e^{i\theta}z)=\gamma(z)$ for all $z\in S^1$. If $S^1_\gamma=S^1$, it is easy to see that $\gamma$ is a constant curve, which is not the case. Thus we may assume that $S^1_\gamma$ is a proper subgroup of $S^1$. It is well--known that if a proper subgroup of $S^1$ is infinite, then it must be dense. From Remark~\ref{re:closediso}, since the action is continuous, $S^1_\gamma$ is closed and hence finite. Finally, finite subgroups of a field are cyclic.\footnote{Indeed, suppose $G$ is a finite subgroup of a field. Then for a given divisor $d|\# G$ of the order of $G$, either $G$ has no element of order $d$ or at least one. In this last case, $G$ contains a cyclic group of order $d$, which by hypothesis, must contain all solutions of $x^d=1$ in $G$, since $G$ is contained in a field. Thus, in this case, $G$ contains exactly $\phi(d)$ elements of order $d$, where $\phi$ is the Euler phi function. In addition, $\# G=\sum_{d|\# G}\phi(d)$, where $d$ runs over all divisors of $\# G$. No divisor $d|\# G$ is left out, since there are either zero or $\phi(d)$ elements of order $d$ in $G$. However the sum would not add up to $\# G$ if any zero ever occurred, hence $G$ contains elements of all orders, in particular of order $\# G$, proving that $G$ is cyclic.} Thus, since $S^1_\gamma$ is a subgroup of $\C$, it must be a finite cyclic group, hence isomorphic to $\Z_n$, for $n=\# S^1_\gamma$.
\end{proof}

\begin{figure}[htf]
\begin{center}
\includegraphics[scale=0.7]{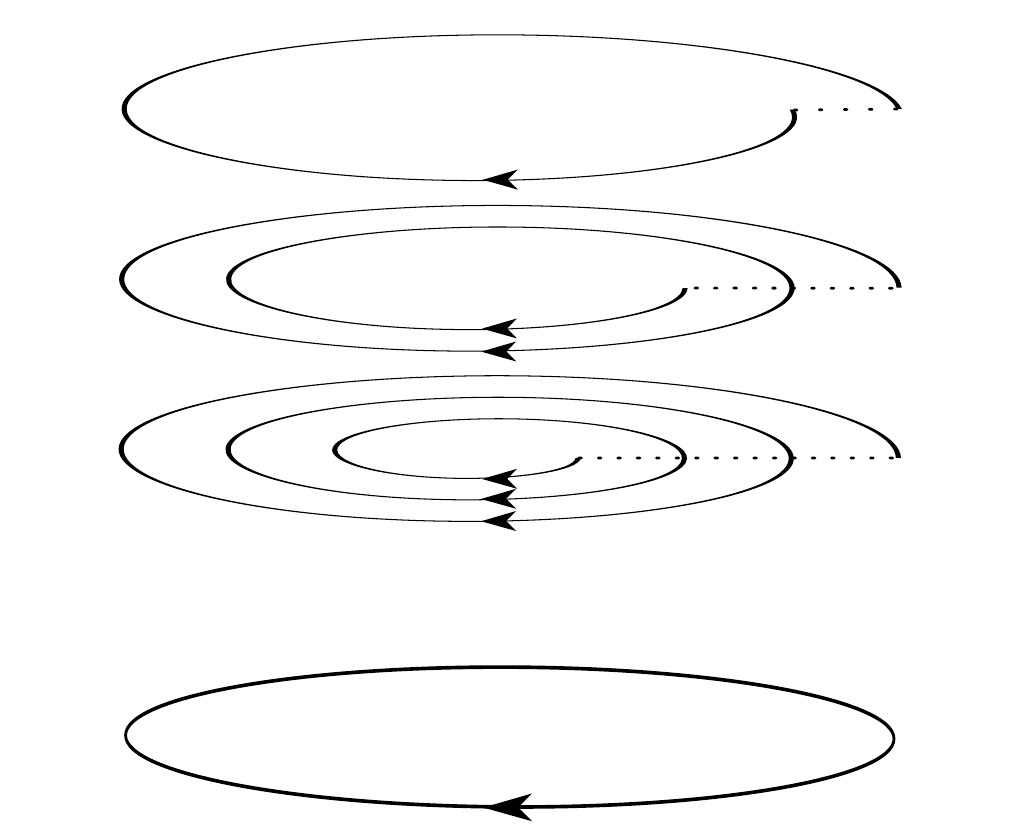}
\begin{pgfpicture}
\pgfputat{\pgfxy(0.1,5.2)}{\pgfbox[center,center]{$S^1_\gamma\cong\{1\}$}}
\pgfputat{\pgfxy(0.1,3.8)}{\pgfbox[center,center]{$S^1_\gamma\cong\Z_2$}}
\pgfputat{\pgfxy(0.1,2.6)}{\pgfbox[center,center]{$S^1_\gamma\cong\Z_3$}}
\pgfputat{\pgfxy(-0.3,0.6)}{\pgfbox[center,center]{$\gamma$}}
\end{pgfpicture}
\end{center}
\caption{A prime curve $\gamma$ and iterates given as $n$--fold covers with respective isotropy groups.}\label{fig:iterates}
\end{figure}

\begin{definition}\label{def:prime}
A curve $\gamma\in H^1(S^1,M)$ is called {\em prime} if $S^1_\gamma$ is trivial, otherwise it is called an {\em iterate}.\index{Prime curve}\index{Geodesic!periodic!prime}\index{Iterate curve}\index{Geodesic!periodic!iterate} Denote $H^1_*(S^1,M)$ the subset of $H^1(S^1,M)$ formed by prime curves.
\end{definition}

\begin{remark}
The order of the isotropy group $S^1_\gamma$ measures how many times {\em $\gamma$ winds itself} around its image. In this sense, prime curves are the periodic curves that make one single twist. More precisely, if $S^1_\gamma$ has order $n$, then it is easy to see that $\gamma$ is the $n$--fold iteration of a prime curve $\gamma_0:S^1\to M$. This means $\gamma$ has the same image of $\gamma_0$, but {\em runs over it} $n$ times, while $\gamma_0$ does it only once.

Another approach for this analysis is considering the {\em period} of a curve $\gamma:S^1\to M$. Namely, the {\em period} of $\gamma$ is defined as the generator of $S^1_\gamma$. Since this is a finite abelian group, its generator is the element of maximal order $\#S^1_\gamma$, which corresponds to the minimal time $\theta$ for $\gamma(e^{i\theta}z)$ to coincide again with $\gamma(z)$ for all $z\in S^1$ after $\theta=0$. If $S^1_\gamma$ has order $n$, its period is an element of order $n$, and hence $\gamma$ {\em makes $n$ turns around its image}, meaning once more it is the $n$--fold iteration of a prime curve $\gamma_0$.
\end{remark}

\begin{remark}
It is possible to prove that $H^1_*(S^1,M)$ is open in $H^1(S^1,M)$. Furthermore, it is clearly $S^1$--invariant.
\end{remark}

Non constant periodic curves with the same image form an infinite class of {\em geometrically indistinct} curves. For the sake of counting periodic geodesics for instance, it is convenient to have this infinite family counted as a single geodesic, otherwise every manifold that admits a periodic geodesic would trivially have {\em infinitely many} periodic geodesics. Thus, it is useful to have a {\em distinguished} representant of a such class, given by the prime curve that {\em generates} all the other iterates.

Let us now examine the orbit of a curve $\gamma\in H^1(S^1,M)$. Due to lack of regularity of this action, the maps $\rho_\gamma$ and $\overline{\rho_\gamma}$ used to identify $S^1(\gamma)$ with $S^1/S^1_\gamma$ are only {\em homeomorphisms}, and not {\em diffeomorphisms}. Thus, $S^1(\gamma)$ is {\em homeomorphic} to the quotient $S^1/S^1_\gamma$. Hence, if $\gamma$ is constant, its orbit is a point. If it is non constant, then its orbit is homeomorphic to $S^1$, since $S^1_\gamma$ is finite from Lemma~\ref{le:cyclicgroup}.

In addition, in case $\gamma$ has more regularity, the orbit is a submanifold. More precisely, if for instance\footnote{Notice that this is the case when $\gamma$ is a geodesic.} $\gamma$ is of class $C^2$, then $S^1(\gamma)$ is a $C^1$ submanifold of $H^1(S^1,M)$. Once more, the proof follows from a simple analysis of the maps $\rho_\gamma$ and $\overline{\rho_\gamma}$.

\begin{definition}\label{def:dy}
A $C^k$ action $\mu:G\times Y\to Y$ induces a natural (finite--dimensional) subspace of $T_yY$ at every $y\in Y$, tangent to $G(y)$, given by
\begin{equation}\label{eq:dy}
\mathcal D_y=\im\dd\mu_y(e).
\end{equation}
If $\dim\mathcal D_y$ does not depend on $y$, then $\mathcal D=\{\mathcal D_y:y\in Y\}$ is a smooth distribution\footnote{Recall Example~\ref{ex:distribution}.} of $Y$.
\end{definition}

\begin{figure}[htf]
\begin{center}
\includegraphics[scale=0.7]{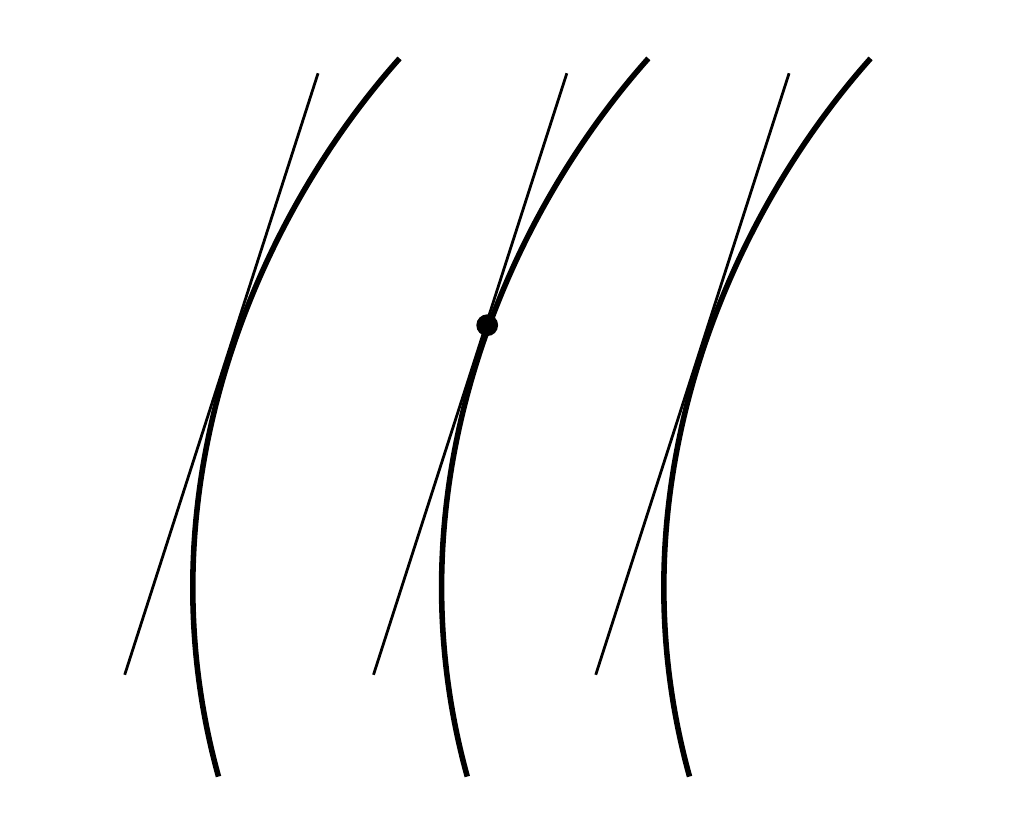}
\begin{pgfpicture}
\pgfputat{\pgfxy(-3.3,5.7)}{\pgfbox[center,center]{$\mathcal D_y$}}
\pgfputat{\pgfxy(-3.85,0)}{\pgfbox[center,center]{$G(y)$}}
\pgfputat{\pgfxy(-4.2,3.6)}{\pgfbox[center,center]{$y$}}
\end{pgfpicture}
\end{center}
\end{figure}

\begin{remark}\label{re:weakreg}
Notice that it is possible to relax the regularity assumptions on the action $\mu$ to define the distribution $\mathcal D$. In fact, suppose $\mu:G\times Y\to Y$ is a (possibly not differentiable) action and that there exists a $G$--invariant dense subset\footnote{This subset $Y_1$ in fact will have a differentiable structure, however its inclusion $Y_1\hookrightarrow Y$ will be continuous but not a homeomorphism. Thus, $Y_1$ may not be regarded as a submanifold of $Y$.} $Y_1$ of $Y$, with and $$\mu_y:G\la Y$$ differentiable for all $y\in Y_1$. Then we may consider for each $y\in Y_1$, $$\mathcal D_{y}=\im\dd\mu_{y}(e)\subset T_{y}Y.$$

This is the case of the action $\rho$ considered in Example~\ref{ex:s1h1}, with $Y_1=H^2(S^1,M).$ Let $\gamma\in H^2(S^1,M)$. Then standard arguments prove that $$\rho_\gamma:S^1\la H^1(S^1,M)$$ is of class $C^1$ for every $\gamma\in Y_1$. In fact, $\dd\rho_\gamma(1)$ is identified with $\dot\gamma$ in the following way. Consider the composite 
\begin{equation*}
\begin{aligned}
S^1\xrightarrow{\;\;\rho_\gamma\;\;} & \;\,H^1(S^1,M)\xrightarrow{\;\;\ev_w\;\;}M\\
\;z\longmapsto & \quad\;\gamma(z\,\cdot)\quad\longmapsto\gamma(zw)
\end{aligned}
\end{equation*}
This is simply a reparameterization of $\gamma$, and coincides with the composite map
\begin{equation*}
\begin{aligned}
S^1\xrightarrow{\;\;R_w\;\;} & \;\,S^1\xrightarrow{\;\;\gamma\;\;}M\\
z\longmapsto & \;zw\longmapsto\gamma(zw)
\end{aligned}
\end{equation*}
Hence, using \eqref{eq:deval}, we may compute the derivative
\begin{equation*}
\begin{aligned}
\dd(\ev_w\circ\rho_\gamma)(z)v&= \dd(\ev_w)(\rho_\gamma(z))\dd\rho_\gamma(z)v\\
&=\big[\dd\rho_\gamma(z)v\big](w),
\end{aligned}
\end{equation*}
for every $v\in T_zS^1$. Moreover, it coincides with the derivative
\begin{equation*}
\begin{aligned}
\dd(\gamma\circ R_w)(z)v&= \dd\gamma(zw)\dd R_w(z)v\\
&=\dot\gamma(zw)vw.
\end{aligned}
\end{equation*}
Therefore, $\dd\rho_\gamma(z)(w)=\dot\gamma(zw)w$ and hence $\dd\rho_\gamma(1)(w)=\dot\gamma(w)w$. This allows to identify
\begin{equation}\label{eq:dgammaspangamma}
\mathcal D_\gamma=\im\dd\rho_\gamma(1)=\operatorname{span}\,\dot\gamma\subset T_\gamma H^1(S^1,M),
\end{equation}
i.e., the subspace $\im\dd\rho_\gamma(1)$ is identified with the one--dimensional subspace spanned by $\dot\gamma$ in $\sect^{H^1}(\gamma^*TM)$. Thus, the distribution $\mathcal D$ of $Y$ is well--defined on points of $Y_1$, which in this case is dense in $Y$ from Corollary~\ref{cor:cinftyhk}.
\end{remark}

We will henceforth drop the assumption of differentiability of the actions and assume the existence of the dense subset $Y_1$, such that \eqref{eq:dy} is a defined on $Y_1$ as in Remark~\ref{re:weakreg}. More precisely, for each $y\in Y_1$, there is a subspace $\mathcal D_y$ of $T_yY$, defined by \eqref{eq:dy}. In fact, we always keep in mind the above example of the action $\rho$, for which all this theory is developed. Let us extend some definitions of Section~\ref{sec:infinitedimmnflds} to this context.

\begin{definition}\label{def:transversed}
A submanifold $S\subset Y$ is said to be \emph{transverse to $\mathcal D$ at $y\in S\cap Y_1$} if $$T_yY=T_yS\oplus\mathcal D_y.$$ The submanifold $S$ is {\em transverse to $\mathcal D$} if it is transverse to $\mathcal D$ at every $y\in S\cap Y_1$.
\end{definition}

\begin{remark}
Notice that $\mathcal D$ is {\em integrable}, since orbits of the action are its integral submanifolds. More precisely, since $Y_1$ is $G$--invariant, if $y\in Y_1$, the orbit $G(y)$ is contained in $Y_1$. The subspace $\mathcal D_y$ of $T_yY_1$ then coincides with the tangent space to $G(y)$ considered as a submanifold of $Y_1$. From this viewpoint, the above definition of transversality of $S$ to $\mathcal D$ {\em does not} coincides with the definition of transversality of $S$ to $G(y)\subset Y_1$ for all $y\in Y_1$, see Remark~\ref{re:abouttransv}. Indeed, the condition above is {\em stronger} than transversality of $S$ to $G(y)$ for all $y\in Y_1$, since transversality does not require the intersection to be discrete.
\end{remark}

We end this section with an interesting result on continuous actions $\mu$ that have in addition further regularity of $\mu_y$ for some $y\in Y_1$. It indirectly uses stability of transversality and degree theory to obtain an open neighborhood of $y$ by considering the image under the group action of a submanifold transverse to the orbits at $y$. As stated in Remark~\ref{re:weakreg}, such regularity hypotheses are satisfied in the case of $\rho:S^1\times H^1(S^1,M)\to H^1(S^1,M)$ with $Y_1=H^2(S^1,M)$, since for one such $\gamma\in Y_1$, the map $\rho_\gamma$ is $C^1$.

\begin{proposition}\label{prop:tauskehocara}
Let $\mu:G\times Y\to Y$ be a continuous action of a one--dimensional Lie group $G$ and suppose there exists $y\in Y_1$ such that $\mu_y:G\to Y$ is of class $C^1$. If a submanifold $S$ of $Y$ is such that $y\in S$ and $T_yY=T_yS\oplus\mathcal D_y$, then $\mu(G\times S)$ is a neighborhood of $y\in Y$.
\end{proposition}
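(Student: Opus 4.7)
The plan is to reduce the statement to a direct application of Proposition~\ref{prop:dovalorinterm}, after swapping the roles of the group variable and the target point in $Y$. Since $\mu$ is only assumed continuous, the only differentiable ingredient available is the $C^1$ regularity of the orbit map $\mu_y$ at the single point $y$; Proposition~\ref{prop:dovalorinterm} is tailor-made for this situation, as it requires regularity only in the second variable and only at a fixed base point, while the dependence on the first variable is merely continuous.

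First I would dispose of the trivial case $\mathcal D_y=\{0\}$: the transversality hypothesis then forces $T_yY=T_yS$, so (possibly shrinking $S$) $S$ is already open in $Y$ near $y$ and $\mu(G\times S)\supset S$ is a neighborhood of $y$. Hence I may assume $\dim\mathcal D_y=1$, in which case $S$ is, near $y$, a codimension-one submanifold of $Y$ with $T_yY=T_yS\oplus\mathcal D_y$.

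Next I would introduce the auxiliary map
\begin{equation*}
f:Y\times G\la Y,\qquad f(y',g)=\mu(g^{-1},y'),
\end{equation*}
which is continuous, since $\mu$ is continuous and inversion on the Lie group $G$ is smooth. One has $f(y,e)=y\in S$, and the partial map $f(y,\,\cdot\,)=\mu_y\circ\iota$, with $\iota(g)=g^{-1}$, is of class $C^1$ on $G$ because $\mu_y$ is $C^1$ by hypothesis and $\iota$ is smooth. Using $\dd\iota(e)=-\id$ on $\mathfrak g$, the chain rule gives
\begin{equation*}
\tfrac{\partial f}{\partial g}(y,e)=\dd\mu_y(e)\circ\dd\iota(e)=-\dd\mu_y(e),
\end{equation*}
hence $\im\tfrac{\partial f}{\partial g}(y,e)=\im\dd\mu_y(e)=\mathcal D_y$. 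The splitting hypothesis of Proposition~\ref{prop:dovalorinterm} reads then $T_yY=T_yS\oplus\mathcal D_y$, which is exactly the transversality assumption.

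Applying Proposition~\ref{prop:dovalorinterm} with $A=Y$, $X=G$, $a_0=y$ and $x_0=e$ would yield an open neighborhood $U\subset Y$ of $y$ such that for every $y'\in U$ there exists $g\in G$ with $\mu(g^{-1},y')=f(y',g)\in S$; the action axioms then give $y'=\mu(g,\mu(g^{-1},y'))\in\mu(G\times S)$, so that $U\subset\mu(G\times S)$, finishing the proof. The only subtlety I expect is essentially bookkeeping: one must check that Proposition~\ref{prop:dovalorinterm} really applies under the weak regularity at hand, the key observation being that $f$ is jointly continuous on $Y\times G$ while $C^1$ in the group variable only at the single base point $(y,e)$, which is precisely the setting that Proposition~\ref{prop:dovalorinterm} is designed to exploit (by a degree/sign-change argument rather than an inverse function theorem).
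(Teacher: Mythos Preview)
Your proof is correct and follows the same approach as the paper, namely a direct application of Proposition~\ref{prop:dovalorinterm} with the topological parameter space taken as $Y$ and the differentiable variable as $G$. The inversion $g\mapsto g^{-1}$ is unnecessary---using $f(y',g)=\mu(g,y')$ works just as well, since $\mu(g,y')\in S$ already gives $y'=\mu(g^{-1},\mu(g,y'))\in\mu(G\times S)$---but it is harmless.
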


\begin{proof}
Since $G$ is one--dimensional, $\codim_Y S=1$. In addition, from the hypotheses on $\mu$ and $y\in S$, it follows that Proposition~\ref{prop:dovalorinterm} applies, with $f$ being the action $\mu$, the Banach manifold $X$ being $Y$, $x_0=y\in Y$, and the topological space $A$ being $G$, $a_0=e\in G$ the identity. Notice that $$f(a_0,x_0)=\mu(e,y)=y=x_0$$ and $$\im\frac{\partial f}{\partial x}(a_0,x_0)=\im\frac{\partial\mu}{\partial y}(e,y)=\im\dd\mu_y(e)=\mathcal D_y.$$ Thus, from Proposition~\ref{prop:dovalorinterm} there exists an open neighborhood $U$ of $e$ in $G$ such that for all $g\in U$, we have $S\cap\im\mu_g\neq\emptyset$. This means that if $g\in U$, then $g\in\mu(G\times S)$, and hence $\mu(G\times S)$ is a neighborhood of $y\in Y$.
\end{proof}

\begin{remark}
Proposition~\ref{prop:tauskehocara} obviously holds for any finite--dimensional Lie group $G$. The hypothesis that $\dim G=1$ was only used to obtain $\codim_Y S=1$, since $S$ is transverse to the orbits, and then apply Proposition~\ref{prop:dovalorinterm}. Nevertheless, as stated in Remark~\ref{re:codimalta}, by using a topological degree argument, this hypothesis may be replaced by $\codim_Y S=n<+\infty$. Hence, applying this more general version of Proposition~\ref{prop:dovalorinterm} we obtain the same result above for any finite--dimensional Lie groups. Nevertheless, the simpler version of this result given above is already enough for our applications, that will be concerned with $G=S^1$ and its action on $H^1(S^1,M)$.
\end{remark}

\chapter{Geodesic variational problems}
\label{chap35}

In this chapter, we are interested in a classic problem of geometric calculus of variations. In general, problems of geometric calculus of variations are in the interface of nonlinear analysis and differential geometry, studying variational problems that arise in a geometric context. Let us give a brief introduction to the subject, inspired mostly by Jost \cite{jostcv}.

The oldest and most famous geometric variational problem is the geodesic problem. If $(M,g_\mathrm R)$ is a Riemannian manifold and $\gamma:[0,1]\to M$ is a Sobolev $H^1$ curve, we may consider its {\em $g_\mathrm R$--length} and its {\em $g_\mathrm R$--energy}, respectively given by
\begin{equation}\label{eq:grenergylength}
L_\mathrm R(\gamma)=\int_0^1\sqrt {g_{\mathrm R}(\dot\gamma,\dot\gamma)}\;\dd t\;\; \mbox{ and } \;\; E_{\mathrm R}(\gamma)=\tfrac12\int_0^1 g_\mathrm R(\dot\gamma,\dot\gamma)\;\dd t.
\end{equation}
Recall that in Section~\ref{sec:hum}, the set $H^1([0,1],M)$ was endowed with a separable Hilbert manifold structure, see Theorem~\ref{thm:atlasparaH1}. Notice that since $\gamma\in H^1([0,1],M)$, its tangent field is regarded as $\dot\gamma\in\sect^{L^2}(\gamma^*TM)$, and this derivative is only almost everywhere defined, see Proposition~\ref{prop:abscontcharact}. Nevertheless, the integrals above are perfectly well--defined. In fact, Sobolev class $H^1$ is the minimal regularity assumption needed to have enough analytical tools to study the above two functionals.

Let us discuss some relations between these functionals and the respective variational problems. Using the Cauchy--Schwartz inequality for the $L^2$--inner product, see \eqref{eq:productldois}, it is easy to conclude that
\begin{equation*}
L^2_\mathrm R(\gamma)\le 2E_{\mathrm R}(\gamma),
\end{equation*}
with the equality holding if and only if
\begin{equation}\label{eq:gammageod}
g_\mathrm R(\dot\gamma,\dot\gamma)=\mbox{const.}
\end{equation}
Critical points of $E_{\mathrm R}$ are $g_\mathrm R$--geodesics,\footnote{Notice that there is an implicit regularity result, since the functional $E_{\mathrm R}$ is defined for Sobolev $H^1$ curves, and a $g_\mathrm R$--geodesic is a $C^2$ curve. In Proposition~\ref{prop:critgenenfunc}, we prove that if $\gamma\in H^1([0,1],M)$ is a critical point of $E_{\mathrm R}$, then $\gamma\in C^2([0,1],M)$. In fact, if $g_\mathrm R$ is of class $C^k$, it follows from Corollary~\ref{cor:geodck} that $\gamma$ is of class $C^{k+1}$.} in the sense of Definition~\ref{def:geodesic}, see Proposition~\ref{prop:critgenenfunc}. It is easy to see that a critical point of $E_{\mathrm R}$ is a critical point of $L_\mathrm R$ if and only if \eqref{eq:gammageod} holds, and vice versa. Hence critical points of these functionals are geometrically the same.

An important observation however, is that critical points of $E_{\mathrm R}$ are affinely parameterized curves, while this is not necessarily true for $L_\mathrm R$. Recall that by {\em geodesic} we mean an affinely parameterized curve that satisfies the geodesic equation, see Definition~\ref{def:geodesic}.

Our goal is to study geodesic variational problems for {\em semi--Riemannian} geodesics, hence the natural (actually {\em compulsory}) option is considering the energy functional instead of the lenght functional. More precisely, we would to consider the functionals \eqref{eq:grenergylength} replacing $g_\mathrm R$ with a semi--Riemannian metric $g\in\met_\nu^k(M)$. Nevertheless, this can only be done in the energy functional, since the integrand of the length functional is not even well--defined if $g$ is not positive--definite. Thus, a convenient setting for the $g$--geodesic variational problem in our case is to find extrema of the $g$--energy functional defined on Sobolev $H^1$ curves on $M$, $$E_g:H^1([0,1],M)\owns\gamma\longmapsto\tfrac{1}{2}\int_0^1 g(\dot{\gamma},\dot{\gamma}) \;\dd t\in\R.$$ Moreover, since we will be interested in analyzing the set of metrics for which the geodesic variational problem has only (strongly) nondegenerate minimizers, we consider a family of such geodesic variational problems, parameterized by semi--Riemannian metrics.

The adequate abstract structure for the space of parameters in this case is that of a Banach manifold. Consider $\mathcal A_{g_\mathrm A,\nu}$ as in Proposition~\ref{prop:affineworks}. Recall that this is an open subset of an affine Banach space formed by semi--Riemannian metrics of index $\nu$ that are asymptotically equal to an auxiliary metric $g_\mathrm A$ at infinity, see Section~\ref{sec:banachspacetensors}.
We may then define the objects of this parameterized family of geodesic variational problems as follows. Consider the {\em generalized energy functional}
\begin{equation*}
E:\mathcal A_{g_\mathrm A,\nu}\times H^1([0,1],M)\owns (g,\gamma)\longmapsto E_g(\gamma)=\tfrac{1}{2}\int_0^1 g(\dot{\gamma},\dot{\gamma}) \;\dd t\in\R.
\end{equation*}
The first variable of this functional should be thought of as a parameter $g\in\mathcal A_{g_\mathrm A,\nu}$, while the second variable $\gamma\in H^1([0,1],M)$ is the {\em real} variable of which we are interested in finding extrema. In this sense, we will frequently use the notation $E_g:H^1([0,1],M)\to\R$ for the restricted functional $E(g,\cdot\,)$. We will later give a formal definition of this functional and study its regularity, see Definition~\ref{def:energyfunc} and Proposition~\ref{prop:fck}.

Let us briefly remark that the $g$--geodesic variational problem is part of a wide class of variational problems in classical mechanics, namely {\em Hamiltonian variational problems}. In this sense, one can regard geodesics as Hamiltonian flows, since these are solutions of the associated Hamilton-Jacobi equation. In fact, consider the geodesic Hamiltonian on $M$ defined by
\begin{eqnarray*}
H_g: TM^*& \la &\R\\
(x,p)&\longmapsto &\tfrac12g(x)^{-1}(p,p).
\end{eqnarray*}
Since $g$ is a semi--Riemannian metric on $M$, at each $x\in M$ we may use \eqref{ident:bilin} to consider $g(x):T_xM\to T_xM^*$, and its inverse $g(x)^{-1}:T_xM^*\to T_xM$. Thus $g(x)^{-1}$ gives an inner product in the dual space $T_xM^*$, which is used to give the correct formulation of the geodesic Hamiltonian as above. Notice that this is the well--known kinetic Hamiltonian for a particle with unitary mass, where $p$ represents its momentum.

The Hamilton--Jacobi equation for $H_g$ coincides with the geodesic equation \eqref{eq:geodequation} mentioned in Remark~\ref{re:geodequation}. By using this approach, it is possible to intepret geodesics as the trajectories described by particles that are not experiencing any forces. Compare these concepts for instance in $\R^m$ endowed with the Euclidean metric. On the one hand, geodesics in Euclidean space are straight lines. On the other hand, Newton's First Law asserts that a particle moving in a straight line will continue to move in a straight line if it experiences no external forces. The reason for the {\em straight} motion of this particle in $\R^m$ is conservation of momentum, which in the presence of curvature is described in terms of the metric $g$.

In this sense, the geodesic flow of a metric $g\in\met_\nu^k(M)$ is a Hamiltonian flow, see Definition~\ref{def:geodflow}. There are several important properties of this particular flow that distinguishes it from general Hamiltonian flows. For instance, consider the geodesic flow's {\em energy levels} 
\begin{equation}
H_g^{-1}(\zeta)=\{(x,p)\in TM^*: H_g(x,p)=\zeta\}.
\end{equation}
These form a partition of the cotangent bundle $TM^*$, which is {\em well--behaved} in several ways. Some of its properties are no longer valid for more general Hamiltonian flows, and this causes genericity results of nondegeneracy similar to the ones proved in this text to fail for more general classes of Hamiltonian flows, see Chapter~\ref{chap7}. Several important Hamiltonian aspects of the geodesic flow are well studied in the literature, regarding dynamical concepts for instance as being Anosov, or having positive topological entropy. Great contributions in this area were given by several authors, from which we highlight Contreras-Barandiar\'an, Ma\~n\'e and Paternain. For a thorough study of the geodesic flow from this viewpoint we refer to Paternain \cite{paternain}.

\section{GECs}
\label{sec:gecs}

As discussed above, Sobolev $H^1$ is a convenient regularity to develop the basic arguments of calculus of variations in the case of the geodesic variational problem. However, if the domain of the $g$--energy functional was the entire $H^1([0,1],M)$, extrema would trivially be constant curves. Thus we must require further {\em endpoints conditions} on curves, which corresponds to restricting the $g$--energy functional to submanifolds of $H^1([0,1],M)$. The main goal of this section is to establish the most general setting for endpoints condition on curves $\gamma\in H^1([0,1],M)$, analyze the structure of the correspondent submanifolds and discuss a few examples.

Let us start with a simple example in which curves have fixed endpoints.

\begin{definition}
A {\em fixed endpoints}\index{Fixed endpoints condition}\index{GEC!fixed endpoints} condition on $M$ is a fixed pair of points $(p,q)\in M\times M$. The correspondent restraint on a curve $\gamma\in H^1([0,1],M)$ is $\gamma(0)=p$ and $\gamma(1)=q$, see Figure \ref{fig:omegapq}. In this context, the domain of curves that satisfy such endpoints condition is
\begin{equation}\label{eq:opq}
\Omega_{p,q}(M)=\{\gamma\in H^1([0,1],M):\gamma(0)=p,\gamma(1)=q\}.
\end{equation}
\end{definition}

\begin{figure}[htf]
\begin{center}
\vspace{-0.5cm}
\includegraphics[scale=1]{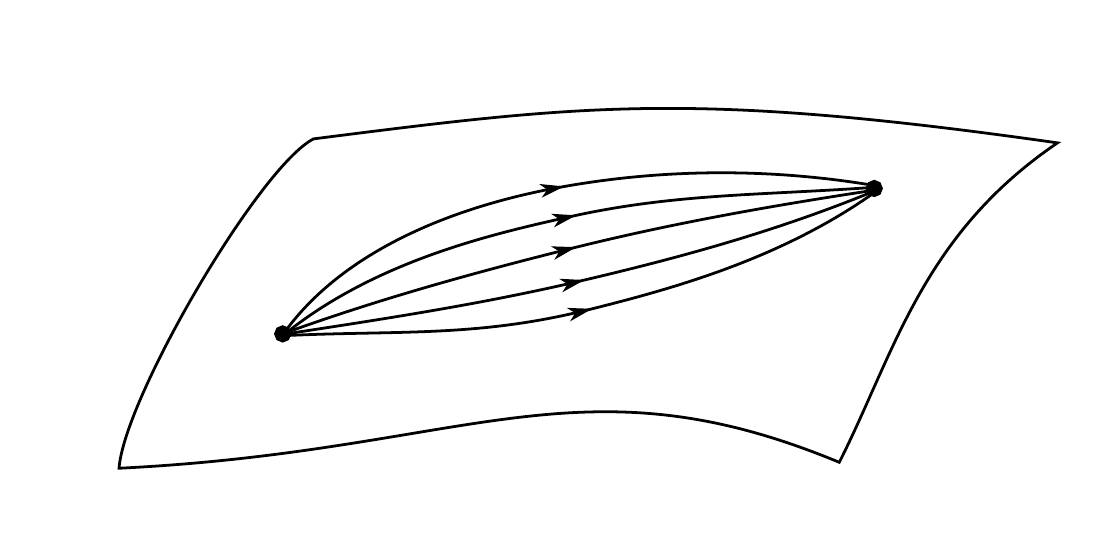}
\begin{pgfpicture}
\pgfputat{\pgfxy(-9,2.2)}{\pgfbox[center,center]{$p$}}
\pgfputat{\pgfxy(-2,3.8)}{\pgfbox[center,center]{$q$}}
\pgfputat{\pgfxy(-1.8,2)}{\pgfbox[center,center]{$M$}}
\end{pgfpicture}
\vspace{-0.75cm}
\caption{Some curves $\gamma\in\Omega_{p,q}(M)$.}\label{fig:omegapq}
\end{center}
\end{figure}

\begin{lemma}\label{le:opqsmnfld}
The subset $\Omega_{p,q}(M)$ is a (smooth) separable submanifold of $H^1([0,1],M)$, whose tangent space at $\gamma$ is given by
\begin{equation}\label{eq:tgammaopq}
T_\gamma\Omega_{p,q}(M)=\{v\in\sect^{H^1}(\gamma^*TM):v(0)=0, v(1)=0\}.
\end{equation}
\end{lemma}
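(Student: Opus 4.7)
The plan is to recognize $\Omega_{p,q}(M)$ as the preimage of a single point under the endpoints map, and then apply the regular value theorem (Proposition~\ref{prop:regularvalue}) in its Hilbert version (Remark~\ref{re:regularvaluehilbert}). Concretely, I would start from the endpoints map
\[
\ev_{01}:H^1([0,1],M)\la M\times M,\qquad \ev_{01}(\gamma)=(\gamma(0),\gamma(1)),
\]
and recall from Proposition~\ref{prop:endpointsmap} that $\ev_{01}$ is a smooth submersion, with derivative at $\gamma$ given by \eqref{eq:dev01}, namely $\dd(\ev_{01})(\gamma)v=(v(0),v(1))$ under the identification $T_\gamma H^1([0,1],M)\cong\sect^{H^1}(\gamma^*TM)$ of Remark~\ref{re:TH1H1T}.

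Since every submersion has every point of its codomain as a regular value, the pair $(p,q)\in M\times M$ is a regular value of $\ev_{01}$, and hence by Proposition~\ref{prop:regularvalue} (in the Hilbert manifold version of Remark~\ref{re:regularvaluehilbert}) the preimage
\[
\Omega_{p,q}(M)=\ev_{01}^{-1}(\{(p,q)\})
\]
is a smooth embedded Hilbert submanifold of $H^1([0,1],M)$. Moreover, the same proposition identifies the tangent space at $\gamma\in\Omega_{p,q}(M)$ with $\ker\dd(\ev_{01})(\gamma)$; plugging in the explicit formula \eqref{eq:dev01} for this differential gives precisely
\[
T_\gamma\Omega_{p,q}(M)=\{v\in\sect^{H^1}(\gamma^*TM):v(0)=0,\;v(1)=0\},
\]
which is \eqref{eq:tgammaopq}.

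It remains only to record separability. By Corollary~\ref{cor:h1separable}, the ambient Hilbert manifold $H^1([0,1],M)$ is separable, hence second countable and metrizable; since separability is hereditary in metric spaces, the submanifold $\Omega_{p,q}(M)$ inherits it. I do not expect any genuine obstacle here: the content of the statement is entirely packaged in the two results Proposition~\ref{prop:endpointsmap} and Proposition~\ref{prop:regularvalue} that have already been established, so the proof is essentially a one-line application of the regular value theorem to a submersion whose derivative is explicitly known.
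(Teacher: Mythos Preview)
Your proposal is correct and follows essentially the same approach as the paper: apply Proposition~\ref{prop:regularvalue} (via Remark~\ref{re:regularvaluehilbert}) to the submersion $\ev_{01}$ of Proposition~\ref{prop:endpointsmap}, read off the tangent space from \eqref{eq:dev01}, and inherit separability from Corollary~\ref{cor:h1separable}.
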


\begin{proof}
Consider the endpoints map \eqref{eq:ev01},
\begin{eqnarray*}
\ev_{01}=(\ev_0,\ev_1):H^1([0,1],M) &\la& M\times M \\
\gamma &\longmapsto &(\gamma(0),\gamma(1)).
\end{eqnarray*}
From Proposition~\ref{prop:endpointsmap}, this is a smooth submersion. In particular, $(p,q)\in M\times M$ is a regular value, see Definitions~\ref{def:regular} and~\ref{def:submersion}. From Proposition~\ref{prop:regularvalue} and Remark~\ref{re:regularvaluehilbert}, it follows that $\Omega_{p,q}(M)$ is a (smooth) Hilbert submanifold of $H^1([0,1],M)$.

It also follows from Proposition~\ref{prop:regularvalue} that the tangent space to $\Omega_{p,q}(M)$ at $\gamma$ is given by the complemented subspace $\ker\dd(\ev_{01})(\gamma)$. Moreover, from Remarks~\ref{re:submnflds} and~\ref{re:TH1H1T}, $T_\gamma\Omega_{p,q}(M)$ is a Hilbert subspace of $\sect^{H^1}(\gamma^*TM)$. From \eqref{eq:dev01} it is clear that $v\in\ker\dd(\ev_{01})(\gamma)$ if and only if $v(0)=0$ and $v(1)=0$. Therefore formula \eqref{eq:tgammaopq} holds.

Finally, regarding separability of $\Omega_{p,q}(M)$, Corollary~\ref{cor:h1separable} guarantees that $H^1([0,1],M)$ is separable. Since these are metric spaces (see Remarks~\ref{re:infinitedimmetricspace} and~\ref{re:metricinfinitesmfld} and Proposition~\ref{prop:RiemannH1}), separability is equivalent to second--countability, which is a hereditary property. Therefore, the submanifold $\Omega_{p,q}(M)$ is separable.
\end{proof}

\begin{corollary}\label{cor:opqsmnfld}
The Hilbert manifold of curves $\Omega_{p,q}(M)$ can be endowed with the Riemannian metric
\begin{equation}\label{eq:metricopqm}
\llangle v,w\rrangle =\int_0^1 g_\mathrm{R}(\D^\mathrm Rv,\D^\mathrm Rw)\;\dd t, \quad v,w\in T_\gamma\Omega_{p,q}(M)
\end{equation}
where $\D^\mathrm R:\sect^{H^1}(\gamma^*TM)\to\sect^{L^2}(\gamma^*TM)$ is the covariant derivative operator along $\gamma$ induced by the fixed Riemannian metric $g_\mathrm R$.
\end{corollary}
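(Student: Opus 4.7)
The plan is to obtain \eqref{eq:metricopqm} as the restriction to $\Omega_{p,q}(M)$ of the ambient Riemann--Hilbert metric constructed in Proposition~\ref{prop:RiemannH1} on $H^1([0,1],M)$. Since Lemma~\ref{le:opqsmnfld} establishes that $\Omega_{p,q}(M)$ is a (smooth, separable) Hilbert submanifold of $H^1([0,1],M)$, Remark~\ref{re:metricinfinitesmfld} ensures that any Riemannian metric on the ambient Hilbert manifold restricts to a bona fide Riemannian metric on the submanifold, in the sense of Definition~\ref{def:metricinfinitemnfld}.

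First I would recall that Proposition~\ref{prop:RiemannH1} endows $H^1([0,1],M)$ with the smoothly varying family of Hilbert inner products
\begin{equation*}
\llangle v,w\rrangle_\gamma = g_\mathrm R(v(0),w(0)) + \int_0^1 g_\mathrm R(\D^\mathrm R v,\D^\mathrm R w)\;\dd t.
\end{equation*}
Next I would invoke Lemma~\ref{le:opqsmnfld}, more precisely the explicit description \eqref{eq:tgammaopq} of $T_\gamma\Omega_{p,q}(M)$: any tangent vector $v\in T_\gamma\Omega_{p,q}(M)$ is a vector field of Sobolev class $H^1$ along $\gamma$ satisfying $v(0)=0$ and $v(1)=0$. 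Therefore the boundary term $g_\mathrm R(v(0),w(0))$ in the ambient inner product vanishes identically on $T_\gamma\Omega_{p,q}(M)$, so the pullback of $\llangle\cdot,\cdot\rrangle_\gamma$ to the tangent space of the submanifold is exactly the expression \eqref{eq:metricopqm}.

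It then only remains to check that, at every $\gamma\in\Omega_{p,q}(M)$, the symmetric bilinear form \eqref{eq:metricopqm} is a Hilbert space inner product on $T_\gamma\Omega_{p,q}(M)$, i.e., that it is positive definite and induces a complete topology. Symmetry and bilinearity are immediate. For positive definiteness, if $\llangle v,v\rrangle = 0$, then $\D^\mathrm R v=0$ almost everywhere, so $v$ is $g_\mathrm R$--parallel along $\gamma$ (cf.\ Remark~\ref{re:covderh1} and Proposition~\ref{prop:paralleltransport}); combined with the boundary condition $v(0)=0$, uniqueness of parallel transport forces $v\equiv 0$. The norm induced by \eqref{eq:metricopqm} is equivalent to the restriction of the ambient $H^1$--norm on $T_\gamma\Omega_{p,q}(M)$ precisely because the boundary term $g_\mathrm R(v(0),w(0))$ is zero there; hence completeness is inherited from the ambient Hilbert inner product via the isometric identification.

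The only subtle point I anticipate is justifying that the resulting family of inner products varies smoothly with $\gamma$ in the sense of Definition~\ref{def:metricinfinitemnfld}; this however follows directly from the smoothness of the ambient metric of Proposition~\ref{prop:RiemannH1} together with the fact that the inclusion $\Omega_{p,q}(M)\hookrightarrow H^1([0,1],M)$ is a smooth embedding (Lemma~\ref{le:opqsmnfld} and Remark~\ref{re:submnflds}), so that the pulled--back section of $T\Omega_{p,q}(M)^*\vee T\Omega_{p,q}(M)^*$ is automatically smooth.
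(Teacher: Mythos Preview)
Your proposal is correct and follows essentially the same approach as the paper: restrict the ambient Riemann--Hilbert metric of Proposition~\ref{prop:RiemannH1} to the submanifold $\Omega_{p,q}(M)$ via Remark~\ref{re:metricinfinitesmfld} and Lemma~\ref{le:opqsmnfld}, and observe that the boundary term $g_\mathrm R(v(0),w(0))$ vanishes because $v(0)=0$. Your additional verifications of positive definiteness, completeness, and smooth dependence on $\gamma$ are redundant (they are already packaged in the conclusion of Remark~\ref{re:metricinfinitesmfld}), but they are correct and do no harm.
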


\begin{proof}
This is an immediate consequence of Lemma~\ref{le:opqsmnfld}, Proposition~\ref{prop:RiemannH1} and Remark~\ref{re:metricinfinitesmfld}. Notice that from Proposition~\ref{prop:RiemannH1}, the Riemannian metric on $\Omega_{p,q}(M)$ is given by the restriction of \eqref{eq:RiemannH1} to $T_\gamma\Omega_{p,q}(M)$. Hence, this restricted metric is given by formula \eqref{eq:metricopqm}, once the term $g_\mathrm R(v(0),w(0))$ obviously vanishes since for all $v\in T_\gamma\Omega_{p,q}(M)$, from \eqref{eq:tgammaopq}, $v(0)=0$ and $v(1)=0$.
\end{proof}

The above results guarantee that $\Omega_{p,q}(M)$ is a sufficiently regular domain for developing calculus of variations. Thus, it would be possible to continue and study extrema of the functional $E_g:\Omega_{p,q}(M)\to\R$, which are be geodesics joining $p$ and $q$. Nevertheless, we would like to consider {\em more general} endpoints conditions.

Several attempts to generalize this fixed endpoints condition are possible, for instance instead of fixing two points $p,q\in M$, fix two submanifolds $P,Q\subset M$, and allow $\gamma(0)\in P$ and $\gamma(1)\in Q$, as in Figure \ref{fig:omegapzaoqzao}. To our knowledge, the most comprehensive generalization is considering a {\em submanifold} $\p\subset M\times M$. The correspondent endpoints condition for curves $\gamma$ is given by $(\gamma(0),\gamma(1))\in\p.$ This makes arbitrary choices of endpoints conditions possible.
 
%

\begin{figure}[htf]
\begin{center}
\vspace{-0.75cm}
\includegraphics[scale=1]{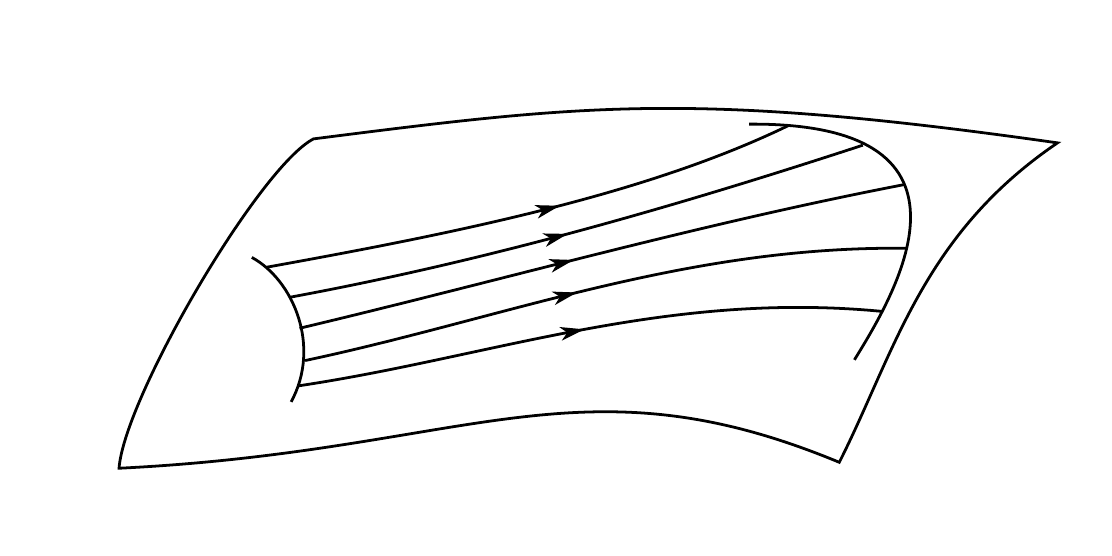}
\begin{pgfpicture}
\pgfputat{\pgfxy(-8.8,2.2)}{\pgfbox[center,center]{$P$}}
\pgfputat{\pgfxy(-2,3.8)}{\pgfbox[center,center]{$Q$}}
\pgfputat{\pgfxy(-1.8,2)}{\pgfbox[center,center]{$M$}}
\end{pgfpicture}
\vspace{-0.5cm}
\caption{Some curves joining the submanifolds $P$ and $Q$.}\label{fig:omegapzaoqzao}
\end{center}
\end{figure}

\begin{definition}\label{def:gec}
A \emph{general endpoints condition}\index{GEC} on $M$ (or simply {\em GEC}) is a submanifold $\p\subset M\times M$.
\end{definition}


The subset of curves that satisfy a GEC $\p$ will be denoted
\begin{equation}\label{opm}
\op(M)=\{\gamma\in H^1([0,1],M):(\gamma(0),\gamma(1))\in\p\}.
\end{equation}
We now develop a result totally analogous to Lemma~\ref{le:opqsmnfld} and Corollary~\ref{cor:opqsmnfld}, replacing \eqref{eq:opq} with \eqref{opm}. The proof of this result will be given in more details then Lemma~\ref{le:opqsmnfld}, and is obviously an extension of such result to GECs.

\begin{proposition}\label{prop:opmsubmfld}
The subset $\op(M)$ is a separable Hilbert submanifold of $H^1([0,1],M)$. Moreover, the tangent space to $\op(M)$ at $\gamma$ is given by
\begin{equation}\label{eq:tgammaopm}
T_\gamma\op(M)=\{v\in\sect^{H^1}(\gamma^*TM):(v(0),v(1))\in T_{(\gamma(0),\gamma(1))}\p\},
\end{equation}
see Figure \ref{fig:tomegap}, and can be endowed with the inner product induced from \eqref{eq:RiemannH1},
\begin{equation}\label{eq:riemhilbop}
\llangle v,w\rrangle =g_\mathrm R(v(0),w(0))+\int_0^1 g_\mathrm{R}(\D^{\mathrm{R}}v,\D^{\mathrm{R}}w)\;\dd t.
\end{equation}
\end{proposition}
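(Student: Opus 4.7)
The plan is to mirror the proof of Lemma~\ref{le:opqsmnfld}, replacing the regular value $(p,q)$ with the submanifold $\p$ and invoking the preimage-of-a-submanifold theorem (Proposition~\ref{prop:transvsubmnfld}) in place of the regular value theorem. First I would use Proposition~\ref{prop:endpointsmap}, which says that the endpoints map
\begin{equation*}
\ev_{01}=(\ev_0,\ev_1):H^1([0,1],M)\la M\times M
\end{equation*}
is a smooth submersion, together with the identity $\op(M)=\ev_{01}^{-1}(\p)$. Since $\ev_{01}$ is a submersion, $\dd(\ev_{01})(\gamma)$ is surjective for every $\gamma$, and by Remark~\ref{re:transversalityeq} this automatically implies that $\ev_{01}$ is transverse to the submanifold $\p\subset M\times M$ in the sense of Definition~\ref{def:transversality}. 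By Proposition~\ref{prop:transvsubmnfld} (with the Hilbert version noted in Remark~\ref{re:transvsubmnfldhilbert}), the preimage $\op(M)=\ev_{01}^{-1}(\p)$ is then a smooth Hilbert submanifold of $H^1([0,1],M)$.

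Next, I would identify the tangent space. Proposition~\ref{prop:transvsubmnfld} gives
\begin{equation*}
T_\gamma\op(M)=\dd(\ev_{01})(\gamma)^{-1}\bigl[T_{(\gamma(0),\gamma(1))}\p\bigr],
\end{equation*}
and combining this with the explicit formula \eqref{eq:dev01} for $\dd(\ev_{01})(\gamma)$, together with the identification $T_\gamma H^1([0,1],M)\cong\sect^{H^1}(\gamma^*TM)$ from Remark~\ref{re:TH1H1T}, yields exactly \eqref{eq:tgammaopm}. Separability of $\op(M)$ then follows from Corollary~\ref{cor:h1separable} by the standard fact that second-countability is hereditary and that $H^1([0,1],M)$ is metrizable (having a Riemannian metric by Proposition~\ref{prop:RiemannH1}). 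Finally, the Riemann--Hilbert structure on $\op(M)$ is obtained by restricting \eqref{eq:RiemannH1} to the submanifold, as allowed by Remark~\ref{re:metricinfinitesmfld}; since the defining expression \eqref{eq:RiemannH1} has the form $g_\mathrm R(v(a),w(a))+\int g_\mathrm R(\D^\mathrm R v,\D^\mathrm R w)\,\dd t$, the restriction is precisely \eqref{eq:riemhilbop}.

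There is no real obstacle here: the heart of the argument is simply that $\ev_{01}$ is a submersion, so every submanifold of $M\times M$ pulls back to a submanifold, a fact which abstracts away from the earlier fixed-endpoints case treated in Lemma~\ref{le:opqsmnfld}. The only minor point worth being careful about is ensuring that one uses the Hilbert-manifold version of transversality (Remark~\ref{re:transvsubmnfldhilbert}) so that complementability of the tangent space is automatic, and noting that the formula \eqref{eq:riemhilbop} for the restricted metric differs from \eqref{eq:metricopqm} in the fixed-endpoints case precisely by the retained boundary term $g_\mathrm R(v(0),w(0))$, which no longer vanishes because tangent vectors in $T_\gamma\op(M)$ need not vanish at the endpoints.
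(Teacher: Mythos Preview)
Your proposal is correct and follows essentially the same approach as the paper's own proof: both use that $\ev_{01}$ is a submersion (Proposition~\ref{prop:endpointsmap}), hence transverse to $\p$, apply Proposition~\ref{prop:transvsubmnfld} to obtain the submanifold structure and the tangent space formula, derive separability from Corollary~\ref{cor:h1separable} via heredity of second-countability, and restrict the Riemannian metric \eqref{eq:RiemannH1} to the submanifold.
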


\begin{proof}
Consider again the endpoints map \eqref{eq:ev01},
\begin{eqnarray*}
\ev_{01}=(\ev_0,\ev_1):H^1([0,1],M) &\la& M\times M\\
\gamma &\longmapsto& (\gamma(0),\gamma(1)).
\end{eqnarray*}
From Proposition~\ref{prop:endpointsmap}, this is a smooth submersion. In particular, $\ev_{01}$ is transverse to $\p$, see Remark~\ref{re:transversalityeq} and Definition~\ref{def:transversality}. From Proposition~\ref{prop:transvsubmnfld}, $$\op(M)=\ev_{01}^{-1}(\p)$$ is a (smooth) submanifold of $H^1([0,1],M)$.

It also follows from Proposition~\ref{prop:transvsubmnfld} that the tangent space to $\op(M)$ at $\gamma$ is the Hilbertable subspace of $T_\gamma H^1([0,1],M)$ given by
\begin{equation}\label{eq:tgammaopm1}
T_\gamma\op(M)=\dd(\ev_{01})(\gamma)^{-1}\big[T_{(\gamma(0),\gamma(1))}\p\big].
\end{equation}
Moreover, from Remarks~\ref{re:submnflds} and~\ref{re:TH1H1T}, $T_\gamma\op(M)$ is a Hilbert subspace of $\sect^{H^1}(\gamma^*TM)$, and formula \eqref{eq:tgammaopm} is an immediate consequence of \eqref{eq:dev01} and \eqref{eq:tgammaopm1}.

Regarding separability of $\op(M)$, Corollary~\ref{cor:h1separable} guarantees that the ambient manifold $H^1([0,1],M)$ is separable. Since these are metrizable spaces (see Remarks~\ref{re:infinitedimmetricspace} and~\ref{re:metricinfinitesmfld} and Proposition~\ref{prop:RiemannH1}), separability is equivalent to second--countability, which is a hereditary property. Therefore, the submanifold $\op(M)$ is separable.

Finally, the Riemannian metric \eqref{eq:RiemannH1} on $H^1([0,1],M)$ that was described in Proposition~\ref{prop:RiemannH1} can be restricted to $\op(M)$, see Remarks~\ref{re:metricinfinitesmfld} and~\ref{re:covderh1}, resulting in formula \eqref{eq:riemhilbop} at each $\gamma\in\op(M)$, and this concludes the proof.
\end{proof}

\begin{figure}[htf]
\begin{center}
\vspace{-0.75cm}
\includegraphics[scale=1]{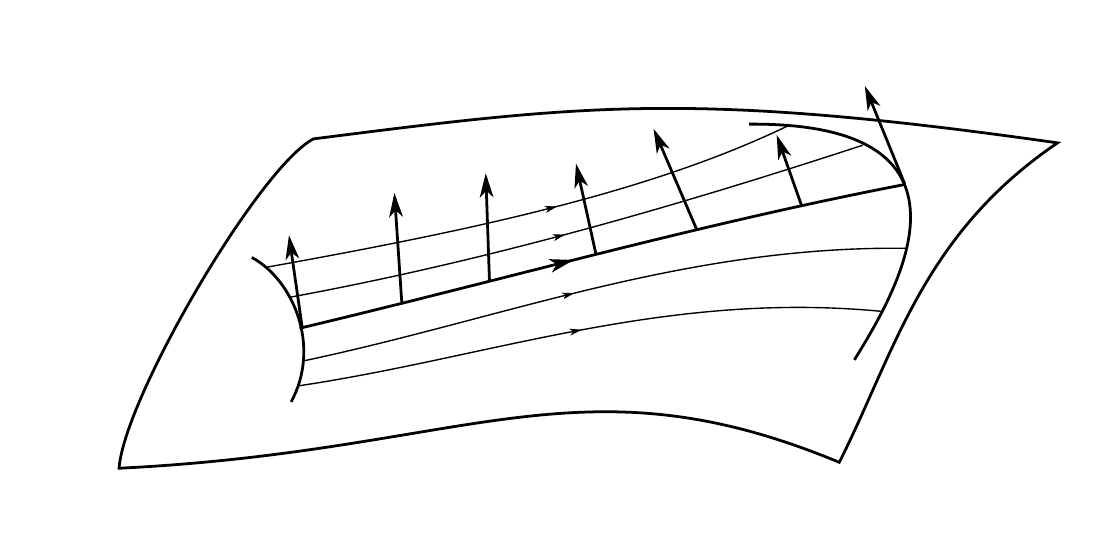}
\begin{pgfpicture}
\pgfputat{\pgfxy(-8.8,2.2)}{\pgfbox[center,center]{$P$}}
\pgfputat{\pgfxy(-1.9,3.6)}{\pgfbox[center,center]{$Q$}}
\pgfputat{\pgfxy(-1.8,2)}{\pgfbox[center,center]{$M$}}
\pgfputat{\pgfxy(-5,2.9)}{\pgfbox[center,center]{$\gamma$}}
\pgfputat{\pgfxy(-3.8,4)}{\pgfbox[center,center]{$v$}}
\end{pgfpicture}
\vspace{-0.5cm}
\caption{A vector $v\in T_\gamma\op(M)$ represented as a vector field along $\gamma$.}\label{fig:tomegap}
\end{center}
\end{figure}

\begin{example}\label{ex:gecs}
The fixed endpoints condition $\p=\{p\}\times\{q\}$ is a GEC. Notice that setting $\p=\{p\}\times\{q\}$, Proposition~\ref{prop:opmsubmfld} coincides with Lemma~\ref{le:opqsmnfld}. This fixed endoints condition is illustrated in Figure~\ref{fig:omegapq}. In particular, \eqref{eq:tgammaopm} coincides with \eqref{eq:tgammaopq} for such $\p$. In other words, as expected, the tangent space $T_\gamma\op(M)$ is formed by Sobolev class $H^1$ sections $v$ of $\gamma^*TM$ such that $v(0)=0$ and $v(1)=0$, as described in Lemma~\ref{le:opqsmnfld} by formula \eqref{eq:tgammaopq}.

\begin{figure}[htf]
\begin{center}
\vspace{-0.75cm}
\includegraphics[scale=1]{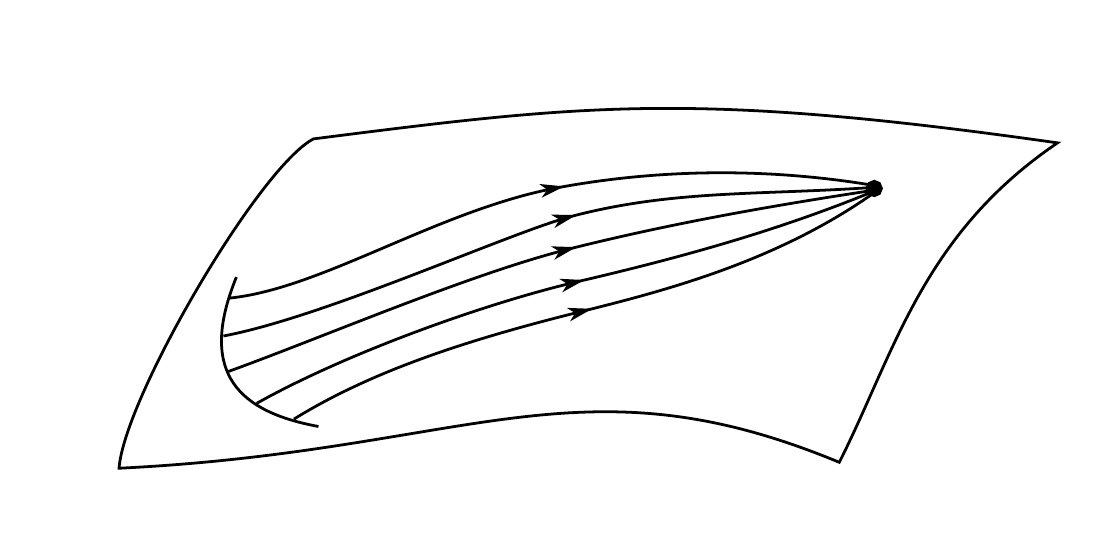}
\begin{pgfpicture}
\pgfputat{\pgfxy(-9.5,1.7)}{\pgfbox[center,center]{$P$}}
\pgfputat{\pgfxy(-2.1,3.6)}{\pgfbox[center,center]{$q$}}
\pgfputat{\pgfxy(-1.8,2)}{\pgfbox[center,center]{$M$}}
\end{pgfpicture}
\vspace{-0.5cm}
\end{center}
\end{figure}

Another interesting example of GEC is $\p=P\times Q$, where $P$ and $Q$ are submanifolds of $M$, as illustrated in Figure~\ref{fig:omegapzaoqzao}. The curves $\gamma\in\op(M)$ satisfy $\gamma(0)\in P$ and $\gamma(1)\in Q$, and the condition on the sections $v$ of $\gamma^*TM$ that form the tangent space to $\op(M)$ at $\gamma$ is, as expected, $v(0)\in T_{\gamma(0)}P$ and $v(1)\in T_{\gamma(1)}Q$. This follows at once from Proposition~\ref{prop:opmsubmfld} by formula \eqref{eq:tgammaopm}. Notice also that we could also consider $\p=P\times\{q\}$, replacing the submanifold $Q$ with a point $Q=\{q\}$, as illustrated above. Analogous results on the endpoints conditions for curves and tangent spaces are easily verified.

As a last example of GEC, consider the case of {\em periodic} curves on $M$, given by the diagonal\footnote{Here $\Delta\subset M\times M$ is the diagonal of the product manifold $M\times M$, however in the sequel we will be somewhat sloppy about the use of the symbol $\Delta$. It will denote the diagonal not only of $M\times M$, but also of any product space, for instance $\Delta$'s own tangent space, which is the diagonal $\Delta\subset T_xM\oplus T_xM$. There is no ambiguity, since it will always be clear from the context which diagonal is being considered.} $$\Delta=\{(p,p):p\in M\}$$ Curves $\gamma\in\Omega_\Delta(M)$ satisfy $\gamma(0)=\gamma(1)$, and the condition on the sections $v$ of $\gamma^*TM$ that form the tangent space to $\Omega_\Delta(M)$ at $\gamma$ is $v(0)=v(1)$. Recall that we had already proved that $\Omega_\Delta(M)$ is a submanifold, and identified it with $H^1(S^1,M)$, see Corollary~\ref{cor:h1s1}.
\end{example}

\begin{remark}\label{re:transpose}
Note that the \emph{transpose}\index{GEC!transpose} of a GEC $\p$, defined by \begin{equation}\label{eq:transpose}\p^t=\{(p,q)\in M\times M:(q,p)\in\p\},\end{equation} is also a GEC, and the manifolds $\op(M)$ and $\Omega_{\p^t}(M)$ can be canonically identified using the diffeomorphism given by backwards reparametrization of curves, see Figure~\ref{fig:ptranspose}. Hence solutions of the geodesic variational problems with endpoints conditions $\p$ and $\p^t$ are also obviously identified. Due to such symmetry, every result stated for some GEC $\p$ is also automatically valid for its transpose $\p^t$.
\end{remark}

\begin{figure}[htf]
\begin{center}
\vspace{-0.3cm}
\includegraphics[scale=1]{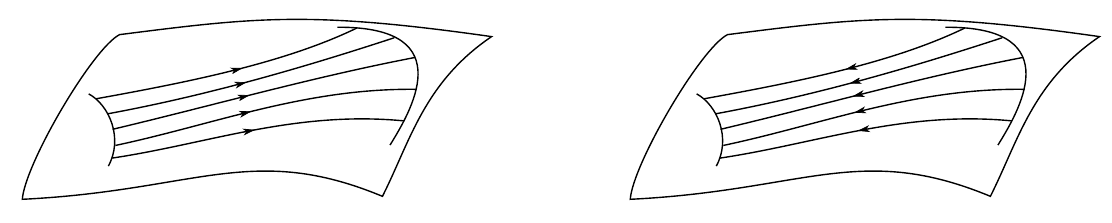}
\begin{pgfpicture}
\pgfputat{\pgfxy(-9,0)}{\pgfbox[center,center]{$\p$}}
\pgfputat{\pgfxy(-2.5,0)}{\pgfbox[center,center]{$\p^t$}}
\end{pgfpicture}
\caption{Curves satisfying a GEC $\p$ and its transpose $\p^t$.}\label{fig:ptranspose}
\end{center}
\end{figure}

\section{Generalized energy functional}
\label{sec:genenfunc}

In this section, we study a generalized energy functional for the geodesic variational problem under general endpoints conditions. This energy functional has a parameter $g$, which is the metric used to compute the energy of Sobolev $H^1$ curves on $M$ that satisfy a GEC.

\begin{definition}\label{def:energyfunc}\index{Energy!generalized functional}\index{Generalized energy functional}
Consider $\mathds E$ a separable $C^k$ Whitney type Banach space of sections of $E=TM^*\vee TM^*$ that tend to zero at infinity, $g_\mathrm A\in\met_\nu^k(M)$ a metric that satisfies \eqref{eq:boundedfromzero} and $\A_{g_\mathrm A,\nu}=(g_\mathrm A+\mathds E)\cap\met_\nu^k(M).$ Furthermore, let $\p$ be a GEC on $M$ and consider the separable Hilbert manifold $\op(M)$. The {\em generalized energy functional} for $M$ is defined by
\begin{equation}\label{eq:efunct}
E:\A_{g_\mathrm A,\nu}\times\op(M)\owns (g,\gamma)\longmapsto E_g(\gamma)=\tfrac{1}{2}\int_0^1 g(\dot{\gamma},\dot{\gamma}) \;\dd t\in\R.
\end{equation} 
\end{definition}

\begin{remark}
From Propositions~\ref{prop:affineworks} and~\ref{prop:opmsubmfld}, the domain $\A_{g_\mathrm A,\nu,\p}\times\op(M)$ is an open subset of the product $(g_\mathrm A+\mathds E)\times \op(M)$. 
\end{remark}

\begin{proposition}\label{prop:fck}
The generalized energy functional $E$ given by \eqref{eq:efunct} is of class $C^k$. More precisely, it is smooth with respect to the first variable $g\in\A_{g_\mathrm A,\nu}$ and $C^k$ with respect to the second variable\footnote{This $C^k$ regularity clearly comes from the regularity $C^k$ chosen for the metrics $g\in\A_{g_\mathrm A,\nu}$. For our applications to be possible, we will henceforth implicitly suppose $k\geq3$.} $\gamma\in\op(M)$.
\end{proposition}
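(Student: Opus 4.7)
The plan is to reduce the statement to a local coordinate computation, establish the two partial regularities separately by exhibiting $E$ as a composition of elementary maps whose regularity has already been worked out in the preceding chapters, and then upgrade to joint $C^k$ regularity by exploiting the bilinear structure of $E$ in the pair $(g-g_\mathrm A,\gamma)$.

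First, I would fix an arbitrary $(g_0,\gamma_0)\in\A_{g_\mathrm A,\nu}\times\op(M)$ and, by Proposition~\ref{prop:existVBN}, pick a one--parameter family of charts $\varphi=(\varphi_t,U_t)$ on $M$ whose total domain $U\subset\R\times M$ contains the graph of $\gamma_0$. The chart $\curves\varphi:\curves U\to H^1([0,1],\R^m)$ from Corollary~\ref{cor:atlash1} restricts to a submanifold chart of $\op(M)$ around $\gamma_0$ (Proposition~\ref{prop:opmsubmfld}), and the metric $g$ is represented, in a neighborhood of the image of $\gamma_0$, by the symmetric--matrix--valued field $\tilde g(t,x)_{ij}=g_{\varphi_t^{-1}(x)}\bigl((\dd\varphi_t)^{-1}e_i,(\dd\varphi_t)^{-1}e_j\bigr)$. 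Writing $\hat\gamma=\curves\varphi(\gamma)$, the local expression of \eqref{eq:efunct} is
\begin{equation}\label{eq:Elocal}
E(g,\gamma)=\tfrac12\int_0^1\tilde g\bigl(t,\hat\gamma(t)\bigr)\bigl(\dot{\hat\gamma}(t),\dot{\hat\gamma}(t)\bigr)\,\dd t.
\end{equation}
Because $\mathds E$ is a $C^k$ Whitney type Banach space of sections (Definition~\ref{def:ckwhitbanachspace}) and the inclusion $i:\mathds E\hookrightarrow\sect_b^k(E)$ is bounded, the assignment $g\mapsto\tilde g$ is a bounded \emph{affine} map from $\A_{g_\mathrm A,\nu}\subset g_\mathrm A+\mathds E$ into a Banach space of $C^k$ symmetric--matrix--valued maps on $\fibr\varphi(U)$.

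Second, I would treat each slot separately. For fixed $\gamma$, formula \eqref{eq:Elocal} displays $g\mapsto E(g,\gamma)$ as the restriction of an affine bounded functional on $g_\mathrm A+\mathds E$, hence $C^\infty$. For fixed $g$, I would factor $\gamma\mapsto E(g,\gamma)$ as
\[
\curves U\xrightarrow{(\mathrm{id},\dd)}\curves U\times L^2([0,1],\R^m)\xrightarrow{(\Phi_{\tilde g},\mathrm{id})}C^0([0,1],\mathrm{Sym}(\R^m))\times L^2([0,1],\R^m)\xrightarrow{B}\R,
\]
where $\Phi_{\tilde g}(\hat\gamma)(t)=\tilde g(t,\hat\gamma(t))$ is the superposition operator, $\dd:H^1\to L^2$ is the continuous linear derivation, and $B(T,v)=\tfrac12\int_0^1T(t)(v(t),v(t))\,\dd t$. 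The map $B$ is continuous trilinear (continuity being a direct application of the Cauchy--Schwarz bound of Example~\ref{thm:multC0L2}), hence $C^\infty$. The operator $\Phi_{\tilde g}$, when its target is relaxed from $H^1$ to $C^0$, is a superposition operator induced by a $C^k$ integrand and is therefore of class $C^k$; this is obtained by adapting the proof of Theorem~\ref{thm:curvesalphasmooth} with the continuous inclusion $H^1\hookrightarrow C^0$ (Proposition~\ref{prop:inclusions}) absorbing one derivative of loss, so that no regularity is shed. Composing these three $C^k$ factors yields the $C^k$ regularity of $E_g$.

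Third, to establish joint $C^k$ regularity I would split
\[
E(g,\gamma)=E(g_\mathrm A,\gamma)+F(h,\gamma),\qquad h=g-g_\mathrm A\in\mathds E,\qquad F(h,\gamma)=\tfrac12\int_0^1 h(\dot\gamma,\dot\gamma)\,\dd t.
\]
The first summand depends only on $\gamma$ and is $C^k$ by the previous step, while the second is \emph{linear} in $h$ and $C^k$ in $\gamma$. I would then apply the weak differentiation principle (Lemma~\ref{le:weakdiffprinc}) to the bilinear--nonlinear functional $F$: the directional derivatives in both variables are manifestly continuous on $\mathds E\times\op(M)$ (linearity in $h$ making the $h$-derivative a continuous linear functional depending continuously on $\gamma$, and the $\gamma$-derivative depending linearly on $h$ and continuously on $\gamma$), and they assemble into a continuous map into the space of bounded operators, yielding joint $C^k$ regularity. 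The main obstacle is precisely this last step: showing that the superposition operator $\Phi_{\tilde h}$ depends \emph{jointly continuously} (in fact $C^k$) on $(h,\hat\gamma)$ when the output is taken in $C^0$. This requires uniform $C^k$-control of $\tilde h$ by the $\mathds E$-norm of $h$, which is guaranteed by condition (i) of Definition~\ref{def:ckwhitbanachspace}; with this control in hand, the standard estimates for the $k$-th differential of a superposition operator transfer to a joint estimate, closing the argument.
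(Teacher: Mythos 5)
Your proof is essentially correct but takes a more structural route than the paper. Where the paper introduces two possible approaches (the local--chart reduction, which it sets aside as too technical, and the separate--regularity--plus--explicit--partials approach, which it briefly sketches), you blend the two: you actually carry out the local--chart factorization to get the $\gamma$-regularity, decomposing $E_g$ through the maps $(\mathrm{id},\dd)$, a Nemytskii superposition operator targeting $C^0$, and a continuous polynomial functional; then you invoke linearity in the first variable (the kernel of the paper's second approach) to upgrade to joint regularity. The factorization makes explicit what the paper leaves implicit, namely why the derivative--loss built into Theorem~\ref{thm:curvesalphasmooth} disappears once the target of the superposition operator is relaxed from $H^1$ to $C^0$, so that $\Phi_{\tilde g}$ is $C^k$ rather than $C^{k-1}$. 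One small imprecision: you call $B$ ``continuous trilinear,'' but $B(T,v)=\tfrac12\int T(v,v)$ is quadratic in its second slot; the correct statement is that $B$ is the diagonal restriction of the continuous trilinear form $(T,v,w)\mapsto\tfrac12\int T(v,w)$, hence a $C^\infty$ polynomial map. Both your proposal and the paper leave the iteration up to order $k$ somewhat informal, but your identification of the real crux --- the joint $C^k$ dependence of the superposition operator on $(h,\hat\gamma)$, controlled through the bounded inclusion $i:\mathds E\hookrightarrow\sect_b^k(E)$ from Definition~\ref{def:ckwhitbanachspace}(i) --- pins down more precisely than the paper's ``the reader may verify'' where the technical work actually lies.
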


\begin{proof}
There are essentially two ways of proving the desired regularity of \eqref{eq:efunct}. We will briefly comment on the first approach and then sketch parts of the proof using the second approach.

The first idea is to use the local charts of $\A_{g_\mathrm A,\nu}\times\op(M)$ given by $$T\times\curves\varphi:\A_{g_\mathrm A,\nu}\times(\curves{U}\cap\op(M))\la\mathds E\times H^1([0,1],\R^m),$$ where $T:\sect_b^k(E)\to\sect_b^k(E)$ is the translation of $-g_\mathrm A$, a global chart for $\A_{g_\mathrm A,\nu}$ that maps $g_\mathrm A+\mathds E$ to $\mathds E$ and $\curves\varphi$ is a submanifold chart of $\op(M)$. Representing $E$ in such charts, one obtains a (fairly complicated) local expression for $E$ defined in an open subset of the product $\mathds E\times H^1([0,1],\R^m)$.

Determining the regularity of $E$ is now reduced to determining the regularity of a map defined in an open subset of a Banach space, in the sense of Definitions~\ref{def:diffBanach} and~\ref{def:ckbanach}. Such verification involves several preliminary lemmas to guarantee the adequate regularity of auxiliary maps such as left composition with certain vector bundle morphisms. Given the high technicality of the involved computations, we will not follow this approach. The interested reader may find the basic tools necessary in Palais \cite{palais} for the case in which $M$ is compact, and in Piccione and Tausk \cite{pictau} for the noncompact case.

A second approach is the following. Denote $X=g_\mathrm A+\mathds E$, $Y=\op(M)$ and $\mathcal U=\A_{g_\mathrm A,\nu}\times\op(M)$, which is clearly an open subset of the product $X\times Y$. The functional $E:\mathcal U\to\R$ is linear in the first variable. This means that for each fixed $\gamma_0$,
\begin{equation}\label{eq:fy}
E(\,\cdot\,,\gamma_0):\A_{g_\mathrm A,\nu}\la\R
\end{equation}
is linear, hence smooth. In addition, standard arguments prove that for each fixed $g_0$, 
\begin{equation}\label{eq:fx}
E_{g_0}=E(g_0,\,\cdot\,):\op(M)\la\R
\end{equation}
is of class $C^k$, see for instance \cite{jost,petersen}. Moreover, the derivatives of \eqref{eq:fy} and \eqref{eq:fx} at any $(g_0,\gamma_0)\in\mathcal U$, i.e., the partial derivatives of $E$, are respectively\footnote{Formula \eqref{eq:dfdgamma} will be later justified, see \eqref{eq:dfdgammaa}.}
\begin{eqnarray}
&\label{eq:dfdg}\displaystyle\frac{\partial E}{\partial g}(g_0,\gamma_0)h =\frac{1}{2}\int_0^1 h(\dot{\gamma_0},\dot{\gamma_0})\;\dd t, \quad h\in\mathds E& \\
&&\nonumber\\
&\label{eq:dfdgamma}\displaystyle\frac{\partial E}{\partial\gamma}(g_0,\gamma_0)v=\int_0^1 g_0(\dot{\gamma_0},\D^{g_0}v)\;\dd t, \quad v\in T_\gamma\op(M).&
\end{eqnarray}
These clearly induce continuous maps $$\frac{\partial E}{\partial g}:\mathcal U\la X^* \;\;\mbox{ and }\;\; \frac{\partial E}{\partial\gamma}:\mathcal U\la Y^*.$$ Therefore $$\dd E:\mathcal U\ni (g,\gamma)\longmapsto\left(\frac{\partial E}{\partial g}(g,\gamma),\frac{\partial E}{\partial\gamma}(g,\gamma)\right)\in X^*\times Y^*$$ is also continuous, and hence $E$ is of class $C^1$.

In order to prove that $E$ is of class $C^k$, the same standard argument above applies. Namely, if each partial derivative of order $r$ exists and is continuous as a {\em map of two variables, $g$ and $\gamma$},\footnote{Notice that each first partial derivative is clearly continuous as a function of the respective variable. However, to infer continuity of $\dd E$ it is necessary to verify continuity with respect to {\em both} variables. In the above case, this is a simple calculation, however the verification of continuity of higher order derivatives may imply greater computation efforts.} then $E$ is of class $C^r$. Being $E$ linear on the first variable $g$, it suffices to prove the above statement for derivatives with respect to $\gamma$.

Observe that the first derivative $\frac{\partial E}{\partial\gamma}(g_0,\gamma_0)$ computed above involves the covariant derivative $\D^{g_0}$ induced by the Levi--Civita connection $\nabla^{g_0}$ of $g_0$, hence the Christoffel tensors of $g_0$, which are computed in terms of the first derivatives of the metric coefficients, see \eqref{eq:christoffeltensor}. The second derivative $\frac{\partial^2 E}{\partial\gamma^2}(g_0,\gamma_0)$ involves the curvature tensor $R^{g_0}$ of $\nabla^{g_0}$, i.e., the second derivative of $g$, see \eqref{eq:Rg} and \eqref{eq:koszul}. Higher order derivatives of $E$ with respect to $\gamma$ at $(g_0,\gamma_0)$ are computed in terms of higher order covariant derivatives of $R^{g_0}$.

Using this standard setting, the reader may verify that $E$ is indeed $C^k$, having the same regularity as the metric tensors in the translated $C^k$ Whitney type Banach space $\mathds E$.
\end{proof}

Henceforth, assume the domain of the generalized energy functional \eqref{eq:efunct} to be the open subset
\begin{equation}
\mathcal U=\A_{g_\mathrm A,\nu}\times\op(M),
\end{equation}
i.e., fix the auxiliary parameters $g_\mathrm A$ and $\nu$ and a general endpoints condition $\p$ for the geodesic variational problem. Let us now study its extrema, which are the critical points of $E(g,\cdot\,)=E_g:\op(M)\to\R$, that will be obtained by a classic {\em first variation} argument. For this, we need $g$ to induce a metric on $\p$. It will be later evident\footnote{See Remark~\ref{re:whyg}.} that a convenient choice is to consider the metric
\begin{equation}\label{eq:overlineg}
\overline{g}=g\oplus(-g)
\end{equation}
on the product $M\times M$ and then its restriction to $\p$. Clearly, there are topological obstructions on $\p$ for this to be possible. This problem will be dealt with later, by reducing\footnote{Actually, further topological assumptions on $\p$ will be necessary, such as compactness.} the domain $\A_{g_\mathrm A,\nu}$ of parameters to an open subset of metrics $g$ where $\p$ is nondegenerate with respect to $\overline g$, see Proposition~\ref{prop:nondegenerateopen}. For now, we ignore this problem by considering only parameters that are metrics $g\in\A_{g_\mathrm A,\nu}$ such that $\overline g$ does not degenerate on $\p$. In this way, notions as $\overline g$--orthogonality involving tangent vectors to $\p$ are legitimate.

\begin{proposition}\label{prop:critgenenfunc}
A point $(g_0,\gamma_0)\in\mathcal U$ satisfies $\frac{\partial E}{\partial\gamma}(g_0,\gamma_0)=0$ if and only if $\gamma_0\in\op(M)$ is a $g_0$--geodesic (in particular, of class $C^2$) and
\begin{equation}\label{eq:gpgeod}
(\dot{\gamma_0}(0),\dot{\gamma_0}(1))\in T_{(\gamma_0(0),\gamma_0(1))}\p^\perp,
\end{equation}
where $^\perp$ denotes orthogonality with respect to $\overline{g_0}$.
\end{proposition}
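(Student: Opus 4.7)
The plan is to split the argument into two directions. The sufficient direction is immediate from integration by parts, while the necessary direction requires first a regularity bootstrap to promote $\gamma_0$ from Sobolev $H^1$ to $C^2$, and only then an integration by parts to extract the endpoints orthogonality condition.

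For the sufficient direction, assume $\gamma_0$ is a $C^2$ $g_0$-geodesic satisfying \eqref{eq:gpgeod}. Since $\D^{g_0}\dot\gamma_0 = 0$ and $\dot\gamma_0$ is continuous, I would integrate \eqref{eq:dfdgamma} by parts to obtain, for every $v\in T_{\gamma_0}\op(M)$,
\begin{equation*}
\tfrac{\partial E}{\partial\gamma}(g_0,\gamma_0)v \;=\; g_0(\dot\gamma_0(1),v(1))-g_0(\dot\gamma_0(0),v(0)) \;=\; -\,\overline{g_0}\!\left((\dot\gamma_0(0),\dot\gamma_0(1)),(v(0),v(1))\right).
\end{equation*}
By \eqref{eq:tgammaopm}, $(v(0),v(1))$ ranges over $T_{(\gamma_0(0),\gamma_0(1))}\p$ as $v$ varies in $T_{\gamma_0}\op(M)$ (any such pair can be realized, e.g.\ by $g_0$-parallel transport), so \eqref{eq:gpgeod} forces the above to vanish.

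For the necessary direction, suppose $\tfrac{\partial E}{\partial\gamma}(g_0,\gamma_0)=0$. First I would restrict to test fields $v$ compactly supported in $(0,1)$; since $(0,0)\in T_{(\gamma_0(0),\gamma_0(1))}\p$, these lie in $T_{\gamma_0}\op(M)$. Working in a one-parameter family of charts along $\gamma_0$ as in Definition~\ref{def:familycharts} and using the covariant derivative formula \eqref{eq:nablachr}, the equation $\int_0^1 g_0(\dot\gamma_0,\D^{g_0}v)\,\dd t=0$ reads
\begin{equation*}
\int_0^1\bigl[h(t)\cdot\dot\phi(t)+F(t)\cdot\phi(t)\bigr]\,\dd t=0, \qquad \phi\in C^\infty_c(\,]0,1[\,,\R^m),
\end{equation*}
where $h(t)$ encodes $g_0(\dot\gamma_0(t),\cdot\,)$ in the frame (an $L^2$ object) and $F(t)$ is a continuous expression involving $\chr^{g_0}(\gamma_0)$ and $\dot\gamma_0$. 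Letting $G(t)=\int_0^t F(s)\,\dd s$ (continuous) and integrating the $F\cdot\phi$ term by parts, the identity rewrites as $\int_0^1(h-G)\cdot\dot\phi\,\dd t=0$ for every $\phi\in C^\infty_c(\,]0,1[\,,\R^m)$. Lemma~\ref{le:distder0} then forces $h-G$ to be almost everywhere constant; since $G$ is continuous and $g_0$ is nondegenerate, this provides a continuous representative for $\dot\gamma_0$, and Lemma~\ref{le:milagre} together with Corollary~\ref{cor:geodck} bootstraps $\gamma_0$ to a $C^2$ (indeed $C^{k+1}$) curve that satisfies the local geodesic equation \eqref{eq:geodequation} pointwise. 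Hence $\gamma_0$ is a $g_0$-geodesic. With this regularity in hand, integration by parts in \eqref{eq:dfdgamma} is now legal for arbitrary $v\in T_{\gamma_0}\op(M)$, and the sufficient-direction calculation runs in reverse to show that $\overline{g_0}((\dot\gamma_0(0),\dot\gamma_0(1)),(v(0),v(1)))=0$ for every $(v(0),v(1))\in T_{(\gamma_0(0),\gamma_0(1))}\p$, which is precisely \eqref{eq:gpgeod}.

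The main obstacle is the regularity bootstrap: since $\gamma_0$ is only $H^1$ a priori, one cannot naively move the covariant derivative off $v$. The technical heart is to organize the integrand in local coordinates so that Lemma~\ref{le:distder0} applies, by absorbing the Christoffel-type term into a continuous primitive; extending this argument past chart boundaries along $\gamma_0([0,1])$ by a finite covering and a partition-of-unity type argument is routine once the single-chart case is handled.
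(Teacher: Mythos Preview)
Your overall strategy is correct, and the sufficient direction matches the paper. For the necessary direction, your approach is valid but differs from the paper's in a way worth noting.

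\textbf{A minor imprecision.} You claim $F(t)$ is continuous, but at this stage $\dot\gamma_0$ is only $L^2$ and $F$ is quadratic in $\dot\gamma_0$ through the Christoffel term, so $F$ is only $L^1$. This does not break the argument: $G(t)=\int_0^t F$ is still absolutely continuous (hence continuous), the integration by parts $\int F\cdot\phi=-\int G\cdot\dot\phi$ remains valid for $\phi\in C^\infty_c$, and Lemma~\ref{le:distder0} still applies. After the first bootstrap to $\gamma_0\in C^1$, then $F$ becomes continuous and the second step to $C^2$ goes through. You should also spell out that the geodesic equation itself is read off from $h'=F$ (or, equivalently, from a legitimate integration by parts once $\gamma_0\in C^2$), before invoking Corollary~\ref{cor:geodck}; that corollary upgrades regularity only after the geodesic equation is known.

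\textbf{Comparison with the paper's route.} The paper avoids both the Christoffel bookkeeping and the chart--patching by choosing a $g_0$--\emph{parallel} orthonormal frame $\{e_i(t)\}$ along $\gamma_0$ (whose existence along an $H^1$ curve is a standard ODE fact). In that frame $\D^{g_0}v$ has coordinates $\lambda'$, and the variational identity collapses directly to $\int_0^1\langle\alpha,\lambda'\rangle\,\dd t=0$ with $\alpha_i=g_0(\dot\gamma_0,e_i)$; Lemma~\ref{le:distder0} gives $\alpha$ constant, so $\dot\gamma_0=\sum_i\delta_i a_i e_i$ is a constant combination of parallel vectors, hence itself parallel. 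This yields $C^1$, then $C^2$, \emph{and} the geodesic equation in one stroke, with no primitive $G$ and no partition--of--unity gluing. Your coordinate approach trades that global frame for more elementary local analysis; it works, but the parallel--frame trick is the cleaner device here.
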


\begin{proof}
Suppose first $\gamma_0\in\op(M)$ is a $g_0$--geodesic satisfying \eqref{eq:gpgeod}, in particular $\gamma_0\in C^2([a,b],M)$ and consider a $C^2$ variation $$(-\varepsilon,\varepsilon)\times [0,1]\ni (s,t)\longmapsto\gamma_s(t)\in M$$ with $\gamma_s=\gamma_0$ for $s=0$ and $(\gamma_s(0),\gamma_s(1))\in\p$ for all $s$. We will denote $\dot{\gamma_s}(t_0)\in T_{\gamma_s(t_0)}M$ the derivative $\frac{\partial}{\partial t}\gamma_s(t)\big|_{t=t_0}$. The infinitesimal variation $v$ associated induces a vector field $v\in\sect^1(\gamma^*TM)$ given by $$v(t)=\frac{\partial}{\partial s}\gamma_s(t)\Big|_{s=0}.$$ Thus, since from Proposition~\ref{prop:fck} the functional $E$ is of class $C^k$, we may compute
\begin{eqnarray}\label{eq:dfdgammaa}
\begin{aligned}
\frac{\partial E}{\partial\gamma}(g_0,\gamma_0)v &= \frac{\partial}{\partial s}E(g_0,\gamma_s)\Big|_{s=0} \\
&= \tfrac12\int_0^1 \frac{\partial}{\partial s} g_0\left(\dot{\gamma_s},\dot{\gamma_s}\right)\Big|_{s=0}\;\dd t \\
&= \int_0^1 g_0\left(\dot{\gamma_0},\D^{g_0} v\right)\;\dd t.
\end{aligned}
\end{eqnarray}
Notice that the above formula {\em a priori} does not hold for any Sobolev $H^1$ class variation $v\in T_{\gamma_0}\op(M)$, but only for $v\in\sect^1(\gamma_0^*TM)$.

Nevertheless, using again that $E$ is of class $C^k$, its derivative $$\frac{\partial E}{\partial\gamma}(g_0,\gamma_0):T_{\gamma_0}\op(M)\la\R$$ is continuous. From Corollary~\ref{cor:cinftyhk2}, $\sect^1(\gamma_0^*TM)$ is dense in $\sect^{H^1}(\gamma_0^*TM)$ hence in $T_{\gamma_0}\op(M)$. Thus, the continuous map $\frac{\partial E}{\partial\gamma}(g_0,\gamma_0)$ coincides in a dense subset with \eqref{eq:dfdgammaa}, which is also continuous. It follows that for all $v\in T_{\gamma_0}\op(M)$,
\begin{equation}\label{eq:dfdgammav}
\frac{\partial E}{\partial\gamma}(g_0,\gamma_0)v =\int_0^1 g_0\left(\dot{\gamma_0},\D^{g_0} v\right)\;\dd t.
\end{equation}
Notice that even if $v$ is {\em only} of Sobolev class $H^1$, the above integral is well--defined, since $\D^{g_0}v\in\sect^{L^2}(\gamma_0^*TM)$, see Remark~\ref{re:covderh1}. Using \eqref{eq:dfdgammav} and the $g_0$--geodesic equation, we may compute for all $v\in T_{\gamma_0}M$,
\begin{eqnarray}\label{eq:firstvariation}
\begin{aligned}
\frac{\partial E}{\partial\gamma}(g_0,\gamma_0)v &\stackrel{\eqref{eq:dfdgammav}}{=} \int_0^1 g_0(\dot{\gamma_0},\D^{g_0}v)\;\dd t\\
&= -\int_0^1 g_0(\D^{g_0}\dot{\gamma_0},v)\;\dd t+g_0(\dot{\gamma_0},v)\Big|_0^1\\
&= g_0(\dot{\gamma_0}(1),v(1))-g_0(\dot{\gamma_0}(0),v(0))\\
&= \overline{g_0}\big((\dot{\gamma_0}(0),\dot{\gamma_0}(1)),(v(0),v(1))\big)\\
&\stackrel{\eqref{eq:gpgeod}}{=} 0.
\end{aligned}
\end{eqnarray}
Thus, $\frac{\partial E}{\partial\gamma}(g_0,\gamma_0)=0$, i.e., $\gamma_0$ is a critical point of $E_{g_0}:\op(M)\to\R$.

Conversely, suppose $(g_0,\gamma_0)\in\mathcal U$ satisfies $\frac{\partial E}{\partial\gamma}(g_0,\gamma_0)=0$. Before any computations, we first have to ensure that $\gamma_0$ is sufficiently regular. Consider\footnote{Existence of such frame with weak regularity (Sobolev class $H^1$) is not evident, however follows from standard techniques of ODEs.} $\{e_i(t)\}_{i=1}^m$ a {\em $g_0$--parallel orthonormal frame} of $\gamma_0^*TM$, in other words, a $g_0$--orthonormal frame\footnote{Recall Definition~\ref{def:gorthframe}.} formed by vectors $e_i\in\sect^{H^1}(\gamma_0^*TM)$ along $\gamma_0$ that are $g_0$--parallel.\footnote{From Definition~\ref{def:parallel}, a vector field $v$ along $\gamma_0$ is $g_0$--parallel if it satisfies $\D^{g_0}v=0$. Notice however that in this context, this ODE is supposed to hold almost everywhere, since the considered vector fields are not $C^k$, but only Sobolev $H^1$.} Let $v\in T_{\gamma_0}H^1([0,1],M)$, and decompose it with respect to this frame,
\begin{equation*}
v(t)=\sum_{i=1}^m \lambda_i(t)e_i(t), \quad t\in [0,1],
\end{equation*}
where $\lambda_i:[0,1]\to\R$. Since $\frac{\partial E}{\partial\gamma}(g_0,\gamma_0)v=0$ for all $v\in T_{\gamma_0}H^1([0,1],M)$, in particular this holds for $v$'s such that for all $1\leq i\leq m$, $\lambda_i\in C^\infty_c(\,]0,1[,\R)$, for these $v$'s are clearly in $H^1([0,1],\R)$.

Denote $\lambda=(\lambda_i)_{i=1}^m\in C^\infty_c(\,]0,1[,\R^m)$ and notice that, since the frame is parallel,
\begin{equation*}
\D^{g_0} v(t)=\sum_{i=1}^m \lambda'_i(t)e_i(t), \quad t\in\, ]0,1[,
\end{equation*}
and $\lambda'=(\lambda'_i)_{i=1}^m\in C^\infty_c(\,]0,1[,\R^m)$. In addition, consider for all $1\leq i\leq m$,
\begin{equation*}
\alpha_i(t)=g_0(\dot{\gamma_0}(t),e_i(t)), \quad t\in [0,1],
\end{equation*}
and $\alpha=(\alpha_i)_{i=1}^m\in L^2([0,1],\R^m)$.

To have an expression of the form \eqref{eq:dfdgammav}, notice that the same density argument works, considering also $\gamma$ as a variable. More precisely, consider the maps
\begin{eqnarray*}
T\op(M)\ni (\gamma,v) &\longmapsto&\dfrac{\partial E}{\partial\gamma}(g_0,\gamma)v\in\R \\
T\op(M)\ni (\gamma,v) &\longmapsto&\displaystyle\int_0^1 g_0(\dot\gamma,\D^{g_0}v)\;\dd t\in\R.
\end{eqnarray*}
Both are continuous and coincide in the dense subset formed by pairs $(\gamma,v)$, where $\gamma\in C^2([0,1],M)\cap\op(M)$ and $v\in\sect^1(\gamma^*TM)$. Thus, the above maps coincides in the entire $T\op(M)$.

Therefore, we may compute
\begin{eqnarray*}
\frac{\partial E}{\partial\gamma}(g_0,\gamma_0)v &=& \int_0^1 g_0(\dot{\gamma_0},\D^{g_0}v)\;\dd t \\
&=& \int_0^1\sum_{i=1}^m g_0(\dot{\gamma_0},\lambda_i'e_i)\;\dd t \\
&=& \int_0^1\sum_{i=1}^m \lambda'_i g_0(\dot{\gamma_0},e_i)\;\dd t \\
&=& \int_0^1\sum_{i=1}^m \alpha_i\lambda'_i\;\dd t \\
&=& \int_0^1\langle\alpha,\lambda'\rangle\;\dd t.
\end{eqnarray*}
Since $\gamma_0$ is a critical point of $E_{g_0}$, the above expression vanishes for all $v$, hence for all $\lambda\in C^\infty_c(\,]0,1[,\R^m)$. From Lemma~\ref{le:distder0}, it follows that $\alpha$ is constant almost everywhere. This means that there exists $a=(a_i)_{i=1}^m\in\R^m$ such that $$\alpha_i=g_0(\dot{\gamma_0}(t),e_i(t))=a_i$$ for almost every $t\in [0,1]$. Let $\delta_i=g_0(e_i,e_i)=\pm1$. Then,
\begin{equation}\label{eq:dgamma0t}
\dot{\gamma_0}(t)=\sum_{i=1}^m \delta_ia_ie_i(t),
\end{equation}
for almost every $t\in [0,1]$. Notice that the right--hand side of \eqref{eq:dgamma0t} is continuous and, since $\gamma_0$ is of Sobolev class $H^1$, it is absolutely continuous. From Corollary~\ref{cor:milagre}, it follows that $\gamma_0$ is of class $C^1$ and the above equality holds for every $t\in [0,1]$. Thus, the frame $\{e_i(t)\}_{i=1}^m$ is a $g_0$--parallel frame along a $C^1$ curve, hence also of class $C^1$. Therefore, it follows again from \eqref{eq:dgamma0t} that $\dot{\gamma_0}$ is of class $C^1$, hence $\gamma_0$ is of class $C^2$. This gives the necessary regularity to proceed. Moreover, \eqref{eq:dgamma0t} implies that the tangent field $\dot{\gamma_0}$ is $g_0$--parallel, hence
\begin{equation}\label{eq:g0geodeq}
\D^{g_0}\dot{\gamma_0}=0
\end{equation}
i.e., $\gamma_0$ is a $g_0$--geodesic.

Finally, since $\gamma_0$ is of class $C^2$, the same integration by parts to obtain expression \eqref{eq:firstvariation} holds in this case. More precisely, we may compute
{\allowdisplaybreaks
\begin{eqnarray*}
\frac{\partial E}{\partial\gamma}(g_0,\gamma_0)v &\stackrel{\eqref{eq:dfdgammav}}{=}& \int_0^1 g_0(\dot{\gamma_0},\D^{g_0}v)\;\dd t\\
&=& -\left.\int_0^1 g_0(\D^{g_0}\dot{\gamma_0},v)\;\dd t+g_0(\dot{\gamma_0},v)\right|_0^1\\
&\stackrel{\eqref{eq:g0geodeq}}{=}& g_0(\dot{\gamma_0}(1),v(1))-g_0(\dot{\gamma_0}(0),v(0))\\
&=& -\overline{g_0}\big((\dot{\gamma_0}(0),\dot{\gamma_0}(1)),(v(0),v(1))\big).
\end{eqnarray*}}Since the above expression vanishes for all $v$'s, it follows that $\gamma_0$ must also satisfy \eqref{eq:gpgeod}, concluding the proof.
\end{proof}

\begin{definition}\label{def:gpgeod}
A curve $\gamma\in\op(M)$ is called a \emph{$(g,\p)$--geodesic}\index{$(g,\p)$--geodesic}\index{Geodesic!$(g,\p)$--geodesic} if $$\frac{\partial E}{\partial \gamma}(g,\gamma)=0,$$ i.e., if $\gamma$ is a critical point of $E_g:\op(M)\to\R$.
From Proposition~\ref{prop:critgenenfunc}, this is equivalent to $\gamma$ being a $g$--geodesic that satisfies $$(\dot\gamma(0),\dot\gamma(1))\in T_{(\gamma(0),\gamma(1))}\p^\perp,$$ where $^\perp$ denotes orthogonality relatively to $\overline g$.
\end{definition}

\begin{remark}\label{re:gammack}
From Corollary~\ref{cor:geodck}, since $g\in\met^k_\nu(M)$, if $\gamma$ is a $g$--geodesic then $\gamma$ is of class $C^{k+1}$. In particular, $(g,\p)$--geodesics are $C^{k+1}$.
\end{remark}

\begin{example}\label{ex:gecs2}
Consider the GECs given in Example~\ref{ex:gecs}. If $\p=\{p\}\times\{q\}$, $(g,\p)$--geodesics are $g$--geodesics joining $p$ and $q$. Since the tangent space to $\p$ is trivial, condition \eqref{eq:gpgeod} is also trivial. In case $P$ and $Q$ are submanifolds of $M$ and $\p=P\times Q$, the $(g,\p)$--geodesics are $g$--geodesics $g$--orthogonal to $P$ and $Q$ at its endpoints. This follows at once since $T_{(\gamma(0),\gamma(1))}\p=T_{\gamma(0)}P\oplus T_{\gamma(1)}Q$.

Finally, let us examine the special case $\p=\Delta$. The tangent space to $\Delta$ at $(x,x)\in\Delta$ is the diagonal $\Delta\subset T_xM\oplus T_xM$. Thus, {\em for every} metric $g\in\A_{g_\mathrm A,\nu}$, the product metric $\overline g=g\oplus (-g)$ is identically null at $\Delta$, and hence $\overline g$--orthogonality at such points is meaningless. Nevertheless, the computations above imply that $\gamma\in\Omega_\p(M)$ is a $(g,\p)$--geodesic if and only if it is a periodic $g$--geodesic, see Example~\ref{ex:periodicgeod}. Recall that this means not only $\gamma(0)=\gamma(1)$, but also $\dot\gamma(0)=\dot\gamma(1)$. Identifying $\Omega_\Delta(M)$ and $H^1(S^1,M)$, this simply means that the Sobolev $H^1$ curve $\gamma:S^1\to M$ is of class $C^2$ and satisfies the $g$--geodesic equation.
\end{example}

\section{Generalized index form}
\label{sec:genindexform}

In order to describe degeneracy of $(g,\p)$--geodesics, we need to analyze the second variation of $E_g:\op(M)\to\R$ at its critical points, which is given by the {\em index form} of this generalized energy functional \eqref{eq:efunct}. More precisely, we are interested in obtaining an explicit formula for the second derivative $$\frac{\partial^2 E}{\partial \gamma^2}(g_0,\gamma_0):T_{\gamma_0}\op(M)\times T_{\gamma_0}\op(M)\la\R$$ at points $(g_0,\gamma_0)\in\mathcal U$ such that $\frac{\partial E}{\partial\gamma}(g_0,\gamma_0)=0$, and determining its Fredholmness. In addition, we will also obtain an explicit formula at such points $(g_0,\gamma_0)\in\mathcal U$ for the mixed derivative $$\frac{\partial^2 E}{\partial g\partial\gamma}(g_0,\gamma_0):\mathds E\oplus T_{\gamma_0}\op(M)\la\R.$$

\begin{proposition}\label{prop:indexform}
Fix $(g_0,\gamma_0)\in\mathcal U$ such that $\frac{\partial E}{\partial\gamma}(g_0,\gamma_0)=0$. The {\em index form} of $E$ at $(g_0,\gamma_0)$ is given by
\begin{multline}\label{eq:indexform}
\frac{\partial^2 E}{\partial \gamma^2}(g_0,\gamma_0)(v,w)=\int_0^1 g_0(\D^{g_0}v,\D^{g_0}w)-g_0(R^{g_0}(\dot{\gamma_0},v)w,\dot{\gamma_0})\;\dd t \\ -\s^\p_{(\dot{\gamma_0}(0),\dot{\gamma_0}(1))}\Big((v(0),v(1)),(w(0),w(1))\Big),
\end{multline}
for all $v,w\in T_\gamma\op(M)$, where $\s^\p_\eta$ is the second fundamental form of $\p$ with normal $\eta\in T\p^\perp$, with respect to the ambient metric $\overline{g_0}$.
\end{proposition}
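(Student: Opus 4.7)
The plan is to compute the second derivative via a two--parameter variation and then extend by density. First, assume $v,w\in\sect^2(\gamma_0^*TM)\cap T_{\gamma_0}\op(M)$, so that they are smooth enough to be realized as infinitesimal variational fields. Pick a $C^2$ map $h:(-\varepsilon,\varepsilon)^2\times[0,1]\to M$ such that $h(0,0,t)=\gamma_0(t)$, $\partial_s h|_{(0,0,\cdot)}=v$, $\partial_u h|_{(0,0,\cdot)}=w$ and, crucially, such that the endpoints curve $H(s,u)=(h(s,u,0),h(s,u,1))$ takes values in $\p$. Such $h$ exists because $(v(0),v(1))$ and $(w(0),w(1))$ lie in $T_{(\gamma_0(0),\gamma_0(1))}\p$ by \eqref{eq:tgammaopm}; one first constructs a $C^2$ map $H:(-\varepsilon,\varepsilon)^2\to\p$ realizing these two tangent vectors, and then interpolates smoothly between the endpoints $H(s,u)$ to obtain $h$.

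Next, I would differentiate $E(g_0,h(s,u,\cdot))$ twice, as in the proof of Proposition~\ref{prop:critgenenfunc}. Writing $V=\partial_s h$, $W=\partial_u h$, applying $\partial_u$ to the integrand $\tfrac12 g_0(\partial_t h,\partial_t h)$ gives $g_0(\partial_t h,\D_t W)$ after using symmetry of $\nabla^{g_0}$ (i.e.\ $\D_u\partial_t h=\D_t W$). Differentiating again with respect to $s$ and swapping $\D_s\D_t=\D_t\D_s+R^{g_0}(V,\partial_t h)$ via \eqref{eq:R}, one obtains, at $s=u=0$,
\begin{equation*}
\frac{\partial^2 E}{\partial s\partial u}\Big|_{0}=\int_0^1\!\Big[g_0(\D^{g_0}v,\D^{g_0}w)+g_0(\dot\gamma_0,\D_t\D_sW)+g_0(\dot\gamma_0,R^{g_0}(v,\dot\gamma_0)w)\Big]\,\dd t.
\end{equation*}
Integration by parts on the middle term, together with the geodesic equation $\D^{g_0}\dot\gamma_0=0$ from Proposition~\ref{prop:critgenenfunc}, kills the interior integral and leaves the boundary contribution $g_0(\dot\gamma_0(1),\D_sW(0,0,1))-g_0(\dot\gamma_0(0),\D_sW(0,0,0))$. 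The antisymmetry $R^{g_0}(v,\dot\gamma_0)=-R^{g_0}(\dot\gamma_0,v)$ then turns the curvature term into the desired form in \eqref{eq:indexform}.

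The main (and genuinely delicate) step is identifying the boundary term with the second fundamental form of $\p$. The key observation is that the curve $\bar W(s,u)=\partial_u H(s,u)$ is a tangent extension along $\p$ of the vector $(w(0),w(1))$, so $\nabla^{\overline{g_0}}_{(v(0),v(1))}\bar W\big|_{(0,0)}$ equals $(\D_sW(0,0,0),\D_sW(0,0,1))$. Combining with $\overline{g_0}=g_0\oplus(-g_0)$, the boundary term becomes
\begin{equation*}
-\overline{g_0}\Big((\dot\gamma_0(0),\dot\gamma_0(1)),\nabla^{\overline{g_0}}_{(v(0),v(1))}\bar W\Big),
\end{equation*}
which by Definition~\ref{def:sfform} is precisely $-\s^\p_{(\dot\gamma_0(0),\dot\gamma_0(1))}\big((v(0),v(1)),(w(0),w(1))\big)$, since by Proposition~\ref{prop:critgenenfunc} the vector $(\dot\gamma_0(0),\dot\gamma_0(1))$ is $\overline{g_0}$--normal to $\p$. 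Independence of this quantity on the chosen tangent extension is standard and justifies the computation.

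Finally, to remove the extra regularity assumption on $v,w$, I would invoke a density argument as in the proof of Proposition~\ref{prop:critgenenfunc}: the bilinear map $\tfrac{\partial^2 E}{\partial\gamma^2}(g_0,\gamma_0)$ is continuous on $T_{\gamma_0}\op(M)\times T_{\gamma_0}\op(M)$ by Proposition~\ref{prop:fck}, and the right--hand side of \eqref{eq:indexform} extends by continuity to the same space (recall that $\D^{g_0}v\in\sect^{L^2}(\gamma_0^*TM)$ for $v\in\sect^{H^1}(\gamma_0^*TM)$ by Remark~\ref{re:covderh1}, and that the boundary evaluation $v\mapsto(v(0),v(1))$ is continuous as a consequence of Proposition~\ref{prop:hkckmenosum}). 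Since $\sect^2(\gamma_0^*TM)\cap T_{\gamma_0}\op(M)$ is dense in $T_{\gamma_0}\op(M)$ by Corollary~\ref{cor:cinftyhk2}, the formula propagates to all of $T_{\gamma_0}\op(M)$.
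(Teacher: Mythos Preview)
Your proposal is correct and follows essentially the same strategy as the paper: compute the second variation via a variation of $\gamma_0$ subject to the endpoint constraint, exchange covariant derivatives using the curvature identity \eqref{eq:R}, integrate by parts using the geodesic equation $\D^{g_0}\dot\gamma_0=0$, identify the boundary term with $\s^\p$ via Definition~\ref{def:sfform} and the $\overline{g_0}$--orthogonality of $(\dot\gamma_0(0),\dot\gamma_0(1))$ to $\p$, and then extend by density. The paper carries this out with a \emph{one}-parameter variation $\gamma_s$, obtaining the quadratic form $\frac{\partial^2 E}{\partial\gamma^2}(g_0,\gamma_0)(v,v)$ and then polarizing (Lemma~\ref{le:polarization}); you instead use a two-parameter variation to hit the bilinear form $(v,w)$ directly, which is an equally standard route and spares the polarization step. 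One small correction: the paper takes the variation of class $C^3$ (see Remark~\ref{re:gammack}), and indeed your term $\D_t\D_sW$ involves third-order mixed derivatives of $h$, so $C^2$ is not quite enough; you should upgrade $h$ to $C^3$.
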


\begin{proof}
From Proposition~\ref{prop:critgenenfunc}, $\gamma_0:[0,1]\to M$ is a $g_0$--geodesic. Using Corollary~\ref{cor:geodck}, it follows that $\gamma_0$ is automatically of class $C^{k+1}$, in particular $C^3$, see Remark~\ref{re:gammack}. This allows us to use, for instance, $C^3$ variations of $\gamma$ and some integration by parts among other analytical tools.

Formula \eqref{eq:indexform} above is obtained through a standard argument, using variations of $\gamma_0$ by other curves in $M$ that satisfy the same GEC. Namely, consider a $C^3$ variation $$(-\varepsilon,\varepsilon)\times [0,1]\ni (s,t)\longmapsto\gamma_s(t)\in M$$ with $\gamma_s=\gamma_0$ for $s=0$ and $(\gamma_s(0),\gamma_s(1))\in\p$. We will denote $\dot{\gamma_s}(t_0)\in T_{\gamma_s(t_0)}M$ the derivative $\frac{\partial}{\partial t}\gamma_s(t)\big|_{t=t_0}$. The infinitesimal variation $v$ associated induces a vector field $v\in\sect^2(\gamma^*TM)$ given by $$v(t)=\left.\frac{\partial}{\partial s}\gamma_s(t)\right|_{s=0}.$$ From Proposition~\ref{prop:fck}, the functional $E$ is of class $C^k$ and hence we may compute as in \eqref{eq:dfdgammaa},
\begin{eqnarray*}
\frac{\partial E}{\partial\gamma}(g_0,\gamma_0)v &=& \frac{\partial}{\partial s}E(g_0,\gamma_s)\Big|_{s=0}\\
&=& \int_0^1 g_0\left(\frac{\D}{\partial s}\dot{\gamma_s},\dot{\gamma_0}\right)\;\dd t.
\end{eqnarray*}
Deriving again and applying integration by parts, it follows that
{\allowdisplaybreaks
\begin{eqnarray*}
\frac{\partial^2 E}{\partial\gamma^2}(g_0,\gamma_0)(v,v) &=& \frac{\partial^2}{\partial s^2}E(g_0,\gamma_s)\Big|_{s=0} \\
&=& \int_0^1 g_0\left(\frac{\D}{\partial s}\dot{\gamma_s},\frac{\D}{\partial s}\dot{\gamma_s}\right)+g_0\left(\frac{\D}{\partial s}\frac{\D}{\partial s}\dot{\gamma_s},\dot{\gamma_0}\right)\;\dd t \\
&=& \int_0^1 g_0\left(\D^{g_0}v,\D^{g_0}v\right)+g_0\left(\frac{\D}{\partial s}\frac{\D}{\partial t}v,\dot{\gamma_0}\right)\;\dd t \\
&=& \int_0^1 g_0\left(\D^{g_0}v,\D^{g_0}v\right)+g_0(R^{g_0}(v,\dot{\gamma_0})v,\dot{\gamma_0}) \\
&&\hspace{0.5cm}+g_0\left(\frac{\D}{\partial t}\frac{\D}{\partial s}\frac{\partial}{\partial s}\gamma_s,\dot{\gamma_0}\right)\;\dd t \\
&=& \int_0^1 g_0\left(\D^{g_0}v,\D^{g_0}v\right)-g_0(R^{g_0}(\dot{\gamma_0},v)v,\dot{\gamma_0})\;\dd t \\
&&\hspace{0.5cm}+\left.g_0\left(\frac{\D}{\partial s}\frac{\partial}{\partial s}\gamma_s,\dot{\gamma_0}\right)\right|_0^1  \\
&=& \int_0^1 g_0\left(\D^{g_0}v,\D^{g_0}v\right)-g_0(R^{g_0}(\dot{\gamma_0},v)v,\dot{\gamma_0})\;\dd t \\
&&\hspace{0.5cm}+g_0\Big(\nabla^{g_0}_{v(1)}v(1),\dot{\gamma_0}(1)\Big)-g_0\Big(\nabla^{g_0}_{v(0)}v(0),\dot{\gamma_0}(0)\Big)  \\
&=& \int_0^1 g_0\left(\D^{g_0}v,\D^{g_0}v\right)-g_0(R^{g_0}(\dot{\gamma_0},v)v,\dot{\gamma_0})\;\dd t \\
&&\hspace{0.5cm}-\overline{g_0}\Big(\nabla^{\overline{g_0}}_{(v(0),v(1))}(v(0),v(1)),(\dot{\gamma_0}(0),\dot{\gamma_0}(1)\Big)  \\
&=& \int_0^1 g_0\left(\D^{g_0}v,\D^{g_0}v\right)-g_0(R^{g_0}(\dot{\gamma_0},v)v,\dot{\gamma_0})\;\dd t \\
&&\hspace{0.5cm}-\s_{(\dot{\gamma_0}(0),\dot{\gamma_0}(1))}^\p\Big((v(0),v(1)),(v(0),v(1))\Big).
\end{eqnarray*}}By applying standard polarization arguments\footnote{Recall \eqref{eq:polarization} in Lemma~\ref{le:polarization}.} to the last expression, since it is bilinear and symmetric, we obtain \eqref{eq:indexform},
\begin{multline*}
\frac{\partial^2 E}{\partial \gamma^2}(g_0,\gamma_0)(v,w)=\int_0^1 g_0(\D^{g_0}v,\D^{g_0}w)-g_0(R^{g_0}(\dot{\gamma_0},v)w,\dot{\gamma_0})\;\dd t \\ -\s^\p_{(\dot{\gamma_0}(0),\dot{\gamma_0}(1))}\Big((v(0),v(1)),(w(0),w(1))\Big).
\end{multline*}
Notice that the above formula {\em a priori} does not hold for any vectors $v,w\in T_{\gamma_0}\op(M)$, but only for $v,w\in\sect^2(\gamma_0^*TM)$.

Nevertheless, from Proposition~\ref{prop:fck}, the functional $E$ is of class $C^k$. Thus, its derivative $$\dfrac{\partial^2 E}{\partial\gamma^2}(g_0,\gamma_0):T_{\gamma_0}\op(M)\times T_{\gamma_0}\op(M)\la\R$$ is a symmetric {\em continuous} bilinear form. From Corollary~\ref{cor:cinftyhk2}, $\sect^2(\gamma_0^*TM)$ is dense in $\sect^{H^1}(\gamma_0^*TM)$ hence in $T_{\gamma_0}\op(M)$. Thus, the continuous bilinear form $\frac{\partial^2 E}{\partial \gamma^2}(g_0,\gamma_0)$ coincides in a dense subset with the above formula, which is also continuous. It follows that for all $v,w\in T_{\gamma_0}\op(M)$ the formula \eqref{eq:indexform} holds,\footnote{Notice that even if $v,w$ are {\em only} of Sobolev class $H^1$, the integral in \eqref{eq:indexform} is well--defined, since $\D^{g_0}v\in\sect^{L^2}(\gamma_0^*TM)$, see Remark~\ref{re:covderh1}.} concluding the proof.
\end{proof}

\begin{proposition}\label{prop:fredholmness}
Fix $(g_0,\gamma_0)\in\mathcal U$ such that $\frac{\partial E}{\partial\gamma}(g_0,\gamma_0)=0$. The index form \eqref{eq:indexform}, represented using \eqref{ident:bilin} as \begin{equation}\label{eq:indexformrep}
\frac{\partial^2 E}{\partial\gamma^2}(g_0,\gamma_0):T_{\gamma_0}\op(M)\la T_{\gamma_0}\op(M)^*\cong T_{\gamma_0}\op(M)
\end{equation}
is a self--adjoint Fredholm operator of this Hilbert space.
\end{proposition}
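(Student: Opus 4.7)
Self-adjointness is immediate: the index form \eqref{eq:indexform} is visibly symmetric in $v,w$, so Lemma~\ref{le:symselfadjoint} applied to the Hilbert space $T_{\gamma_0}\Omega_\p(M)$ endowed with \eqref{eq:riemhilbop} forces the representing operator \eqref{eq:indexformrep} to be self-adjoint. For Fredholmness, the strategy is to write $\frac{\partial^2 E}{\partial\gamma^2}(g_0,\gamma_0) = A + K$, where $A$ is strongly nondegenerate (i.e.\ represented by a topological isomorphism of $T_{\gamma_0}\Omega_\p(M)$) and $K$ is represented by a compact operator; then Proposition~\ref{prop:fredholmisocomp} delivers the Fredholm property (and, incidentally, that the Fredholm index is zero, consistent with Lemma~\ref{le:selfadjointzero}).

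The natural choice is to set $A(v,w) = \int_0^1 g_0(\D^{g_0}v,\D^{g_0}w)\,\dd t$ plus a suitable boundary correction, and to absorb into $K$ the curvature term and the second fundamental form term, together with any leftover boundary discrepancy. The curvature term $-\int_0^1 g_0(R^{g_0}(\dot\gamma_0,v)w,\dot\gamma_0)\,\dd t$ involves only pointwise values of $v$ and $w$, hence extends continuously to a bilinear form on $H^1\times C^0$; Lemma~\ref{le:h1c0extension} then produces a compact representing operator. The second fundamental form term $-\s^\p_{(\dot\gamma_0(0),\dot\gamma_0(1))}\big((v(0),v(1)),(w(0),w(1))\big)$ factors through the continuous evaluation map $v\mapsto(v(0),v(1))\in T_{(\gamma_0(0),\gamma_0(1))}\p$, whose target is finite-dimensional; the corresponding operator is therefore of finite rank and a fortiori compact by Proposition~\ref{prop:propcpcop}. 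Any boundary correction added to $A$ is of the same finite-rank type, so it is also absorbed into $K$ without difficulty.

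The principal difficulty is exhibiting the strong nondegeneracy of $A$. I would choose a $g_0$-parallel $g_0$-orthonormal frame $\{e_i(t)\}_{i=1}^m$ along $\gamma_0$ (cf.\ Definition~\ref{def:gorthframe}) with signs $\delta_i = g_0(e_i,e_i)=\pm1$; writing $v=\sum v_i e_i$ trivialises the covariant derivative as $\D^{g_0}v=\sum v_i'\,e_i$, so the integrand becomes $\sum_i\delta_i v_i'w_i'$, and a boundary correction of the form $g_0(v(0),w(0))$ contributes $\sum_i\delta_i v_i(0)w_i(0)$. In the equivalent Hilbert structure on $T_{\gamma_0}H^1([0,1],M)$ obtained from the frame identification with $H^1([0,1],\R^m)$ endowed with $\sum_i[v_i(0)w_i(0)+\int_0^1 v_i'w_i'\,\dd t]$ (equivalent to \eqref{eq:riemhilbop} by the comparison of $\nabla^{g_0}$ and $\nabla^{\mathrm R}$ through a bounded Christoffel tensor), the form $A$ is represented by the diagonal involution $v\mapsto\sum_i\delta_i v_i e_i$, which is patently a topological isomorphism.

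The subtle point is that we need this isomorphism to restrict to an isomorphism of the closed subspace $T_{\gamma_0}\Omega_\p(M)$. Here the nondegeneracy of $\overline{g_0}$ on $T\p$ (which is assumed at $(g_0,\gamma_0)$) is essential: it lets one replace the ad hoc boundary correction $g_0(v(0),w(0))$ by the intrinsic one $\overline{g_0}|_{T\p}\big((v(0),v(1)),(w(0),w(1))\big)$, whose restriction to $T_{\gamma_0}\Omega_\p(M)$ remains strongly nondegenerate by the same frame diagonalisation applied to a $\overline{g_0}$-orthonormal basis of $T\p$; the change from one boundary term to the other is again finite rank and goes into $K$. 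With $A$ strongly nondegenerate on $T_{\gamma_0}\Omega_\p(M)$ and $K$ compact there, Proposition~\ref{prop:fredholmisocomp} concludes that $\frac{\partial^2 E}{\partial\gamma^2}(g_0,\gamma_0)$ is a self-adjoint Fredholm operator.
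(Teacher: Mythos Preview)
Your strategy and the paper's agree: split the index form as a principal part plus a compact remainder and invoke Proposition~\ref{prop:fredholmisocomp}. You correctly identify the compact pieces---the curvature integral extends to $H^1\times C^0$ so Lemma~\ref{le:h1c0extension} applies, while the second fundamental form term and any boundary corrections factor through evaluation and are finite rank. The paper's principal part differs from yours: rather than a $g_0$--parallel frame, it sets $\Phi(v)(t)=A_tv(t)$ with $A_t\in\GL(T_{\gamma_0(t)}M)$ the $g_\mathrm R$--symmetric operator satisfying $g_0=g_\mathrm R(A_t\cdot,\cdot)$, and computes $D(v,w)=\frac{\partial^2E}{\partial\gamma^2}(g_0,\gamma_0)(v,w)-\llangle\Phi v,w\rrangle$ directly. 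Every term of $D$ carries at most one covariant derivative of $v$ or $w$ (plus boundary evaluations), hence is compact by the same lemma; this avoids passing to an equivalent inner product.

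You correctly flag the delicate point---whether the principal part restricts to an isomorphism of $T_{\gamma_0}\op(M)$---but your resolution does not work. The diagonal involution $\Psi:v\mapsto\sum_i\delta_iv_ie_i$ need not preserve the constraint $(v(0),v(1))\in T\p$, and swapping the boundary term for $\overline{g_0}|_{T\p}$ together with a $\overline{g_0}$--orthonormal basis of $T\p$ does not yield a joint diagonalisation: the bulk data $(v_i')$ and the boundary data in $T\p$ are coupled through $v_i(1)-v_i(0)=\int_0^1 v_i'$, so there is no diagonal operator on $T_{\gamma_0}\op(M)$ to point at. The clean repair is to note that $\Psi$ (and likewise the paper's pointwise $\Phi$) \emph{does} preserve the closed subspace $T_{\gamma_0}\Omega_{p,q}(M)$ with $p=\gamma_0(0)$, $q=\gamma_0(1)$, where $v(0)=v(1)=0$, and is an isomorphism there; since this subspace has finite codimension $\dim\p$ in $T_{\gamma_0}\op(M)$, the principal bilinear form on $T_{\gamma_0}\op(M)$ is represented by a self--adjoint Fredholm operator of index zero. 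The remark following Proposition~\ref{prop:fredholmisocomp} (Fredholm plus compact is Fredholm of the same index) then finishes.
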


\begin{proof}
Recall that from Proposition~\ref{prop:critgenenfunc}, $\gamma_0:[0,1]\to M$ is a $g_0$--geodesic, in particular of class $C^2$. Self--adjointness of \eqref{eq:indexformrep} is evident since it represents\footnote{See Definition~\ref{def:represents}.} a symmetric bilinear form of this Hilbert space.

For each $t\in [0,1]$, denote by $A_t\in\GL(T_{\gamma_0(t)}M)$ the $g_\mathrm R$--symmetric automorphism that represents $g_0$ in terms of the fixed Riemannian metric $g_\mathrm{R}$, that is, such that $g_0=g_\mathrm{R}(A_t\cdot,\cdot)$. Then the map
\begin{eqnarray}
\Phi:T_{\gamma_0}\op(M)&\la & T_{\gamma_0}\op(M)\\
v &\longmapsto & \tilde v \nonumber
\end{eqnarray}
where $\tilde{v}(t)=A_tv(t)$, is an isomorphism. We now prove that \eqref{eq:indexformrep} is a compact perturbation of the isomorphism $\Phi$. From Proposition~\ref{prop:fredholmisocomp}, it will then follow that it is a Fredholm operator. Recall that the inner product $\llangle\cdot,\cdot\rrangle$ in $T_\gamma\op(M)$ is given by \eqref{eq:riemhilbop}, hence
\begin{eqnarray*}
\llangle\Phi v,w\rrangle &=& g_\mathrm R(A_0 v(0),w(0))+\int_0^1 g_\mathrm R(\D^\mathrm R\Phi v,\D^\mathrm R w)\;\dd t\\
&=& g_\mathrm R(A_0 v(0),w(0))+\int_0^1 g_\mathrm R(A'v,\D^\mathrm R w)+g_\mathrm R(A\D^\mathrm Rv,\D^\mathrm R w)\;\dd t,\\
\end{eqnarray*}
where $A'$ is the covariant derivative\footnote{$A$ can be thought as a $C^k$ section of $\gamma_0^*(TM^*\vee TM)$, see Example~\ref{ex:vectoralongcurve}. From Theorem~\ref{thm:tensorconnection}, $\nabla^\mathrm{R}$ canonically induces a connection on $TM^*\vee TM^*$. Furthermore, Definition~\ref{def:pullbackconnection} guarantees the existence of a pull--back connection on $\gamma_0^*(TM^*\vee TM)$, which is used to compute $A'$.} of $A$. Denote by $\chr^\mathrm{R}=\D^{g_0}-\D^\mathrm{R}$ the Christoffel tensor of $\nabla^{g_0}$ relatively to $\nabla^\mathrm{R}$, see Definition~\ref{def:christoffeltens}. The difference
\begin{eqnarray*}
D:\sect^{H^1}(\gamma_0^*TM)\times\sect^{H^1}(\gamma_0^*TM)&\la &\R \\
(v,w) &\longmapsto &\frac{\partial^2 E}{\partial \gamma^2}(g_0,\gamma_0)(v,w)-\llangle\Phi v,w\rrangle
\end{eqnarray*}
is clearly a continuous bilinear symmetric form, that can be now computed as follows.
{\allowdisplaybreaks
\begin{eqnarray*}
\begin{aligned}
D(v,w) &= \int_0^1 g_0(\D^{g_0}v,\D^{g_0}w)-g_0(R^{g_0}(\dot{\gamma_0},v)w,\dot{\gamma_0})\;\dd t  \\
&\hspace{0.5cm} -\s^\p_{(\dot{\gamma_0}(0),\dot{\gamma_0}(1))}\Big((v(0),v(1)),(w(0),w(1))\Big)-g_\mathrm R(A_0 v(0),w(0))  \\
&\hspace{0.5cm} -\int_0^1 g_\mathrm R(A'v,\D^\mathrm R w)+g_\mathrm R(A\D^\mathrm Rv,\D^\mathrm R w)\;\dd t \\
&= \int_0^1 \Big[g_0(\D^{g_0}v,\D^{g_0}w)-g_\mathrm R(A'v,\D^\mathrm R w)-g_\mathrm R(A\D^\mathrm Rv,\D^\mathrm R w) \\
&\hspace{0.5cm} -g_0(R^{g_0}(\dot{\gamma_0},v)w,\dot{\gamma_0})\Big]\;\dd t \\
&\hspace{0.5cm} -g_\mathrm R(A_0 v(0),w(0))-\s^\p_{(\dot{\gamma_0}(0),\dot{\gamma_0}(1))}\Big((v(0),v(1)),(w(0),w(1))\Big)  \\
&= \int_0^1\Big[-g_\mathrm R(A'v,\D^\mathrm{R}w)+g_\mathrm{R}(A\D^\mathrm{R}v,\chr^\mathrm{R}w)+g_\mathrm{R}(A\chr^\mathrm{R}v,\D^\mathrm{R}w) \\
&\hspace{0.5cm} +g_\mathrm{R}(A\chr^\mathrm{R}v,\chr^\mathrm{R}w)+g_\mathrm{R}(AR^{g_0}(\dot{\gamma_0},v)\dot{\gamma_0},w)\Big]\;\dd t  \\
&\hspace{0.5cm} -g_\mathrm R(A_0 v(0),w(0))-\s^\p_{(\dot{\gamma_0}(0),\dot{\gamma_0}(1))}\Big((v(0),v(1)),(w(0),w(1))\Big).
\end{aligned}
\end{eqnarray*}}We now briefly explain why the above bilinear form $D$ is represented by a compact operator $$T_D:T_{\gamma_0}\op(M)\la\big[T_{\gamma_0}\op(M)\big]^*\cong T_{\gamma_0}\op(M).$$ Notice that each term of the above integral is a continuous bilinear form in $T_{\gamma_0}\op(M)$ that does not contain more than one derivative of its arguments. More precisely, each of these bilinear forms can be written as the composition of the continuous covariant derivative operator\footnote{See Remark~\ref{re:covderh1}.} $\D^\mathrm R:\sect^{H^1}(\gamma_0^*TM)\to\sect^{L^2}(\gamma_0^*TM)$ and other $\sect^{L^2}(\gamma_0^*TM)$--continuous operators, such as $\chr^\mathrm R$ and $A$ for instance. Each $g_\mathrm R$--product of a couple of such composite operators along $\gamma_0^*TM$ is hence in $L^1([0,1],\R)$, and its integral is therefore continuous. All the other integrand terms without covariant derivatives are also clearly $\sect^{H^1}(\gamma_0^*TM)$--continuous, since they are $g_\mathrm R$--products of composite $\sect^{L^2}(\gamma_0^*TM)$--continuous operators, hence also in $L^1([0,1],\R)$. Therefore, up to convenient identifications\footnote{See Remark~\ref{re:TH1H1T}.} of $T_{\gamma_0}\op(M)$, each integrand term is a bilinear form on $\sect^{H^1}(\gamma_0^*TM)$ that is $\sect^{H^1}(\gamma_0^*TM)$--continuous in one variable (or in both, if there are no covariant derivatives involved) and $\sect^0(\gamma_0^*TM)$--continuous in the other variable. It then follows from Lemma~\ref{le:h1c0extension} that each of these terms is represented by a compact operator of $T_{\gamma_0}\op(M)$.

With regard to the last two terms of the above expression, which are not integrands, they are obviously represented by a compact operator of $T_{\gamma_0}\op(M)$ since they are composite operators involving a linearized evaluation map of the form \eqref{eq:dev01}, which has finite rank and is hence compact.

Thus, $D$ is represented by a compact operator $T_D\in\cpc(T_{\gamma_0}\op(M))$, given by the sum of the compact operators above described that represent each term of $D$, see Proposition~\ref{prop:propcpcop}. This implies, by Proposition~\ref{prop:fredholmisocomp}, that \eqref{eq:indexformrep} is Fredholm, concluding the proof.
\end{proof}

We end this section calculating the mixed derivative $\frac{\partial^2 E}{\partial g\partial\gamma}(g_0,\gamma_0)$, which will be later useful for our genericity results.

\begin{proposition}\label{prop:mixedderivative}
Fix $(g_0,\gamma_0)\in\mathcal U$ such that $\frac{\partial E}{\partial\gamma}(g_0,\gamma_0)=0$. Consider $\nabla$ any symmetric connection on $M$ and denote by $\D$ the covariant derivative operator of vector fields along $\gamma_0$ induced\footnote{Recall Proposition~\ref{prop:covder}. Although only stated for Levi--Civita connections, the result holds for any symmetric connection on $TM$, see Definition~\ref{def:symflat}. Alternatively, consider $\nabla$ to be the Levi--Civita connection of some metric on $M$ and apply directly Proposition~\ref{prop:covder}.} by $\nabla$. Then for all $v\in T_{\gamma_0}\op(M)$ and $h\in\mathds E$,
\begin{equation}\label{eq:mixedderivative}
\frac{\partial^2 E}{\partial g\partial\gamma}(g_0,\gamma_0)(h,v)=\int_0^1 h(\dot{\gamma_0},\D v)+\tfrac{1}{2}\nabla h(v,\dot{\gamma_0},\dot{\gamma_0})\;\dd t,
\end{equation}
\end{proposition}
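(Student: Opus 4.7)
The plan is to exploit the linearity of $E$ in its first argument to reduce the computation to a single variation. Since $E(g, \gamma)$ is linear in $g$, formula \eqref{eq:dfdg} gives
$$\frac{\partial E}{\partial g}(g_0, \gamma)h = \tfrac{1}{2}\int_0^1 h(\dot\gamma, \dot\gamma)\, \dd t$$
for every $\gamma \in \op(M)$. Since $E$ is of class $C^k$ with $k \geq 2$ by Proposition~\ref{prop:fck}, Schwartz's theorem on the equality of mixed partials applies, so
$$\frac{\partial^2 E}{\partial g\,\partial\gamma}(g_0, \gamma_0)(h, v) = \frac{\partial}{\partial s}\bigg|_{s=0}\,\tfrac{1}{2}\int_0^1 h(\dot\gamma_s, \dot\gamma_s)\, \dd t$$
for any sufficiently regular variation $\gamma_s$ of $\gamma_0$ with variational field $v$.

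First I would handle the case in which $v \in \sect^1(\gamma_0^*TM)$, so that a $C^2$ variation $\gamma_s$ with $\frac{\partial}{\partial s}\gamma_s|_{s=0}=v$ exists. Writing $V = \frac{\partial}{\partial s}\gamma_s$ and $T = \frac{\partial}{\partial t}\gamma_s$, the vanishing of their Lie bracket combined with the torsion-freeness of $\nabla$ yields $\nabla_V T = \nabla_T V$, which at $s=0$ evaluates to $\D v$. Differentiating $h(\dot\gamma_s, \dot\gamma_s)$ at $s=0$ and using the symmetry of $h$ then gives
$$\frac{\dd}{\dd s}\bigg|_{s=0} h(\dot\gamma_s, \dot\gamma_s) = (\nabla_v h)(\dot\gamma_0, \dot\gamma_0) + 2\, h(\dot\gamma_0, \D v) = \nabla h(v, \dot\gamma_0, \dot\gamma_0) + 2\, h(\dot\gamma_0, \D v).$$
Substituting into the integral and collecting the factor of $\tfrac{1}{2}$ yields \eqref{eq:mixedderivative} for $v \in \sect^1(\gamma_0^*TM)$.

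To extend to all $v \in T_{\gamma_0}\op(M) = \sect^{H^1}(\gamma_0^*TM)$, I would invoke density of $\sect^1(\gamma_0^*TM)$ in $\sect^{H^1}(\gamma_0^*TM)$ (Corollary~\ref{cor:cinftyhk2}) together with continuity of both sides as bilinear forms on $\mathds{E} \times T_{\gamma_0}\op(M)$. The left-hand side is continuous by Proposition~\ref{prop:fck}; the right-hand side is continuous since $\dot\gamma_0$ is bounded along the compact image $\gamma_0([0,1])$, $\D v \in \sect^{L^2}(\gamma_0^*TM)$, and both $h$ and $\nabla h$ are bounded. The two continuous bilinear forms agree on a dense subset and so agree everywhere, exactly as in the corresponding density arguments in the proofs of Propositions~\ref{prop:critgenenfunc} and~\ref{prop:indexform}.

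Finally, I would note as a sanity check that the right-hand side of \eqref{eq:mixedderivative} is independent of the chosen symmetric connection. If $\nabla' = \nabla + \chr$ is another symmetric connection, Remark~\ref{re:chrsym} gives that $\chr$ is a symmetric $(1,2)$-tensor, so $\D'v = \D v + \chr(\dot\gamma_0, v)$ and $(\nabla'_v h)(\dot\gamma_0, \dot\gamma_0) = (\nabla_v h)(\dot\gamma_0, \dot\gamma_0) - 2\,h(\dot\gamma_0, \chr(v, \dot\gamma_0))$; the correction terms cancel using $\chr(\dot\gamma_0, v) = \chr(v, \dot\gamma_0)$. The main (very mild) obstacle is the regularity bookkeeping in the density step, since the variation formalism requires $v$ of class $C^1$ while the statement concerns $H^1$ vector fields; however, this is entirely parallel to the reductions already carried out earlier in the chapter.
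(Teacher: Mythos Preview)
Your proposal is correct and follows essentially the same route as the paper: invoke Schwartz's theorem to reduce to differentiating $\gamma\mapsto\tfrac12\int_0^1 h(\dot\gamma,\dot\gamma)\,\dd t$, carry out the computation via a $C^2$ variation using torsion-freeness of $\nabla$, extend to $H^1$ by density (Corollary~\ref{cor:cinftyhk2}) and continuity, and verify independence of the symmetric connection via the symmetry of the Christoffel tensor. The only cosmetic difference is that the paper places the connection-independence remark immediately after the variational computation rather than at the end.
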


\begin{proof}
For this proof, it is convenient to use the Schwartz Lemma. Let us briefly explain the context where this calculation simplifier will be employed. Recall that from Proposition~\ref{prop:fck}, the energy functional $E$ is of class $C^k$. Since the domain $\mathcal U=\A_{g_\mathrm A,\nu}\times\op(M)$ is the product of an open subset $\A_{g_\mathrm A,\nu}$ of an affine Banach space $g_\mathrm A+\mathds{E}$ and a Hilbert manifold $\op(M)$, the first partial derivative can be thought as $$\frac{\partial E}{\partial g}:\A_{g_\mathrm A,\nu}\times\op(M)\la\mathds{E}^*,$$ which is explicitly given by \eqref{eq:dfdg}. Deriving $\frac{\partial E}{\partial g}(g_0,\,\cdot\,)$, one obtains
\begin{equation}\label{eq:yx}
\frac{\partial}{\partial\gamma}\frac{\partial E}{\partial g}(g_0,\gamma_0):T_{\gamma_0}\op(M)\la\mathds E^*,
\end{equation}
which may also be seen as a bilinear form on $T_{\gamma_0}\op(M)\times\mathds E$. If instead of deriving $E$ first in $g$, one derives first in $\gamma$ and then in $g$, the result is
\begin{equation}\label{eq:xy}
\frac{\partial}{\partial g}\frac{\partial E}{\partial \gamma}(g_0,\gamma_0):\mathds E\la T_{\gamma_0}\op(M)^*,
\end{equation}
which is a bilinear form on $\mathds E\times T_{\gamma_0}\op(M)$. Using local charts and the Schwartz Lemma, it follows that these maps are transpose to each other, that is, for all $(h,v)\in\mathds E\times T_{\gamma_0}\op(M)$,
\begin{equation}\label{eq:xyyxtranspose}
\frac{\partial^2 E}{\partial g\partial\gamma}(g_0,\gamma_0)(h,v)=\frac{\partial^2 E}{\partial\gamma\partial g}(g_0,\gamma_0)(v,h).
\end{equation}
Thus, since we are interested in computing the mixed derivative \eqref{eq:xy}, however it turns out to be easier to compute \eqref{eq:yx}, we now use the above observation.

Recall that from \eqref{eq:dfdg}, since $E$ is linear in the first variable, for all $h\in\mathds E$, $$\frac{\partial E}{\partial g}(g_0,\gamma)h=\tfrac{1}{2}\int_0^1 h(\dot{\gamma},\dot{\gamma})\;\dd t.$$ We would now like to derivate the above expression with respect to $\gamma$, to obtain a formula for \eqref{eq:yx}. From Corollary~\ref{cor:geodck}, $\gamma_0$ is of class $C^2$, see also Remark~\ref{re:gammack}. Consider a $C^2$ variation $$(-\varepsilon,\varepsilon)\times [0,1]\ni (s,t)\longmapsto\gamma_s(t)\in M$$ with $\gamma_s=\gamma_0$ for $s=0$ and $(\gamma_s(0),\gamma_s(1))\in\p$ for all $s$. We will denote $\dot{\gamma_s}(t_0)\in T_{\gamma_s(t_0)}M$ the derivative $\frac{\partial}{\partial t}\gamma_s(t)\big|_{t=t_0}$. The infinitesimal variation $v$ associated induces a vector field $v\in\sect^1(\gamma^*TM)$ given by $$v(t)=\left.\frac{\partial}{\partial s}\gamma_s(t)\right|_{s=0}.$$ Consider a symmetric connection $\nabla$ on $M$ and the covariant derivative operator $\D$ of vector fields along $\gamma_0$ induced by $\nabla$. Thus, since from Proposition~\ref{prop:fck} the functional $E$ is of class $C^k$, we may compute
\begin{equation}\label{eq:ddfdgammadg}
\begin{aligned}
\frac{\partial^2 E}{\partial\gamma\partial g}(g_0,\gamma_0)(v,h) &= \frac{\partial}{\partial s}\left.\frac{\partial E}{\partial g}(g_0,\gamma_s)h\right|_{s=0}\\
&= \tfrac12\int_0^1 \frac{\partial}{\partial s} h(\dot{\gamma_s},\dot{\gamma_s})\Big|_{s=0}\;\dd t\\
&= \int_0^1 h(\dot{\gamma}_0,\D v)+\tfrac{1}{2}\nabla h(v,\dot{\gamma_0},\dot{\gamma_0})\;\dd t.
\end{aligned}
\end{equation}
It is easy to see that the following construction does not depend on the choice of $\nabla$. Indeed, replacing $\nabla$ with $\nabla'$ above, the difference between the obtained expressions vanishes identically from the symmetries of the Christoffel tensor $\nabla-\nabla'$, see Definition~\ref{def:christoffeltens} and \eqref{eq:christoffeltensor}. Furthermore, notice that the above formula {\em a priori} does not hold for any Sobolev $H^1$ class variation $v\in T_{\gamma_0}\op(M)$, but only for $v\in\sect^1(\gamma_0^*TM)$.

Nevertheless, using again that $E$ is of class $C^k$, its derivative $$\frac{\partial^2 E}{\partial\gamma\partial g}(g_0,\gamma_0):\mathds E\oplus T_{\gamma_0}\op(M)\la\R$$ is continuous. From Corollary~\ref{cor:cinftyhk2}, $\sect^1(\gamma_0^*TM)$ is dense in $\sect^{H^1}(\gamma_0^*TM)$ hence in $T_{\gamma_0}\op(M)$. Thus, the continuous map $\frac{\partial^2 E}{\partial\gamma\partial g}(g_0,\gamma_0)$ coincides in a dense subset with \eqref{eq:ddfdgammadg}, which is also continuous. It follows that for all $(h,v)\in\mathds E\times T_{\gamma_0}\op(M)$,
\begin{equation*}
\frac{\partial^2 E}{\partial\gamma\partial g}(g_0,\gamma_0)(v,h)=\int_0^1 h(\dot{\gamma_0},\D v)+\tfrac{1}{2}\nabla h(v,\dot{\gamma_0},\dot{\gamma_0})\;\dd t.
\end{equation*}
Notice that even if $v$ is {\em only} of Sobolev class $H^1$, the above integral is well--defined, since $\D v\in\sect^{L^2}(\gamma_0^*TM)$, see Remark~\ref{re:covderh1}. This gives a formula for \eqref{eq:yx}, and hence for its transpose \eqref{eq:xy}. From \eqref{eq:xyyxtranspose}, it follows that \eqref{eq:mixedderivative} holds, concluding the proof.
\end{proof}

\section{\texorpdfstring{$\p$--Jacobi fields}{P--Jacobi fields}}
\label{sec:pjacobi}

In the last sections, we studied critical points $\gamma_0$ of the $g_0$--energy functional $E_{g_0}=E(g_0,\cdot\,)$ with endpoints condition $\p$, i.e., $$(g_0,\gamma_0)\in\mathcal U, \quad\frac{\partial E}{\partial\gamma}(g_0,\gamma_0)=0.$$ In order to characterize degeneracy of such critical points, it is necessary to study the kernel of the index form $\frac{\partial^2 E}{\partial \gamma^2}(g_0,\gamma_0)$ given by \eqref{eq:indexform}, recall Definition~\ref{def:degnondegstdeg}. Such kernel is formed by special $g_0$--Jacobi fields along $\gamma_0$ that describe the variational character of the endpoints condition $\p$, see Definitions~\ref{def:jacobifield} and~\ref{def:pjacobi}. Thus, one expects these Jacobi fields to satisfy a {\em linearized endpoints condition} that involves submanifold objects associated to $\p$ such as its second fundamental form $\s^\p$, as confirmed by the next result.

\begin{proposition}\label{prop:pjacobi}
Fix $(g_0,\gamma_0)\in\mathcal U$ such that $\frac{\partial E}{\partial\gamma}(g_0,\gamma_0)=0$. Then the kernel $\ker\frac{\partial^2 E}{\partial \gamma^2}(g_0,\gamma_0)$ of the index form \eqref{eq:indexform} is the subspace of $T_{\gamma_0}\op(M)$ formed by $g_0$--Jacobi fields $J\in\sect^2(\gamma_0^*TM)$ along $\gamma_0$, such that 
\begin{equation}\label{eq:prepcampodijacobi}
(\D^{g_0} J(0),\D^{g_0} J(1))+\s^\p_{(\dot{\gamma_0}(0),\dot{\gamma_0}(1))}\Big(J(0),J(1)\Big)\in T_{(\gamma_0(0),\gamma_0(1))}\p^\perp,
\end{equation}
where $^\perp$ denotes orthogonality with respect to $\overline{g_0}$.
\end{proposition}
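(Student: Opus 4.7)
The plan is the standard two-step strategy for identifying the kernel of a quadratic form arising from a second variation: use compactly supported test fields to extract an interior ODE with enough regularity, then use general test fields to read off the boundary condition.

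\medskip

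\emph{Step 1 (interior equation and regularity).} Suppose $v\in T_{\gamma_0}\op(M)$ satisfies $\frac{\partial^2 E}{\partial\gamma^2}(g_0,\gamma_0)(v,w)=0$ for every $w\in T_{\gamma_0}\op(M)$. Specializing to variations $w\in\sect^{H^1}(\gamma_0^*TM)$ with support compactly contained in $\,]0,1[$—which lie automatically in $T_{\gamma_0}\op(M)$ since $w(0)=w(1)=0$—the second fundamental form term in \eqref{eq:indexform} drops, leaving
$$\int_0^1 g_0(\D^{g_0}v,\D^{g_0}w)-g_0(R^{g_0}(\dot{\gamma_0},v)w,\dot{\gamma_0})\;\dd t=0.$$
I would then mimic the argument in the second half of the proof of Proposition~\ref{prop:critgenenfunc}: pick a $g_0$-parallel $g_0$-orthonormal frame $\{e_i\}_{i=1}^m$ along $\gamma_0$ (available since $\gamma_0\in C^{k+1}$ by Remark~\ref{re:gammack}), decompose $v=\sum_iv_ie_i$, $w=\sum_iw_ie_i$, and move a primitive of the curvature coefficients onto the side carrying $w_i'$. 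Lemma~\ref{le:distder0} then forces a suitable combination of $v_i'$ with a continuous function to be almost-everywhere constant; Lemma~\ref{le:milagre} upgrades $v$ to class $C^1$, and bootstrapping once more gives $v\in\sect^2(\gamma_0^*TM)$ together with the pointwise $g_0$-Jacobi equation $(\D^{g_0})^2v=R^{g_0}(\dot{\gamma_0},v)\dot{\gamma_0}$.

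\medskip

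\emph{Step 2 (boundary condition).} With $v$ now a $C^2$ Jacobi field, I integrate the index form by parts against an arbitrary $w\in T_{\gamma_0}\op(M)$. The curvature symmetry $g_0(R^{g_0}(\dot{\gamma_0},v)w,\dot{\gamma_0})=-g_0(R^{g_0}(\dot{\gamma_0},v)\dot{\gamma_0},w)$ combined with the Jacobi equation kills the resulting integrand, leaving only
$$g_0(\D^{g_0}v(1),w(1))-g_0(\D^{g_0}v(0),w(0))-\s^\p_{(\dot{\gamma_0}(0),\dot{\gamma_0}(1))}\!\bigl((v(0),v(1)),(w(0),w(1))\bigr)=0.$$
Writing the first two terms as $-\overline{g_0}\bigl((\D^{g_0}v(0),\D^{g_0}v(1)),(w(0),w(1))\bigr)$ and recognizing $\s^\p$ in its vector-valued normal form (as in Definition~\ref{def:focalpoint}), the arbitrariness of $(w(0),w(1))\in T_{(\gamma_0(0),\gamma_0(1))}\p$ translates this identity into \eqref{eq:prepcampodijacobi}. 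The converse is immediate: for a $C^2$ Jacobi field satisfying \eqref{eq:prepcampodijacobi}, reversing the integration by parts recovers vanishing of the index form on all of $T_{\gamma_0}\op(M)$.

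\medskip

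The principal technical obstacle is the regularity bootstrap in Step~1: only one covariant derivative on $v$ is visible in the index form, so extracting a full second-order ODE has to be done through the distributional machinery of Section~\ref{sec:afewmorelemmas}, applied coordinatewise in the parallel frame. The remaining sign bookkeeping—reconciling the curvature symmetries, the convention $\overline{g_0}=g_0\oplus(-g_0)$, and the passage between the scalar- and vector-valued forms of the second fundamental form—is delicate but routine.
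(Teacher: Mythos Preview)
Your proposal is correct and follows essentially the same approach as the paper: the paper carries out exactly the distributional regularity bootstrap you describe (introducing the primitive $b_i(t)=R_i(0)+\int_0^tR_i$ of the curvature coefficients in a $g_0$-parallel orthonormal frame, applying Lemma~\ref{le:distder0} and Lemma~\ref{le:milagre} to reach $C^2$), then reads off the Jacobi equation from test fields vanishing at the endpoints and the boundary condition from arbitrary $w$, with the converse done by the same integration by parts you indicate.
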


\begin{proof}
Recall that from Proposition~\ref{prop:critgenenfunc}, $\gamma_0:[0,1]\to M$ is a $g_0$--geodesic, in particular of class $C^2$. Suppose $J\in\sect^2(\gamma_0^*TM)$ is a $g_0$--Jacobi field along $\gamma_0$ satisfying \eqref{eq:prepcampodijacobi}. Then, using the $g_0$--Jacobi equation \eqref{eq:jacobi} and \eqref{eq:prepcampodijacobi}, we may compute, using identifications \eqref{ident:bilin},
{\allowdisplaybreaks
\begin{eqnarray*}
\frac{\partial^2 E}{\partial \gamma^2}(g_0,\gamma_0)(J,v) &\stackrel{\eqref{eq:indexform}}{=}& \int_0^1 g_0(\D^{g_0}J,\D^{g_0}v)-g_0(R^{g_0}(\dot{\gamma_0},J)v,\dot{\gamma_0})\;\dd t \\
&&\hspace{0.5cm}-\s^\p_{(\dot{\gamma_0}(0),\dot{\gamma_0}(1))}\Big((J(0),J(1)),(v(0),v(1))\Big)\\
&=&\int_0^1 g_0\big(-(\D^{g_0})^2J,v\big)+g_0(R^{g_0}(\dot{\gamma_0},J)\dot{\gamma_0},v)\;\dd t \\
&&\hspace{0.5cm}+g_0(\D^{g_0}J,v)\Big|_0^1\\
&&\hspace{0.5cm}-\s^\p_{(\dot{\gamma_0}(0),\dot{\gamma_0}(1))}\Big((J(0),J(1)),(v(0),v(1))\Big)\\
&\stackrel{\eqref{eq:jacobi}}{=}& g_0(\D^{g_0}J(1),v(1))-g_0(\D^{g_0}J(0),v(0))\\
&&\hspace{0.5cm}-\s^\p_{(\dot{\gamma_0}(0),\dot{\gamma_0}(1))}\Big((J(0),J(1)),(v(0),v(1))\Big)\\
&=&-\overline{g_0}\Big(\big(\D^{g_0}J(0),\D^{g_0}J(1)\big),(v(0),v(1))\Big)\\
&&\hspace{0.5cm}-\overline{g_0}\Big(\s^\p_{(\dot{\gamma_0}(0),\dot{\gamma_0}(1))}\Big(J(0),J(1)\Big),(v(0),v(1))\Big)\\
&\stackrel{\eqref{eq:prepcampodijacobi}}{=}&0.
\end{eqnarray*}}Thus, $J\in\ker\frac{\partial^2 E}{\partial \gamma^2}(g_0,\gamma_0)$.

Conversely, suppose $J\in\ker\frac{\partial^2 E}{\partial \gamma^2}(g_0,\gamma_0)$. Before any computations, we first have to ensure that $J$ is sufficiently regular. Since $\gamma_0$ is of class $C^2$, we may consider $\{e_i(t)\}_{i=1}^m$ a $g_0$--parallel orthonormal frame of $\gamma_0^*TM$, i.e., a $g_0$--orthonormal frame\footnote{Recall Definition~\ref{def:gorthframe}.} formed by vectors $e_i\in\sect^{H^1}(\gamma_0^*TM)$ along $\gamma_0$ that are $g_0$--parallel.\footnote{From Definition~\ref{def:parallel}, a vector field $v$ along $\gamma_0$ is $g_0$--parallel if it satisfies $\D^{g_0}v=0$. Notice however that in this context, this ODE is supposed to hold almost everywhere, since the considered vector fields are not $C^k$, but only Sobolev $H^1$.} Define for all $1\leq i\leq m$ and $t\in [0,1]$,
\begin{align*}
J_i(t) &= g_0(J(t),e_i(t));\\
R_i(t) &= g_0\big(R^{g_0}(\dot{\gamma_0}(t),J(t))\dot{\gamma_0}(t),e_i(t)\big);\\
\intertext{and}
b_i(t) &= R_i(0)+\int_0^t R_i(s)\;\dd s.
\end{align*}
Notice that $J_i\in H^1([0,1],\R)$, $R_i\in C^0([0,1],\R)$ and $b_i\in C^1([0,1],\R)$, with
\begin{equation}
b_i'(t)=R_i(t), \quad t\in [0,1],\quad 1\leq i\leq m.
\end{equation}
Since the frame is orthonormal, consider $\delta_i=g_0(e_i,e_i)=\pm1$. Notice that from the above definitions,
\begin{equation}\label{eq:jji}
J(t)=\sum_{i=1}^m \delta_iJ_i(t)e_i(t).
\end{equation}
Using that the frame is parallel, it is also possible to express the covariant derivative of $J$ in terms of the ordinary derivatives of the coordinate functions $J_i$. More precisely, for almost every $t\in [0,1]$,
\begin{equation*}
\D^{g_0} J(t)=\sum_{i=1}^m \delta_iJ'_i(t)e_i(t),
\end{equation*}
where $J'_i\in L^2([0,1],\R)$ is the (almost everywhere defined) ordinary derivative of $J_i:[0,1]\to\R$. In addition, define for all $t\in [0,1]$,
\begin{equation}
\alpha_i(t)=J'_i(t)-b_i(t),
\end{equation}
and $\alpha=(\alpha_i)_{i=1}^m\in L^2([0,1],\R^m)$.

Let $v\in T_{\gamma_0}H^1([0,1],M)$, and decompose it with respect to the same frame,
\begin{equation*}
v(t)=\sum_{i=1}^m \lambda_i(t)e_i(t), \quad t\in [0,1],
\end{equation*}
where $\lambda_i:[0,1]\to\R$. Since $\frac{\partial^2 E}{\partial\gamma^2}(g_0,\gamma_0)(J,v)=0$ for all $v\in T_{\gamma_0}H^1([0,1],M)$, in particular this holds for $v$'s such that for all $1\leq i\leq m$, $\lambda_i\in C^\infty_c(\,]0,1[,\R)$, for these $v$'s are clearly in $H^1([0,1],\R)$. 

Denote $\lambda=(\lambda_i)_{i=1}^m\in C^\infty_c(\,]0,1[,\R^m)$ and notice that, since the frame is parallel,
\begin{equation*}
\D^{g_0} v(t)=\sum_{i=1}^m \lambda'_i(t)e_i(t), \quad t\in\, ]0,1[,
\end{equation*}
and $\lambda'=(\lambda'_i)_{i=1}^m\in C^\infty_c(\,]0,1[,\R^m)$. We may then compute
{\allowdisplaybreaks
\begin{eqnarray*}
\frac{\partial^2 E}{\partial \gamma^2}(g_0,\gamma_0)(J,v) &\stackrel{\eqref{eq:indexform}}{=}& \int_0^1 g_0(\D^{g_0}J,\D^{g_0}v)-g_0(R^{g_0}(\dot{\gamma_0},J)v,\dot{\gamma_0})\;\dd t \\
&&\hspace{0.5cm}-\s^\p_{(\dot{\gamma_0}(0),\dot{\gamma_0}(1))}\Big((J(0),J(1)),(v(0),v(1))\Big)\\
&=& \int_0^1 g_0(\D^{g_0}J,\D^{g_0}v)+g_0(R^{g_0}(\dot{\gamma_0},J)\dot{\gamma_0},v)\;\dd t \\
&=& \int_0^1\sum_{i=1}^m g_0(\D^{g_0}J,\lambda'_ie_i)+g_0(R^{g_0}(\dot{\gamma_0},J)\dot{\gamma_0},\lambda_ie_i)\;\dd t \\
&=& \int_0^1\sum_{i=1}^m \lambda'_ig_0(\D^{g_0}J,e_i)+\lambda_ig_0(R^{g_0}(\dot{\gamma_0},J)\dot{\gamma_0},e_i)\;\dd t \\
&=& \int_0^1\sum_{i=1}^m \lambda'_iJ'_i+\lambda_iR_i\;\dd t \\
&=& \int_0^1\sum_{i=1}^m \lambda'_iJ'_i+\lambda_ib'_i\;\dd t \\
&=& \int_0^1\sum_{i=1}^m \lambda'_iJ'_i-\lambda'_ib_i\;\dd t \\
&=& \int_0^1\sum_{i=1}^m \lambda'_i(J'_i-b_i)\;\dd t \\
&=& \int_0^1\sum_{i=1}^m \alpha_i\lambda'_i\;\dd t \\
&=& \int_0^1\langle\alpha,\lambda'\rangle\;\dd t.
\end{eqnarray*}}Since $J\in\ker\frac{\partial^2 E}{\partial \gamma^2}(g_0,\gamma_0)$, the above expression vanishes for all $v$, hence for all $\lambda\in C^\infty_c(\,]0,1[,\R^m)$. From Lemma~\ref{le:distder0}, it follows that $\alpha$ is constant almost everywhere. This means that there exists $a=(a_i)_{i=1}^m\in\R^m$ such that $$\alpha_i=J'_i(t)-b_i(t)=a_i, \quad 1\leq i\leq m,$$ for almost every $t\in [0,1]$. Then, 
\begin{equation}\label{eq:jiprime}
J'_i(t)=a_i+b_i(t), \quad 1\leq i\leq m,
\end{equation}
for almost all $t\in [0,1]$. Notice that the right--hand side of \eqref{eq:jiprime} is continuous and, since $J_i$ is of Sobolev class $H^1$, it is absolutely continuous. From Lemma~\ref{le:milagre}, it follows that $J_i$ are of class $C^1$ and the above equality holds for every $t\in [0,1]$. The same argument applies to \eqref{eq:jji}, in that it holds almost everywhere, its right--hand side is continuous and given as the covariant derivative of an absolutely continuous vector field. Thus, from Corollary~\ref{cor:milagre}, it follows that $J$ is of class $C^1$ and \eqref{eq:jji} holds for all $t\in [0,1]$.

Therefore, since $J$ and $J_i$ are of class $C^1$, it follows that the frame $\{e_i(t)\}_{i=1}^m$ is also of class $C^1$. Then, from \eqref{eq:jji}, we have that $\D^{g_0} J$ is of class $C^1$, hence $J$ is of class $C^2$. This gives the necessary regularity to proceed.

Finally, since $J\in\sect^2(\gamma_0^*TM)$, we may compute for $v\in T_{\gamma_0}\op(M)$ such that $v(0)=0$ and $v(1)=0$,
{\allowdisplaybreaks
\begin{eqnarray*}
\frac{\partial^2 E}{\partial \gamma^2}(g_0,\gamma_0)(J,v) &\stackrel{\eqref{eq:indexform}}{=}&\int_0^1 g_0(\D^{g_0}J,\D^{g_0}v)-g_0(R^{g_0}(\dot{\gamma_0},J)v,\dot{\gamma_0})\;\dd t \\
&&\hspace{0.5cm}-\s^\p_{(\dot{\gamma_0}(0),\dot{\gamma_0}(1))}\Big((J(0),J(1)),(v(0),v(1))\Big)\\
&=& \int_0^1 g_0(-(\D^{g_0})^2J,v)+g_0(R^{g_0}(\dot{\gamma_0},J)\dot{\gamma_0},v)\;\dd t\\
&& \hspace{0.5cm}+g_0(\D^{g_0}J,v)\Big|_0^1 \\
&=&\int_0^1 g_0\Big(-(\D^{g_0})^2 J+R^{g_0}(\dot{\gamma_0},J)\dot{\gamma_0},v\Big)\;\dd t.
\end{eqnarray*}}Since the above expression vanishes for all such $v$'s, it follows that $J$ must satisfy the $g_0$--Jacobi equation along $\gamma_0$,
\begin{equation}\label{eq:g0jacobieq}
(\D^{g_0})^2 J=R^{g_0}(\dot{\gamma_0},J)\dot{\gamma_0}.
\end{equation}
In addition, for $v$'s that do not vanish at the endpoints, we have, applying identifications \eqref{ident:bilin},
{\allowdisplaybreaks
\begin{eqnarray*}
\frac{\partial^2 E}{\partial \gamma^2}(g_0,\gamma_0)(J,v) &\stackrel{\eqref{eq:indexform}}{=}&\int_0^1 g_0(\D^{g_0}J,\D^{g_0}v)-g_0(R^{g_0}(\dot{\gamma_0},J)v,\dot{\gamma_0})\;\dd t \\
&&\hspace{0.5cm}-\s^\p_{(\dot{\gamma_0}(0),\dot{\gamma_0}(1))}\Big((J(0),J(1)),(v(0),v(1))\Big)\\
&=& \int_0^1 g_0(-(\D^{g_0})^2J,v)+g_0(R^{g_0}(\dot{\gamma_0},J)\dot{\gamma_0},v)\;\dd t\\
&& \hspace{0.5cm}+g_0(\D^{g_0}J,v)\Big|_0^1 \\
&&\hspace{0.5cm}-\s^\p_{(\dot{\gamma_0}(0),\dot{\gamma_0}(1))}\Big((J(0),J(1)),(v(0),v(1))\Big)\\
&=&\int_0^1 g_0\Big(-(\D^{g_0})^2 J+R^{g_0}(\dot{\gamma_0},J)\dot{\gamma_0},v\Big)\;\dd t. \\
&& \hspace{0.5cm}+g_0(\D^{g_0}J(1),v(1))-g_0(\D^{g_0}J(0),v(0)) \\
&&\hspace{0.5cm}-\overline{g_0}\Big(\s^\p_{(\dot{\gamma_0}(0),\dot{\gamma_0}(1))}\Big(J(0),J(1)\Big),(v(0),v(1))\Big)\\
&\stackrel{\eqref{eq:g0jacobieq}}{=}&-\overline{g_0}\Big((\D^{g_0}J(0),\D^{g_0}J(1)),(v(0),v(1))\Big).\\
&&\hspace{0.5cm}-\overline{g_0}\Big(\s^\p_{(\dot{\gamma_0}(0),\dot{\gamma_0}(1))}\Big(J(0),J(1)\Big),(v(0),v(1))\Big).
\end{eqnarray*}}Since the above expression vanishes for all such $v$'s, it follows that $J$ must also satisfy \eqref{eq:prepcampodijacobi}, concluding the proof.
\end{proof}

\begin{definition}\label{def:pjacobi}
Let $\gamma$ be a $(g,\p)$--geodesic. A vector field $J\in\sect^{H^1}(\gamma_0^*TM)$ is called a {\em $\p$--Jacobi field}\index{$\p$--Jacobi field}\index{Jacobi field!$\p$--Jacobi field} along $\gamma$ with respect to $g$ if $J$ is in the kernel of the index form of $E_g$, i.e., if $J\in\ker\frac{\partial^2 E}{\partial\gamma^2}(g,\gamma)$.
From Proposition~\ref{prop:pjacobi}, this is equivalent to $J$ satisfying
\begin{itemize}
\item[(i)] the $g$--Jacobi equation, i.e., $$(\D^{g_0})^2 J=R^{g_0}(\dot{\gamma_0},J)\dot{\gamma_0};$$
\item[(ii)] the {\em linearized endpoints condition}\index{GEC!linearized} associated to $\p$ at $(g,\gamma)$, given by
\begin{equation}\label{eq:pcampodijacobi}
(\D^{g} J(0),\D^{g} J(1))+\s^\p_{(\dot{\gamma}(0),\dot{\gamma}(1))}\Big(J(0),J(1)\Big)\in T_{(\gamma(0),\gamma(1))}\p^\perp,
\end{equation}
where $^\perp$ denotes orthogonality with respect to $\overline{g}$.
\end{itemize}
Finally, when $g_0$ and $\gamma_0$ are evident from the context, we will simply refer to $J$ as $\p$--Jacobi field.
\end{definition}

\begin{remark}\label{re:jck}
From Corollary~\ref{cor:jacobick}, since $g\in\met^k_\nu(M)$, if $J$ is a $g$--Jacobi field along a $g$--geodesic $\gamma$, then $J$ is of class $C^{k}$. In particular, $\p$--Jacobi fields are $C^{k}$.
\end{remark}

\begin{example}\label{ex:jac}
Consider $\gamma$ a $(g,\p)$--geodesic where $\p$ is one of the GECs given in Example~\ref{ex:gecs}, see also Example~\ref{ex:gecs2}. If $\p=\{p\}\times\{q\}$, $\gamma$ simply joins $p$ and $q$. According to expected, since the tangent space to $\p$ is trivial, $T_{(p,q)}\p=\{0\}$, the $\p$--Jacobi fields are $g$--Jacobi fields along $\gamma$ that vanish at its endpoints. In case $P$ and $Q$ are submanifolds of $M$ and $\p=P\times Q$, $\gamma$ is $g$--orthogonal to $P$ and $Q$ at its endpoints. In addition, we may compute the second fundamental form\footnote{Recall Definition~\ref{def:sfform}} of $\p$ as $$\s_{(\dot{\gamma}(0),\dot{\gamma}(1))}^{P\times Q}=\s_{\dot{\gamma}(0)}^P-\s_{\dot{\gamma}(1)}^Q,$$ considering the metrics induced by $\overline{g}$. Thus, using \eqref{eq:prepcampodijacobi} it is easy to see that $\p$--Jacobi fields are $g$--Jacobi fields along $\gamma$ that satisfy $J(0)\in T_{\gamma(0)}P$, $J(1)\in T_{\gamma(1)}Q$ and
\begin{equation*}
\begin{aligned}
\D^g J(0)+\s_{\dot\gamma(0)}^P(J(0)) &\in T_{\gamma(0)}P^\perp \\
\D^g J(1)+\s_{\dot\gamma(1)}^Q(J(1)) &\in T_{\gamma(1)}Q^\perp,
\end{aligned}
\end{equation*}
where $^\perp$ is orthogonality with respect to the metrics on $P$ and $Q$ induced by $g$.

As for the special case $\p=\Delta$, where $\overline g=g\oplus (-g)$ vanishes identically, it is possible to adapt the above computations to obtain the following. A $(g,\p)$--geodesic, as observed in Example~\ref{ex:gecs2} is a periodic $g$--geodesic, see Example~\ref{ex:periodicgeod}. In addition, the condition for a $g$--Jacobi field $J$ along $\gamma$ to be a $\p$--Jacobi field is simply its periodicity, i.e., $J(0)=J(1)$ and also $\D^g J(0)=\D^g J(1)$.

Geometrically, existence of a nontrivial $\p$--Jacobi field in the previous cases can be interpreted as follows. In the first case, it simply means that $p$ and $q$ are conjugate along $\gamma$, see Definition~\ref{def:conjugate}. In the second, if $Q=\{q\}$ is a point, it means that $q$ is a focal point of $P$, see Definition~\ref{def:focalpoint}. Finally, for $\p=P\times Q$, existence of a nontrivial $\p$--Jacobi field is equivalent to focality of $P$ and $Q$, see Definition~\ref{def:focalsubmanifolds}.
\end{example}

We end this section with a few last results and remarks on $\p$--Jacobi fields, that will be important to determine the genericity of the parameters $g$ for which $E_g$ has only nondegenerate critical points, i.e., $(g,\p)$--geodesics that do not admit any nontrivial $\p$--Jacobi field.

\begin{proposition}\label{prop:tangentaintpjacobi}
Consider $\gamma\in\op(M)$ a nonconstant $(g,\p)$--geodesic. Although the tangent field $\dot\gamma$ is a $g$--Jacobi field along $\gamma$, it is \emph{not} a \emph{$\p$--Jacobi field} along $\gamma$.
\end{proposition}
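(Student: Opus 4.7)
The plan is to argue by contradiction, exploiting the orthogonality condition satisfied by $(g,\p)$-geodesics together with the membership condition forced on every element of $T_\gamma\op(M)$. Recall from the beginning of Section~\ref{sec:genindexform} that the whole discussion tacitly assumes $\overline g=g\oplus(-g)$ is nondegenerate on $\p$, which will be essential here.

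First I would observe the positive part: since $\gamma$ is a $g$-geodesic, $\D^g\dot\gamma=0$ and $R^g(\dot\gamma,\dot\gamma)\dot\gamma=0$, so $\dot\gamma$ satisfies the $g$-Jacobi equation \eqref{eq:jacobi} trivially. Then I would suppose for contradiction that $\dot\gamma$ is in addition a $\p$-Jacobi field. By Definition~\ref{def:pjacobi}, a $\p$-Jacobi field lies in $\ker\frac{\partial^2E}{\partial\gamma^2}(g,\gamma)\subset T_\gamma\op(M)$, so by the description of the tangent space in Proposition~\ref{prop:opmsubmfld} this would force
\[
(\dot\gamma(0),\dot\gamma(1))\in T_{(\gamma(0),\gamma(1))}\p.
\]

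Next I would combine this with the fact that, by Proposition~\ref{prop:critgenenfunc} and Definition~\ref{def:gpgeod}, being a $(g,\p)$-geodesic gives
\[
(\dot\gamma(0),\dot\gamma(1))\in T_{(\gamma(0),\gamma(1))}\p^\perp,
\]
where $^\perp$ is taken with respect to $\overline g$. Hence $(\dot\gamma(0),\dot\gamma(1))$ would lie in $T_{(\gamma(0),\gamma(1))}\p\cap T_{(\gamma(0),\gamma(1))}\p^\perp$, which equals $\ker(\overline g|_{T\p})$ and is therefore trivial by $\overline g$-nondegeneracy of $\p$.

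Finally, this yields $\dot\gamma(0)=0$. Since $\dot\gamma$ is $g$-parallel along $\gamma$ (i.e.\ $\D^g\dot\gamma=0$), parallel transport uniqueness gives $\dot\gamma\equiv0$ on $[0,1]$, so $\gamma$ is constant, contradicting the hypothesis. The only delicate point is verifying that the nondegeneracy hypothesis on $\overline g|_\p$ is indeed in force, which is the standing assumption of the section; otherwise, as noted in Example~\ref{ex:gecs2} for $\p=\Delta$, $\dot\gamma$ genuinely is a $\p$-Jacobi field (the periodic case), and the statement must be read within the admissible-GEC framework that excludes this pathology.
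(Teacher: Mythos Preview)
Your proof is correct and follows essentially the same approach as the paper: both exploit that a $\p$-Jacobi field must lie in $T_\gamma\op(M)$, forcing $(\dot\gamma(0),\dot\gamma(1))\in T_{(\gamma(0),\gamma(1))}\p$, while the $(g,\p)$-geodesic condition places it in $T_{(\gamma(0),\gamma(1))}\p^\perp$, and nondegeneracy of $\overline g$ on $\p$ makes this intersection trivial. You spell out one step the paper leaves implicit---namely, that $\dot\gamma(0)=0$ together with $\D^g\dot\gamma=0$ forces $\dot\gamma\equiv 0$ via uniqueness of parallel transport---and your closing remark about the necessity of the nondegeneracy assumption (and the failure for $\p=\Delta$) anticipates exactly the content of the remark following the proposition in the paper.
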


\begin{proof}
Recall that $J=\dot\gamma$ is trivially a solution of the $g$--Jacobi equation. From \eqref{eq:tgammaopm}, all $\p$--Jacobi fields $J$ along $\gamma$ must be tangent to $\p$ at  $(\gamma(0),\gamma(1))$, since $J\in T_\gamma\op(M)$. Nevertheless, from Definition~\ref{def:gpgeod}, $\dot\gamma$ is $\overline{g}$--orthogonal to $\p$ at $(\gamma(0),\gamma(1))$. Since we are assuming that $\overline g$ does not degenerate on $\p$, it follows that $\dot\gamma$ is not a $\p$--Jacobi field, unless $\gamma$ is constant.
\end{proof}

\begin{remark}
Notice that this observation includes the case of geodesics loops, which may be $(g,\p)$--geodesics if $\p\cap\Delta\neq\emptyset$. In addition, it also covers the possibility $\p=\{p\}\times\{q\}$, even if $p=q$. In such case, the tangent space $T_{(p,q)}\p$ is trivial, hence all $\p$--Jacobi fields $J$ along $\gamma$ have to satisfy $J(0)=0$ and $J(1)=0$. Therefore, $\dot\gamma$ is not a $\p$--Jacobi field once more.

Nevertheless, the same does not hold for $\p=\Delta$, since $\overline g$ degenerates. In fact, the tangent field to a periodic geodesic is periodic, and this means that in this special case, the tangent field is a $\Delta$--Jacobi field. This is one of the main reasons it is necessary to treat this case separately in our applications, since nondegeneracy of $\overline g$ will be a necessary and constant assumption.
\end{remark}

\begin{remark}\label{re:degeneracynotions}
Suppose $\p\cap\Delta\neq\emptyset$ and let $\gamma$ be a periodic $g$--geodesic that is also a $(g,\p)$--geodesic. As a consequence of Proposition~\ref{prop:tangentaintpjacobi}, the notions of degeneracy of $\gamma$ differ when it is considered as a \emph{periodic geodesic} and as a \emph{$(g,\p)$--geodesic}. More precisely, the tangent field $\dot\gamma$ is always a Jacobi field along $\gamma$, therefore $\gamma$ is always a degenerate critical point of the $g$--energy functional. Such degeneracy is caused by the reparameterization action of the circle $S^1$ on $H^1(S^1,M)$, studied in Section~\ref{sec:actions}. In fact, as we will see in Section~\ref{sec:s1invariance}, there is a more precise equivariant concept of nondegeneracy that is adequate in this case. In this sense, $\gamma$ will be degenerate if there are non trivial periodic Jacobi fields along it, that are {\em not constant multiples} of $\dot\gamma$, see Remark~\ref{re:invkernel} and Definition~\ref{def:gmorse}.

Furthermore, since from Proposition~\ref{prop:tangentaintpjacobi} the tangent field $\dot\gamma$ is not a $\p$--Jacobi field along $\gamma$, it follows that if $\gamma$ is nondegenerate {\em as a periodic geodesic}, then it is also nondegenerate {\em as a $(g,\p)$--geodesic}. However, the converse is not true, since $\gamma$ may admit a Jacobi field which is not a constant multiple of $\dot\gamma$, neither a $\p$--Jacobi field.
\end{remark}

Let $\gamma$ be a $(g,\p)$--geodesic. Not only the tangent field $\dot\gamma$ is not a $\p$--Jacobi field (see Proposition~\ref{prop:tangentaintpjacobi}), but also $\p$--Jacobi fields along $\gamma$ are only parallel to $\dot\gamma$ at a finite number of points. Such claim is a consequence of Lemma~\ref{le:parallelfinite} combined with the following result.

\begin{lemma}\label{le:jacobinotparallel}
Let $\gamma:[0,1]\rightarrow M$ be a $(g,\p)$--geodesic. If $J$ is a nontrivial $\p$--Jacobi field along $\gamma$, then it is not everywhere parallel to $\dot{\gamma}$.
\end{lemma}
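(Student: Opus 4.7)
The plan is to argue by contradiction: suppose $J$ is nontrivial and that there exists $f:[0,1]\to\R$ with $J(t)=f(t)\dot\gamma(t)$ for every $t\in[0,1]$.

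First, I would pin down $f$ using the Jacobi equation. Since $\gamma$ is a $g$--geodesic, $\D^g\dot\gamma=0$, which gives $\D^g J=f'\dot\gamma$ and $(\D^g)^2 J=f''\dot\gamma$. The curvature side collapses by antisymmetry of $R^g$ in the first two arguments: $R^g(\dot\gamma,J)\dot\gamma=fR^g(\dot\gamma,\dot\gamma)\dot\gamma=0$. The $g$--Jacobi equation \eqref{eq:jacobi} therefore reduces to $f''\dot\gamma=0$. Because $\gamma$ is nonconstant, uniqueness in Proposition~\ref{prop:geodexist} rules out $\dot\gamma$ vanishing at any point, so $f''\equiv 0$ and $f(t)=at+b$ for some $a,b\in\R$.

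Next, I would feed this information into the two linear conditions characterizing a $\p$--Jacobi field, namely $(J(0),J(1))\in T_{(\gamma(0),\gamma(1))}\p$ (from \eqref{eq:tgammaopm}, since $J\in T_\gamma\op(M)$) and the fact that $(\dot\gamma(0),\dot\gamma(1))\in T_{(\gamma(0),\gamma(1))}\p^\perp$ (since $\gamma$ is a $(g,\p)$--geodesic, Definition~\ref{def:gpgeod}), where $^\perp$ is taken with respect to $\overline g=g\oplus(-g)$. The former reads $(b\dot\gamma(0),(a+b)\dot\gamma(1))\in T\p$. Pairing the two vectors via $\overline g$ must vanish, and a direct computation yields
\[
0=\overline g\bigl((b\dot\gamma(0),(a+b)\dot\gamma(1)),(\dot\gamma(0),\dot\gamma(1))\bigr)=bc-(a+b)c=-ac,
\]
where $c=g(\dot\gamma,\dot\gamma)$ is the conserved energy of $\gamma$.

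Assuming $c\neq 0$, one concludes $a=0$, so $J=b\dot\gamma$ is a constant multiple of the tangent field. Since the space of $\p$--Jacobi fields is a vector space (being the kernel of the index form), if $b\neq 0$ then $\dot\gamma$ would itself be a $\p$--Jacobi field, contradicting Proposition~\ref{prop:tangentaintpjacobi}; hence $b=0$ as well and $J\equiv 0$, against nontriviality. The main obstacle is precisely the lightlike case $c=0$, where the pairing above is automatically zero and yields no information. In that case one must analyze more carefully the position of $(b\dot\gamma(0),(a+b)\dot\gamma(1))$ and $(\dot\gamma(0),\dot\gamma(1))$ inside the (totally $\overline g$--null) plane spanned by $(\dot\gamma(0),0)$ and $(0,\dot\gamma(1))$, using the decomposition $T(M\times M)=T\p\oplus T\p^\perp$ coming from $\overline g$--nondegeneracy of $\p$ together with the full linearized endpoints condition \eqref{eq:pcampodijacobi}; this is the delicate step and likely the one requiring the most care.
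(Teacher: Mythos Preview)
Your argument for $c\ne0$ is correct and complete: once $a=0$ is forced via the pairing, $J$ is a nonzero multiple of $\dot\gamma$ and Proposition~\ref{prop:tangentaintpjacobi} finishes. The paper argues the case $(J(0),J(1))=(0,0)$ exactly as you do (an affine function vanishing at both endpoints is identically zero), and for $(J(0),J(1))\ne(0,0)$ it appeals directly to $(J(0),J(1))\in T\p$ versus $\eta=(\dot\gamma(0),\dot\gamma(1))\in T\p^\perp$ without isolating the causal character of $\gamma$.

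The lightlike case $c=0$ is not a technicality to be patched by invoking \eqref{eq:pcampodijacobi} and the splitting $T(M\times M)=T\p\oplus T\p^\perp$: the statement is in fact \emph{false} there, so no argument along your suggested lines can close the gap. Take $M=\R^2$ with $g=\dd x^2-\dd y^2$, the null geodesic $\gamma(t)=(t,t)$, and let $\p\subset\R^4$ be the affine $2$--plane through $(0,0,1,1)$ with tangent space spanned by $(1,1,2,2)$ and $(1,0,1,0)$. With $\overline g=\dd x_1^2-\dd y_1^2-\dd x_2^2+\dd y_2^2$ the Gram matrix of $\overline g|_{T\p}$ has determinant $-1$, so $\p$ is $\overline g$--nondegenerate; moreover $\eta=(1,1,1,1)$ is $\overline g$--orthogonal to both generators of $T\p$, so $\gamma$ is a $(g,\p)$--geodesic. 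Now $J(t)=(t+1)\dot\gamma(t)$ is a Jacobi field with $(J(0),J(1))=(1,1,2,2)\in T\p$, and since $\p$ is totally geodesic the linearized endpoint condition \eqref{eq:pcampodijacobi} reduces to $(\D^gJ(0),\D^gJ(1))=\eta\in T\p^\perp$, which holds. Hence $J$ is a nontrivial $\p$--Jacobi field that is everywhere a multiple of $\dot\gamma$. The paper's first case shares this gap: its reasoning tacitly requires the endpoint coefficients $b$ and $a+b$ to coincide (so that $(J(0),J(1))$ is a scalar multiple of $\eta$), which fails precisely when $a\ne0$ and $c=0$.
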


\begin{proof}
First, let us consider the trivial case when $J$ does not vanish at the endpoints of $\gamma$. Since $J$ is a $\p$--Jacobi field, from \eqref{eq:tgammaopm}, $(J(0),J(1))\in T_{(\gamma(0),\gamma(1))}\p$. Hence $J(0)$ and $J(1)$ are not respectively parallel to $\dot{\gamma}(0)$ and $\dot{\gamma}(1)$, because they are not trivial and $(\dot{\gamma}(0),\dot{\gamma}(1))\in T_{(\gamma(0),\gamma(1))}\p^\perp$.

If $J(0)=0$ and $J(1)=0$, the argument is modified as follows. Suppose that there exists $\lambda:[0,1]\rightarrow M$ such that $J(t)=\lambda(t)\dot{\gamma}(t)$. Since $J$ is a solution of the $g$--Jacobi equation \eqref{eq:jacobi}, $\lambda$ must be an affine function, that is, $\lambda(t)=c_1+c_2t$ for some $c_1,c_2\in\R$. Using that $J(0)=0$ and $J(1)=0$, it follows that $\lambda(0)=\lambda(1)=0$, which implies that $J$ is the trivial solution.
\end{proof}

\begin{remark}\label{re:whyg}
At this point, the reader may have already recognized the naturality of the choice $\overline{g}=g\oplus (-g)$ for the ambient metric on $M\times M$ instead of any other. First, it appears naturally on expressions such as \eqref{eq:firstvariation}, \eqref{eq:indexform} and \eqref{eq:prepcampodijacobi}, directly or in the form of the identification \eqref{ident:bilin} to express the second fundamental form $\s^\p$. Second, the crucial reason is that if the induced metric in $\p$ was different, it would be possible that the tangent field $\dot\gamma$ was a $\p$--Jacobi field, see Proposition~\ref{prop:tangentaintpjacobi}. Furthermore, notice that the nondegeneracy of $\p$ with respect to this $\overline{g}$ is essential in the proof of Lemma~\ref{le:jacobinotparallel}.
\end{remark}

\begin{corollary}\label{cor:pjacobiparallelfinite}
Let $\gamma:[0,1]\rightarrow M$ be a $(g,\p)$--geodesic. If $J$ is a nontrivial $\p$--Jacobi field along $\gamma$, then the following set is finite, $$\{t\in [0,1]: J(t) \mbox{ is parallel to } \dot{\gamma}(t)\}.$$
\end{corollary}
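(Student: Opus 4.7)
The plan is to deduce this directly from the two preceding results, which together already encapsulate all the work. Indeed, Corollary~\ref{cor:pjacobiparallelfinite} is essentially a combination of Lemma~\ref{le:jacobinotparallel} and Lemma~\ref{le:parallelfinite}, so no new computation is required; I would simply verify the hypotheses of the latter.

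First I would recall that, by Definition~\ref{def:pjacobi}, a $\p$--Jacobi field $J$ along $\gamma$ is in particular a $g$--Jacobi field along $\gamma$, and that $\gamma$, being a $(g,\p)$--geodesic, is in particular a $g$--geodesic (see Definition~\ref{def:gpgeod} and Proposition~\ref{prop:critgenenfunc}). Then, because $J$ is assumed nontrivial, Lemma~\ref{le:jacobinotparallel} applies and yields that $J$ is not everywhere parallel to $\dot\gamma$ on $[0,1]$. At this point all the assumptions of Lemma~\ref{le:parallelfinite} are fulfilled with $[a,b]=[0,1]$: we have a $g$--geodesic $\gamma$ and a nontrivial $g$--Jacobi field along $\gamma$ that fails to be everywhere parallel to $\dot\gamma$.

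Applying Lemma~\ref{le:parallelfinite} to this setting, the set
\[
\mathcal{D}=\{t\in[0,1]:J(t)\text{ is parallel to }\dot\gamma(t)\}
\]
consists only of isolated points of $[0,1]$, and since $[0,1]$ is compact, this implies $\mathcal{D}$ is finite. This is exactly the conclusion of the corollary.

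There is no serious obstacle: all the geometric content (the ODE argument using the parallel frame to force $J$ to be an affine multiple of $\dot\gamma$ once it is parallel on a cluster set, and the variational/normality argument ruling out $J\propto\dot\gamma$ for $\p$--Jacobi fields) has been handled in Lemmas~\ref{le:parallelfinite} and~\ref{le:jacobinotparallel}. The only small care needed is to note that the two distinguished cases of Lemma~\ref{le:jacobinotparallel}, namely $J$ not vanishing at the endpoints and $J(0)=J(1)=0$, together exhaust the possibilities, so its conclusion is truly available for every nontrivial $\p$--Jacobi field regardless of its boundary behaviour.
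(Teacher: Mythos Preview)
Your proposal is correct and follows essentially the same approach as the paper: the paper's proof is simply ``From Lemma~\ref{le:jacobinotparallel}, $J$ is not everywhere parallel to $\dot\gamma$. The conclusion then follows from Lemma~\ref{le:parallelfinite}.'' Your added observations about regularity, compactness of $[0,1]$, and the case split in Lemma~\ref{le:jacobinotparallel} are accurate but not strictly needed.
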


\begin{proof}
From Lemma~\ref{le:jacobinotparallel}, $J$ is not everywhere parallel to $\dot\gamma$. The conclusion then follows from Lemma~\ref{le:parallelfinite}.
\end{proof}

\section{\texorpdfstring{Periodic geodesics and $S^1$--invariance}{Periodic geodesics and S1--invariance}}
\label{sec:s1invariance}

Geometric variational problems are often invariant under the action of a group, i.e., the related functional is constant on the orbits. For instance, let us analyze the two geodesic functionals \eqref{eq:grenergylength},
\begin{equation*}
L_\mathrm R(\gamma)=\int_0^1\sqrt {g_{\mathrm R}(\dot\gamma,\dot\gamma)}\;\dd t\;\; \mbox{ and } \;\; E_{\mathrm R}(\gamma)=\tfrac12\int_0^1 g_\mathrm R(\dot\gamma,\dot\gamma)\;\dd t
\end{equation*}
mentioned in the beginning of this chapter. Suppose their domain to be $H^1([0,1],M)$. Then the length functional $L_\mathrm R$ is invariant under reparameterizations, i.e., {\em diffeomorphisms} of $[0,1]$. Here, the action of the group of diffeomorphisms\footnote{This is actually not a Lie group, but only a {\em topological group}, whose action on $H^1([0,1],M)$ is only continuous. Therefore, most tools that will be developed do not apply to this case. Nevertheless, as remarked in the beginning of the chapter, there is a clear geometric relation between critical points of $L_\mathrm R$ and $E_\mathrm R$. Thus, all the analysis for geodesic variational problems can be done using $E_\mathrm R$, which in addition allows to consider semi--Riemannian metrics instead of only Riemannian metrics.} $\diff([0,1])$ on $H^1([0,1],M)$ is given by right composition. On the other hand, the energy functional $E_\mathrm R$ is only invariant under {\em isometries} of $[0,1]$, also acting by right composition. Since there are endpoints conditions involved, we only consider reparameterizations on $[0,1]$ that preserve the orientation of curves, i.e., have positive derivative. Thus, the group of possible reparameterizations for $L_\mathrm R$ is $f\in\diff([0,1])$ such that $f'>0$ and for $E_\mathrm R$ it is reduced to the trivial group. Among other reasons, this indicates that the analysis of critical points for $E_\mathrm R$ is easier then the correspondent for $L_\mathrm R$, since it is not invariant under any group actions. This is precisely because critical points of $E_\mathrm R$ are {\em affinely parameterized} geodesics and there are no other possible reparameterizations, as in the case of critical points of $L_\mathrm R$.

Nevertheless, in the case of {\em periodic} curves, the domain of these functionals is the submanifold $H^1(S^1,M)$, see Corollary~\ref{cor:h1s1}. Here, {\em there are} nontrivial isometric reparameterizations of $S^1$ that leave $E_\mathrm R$ invariant, namely rotations of the domain $S^1$. The correspondent action $$\rho:S^1\times H^1(S^1,M)\la H^1(S^1,M)$$ is precisely the one given by \eqref{eq:mus1h1}, introduced in Section~\ref{sec:actions}. Therefore, in this case we are dealing with a $S^1$--invariant functional, hence with an equivariant variational problem, see Example~\ref{ex:mus1h1inv}.

We will first give a brief abstract introduction to $G$--invariant functionals in the following context. We assume $G$ is a finite--dimensional Lie group, $Y$ is a Hilbert manifold, $$\mu:G\times Y\la Y$$ is a differentiable action and $f:Y\to\R$ is a $C^k$ functional invariant under this action. Second, we explore the above example of the energy functional for periodic curves $E_g:H^1(S^1,M)\to\R$, developing asome special tools to deal with its the lack of regularity.

\begin{definition}\label{def:finv}
A $C^k$ functional $f:Y\to\R$ is {\em $G$--invariant}\index{$G$--invariant functional}\index{Action!invariant functional} if it is constant along the orbits of $G$, i.e., for all $y\in Y$ and $g\in G$, 
\begin{equation}\label{eq:finv}
f(\mu(g,y))=f(y).
\end{equation}
\end{definition}

Invariance of a functional under $G$ means that it would be essentially possible to define this functional {\em modulo $G$}.\footnote{As we will see, this means that $f$ is constant along each orbit. This suggests dividing out by $G$, i.e., considering $f$ defined in the orbit space $Y/G$ instead of $Y$.} For instance, consider $M$ and $N$ Riemannian manifolds, with $M$ compact, and the action of $\Iso(N)$ on $H^k(M,N)$ described in Example~\ref{ex:hkmn}. The volume functional of an embedding $$\Vol(\Phi)=\int_M\sqrt{\det\big(\dd\Phi(x)^*\dd\Phi(x)\big)}\;\dd\vol_M(x),$$ where $\vol_M(x)$ is the volume form of $M$, is clearly $\Iso(N)$--invariant, since isometries of the ambient space preserve the Riemannian structure of submanifolds. In other words, it is not important to consider any particular positioning of an embedded submanifold or rigid motions of the ambient to compute its volume. Let us mention another example that will become the center of our attention in the sequel.

\begin{example}\label{ex:mus1h1inv}
Consider $g$ is a semi--Riemannian metric on $M$ and the $g$--energy functional on closed curves,
\begin{eqnarray}\label{eq:efunctper}
E_g:H^1(S^1,M)&\la &\R\\
\gamma&\longmapsto&\tfrac12\int_{S^1} g(\dot\gamma(z),\dot\gamma(z))\;\dd z.\nonumber
\end{eqnarray}
As remarked above, this is a $S^1$--invariant functional considering the action \eqref{eq:mus1h1} described in Example~\ref{ex:s1h1}, $$\rho:S^1\times H^1(S^1,M)\la H^1(S^1,M).$$ In fact, for each $w\in S^1$,
{\allowdisplaybreaks
\begin{eqnarray*}
E_g(\rho^z(\gamma)) &=& \tfrac12\int_{S^1} g(\dd\rho^z(\gamma)\dot\gamma(w),\dd\rho^z(\gamma)\dot\gamma(w))\;\dd w\\
&\stackrel{\eqref{eq:dmuz}}{=}& \tfrac12\int_{S^1} g(\dot\gamma(zw),\dot\gamma(zw))\;\dd w\\
&=& \tfrac12\int_{S^1} g(\dot\gamma(\zeta),\dot\gamma(\zeta))\;\dd\zeta \\
&=& E_g(\gamma),
\end{eqnarray*}}where the third equality holds by a simple change of variables $\zeta=zw$. Hence $E_g$ is $S^1$--invariant.

Analogously to Propositions~\ref{prop:fck},~\ref{prop:critgenenfunc},~\ref{prop:indexform},~\ref{prop:fredholmness} and~\ref{prop:mixedderivative}, the functional $E_g$ is $C^k$ and its critical points are periodic $g$--geodesics. Moreover, if $\gamma_0\in H^1(S^1,M)$ is a critical point of $E_g$, then the second derivative of $E_g$ is a continuous bilinear symmetric form on $T_{\gamma_0} H^1(S^1,M)$ that is represented by a Fredholm operator of this Hilbert space, and the elements in its kernel are periodic Jacobi fields along $\gamma_0$. Finally, formula \eqref{eq:mixedderivative} also holds for \eqref{eq:efunctper}.
\end{example}

In order to better describe the behaviour of a $G$--invariant functional, we use some objects related to the action $\mu:G\times Y\to Y$, described in Section~\ref{sec:actions}. Recall that $$\mathcal D_y=\im\dd\mu_y(1)$$ gives a subspace of $T_yY$ for all $y\in Y$ if $\mu$ is differentiable. Also under weaker regularity assumptions, such as existence of a $G$--invariant dense subset $Y_1\subset Y$ described in Remark~\ref{re:weakreg}, it is possible to consider this subspace of $T_yY$ for $y\in Y_1$.

\begin{lemma}\label{le:invkernel}
If $f:Y\to\R$ is a $G$--invariant functional, then $\mathcal D_y$ is containted in $\ker\dd f(y)$ for every $y\in Y$, or $y\in Y_1$ in case the action is non differentiable.\footnote{The context for non differentiable actions is the one established in Remark~\ref{re:weakreg}, recalled above.}
\end{lemma}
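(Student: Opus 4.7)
The plan is to exploit the invariance relation \eqref{eq:finv} by composing with $\mu_y$ and differentiating at the identity $e\in G$. More precisely, I would first observe that the $G$--invariance of $f$ can be rephrased by saying that the composite map
\begin{equation*}
f\circ\mu_y:G\la\R,\qquad g\longmapsto f(\mu(g,y))=f(y)
\end{equation*}
is the \emph{constant} functional equal to $f(y)$, by Definition~\ref{def:finv}. In particular, its derivative at $e\in G$ vanishes identically, i.e., $\dd(f\circ\mu_y)(e)=0$ as a linear map from $\mathfrak g$ to $\R$.

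Next I would invoke the chain rule. In the case where $\mu$ is of class $C^k$ with $k\ge 1$, both $f$ and $\mu_y$ are differentiable, so
\begin{equation*}
0=\dd(f\circ\mu_y)(e)=\dd f(\mu_y(e))\circ\dd\mu_y(e)=\dd f(y)\circ\dd\mu_y(e),
\end{equation*}
using that $\mu_y(e)=\mu(e,y)=y$. This means that every vector in $\im\dd\mu_y(e)=\mathcal D_y$ is annihilated by $\dd f(y)$, i.e., $\mathcal D_y\subseteq\ker\dd f(y)$, which is exactly the desired conclusion.

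Finally, to handle the case of a non differentiable action in the weakened setting of Remark~\ref{re:weakreg}, I would restrict attention to $y\in Y_1$, where by hypothesis the auxiliary map $\mu_y:G\to Y$ is differentiable even though $\mu$ itself is not. Since $f:Y\to\R$ is a $C^k$ map defined on the whole Hilbert manifold $Y$, the composite $f\circ\mu_y:G\to\R$ is still differentiable at $e$, and the identical chain--rule computation above applies verbatim, yielding $\mathcal D_y\subseteq\ker\dd f(y)$ for $y\in Y_1$.

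There is essentially no obstacle in this argument: the only subtlety is making sure the chain rule is legitimately applicable in the non differentiable case, which is precisely why the statement is restricted to $y\in Y_1$ in that setting. No further machinery is needed.
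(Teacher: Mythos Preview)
Your proof is correct and takes essentially the same approach as the paper: both observe that $f\circ\mu_y$ is constant and differentiate at $e$ via the chain rule to obtain $\dd f(y)\circ\dd\mu_y(e)=0$, hence $\mathcal D_y\subset\ker\dd f(y)$, with the non differentiable case handled by restricting to $y\in Y_1$.
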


\begin{proof}
Since $f$ is $G$--invariant, it is constant along the orbits $G(y)$. The subspace $\mathcal D_y$ is always tangent to $G(y)$, hence is clearly in the kernel of $\dd f(y)$. More precisely, deriving \eqref{eq:finv} with respect to $g$ at $g=e$, we have $$\dd f(y)\dd\mu_y(e)=0,$$ which implies $\im\dd\mu_y(e)\subset\ker\dd f(y)$. Notice that if the action is non differentiable, then $\mathcal D_y$ can only be considered for points $y\in Y_1$. However, in this case $\dd\mu_y(e)$ can be computed at such points in $Y_1$.
\end{proof}

\begin{remark}\label{re:invkernel}
In particular, Lemma~\ref{le:invkernel} implies that a $G$--invariant functional $f$ is {\em not} a Morse functional in the sense of Definition~\ref{def:degnondegstdeg}, since $\ker\dd f(y)$ is never trivial. Nevertheless, it is possible to define a $G$--equivariant Morse condition, requiring $\ker\dd f(y)$ to be the smallest possible, see Definition~\ref{def:gmorse}.
\end{remark}

\begin{remark}
If a $C^k$ functional $f:Y\to\R$ is $G$--invariant, then its derivative $\dd f(y):T_yY\to\R$ is {\em $G$--equivariant}.\index{$G$--equivariant map} Invariance of $f$ is expressed by \eqref{eq:finv}, and can also be seen as commutativity of the following diagram.
\begin{equation*}
\xymatrix@+10pt{
Y\ar[rr]^{\mu^g}\ar[rd]_f & & Y\ar[ld]^f\\
& \R &
}
\end{equation*}
Deriving \eqref{eq:finv} at $y$, we obtain $G$--equivariance of $\dd f$, i.e.
\begin{equation}\label{eq:dfeq}
\dd f(\mu(g,y))\dd\mu^g(y)=\dd f(y),
\end{equation}
or in the form of a commutative diagram,
\begin{equation*}
\xymatrix@+8pt{
T_yY\ar[rr]^{\dd\mu^g(y)}\ar[rd]_{\dd f(y)} & & T_{\mu(g,y)}Y\ar[ld]^{\dd f(\mu(g,y))}\\
& \R &
}
\end{equation*}
\end{remark}

\begin{lemma}\label{le:critorb}
Suppose the action of $G$ on $Y$ is by diffeomorphisms and let $f:Y\to\R$ be a $G$--invariant $C^k$ functional. If $y_0\in Y$ is a critical point of $f$, then the whole orbit $G(y_0)$ is critical.
\end{lemma}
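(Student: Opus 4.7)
The plan is to derive this directly from the $G$-equivariance of $\dd f$, combined with the fact that each $\mu^g$ is a diffeomorphism (which makes $\dd\mu^g(y_0)$ a linear isomorphism at every point). Concretely, fix any $g\in G$ and set $y_1=\mu(g,y_0)$; I want to show $\dd f(y_1)=0$.

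First I would recall that since $f:Y\to\R$ is $G$-invariant, the chain rule applied to the identity $f\circ\mu^g=f$ yields the equivariance relation \eqref{eq:dfeq}, namely
\begin{equation*}
\dd f(\mu(g,y))\circ\dd\mu^g(y)=\dd f(y)
\end{equation*}
for every $y\in Y$. Evaluating this at $y=y_0$ and using that $y_0$ is critical, i.e.\ $\dd f(y_0)=0$ (recall Definition~\ref{def:critical}, noting that for a real-valued function criticality reduces to vanishing of the differential), we obtain
\begin{equation*}
\dd f(y_1)\circ\dd\mu^g(y_0)=0.
\end{equation*}

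Now I would invoke the hypothesis that the action is by diffeomorphisms (Definition~\ref{def:actdiffeos}): $\mu^g:Y\to Y$ is a smooth diffeomorphism with smooth inverse $\mu^{g^{-1}}$, so its differential $\dd\mu^g(y_0):T_{y_0}Y\to T_{y_1}Y$ is a topological linear isomorphism. In particular it is surjective, so from $\dd f(y_1)\circ\dd\mu^g(y_0)=0$ we conclude $\dd f(y_1)=0$, i.e.\ $y_1$ is critical. Since $g\in G$ was arbitrary, every point of the orbit $G(y_0)=\{\mu(g,y_0):g\in G\}$ is a critical point of $f$.

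There is no real obstacle here; the only subtle point is that the hypothesis ``action by diffeomorphisms'' is used in an essential way, since it guarantees that $\dd\mu^g(y_0)$ is invertible and hence can be cancelled from the equivariance identity. Without this invertibility one would only obtain that $\dd f(y_1)$ vanishes on $\im\dd\mu^g(y_0)$, which is in general a proper subspace of $T_{y_1}Y$ (cf.\ Lemma~\ref{le:invkernel}).
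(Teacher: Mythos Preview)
Your proof is correct and follows essentially the same approach as the paper: both use the equivariance relation $\dd f(\mu^g(y))\circ\dd\mu^g(y)=\dd f(y)$ evaluated at the critical point $y_0$, then invoke that $\dd\mu^g(y_0)$ is an isomorphism (you only need surjectivity, which suffices) to cancel it and conclude $\dd f(\mu^g(y_0))=0$. Your closing remark on why the diffeomorphism hypothesis is essential is a nice addition not present in the paper's proof.
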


\begin{proof}
Since the action is by diffeomorphisms, for all $g\in G$, the map $\mu^g:Y\to Y$ is a diffeomorphism. In particular, its derivative $\dd\mu^g(y)$ is an isomorphism for all $y\in Y$. From $G$--equivariance \eqref{eq:dfeq},
\begin{eqnarray*}
\dd f(y) &=& \dd(f\circ\mu^g)(y)\\
&=& \dd f(\mu^g(y))\dd\mu^g(y).
\end{eqnarray*}
If $y_0$ is a critical point of $f$, then the above expression vanishes. However, since $\dd\mu^g(y_0)$ is an isomorphism, it follows that $\dd f(\mu^g(y_0))=0$, i.e., $\mu^g(y_0)$ is a critical point of $f$. Therefore, $\dd f(y)$ vanishes for all $y=\mu(g,y_0)$, i.e., for all $y\in G(y_0)$.
\end{proof}

We now define the {\em $G$--Morse} condition for $G$--invariant functionals. Recall that from Lemma~\ref{le:invkernel}, the kernel $\ker\dd f(y)$ of a $G$--invariant functional is necessarily nontrivial since it contains $\mathcal D_y$, hence $G$--invariant functionals are {\em never} Morse functionals in the sense of Definition~\ref{def:degnondegstdeg}. This concept provides a $G$--equivarant version of the Morse condition, that requires the kernel of $\dd f(y)$ to be {\em at most} $\mathcal D_y$, i.e., the smallest possible subspace of $T_yY$. Again, in case $\mathcal D_y$ is only defined for $y\in Y_1$, we suppose $\crit(f)\subset Y_1$, i.e., that all critical points of $f$ are in the dense subset $Y_1$, where $\mathcal D$ is defined. Notice that this is the case of the $g$--energy functional $E_g$ for periodic curves \eqref{eq:efunctper} with respect to a $C^k$ semi--Riemannian metric $g$, since analogously to Remark~\ref{re:gammack}, periodic geodesics $\gamma$ are automatically $C^{k+1}$, and in particular are in $Y_1=H^2(S^1,M)$, from Proposition~\ref{prop:inclusions}. Moreover, from \eqref{eq:dgammaspangamma}, the subspace $\mathcal D_\gamma$ coincides with the one--dimensional subspace of $\sect^{H^1}(\gamma^*TM)$ spanned by $\dot\gamma$.

\begin{definition}\label{def:gmorse}
Let $f$ be a $G$--invariant functional whose critical points are in the $G$--invariant dense subset $Y_1$, where $\mathcal D$ is well--defined. Then $f$ is {\em $G$--nondegenerate}\index{Critical point!$G$--nondegenerate} at a critical point $y_0$ if $\hess(f)(y_0)$ restricted to a closed complement of $\mathcal D_{y_0}$ is an isomorphism. If all critical points of $f$ are $G$--nondegenerate, then $F$ is said to be {\em $G$--Morse}.\index{$G$--Morse functional}
\end{definition}

\begin{remark}
The above definition is clearly an equivariant extension of Definition~\ref{def:degnondegstdeg}, which corresponds to the case $G=\{e\}$ acting trivially on $Y$. In this case, $\mathcal D_y$ is also trivial, hence both definitions coincide.
\end{remark}

\begin{remark}
It is easily seen that the above definition of $G$--Morse functional does not depend on the choice of a closed complement of $\mathcal D_y$.
\end{remark}

\begin{lemma}\label{le:gmorseorbit}
Suppose the action of $G$ on $Y$ is by diffeomorphisms and let $f$ be a $G$--invariant functional and $y_0\in Y$ a critical point of $f$. Then $y_0$ is $G$--nondegenerate if and only if every $y\in G(y_0)$ is also $G$--nondegenerate.
\end{lemma}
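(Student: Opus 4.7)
The plan is to exhibit an explicit isomorphism between the ``reduced'' Hessians at $y_0$ and at any $y=\mu^g(y_0)\in G(y_0)$, using that $\mu^g\colon Y\to Y$ is a diffeomorphism. Concretely, I would first note that $y$ is indeed a critical point of $f$ by Lemma~\ref{le:critorb}, so $\hess(f)(y_0)$ and $\hess(f)(y)$ are both well--defined without a choice of connection (recall Definition~\ref{def:hess} and Remark~\ref{re:equivalenthess}). From $f\circ\mu^g=f$ and the characterization of the Hessian via $\hess(f)(y_0)(v,v)=\tfrac{\dd^2}{\dd t^2}(f\circ\gamma)|_{t=0}$ for curves $\gamma$ with $\gamma(0)=y_0$, $\dot\gamma(0)=v$, applied to the reparameterized curve $\mu^g\circ\gamma$, one obtains the pull--back identity
\begin{equation*}
\hess(f)(y_0)(v,w)=\hess(f)(y)\bigl(\dd\mu^g(y_0)v,\dd\mu^g(y_0)w\bigr),\qquad v,w\in T_{y_0}Y,
\end{equation*}
after polarization. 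Since $\mu^g$ is a diffeomorphism, the operator $\dd\mu^g(y_0)\colon T_{y_0}Y\to T_yY$ is a topological isomorphism.

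Next I would show that this isomorphism sends $\mathcal D_{y_0}$ onto $\mathcal D_y$. By definition of the action, for every $h\in G$,
\begin{equation*}
\mu^g\circ\mu_{y_0}(h)=\mu(gh,y_0)=\mu_{y_0}\circ L_g(h)\quad\text{and}\quad\mu_y(h)=\mu(hg,y_0)=\mu_{y_0}\circ R_g(h),
\end{equation*}
where $L_g,R_g$ are left and right translations in $G$. Differentiating these at $e\in G$ gives $\dd\mu^g(y_0)\circ\dd\mu_{y_0}(e)=\dd\mu_{y_0}(g)\circ\dd L_g(e)$ and $\dd\mu_y(e)=\dd\mu_{y_0}(g)\circ\dd R_g(e)$. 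Since $\dd L_g(e)$ and $\dd R_g(e)$ are isomorphisms $T_eG\to T_gG$, it follows that
\begin{equation*}
\dd\mu^g(y_0)(\mathcal D_{y_0})=\im\bigl(\dd\mu_{y_0}(g)\circ\dd L_g(e)\bigr)=\im\dd\mu_{y_0}(g)=\mathcal D_y.
\end{equation*}
(If the action is non differentiable in the sense of Remark~\ref{re:weakreg}, the same computation is done at points of the $G$--invariant dense subset $Y_1$, which contains $\crit(f)$ by the standing hypothesis in Definition~\ref{def:gmorse}; $\mu^g$ preserves $Y_1$ by $G$--invariance of $Y_1$.)

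Finally, I would pick any closed complement $W_0$ of $\mathcal D_{y_0}$ in $T_{y_0}Y$ and set $W=\dd\mu^g(y_0)(W_0)\subset T_yY$. Because $\dd\mu^g(y_0)$ is a topological isomorphism mapping $\mathcal D_{y_0}$ onto $\mathcal D_y$, $W$ is a closed complement of $\mathcal D_y$ in $T_yY$. The pull--back identity above then shows that $\hess(f)(y_0)|_{W_0}$ and $\hess(f)(y)|_{W}$ are conjugate by the isomorphism $\dd\mu^g(y_0)|_{W_0}\colon W_0\to W$, so one is an isomorphism if and only if the other is. Invoking the remark following Definition~\ref{def:gmorse} that $G$--nondegeneracy is independent of the chosen complement, this yields the equivalence, and running $g$ over $G$ shows that $y_0$ is $G$--nondegenerate if and only if every $y\in G(y_0)$ is. The only subtle point, which I expect to be the main (but very mild) obstacle, is the careful check that $\dd\mu^g(y_0)$ carries one orbit--tangent subspace onto the other; this is the content of the computation with $L_g$ and $R_g$ above.
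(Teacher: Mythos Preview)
Your proposal is correct and follows essentially the same approach as the paper: both use that $\dd\mu^g(y_0)$ is a topological isomorphism carrying $\mathcal D_{y_0}$ onto $\mathcal D_y$ and conjugating the Hessians, so that the restriction to a closed complement is an isomorphism at $y_0$ if and only if it is at $y$. The paper encodes the pull--back identity for the Hessian in a commutative diagram and simply asserts that ``clearly'' $\dd\mu^g(y_0)$ maps $\mathcal D_{y_0}$ to $\mathcal D_y$, whereas you supply the explicit verification via left and right translations in $G$; your treatment is thus a more detailed version of the same argument.
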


\begin{proof}
Let $g\in G$ and consider $y=\mu(g,y_0)$. From Lemma~\ref{le:critorb}, $y$ is a critical point of $f$, and from $G$--equivariance of $\dd f$ it is easy to see that the following diagram commutes.
\begin{equation*}
\xymatrix@+25pt{
T_{y_0}Y \ar[d]_{\dd\mu^g(y_0)}\ar[r]^(.4){\hess(f)(y_0)} & T_{y_0}Y^*\cong T_{y_0}Y\ar[d]^{\dd\mu^g(y_0)} \\
T_{y}Y \ar[r]_(.4){\hess(f)(y)} & T_{y}Y^*\cong T_{y_0}Y
}
\end{equation*}
Suppose $y_0$ is a $G$--nondegenerate critical point of $f$. Then the Hessian $\hess(f)(y_0)$ restricted to some closed complement of $\mathcal D_{y_0}$ is an isomorphism. Clearly, the isomorphism $\dd\mu^g(y_0)$ maps $\mathcal D_{y_0}$ to $\mathcal D_y$, and since it also maps the closed complement of $\mathcal D_{y_0}$ where $\hess(f)(y_0)$ is an isomorphism to some closed complement of $\mathcal D_y$ in $T_yY$, see Figure~\ref{fig:gnondeg}. Commutativity of the diagram above implies that $\hess(f)(y)$ restricted to this closed complement of $\mathcal D_y$ is an isomorphism. Hence $y$ is also $G$--nondegenerate, concluding the proof.
\end{proof}

\begin{figure}[htf]
\begin{center}
\vspace{-0.7cm}
\includegraphics[scale=1]{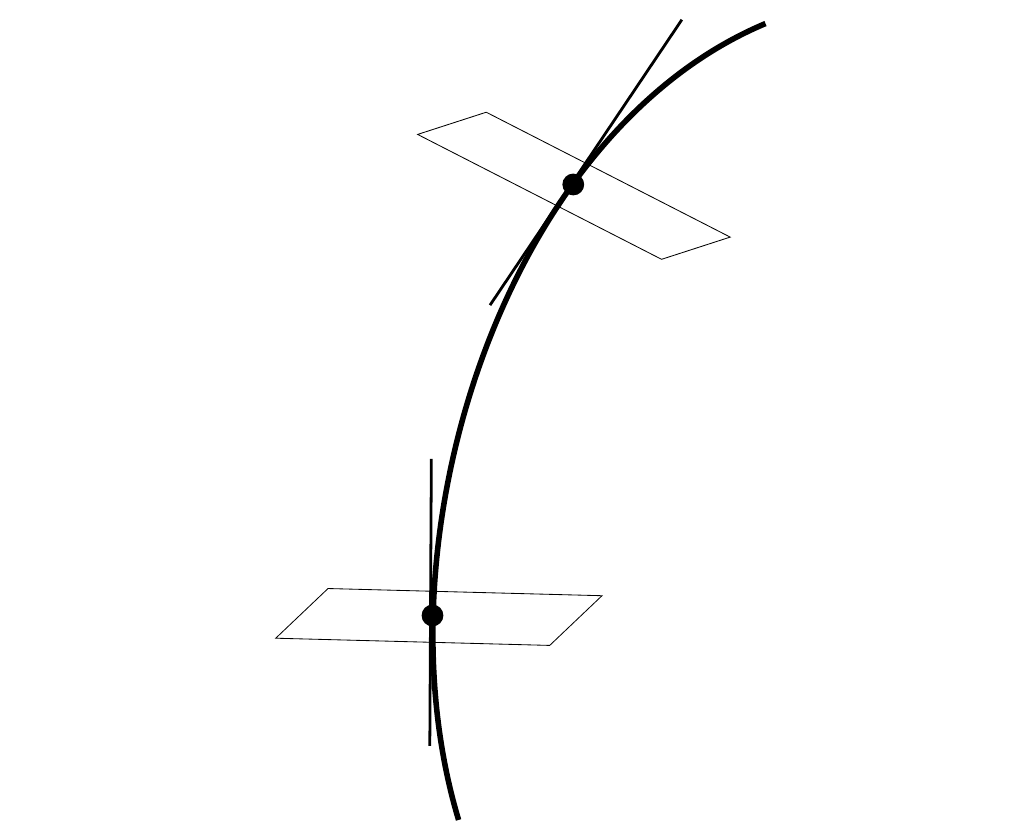}
\begin{pgfpicture}
\pgfputat{\pgfxy(-4.2,7.8)}{\pgfbox[center,center]{$\mathcal D_y$}}
\pgfputat{\pgfxy(-6.5,3.2)}{\pgfbox[center,center]{$\mathcal D_{y_0}$}}
\pgfputat{\pgfxy(-5,0)}{\pgfbox[center,center]{$G(y_0)$}}
\pgfputat{\pgfxy(-6.5,2.2)}{\pgfbox[center,center]{$y_0$}}
\pgfputat{\pgfxy(-4.8,6.8)}{\pgfbox[center,center]{$y$}}
\end{pgfpicture}
\caption{Points $y_0$ and $y$ in the same orbit, subspaces $\mathcal D_{y_0}$ and $\mathcal D_{y}$ and closed complements, where $\hess(f)$ is an isomorphism in case the orbit is formed by $G$--nondegenerate critical points of $f$.}\label{fig:gnondeg}
\end{center}
\end{figure}

We now establish the definition of {\em generalized slice} for an action with respect to a $G$--invariant functional. This is the key idea to analyze the $G$--Morse property for $G$--invariant functionals, since it will reduce the problem to analyzing the classic Morse property of $f$ restricted to transverse submanifolds to the orbits, see Proposition~\ref{prop:morsegmorse}.

\begin{definition}\label{def:genslice}
Suppose the action of $G$ on $Y$ is by diffeomorphisms and there exists a $G$--invariant dense subset $Y_1\subset Y$ as in Remark~\ref{re:weakreg}, such that $\mathcal D_y$ is well--defined for $y\in Y_1$. Let $f:Y\to\R$ be a $G$--invariant $C^k$ functional whose critical points are contained in $Y_1$. A {\em generalized slice}\index{Generalized slice}\footnote{Usually, the notion of {\em slice} of an action at a point $y$ is used to simplify the analysis of the behaviour of the orbit $G(y)$, by introducing a transverse submanifold with certain {\em $G$--invariance properties}. In addition, the image of a slice by $\mu|_G$ gives an open neighborhood of the orbit $G(y)$. By the analogy with this situation, we will call the family $\{S_n\}_{n\in\N}$ a generalized slice, although we stress it does not have any invariance under isotropy groups at all, as standard slices. For further details on slices we refer to Alexandrino and Bettiol \cite{ilgaag} for the finite--dimensional case, and to Palais and Terng \cite{PalaisTerng} for the infinite--dimensional case.} for the action of $G$ on $Y$ with respect to $f$ is a pair $(\mathfrak U,\{S_n\}_{n\in\N})$, where $\mathfrak U$ is an open subset of $Y$ that contains $Y_1$ and $\{S_n\}_{n\in\N}$ is a countable family of submanifolds of $Y$, satisfying
\begin{itemize}
\item[(i)] for all $y\in\mathfrak U$ there exists $n\in\N$ with $G(y)\cap S_n\neq\emptyset$;
\item[(ii)] for all $n\in\N$, given $y\in S_n$, if $y$ is a critical point of $f\vert_{S_n}$, then $y$ is a critical point of $f$ (in particular, $y\in Y_1$);
\item[(iii)] for all $n\in\N$, if $y\in S_n$ is a critical point of $f$ then $T_yY$ decomposes as direct sum $T_yY=T_yS_n\oplus\mathcal D_y$.
\end{itemize}
\end{definition}

\begin{figure}[htf]
\begin{center}
\vspace{-0.4cm}
\includegraphics[scale=1]{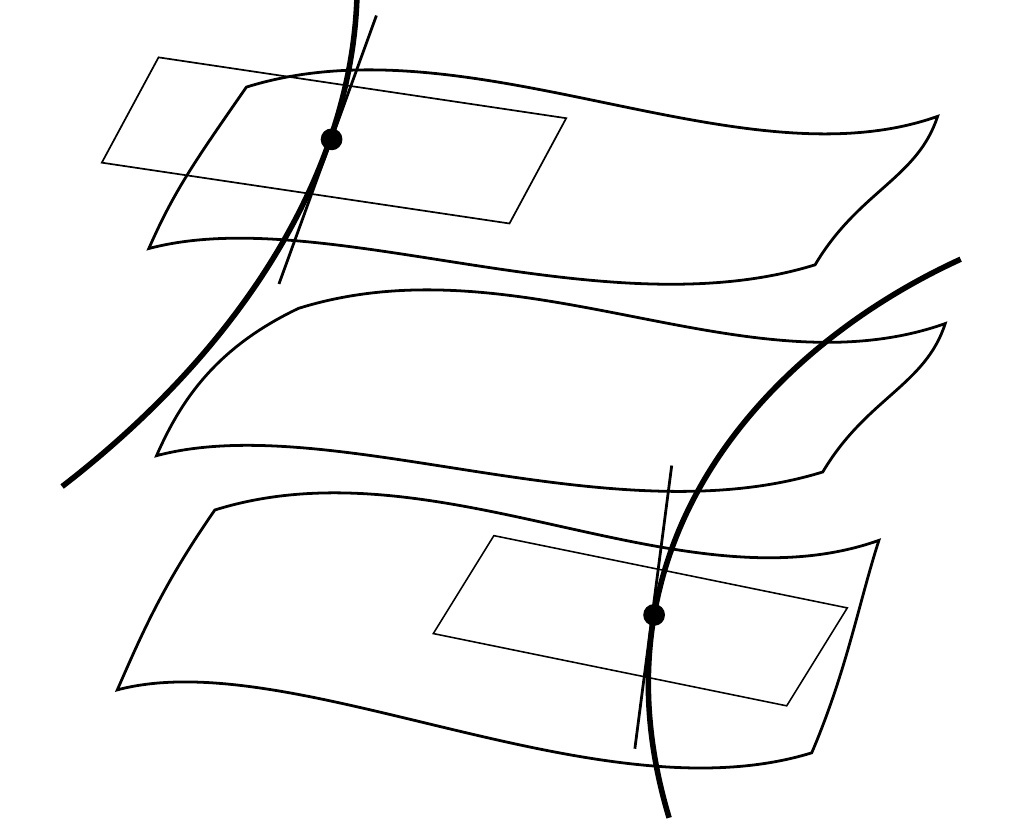}
\begin{pgfpicture}
\pgfputat{\pgfxy(-4,3.8)}{\pgfbox[center,center]{$\mathcal D_y$}}
\pgfputat{\pgfxy(-4.2,0.2)}{\pgfbox[center,center]{$G(y)$}}
\pgfputat{\pgfxy(-3.4,2)}{\pgfbox[center,center]{$y$}}
\pgfputat{\pgfxy(-0.9,2.3)}{\pgfbox[center,center]{$S_n$}}
\pgfputat{\pgfxy(-0.5,4.4)}{\pgfbox[center,center]{$S_{n+1}$}}
\pgfputat{\pgfxy(-0.5,6.6)}{\pgfbox[center,center]{$S_{n+2}$}}
\pgfputat{\pgfxy(-4.8,2.5)}{\pgfbox[center,center]{$T_yS_n$}}
\end{pgfpicture}
\vspace{0.3cm}
\caption{Some elements of a generalized slice $(\mathfrak U,\{S_n\}_{n\in\N})$ and transverse intersection with critical orbits of $f$.}\label{fig:genslice}
\end{center}
\end{figure}

\begin{proposition}\label{prop:morsegmorse}
In the above conditions, suppose there exists a generalized slice $(\mathfrak U,\{S_n\}_{n\in\N})$ for the action of $G$ on $Y$ with respect to $f$. Then $f$ is $G$--Morse if and only if $f\vert_{S_n}$ is Morse\footnote{In the sense of Definition~\ref{def:degnondegstdeg}.} for all $n\in\N$.
\end{proposition}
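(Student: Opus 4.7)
The plan is to reduce the $G$-equivariant nondegeneracy at critical points of $f$ to the classical nondegeneracy at critical points of the restrictions $f|_{S_n}$, using the transversality property (iii) of a generalized slice to identify $T_{y_0}S_n$ as the canonical closed complement of $\mathcal D_{y_0}$. The linchpin is the identity
\[
\hess(f|_{S_n})(y_0) = \hess(f)(y_0)\big|_{T_{y_0}S_n\times T_{y_0}S_n},
\]
valid at any $y_0\in S_n$ that is a critical point of $f$. To prove it I would invoke the connection-free characterization of the Hessian at a critical point from Remark~\ref{re:equivalenthess}: $\hess(f)(y_0)(v,v)=\tfrac{\dd^2}{\dd t^2}(f\circ\gamma)|_{t=0}$ for any $C^2$ curve $\gamma$ with $\gamma(0)=y_0$ and $\dot\gamma(0)=v$. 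If $v\in T_{y_0}S_n$, such a curve can be chosen to lie entirely in $S_n$, so $f\circ\gamma=(f|_{S_n})\circ\gamma$; polarization (Lemma~\ref{le:polarization}) upgrades the agreement of quadratic forms to the agreement of bilinear forms.

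Combining this identity with condition (iii) of Definition~\ref{def:genslice} yields the pointwise equivalence: for $y_0\in S_n\cap\crit(f)$, the bilinear form $\hess(f)(y_0)$ restricted to the closed complement $T_{y_0}S_n$ of $\mathcal D_{y_0}$ is strongly nondegenerate if and only if $\hess(f|_{S_n})(y_0)$ is strongly nondegenerate, i.e., $y_0$ is $G$-nondegenerate as a critical point of $f$ iff it is a strongly nondegenerate critical point of $f|_{S_n}$. Since the $G$-nondegeneracy definition is independent of the choice of closed complement, this equivalence is complete.

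With these in hand, the forward implication proceeds as follows. Assume $f$ is $G$-Morse, fix $n\in\N$ and let $y_0\in S_n$ be a critical point of $f|_{S_n}$; by (ii), $y_0\in\crit(f)$, so $y_0$ is $G$-nondegenerate by hypothesis, and the equivalence above forces $y_0$ to be a strongly nondegenerate critical point of $f|_{S_n}$. For the converse, assume each $f|_{S_n}$ is Morse and let $y_0\in\crit(f)$. Since $\crit(f)\subset Y_1\subset\mathfrak U$, condition (i) provides some $n$ and $y\in G(y_0)\cap S_n$; by Lemma~\ref{le:critorb}, $y\in\crit(f)$, and the inclusion $T_yS_n\subset T_yY$ together with $\dd f(y)=0$ yields $y\in\crit(f|_{S_n})$. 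The hypothesis gives strong nondegeneracy of $y$ for $f|_{S_n}$, whence $G$-nondegeneracy of $y$ for $f$ by the equivalence, and finally Lemma~\ref{le:gmorseorbit} propagates $G$-nondegeneracy from $y$ to all of $G(y_0)$, in particular to $y_0$.

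The main technical obstacle I anticipate is justifying the Hessian restriction identity cleanly at critical points in the Banach/Hilbert manifold setting, where, outside of critical points, the Hessian depends on a choice of connection (Remark~\ref{re:hessnoncrit}); this is handled precisely by invoking the connection-free description at critical points and checking that the curve $\gamma$ witnessing the second derivative may be taken inside $S_n$ whenever its initial velocity is tangent to $S_n$. A small ancillary point to verify is that $\hess(f|_{S_n})(y)$ being an isomorphism as an operator $T_yS_n\to T_yS_n^*$ is synonymous with the strong nondegeneracy condition appearing in the definition of $G$-Morse.
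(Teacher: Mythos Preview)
Your proposal is correct and follows essentially the same approach as the paper: both establish the Hessian restriction identity $\hess(f|_{S_n})(y_0)=\hess(f)(y_0)|_{T_{y_0}S_n\times T_{y_0}S_n}$, use properties (ii) and (iii) of the generalized slice to identify critical points and the complementary decomposition, and invoke Lemma~\ref{le:gmorseorbit} to propagate $G$--nondegeneracy along the orbit in the converse direction. Your version is slightly more explicit in justifying the Hessian restriction via the curve characterization from Remark~\ref{re:equivalenthess} and in flagging where Lemma~\ref{le:critorb} is used, but the argument is otherwise the same.
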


\begin{proof}
First, notice that $y_0\in S_n$ is a critical point of $f$ if and only if it is a critical point of the restriction $f\vert_{S_n}$. This is immediate from Definition~\ref{def:genslice}, since $T_{y_0}S_n$ and $\mathcal D_{y_0}$ are complementary subspaces of $T_{y_0}Y$ and from Lemma~\ref{le:invkernel}, $\mathcal D_{y_0}\subset\ker\dd f(y_0)$. Second, $\hess(f\vert_{S_n})(y_0)$ is the restriction to $T_{y_0}S_n\times T_{y_0}S_n$ of $\hess(f)(y_0)$, see Definition~\ref{def:hess}. Thus, if $f$ is $G$--Morse and $y_0\in S_n$ is a critical point of $f$, then $y_0$ is $G$--nondegenerate, and hence the restriction of $\hess(f)(y_0)$ to the (closed) complement $T_{y_0}S_n$, given by $\hess(f\vert_{S_n})(y_0)$, is an isomorphism. Therefore, $f\vert_{S_n}$ is Morse.

Conversely, suppose $f\vert_{S_n}$ is Morse for every $n\in\N$ and let $y_0$ be a critical point of $f$. From Lemma~\ref{le:gmorseorbit}, for $y_0$ to be $G$--nondegenerate, it suffices to have that one single point $y\in G(y_0)$ is $G$--nondegenerate, since this implies that the whole orbit $G(y_0)$ is formed by $G$--nondegenerate critical points. In addition, since $y_0\in Y_1\subset\mathfrak U$, from Definition~\ref{def:genslice}, there exists $n_0\in\N$ such that $G(y_0)\cap S_{n_0}\neq\emptyset$. Let $y\in G(y_0)\cap S_{n_0}$. Since $f\vert_{S_{n_0}}$ is Morse, its Hessian, that is given by the restriction of $\hess(f)(y)$ to $T_yS_{n_0}$, is an isomorphism. Moreover, $T_yS_{n_0}$ is a closed complement to $\mathcal D_y$, hence $y$, and also $y_0$, are $G$--nondegenerate. Therefore $f$ is $G$--Morse.
\end{proof}

We now establish the feasibility of condition (i) of Definition~\ref{def:genslice} for {\em differentiable} actions on {\em separable} Hilbert manifolds, with respect to any $G$--invariant functional $f$. In this stronger context, $\mathcal D_y$ is well--defined for every $y\in Y$, and $Y_1=Y$. Nevertheless, notice that the case of the $g$--energy functional for periodic curves does not fit this situation, since the action is not differentiable. In the sequel we will {\em manually} construct a generalized slice for the reparameterization action \eqref{eq:mus1h1}, which allows to use Proposition~\ref{prop:morsegmorse}.

\begin{lemma}\label{le:transvd}
Suppose the action of $G$ on $Y$ is differentiable and let $S\subset Y$ be a transverse submanifold\footnote{Recall Definition~\ref{def:transversed}.} to $\mathcal D$ at some $y\in S$. Then the following hold.
\begin{itemize}
\item[(i)] there exists an open submanifold $S'\subset S$ containing $y$ which is transverse to $\mathcal D$;
\item[(ii)] $\mu(G\times S)=\{\mu(g,s):g\in G, s\in S\}$ contains an open neighborhood of $y$ in $Y$.
\end{itemize}
\end{lemma}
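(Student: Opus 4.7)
The plan tackles part (ii) first, as it is the more direct of the two, then treats part (i) via an openness argument on transversality. For (ii), define the $C^k$ map $\Psi:G\times S\to Y$ by $\Psi(g,s)=\mu(g,s)$. Its differential at $(e,y)$ sends $(X,v)\in\mathfrak g\oplus T_yS$ to $\dd\mu_y(e)X+v\in T_yY$. The transversality hypothesis $T_yY=T_yS\oplus\mathcal D_y$ immediately gives surjectivity of this differential (its image is $\mathcal D_y+T_yS$), while the intersection condition $T_yS\cap\mathcal D_y=\{0\}$ forces its kernel to be $T_eG_y\oplus\{0\}$, a finite-dimensional subspace, complemented by Lemma~\ref{le:finitecomplemented}. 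Hence $(e,y)$ is a regular point of $\Psi$, so $\Psi$ is a submersion there; by Remark~\ref{re:localforms} it is locally a projection between open subsets of Banach spaces and in particular an open map, whence $\Psi(G\times S)$ contains an open neighborhood of $y=\Psi(e,y)$.

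For (i), the strategy is to show that the subset of $S$ on which transversality to $\mathcal D$ holds is open in $S$, and then to take $S'$ to be the connected component of this subset containing $y$. Consider the continuous family of operators $\Phi_s:T_sS\oplus\mathfrak g\to T_sY$ defined by $\Phi_s(v,X)=v+\dd\mu_s(e)X$, with domains and codomains identified with fixed Banach spaces via local trivializations near $y$. At $s=y$, the same computation as in (ii) shows $\Phi_y$ to be surjective with complemented (finite-dimensional) kernel $T_eG_y\oplus\{0\}$, so Lemma~\ref{le:tsobreab} furnishes an open neighborhood $S'\subset S$ of $y$ on which $\Phi_s$ remains surjective with complemented kernel; surjectivity of $\Phi_s$ amounts to $T_sS+\mathcal D_s=T_sY$ for every $s\in S'$.

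The main obstacle will be promoting this \emph{sum} to the \emph{direct sum} required by Definition~\ref{def:transversed}. Working in a submanifold chart that identifies $Y$ locally with $V_1\oplus V_2$, where $V_2$ is finite-dimensional of dimension $\dim\mathcal D_y$ and $S$ corresponds to $V_1$, transversality at $s$ is equivalent to the restriction of the projection $p_2|_{\mathcal D_s}:\mathcal D_s\to V_2$ being an isomorphism. Surjectivity of $\Phi_s$ yields surjectivity of $p_2|_{\mathcal D_s}$, but injectivity requires a dimension count that relies on the implicit rank constancy of $\mathcal D$ as a bona fide distribution (Definition~\ref{def:dy}), so that $\dim\mathcal D_s=\dim V_2$ for $s$ near $y$. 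Without this rank constancy the statement can genuinely fail---as, for example, for $S^1$ rotating $\R^2$ about the origin, where $\dim\mathcal D_s$ jumps from $0$ at the fixed point to $1$ elsewhere---so invoking the distribution hypothesis explicitly will be essential.
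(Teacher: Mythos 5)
Your proof is correct, and it takes essentially the same route as the paper---part (ii) via the differential of $\mu$ at $(e,y)$ and an Inverse Function Theorem--type argument, part (i) via an openness statement for transversality---but it is more careful in two places. In (ii), the paper asserts that $\dd\mu(e,y):\mathfrak g\oplus T_yS\to T_yY$ is an isomorphism and then invokes the Inverse Function Theorem to produce a local diffeomorphism. This would require $\dd\mu_y(e):\mathfrak g\to\mathcal D_y$ to be injective, i.e.\ the isotropy $G_y$ to be discrete, which the lemma does not assume; you correctly weaken the claim to $\dd\mu(e,y)$ being a submersion with finite-dimensional complemented kernel $T_eG_y\oplus\{0\}$, and then openness of the image follows from the local form of submersions (Remark~\ref{re:localforms}) without needing a diffeomorphism. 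In (i), the paper gestures at ``a result similar to Lemma~\ref{le:transvopen},'' while you supply the concrete mechanism: Lemma~\ref{le:tsobreab} gives that the sum $T_sS+\mathcal D_s=T_sY$ persists on a neighborhood, and, crucially, you note that upgrading this to the direct sum demanded by Definition~\ref{def:transversed} needs the constant-rank hypothesis built into $\mathcal D$ being a distribution (Definition~\ref{def:dy}), so that the finite-dimensional count $\dim\mathcal D_s=\operatorname{codim}T_sS$ forces the intersection to be trivial. Your $S^1$-rotation example correctly illustrates that (i) genuinely fails when the rank jumps, which justifies making this hypothesis explicit rather than implicit.
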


\begin{proof}
A result similar to Lemma~\ref{le:transvopen} implies that the set of $y\in S$ such that $T_yY=T_yS\oplus\mathcal D_y$ is open, hence (i) holds.\footnote{See also Remark~\ref{re:transvopen} on the openness of the transversality condition to a fixed submanifold. Notice however that we are dealing here with transversality to the distribution $\mathcal D$, i.e., to all orbits, which are integral submanifolds, at the same time.}

As for (ii), consider the derivative of $\mu:G\times S\to Y$ at $(e,y)$, given by
\begin{eqnarray*}
\dd\mu(e,y):\mathfrak g\oplus T_yS &\la & T_yY=T_yS\oplus \mathcal D_y\\
(X,v) &\longmapsto &(\dd\mu_y(e)X,v).
\end{eqnarray*}
This is clearly an isomorphism, hence from the Inverse Function Theorem, there exist open neighborhoods of $(e,y)\in G\times S$ and $y\in Y$ where $\mu$ is a diffeomorphism. Therefore, the image $\mu(G\times S)$ contains an open neighborhood of $y$ in $Y$.
\end{proof}

\begin{proposition}\label{prop:genslice}
If the action of $G$ on $Y$ is differentiable and $Y$ is separable, then there exists a countable family $\{S_n\}_{n\in\N}$ of transverse submanifolds to $\mathcal D$ such that every orbit intercepts some $S_n$.
\end{proposition}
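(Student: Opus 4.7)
The plan is to construct a transverse submanifold through each point of $Y$, use these to form an open cover of $Y$ via the action, and then extract a countable subcover using separability. Concretely, I would proceed as follows.

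First, for each $y\in Y$ I would produce a local transverse submanifold $S_y$ with $y\in S_y$. Since $\dd\mu_y(e):\mathfrak{g}\to T_yY$ has image $\mathcal D_y$, a finite-dimensional (hence closed and complemented) subspace of $T_yY$, I can pick a closed complement $W_y\subset T_yY$ of $\mathcal D_y$ and use a local chart $\varphi$ of $Y$ around $y$ to define $S_y=\varphi^{-1}(\varphi(y)+W_y)$ on a suitably small open set; this gives a submanifold of $Y$ with $T_yS_y=W_y$, so that $T_yY=T_yS_y\oplus\mathcal D_y$, i.e.\ $S_y$ is transverse to $\mathcal D$ at $y$. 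By Lemma~\ref{le:transvd}(i), shrinking $S_y$ further if necessary, I may assume that $S_y$ is transverse to $\mathcal D$ at all of its points. Then Lemma~\ref{le:transvd}(ii) provides an open neighborhood $U_y\subset Y$ of $y$ such that $U_y\subset\mu(G\times S_y)$.

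Second, I would invoke a countable subcover argument. The family $\{U_y\}_{y\in Y}$ is an open cover of $Y$. Since $Y$ is a separable Hilbert manifold, it carries a (Riemann--Hilbert) metric making it a separable metric space (cf.\ Definition~\ref{def:metricinfinitemnfld} and Remark~\ref{re:infinitedimmetricspace}), and separable metric spaces are second-countable, hence Lindel\"of. Therefore I can extract a countable subcover $\{U_{y_n}\}_{n\in\N}$, and I set $S_n=S_{y_n}$.

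Finally, I would check that every orbit meets some $S_n$. Given any $y_0\in Y$, pick $n$ with $y_0\in U_{y_n}\subset\mu(G\times S_n)$; then $y_0=\mu(g,s)$ for some $g\in G$ and $s\in S_n$, so that $s=\mu(g^{-1},y_0)\in G(y_0)\cap S_n\ne\emptyset$. This produces the required countable family $\{S_n\}_{n\in\N}$ of submanifolds transverse to $\mathcal D$.

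No step looks technically deep: Lemma~\ref{le:transvd} supplies both the openness of the transversality condition and the fact that saturating a transverse slice by the action produces a neighborhood, and the only nontrivial topological input is that separability together with metrizability implies the Lindel\"of property, which is standard. If anything, the one point requiring mild attention is the construction of $S_y$ with prescribed tangent space inside the infinite-dimensional Hilbert manifold $Y$, which is handled painlessly via a chart as indicated.
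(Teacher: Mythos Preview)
Your proof is correct and follows essentially the same approach as the paper: construct a transverse slice $S_y$ through each point via Lemma~\ref{le:transvd}, use part (ii) to obtain an open neighborhood $U_y\subset\mu(G\times S_y)$, and then extract a countable subcover by the Lindel\"of property coming from separability and metrizability. You give slightly more detail (the explicit chart-based construction of $S_y$ and the verification that each orbit meets some $S_n$), but the argument is the same.
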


\begin{proof}
Let $\mathfrak U=Y$. From (i) in Lemma~\ref{le:transvd}, every $y\in Y$ is contained in a submanifold $S_y$ of $Y$ which is transverse to $\mathcal D$. From (ii), there exists an open neighborhood $U_y$ of $y$ in $Y$ such that every point in $U_y$ belongs to the orbit of some element of $S_y$. Since $Y$ is separable (and metrizable), it is second--countable, hence the open cover $\{U_y\}_{y\in Y}$ of $Y$ admits a countable subcover $\{U_{y_n}\}_{n\in\N}$. Then, the corresponding family $\{S_n\}_{n\in\N}$ given by $S_n=S_{y_n}$ clearly satisfies the claimed property.
\end{proof}

This shows that condition (i) of Definition~\ref{def:genslice} can be verified under these stronger hypotheses by taking a family $\{S_n\}_{n\in\N}$ as above. Nevertheless, we are interested in establishing the existence of a generalized slice for the reparameterization action \eqref{eq:mus1h1} of $S^1$ on $H^1(S^1,M)$ with respect to the $S^1$--invariant $g$--energy functional $E_g$. This will be done {\em manually}, directly using the structure of $H^1(S^1,M)$, since the action is not differentiable and hence the above arguments fail.

\begin{proposition}\label{prop:existslice}
For every $g\in\met_\nu^k(M)$, there exists a generalized slice $(\mathfrak U,\{S_n\}_{n\in\N})$ for the reparameterization action of $S^1$ on $H^1(S^1,M)$, given by \eqref{eq:mus1h1}, with respect to the $S^1$--invariant $g$--energy functional $E_g$, given by \eqref{eq:efunctper}.
\end{proposition}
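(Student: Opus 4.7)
The plan is to construct $\{S_n\}_{n\in\N}$ using two types of submanifolds, reflecting the fact that from \eqref{eq:dgammaspangamma} the subspace $\mathcal D_\gamma = \operatorname{span}\dot\gamma$ is trivial at constant curves and one-dimensional at non-constant curves in $Y_1 = H^2(S^1,M)$. Accordingly, I would use codimension-$0$ open submanifolds (open subsets of $H^1(S^1,M)$) to handle constant critical points and codimension-$1$ hypersurface slices to handle non-constant periodic geodesics.

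For constants, fix a countable cover $\{V_l\}$ of $M$ by $g$-convex normal balls (Definition~\ref{def:normalradius}), each chosen small enough that no non-constant periodic $g$-geodesic has image contained in any $V_l$, and set $S^{II}_l = \{\gamma \in H^1(S^1,M) : \gamma(S^1) \subset V_l\}$; this is $C^0$-open, hence $H^1$-open by Corollary~\ref{cor:hkckj}. For non-constant curves, fix a countable family $\{\gamma_0^{(\alpha)}\}$ of smooth curves dense in $H^1(S^1,M)\setminus\bigcup_l S^{II}_l$; for each $\gamma_0^{(\alpha)}$, pick $z_\alpha\in S^1$ and a smooth hypersurface $N_\alpha\subset M$ through $\gamma_0^{(\alpha)}(z_\alpha)$ with $\dot\gamma_0^{(\alpha)}(z_\alpha)\pitchfork N_\alpha$, and set
$$
S^I_\alpha \;=\; \ev_{z_\alpha}^{-1}(N_\alpha) \;\cap\; B_{H^1}\!\big(\gamma_0^{(\alpha)},\delta_\alpha\big),
$$
for a sufficiently small rational $\delta_\alpha>0$. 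The ambient $\ev_{z_\alpha}^{-1}(N_\alpha)$ is a codimension-$1$ submanifold of $H^1(S^1,M)$ by the submersion argument of Proposition~\ref{prop:endpointsmap}, and $S^I_\alpha$ is an open submanifold thereof. Take $\mathfrak U = \bigcup_\alpha B_{H^1}(\gamma_0^{(\alpha)},\delta_\alpha) \cup \bigcup_l S^{II}_l$, which contains $Y_1$ by density.

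To verify (i), any $\gamma\in\mathfrak U$ either lies in some $S^{II}_l$ or in some $B_{H^1}(\gamma_0^{(\alpha)},\delta_\alpha)$; in the latter case a suitable $S^1$-translate of $\gamma$ is forced into $\ev_{z_\alpha}^{-1}(N_\alpha)$ by adjusting the parameter, using that $\ev_{z_\alpha}$ is a submersion at $\gamma_0^{(\alpha)}$ transverse to $N_\alpha$. For (iii) at a constant critical point in $S^{II}_l$, one has $\mathcal D_\gamma = 0$ and $T_\gamma S^{II}_l = T_\gamma H^1(S^1,M)$; at a critical point in $S^I_\alpha$, $H^1$-closeness to $\gamma_0^{(\alpha)}$ plus $C^1$-continuous dependence of geodesics on initial data keeps $\dot\gamma(z_\alpha)$ close to $\dot\gamma_0^{(\alpha)}(z_\alpha)$ and hence transverse to $N_\alpha$, yielding $T_\gamma H^1(S^1,M) = T_\gamma S^I_\alpha \oplus \R\dot\gamma$ via \eqref{eq:tgammaopm}. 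For (ii), critical points of $E_g|_{S^{II}_l}$ are critical points of $E_g$ whose image lies in the convex $V_l$, hence constants; a critical $\gamma$ of $E_g|_{S^I_\alpha}$ satisfies, by the argument of Proposition~\ref{prop:critgenenfunc} applied to variations compactly supported in $S^1\setminus\{z_\alpha\}$, the $g$-geodesic equation on $S^1\setminus\{z_\alpha\}$, and integration by parts against arbitrary variations in $T_\gamma S^I_\alpha$ yields the jump condition
$$
\dot\gamma(z_\alpha^+) - \dot\gamma(z_\alpha^-) \;\in\; T_{\gamma(z_\alpha)}N_\alpha^{\,\perp},
$$
where $\perp$ is $g$-orthogonality. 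Energy conservation on the smooth segment forces the normal components to satisfy $|\dot\gamma(z_\alpha^+)_\perp| = |\dot\gamma(z_\alpha^-)_\perp|$, leaving two alternatives: equality (periodic $g$-geodesic) or a sign reversal (``bouncing'').

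The main obstacle is excluding the bouncing alternative in step (ii). The idea is that a bouncing critical $\gamma$ would have $\dot\gamma(z_\alpha^+)$ and $\dot\gamma(z_\alpha^-)$ with opposite normal components, while the smooth reference $\gamma_0^{(\alpha)}$ has a fixed nonzero normal component at $z_\alpha$. Continuous dependence of the geodesic flow on the initial datum shows that $H^1$-proximity of $\gamma$ to $\gamma_0^{(\alpha)}$ forces $C^1$-proximity of $\gamma$ to $\gamma_0^{(\alpha)}$ on each of the two closed subintervals obtained by removing a neighborhood of $z_\alpha$, which is incompatible with a sign reversal of the normal component of $\dot\gamma$ across $z_\alpha$ provided $\delta_\alpha$ is chosen small enough relative to the normal component of $\dot\gamma_0^{(\alpha)}(z_\alpha)$. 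Making this quantitative -- and performing the corresponding choice of $\delta_\alpha$ uniformly for each $\alpha$ together with the $g$-nondegeneracy of each $N_\alpha$ near $\gamma_0^{(\alpha)}(z_\alpha)$ -- is the technical heart of the argument.
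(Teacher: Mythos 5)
Your approach---Poincar\'e-section-type slices $S^I_\alpha = \ev_{z_\alpha}^{-1}(N_\alpha)\cap B_{H^1}(\gamma_0^{(\alpha)},\delta_\alpha)$, together with codimension-zero open slices to handle constants---is genuinely different from the paper's construction, which builds each $S_\gamma$ by exponentiating, through the $g_\mathrm R$-exponential chart, a neighborhood of the origin in $A'=A_\gamma\cap T_\gamma H^1(S^1,M)$, where $A_\gamma$ is an $L^2$-closed complement of $\mathcal D_\gamma$ in $\sect^{L^2}(\gamma^*TM)$. The point of that choice is that $T_\alpha S_\gamma$ is $L^2$-closed and of finite $H^1$-codimension for every $\alpha\in S_\gamma$; this is exactly what makes Lemma~\ref{le:grandetauskao} applicable, so that $\dd E_g(\alpha)$ vanishing on $T_\alpha S_\gamma$ extends $L^2$-continuously, whence the Riesz Representation Theorem~\ref{thm:riesz}, Lemma~\ref{le:distder0} and Corollary~\ref{cor:milagre} bootstrap $\alpha$ to $H^2(S^1,M)$ and identify it as a genuine periodic $g$-geodesic. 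Your tangent space $T_\gamma S^I_\alpha=\{v:v(z_\alpha)\in T_{\gamma(z_\alpha)}N_\alpha\}$ is \emph{not} $L^2$-closed, since a single pointwise evaluation is not continuous in $L^2$; that route to condition~(ii) of Definition~\ref{def:genslice} is therefore unavailable to you, and you must rule out the ``bouncing'' critical points of $E_g\vert_{S^I_\alpha}$ by hand. This is where the proposal has a genuine gap.

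Your mechanism for excluding bouncing is the claim that $H^1$-proximity of a bouncing critical curve $\gamma$ to the reference $\gamma_0^{(\alpha)}$ forces $C^1$-proximity away from $z_\alpha$, via continuous dependence of the geodesic flow on initial data. But $\gamma_0^{(\alpha)}$ is a member of a countable dense family and is generically \emph{not} a geodesic, so continuous dependence of the geodesic flow gives no relation between $\dot\gamma$ and $\dot\gamma_0^{(\alpha)}$; and the embedding $H^1\hookrightarrow C^0$ of Proposition~\ref{prop:hkckmenosum} does not control the one-sided velocities $\dot\gamma(z_\alpha^\pm)$. Without a quantitatively different argument bounding the jump $\dot\gamma(z_\alpha^+)-\dot\gamma(z_\alpha^-)$ (and, in the semi-Riemannian setting, ensuring $N_\alpha$ is $g$-nondegenerate at $\gamma(z_\alpha)$, so that $T_{\gamma(z_\alpha)}N_\alpha^\perp\cap T_{\gamma(z_\alpha)}N_\alpha=\{0\}$), there is no reason a small enough $\delta_\alpha$ excludes billiard-type critical points from $S^I_\alpha$. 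The paper's $L^2$-complement slice sidesteps this entirely by refusing to localize the constraint at one parameter value, so that critical points of the restricted functional come out automatically smooth and without corners.
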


\begin{proof}
Let $g\in\met_\nu^k(M)$ be any metric. Through a sequence of five claims we will prove existence of a generalized slice for the action of $S^1$ on $H^1(S^1,M)$ with respect to $E_g$.

Let us first establish some notations. For each $\gamma$, there is a natural inclusion of $T_\gamma H^1(S^1,M)$ in $\sect^{L^2}(\gamma^*TM)$. This inclusion induces an $L^2$--topology on $T_\gamma H^1(S^1,M)$, which will be denoted $\tau_{L^2}$. This is the smallest topology that turns the above inclusion continuous. The Hilbert space of Sobolev $H^1$ sections of $\gamma^*TM$ endowed with the topology $\tau_{L^2}$ will be denoted $(T_\gamma H^1(S^1,M),\tau_{L^2})$.

In addition, for each $\gamma\in H^1(S^1,M)$, we denote $\mathcal D_\gamma$ the one--dimensional subspace of $\sect^{L^2}(\gamma^*TM)$ spanned by the tangent field $\dot\gamma$. If $\gamma\in H^2(S^1,M)$, this coincides with the subspace defined by \eqref{eq:dy}, using differentiability of $\rho_\gamma$, see Remark~\ref{re:weakreg}. In this way, we may consider $\mathcal D_\gamma\subset\sect^{L^2}(\gamma^*TM)$ for each $\gamma\in H^1(S^1,M)$, and not only for $\gamma\in H^2(S^1,M)$.

\begin{claim}\label{cl:of}
For each $\gamma\in H^2(S^1,M)$ there exists a submanifold $S_\gamma$ of $H^1(S^1,M)$ such that for all $\alpha\in S_\gamma$ there exists a closed subset $A_\alpha$ of $\sect^{L^2}(\alpha^*TM)$ such that
\begin{itemize}
\item[(i)] $\sect^{L^2}(\alpha^*TM)=\mathcal D_\alpha\oplus A_\alpha$;
\item[(ii)] $T_\alpha S_\gamma=A_\alpha\cap T_\alpha H^1(S^1,M)$, in particular, $T_\alpha S_\gamma$ is closed \hfill\break\hfill in $(T_\alpha H^1(S^1,M),\tau_{L^2})$.
\end{itemize}
\end{claim}

Let us consider a special chart around $\gamma\in H^1(S^1,M)$. Namely, there exists an open neighborhood $U$ of the origin of $T_\gamma H^1(S^1,M)$ such that the map
\begin{equation*}
\begin{aligned}
\phi:U\ni u&\longmapsto\phi(u)\in H^1(S^1,M)\\
\phi(u)(z)&=\exp^\mathrm R_{\gamma(z)}(u(z)), \quad z\in S^1
\end{aligned}
\end{equation*}
is a local chart, where $\exp_x^\mathrm R$ is the exponential map of the auxiliary Riemannian metric $g_\mathrm R$ on the basepoint $x\in M$. Notice that $\phi(0)=\gamma$ corresponds to the exponentiation of the null section along $\gamma$, which hence coincides with $\gamma$ itself. The differential of this chart can be easily computed as being\footnote{Here we identify the tangent space $T_u T_\gamma H^1(S^1,M)$ to this vector space as $T_\gamma H^1(S^1,M)$ itself.}
\begin{equation}\label{eq:dphiu}
\begin{aligned}
\dd\phi(u):T_\gamma H^1(S^1,M)&\la T_{\phi(u)}H^1(S^1,M)& \\
(\dd\phi(u)v)(z)&=\dd\exp^\mathrm R_{\gamma(z)}(u(z))v(z), \quad z\in S^1.
\end{aligned}
\end{equation}
Furthermore, notice that $\dd\phi(0)=\id$. The linear isomorphism $\dd\phi(u)$ extends to a linear isomorphism $\widetilde{\dd\phi(u)}$ between the spaces of $L^2$--sections along the Sobolev $H^1$ curves $\gamma$ and $\phi(u)$ respectively, as the following diagram illustrates.
\begin{equation*}
\xymatrix@+20pt{\sect^{L^2}(\gamma^*TM)\ar[r]^{\widetilde{\dd\phi(u)}} & \sect^{L^2}(\phi(u)^*TM) \\ T_\gamma H^1(S^1,M)\ar@{^{(}->}[u]\ar[r]_{\dd\phi(u)} & T_{\phi(u)}H^1(S^1,M)\ar@{^{(}->}[u]}
\end{equation*}
In fact, this is a direct verification, by checking that the expression \eqref{eq:dphiu} is well--defined and is continuous in the $L^2$--topology,\footnote{The mentioned expression consists of a left multiplication by a continuous curve of operators, and is hence $L^2$--continuous.} hence extends to an isomorphism $\widetilde{\dd\phi(u)}$ as claimed above.

Define $A_\gamma$ as any closed complement of $\mathcal D_\gamma$ in $\sect^{L^2}(\gamma^*TM)$, so that $$\sect^{L^2}(\gamma^*TM)=\mathcal D_\gamma\oplus A_\gamma.$$ Notice that $A'=A_\gamma\cap T_\gamma H^1(S^1,M)$ is clearly closed in $T_\gamma H^1(S^1,M)$ in both the $L^2$--topology and its natural topology. Thus, it follows that
\begin{equation*}
T_\gamma H^1(S^1,M)=\mathcal D_\gamma\oplus A'.
\end{equation*}
Define $S_\gamma=\phi(A'\cap U)$. Since $A'$ is closed, $S$ is a submanifold of $H^1(S^1,M)$ in the sense of Definition~\ref{def:submnfldchart}. Moreover, from the observation that $\dd\phi(0)=\id$, it follows that $A'=T_\gamma S_\gamma$. For each $u\in A'\cap U$, let us denote $\alpha=\phi(u)\in S_\gamma$ the correspondent Sobolev $H^1$ curve. The for each $\alpha\in S_\gamma$, define $A_\alpha=\widetilde{\dd\phi(u)}(A_\gamma)$. This is clearly a closed subspace of $\sect^{L^2}(\alpha^*TM)$, since $\widetilde{\dd\phi(u)}$ is a linear isomorphism and $A_\gamma$ is closed in $\sect^{L^2}(\gamma^*TM)$.

Let us now verify that the above choices of submanifolds $S_\gamma$ and closed subsets $A_\alpha$ of $\sect^{L^2}(\alpha^*TM)$ for each $\alpha\in S_\gamma$ satisfy conditions (i) and (ii) of Claim~\ref{cl:of}, concluding the proof of this claim. Notice that $T_\alpha S_\gamma=\dd\phi(u)A'$ is contained in $T_\alpha H^1(S^1,M)$. Thus, $T_\alpha S_\gamma$ is contained in the intersection $A_\alpha\cap T_\alpha H^1(S^1,M)$. Using that $\widetilde{\dd\phi(u)}$ is an isomorphism, it follows easily that $T_\alpha S_\gamma=A_\alpha\cap T_\alpha H^1(S^1,M)$, which proves (ii).

As for (i), in order to prove that $\sect^{L^2}(\alpha^*TM)=\mathcal D_\alpha\oplus A_\alpha$, it suffices\footnote{Notice that the second decomposition is simply the pull--back of the first decomposition by the linear isomorphism $\widetilde{\dd\phi(u)}:\sect^{L^2}(\gamma^*TM)\to\sect^{L^2}(\alpha^*TM)$.} to prove that $\sect^{L^2}(\gamma^*TM)=A_\gamma\oplus\widetilde{\dd\phi(u)}^{-1}(\mathcal D_\alpha)$. For this, notice that the map
\begin{equation}\label{eq:mapamilagre}
T_\gamma H^1(S^1,M)\ni u\longmapsto\widetilde{\dd\phi(u)}^{-1}(\dot\alpha)\in\sect^{L^2}(\gamma^*TM)
\end{equation}
is continuous. In fact, we may compute directly $$\widetilde{\dd\phi(u)}^{-1}(\dot\alpha)(z)=\Big[\dd\exp^\mathrm R_{\gamma(z)}(u(z))\Big]^{-1}(\dot\alpha(z)), \quad z\in S^1,$$ where $\alpha(z)=\phi(u)(z)=\exp^\mathrm R_{\gamma(z)}(u(z))$, for all $z\in S^1$.

Notice that the desired transversality
\begin{equation}\label{eq:desireddec}
\sect^{L^2}(\gamma^*TM)=A_\gamma\oplus\widetilde{\dd\phi(u)}^{-1}(\mathcal D_\alpha)
\end{equation}
holds at $\alpha=\gamma$. From continuity of \eqref{eq:mapamilagre} and the fact that $A_\gamma$ is a closed subspace of $\sect^{L^2}(\gamma^*TM)$, it follows that by possibly reducing $S_\gamma$ if necessary, \eqref{eq:desireddec} also holds for all $\alpha\in S_\gamma$. Therefore, condition (i) is proved, concluding the proof of Claim~\ref{cl:of}.

\begin{claim}\label{cl:bah}
For all $\alpha\in S_\gamma\cap H^2(S^1,M)$, there is a decomposition $$T_\alpha H^1(S^1,M)=\mathcal D_\alpha\oplus T_\alpha S_\gamma.$$
\end{claim}

Let $\alpha\in S_\gamma\cap H^2(S^2,M)$. Then $\mathcal D_\alpha$ and $T_\alpha S_\gamma$ are closed subspaces whose intersection is, from (ii) of Claim~\ref{cl:of}, given by $$\mathcal D_\alpha\cap T_\alpha S_\gamma=\mathcal D_\alpha\cap A_\alpha\cap T_\alpha H^1(S^1,M),$$ which is clearly empty since the sum in (i) of Claim~\ref{cl:of} is direct. Moreover, if $v\in T_\alpha H^1(S^1,M)$, then from (i) of Claim~\ref{cl:of} it follows that there exists $v_1\in A_\alpha$ and $v_2\in\mathcal D_\alpha$ such that $v=v_1+v_2$. Since $\alpha\in H^2(S^1,M)$, the subspace $\mathcal D_\alpha$ is contained in $T_\alpha H^1(S^1,M)$, hence $v_2\in\sect^{H^1}(\alpha^*TM)$. Therefore, $v_1=v-v_2$ is in $T_\alpha H^1(S^1,M)$, and also in $A_\alpha$. Thus, from (ii) in Claim~\ref{cl:of}, it follows that $v_1\in T_\alpha S_\gamma$. This implies that any $v\in T_\alpha H^1(S^1,M)$ decomposes as a sum $v=v_1+v_2$ with $v_1\in T_\alpha S_\gamma$ and $v_2\in\mathcal D_\alpha$. This means that $T_\alpha H^1(S^1,M)=\mathcal D_\alpha + T_\alpha S_\gamma$, and since the intersection of these subspaces was proved to be trivial, it follows that this sum is direct, concluding the proof of Claim~\ref{cl:bah}.

\begin{claim}\label{cl:vale2}
If $\alpha\in S_\gamma$ is a critical point of $E_g\vert_{S_\gamma}$, then $\alpha$ is a critical point of $E_g$.
\end{claim}

Suppose $\alpha\in S_\gamma$ is such that $\dd E_g(\alpha)\vert_{T_\alpha S_\gamma}=0$. We first prove that this implies $\alpha\in H^2(S^1,M)$, and then use Claim~\ref{cl:bah} to conclude the argument. 

The differential $\dd E_g(\alpha)$ is a continuous functional\footnote{See Proposition~\ref{prop:fck}.} in $T_\alpha H^1(S^1,M)$, which can also be regarded with the topology induced by $\sect^{L^2}(\alpha^*TM)$. In order to infer that $\dd E_g(\alpha)$ is continuous also in this topology, i.e., in the space $(T_\alpha H^1(S^1,M),\tau_{L^2})$, notice that it vanishes in the subspace $T_\alpha S_\gamma$, which is closed in $(T_\alpha H^1(S^1,M),\tau_{L^2})$, from (ii) in Claim~\ref{cl:of}. Moreover, $T_\alpha S_\gamma$ is finite--codimensional in $T_\alpha H^1(S^1,M)$. In fact, from (ii) of Claim~\ref{cl:of}, $A_\alpha\cap T_\alpha H^1(S^1,M)\subset T_\alpha S_\gamma$, and hence the linear inclusion $$\frac{T_\alpha H^1(S^1,M)}{T_\alpha S_\gamma}\longhookrightarrow\frac{\sect^{L^2}(\alpha^*TM)}{A_\alpha}\cong\mathcal D_\alpha$$ is injective. Thus, since $\mathcal D_\alpha$ is finite--dimensional, also $T_\alpha H^1(S^1,M)/T_\alpha S_\gamma$ is finite--dimensional.

Therefore we may apply Lemma~\ref{le:grandetauskao}, which gives that $$\dd E_g(\alpha):(T_\alpha H^1(S^1,M),\tau_{L^2})\la\R$$ is continuous. From this continuity and density of $T_\alpha H^1(S^1,M)$ in the space $\sect^{L^2}(\alpha^*TM)$, it follows that the functional $\dd E_g(\alpha)$ admits a continuous extension to $\sect^{L^2}(\alpha^*TM)$. Since this is a Hilbert space, from the Riesz Representation Theorem~\ref{thm:riesz}, there exists $u\in\sect^{L^2}(\alpha^*TM)$ such that
\begin{equation}\label{eq:egrepresentado}
\dd E_g(\alpha)v = \int_{S^1} g(u,v)\;\dd z, \quad v\in T_\alpha H^1(S^1,M).
\end{equation}
Let $\{e_i(t)\}_{i=1}^m$, $t\in [0,2\pi]$ be a $g$--parallel orthonormal frame of $\alpha^*TM$, i.e., a $g$--orthonormal frame\footnote{Recall Definition~\ref{def:gorthframe}.} formed by vectors $e_i\in\sect^{H^1}(\alpha^*TM)$ along $\alpha$ that are $g$--parallel. For some integrals in the sequel, it will be handy to consider the parameter $t\in [0,2\pi]$ of the frame varying in this interval rather than using directly $z\in S^1$. Thus, we now implicitly assume a composition with the parameterization $$[0,2\pi]\ni t\longmapsto e^{it}\in S^1$$ of the circle using the interval $[0,2\pi]$. Decompose $u$ and $v$ with respect to this frame,
\begin{eqnarray*}
u=\sum_{i=1}^m u_i(t)e_i(t) && t\in [0,2\pi] \\
v=\sum_{i=1}^m v_i(t)e_i(t) && t\in [0,2\pi],
\end{eqnarray*}
and define for each $1\leq i\leq m$, $$U_i(t)=u_i(0)+\int_0^t u_i(s)\;\dd s.$$ Notice that since $u_i\in L^2([0,2\pi],\R)$, it follows that $U_i\in H^1([0,2\pi],\R)$, for all $1\leq i\leq m$. Since \eqref{eq:egrepresentado} holds for all $v\in T_\alpha H^1(S^1,M)$, in particular it holds supposing $v_i\in C^\infty_c(\,]0,2\pi[,\R)$, for all $1\leq i\leq m$. Since the above frame is orthonormal, consider $\delta_i=g(e_i,e_i)=\pm1$. We may then compute
\begin{eqnarray*}
\dd E_g(\alpha)v &=& \int_{S^1} g(u,v)\;\dd z\\
&=& \int_0^{2\pi} \sum_{i=1}^m \delta_iu_iv_i\;\dd t\\
&=& \int_0^{2\pi} \sum_{i=1}^m \delta_iU'_iv_i\;\dd t\\
&=& -\int_0^{2\pi} \sum_{i=1}^m \delta_iU_iv'_i\;\dd t\\
&=& -\int_0^{2\pi} \langle \widetilde U,\widetilde{v}'\rangle\;\dd t,
\end{eqnarray*}
where $\widetilde U=(\delta_iU_i)_{i=1}^m$, and $\widetilde{v}'=(v'_i)_{i=1}^m$. Notice that since the chosen frame is $g$--parallel, $$\D^g v(t)=\sum_{i=1}^m v'_i(t)e_i(t)$$ is represented in this frame by $\widetilde{v}'$, which is the ordinary derivative of $\widetilde v=(v_i)_{i=1}^m$. Let $w=(g(\dot\alpha,e_i))_{i=1}^m$. Then, from \eqref{eq:dfdgammaa},
\begin{eqnarray*}
\dd E_g(\alpha)v&=&\int_{S^1} g(\dot\alpha,\D^g v)\;\dd z\\
&=& \int_0^{2\pi} \sum_{i=1}^m v'_ig(\dot\alpha,e_i)\;\dd t.\\
&=& \int_0^{2\pi} \langle w,\widetilde{v}'\rangle\;\dd t.
\end{eqnarray*}
Thus, from the above two computations, $$\int_0^{2\pi} \langle\widetilde U+w,\widetilde{v}'\rangle\;\dd t=0$$ for all $\widetilde{v}\in C^\infty_c(\,]0,2\pi[,\R^m)$. From Lemma~\ref{le:distder0}, it follows that $\widetilde U+w$ is constant almost everywhere. This means that there exists $a=(a_i)_{i=1}^m\in\R^m$, with $\delta_iU_i(t)+g(\dot\alpha(t),e_i(t))=a_i$, for almost all $t\in [0,2\pi]$, for all $1\leq i\leq m$. Thus, $$\delta_ig(\dot\alpha(t),e_i(t))=\delta_ia_i-U_i(t)$$ for almost all $t\in [0,2\pi]$. Since $\dot\alpha\in\sect^{L^2}(\alpha^*TM)$ decomposes in the same frame as 
\begin{equation}\label{eq:alphadotrep}
\dot\alpha(t)=\sum_{i=1}^m \delta_ig(\dot\alpha(t),e_i(t))e_i(t),
\end{equation}
it follows that $\dot\alpha$ concides almost everywhere with a section os Sobolev class $H^1$. From Corollary~\ref{cor:milagre}, this implies that $\alpha$ is of class $C^1$, and $\dot\alpha$ is always equal to this Sobolev $H^1$ section, given by the right--hand side of \eqref{eq:alphadotrep}. Hence, $\alpha\in H^2(S^1,M)$.

Using that $\alpha\in S_\gamma\cap H^2(S^1,M)$, from Claim~\ref{cl:bah} there is a decomposition $$T_\alpha H^1(S^1,M)=\mathcal D_\alpha\oplus T_\alpha S_\gamma.$$ Let $v\in\mathcal D_\alpha$. Then $v=\lambda\dot\alpha$, and hence
\begin{eqnarray*}
\dd E_g(\alpha)v &=& \int_{S^1} g(\dot\alpha,\D^g v)\;\dd z\\
&=& \int_{S^1} \lambda g(\dot\alpha,\D^g \dot\alpha)\;\dd z\\
&=& \tfrac{\lambda}{2}\int_{S^1} \left.\frac{\dd}{\dd s}g(\dot\alpha(s),\dot\alpha(s))\right|_{s=z} \;\dd z\\
&=& 0,
\end{eqnarray*}
since $S^1$ has empty boundary. Therefore, $\dd E_g(\alpha)$ vanishes identically on $\mathcal D_\alpha$. By hypothesis, it also vanishes identically on $T_\alpha S_\gamma$. Thus, $\dd E_g(\alpha)=0$, i.e., $\alpha\in H^2(S^1,M)$ is a critical point of $E_g$, concluding the proof of Claim~\ref{cl:vale2}.

\begin{claim}\label{cl:vale3}
If $\alpha\in S_\gamma$ is a critical point of $E_g$, then $T_\alpha H^1(S^1,M)=\mathcal D_\alpha\oplus T_\alpha S_\gamma$.
\end{claim}

From Proposition~\ref{prop:critgenenfunc}, if $\alpha\in S_\gamma$ is a critical point of $E_g$, then it is a periodic $g$--geodesic. In particular, from Corollary~\ref{cor:geodck}, $\alpha\in H^2(S^1,M)$. Thus, from Claim~\ref{cl:bah} there exists the required decomposition $T_\alpha H^1(S^1,M)=\mathcal D_\alpha\oplus T_\alpha S_\gamma.$

\begin{claim}
There exists an open subset $\mathfrak U$ of $H^1(S^1,M)$ and a sequence $\{\gamma_n\}_{n\in\N}$ in $\mathfrak U$ such that $(\mathfrak U,\{S_{\gamma_n}\}_{n\in\N})$ is a generalized slice for the action of $S^1$ on $H^1(S^1,M)$ with respect to $E_g$.
\end{claim}

For each $\gamma\in H^2(S^1,M)$, consider the submanifold $S_\gamma$ of $H^1(S^1,M)$ given by Claim~\ref{cl:of}. From Remark~\ref{re:weakreg}, the map $$\rho_\gamma:S^1\ni e^{i\theta}\longmapsto\rho(e^{i\theta},\gamma)=\gamma(e^{i\theta}\,\cdot)\in H^1(S^1,M)$$ is of class $C^1$. In addition, Claim~\ref{cl:bah} gives a decomposition $T_\gamma H^1(S^1,M)=\im\dd\rho_\gamma(1)\oplus T_\gamma S_\gamma$. Thus, we may apply Proposition~\ref{prop:tauskehocara}, which implies that the subset $\rho(S^1\times S_\gamma)=\bigcup_{\alpha\in S_\gamma} \rho_\alpha(S^1)$ is a neighborhood of $\gamma\in H^1(S^1,M)$. 

Thus, for each $\gamma\in H^2(S^1,M)$, let $U_\gamma\subset\rho(S^1\times S_\gamma)$ be an open subset of $H^1(S^1,M)$ containing $\gamma$, and define $$\mathfrak U=\bigcup_{\gamma\in H^2(S^1,M)} U_\gamma.$$ This is clearly an open subset of $H^1(S^1,M)$ that contains\footnote{Since critical points of $E_g$ are periodic $g$--geodesics, from Corollary~\ref{cor:geodck}, they are automatically of class $C^2$, in particular of Sobolev class $H^2$.} all critical points of $E_g$, and since it is second--countable, by the Lindel\"of property, the open cover $\{U_\gamma\}_{\gamma\in H^2(S^1,M)}$ of $\mathfrak U$ admits a countable subcover $\{U_{\gamma_n}\}_{n\in\N}$. Consider the family $\{S_{\gamma_n}\}_{n\in\N}$ of submanifolds associated to the sequence $\{\gamma_n\}_{n\in\N}$ in $\mathfrak U$.

Since $\mathfrak U\subset\bigcup_{\gamma\in H^2(S^1,M)}\rho(S^1\times S_\gamma)$, condition (i) of Definition~\ref{def:genslice} is trivially satisfied. Moreover, conditions (ii) and (iii) of this definition are direct consequences of Claims~\ref{cl:vale2} and~\ref{cl:vale3}, respectively. Therefore, $(\mathfrak U,\{S_{\gamma_n}\}_{n\in\N})$ is a generalized slice for the action of $S^1$ on $H^1(S^1,M)$ with respect to $E_g$, concluding the proof.
\end{proof}


\part{Genericity of nondegenerate geodesics}

\chapter{Abstract genericity criteria}
\label{chap4}

In this chapter we discuss the concept of {\em genericity}, and give a few abstract criteria under which a certain property is {\em generic}. In general terms, a property is generic if it holds for {\em typical examples}, in other words, if {\em almost all} objects satisfy it. This notion can be formally defined in terms of {\em measures} or {\em topologies}, depending on the type of ambient space considered. Since our default ambient spaces are infinite--dimensional manifolds, where there is no clear concept of Lebesgue measure, the most natural definition of genericity is in terms of its topology, as follows.

\begin{definition}\label{def:generic}
A subset $\mathcal G$ of a metric space $E$ is said to be \emph{generic}\index{Generic property} in $E$ if it contains a $G_\delta$ of dense subsets, that is, a countable intersection of open dense subsets of $E$. Elements of $\mathcal G$ are said to be {\em generic elements of $E$}, and if these satisfy a certain property, then such property is also said to be {\em generic} in $E$.\index{Generic set}
\end{definition}

\begin{remark}\label{re:stupidremark}
As an obvious consequence of this definition, if a certain subset $S$ of a metric space $E$ contains a generic subset of $E$, then $S$ is also generic in $E$. This fact implies, for instance, that any union of generic subsets is also generic.
\end{remark}

\begin{remark}
Apart from giving intuition about the nature of mathematical objects, genericity plays an important role in the reliability mathematical models. Namely, due to inner inaccuracies in observation, the only relevant physical properties of a model are those generic in the adequate topology. This stability guarantees that such inaccuracies are physically neglectable. In particular, this is of great relevance in astrophysics concerning measurements of effects modeled by general relativity, see Hawking \cite{haw2}.
\end{remark}

Although this definition is given for subsets of metric spaces, it is clearly valid for more general topological spaces. Once more, all of our applications deal with metrizable ambients and hence we shall restrict to the case of (complete) metric spaces.

\begin{example}\label{ex:gensubsets1}
Some generic properties are quite intuitive, as the following examples indicate. A generic polynomial of degree $n$ with real coefficients has $n$ distinct complex roots. Generically, a plane in $\R^3$ intersects the three coordinate axes in three distinct points. A generic pair of lines in $\R^3$ are skew, i.e., non parallel and disjoint. 

Nevertheless, genericity can also be tricky at times. For instance, a generic function $f\in C^0([a,b],\R)$ is differentiable at no point of $[a,b]$, nor is it monotone on any subinterval of $[a,b]$. A proof can be found in Pugh \cite{pugh}, using the Lebesgue Monotone Differentiation Theorem. Further interesting generic properties of continuous real functions are given in \cite{boas,bruckner1,bruckner2,rooij}. For more basic examples of generic properties see Example~\ref{ex:gensubsets2} and Remarks~\ref{rm:isotrivial} and~\ref{re:genericvsdense}.
\end{example}

Let us mention a few elementary properties of generic sets, which are in great part direct consequences of basic properties of $G_\delta$ sets.\index{$G_\delta$ set} Recall that a subset of a metric space is a $G_\delta$ if it is given as the countable intersection of open subsets. Countable intersections and finite unions of $G_\delta$'s are still a $G_\delta$, and open or closed subsets are $G_\delta$'s. The complementary of a $G_\delta$ set is called an $F_\sigma$ set, which is hence given as countable union of closed subsets.\index{$F_\sigma$ set}

\begin{lemma}\label{le:genopenintersect}
Let $E$ be a metric space, $\mathcal G$ a generic subset and $A$ an open subset. Then $\mathcal G\cap A$ is generic in $A$.
\end{lemma}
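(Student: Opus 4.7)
The plan is to unfold the definition of genericity in two nested settings — genericity in $E$ versus genericity in the subspace $A$ — and transfer the defining $G_\delta$ from one to the other by restriction. Write $\mathcal G \supseteq \bigcap_{n \in \N} U_n$, where each $U_n$ is open and dense in $E$, which is possible by Definition~\ref{def:generic}.

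The candidate $G_\delta$ inside $A$ is then $\bigcap_{n \in \N} (U_n \cap A)$. Two verifications are needed. First, each $U_n \cap A$ is open in the subspace topology of $A$, which is immediate since $A$ is open in $E$ and so open subsets of $A$ coincide with intersections of $A$ with open subsets of $E$. Second, and this is the only substantive point, each $U_n \cap A$ is dense in $A$: given $x \in A$ and an open neighborhood $V$ of $x$ in $A$, the openness of $A$ in $E$ ensures that $V$ is also open in $E$, so by density of $U_n$ in $E$ we get $U_n \cap V \neq \emptyset$, and since $V \subseteq A$ this intersection lies in $(U_n \cap A) \cap V$.

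To finish, combine the two inclusions
\[
\bigcap_{n \in \N} (U_n \cap A) \;=\; A \cap \bigcap_{n \in \N} U_n \;\subseteq\; A \cap \mathcal G \;=\; \mathcal G \cap A,
\]
so $\mathcal G \cap A$ contains a countable intersection of open dense subsets of $A$, and by Remark~\ref{re:stupidremark} it is generic in $A$. There is no real obstacle here; the statement is essentially a bookkeeping check that openness of $A$ in $E$ lets both open\-ness and density pass to the subspace topology unchanged. The proof would not even require $E$ (or $A$) to be complete — completeness of $E$ is used elsewhere to invoke the Baire theorem to conclude that generic sets are dense, but is irrelevant for this purely topological restriction statement.
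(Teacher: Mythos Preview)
Your proof is correct and follows exactly the same approach as the paper: both take the dense $G_\delta$ witnessing genericity of $\mathcal G$ in $E$ and intersect each of its terms with $A$. The paper's proof simply declares the openness and density of $D_n\cap A$ in $A$ to be ``evident'', whereas you spell out these verifications in detail; otherwise the arguments are identical.
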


\begin{proof}
If $\mathcal G$ is generic, it contains a $G_\delta$, $D=\bigcap_{n\in\N} D_n$, where $D_n$ are open dense subsets of $E$. The fact that $\mathcal G\cap A$ is generic in $A$ is evident considering $\bigcap_{n\in\N} (D_n\cap A)$, which is a dense $G_\delta$ of $A$ contained in $\mathcal G\cap A$.
\end{proof}

\begin{lemma}\label{le:genunion}
Let $E$ be a metric space and $A$ and $B$ open subsets of $E$. If $\mathcal G$ is generic in both $A$ and $B$, then it is also generic in $A\cup B$.
\end{lemma}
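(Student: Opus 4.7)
My plan is to unwind the definition of genericity and construct, from the two given $G_\delta$ certificates in $A$ and in $B$ respectively, a single $G_\delta$ of open dense subsets of $A\cup B$ contained in $\mathcal G$. By hypothesis, there are open dense $U_n^A\subseteq A$ and open dense $U_n^B\subseteq B$ (for $n\in\N$) such that
\[
\bigcap_{n\in\N} U_n^A \;\subseteq\;\mathcal G\cap A\quad\text{and}\quad \bigcap_{n\in\N} U_n^B\;\subseteq\;\mathcal G\cap B.
\]
Note that each $U_n^A$ (resp.\ $U_n^B$) is open in $E$, since $A$ (resp.\ $B$) is open in $E$.

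The key construction is to enlarge each $U_n^A$ to a set that is open dense in the bigger space $A\cup B$ by throwing in the part of $A\cup B$ which is far from $A$, namely
\[
V_n^A \;:=\; U_n^A \,\cup\, \bigl((A\cup B)\setminus\overline{A}\bigr),\qquad V_n^B \;:=\; U_n^B \,\cup\, \bigl((A\cup B)\setminus\overline{B}\bigr).
\]
These are open subsets of $A\cup B$. To check that $V_n^A$ is dense in $A\cup B$, take any nonempty open $W\subseteq A\cup B$; if $W\cap A\ne\emptyset$, then density of $U_n^A$ in $A$ gives $W\cap U_n^A\ne\emptyset$, while if $W\cap A=\emptyset$ then $W\subseteq E\setminus\overline A$ (because $W$ is open), so $W\subseteq (A\cup B)\setminus\overline A\subseteq V_n^A$. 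The same argument applies to $V_n^B$.

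It then remains to verify that $D:=\bigcap_{n\in\N}(V_n^A\cap V_n^B)$, which is a countable intersection of open dense subsets of $A\cup B$, is contained in $\mathcal G$. Given $x\in D$, either $x\in A$ or $x\in B$. In the first case $x\in\overline A$, so $x\notin(A\cup B)\setminus\overline A$, forcing $x\in U_n^A$ for every $n$, whence $x\in\bigcap_n U_n^A\subseteq\mathcal G$; the second case is symmetric. The main (and only) obstacle is the density verification for $V_n^A$, which is why I single out the decomposition $W\cap A\ne\emptyset$ versus $W\cap A=\emptyset$; once that is in place, the set--theoretic inclusions complete the proof without any need to invoke Baire's theorem or completeness of $E$.
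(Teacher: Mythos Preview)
Your proof is correct. It also takes a genuinely different route from the paper's, and in fact avoids a gap present there. The paper proposes $D_n^A\cup D_n^B$ as the open dense subsets of $A\cup B$ and then asserts the identity
\[
\Big(\bigcap_{n} D_n^A\Big)\cup\Big(\bigcap_{n} D_n^B\Big)=\bigcap_{n}\big(D_n^A\cup D_n^B\big),
\]
in order to conclude that $D^A\cup D^B$ is itself a $G_\delta$. Only the inclusion $\subseteq$ holds in general, so this step does not justify that the candidate $G_\delta$ sits inside $\mathcal G$. Your construction sidesteps the issue entirely: by enlarging each $U_n^A$ with the piece $(A\cup B)\setminus\overline A$ (and symmetrically for $B$), you ensure that for any point of $A$ lying in all the $V_n^A$, it must already lie in all the $U_n^A$, and hence in $\mathcal G$. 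The density check via the dichotomy $W\cap A\ne\emptyset$ versus $W\cap A=\emptyset$ (using that an open set disjoint from $A$ is disjoint from $\overline A$) is exactly the right ingredient. So your argument is both different and more careful than the paper's.
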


\begin{proof}
Since $G$ is generic in $A$ and $B$, there exist dense $G_\delta$'s $$D^A=\bigcap_{n\in\N} D^A_n\;\;\mbox{ and }\;\;D^B=\bigcap_{n\in\N} D^B_n$$ of $A$ and $B$ respectively, contained in $\mathcal G$. Since $D^A$ is dense in $A$ and $D^B$ is dense in $B$, it follows that $D^A\cup D^B$ is dense in $A\cup B$, for the closure of a union is the union of closures. Moreover, $$D^A\cup D^B=\left(\bigcap_{n\in\N}D^A_n\right)\cup\left(\bigcap_{n\in\N}D^B_n\right)=\bigcap_{n\in\N}(D^A_n\cup D^B_n),$$ and since $D^A_n$'s and $D^B_n$'s are open in $A$ and $B$ respectively, which are in turn open subsets of $E$, it follows that $(D^A_n\cup D^B_n)$ is an open subset of $A\cup B$. Hence $D^A\cup D^B$ is a (dense) $G_\delta$ of $A\cup B$ contained in $\mathcal G$, and therefore $\mathcal G$ is generic in $A\cup B$.
\end{proof}

\begin{lemma}\label{le:gencountintersect}
Let $E$ be a metric space and $\{\mathcal G_n\}_{n\in\N}$ a countable family of generic subsets. Then the intersection $\bigcap_{n\in\N}\mathcal G_n$ is also generic in $E$.
\end{lemma}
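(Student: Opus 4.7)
The plan is to exhibit a dense $G_\delta$ of $E$ contained in $\bigcap_{n\in\N}\mathcal G_n$, which by Definition~\ref{def:generic} will establish genericity. Since each $\mathcal G_n$ is generic, by definition there exists a countable family $\{D_{n,m}\}_{m\in\N}$ of open dense subsets of $E$ such that $D_n=\bigcap_{m\in\N} D_{n,m}\subset\mathcal G_n$.

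The natural candidate is then the double intersection
\begin{equation*}
D=\bigcap_{n\in\N}\bigcap_{m\in\N} D_{n,m}.
\end{equation*}
Since $\N\times\N$ is countable, $D$ is a countable intersection of open subsets of $E$, hence a $G_\delta$. Moreover, $D\subset\bigcap_{n\in\N}D_n\subset\bigcap_{n\in\N}\mathcal G_n$, so it remains only to verify that $D$ is dense in $E$. Here the main (and only nontrivial) step enters: I would invoke the Baire Theorem, implicit in the text, which asserts that in a complete metric space (or more generally, a Baire space) any countable intersection of open dense subsets is dense. Applied to the countable family $\{D_{n,m}\}_{(n,m)\in\N\times\N}$, this gives density of $D$ in $E$.

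Therefore $D$ is a dense $G_\delta$ of $E$ contained in $\bigcap_{n\in\N}\mathcal G_n$, which is consequently generic by Remark~\ref{re:stupidremark}. The only subtlety is the appeal to the Baire Theorem, which requires $E$ to be a Baire space; in all intended applications the ambient metric spaces are either complete metric spaces or open subsets thereof, so this hypothesis is tacitly satisfied.
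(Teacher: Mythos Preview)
Your construction of $D=\bigcap_{n,m}D_{n,m}$ is exactly what the paper does, but you then take an unnecessary detour. Reread Definition~\ref{def:generic}: a subset is generic if it \emph{contains a countable intersection of open dense subsets}. The set $D$ is, by construction, such an intersection, and $D\subset\bigcap_n\mathcal G_n$; so the lemma is already proved at that point. There is no need to show that $D$ itself is dense, and hence no need to invoke the Baire Theorem or to assume completeness of $E$.

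Your appeal to Baire is not wrong when $E$ is complete, but it imports a hypothesis that the lemma does not carry. The paper's proof works for an arbitrary metric space precisely because the definition of ``generic'' is phrased in terms of the open dense sets forming the $G_\delta$, not in terms of the resulting intersection being dense. (That the intersection \emph{is} dense in complete spaces is recorded separately, in Remark~\ref{re:genericvsdense}.) So: drop the last paragraph about Baire and the caveat about completeness; the proof ends one sentence earlier than you think.
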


\begin{proof}
From genericity of each $\mathcal G_n$ in $E$, there exist $D_m^n$ open dense subsets of $E$, with $D^n=\bigcap_{m\in\N} D_m^n\subset\mathcal G_n$. Let $$D=\bigcap_{n\in\N}D^n=\bigcap_{n,m\in\N} D_m^n.$$ Then $D\subset\bigcap_{n\in\N}\mathcal G_n$ is the countable intersection of open dense subsets in $E$, hence also $\bigcap_{n\in\N}\mathcal G_n$ is generic in $E$.
\end{proof}

\begin{example}\label{ex:gensubsets2}
An open dense subset is clearly generic, since it contains a countable (finite) intersection of open dense subsets. A counter example for the converse is for instance the set of irrational numbers $\R\setminus\Q$, that is generic in $\R$ however not open. For more details, see Remark~\ref{re:genericvsdense}.

An important example of generic subset that is open and dense is the following. Let $V$ be a finite--dimensional vector space and consider $\GL(V)$ the set of all automorphisms of $V$. Then, it can be written as $$\GL(V)={\det}^{-1}\big(\R\setminus\{0\}\big),$$ where $\det:\Lin(V)\to\R$ is the determinant function. Since $\det$ is continuous, $\GL(V)$ is open. Moreover, it is dense in $\Lin(V)$ by standard arguments.\footnote{Suppose $A\in\Lin(V)$ is non invertible and let $\varepsilon>0$. Consider the polynomial function $p(t)=\det((1-t)A+t\id)$, $t\in\R$. Since $p$ is a polynomial, it has finitely many (hence isolated) zeros. Thus, since $p(0)=0$, there exists $\delta>0$ such that there are no other zeros of $p$ in $(-\delta,\delta)$ other than $0$. Therefore, if $t\in (-\delta,\delta)\setminus\{0\}$ then $(1-t)A+t\id\in\GL(V)$. Choose $0<t_0<\min\{\delta,\frac{\varepsilon}{\|A-\id\|}\}$. Then $(1-t_0)A+t_0\id\in\GL(V)$ and its distance from $A$ is less then $\varepsilon$, proving that $\GL(V)$ is dense in $\Lin(V)$.} Therefore, $\GL(V)$ is generic in $\Lin(V)$. Since operators of $\GL(V)$ are invertible operators, we may conclude that {\em an operator $T\in\Lin(V)$ is generically invertible}, or is {\em generically an isomorphism}.
\end{example}

Notice that a non invertible operator, or a singular matrix, is a quite symmetric object, when compared to invertible operators, that are {\em generic}. This is a sort of general rule, in the sense that highly symmetric objects are almost never {\em generic}. All genericity results in Chapters~\ref{chap5} and~\ref{chap6} are in this direction, asserting for instance that on a given manifold, two points\footnote{This is the content of Theorem~\ref{thm:biljavapic}, of \cite{biljavapic}. Nevertheless, we prove (see Theorem~\ref{thm:bigone}) that it is possible to extend this concept to much more general {\em endpoints conditions}, such as two submanifolds or any {\em admissible GEC}, see Definition~\ref{def:admgbc}.} are not conjugate in a generic semi--Riemannian metric.

\begin{remark}\label{rm:isotrivial}
This general idea regarding symmetries also holds in a more precise sense, considering isometries of a manifold. Namely, it is possible to prove that a generic Riemannian metric $g$ on a smooth finite--dimensional manifold $M$ has trivial isometry group $\Iso(M,g)=\{\id\}$, see Definition~\ref{def:isometries}. This result was proved by Ebin \cite{ebin}, however there are simpler proofs of this fact.
\end{remark}

We now recall the celebrated Baire Theorem, whose proof can be found for instance in Manetti \cite{manetti}. Among other consequences, it is the key fact used in the proof of basic functional analysis results such as the Open Mapping Theorem (or Banach--Schauder Theorem) and the Closed Graph Theorem.

\begin{bairethm}\label{thm:baire}\index{Theorem!Baire}
Let $\{D_n\}_{n\in\N}$ be a countable family of open dense subsets of a complete metric space. Then the intersection $\bigcap_{n\in\N} D_n$ is dense.
\end{bairethm}

\begin{remark}\label{re:genericvsdense}
By the Baire Theorem~\ref{thm:baire}, a generic subset (of a complete metric space) is automatically dense. This implies that arbitrarily small perturbations turn any element generic.

Notice however that genericity is a {\em much stronger} condition than being dense. For instance, the intersection of two generic subsets is generic (see Lemma~\ref{le:gencountintersect}), while the intersection of two dense subsets might be even empty. Namely, consider the set $\Q$ of rational numbers. Both $\Q$ and its complementary $\R\setminus\Q$ are dense, but their intersection is empty. Moreover, since $\Q$ is countable, it may be regarded as the countable union of its points, which are closed subsets of empty interior in $\R$. Therefore $\Q$ is an $F_\sigma$ with empty interior, and hence its complementary $\R\setminus\Q$ is a dense $G_\delta$, in particular, generic. Notice that $\R\setminus\Q$ is not open, however generic; and $\Q$ is not generic, however dense.
\end{remark}

\section{Sard--Smale Theorem and Transversality Theorem}
\label{sec:sardsmaleetc}

All genericity results presented in this text use directly or indirectly the Sard--Smale Theorem~\ref{thm:sardsmale}. This theorem is an infinite--dimensional extension of the celebrated Sard Theorem~\ref{thm:sard}, originally proved in 1942 by Sard \cite{sard} in the finite--dimensional context. It asserts that, under suitable regularity conditions, the set of critical values of a map between finite--dimensional manifolds has Lebesgue measure zero. As remarked above, there is no clear extension of the notion of null Lebesgue measure for subsets of Banach manifolds. Thus, its infinite--dimensional version states that the set of critical values of a sufficiently regular map\footnote{The regularity hypotheses from the finite--dimensional version are maintained and further {\em Fredholmness} assumptions are necessary.} has {\em generic}\footnote{Some textbooks choose to call such a subset {\em residual}. Since this term suggests {\em of small size}, we prefer not to use this terminology, and rather state that its complement is generic.} complement, in the sense of Definition~\ref{def:generic}. This important result was proved by Smale in the sixties in \cite{smale}, and is clearly of great value when dealing with generic properties. One of the main reasons for this is that if values are generically regular, from Proposition~\ref{prop:regularvalue} preimages (of values) are generically submanifolds.

In this section, we state the Sard Theorem~\ref{thm:sard} and use it to prove the Sard--Smale Theorem~\ref{thm:sardsmale}, exploring also further aspects related to the genericity of transversality using these results. A complete proof of the Sard Theorem~\ref{thm:sard}, originally given by Sard \cite{sard} in 1942, can be found in most differential topology textbooks, such as \cite{hirsch,milnortop}.

\begin{sardthm}\label{thm:sard}\index{Theorem!Sard}
Let $f:\R^n\to\R^m$ be a $C^k$ map, such that $k>\max\{n-m,0\}$. Then the set of critical values of $f$ has Lebesgue measure zero in $\R^m$.
\end{sardthm}

The adequate infinite--dimensional context to generalize this result is considering a $C^k$ nonlinear Fredholm map $f:X\to Y$ between Banach manifolds $X$ and $Y$, see Definitions~\ref{def:chartatlasmnfld} and~\ref{def:nonlinfred}. For this section, consider $f:X\to Y$ such a map. At a further point, we will also need to assume separability of the Banach manifolds $X$ and $Y$ and a regularity condition on $f$, namely $k>\max\{\ind(f),0\}$.

The following result asserts that after a suitable change of coordinates, $f$ differs from the identity by a nonlinear map between finite--dimensional manifolds.

\begin{lemma}\label{le:frednonlinstruc}
Let $f:X\to Y$ be a $C^k$ nonlinear Fredholm map. Then for any $x_0\in X$ there exists a Banach space $B$, finite--dimensional spaces $K$ and $C$, a $C^k$ map $g:B\oplus K\to C$ and local charts\footnote{Recall Definition~\ref{def:chartatlasmnfld}.} $\varphi:U\to B\oplus K$ around $x_0\in X$ and $\psi:V\to B\oplus C$ around $f(x_0)\in Y$, such that
\begin{equation}\label{eq:frednonlinstruc}
\psi\circ f\circ\varphi^{-1}(b,k)=(b,g(b,k)), \quad b\in B,k\in K,
\end{equation}
\begin{equation*}
\xymatrix@+15pt{
X\ar@(u,u)[rrr]^f &\ar@{_{(}->}[l] U\ar[d]_{\varphi}\ar[r]^{f|_U} & V\ar[d]^{\psi} \ar@{^{(}->}[r] & Y \\
& B\oplus K\ar[r]_{(\id,g)} & B\oplus C &
}
\end{equation*}
\end{lemma}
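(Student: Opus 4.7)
The plan is to use the standard ``straightening out'' argument for Fredholm maps, exploiting the fact that the Fredholm property of $\dd f(x_0)$ allows a controlled direct-sum decomposition of both the domain and codomain. First I would reduce to the linear setting: pick any local chart $\varphi_0$ around $x_0\in X$ sending $x_0$ to $0$ and any local chart $\psi_0$ around $f(x_0)\in Y$ sending $f(x_0)$ to $0$. This replaces $f$ by a $C^k$ map $F:\widetilde U\subset E\to \widetilde V\subset \widetilde F$ between open subsets of Banach spaces with $F(0)=0$ and $T:=\dd F(0)$ Fredholm of the same index as $f$.

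Next I would build the two splittings. Since $T$ is Fredholm, $K:=\ker T$ is finite-dimensional, $\im T$ is closed (Proposition~\ref{prop:fredclosed}), and $\im T$ has finite codimension, say $C$ is a (finite-dimensional) complementary subspace so that $\widetilde F=\im T\oplus C$ (Remark~\ref{re:fredcomp}). Similarly, $K$ is finite-dimensional hence complemented in $E$ (Lemma~\ref{le:finitecomplemented}); let $B$ be a closed complement, so $E=B\oplus K$. The restriction $T|_B:B\to\im T$ is then a continuous bijection between Banach spaces, hence a topological isomorphism by the Open Mapping Theorem (Remark~\ref{re:omt}). Writing $F=(F_1,F_2)$ according to $\widetilde F=\im T\oplus C$, the idea is to use $F_1$ as a new coordinate along the $B$-direction while leaving the finite-dimensional $K$-coordinate untouched.

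Concretely, I would define
\[
\Phi:\widetilde U\longrightarrow E,\qquad \Phi(b,k)=\bigl(\,(T|_B)^{-1}F_1(b,k),\,k\,\bigr),
\]
which is of class $C^k$ and satisfies $\dd\Phi(0)(b,k)=(b,k)=\id$. By the Inverse Function Theorem on Banach spaces (see Section~\ref{sec:calcbanspaces}), $\Phi$ is a local diffeomorphism around $0$; shrink $\widetilde U$ so that $\Phi$ is a diffeomorphism onto an open subset $W\subset B\oplus K$. Declare $\varphi:=\Phi\circ\varphi_0$ to be the new chart around $x_0$, and $\psi:=((T|_B)^{-1}\oplus\id_C)\circ\psi_0$, which is a chart around $f(x_0)$ with values in $B\oplus C$.

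Finally, I would read off the desired form \eqref{eq:frednonlinstruc}. For $(b,k)\in W$, write $\Phi^{-1}(b,k)=(\beta(b,k),k)$ (the second coordinate is preserved by construction). By definition of $\Phi$, $(T|_B)^{-1}F_1(\beta(b,k),k)=b$, i.e.\ $F_1(\beta(b,k),k)=T|_B(b)$. Therefore
\[
\psi\circ f\circ\varphi^{-1}(b,k)=\bigl((T|_B)^{-1}F_1(\Phi^{-1}(b,k)),\,F_2(\Phi^{-1}(b,k))\bigr)=(b,g(b,k)),
\]
where $g(b,k):=F_2(\beta(b,k),k)\in C$ is $C^k$ as a composition of $C^k$ maps. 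The only genuinely delicate point is keeping track of the three identifications (the two starting charts and the rectifying diffeomorphism $\Phi$) so that the final map lands in $B\oplus C$ with the correct $B$-component; everything else is a direct application of the Fredholm decomposition and the Inverse Function Theorem.
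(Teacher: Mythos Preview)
Your proof is correct and follows essentially the same strategy as the paper's: decompose the source and target using the Fredholm property of $\dd f(x_0)$ and then straighten out the first component via the Inverse Function Theorem. The paper takes $B=\im T$ as a subspace of the target and invokes the local form of submersions (Remark~\ref{re:localforms}) as a black box to produce the rectifying chart on the source; you instead take $B$ as a closed complement of $\ker T$ in the source, build the rectifying diffeomorphism $\Phi$ explicitly, and then compose the target chart with the linear isomorphism $(T|_B)^{-1}\oplus\id_C$ to land in your $B\oplus C$. These are the same argument up to the identification $T|_B:B\xrightarrow{\cong}\im T$; your version is slightly more hands-on, while the paper's version avoids modifying the target chart.
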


\begin{proof}
Consider arbitrary local charts $(U,\varphi)$ around $x_0\in X$ and $(V,\psi)$ around $f(x_0)\in Y$ taking values on Banach spaces $B_1$ and $B_2$ respectively. Denote by $\widetilde f:B_1\to B_2$ the representation of $f$ in these local charts. Since $f$ is a nonlinear Fredholm map, $B=\im\dd\widetilde f(\varphi(x_0))$ is finite--codimensional, hence complemented, from Lemma~\ref{le:finitecomplemented}. Consider $C$ a (finite--dimensional) complement to $B$, so that $B_2=B\oplus C$. Notice that, from Lemma~\ref{le:comp1}, $C$ is topologically isomorphic to $\coker\dd\widetilde f(\varphi(x_0))$.

Let $\widetilde f=\big(\widetilde f_1,\widetilde f_2\big)$. Then, by the above construction, $$\widetilde f_1:B_1\la B$$ is a submersion at $\varphi(x_0)\in B_1$. Notice that $\ker\dd\widetilde f_1(\varphi(x_0))=\ker\dd\widetilde f(\varphi(x_0))$, since $\dd\widetilde f_2(\varphi(x_0))=0$. Define $K=\ker\dd\widetilde f(\varphi(x_0))$, and notice it is finite--dimensional, because $f$ is Fredholm.

We now use the local form of submersions (see Remark~\ref{re:localforms}). There exists a $C^k$ diffeomorphism $\mathfrak d$ between open subsets of $B_1$ and $B\oplus K$ (the local chart in the domain) and a $C^k$ nonlinear map $g:B\oplus K\to C$ (the representation of $\tilde f$ in using this chart and the identity on the counter domain), such that the following diagram is commutative
\begin{equation}
\xymatrix@+15pt{
B_1\ar[r]^(.4){\widetilde f}\ar[d]_{\mathfrak d} & B\oplus C\\
B\oplus K\ar@(ru,d)[ru]_{(\id,g)} &
}
\end{equation}
where by $(\id,g)$ we denote the map $$B\oplus K\ni (b,k)\longmapsto (b,g(b,k))\in B\oplus C.$$

Replacing $\varphi$ with $\mathfrak d\circ\varphi$ and shrinking $U$ if necessary, so that $\varphi(U)$ is in the domain of $\mathfrak d$, we obtain the desired local charts such that \eqref{eq:frednonlinstruc} holds, concluding the proof.
\end{proof}

\begin{remark}\label{re:kcnames}
The subspaces $K$ and $C$ in Lemma~\ref{le:frednonlinstruc} are clearly named this way in reference to $\ker\dd f(x_0)$ and $\coker\dd f(x_0)$, since they are clearly topologically isomorphic. Recall that since $f$ is Fredholm, these are finite--dimensional spaces. Furthermore, notice that
\begin{equation}\label{eq:indfkc}
\ind(f)=\dim K-\dim C.
\end{equation}
\end{remark}

\begin{remark}
As a consequence of this Lemma~\ref{le:frednonlinstruc}, it follows that the preimage $f^{-1}(\{y\})$ of a point by a $C^k$ nonlinear Fredholm map is locally homeomorphic to the preimage $g^{-1}(\psi(y))$ of a point by a $C^k$ map between finite--dimensional manifolds. This fact paves the way to several topics in {\em deformation theory} for complex manifolds, see Kuranishi \cite{kuran}.
\end{remark}

\begin{lemma}\label{le:critvalemptyint}
Let $f:X\to Y$ be a $C^k$ nonlinear Fredholm map, with $k>\max\{\ind(f),0\}$. Then the set of critical values of $f$ has empty interior.
\end{lemma}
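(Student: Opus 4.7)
The plan is to reduce the statement to the classical Sard Theorem~\ref{thm:sard} via the local normal form provided by Lemma~\ref{le:frednonlinstruc}, combined with a slicing argument on the finite-dimensional ``nonlinear part'' of $f$ and a separability-based globalization.

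First, for each $x_0\in X$ I would apply Lemma~\ref{le:frednonlinstruc} to obtain local charts $\varphi:U\to B\oplus K$ around $x_0$ and $\psi:V\to B\oplus C$ around $f(x_0)$ so that $\widetilde f:=\psi\circ f\circ\varphi^{-1}$ takes the form $\widetilde f(b,k)=(b,g(b,k))$ with $g$ of class $C^k$, $K$ and $C$ finite-dimensional, and $\dim K-\dim C=\ind(f)$ by \eqref{eq:indfkc}. A direct computation of the derivative shows that $\dd\widetilde f(b,k)$ is surjective if and only if the partial derivative $\partial_k g(b,k):K\to C$ is surjective, and that its kernel is isomorphic to $\ker\partial_kg(b,k)$, hence finite-dimensional and automatically complemented. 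Thus $(b,k)$ is a critical point of $\widetilde f$ exactly when $k$ is a critical point of the finite-dimensional map $g_b:K\to C$, $g_b(k)=g(b,k)$.

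Next, I would establish a local empty-interior property by contradiction. Suppose $\widetilde f(\crit(\widetilde f))$ contained a nonempty open subset $O\subset B\oplus C$; fixing $(b_0,c_0)\in O$, the slice $\{c\in C:(b_0,c)\in O\}$ is a nonempty open subset of $C$ consisting entirely of critical values of $g_{b_0}:K\to C$. The regularity hypothesis $k>\max\{\ind(f),0\}=\max\{\dim K-\dim C,0\}$ is precisely what is needed to invoke the Sard Theorem~\ref{thm:sard} on $g_{b_0}$, whose critical values therefore form a Lebesgue-null subset of $C$, contradicting the existence of a nonempty open slice. This establishes the desired empty-interior property for the image of the critical set captured by a single chart domain.

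Finally, to upgrade this local conclusion to the global statement I plan to use separability of $X$ to extract a countable subcover $\{U_n\}_{n\in\N}$ of $\crit(f)$ (closed by Lemma~\ref{le:critclosed}) by chart domains of the above type, and to write each $\crit(f)\cap U_n$ as a countable union of closed pieces whose $f$-images remain closed, by a compact exhaustion of $U_n$ combined with the Fredholm property of $\dd f$. This exhibits $f(\crit(f))$ as an $F_\sigma$ of first Baire category in $Y$, so the Baire Theorem~\ref{thm:baire} yields empty interior. The hard part will be this globalization: the empty-interior property is not preserved under continuous images between Banach manifolds in general, and Fredholmness has to be leveraged carefully to guarantee that each local piece contributes only a closed nowhere-dense subset, so that Baire's theorem applies cleanly.
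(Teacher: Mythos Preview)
Your first two paragraphs are exactly the paper's argument: reduce via Lemma~\ref{le:frednonlinstruc} to the normal form $(b,k)\mapsto(b,g(b,k))$, observe that $(b,k)$ is critical iff $k$ is critical for $g_b:K\to C$, and then derive a contradiction by slicing a hypothetical open set of critical values at a fixed $b$ and applying the Sard Theorem~\ref{thm:sard} to $g_b$, using $k>\dim K-\dim C=\ind(f)$.

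Where you diverge is in your third paragraph. The paper does \emph{not} carry out a globalization step inside this lemma: it simply asserts that ``the question is local'' and works in a single chart. In effect, what the paper actually proves here is the local empty-interior statement, and this is precisely what is used downstream in the proof of the Sard--Smale Theorem~\ref{thm:sardsmale}. The $F_\sigma$ structure you propose to build is the content of Lemma~\ref{le:critvalfsigma}, which the paper states and proves separately and which \emph{does} assume separability (of $B$). Note that Lemma~\ref{le:critvalemptyint} as stated carries no separability hypothesis, so your globalization invokes an assumption not available at this point; the paper avoids this by deferring the countable-cover and Baire arguments to the Sard--Smale proof, where separability of $X$ and $Y$ is explicitly assumed. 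In short: your local argument is correct and matches the paper; your globalization anticipates (and partially duplicates) Lemma~\ref{le:critvalfsigma} and the Sard--Smale proof, but relies on a hypothesis not granted in the present lemma.
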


\begin{proof}
From Definition~\ref{def:critical}, the set of critical values of $f$ is the image $f(\crit(f))$ of the critical set of $f$. It clearly suffices to prove that the intersection of each open subset of $Y$ with $f(\crit(f))$ has empty interior. Thus, since the question is local, from Lemma~\ref{le:frednonlinstruc}, we may use local charts and reduce the problem to the case where $f$ is of the form
\begin{eqnarray}\label{eq:fbk}
f: B\oplus K &\la &B\oplus C \nonumber\\
(b,k)&\longmapsto &(b,g(b,k)),
\end{eqnarray}
where $B$ is a Banach space, $K$ and $C$ are finite--dimensional spaces and $g:B\oplus K\to C$ is a $C^k$ map.

The proof will be by contradiction, using the Sard Theorem~\ref{thm:sard}. Suppose that $V\subset B\oplus C$ is a nonempty open subset of critical values of $f$. Notice that a point $(b,k)\in B\oplus K$ is a critical point of $f$ if and only if $k$ is a critical point of $$g_b:K\ni k\longmapsto g(b,k)\in C.$$ In fact, from \eqref{eq:fbk}, it follows that
\begin{equation}
\dd f(b,k)=\left[\begin{array}{c c} \id & 0 \\ & \\ \dfrac{\partial g}{\partial b}(b,k) &\dfrac{\partial g}{\partial k}(b,k) \end{array}\right]
\end{equation}
and hence $\dd f(b,k)$ is not surjective if and only if $\dd g_b(k)=\frac{\partial g}{\partial k}(b,k)$ is not surjective, i.e., $k$ is a critical point of $g_b$.

For each $b\in B$, define $C_b=\{c\in C:(b,c)\in V\}$, which is clearly an open subset of $C$. Since we are assuming that $V$ is a nonempty open subset of critical values of $f$, for some $b\in B$, $C_b$ is nonempty. For any $c\in C_b$, the value $(b,c)\in V$ is a critical value of $f$. Hence there exists $k_c\in K$ such that $f(b,k_c)=(b,c)$, with $(b,k_c)\in\crit(f)$. Thus, from the above discussion, such a $k_c$ is a critical point of $g_b$, and hence $c$ is a critical value of $g_b$. Since $c\in C_b$ was arbitrarily chosen, this implies that $C_b$ is an open subset of critical values of $g_b:K\to C$.

Finally, since $$k>\ind(f)\stackrel{\eqref{eq:indfkc}}{=}\dim K-\dim C,$$ the Sard Theorem~\ref{thm:sard} applies to $g_b$, implying that the set of its critical values has Lebesgue measure zero in $C$. This contradicts the existence of the nonempty open subset $C_b$ of critical values, concluding the proof.
\end{proof}

\begin{lemma}\label{le:locallyclosed}
Consider a map $f:B\oplus K\to B\oplus C$ of the form \eqref{eq:fbk} and let $U\subset B\oplus K$ be an open subset. Let $U_1\subset B$ and $U_2\subset K$ be open subsets with $\overline{U_2}$ compact\footnote{Recall that $K$ is finite--dimensional, see Remark~\ref{re:kcnames}.} and $\overline{U_1}\times\overline{U_2}\subset U$. Then the restriction $$f|_{\overline{U_1}\times\overline{U_2}}:\overline{U_1}\times\overline{U_2}\to B\oplus C$$ is a closed map, i.e., maps closed subsets of the domain to closed subsets of the counter domain.
\end{lemma}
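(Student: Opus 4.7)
The plan is to verify closedness via a sequential argument, exploiting three structural features: the product form of $f$ (which fixes the $B$--coordinate), the finite dimensionality of $K$ (which makes $\overline{U_2}$ compact), and the continuity of $g$. Since $B\oplus K$ and $B\oplus C$ are Banach spaces and hence metrizable, closedness is equivalent to sequential closedness, so it suffices to check that the image of a closed subset of $\overline{U_1}\times\overline{U_2}$ contains all its sequential limits.

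The core of the argument will run as follows. Fix $F\subset\overline{U_1}\times\overline{U_2}$ closed and consider a sequence $\{(b_n,k_n)\}_{n\in\N}$ in $F$ with $f(b_n,k_n)=(b_n,g(b_n,k_n))\to(b_\infty,c_\infty)$ in $B\oplus C$. Because $f$ preserves the first coordinate, one reads off immediately that $b_n\to b_\infty$ in $B$, and since $\overline{U_1}$ is closed, $b_\infty\in\overline{U_1}$. Using compactness of $\overline{U_2}\subset K$ (valid because $K$ is finite--dimensional, see Remark~\ref{re:kcnames}), pass to a subsequence $\{k_{n_j}\}_{j\in\N}$ converging to some $k_\infty\in\overline{U_2}$.

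Now $(b_{n_j},k_{n_j})\to(b_\infty,k_\infty)$ in $\overline{U_1}\times\overline{U_2}$, and since $F$ is closed in this set, $(b_\infty,k_\infty)\in F$. By continuity of $g:B\oplus K\to C$, one obtains $g(b_{n_j},k_{n_j})\to g(b_\infty,k_\infty)$, and comparing with the convergence $g(b_n,k_n)\to c_\infty$ (which holds for the whole sequence and therefore for the subsequence), one concludes $c_\infty=g(b_\infty,k_\infty)$. Hence $(b_\infty,c_\infty)=f(b_\infty,k_\infty)\in f(F)$, proving that $f(F)$ is closed.

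I do not expect any serious obstacle here: the only ingredient that could fail in a more general setting is the extraction of a convergent subsequence from $\{k_n\}$, and this is guaranteed precisely by the finite dimensionality of $K$ together with the compactness assumption on $\overline{U_2}$. The role of this lemma in the sequel is presumably to localize the Sard--Smale argument by reducing questions about images of $f$ to questions involving only the finite--dimensional factor, and this sequential closedness is exactly the point where the hypothesis $\overline{U_2}$ compact gets used.
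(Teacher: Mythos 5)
Your argument is correct and coincides with the paper's proof: both extract convergence of $\{b_n\}$ from the product form of $f$, use compactness of $\overline{U_2}$ (valid since $K$ is finite--dimensional) to pass to a convergent subsequence of $\{k_n\}$, and conclude via continuity and closedness of $F$. If anything, your write-up is slightly more careful than the paper's, which glosses over the step of noting $(b_\infty,k_\infty)\in F$.
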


\begin{proof}
Let $F\subset \overline{U_1}\times\overline{U_2}$ be a closed subset and $\{f(b_n,k_n)\}_{n\in\N}$ a convergent sequence in $B\oplus C$, where $\{(b_n,k_n)\}_{n\in\N}$ is a sequence on $F$. Since $f$ is of the form \eqref{eq:fbk}, convergence of $$f(b_n,k_n)=(b_n,g(b_n,k_n))$$ implies the convergence of $\{b_n\}_{n\in\N}$ to a limit $b_\infty\in B$. Since $\overline{U_2}$ is compact, up to passing to a subsequence, we may assume that $\{k_n\}_{n\in\N}$ converges to a limit $k_\infty\in K$. Hence, from continuity of $f$, we have that $\{f(b_n,k_n)\}_{n\in\N}$ converges to $f(b_\infty,c_\infty)\in f(F)$. Therefore, $f(F)$ is closed in $B\oplus C$, concluding the proof.
\end{proof}

\begin{lemma}\label{le:critvalfsigma}
Consider a map $f:B\oplus K\to B\oplus C$ of the form \eqref{eq:fbk}, and suppose $B$ is {\em separable}. If $U$ is an open subset of $X$, then $f(U\cap\crit(f))$ is an $F_\sigma$, i.e., a countable union of closed subsets.
\end{lemma}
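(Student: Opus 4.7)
The plan is to write $U$ as a countable union of products $V_n\times W_n$ of open sets, chosen small enough that $\overline{V_n}\times\overline{W_n}\subset U$ with $\overline{W_n}$ compact, so that Lemma~\ref{le:locallyclosed} applies to each piece. First, I would observe that since $B$ is a separable Banach space (hence second--countable) and $K$ is finite--dimensional, the product $B\oplus K$ is second--countable as well. For each point $(b_0,k_0)\in U$, one can pick open balls $V\subset B$ around $b_0$ and $W\subset K$ around $k_0$ of sufficiently small radius that $\overline{V}\times\overline{W}\subset U$; finite--dimensionality of $K$ guarantees that $\overline{W}$ is compact. The collection of all such products $V\times W$ is an open cover of $U$, from which second--countability extracts a countable subcover $\{V_n\times W_n\}_{n\in\N}$.

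Next, I would invoke Lemma~\ref{le:critclosed} to conclude that $\crit(f)$ is closed in $B\oplus K$, so that $\crit(f)\cap\overline{V_n}\times\overline{W_n}$ is a closed subset of $\overline{V_n}\times\overline{W_n}$ for every $n\in\N$. By Lemma~\ref{le:locallyclosed}, applied to $U_1=V_n$ and $U_2=W_n$, the restriction
\begin{equation*}
f\big|_{\overline{V_n}\times\overline{W_n}}:\overline{V_n}\times\overline{W_n}\la B\oplus C
\end{equation*}
is a closed map. Consequently, each image $F_n=f\big(\crit(f)\cap\overline{V_n}\times\overline{W_n}\big)$ is a closed subset of $B\oplus C$.

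Finally, I would verify the set--theoretic identity
\begin{equation*}
f(U\cap\crit(f))=\bigcup_{n\in\N} F_n.
\end{equation*}
The inclusion $\supset$ follows from $\overline{V_n}\times\overline{W_n}\subset U$, which forces every element of $F_n$ to be of the form $f(x)$ with $x\in U\cap\crit(f)$. For $\subset$, any $x\in U\cap\crit(f)$ lies in some $V_n\times W_n\subset\overline{V_n}\times\overline{W_n}$, so $f(x)\in F_n$. This exhibits $f(U\cap\crit(f))$ as a countable union of closed subsets of $B\oplus C$, i.e., as an $F_\sigma$.

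The only delicate point is the construction of the countable cover with the two key properties (compactness in the $K$ direction and containment of the closure in $U$); this is where separability of $B$ is essential and where the argument would fail without that hypothesis. Once the cover is in place, the rest of the proof is a routine combination of Lemmas~\ref{le:critclosed} and~\ref{le:locallyclosed}.
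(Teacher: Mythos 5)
Your proof is correct and takes essentially the same route as the paper: use separability of $B$ (via second--countability, which for metric spaces is equivalent to the Lindel\"of property invoked in the paper) to extract a countable cover of $U$ by products whose closures lie inside $U$ and are compact in the $K$--direction, then combine Lemma~\ref{le:critclosed} with Lemma~\ref{le:locallyclosed}. Your write--up is a bit more explicit about the construction of the cover and the final set--theoretic identity, but the substance is identical.
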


\begin{proof}
Since $B$ is supposed to be separable, the sum $B\oplus K$ clearly satisfies the {\em Lindel\"of property}.\footnote{Since Banach spaces are first--countable, separability is equivalent to second--countability, and also to satisfying the so--called Lindel\"of property. Recall that the Lindel\"of property for a topological space $X$ asserts that every open cover of $X$ admits a countable subcover. The finite--dimensional space $K$ automatically satisfies this property, hence if $B$ is separable, we may assume the Lindel\"of property to hold for $B\oplus K$.} Thus, we may consider a countable open cover $\bigcup_{n\in\N} (U_1^n\times U_2^n)$ of $U$, where $U_1^n\subset C$ and $U_2^n\subset K$ satisfy the conditions of Lemma~\ref{le:locallyclosed}, i.e., $\overline{U_1^n}\times\overline{U_2^n}\subset U$ and $\overline{U_2^n}\subset K$ is compact.

From Lemma~\ref{le:critclosed}, the set $\crit(f)$ is closed in $B\oplus K$. Thus, the intersection $(\overline{U_1^n}\times\overline{U_2^n})\cap\crit(f)$ is closed in $\overline{U_1^n}\times\overline{U_2^n}$ for every $n\in\N$. Therefore, from Lemma~\ref{le:locallyclosed},
\begin{equation*}
f(U\cap\crit(f))=\bigcup_{n\in\N} f\big((\overline{U_1^n}\times\overline{U_2^n})\cap\crit(f)\big)
\end{equation*}
is a countable union of {\em closed} subsets of $B\oplus C$, i.e., an $F_\sigma$.
\end{proof}

We are now ready to prove the Sard--Smale Theorem~\ref{thm:sardsmale}, simply applying the previous lemmas.

\begin{sardsthm}\label{thm:sardsmale}\index{Theorem!Sard--Smale}
Let $f:X\to Y$ be a $C^k$ nonlinear Fredholm map between separable Banach manifolds, with $k>\max\{\ind(f),0\}$. Then the set of regular values of $f$ is generic.
\end{sardsthm}

\begin{proof}
Since $Y$ is separable (hence second--countable) and union of generic subsets is generic (see Remark~\ref{re:stupidremark}), it suffices\footnote{For a slightly more precise justification of this fact, see Remark~\ref{re:gotu}.} to prove that every $y\in Y$ admits an open neighborhood $V$, such that the regular values of $f$ in $V$ form a generic subset of $V$. If $V$ does not intersect $\im f$, there is nothing to do, since all values in $V$ are trivially regular (recall Definition~\ref{def:regular}). Suppose there exists $x_0\in X$, with $f(x_0)\in V$. We have to prove that the regular values of $f$ in $V$ contain an open $G_\delta$ of $V$, which is equivalent to proving that its complementary $f(U\cap\crit(f))$ is contained in an $F_\sigma$ with empty interior.

Shrinking $U$ and $V$ if necessary, we may assume that these are domains of charts $\varphi$ and $\psi$ of $X$ and $Y$ respectively, as in Lemma~\ref{le:frednonlinstruc}. In other words, we may assume that $f$ is locally represented by \eqref{eq:fbk}, i.e.,
\begin{eqnarray*}
f: B\oplus K &\la &B\oplus C\\
(b,k)&\longmapsto &(b,g(b,k)),
\end{eqnarray*}
where $B$ is a Banach space, $K$ and $C$ are finite--dimensional spaces such that \eqref{eq:indfkc} holds and $g:B\oplus K\to C$ is a $C^k$ map. From Lemma~\ref{le:critvalfsigma}, $f(U\cap\crit(f))$ is a countable union of closed subsets of $V$. Since $k>\max\{\ind(f),0\}$, we may also apply\footnote{Notice that in this step we use the finite--dimensional Sard Theorem~\ref{thm:sard} on $g:K\to C$, to guarantee that $\crit(f)$ has empty interior. Its hypotheses are satisfied since $\ind(f)=\dim K-\dim C<k$.} Lemma~\ref{le:critvalemptyint}, that implies that each of these closed subsets has empty interior. Therefore, $f(U\cap\crit(f))$ is an $F_\sigma$ with empty interior, concluding the proof.
\end{proof}

\begin{remark}\label{re:gotu}
Notice that we proved above that $f(U\cap\crit(f))$ is a countable union of closed subsets with empty interior in $V$. The meticulous reader may argue that this does not automatically imply that $f(\crit(f))$ is a countable union of closed subsets with empty interior {\em in $Y$}.

Notice however that each subset with empty interior in $V$ also has empty interior in $Y$. Using again its second--countability, we may cover $Y$ with a countable number of open subsets. Considering the union of the closure of these open subsets intersected with the closed subsets of $V$ with empty interior we have an $F_\sigma$ {\em in $Y$} with empty interior.
\end{remark}

We end this section with a brief discussion on the consequences of the Sard Theorem~\ref{thm:sard} and the Sard--Smale Theorem~\ref{thm:sardsmale} related to genericity of transversality, see Definition~\ref{def:transversality}. More precisely, we now prove the so--called Transversality Theorem, that asserts that a sufficiently regular map is generically transverse to a fixed submanifold of the counter domain. 

\begin{transvthm}\label{thm:transversality}\index{Theorem!Transversality}
Let $X$, $Y$ and $Z$ be separable Banach manifolds, $W$ a submanifold of $Z$ and $f:X\times Y\to Z$ a $C^k$ map. For every $(x,y)\in f^{-1}(W)$, denote $q:T_{f(x,y)}Z\to T_{f(x,y)}Z/T_{f(x,y)}W$ the quotient map and suppose that $q\circ\frac{\partial f}{\partial y}:T_yY\to T_{f(x,y)}Z/T_{f(x,y)}W$ is a Fredholm operator such that $k>\max\left\{\ind \left(q\circ\frac{\partial f}{\partial y}(x,y)\right),0\right\}$. In addition, suppose that for every $(x,y)\in f^{-1}(W)$ the operator
\begin{equation}\label{eq:Ltransvthm}
T_xX\oplus T_yY\xrightarrow{\;\;\dd f(x,y)\;\;} T_{f(x,y)}Z\xrightarrow{\;\; q\;\;}\frac{T_{f(x,y)}Z}{T_{f(x,y)}W}
\end{equation}
is surjective. Denoting $f_x:Y\to Z$ the map $f_x(y)=f(x,y)$, the following is a generic subset of $X$,
\begin{equation}
\mathcal G=\{x\in X: f_x \mbox{ is transverse to }W\}.
\end{equation}
\end{transvthm}

\begin{proof}
For all $(x,y)\in f^{-1}(W)$, consider the Banach spaces $V_1=T_xX$, $V_2=T_yY$, $H=T_{f(x,y)}Z/T_{f(x,y)}W$ and the surjective operator $L=q\circ\dd f(x,y)$, given by \eqref{eq:Ltransvthm}. Since $L|_{V_2}=q\circ\frac{\partial f}{\partial y}(x,y)$ is Fredholm, from Lemma~\ref{le:lemmatransvthm}, the subspace $\ker L=\dd f(x,y)^{-1}[T_{f(x,y)}W]$ is complemented. Together with surjectivity of $L$, this implies that $f$ is transverse to $W$. From Proposition~\ref{prop:transvsubmnfld}, $f^{-1}(W)$ is a $C^k$ submanifold of $X\times Y$ and $T_{(x,y)}f^{-1}(W)=\dd f(x,y)^{-1}[T_{f(x,y)}W]=\ker L$.

Let $\Pi:X\times Y\to X$ be the projection onto the first coordinate. On the one hand, $(x,y)\in f^{-1}(W)$ is a regular point of $\Pi|_{f^{-1}(W)}:f^{-1}(W)\to X$ if
\begin{equation*}
\dd\Pi(x,y)|_{\ker L}:\underbrace{T_{(x,y)}f^{-1}(W)}_{\ker L}\longhookrightarrow \underbrace{T_xX\oplus T_yY}_{V_1\oplus V_2}\xrightarrow{\;\;\dd\Pi(x,y) \;\;}\underbrace{T_xX}_{V_1}
\end{equation*}
is surjective and has complemented kernel.\footnote{Notice that the derivative $\dd\Pi(x,y)$ coincides with the (linear) projection $T_xX\oplus T_yY\to T_yY$ onto the first variable.} On the other hand, $f_x$ is transverse to $W$ at $y\in Y$ if
\begin{equation*}
L|_{V_2}:\underbrace{T_yY}_{V_2}\longhookrightarrow \underbrace{T_xX\oplus T_yY}_{V_1\oplus V_2}\xrightarrow{\;\;L\;\;}\underbrace{T_{f(x,y)}Z/T_{f(x,y)}W}_{H}
\end{equation*}
is surjective and has complemented kernel. From Lemma \ref{le:lemmatransvthm}, both kernels are isomorphic to $\ker L\cap(\{0\}\oplus V_2)$, and since we are assuming that $L|_{V_2}=q\circ\frac{\partial f}{\partial y}(x,y)$ is Fredholm, these kernels are also finite--dimensional, hence trivially complemented. Also from Lemma \ref{le:lemmatransvthm}, $\dd\Pi(x,y)|_{\ker L}$ is surjective if and only if $L|_{V_2}$ is surjective. Thus, $(x,y)\in f^{-1}(W)$ is a regular point of $\Pi|_{f^{-1}(W)}:f^{-1}(W)\to X$ if and only if $f_x$ is transverse to $W$ at $y\in Y$. Therefore, $x$ is a regular value of $\Pi|_{f^{-1}(W)}$ if and only if $f_x$ is transverse to $W$.

Once more from Lemma \ref{le:lemmatransvthm}, since $L|_{V_2}=q\circ\frac{\partial f}{\partial y}(x,y)$ is a Fredholm operator, also $\dd\Pi(x,y)|_{T_{(x,y)}f^{-1}(W)}$ is a Fredholm operator, and $$\ind\left(q\circ\frac{\partial f}{\partial y}(x,y)\right)=\ind\left(\dd\Pi(x,y)|_{T_{(x,y)}f^{-1}(W)}\right).$$ Thus, $\Pi|_{f^{-1}(W)}$ is a $C^k$ nonlinear Fredholm map, with $$k>\max\left\{\ind\left(\dd\Pi(x,y)|_{T_{(x,y)}f^{-1}(W)}\right),0\right\},$$ and hence the Sard--Smale Theorem \ref{thm:sardsmale} gives genericity of the set of regular values of $\Pi_{f^{-1}(W)}$. Since $x$ is a regular value of $\Pi|_{f^{-1}(W)}$ if and only if $f_x$ is transverse to $W$, it follows that $\mathcal G$ is generic, concluding the proof.
\end{proof}

\begin{remark}
The above Transversality Theorem~\ref{thm:transversality} clearly holds for finite--dimensional manifolds, in which case several hypotheses are automatically verified. For instance, trivially the operator $q\circ\frac{\partial f}{\partial y}(x,y)$ is Fredholm, and all considered subspaces are complemented. Moreover, surjectivity of \eqref{eq:Ltransvthm} is equivalent to transversality of $f:X\times Y\to Z$ to $W$ at $(x,y)$.
\end{remark}

\begin{remark}\label{re:transvgensubmnfld}
The Transversality Theorem~\ref{thm:transversality} implies that two submanifolds are generically transverse, see Remark~\ref{re:abouttransv}. Let us give a more precise statement of this fact for finite--dimensional manifolds. Let $W$ be a submanifold of $Z$ and consider $f_x:Y\to Z$ a family of immersions of a manifold $Y$ into $Z$ parameterized by $x\in X$. Then, provided that $f:X\times Y\to Z$ is transverse to $W$ and sufficiently regular, the submanifolds $f_x(Y)$ and $W$ are transverse in $Z$ for a generic set of parameters $x\in X$.
\end{remark}

\section{Genericity criteria with transversality}
\label{sec:abstractgenericity}


Assume $Y$ is a Hilbert manifold and $f_x:Y\rightarrow\R$ is a family of functionals parameterized in an open subset of a Banach manifold $X$. In this section, we are interested in establishing abstract criteria for $f_x$ to be generically Morse, see Definition~\ref{def:degnondegstdeg}. More precisely, we want to prove that the following subset of parameters is generic in $X$, $$\mathcal G=\{x\in X:f_x:Y\to\R\mbox{ is Morse}\}.$$ For this, we adopt a standard transversality approach, inspired by Proposition~\ref{prop:morseifftransv}. This allows to consider $\mathcal G$ as the set of $x\in X$ such that $\frac{\partial f}{\partial y}(x,\cdot\,):Y\to TY^*$ is transverse to the null section $\mathbf 0_{TY^*}$, and use an appropriate version of the Transversality Theorem~\ref{thm:transversality} to prove its genericity. Indeed, under suitable transversality hypotheses, the preimage of the null section by $\frac{\partial f}{\partial y}$, is an embedded submanifold of $X\times Y$, the projection $\Pi:X\times Y\rightarrow X$ is a nonlinear Fredholm map of index zero and its critical values are precisely the set of parameters $x$ such that $f_x$ has some degenerate critical point in $Y$. Therefore, the problem of genericity of strongly nondegenerate critical points is reduced to a matter of regular values of a nonlinear Fredholm map. Then, genericity of $\mathcal G$ follows as a simple consequence of the Sard--Smale Theorem~\ref{thm:sardsmale}.

This approach follows the lines of standard transversality arguments present in \cite{AbbMaj2,abrahamrobbin,chillingsworth}. These were extended to the Banach and Hilbert manifolds setting in a more recent paper by White \cite{white}, that introduced the elegant method described above relating degeneracy for $f_x$ with criticality for $\Pi$ restricted to an adequate domain. We will now use these ideas to give a detailed proof of a first genericity criterion, with a formulation closely adapted from Biliotti, Javaloyes and Piccione \cite{biljavapic}.

\begin{abgen}\label{crit:abstractgenericity}\index{Abstract Genericity Criterion}
Consider $X$ a separable Banach manifold, $Y$ a separable Hilbert manifold and $\mathcal{U}\subset X\times Y$ an open subset. Let $f:\mathcal{U}\rightarrow\R$ be a $C^k$ functional and assume that for every $(x_0,y_0)\in\mathcal{U}$ such that $\frac{\partial f}{\partial y}(x_0,y_0)=0$, the following conditions hold:
\begin{itemize}
\item[(i)] the Hessian $$\frac{\partial^2 f}{\partial y^2}(x_0,y_0):T_{y_0}Y\la T_{y_0}Y^*\cong T_{y_0}Y$$ is a (self--adjoint) Fredholm operator;
\item[(ii)] for all $w\in\ker\left[\frac{\partial^2 f}{\partial y^2}(x_0,y_0)\right]\setminus\{0\}$, there exists $v\in T_{x_0}X$ such that $$\frac{\partial^2 f}{\partial x \partial y}(x_0,y_0)(v,w)\neq 0.$$
\end{itemize}
For each $x\in X$, let $\mathcal{U}_x=\{y\in Y:(x,y)\in\mathcal{U}\}$ and $f_x(y)=f(x,y)$ for all $y\in\mathcal U_x$. Then the following is a generic subset of $X$,
\begin{equation}\label{eq:gabstgen}
\mathcal G=\{x\in X: f_x:\mathcal{U}_x\to\R\mbox{ is Morse}\}.
\end{equation}
\end{abgen}

Before giving the proof, we make a couple of remarks on how the second partial derivatives mentioned in conditions (i) and (ii) can be defined without the use of a connection, under the hypotheses of the criterion.

\begin{remark}\label{re:remarkpd1}
Given $y\in Y$ with $(x,y)\in\mathcal U$ for some $x\in X$, since $x\mapsto \frac{\partial f}{\partial y}(x,y)$ takes values on the fixed Hilbert space $T_{y}Y^*$, the mixed derivative in condition (ii) is well defined without the use of a connection on $TY^*$.

More precisely, for such $y\in Y$, we have the map
\begin{eqnarray}\label{eq:partialy}
\frac{\partial f}{\partial y}(\, \cdot \,,y):\Pi(\mathcal U) &\la & T_{y}Y^* \\
x &\longmapsto & \tfrac{\partial f}{\partial y}(x,y) \nonumber
\end{eqnarray}
that can be once more differentiated, obtaining the mixed derivative
\begin{equation*}
\frac{\partial^2 f}{\partial x \partial y}(x,y):T_xX\la T_yY^*.
\end{equation*}
\end{remark}

\begin{remark}\label{re:remarkpd2}
If $\frac{\partial f}{\partial y}(x_0,y_0)=0$, the second partial derivative $\frac{\partial^2 f}{\partial y^2}(x_0,y_0)$ can be defined as the Hessian of $y\mapsto f(x_0,y)$ at the critical point $y_0$, also without depending on the choice of a connection, see Definition~\ref{def:hess}.

For the proof of the criterion, it suffices to adopt the following equivalent definition, given in Remark~\ref{re:equivalenthess}. The bilinear symmetric map
\begin{equation*}
\frac{\partial^2 f}{\partial y^2}(x,y):T_yY\times T_yY\la\R
\end{equation*}
is defined at the diagonal (at a pair $(v,v)\in T_yY\times T_yY$) using an auxiliary curve $\alpha:(-\varepsilon,\varepsilon)\to Y$, with $\alpha(0)=y$ and $\dot\alpha(0)=v$, by $$\frac{\partial^2 f}{\partial y^2}(x,y)(v,v)=(f\circ\alpha)''(0),$$ and extended to $T_yY\times T_yY$ by polarization.\footnote{More generally, if $f:X\to\R$ is a $C^2$ function defined on a Banach manifold, then $d^2f(x_0)(v,v)$ can be defined analogously with a curve $\alpha:(-\varepsilon,\varepsilon)\to X$. Inductively, if $f$ is $C^k$ and $d^jf(x_0)$ vanishes for $1\leq j\leq k-1$, $d^kf(x_0)$ can be defined in a similar way. Such definitions with auxiliary curves can be equivalently given in terms of a connection, and it can be proved that there is no dependence on the choice of this connection.} Hence, if $\frac{\partial f}{\partial y}(x,y)=0$, we have the second partial derivative
\begin{equation*}
\frac{\partial^2 f}{\partial y^2}(x,y):T_yY\la T_yY^*.
\end{equation*}
\end{remark}

\begin{proof}
We now proceed to the proof of the Abstract Genericity Criterion~\ref{crit:abstractgenericity}, through four claims.

First, given any $(x_0,y_0)\in\mathcal U$, denote by $0_{y_0}\in T_{y_0}Y^*$ the zero. If $\frac{\partial f}{\partial y}(x_0,y_0)=0_{y_0}$, then $T_{(y_0,0_{y_0})}TY^*$ canonically decomposes in the direct sum a horizontal and a vertical part, respectively tangent to the null section and to the fibers of $TY^*$. This is the automatic infinite--dimensional extension of \eqref{eq:tangentnullsec}, see Remark~\ref{re:nullsection}. More precisely, the tangent space to the null section of $TY^*$ at $(y_0,0_{y_0})$ is canonically identified as
\begin{equation}\label{eq:ident1}
T_{(y_0,0_y)}\mathbf 0_{TY^*}\cong T_{y_0}Y,
\end{equation}
and hence
\begin{equation}\label{eq:ident2}
T_{(y_0,0_{y_0})}TY^*\cong T_{y_0}Y\oplus T_{y_0}Y^*.
\end{equation}

Second, notice that \eqref{eq:partialy} induces a \emph{global partial derivative}
\begin{eqnarray*}
\frac{\partial f}{\partial y}:\mathcal U &\la & TY^* \\
(x,y) &\longmapsto & \left(y,\tfrac{\partial f}{\partial y}(x,y)\right).
\end{eqnarray*}
and if $(x_0,y_0)\in\mathcal U$ is such that $\frac{\partial f}{\partial y}(x_0,y_0)=0_{y_0}$, the above map can be differentiated again, obtaining
\begin{eqnarray}\label{eq:monster}
\mathrm d\left(\frac{\partial f}{\partial y}\right)(x_0,y_0):T_{x_0}X\oplus T_{y_0}Y &\la & T_{\left(y_0,0_{y_0}\right)}TY^*\cong T_{y_0}Y\oplus T_{y_0}Y^* \nonumber\\
(v,w) &\longmapsto &\left(w,\tfrac{\partial^2 f}{\partial x\partial y}(x_0,y_0)v+\tfrac{\partial^2 f}{\partial y^2}(x_0,y_0)w\right)
\end{eqnarray}
where \eqref{eq:ident2} is used and the second partial derivatives are in the sense of Remarks~\ref{re:remarkpd1} and~\ref{re:remarkpd2}. Observe that if $\frac{\partial f}{\partial y}(x_0,y_0)\neq 0_{y_0}$, it would be necessary to have a connection on $TY^*$, otherwise the vertical component of \eqref{eq:monster} would not be well defined, see Remarks~\ref{re:ttx} and~\ref{re:hessnoncrit}.

\begin{claim}\label{cl:claim1}
The map $\frac{\partial f}{\partial y}:\mathcal U\to TY^*$ is transverse to the null section $\mathbf 0_{TY^*}$ of $TY^*$ if and only if (ii) holds, which is also equivalent to
\begin{equation}\label{eq:transversal}
\ker\left(\frac{\partial^2 f}{\partial y^2}(x_0,y_0)\right)\bigcap\im\left(\frac{\partial^2 f}{\partial x\partial y}(x_0,y_0)\right)^\perp=\{0\}.
\end{equation}
\end{claim}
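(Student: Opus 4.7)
The strategy is to translate the transversality condition into a surjectivity/complementedness statement for a combined linear map built from the two second partial derivatives, and then invoke the Hilbert-space lemmas already proved (Lemma~\ref{le:abs1} and Proposition~\ref{prop:sumcomplement}) to identify this statement with condition (ii).

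First I would dispose of the equivalence between (ii) and \eqref{eq:transversal}. Under the Riesz identification $T_{y_0}Y^*\cong T_{y_0}Y$, a vector $w\in T_{y_0}Y$ lies in $\im\!\left(\tfrac{\partial^2 f}{\partial x\partial y}(x_0,y_0)\right)^\perp$ exactly when $\tfrac{\partial^2 f}{\partial x\partial y}(x_0,y_0)(v,w)=0$ for every $v\in T_{x_0}X$; hence \eqref{eq:transversal} is the contrapositive of (ii).

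Next I would unpack the transversality condition. Since $\tfrac{\partial f}{\partial y}(x_0,y_0)=0_{y_0}$ means that the image point lies on the null section, the horizontal/vertical splitting \eqref{eq:ident2} and the explicit formula \eqref{eq:monster} for $\mathrm d\!\left(\tfrac{\partial f}{\partial y}\right)(x_0,y_0)$ are available. The horizontal summand $T_{y_0}Y\oplus\{0\}$ coincides with $T_{(y_0,0_{y_0})}\mathbf 0_{TY^*}$, so by Remark~\ref{re:transversalityeq} transversality of $\tfrac{\partial f}{\partial y}$ to $\mathbf 0_{TY^*}$ at $(x_0,y_0)$ amounts to
\begin{itemize}
\item the vertical component $L(v,w)=\tfrac{\partial^2 f}{\partial x\partial y}(x_0,y_0)v+\tfrac{\partial^2 f}{\partial y^2}(x_0,y_0)w$ being surjective as an operator $T_{x_0}X\oplus T_{y_0}Y\to T_{y_0}Y^*$, and
\item $\ker L$ being complemented in $T_{x_0}X\oplus T_{y_0}Y$.
\end{itemize}

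Now I would apply Lemma~\ref{le:abs1} with $L_1=\tfrac{\partial^2 f}{\partial x\partial y}(x_0,y_0)$ and the self-adjoint operator $L_2=\tfrac{\partial^2 f}{\partial y^2}(x_0,y_0)$. Hypothesis (i) gives that $L_2$ is Fredholm, so $\im L_2$ is closed (Proposition~\ref{prop:fredclosed}) and $\ker L_2$ is finite-dimensional, whence $p_{\ker L_2}\im L_1$ is automatically closed in $\ker L_2$. The lemma then yields that surjectivity of $L$ is equivalent to the statement that for every $w\in\ker L_2\setminus\{0\}$ there exists $v\in T_{x_0}X$ with $\langle L_1 v,w\rangle\neq 0$, which is precisely (ii).

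The expected main obstacle is the complemented-kernel clause, but this is handled at once by Proposition~\ref{prop:sumcomplement}: since $L_2$ is Fredholm, $\ker L_2$ is (finite-dimensional and hence) complemented in $T_{y_0}Y$ and $\im L_2$ is finite-codimensional in $T_{y_0}Y^*$, so $\ker L$ is complemented in $T_{x_0}X\oplus T_{y_0}Y$ as soon as $L$ exists with closed image—in particular whenever $L$ is surjective. Assembling these observations proves the chain of equivalences asserted in Claim~\ref{cl:claim1}.
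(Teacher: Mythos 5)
Your proposal is correct and follows essentially the same route as the paper: unpack transversality via the horizontal/vertical splitting at the null section, identify the vertical component with the direct sum $L_1\oplus L_2$, and invoke Lemma~\ref{le:abs1} for surjectivity together with Proposition~\ref{prop:sumcomplement} for the complemented kernel. One small imprecision: Proposition~\ref{prop:sumcomplement} yields complementedness of $\ker L$ unconditionally from the Fredholmness of $L_2$ (hypothesis (i)), not only when $L$ has closed image or is surjective, which in fact simplifies your last step since the complemented-kernel clause is automatic and transversality reduces cleanly to surjectivity alone.
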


Denote by $p_{y_0}:T_{(y_0,0_{y_0})}TY^*\to T_{y_0}Y^*$ the projection correspondent to the decomposition \eqref{eq:ident2}. Then $$p_{y_0}\circ\mathrm d\left(\frac{\partial f}{\partial y}\right)(x_0,y_0):T_{x_0}X\oplus T_{y_0}Y\la T_{y_0}Y^*$$ is given by the direct sum of the bounded linear maps
\begin{equation}\label{eq:L1L2}
\begin{aligned}
L_1=\dfrac{\partial^2 f}{\partial x\partial y}(x_0,y_0):T_{x_0}X &\la T_{y_0}Y^*\cong T_{y_0}Y \\
L_2=\dfrac{\partial^2 f}{\partial y^2}(x_0,y_0):T_{y_0}Y &\la T_{y_0}Y^*\cong T_{y_0}Y.
\end{aligned}
\end{equation}

From Definition~\ref{def:transversality}, transversality of $\frac{\partial f}{\partial y}:\mathcal U\to TY^*$ to the null section $\mathbf 0_{TY^*}$ means that for all $(x_0,y_0)\in\mathcal U$ such that $\frac{\partial f}{\partial y}(x_0,y_0)=0_{y_0}$, $$\left[\mathrm d\left(\frac{\partial f}{\partial y}\right)(x_0,y_0)\right]^{-1}T_{(y_0,0_{y_0})} \mathbf 0_{TY^*}$$ is complemented in $T_{x_0}X\oplus T_{y_0}Y$ and $$\im\left[ \mathrm d\left(\frac{\partial f}{\partial y}\right)(x_0,y_0)\right]+T_{(y_0,0_{y_0})} \mathbf 0_{TY^*}=T_{(y_0,0_{y_0})}TY^*.$$ From \eqref{eq:L1L2}, using identifications \eqref{eq:ident1} and \eqref{eq:ident2}, this is equivalent to 
\begin{itemize}
\item[--] $\ker (L_1\oplus L_2)$ is complemented in $T_{x_0}X\oplus T_{y_0}Y$;
\item[--] $L_1\oplus L_2$ is surjective.
\end{itemize}

The first condition holds as a consequence of (i), since $\frac{\partial^2 f}{\partial y^2}(x_0,y_0)$ is Fredholm and hence has finite--dimensional kernel and finite--codimensional image. From Lemma~\ref{le:finitecomplemented}, both subspaces are complemented. Thus, applying Proposition~\ref{prop:sumcomplement} it follows that $\ker (L_1\oplus L_2)$ is complemented in $T_{x_0}X\oplus T_{y_0}Y$.

Therefore, transversality of $\frac{\partial f}{\partial y}:\mathcal U\to TY^*$ to $\mathbf 0_{TY^*}$ is now equivalent to surjectivity of $L_1\oplus L_2$. From self--adjointness and Fredholmness of $L_2$, Lemma~\ref{le:abs1} applies. This gives that $L_1\oplus L_2$ is surjective if and only if (ii) holds, which in turn is also obviously equivalent to \eqref{eq:transversal},\footnote{Since (ii) and \eqref{eq:transversal} are both equivalent to transversality of $\frac{\partial f}{\partial y}$ to the null section of $TY^*$, they will be henceforth labeled as \emph{transversality conditions} in this context.} concluding the proof of Claim~\ref{cl:claim1}.

\begin{claim}\label{cl:claim2}
The subset
\begin{equation*}
\mathfrak{M}=\left\{(x,y)\in\mathcal U:\frac{\partial f}{\partial y}(x,y)=0 \right\}
\end{equation*}
is an embedded $C^{k-1}$ submanifold of $X\times Y$, and at each $(x_0,y_0)\in\mathfrak M$, its tangent space is
\begin{multline}\label{eq:tangentM}
T_{(x_0,y_0)}\mathfrak{M}=\Big\{(v,w)\in T_{x_0}X\oplus T_{y_0}Y: \\ \left[\tfrac{\partial^2 f}{\partial x\partial y}(x_0,y_0)\oplus\tfrac{\partial^2 f}{\partial y^2}(x_0,y_0)\right](v,w)=0\Big\}.
\end{multline}
\end{claim}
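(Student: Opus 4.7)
The plan is to realize $\mathfrak M$ as the transverse preimage of a submanifold and then invoke Proposition~\ref{prop:transvsubmnfld} (or rather its Hilbert-manifold variant noted in Remark~\ref{re:transvsubmnfldhilbert}). Concretely, I would observe that under the natural identification of $\frac{\partial f}{\partial y}$ with a section of the pull-back of $TY^*$ to $\mathcal U$ (equivalently, a map $\mathcal U \to TY^*$ covering the projection onto $Y$), one has
\[
\mathfrak M = \left(\tfrac{\partial f}{\partial y}\right)^{-1}\bigl(\mathbf 0_{TY^*}\bigr),
\]
since $(x,y)\in\mathfrak M$ if and only if $\frac{\partial f}{\partial y}(x,y)$ is the zero element $0_y$ of the fiber $T_yY^*$, i.e.\ lies on the null section.

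The map $\frac{\partial f}{\partial y}:\mathcal U \to TY^*$ is of class $C^{k-1}$ because $f\in C^k(\mathcal U)$, and $\mathbf 0_{TY^*}$ is a smooth embedded submanifold of $TY^*$ (recall Corollary~\ref{cor:secembed} and Remark~\ref{re:nullsectionsubmnfld}). By Claim~\ref{cl:claim1}, hypotheses (i) and (ii) of the criterion are exactly what is needed to guarantee transversality of $\frac{\partial f}{\partial y}$ to $\mathbf 0_{TY^*}$ at every point of $\mathfrak M$. Applying Proposition~\ref{prop:transvsubmnfld} (valid in the Hilbert setting by Remark~\ref{re:transvsubmnfldhilbert}), it follows that $\mathfrak M$ is a $C^{k-1}$ embedded submanifold of $X\times Y$.

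For the tangent space, the same proposition gives
\[
T_{(x_0,y_0)}\mathfrak M = \left[\mathrm d\!\left(\tfrac{\partial f}{\partial y}\right)(x_0,y_0)\right]^{-1}\!\bigl[T_{(y_0,0_{y_0})}\mathbf 0_{TY^*}\bigr].
\]
Using the canonical decomposition \eqref{eq:ident2} together with the explicit formula \eqref{eq:monster} for the derivative, a pair $(v,w)\in T_{x_0}X\oplus T_{y_0}Y$ maps into $T_{(y_0,0_{y_0})}\mathbf 0_{TY^*}\cong T_{y_0}Y$ (the horizontal component) precisely when its vertical component vanishes, i.e.\ when
\[
\tfrac{\partial^2 f}{\partial x\partial y}(x_0,y_0)v+\tfrac{\partial^2 f}{\partial y^2}(x_0,y_0)w = 0,
\]
which is formula \eqref{eq:tangentM}. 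There is no serious obstacle here: the only genuinely nontrivial input is the transversality statement of Claim~\ref{cl:claim1}, which has already been established; the rest is a direct bookkeeping exercise combining the identifications from \eqref{eq:ident1}--\eqref{eq:ident2} with the transverse preimage theorem.
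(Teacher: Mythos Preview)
Your proposal is correct and follows essentially the same approach as the paper: both recognize $\mathfrak M$ as the preimage of the null section $\mathbf 0_{TY^*}$ under $\frac{\partial f}{\partial y}$, invoke Claim~\ref{cl:claim1} to obtain transversality, and then apply Proposition~\ref{prop:transvsubmnfld} to conclude both the submanifold structure and the tangent-space formula via \eqref{eq:monster} and the identifications \eqref{eq:ident1}--\eqref{eq:ident2}. Your write-up simply spells out a few details (the $C^{k-1}$ regularity and the horizontal/vertical decomposition) that the paper leaves implicit.
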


From Claim~\ref{cl:claim1}, (ii) implies that the map $\frac{\partial f}{\partial y}:\mathcal U\to TY^*$ is transverse to the null section $\mathbf 0_{TY^*}$ of $TY^*$. Thus, the above claim is an immediate consequence of Proposition~\ref{prop:transvsubmnfld}. Furthermore, notice that in the notation \eqref{eq:L1L2}, the subspace \eqref{eq:tangentM} of $T_{x_0}X\oplus T_{y_0}Y$ is exactly the (complemented) space $\ker(L_1\oplus L_2)$.

\begin{claim}\label{cl:claim3}
Let $\Pi:X\times Y\to X$ be the projection onto the first variable. Then the restriction $\Pi\vert_{\mathfrak M}$ is a nonlinear $C^{k-1}$ Fredholm map of index zero, and $(x_0,y_0)\in\mathfrak M$ is a regular point of $\Pi\vert_{\mathfrak M}$ if and only if $y_0$ is a {\em strongly nondegenerate} critical point of the functional $$f_{x_0}:\mathcal U_{x_0}\owns y\longmapsto f(x_0,y)\in\R,$$ where $\mathcal{U}_{x_0}=\{y\in Y:(x_0,y)\in\mathcal{U}\}$.
\end{claim}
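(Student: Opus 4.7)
My plan is to reduce Claim~\ref{cl:claim3} to a direct application of Lemma~\ref{le:lemmatransvthm}, using the description of $T_{(x_0,y_0)}\mathfrak{M}$ from Claim~\ref{cl:claim2}. Specifically, write $L_1=\frac{\partial^2 f}{\partial x\partial y}(x_0,y_0)$ and $L_2=\frac{\partial^2 f}{\partial y^2}(x_0,y_0)$ as in \eqref{eq:L1L2}, and set $L=L_1\oplus L_2:T_{x_0}X\oplus T_{y_0}Y\to T_{y_0}Y^*$. By \eqref{eq:tangentM}, the differential of $\Pi|_{\mathfrak M}$ at $(x_0,y_0)$ is exactly the restriction of the projection onto the first factor to $\ker L$. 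This is precisely the setting of Lemma~\ref{le:lemmatransvthm}, with $V_1=T_{x_0}X$, $V_2=T_{y_0}Y$ and $H=T_{y_0}Y^*$.

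Next I would address the Fredholm property and the index. By hypothesis (i), $L|_{V_2}=L_2$ is Fredholm; hence by Lemma~\ref{le:lemmatransvthm}, $\dd(\Pi|_{\mathfrak M})(x_0,y_0)=\Pi|_{\ker L}$ is Fredholm with $\ind\bigl(\dd(\Pi|_{\mathfrak M})(x_0,y_0)\bigr)=\ind(L_2)$. Since $L_2$ represents the symmetric bilinear form $\hess(f_{x_0})(y_0)$, Lemma~\ref{le:symselfadjoint} shows that $L_2$ is self-adjoint, so Lemma~\ref{le:selfadjointzero} gives $\ind(L_2)=0$. The regularity of $\Pi|_{\mathfrak M}$ as a $C^{k-1}$ map follows from the fact that $\mathfrak M$ is a $C^{k-1}$ submanifold (Claim~\ref{cl:claim2}) and $\Pi$ is smooth. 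This establishes that $\Pi|_{\mathfrak M}$ is a nonlinear $C^{k-1}$ Fredholm map of index zero.

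Finally, I would characterize the regular points. By Definition~\ref{def:regular}, $(x_0,y_0)$ is regular for $\Pi|_{\mathfrak M}$ iff $\dd(\Pi|_{\mathfrak M})(x_0,y_0)$ is surjective and has complemented kernel. Since this differential is Fredholm, its kernel is automatically finite-dimensional, hence complemented by Lemma~\ref{le:finitecomplemented}; thus the regularity condition reduces to surjectivity. By the second part of Lemma~\ref{le:lemmatransvthm}, $\Pi|_{\ker L}$ is surjective iff $L|_{V_2}=L_2$ is surjective. Because $L_2$ is a self-adjoint Fredholm operator of index zero, Lemma~\ref{le:fred0} tells us that surjectivity of $L_2$ is equivalent to $L_2$ being a topological isomorphism. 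By Definition~\ref{def:degnondegstdeg}, this is exactly the condition that $y_0$ be a strongly nondegenerate critical point of $f_{x_0}$, which is what we wanted to prove.

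Since every tool required (Lemmas~\ref{le:lemmatransvthm},~\ref{le:selfadjointzero},~\ref{le:fred0},~\ref{le:finitecomplemented} and Claim~\ref{cl:claim2}) is already in place, there is no serious obstacle; the only delicate point is to observe that complementedness of the kernel is free because the map is Fredholm, so that ``regular'' collapses to ``surjective.''
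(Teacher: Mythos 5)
Your proposal is correct, but it takes a genuinely different route from the paper. Where the paper computes $\ker\bigl(\dd\Pi|_{T_{(x_0,y_0)}\mathfrak M}\bigr)$ and $\im\bigl(\dd\Pi|_{T_{(x_0,y_0)}\mathfrak M}\bigr)$ by hand and then invokes Lemma~\ref{le:abs2} together with the surjectivity of $L_1\oplus L_2$ to extract finite codimension and index zero, you observe that the operator $\dd(\Pi|_{\mathfrak M})(x_0,y_0)$ is literally $\Pi|_{\ker L}$ in the notation of Lemma~\ref{le:lemmatransvthm}, and that the lemma immediately yields the Fredholm property, the index, and the surjectivity equivalence $\Pi|_{\ker L}$ surjective $\iff L|_{V_2}=L_2$ surjective. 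This is cleaner and more modular; interestingly, the paper already proves Lemma~\ref{le:lemmatransvthm} and uses it to prove the Transversality Theorem~\ref{thm:transversality}, but does not exploit it in the proof of Claim~\ref{cl:claim3}, so you have essentially unified the paper's two parallel arguments. The one hypothesis of Lemma~\ref{le:lemmatransvthm} that you leave implicit and should state is surjectivity of $L=L_1\oplus L_2$; this is guaranteed by condition (ii) together with Claim~\ref{cl:claim1} (which shows that (ii) is equivalent to surjectivity of $L_1\oplus L_2$), but since it is the gateway hypothesis of the lemma you are relying on, it deserves an explicit sentence. The remaining steps — self-adjointness of $L_2$ via Lemma~\ref{le:symselfadjoint}, index zero via Lemma~\ref{le:selfadjointzero}, complementedness of the kernel via Lemma~\ref{le:finitecomplemented}, and the equivalence of surjectivity and isomorphism via Lemma~\ref{le:fred0} — are all in order.
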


Fix $(x_0,y_0)\in\mathfrak M$. Then the derivative of $\Pi|_\mathfrak M$ at this point is
\begin{eqnarray*}
\mathrm d\Pi(x_0,y_0)\big|_{T_{(x_0,y_0)}\mathfrak M}:T_{(x_0,y_0)}\mathfrak M &\la &T_{x_0}X \\
(v,w) &\longmapsto & v
\end{eqnarray*}
and hence has kernel $T_{(x_0,y_0)}\mathfrak M\cap (\{0\}\oplus T_{y_0}Y)$, which, from \eqref{eq:tangentM}, is clearly identified as
\begin{equation}\label{eq:kerL2}
\ker \mathrm d\Pi(x_0,y_0)\big|_{T_{(x_0,y_0)}\mathfrak M}\cong \ker\left(\frac{\partial^2 f}{\partial y^2}(x_0,y_0)\right),
\end{equation}
and since $\frac{\partial^2 f}{\partial y^2}(x_0,y_0)$ is Fredholm, the right--hand side (hence both sides) are finite dimensional. In addition, if $v$ is in the image $\mathrm d\Pi(x_0,y_0)\left(T_{(x_0,y_0)}\mathfrak M\right)$, then from \eqref{eq:tangentM} there exists $w\in T_{y_0}Y$ such that $$\left[\frac{\partial^2 f}{\partial x\partial y}(x_0,y_0)\oplus\frac{\partial^2 f}{\partial y^2}(x_0,y_0)\right](v,w)=0,$$ hence
\begin{equation}\label{eq:projectionimage}
\mathrm d\Pi(x_0,y_0)\left(T_{(x_0,y_0)}\mathfrak M\right)=\left[\frac{\partial^2 f}{\partial x\partial y}(x_0,y_0)\right]^{-1}\im\left(\frac{\partial^2 f}{\partial y^2}(x_0,y_0)\right).
\end{equation}

Since $\frac{\partial^2 f}{\partial y^2}(x_0,y_0)$ is Fredholm, its image has finite codimension in $T_{y_0}Y$. Applying Lemma~\ref{le:abs2}, it follows that also $\mathrm d\Pi(x_0,y_0)\left(T_{(x_0,y_0)}\mathfrak M\right)$ has finite codimension in $T_{x_0}X$, hence $\mathrm d\Pi(x_0,y_0)\big|_{T_{(x_0,y_0)}\mathfrak M}$ is a Fredholm operator. More precisely,
\begin{multline*}
\codim_{T_{x_0}X}\left[\im \mathrm d\Pi(x_0,y_0)\big|_{T_{(x_0,y_0)}\mathfrak M}\right]=\codim_{T_{y_0}Y}\im\left(\frac{\partial^2 f}{\partial y^2}(x_0,y_0)\right) \\
-\codim_{T_{y_0}Y}\left[\im\left(\tfrac{\partial^2 f}{\partial x\partial y}(x_0,y_0)\right)+\im\left(\tfrac{\partial^2 f}{\partial y^2}(x_0,y_0)\right)\right],
\end{multline*}
and, since by \eqref{eq:transversal} the operator $\left[\tfrac{\partial^2 f}{\partial x\partial y}(x_0,y_0)\oplus\tfrac{\partial^2 f}{\partial y^2}(x_0,y_0)\right]$ is surjective, it follows that the last term in the right--hand side of the above expression is null. Therefore, using that $\frac{\partial^2 f}{\partial y^2}(x_0,y_0)$ is self--adjoint, it follows that
\begin{eqnarray*}
\dim\ker\left(\frac{\partial^2 f}{\partial y^2}(x_0,y_0)\right) &\stackrel{\eqref{eq:ateminhamaesabe}}{=}& \dim\im\left(\frac{\partial^2 f}{\partial y^2}(x_0,y_0)\right)^\perp \\
&=& \codim\im\left(\frac{\partial^2 f}{\partial y^2}(x_0,y_0)\right) \\
&=& \codim_{T_{x_0}X}\left[\im\dd\Pi(x_0,y_0)\big|_{T_{(x_0,y_0)}\mathfrak M}\right],
\end{eqnarray*}
thus $\ind\left(\mathrm d\Pi(x_0,y_0)\big|_{T_{(x_0,y_0)}\mathfrak M}\right)=0$. This proves the claim that $\Pi|_{\mathfrak M}$ is a nonlinear $C^{k-1}$ Fredholm map of index zero.

From \eqref{eq:projectionimage}, it is clear that $(x_0,y_0)\in\mathfrak M$ is a regular point of $\Pi|_{\mathfrak M}$, i.e., $\dd\Pi(x_0,y_0)\big|_{T_{(x_0,y_0)}\mathfrak M}$ is surjective, if and only if\footnote{Notice that its kernel $\ker\dd\Pi(x_0,y_0)\big|_{T_{(x_0,y_0)}\mathfrak M}$ given by \eqref{eq:kerL2} is finite--dimensional, hence complemented as a consequence of Lemma~\ref{le:finitecomplemented}.}
\begin{equation*}
\im\left(\frac{\partial^2 f}{\partial x\partial y}(x_0,y_0)\right)\subset\im\left(\frac{\partial^2 f}{\partial y^2}(x_0,y_0)\right).
\end{equation*}
Taking orthogonal complements and using once more that $\frac{\partial^2 f}{\partial y^2}(x_0,y_0)$ is self--adjoint, this is equivalent to
\begin{equation*}
\im\left(\frac{\partial^2 f}{\partial x\partial y}(x_0,y_0)\right)^\perp\supset\im\left(\frac{\partial^2 f}{\partial y^2}(x_0,y_0)\right)^\perp\stackrel{\eqref{eq:ateminhamaesabe}}{=}\ker\left(\frac{\partial^2 f}{\partial y^2}(x_0,y_0)\right).
\end{equation*}
Since, from \eqref{eq:transversal}, $$\ker\left(\frac{\partial^2 f}{\partial y^2}(x_0,y_0)\right)\bigcap\im\left(\frac{\partial^2 f}{\partial x\partial y}(x_0,y_0)\right)^\perp=\{0\},$$ $(x_0,y_0)$ is a regular point of $\Pi|_\mathfrak M$ if and only if $\ker\left(\frac{\partial^2 f}{\partial y^2}(x_0,y_0)\right)$ is trivial. Using \eqref{eq:kerL2}, this is in turn equivalent to $y_0$ being a nondegenerate critical point of $f_x$. Finally, from Lemma~\ref{le:selfadjointzero}, the self--adjoint Fredholm operator $\frac{\partial^2 f}{\partial y^2}(x_0,y_0)$ must have index zero. From Lemma~\ref{le:fred0} it follows that this operator is injective if and only if it is surjective. Thus, $(x_0,y_0)$ is a regular point of $\Pi|_\mathfrak M$ if and only if $y_0$ is a strongly nondegenerate critical point of $f_x$, concluding the proof of Claim~\ref{cl:claim3}.

\begin{claim}\label{cl:claim4}
The subset $\mathcal G$, given by \eqref{eq:gabstgen}, is generic in $X$.
\end{claim}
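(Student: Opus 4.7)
The plan is to apply the Sard--Smale Theorem~\ref{thm:sardsmale} to the nonlinear Fredholm map $\Pi|_{\mathfrak M}$ constructed in Claim~\ref{cl:claim3}, and identify its set of regular values with (a subset of) $\mathcal G$.

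First, I would verify the hypotheses of the Sard--Smale Theorem for $\Pi|_{\mathfrak M}:\mathfrak M \to X$. From Claim~\ref{cl:claim2}, $\mathfrak M$ is a $C^{k-1}$ embedded submanifold of the product $X\times Y$, which is separable since both $X$ and $Y$ are separable; hence $\mathfrak M$ is separable (as a submanifold of a separable metric space). The target $X$ is separable by hypothesis. From Claim~\ref{cl:claim3}, $\Pi|_{\mathfrak M}$ is a $C^{k-1}$ nonlinear Fredholm map of index zero. Since we are implicitly assuming $k\geq 3$ (so that the generalized energy functional setting is covered), we have $k-1\geq 2 > 0 = \max\{\ind(\Pi|_{\mathfrak M}),0\}$, so the regularity condition of the Sard--Smale Theorem is satisfied.

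Next, I would invoke the Sard--Smale Theorem~\ref{thm:sardsmale} to conclude that the set
\begin{equation*}
\mathcal G_0 = \{x\in X: x\text{ is a regular value of }\Pi|_{\mathfrak M}\}
\end{equation*}
is a generic subset of $X$. Now I would verify that $\mathcal G_0\subset\mathcal G$, which combined with Remark~\ref{re:stupidremark} will yield the genericity of $\mathcal G$. Let $x\in\mathcal G_0$. If $x\notin\Pi(\mathfrak M)$, then $f_x$ has no critical points in $\mathcal U_x$, so $f_x$ is vacuously Morse and $x\in\mathcal G$. Otherwise, for every $y_0\in\mathcal U_x$ with $\frac{\partial f}{\partial y}(x,y_0)=0$, the pair $(x,y_0)$ lies in $\mathfrak M$, and since $x$ is a regular value, $(x,y_0)$ is a regular point of $\Pi|_{\mathfrak M}$. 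By Claim~\ref{cl:claim3}, this is equivalent to $y_0$ being a strongly nondegenerate critical point of $f_x$. As this holds for every critical point of $f_x$, we conclude that $f_x$ is Morse, i.e., $x\in\mathcal G$.

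The main conceptual obstacle has already been handled in the previous claims: converting the transversality hypothesis (ii) into the submanifold structure on $\mathfrak M$ (Claim~\ref{cl:claim2}) and recasting strong nondegeneracy of critical points as regularity of the projection $\Pi|_{\mathfrak M}$ (Claim~\ref{cl:claim3}), the latter relying crucially on the self--adjointness of the Hessian via Lemma~\ref{le:selfadjointzero} to upgrade injectivity to bijectivity. With these in place, Claim~\ref{cl:claim4} reduces to a direct application of the Sard--Smale Theorem followed by the elementary inclusion $\mathcal G_0\subset \mathcal G$ described above.
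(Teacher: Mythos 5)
Your proof is correct and follows essentially the same route as the paper: identify regular values of the nonlinear Fredholm map $\Pi|_{\mathfrak M}$ with parameters $x$ for which $f_x$ is Morse (via Claim~\ref{cl:claim3}), then invoke the Sard--Smale Theorem~\ref{thm:sardsmale}. Your only deviation is phrasing the final step as an inclusion $\mathcal G_0\subset\mathcal G$ plus Remark~\ref{re:stupidremark} rather than an outright equality, and explicitly treating the case $x\notin\Pi(\mathfrak M)$; both are harmless refinements that the paper leaves implicit.
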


From Claim~\ref{cl:claim3}, the set of $\mathcal G$ of parameters $x\in X$ such that the functional $$f_x:\mathcal{U}_x\owns y\longmapsto f(x,y)\in\R$$ is Morse coincides with the set of regular values of $\Pi|_\mathfrak M$. Since this is a nonlinear $C^{k-1}$ Fredholm map of index zero between separable Banach manifolds, the Sard--Smale Theorem~\ref{thm:sardsmale} applies, and it follows that this is a generic subset of $X$, completing the proof of this last claim.
\end{proof}

\begin{remark}
The proof of the Abstract Genericity Criterion~\ref{crit:abstractgenericity} may be severely simplified by the use of Proposition~\ref{prop:morseifftransv} and the Transversality Theorem~\ref{thm:transversality}. Using Claim~\ref{cl:claim1}, all hypotheses of the Transversality Theorem~\ref{thm:transversality} are verified for the map $\frac{\partial f}{\partial y}:\mathcal U\to TY^*$, with respect to the submanifold $\mathbf 0_{TY^*}$. Thus, generically on $x$, $\frac{\partial f}{\partial y}(x,\cdot\,)$ is transverse to $\mathbf 0_{TY^*}$. Finally, from Proposition~\ref{prop:morseifftransv}, this happens if and only if $x\in\mathcal G$, proving that $\mathcal G$ is generic in $X$.

The actual proof given above is rather longer, however more detailed on how criticality for $f_x$ is equivalent to degeneracy for $\Pi\vert_{\mathfrak M}$, and how the Sard--Smale Theorem~\ref{thm:sardsmale} is used to prove genericity of $\mathcal G$. However, the methods involved are clearly the same.
\end{remark}
 

We end this section with a second abstract criterion, with slightly weaker hypotheses but with the same setting as the Abstract Genericity Criterion~\ref{crit:abstractgenericity}. Namely, we consider the same family of parameterized variation problems $f:\mathcal U\subset X\times Y\to\R$ however we allow condition (ii) to be verified only in a distinguished subset $\mathfrak C$ of $$\mathfrak{M}=\left\{(x,y)\in\mathcal U:\frac{\partial f}{\partial y}(x,y)=0\right\}.$$ The conclusion will then be that the parameters $x\in X$ for which these distinguished critical points of $f_x$ are strongly nondegenerate is generic in $X$.

\begin{abgen}\label{crit:weakabstractgenericity}\index{Abstract Genericity Criterion}
Consider $X$ be a separable Banach manifold, $Y$ a separable Hilbert manifold and $\mathcal{U}\subset X\times Y$ an open subset. Let $f:\mathcal{U}\rightarrow\R$ be a $C^k$ functional, $\mathfrak{M}=\left\{(x,y)\in\mathcal U:\frac{\partial f}{\partial y}(x,y)=0\right\}$, and $\mathfrak C$ a subset of distinguished pairs $(x,y)\in\mathfrak M$. Suppose the following conditions hold:
\begin{itemize}
\item[(i)] for every $(x_0,y_0)\in\mathfrak M$, the Hessian $$\frac{\partial^2 f}{\partial y^2}(x_0,y_0):T_{y_0}Y\la T_{y_0}Y^*\cong T_{y_0}Y$$ is a (self--adjoint) Fredholm operator;
\item[(ii)] for every $(x_0,y_0)\in\mathfrak C$, for all $w\in\ker\left[\frac{\partial^2 f}{\partial y^2}(x_0,y_0)\right]\setminus\{0\}$, there exists $v\in T_{x_0}X$ such that $$\frac{\partial^2 f}{\partial x \partial y}(x_0,y_0)(v,w)\neq 0.$$
\end{itemize}
For each $x\in X$, let $\mathcal{U}_x=\{y\in Y:(x,y)\in\mathcal{U}\}$, $\mathfrak C_x=\{y\in Y:(x,y)\in\mathfrak C\}$ and $f_x(y)=f(x,y)$ for all $y\in\mathcal U_x$. Then the following is a generic subset of $X$,
\begin{equation*}
\mathcal G_{\mathfrak C}=\{x\in X: \mbox{ all } y\in\mathfrak C_x\mbox{ are strongly nondegenerate for }f_x:\mathcal{U}_x\to\R\}.
\end{equation*}
\end{abgen}

\begin{proof}
This second abstract criterion is in fact a simple consequence of the Abstract Genericity Criterion~\ref{crit:abstractgenericity}. From Lemma~\ref{le:transvopen}, applied to $\frac{\partial f}{\partial y}:\mathcal U\to TY^*$, the following is an open subset of $\mathfrak M$,
\begin{equation*}
\mathfrak A=\left\{(x,y)\in\mathfrak M:\frac{\partial f}{\partial y}:\mathcal U\to TY^* \mbox{ is transverse to }\mathbf 0_{TY^*} \mbox{ at } (x,y)\right\}.
\end{equation*}
Thus, there exists $\mathcal V$ an open subset in $\mathcal U$, such that $\mathcal V\cap\mathfrak M=\mathfrak A$.

Under condition (i), Claim~\ref{cl:claim1} gives that $(x,y)\in\mathfrak A$ if and only if for all $w\in\ker\left[\frac{\partial^2 f}{\partial y^2}(x,y)\right]\setminus\{0\}$, there exists $v\in T_{x}X$ such that $\frac{\partial^2 f}{\partial x \partial y}(x,y)(v,w)\neq 0.$ Consider the restriction $f\vert_{\mathcal V}:\mathcal V\subset\mathcal U\to\R$. Then, conditions (i) and (ii) of the Abstract Genericity Criterion~\ref{crit:abstractgenericity} apply to $f\vert_{\mathcal V}$, and hence the following is generic in $X$,
\begin{equation*}
\mathcal G_{\mathcal V}=\{x\in X: f_x\vert_{\mathcal V_x}:\mathcal V_x\to\R\mbox{ is Morse}\},
\end{equation*}
where $\mathcal V_x=\{y\in Y:(x,y)\in\mathcal V\}$ and $f_x\vert_{\mathcal V_x}$ is the restriction of $f_x:\mathcal U_x\to\R$ to this open subset. Once more, from Claim~\ref{cl:claim1}, $\mathfrak C_x$ is contained in $\mathcal V_x$ for every $x\in X$. Therefore, $\mathcal G_{\mathcal V}$ is contained in $\mathcal G_{\mathfrak C}$ and hence, from Remark~\ref{re:stupidremark}, $\mathcal G_{\mathfrak C}$ is generic in $X$.
\end{proof}

\section{Equivariant genericity criteria}
\label{sec:equivariantgenericity}

In this section, we consider the same family of functionals $f:\mathcal U\subset X\times Y\to\R$ parametrized in a Banach manifold $X$, however with the additional hypothesis that there is a (non necessarily differentiable) action of a finite--dimensional Lie group $G$ on $Y$ and $f(x,\cdot\,)$ and $\mathcal U$ are $G$--invariant. We will assume also that this action is by diffeomorphisms and that it admits a generalized slice with respect to $f_x$ for all $x\in\Pi(\mathcal U)$, see Definition~\ref{def:genslice}. In this context, we will prove equivariant genericity criteria that give abstract sufficient conditions on $f$ to guarantee the genericity of the set of parameters $x\in X$ for which $f_x$ is $G$--Morse, see Lemma~\ref{le:invkernel} and Definition~\ref{def:gmorse}.

\begin{eqgen}\label{crit:equivariantgenericity}\index{Equivariant Genericity Criterion}\index{Abstract Genericity Criterion}
Consider $X$ a separable Banach manifold, $Y$ a separable Hilbert manifold, $\mathcal{U}\subset X\times Y$ an open subset and $G$ a finite--dimensional Lie group with an action $\mu:G\times Y\to Y$ by diffeomorphisms. Suppose that $\mathcal U$ is $G$--invariant and that $f$ is $G$--invariant in the second variable. Suppose also the existence of submanifolds $Y_2\subset Y_1\subset Y$ such that all critical points of $f_x=f(x,\cdot\,)$ are contained in $Y_2$ for all $x\in\Pi(U)$, and for all $y\in Y_2$, the subspace $\mathcal D_y$ of $T_yY_1$ is well--defined.\footnote{See Remark~\ref{re:weakreg} and \eqref{eq:dgammaspangamma}.} Assume the existence of a generalized slice $(\mathfrak U,\{S_n\}_{n\in\N})$ for the action of $G$ on $Y$ with respect to $f_x$, for all $x\in\Pi(U)$ and that for every $(x_0,y_0)\in\mathcal{U}$ such that $\frac{\partial f}{\partial y}(x_0,y_0)=0$, the following conditions hold:
\begin{itemize}
\item[(eq-i)] the Hessian $$\frac{\partial^2 f}{\partial y^2}(x_0,y_0):T_{y_0}Y\la T_{y_0}Y^*\cong T_{y_0}Y$$ is a (self--adjoint) Fredholm operator;
\item[(eq-ii)] for all $w\in\ker\left[\frac{\partial^2 f}{\partial y^2}(x_0,y_0)\right]\setminus\mathcal D_{y_0}$, there exists $v\in T_{x_0}X$ such that $$\frac{\partial^2f}{\partial x\partial y}(x_0,y_0)(v,w)\ne0.$$
\end{itemize}
For each $x\in X$, let $\mathcal{U}_x=\{y\in Y:(x,y)\in\mathcal{U}\}$ and $f_x(y)=f(x,y)$ for all $y\in\mathcal U_x$. Then the following is a generic subset of $X$,
\begin{equation}\label{eq:geqgen}
\mathcal G=\{x\in X: f_x:\mathcal{U}_x\to\R\mbox{ is } G\mbox{--Morse}\}.
\end{equation}
\end{eqgen}

\begin{proof}
The idea of the proof is to apply the Abstract Genericity Criterion~\ref{crit:abstractgenericity} to the restrictions of $f$ to the elements $S_n$ of the generalized slice. Let $\mathcal U_n$ be the open subsets of $X\times S_n$ defined by $$\mathcal U_n=\{(x,y)\in X\times S_n:\{x\}\times G(y)\cap\mathcal U\neq\emptyset\},$$ and consider $f_n:\mathcal U_n\to\R$ the restrictions $f|_{\mathcal U_n}$.

Let us verify that each $f_n:\mathcal U_n\to\R$ satisfies the hypotheses of the Abstract Genericity Criterion~\ref{crit:abstractgenericity}. Given $(x_0,y_0)\in\mathcal U_n$ such that $\frac{\partial f}{\partial y}(x_0,y_0)=0$, the decomposition\footnote{Recall that this decomposition exists from property (iii) of the generalized slice, see Definition~\ref{def:genslice}.}
\begin{equation}\label{eq:decompty0}
T_{y_0}Y=T_{y_0}S_n\oplus\mathcal D_{y_0}
\end{equation}
induces a decomposition of operators defined in $T_{y_0}Y$. Thus, \eqref{eq:decompty0} induces a decomposition of $\frac{\partial f}{\partial y}(x_0,y_0):T_{y_0}Y\to\R$ as the direct sum of $\frac{\partial f_n}{\partial y}(x_0,y_0):T_{y_0}S_n\to\R$ and the null functional\footnote{Recall that, since $f$ is $G$--invariant in the second variable, as observed in Lemma~\ref{le:invkernel} the subspace $\mathcal D_{y_0}$ is contained in the kernel of $\frac{\partial f}{\partial y}(x_0,y_0)$, and hence also in the kernel of $\frac{\partial^2f}{\partial x\partial y}(x_0,y_0)$.} of $\mathcal D_{y_0}$. Hence, since $\frac{\partial f}{\partial y}(x_0,y_0)=0$, it follows that also $\frac{\partial f_n}{\partial y}(x_0,y_0)=0$.

From (eq-i), the Hessian $\frac{\partial^2f}{\partial y^2}(x_0,y_0)$ is a Fredholm operator. Using \eqref{eq:decompty0}, it decomposes as the sum of
\begin{equation}\label{eq:hessfn}
\frac{\partial^2 f_n}{\partial y^2}(x_0,y_0):T_{y_0}S_n\la T_{y_0}S_n^*\cong T_{y_0}S_n
\end{equation}
and the null operator of $\mathcal D_{y_0}$. Thus, \eqref{eq:hessfn} is given by the restriction of a Fredholm operator to a finite codimensional space, which is hence Fredholm. Therefore condition (i) of the Abstract Genericity Criterion~\ref{crit:abstractgenericity} holds.

As for condition (ii), from the above decomposition of $\frac{\partial^2f}{\partial y^2}(x_0,y_0)$, if $w\in\ker\left[\frac{\partial^2 f_n}{\partial y^2}(x_0,y_0)\right]\setminus\{0\}$, then\footnote{Here we use property (ii) of the generalized slice, see Definition~\ref{def:genslice}.} $w\in\ker\left[\frac{\partial^2 f}{\partial y^2}(x_0,y_0)\right]\setminus\mathcal D_{y_0}$.  Thus, from (eq-ii), there exists $v\in T_{x_0}X$ such that $\frac{\partial^2 f}{\partial x\partial y}(x_0,y_0)(v,w)\neq 0$. Using \eqref{eq:decompty0}, also $$\frac{\partial^2 f}{\partial x\partial y}(x_0,y_0):T_{x_0}X\times T_{y_0}Y\la\R$$ decomposes as the sum of $\frac{\partial^2 f_n}{\partial x\partial y}(x_0,y_0)$ an the null functional of $T_{x_0}X\oplus\mathcal D_{y_0}$. Hence, $\frac{\partial^2f_n}{\partial x\partial y}(x_0,y_0)(v,w)\neq0$, proving that condition (ii) also holds.

Therefore we may apply the Abstract Genericity Criterion~\ref{crit:abstractgenericity} to each $f_n:\mathcal U_n\to\R$, obtaining genericity of $$\mathcal G_n=\big\{x\in X:f_n(x,\cdot\,)\mbox{ is Morse}\big\}$$ in $X$, for all $n\in\N$. Proposition~\ref{prop:morsegmorse} gives that\footnote{Here we use property (i) of the generalized slice, see Definition~\ref{def:genslice}.} $f$ is $G$--Morse if and only if $f_n$ is Morse for all $n\in\N$, hence $\mathcal G=\bigcap_{n\in\N}\mathcal G_n$. Since this is the intersection of a countable family of generic subsets of $X$, Lemma~\ref{le:gencountintersect} gives that $\mathcal G$ is generic in $X$, concluding the proof.
\end{proof}

Analogously to the Abstract Genericity Criterion~\ref{crit:weakabstractgenericity}, we now give a second equivariant criterion, with slightly weaker hypotheses but with the same setting as the Equivariant Genericity Criterion~\ref{crit:equivariantgenericity}. Namely, we consider the same family of parameterized $G$--invariant variation problems $f:\mathcal U\subset X\times Y\to\R$ however we allow condition (eq-ii) to be verified only in a distinguished subset $\mathfrak C$ of critical points of $f_x$. The conclusion will then be that the parameters $x\in X$ for which these distinguished critical points of $f_x$ are $G$--nondegenerate is generic in $X$.

\begin{eqgen}\label{crit:weakequivariantgenericity}\index{Equivariant Genericity Criterion}\index{Abstract Genericity Criterion}
Consider $X$ a separable Banach manifold, $Y$ a separable Hilbert manifold, $\mathcal{U}\subset X\times Y$ an open subset and $G$ a finite--dimensional Lie group with an action $\mu:G\times Y\to Y$ by diffeomorphisms. Suppose that $\mathcal U$ is $G$--invariant and that $f$ is $G$--invariant in the second variable. Suppose also the existence of submanifolds $Y_2\subset Y_1\subset Y$ such that all critical points of $f_x=f(x,\cdot\,)$ are contained in $Y_2$ for all $x\in\Pi(U)$, and for all $y\in Y_2$, the subspace $\mathcal D_y$ of $T_yY_1$ is well--defined.\footnote{See Remark~\ref{re:weakreg} and \eqref{eq:dgammaspangamma}.} Assume the existence of a generalized slice $(\mathfrak U,\{S_n\}_{n\in\N})$ for the action of $G$ on $Y$ with respect to $f_x$, for all $x\in\Pi(U)$. Consider $\mathfrak{M}=\left\{(x,y)\in\mathcal U:\frac{\partial f}{\partial y}(x,y)=0\right\}$, and $\mathfrak C$ a subset of distinguished pairs $(x,y)\in\mathfrak M$. Suppose the following conditions hold:
\begin{itemize}
\item[(eq-i)] for every $(x_0,y_0)\in\mathfrak M$, the Hessian $$\frac{\partial^2 f}{\partial y^2}(x_0,y_0):T_{y_0}Y\la T_{y_0}Y^*\cong T_{y_0}Y$$ is a (self--adjoint) Fredholm operator;
\item[(eq-ii)] for every $(x_0,y_0)\in\mathfrak C$, for all $w\in\ker\left[\frac{\partial^2 f}{\partial y^2}(x_0,y_0)\right]\setminus\mathcal D_{y_0}$, there exists $v\in T_{x_0}X$ such that $$\frac{\partial^2 f}{\partial x \partial y}(x_0,y_0)(v,w)\neq 0.$$
\end{itemize}
For each $x\in X$, let $\mathcal{U}_x=\{y\in Y:(x,y)\in\mathcal{U}\}$, $\mathfrak C_x=\{y\in Y:(x,y)\in\mathfrak C\}$ and $f_x(y)=f(x,y)$ for all $y\in\mathcal U_x$. Then the following is a generic subset of $X$,
\begin{equation*}
\mathcal G_{\mathfrak C}=\{x\in X: \mbox{ all } y\in\mathfrak C_x\mbox{ are }G\mbox{--nondegenerate for } f_x:\mathcal{U}_x\to\R\}.
\end{equation*}
\end{eqgen}

\begin{proof}
Once more, let $$\mathcal U_n=\{(x,y)\in X\times S_n:\{x\}\times G(y)\cap\mathcal U\neq\emptyset\},$$ and consider $\widetilde f_n:\mathcal U_n\to\R$ the restrictions $f|_{\mathcal U_n}$. From Lemma~\ref{le:transvopen}, applied to each $\frac{\partial\widetilde f_n}{\partial y}:\mathcal U_n\to TY^*$, the following is an open subset of $\mathfrak M$,
\begin{equation*}
\mathfrak A_n=\left\{(x,y)\in\mathcal U_n\cap\mathfrak M:\frac{\partial\widetilde f_n}{\partial y}:\mathcal U_n\to TY^* \mbox{ is transverse to }\mathbf 0_{TY^*} \mbox{ at } (x,y)\right\}.
\end{equation*}
Thus, there exists $\mathcal V_n$ an open subset in $\mathcal U_n$, such that $\mathcal V_n\cap\mathfrak M=\mathfrak A_n$. Consider $f_n:\mathcal V_n\to\R$ the restrictions $\widetilde f_n|_{\mathcal V_n}$.

Let us verify that each $f_n:\mathcal V_n\to\R$ satisfies the hypotheses of the Abstract Genericity Criterion~\ref{crit:abstractgenericity}. Given $(x_0,y_0)\in\mathcal V_n$ such that $\frac{\partial f_n}{\partial y}(x_0,y_0)=0$ 
the same decomposition \eqref{eq:decompty0},
\begin{equation*}
T_{y_0}Y=T_{y_0}S_n\oplus\mathcal D_{y_0}
\end{equation*}
induces a decomposition of operators defined in $T_{y_0}Y$. Thus, there is a decomposition of $\frac{\partial f}{\partial y}(x_0,y_0):T_{y_0}Y\to\R$ as the direct sum of $\frac{\partial f_n}{\partial y}(x_0,y_0):T_{y_0}S_n\to\R$ and the null functional\footnote{Recall that, since $f$ is $G$--invariant in the second variable, as observed in Lemma~\ref{le:invkernel} the subspace $\mathcal D_{y_0}$ is contained in the kernel of $\frac{\partial f}{\partial y}(x_0,y_0)$, and hence also in the kernel of $\frac{\partial^2f}{\partial x\partial y}(x_0,y_0)$.} of $\mathcal D_{y_0}$. Hence, if $\frac{\partial f}{\partial y}(x_0,y_0)=0$, it follows that also $\frac{\partial f_n}{\partial y}(x_0,y_0)=0$.

From (eq-i), the Hessian $\frac{\partial^2f}{\partial y^2}(x_0,y_0)$ is a Fredholm operator. Using \eqref{eq:decompty0}, it decomposes as \eqref{eq:hessfn},
\begin{equation*}
\frac{\partial^2 f_n}{\partial y^2}(x_0,y_0):T_{y_0}S_n\la T_{y_0}S_n^*\cong T_{y_0}S_n
\end{equation*}
and the null operator of $\mathcal D_{y_0}$. Thus, $\frac{\partial^2 f_n}{\partial y^2}(x_0,y_0)$ is given by the restriction of a Fredholm operator to a finite codimensional space, which is hence Fredholm. Therefore condition (i) of the Abstract Genericity Criterion~\ref{crit:abstractgenericity} holds.

As for condition (ii), from the above decomposition of $\frac{\partial^2f}{\partial y^2}(x_0,y_0)$, if $w\in\ker\left[\frac{\partial^2 f_n}{\partial y^2}(x_0,y_0)\right]\setminus\{0\}$, then\footnote{Here we use property (ii) of the generalized slice, see Definition~\ref{def:genslice}.} $w\in\ker\left[\frac{\partial^2 f}{\partial y^2}(x_0,y_0)\right]\setminus\mathcal D_{y_0}$. Since $(x_0,y_0)\in\mathfrak A_n$ it follows that there exists $v\in T_{x_0}X$ such that $\frac{\partial^2 f_n}{\partial x\partial y}(x_0,y_0)(v,w)\neq 0$. Using again \eqref{eq:decompty0}, also $$\frac{\partial^2 f}{\partial x\partial y}(x_0,y_0):T_{x_0}X\times T_{y_0}Y\la\R$$ decomposes as the sum of $\frac{\partial^2 f_n}{\partial x\partial y}(x_0,y_0)$ an the null functional of $T_{x_0}X\oplus\mathcal D_{y_0}$. Hence, $\frac{\partial^2f_n}{\partial x\partial y}(x_0,y_0)(v,w)\neq0$, proving that condition (ii) also holds.

Therefore we may apply the Abstract Genericity Criterion~\ref{crit:abstractgenericity} to each $f_n:\mathcal V_n\to\R$, obtaining genericity of $$\mathcal G_n=\big\{x\in X:f_n(x,\cdot\,)\mbox{ is Morse}\big\}$$ in $X$, for all $n\in\N$. Consider the intersection $\mathcal G=\bigcap_{n\in\N}\mathcal G_n$. Since this is the intersection of a countable family of generic subsets of $X$, Lemma~\ref{le:gencountintersect} gives that $\mathcal G$ is generic in $X$. Proposition~\ref{prop:morsegmorse} gives that\footnote{Here we use property (i) of the generalized slice, see Definition~\ref{def:genslice}.} $f$ is $G$--Morse if and only if $f_n$ is Morse for all $n\in\N$, hence $\mathcal G$ is the set of parameters $x\in X$ such that $f_x$ is $G$--Morse. The set $\mathcal G_{\mathfrak C}$ clearly contains $\mathcal G$ and hence, from Remark~\ref{re:stupidremark}, $\mathcal G_{\mathfrak C}$ is generic in $X$.
\end{proof}

\chapter{Periodic geodesics and the Bumpy Metric Theorem}
\label{chap5}

In this chapter we prove the first genericity result of the text, a non compact semi--Riemannian version of the Bumpy Metric Theorem. The celebrated Bumpy Metric Theorem is one of the central results in the theory of generic properties of geodesic flows, and several applications and generalizations are present in the literature. 

Although the Bumpy Metric Theorem~\ref{thm:bumpy} is a result of independent interest, it is also employed in the proofs of other results in the next chapter, as Theorem~\ref{thm:bigone}, that gives new generic properties regarding nondegeneracy of semi--Riemannian geodesics with general endpoints conditions. In fact, the main reason for the importance of the classic Bumpy Metric Theorem is that it is keystone for several other genericity results of geodesic flows, as for instance the ones established in \cite{ballmann,contpater,klitak}.

Let us begin by recalling the definition of {\em bumpy} metric, and providing a variational characterization of this property.

\begin{definition}\label{def:bumpy}
A semi--Riemannian metric $g\in\met_\nu^k(M)$ is {\em bumpy}\index{Bumpy metric}\index{Metric!bumpy} if for every periodic $g$--geodesic $\gamma:S^1\to M$, the only nontrivial periodic Jacobi fields along $\gamma$ are constant multiples of $\dot\gamma$.
\end{definition}

Clearly, there is a variational characterization of this fact, using the $G$--invariant notion of degeneracy introduced in Chapter~\ref{chap35}, see Definition~\ref{def:gmorse}. In this context, the action involved is the reparameterization action\footnote{See Example~\ref{ex:s1h1}.} of $S^1$ on $H^1(S^1,M)$, which leaves invariant\footnote{See Example~\ref{ex:mus1h1inv}.} the (second variable of the) generalized energy functional for periodic curves \eqref{eq:efunctper},
\begin{equation*}
E:\mathcal A_{g_\mathrm A,\nu}\times H^1(S^1,M)\ni (g,\gamma)\longmapsto\tfrac12\int_{S^1} g(\dot\gamma,\dot\gamma)\;\dd z\in\R,
\end{equation*}
where $\mathcal A_{g_\mathrm A,\nu}$ is given by \eqref{eq:agnu}. Recall this is an open subset of an affine separable Banach space formed by semi--Riemannian metrics, that depends on the choice of an auxiliary semi--Riemannian metric $g_\mathrm A$ of index $\nu$, see Proposition~\ref{prop:affineworks}. 

Analogously to what was discussed in Section~\ref{sec:genenfunc}, more precisely in Proposition~\ref{prop:fck}, this is a $C^k$ functional. In addition, the first variable should be thought of as a parameter, and we will frequently denote $E_g(\gamma)=E(g,\gamma)$. If $\frac{\partial E}{\partial\gamma}(g_0,\gamma_0)=0$, then $\gamma_0$ is a periodic $g_0$--geodesic, see Proposition~\ref{prop:critgenenfunc}. This setup allows to give the following obvious characterization of bumpy metrics in $\mathcal A_{g_\mathrm A,\nu}$.

\begin{lemma}\label{le:bumpychar}
A metric $g\in\A_{g_\mathrm A,\nu}$ is bumpy if and only if $$E_g:H^1(S^1,M)\la\R$$ is a $S^1$--Morse functional.
\end{lemma}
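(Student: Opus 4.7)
The plan is to show that both the bumpy condition and the $S^1$-Morse property reduce to the same statement about the kernel of the Hessian of $E_g$ at each of its critical points.

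First, I would identify the critical points of $E_g:H^1(S^1,M)\to\mathbb{R}$: by the version of Proposition~\ref{prop:critgenenfunc} for periodic curves, summarized in Example~\ref{ex:mus1h1inv}, these are precisely the periodic $g$-geodesics. Second, I would describe the kernel of $\hess(E_g)(\gamma_0)$ at such a critical point $\gamma_0$. Identifying $H^1(S^1,M)$ with $\Omega_\Delta(M)$ and applying Proposition~\ref{prop:pjacobi} in the special case $\p=\Delta$ (worked out in Example~\ref{ex:jac}), this kernel consists exactly of the periodic $g$-Jacobi fields along $\gamma_0$.

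Third, I would recall that for the reparameterization action of $S^1$ on $H^1(S^1,M)$, the subspace $\mathcal{D}_{\gamma_0}$ associated to $\gamma_0$ is the one-dimensional space spanned by $\dot\gamma_0$, by \eqref{eq:dgammaspangamma} in Remark~\ref{re:weakreg}. Since $E_g$ is $S^1$-invariant (Example~\ref{ex:mus1h1inv}), Lemma~\ref{le:invkernel} gives $\mathcal{D}_{\gamma_0}\subset\ker\hess(E_g)(\gamma_0)$ automatically.

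Fourth, using the analogue of Proposition~\ref{prop:fredholmness} stated in Example~\ref{ex:mus1h1inv}, the operator representing $\hess(E_g)(\gamma_0)$ via \eqref{ident:bilin} is self-adjoint and Fredholm, hence of index zero by Lemma~\ref{le:selfadjointzero}. The restriction of this operator to any closed complement of $\mathcal{D}_{\gamma_0}$, which is automatically finite-codimensional and therefore complemented by Lemma~\ref{le:finitecomplemented}, is again a self-adjoint Fredholm operator of index zero, so by Lemma~\ref{le:fred0} it is an isomorphism if and only if it is injective. Thus $\gamma_0$ is $S^1$-nondegenerate in the sense of Definition~\ref{def:gmorse} if and only if $\ker\hess(E_g)(\gamma_0)=\mathcal{D}_{\gamma_0}=\operatorname{span}\dot\gamma_0$.

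Combining these steps, $E_g$ is $S^1$-Morse if and only if for every periodic $g$-geodesic $\gamma_0$ the only periodic $g$-Jacobi fields along $\gamma_0$ are constant multiples of $\dot\gamma_0$, which is exactly the bumpy condition of Definition~\ref{def:bumpy}. There is no serious obstacle; the main point requiring care is the index-zero argument that turns the nondegeneracy condition into an isomorphism condition on a complement of $\mathcal{D}_{\gamma_0}$, together with verifying that constant geodesics (if admitted) are handled consistently within the chosen conventions.
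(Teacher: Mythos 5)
Your proof is correct and supplies precisely the reasoning that the paper leaves implicit (the lemma is stated without proof, labelled an ``obvious characterization''), assembled from the same ingredients the text prepares: Proposition~\ref{prop:critgenenfunc} for critical points, Proposition~\ref{prop:pjacobi}/Example~\ref{ex:jac} with $\p=\Delta$ for the kernel of the Hessian, \eqref{eq:dgammaspangamma} for $\mathcal D_{\gamma_0}=\operatorname{span}\dot\gamma_0$, and Proposition~\ref{prop:fredholmness} together with Lemmas~\ref{le:selfadjointzero} and~\ref{le:fred0} to upgrade ``nondegenerate'' to ``isomorphism on a complement.'' One tiny citation slip worth noting: Lemma~\ref{le:invkernel} literally gives $\mathcal D_{\gamma_0}\subset\ker\dd E_g(\gamma_0)$, not $\ker\hess(E_g)(\gamma_0)$; the Hessian inclusion you actually need follows either from Lemma~\ref{le:critorb} (the full orbit is critical, so $\dd E_g$ vanishes along $\mathcal D_{\gamma_0}$ to second order) or most directly from Example~\ref{ex:jac}, which records that $\dot\gamma_0$ is a periodic ($\Delta$--)Jacobi field.
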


The Bumpy Metric Theorem states that bumpy metrics on $M$ form a \emph{generic} subset of $\A_{g_\mathrm A,\nu}$. In view of Lemma~\ref{le:bumpychar}, this is equivalent to the functional $E_g$ being $S^1$--Morse for a generic parameter $g\in\A_{g_\mathrm A,\nu}$. Before getting to some technical lemmas necessary for the proof, let us give a brief overview of the history of this theorem.

The Riemannian version of the Bumpy Metric Theorem is attributed to Abraham \cite{abraham} in 1970, who was the first to formulate its statement and to use the term {\em bumpy}. In fact, the result was announced in 1968 by Abraham, at a conference on global analysis at Berkeley. It seems to be among the first of a numerous sequence of theorems on the {\em generic} behavior of dynamical systems, particularly geodesic flows. The main motivation of Abraham to introduce the concept of bumpy metrics is due to the classic conjecture that every compact Riemannian manifold oughts to admit infinitely many geometrically distinct periodic geodesics. It is claimed in \cite{abraham} that for metrics with non discrete isometry groups, this result is {\em obvious}, and hence the interest in the {\em generic} case of minimal symmetry. In addition, it is conjectured that every bumpy metric on a compact manifold admits infinitely many distinct periodic geodesics. This conjecture on bumpy metrics was proved to hold by Rademacher \cite{rademacher} in 1989, hence the classic conjecture {\em generically} holds.

Strangely enough, the first complete proof of the Bumpy Metric Theorem is due to Anosov \cite{anosov} in 1982, more than ten years after Abraham's paper and originally published in Russian. Anosov \cite{anosov} gives concrete examples of why some attempted proofs of the Bumpy Metric Theorem by Klingenberg \cite{klitak} in 1972 are incorrect. In the mean time, several authors began to use the Bumpy Metric Theorem to establish other genericity results for geodesic flows and more general dynamical systems, among which Klingenberg \cite{kli,klitak} himself. As pointed out by Anosov, the main problem with Klingenberg's proof in \cite{klitak} is that it employs a perturbation argument along a specific degenerate periodic geodesic, making it nondegenerate. However, it relinquishes the effect that this metric change might have, possibly causing other periodic geodesics to degenerate. Among more recent significative extensions of the Bumpy Metric Theorem, we highlight the results of Gon\c{c}alves Miranda \cite{GonMir} on genericity of periodic trajectories in the context of magnetic flows on a surface, which allows to establish an extension of the Kupka--Smale Theorem.

Despite giving a quite cumbersome proof with fairly involved technical arguments, Anosov \cite{anosov} claims that there might have been other correct proofs before that date, however so cumbersome that were not published. Anosov's proof and subsequent applications employ arguments dating from the time of Poincar\'e, Birkhof and Toponogov. However, in the eighties, there already were more modern tools to approach this type of problem, for instance the ones developed in the so--called Ljusternik--Schnirelmann theory. In this sense, Anosov \cite{anosov} claims having worked on a proof using {\em tubular neighborhoods}, inspired by Peixoto \cite{peixoto}, but that revealed being much more cumbersome that the classic approach adopted.

In the last years, the totally unexplored semi--Riemannian version of several important generic properties of geodesic flows came to the attention of Biliotti, Javaloyes and Piccione \cite{biljavapic}. The motivation to explore generic properties of this type of dynamical systems come mainly from Lorentzian geometry and its implications on general relativity, but also from general theory of semi--Riemannian manifolds and Morse theory, due to recent results of Abbondandolo and Majer \cite{AbbMej2,AbbMaj,AbbMaj2}.

This inaugurating article of Biliotti, Javaloyes and Piccione \cite{biljavapic} in 2009 paved the way to several further investigations of genericity of nondegeneracy in semi--Riemannian geodesic flows. We stress that, unlike the Riemannian case, for non necessarily positive--definite metrics there is a significative qualitative change of the structure of the geodesic flow when passing from negative to positive values of energy. At that point, the most natural candidate to be extended to this semi--Riemannian realm was the Bumpy Metric Theorem, since it remains a central result in the area and proved to have numerous applications. Nevertheless, the transversality techniques used in \cite{biljavapic} alone proved not enough to give a complete answer to this problem, due to possible presence of {\em strongly degenerate}\footnote{See Section~\ref{sec:strongdeg}.} periodic geodesics. This issue was solved in the recent preprint \cite{biljavapic2} of the same authors in 2010, where a complete proof of the semi--Riemannian Bumpy Metric Theorem is given, using a combination of the original approach of Anosov \cite{anosov} and transversality techniques of \cite{biljavapic}.

Among several usages of this semi--Riemannian Bumpy Metric Theorem of \cite{biljavapic2}, we mention the genericity results of nondegeneracy of semi--Riemannian geodesics under general endpoints conditions, in the recent paper of Bettiol and Giamb\`o \cite{metmna}. To carry out the main applications, a subtle refinement of the semi--Riemannian Bumpy Metric Theorem of \cite{biljavapic2} is needed. Namely, a {\em non compact} reformulation is required. Up to date, all versions of the Bumpy Metric Theorem in \cite{abraham,anosov,biljavapic2,klitak} were stated for {\em compact} manifolds. Our proof of the Bumpy Metric Theorem~\ref{thm:bumpy} for non necessarily compact semi--Riemannian manifolds details the tools needed for this refinement, which turns out to follow almost immediately from the preceding versions of the Bumpy Metric Theorem. We also stress the importance of removing any compactness assumptions in the semi--Riemannian version of this result, in face of topological obstructions to the existence of metrics of given index in {\em compact} manifolds, as studied in Section~\ref{sec:topobst}. For instance, from Propositions~\ref{prop:noncompactlorentz} and~\ref{prop:existlorentzian}, every non compact manifold admits a Lorentzian metric, while for compact manifolds this only holds under the additional hypothesis that the Euler class vanishes.

A natural challenge to extend any such genericity results to the non compact case is that there is no canonical separable Banach space structure on the space of semi--Riemannian metrics on a non compact manifold. Thus, we use the tools developed in Chapter~\ref{chap3} regarding this structure on {\em smaller} subsets of metrics of the form \eqref{eq:agnu}, namely metrics that are asymptotically equal to some fixed auxiliary metric $g_\mathrm A$ of the same index $\nu$, see Proposition~\ref{prop:affineworks}. Genericity results are then formulated relatively to such open subsets, that have all necessary structures to carry out the analysis in the sequel.

Let us give an idea of the possible approaches to the bumpy problem. There are essentially two ways of characterizing nondegeneracy of periodic geodesics, corresponding to the \emph{dynamical} and the \emph{variational} approaches. On the one hand, the dynamical approach consists in studying periodic geodesics as fixed points for the {\em Poincar\'e map}, or {\em first recurrence map}. On the other hand, the variational approach consists in studying geodesics as critical points of the energy functional defined in the free loop space.

Denote by $T^1M$ the unit tangent bundle of $M$ relatively to an auxiliary Riemannian metric $g_\mathrm R$, see Definition~\ref{def:unittangentbundle}. For each $v\in T^1M$, let $\gamma_v:\left[0,+\infty\right[\to M$ be the unique $g$--geodesic with $\dot\gamma(0)=v$. From the dynamical viewpoint, nondegeneracy of a periodic geodesic $\gamma_0:\left[0,+\infty\right[\to M$ of period $\omega$ means that the map $$\R_+\times T^1M\ni(t,v)\longmapsto \big(v,\dot\gamma_v(t)\big)\in T^1M\times T^1M$$ is transverse to the diagonal of $T^1M\times T^1M$ at $(\omega,v_0)$, where $v_0=\dot{\gamma_0}(0)$. The proof of the Riemannian Bumpy Metric Theorem by Anosov \cite{anosov} uses this approach, and it employs the transversality theorem.

The dynamical approach does not work well when considering semi--Riemannian metrics, starting from the observation that even the notion of unit tangent bundle itself is not very meaningful in semi--Riemannian geometry. Distinguishing \emph{causal} notions of unit tangent bundles, i.e., timelike, lightlike and spacelike, is also not very meaningful when dealing with families of metrics.

The proof of the semi--Riemannian Bumpy Metric Theorem by Biliotti, Javaloyes and Piccione \cite{biljavapic2} uses a variational approach. From this viewpoint, nondegeneracy for a periodic geodesic $\gamma$ means that $\gamma$ is a nondegenerate critical point of the energy functional \eqref{eq:efunctper}, in the invariant sense of Definition~\ref{def:gmorse}. More precisely, nondegeneracy means that the kernel of its index form \eqref{eq:indexform} is one--dimensional, consisting only of constant multiples of the tangent field $\dot\gamma$, i.e., the distribution $\mathcal D_\gamma$ of Definition~\ref{def:dy}. In order to deal with the invariance of the energy functional under the action of $S^1$, see Example~\ref{ex:mus1h1inv}, the Equivariant Genericity Criterion~\ref{crit:equivariantgenericity} is used together with the construction of a generalized slice for the action of $S^1$ on $H^1(S^1,M)$ given by Proposition~\ref{prop:existslice}. Recall that this only holds in the weak regularity context described in Remark~\ref{re:weakreg}.

Using such techniques, in Section~\ref{sec:weakbumpy} we prove a Weak Bumpy Metric Theorem~\ref{thm:weakbumpy}, that guarantees nondegeneracy only of prime geodesics, see Definition~\ref{def:prime}. Genericity of nondegeneracy of iterates does not follow from this equivariant variational setup due to a subtle technical problem, that will be discussed in Section~\ref{sec:strongdeg}. In order to deal with iterates, we follow the ingenious idea of Anosov \cite{anosov}, also used by Biliotti, Javaloyes and Piccione \cite{biljavapic2} with suitable modifications that make it work in the non compact semi--Riemannian case. For this, in Section~\ref{sec:iterates} we introduce families of metrics $\M_K(a,b)$ parameterized by two positive real numbers $a,b\in\R$ that correspond to the \emph{period} and to the \emph{minimal period} of periodic geodesics, and a compact subset $K$ of $M$. For the semi--Riemannian extension, the notion of period (which is meaningless in the case of lightlike geodesics\footnote{Recall Definition~\ref{def:causalchar}.}) is replaced by notions of \emph{energy} relatively to an auxiliary Riemannian metric $g_\mathrm R$. In the final Section~\ref{sec:bumpy}, we prove that the set of metrics that are bumpy for geodesics in $K$ corresponds to the countable intersection $\bigcap_{n\ge1}\M_K(n,n)$, and a proof of its genericity is obtained by showing that each $\M_K(a,b)$ is open and dense in the set of metrics. Finally, to conclude the Bumpy Metric Theorem~\ref{thm:bumpy}, we use a simple exhaustion by compacts argument.

\section{Weak Bumpy Metric Theorem}
\label{sec:weakbumpy}

As mentioned above, prime and iterate geodesics will be treated separately. In this section, we prove a weak version of the Bumpy Metric Theorem~\ref{thm:bumpy}, regarding prime geodesics. This result is a subtle generalization of \cite[Proposition 3.4]{biljavapic2}, in that it does not require compactness of the manifold. Finally, it will be later used to establish genericity of all periodic geodesics, in Section~\ref{sec:bumpy}.

\begin{weakbumpythm}\label{thm:weakbumpy}\index{Theorem!Weak Bumpy Metric}\index{Bumpy Metric Theorem!Weak}
Let $M$ be a smooth $m$--dimensional manifold and fix $\mathds{E}$ a separable $C^k$ Whitney type Banach space of sections of $TM^*\vee TM^*$ that tend to zero at infinity, with $k\geq 3$. Fix $\nu\in\{0,\dots,m\}$ an index and let $g_\mathrm A\in\met_\nu^k(M)$ be such that $$\sup_{x\in M}\|g_\mathrm A(x)^{-1}\|_\mathrm R<+\infty.$$ Then the following is a generic subset of $\A_{g_\mathrm A,\nu}$\footnote{Recall Proposition~\ref{prop:affineworks}.}
$$\mathcal{G}_*(M)=\left\{g\in\A_{g_\mathrm A,\nu}:\text{ all {\em prime }} g\text{--geodesics are } S^1\mbox{--nondegenerate}\right\}.$$
\end{weakbumpythm}

\begin{proof}
The proof is in great part adapted from the proof of \cite[Proposition 3.4]{biljavapic2}. More precisely, we will apply the Equivariant Genericity Criterion~\ref{crit:weakequivariantgenericity} to the generalized energy functional for periodic curves \eqref{eq:efunctper}, $$E:\A_{g_\mathrm A,\nu}\times H^1(S^1,M)\ni (g,\gamma)\longmapsto E_g(\gamma)=\tfrac12\int_{S^1} g(\dot\gamma,\dot\gamma)\;\dd z\in\R,$$ which is invariant under the reparameterization action of $S^1$ on $H^1(S^1,M)$, see Example~\ref{ex:mus1h1inv}. Let $\mathcal U=\A_{g_\mathrm A,\nu}\times H^1(S^1,M)$. Recall that this is a $C^k$ functional and if a pair $(g_0,\gamma_0)\in\mathcal U$ satisfies $\frac{\partial E}{\partial\gamma}(g_0,\gamma_0)=0$, then $\gamma_0$ is a periodic $g_0$--geodesic, see Proposition~\ref{prop:critgenenfunc}. Consider also the set of distinguished critical points to be the set of prime geodesics, $$\mathfrak C=\left\{(g_0,\gamma_0)\in\mathcal U:\gamma_0\in H_*^1(S^1,M)\mbox{ and }\frac{\partial E}{\partial\gamma}(g_0,\gamma_0)=0\right\}.$$ From Lemma~\ref{le:mus1h1diffeos}, Propositions~\ref{prop:affineworks} and~\ref{prop:existslice} and Corollary~\ref{cor:h1s1}, the above context satisfies the hypotheses of the Equivariant Genericity Criterion~\ref{crit:weakequivariantgenericity}.

Let us verify that conditions (eq-i) and (eq-ii) hold. Condition (eq-i) follows from Proposition~\ref{prop:fredholmness} setting $\p=\Delta$. To verify condition (eq-ii), we use a local perturbation argument. This condition asserts that given $(g_0,\gamma_0)\in\mathfrak C$, for all $J\in\ker\left[\frac{\partial^2 E}{\partial\gamma^2}(g_0,\gamma_0)\right]\setminus\operatorname{span}\,\dot{\gamma_0}$, there must exist $h\in T_{g_0}\A_{g_\mathrm A,\nu}$ such that the mixed derivative \eqref{eq:mixedderivative}, given by $$\frac{\partial^2 E}{\partial g\partial\gamma}(g_0,\gamma_0)(h,J)=\int_{S^1} h(\dot{\gamma_0},\D J)+\tfrac{1}{2}\nabla h(J,\dot{\gamma_0},\dot{\gamma_0})\;\dd z$$ does not vanish. Notice that since $\gamma_0$ is prime, it has only a finite number of self intersections, see Proposition~\ref{prop:selfintersections}. From Lemma~\ref{le:parallelfinite}, since $J$ is not a multiple of $\dot{\gamma_0}$, the set of $z\in S^1$ such that $J$ is parallel to $\dot{\gamma_0}$ is finite. Thus, there exists an open nonempty connected subset $I\subset S^1$ such that
\begin{itemize}
\item[$I$--1:] $\gamma_0(I)\cap\gamma_0(S^1\setminus I)=\emptyset$;
\item[$I$--2:] $J$ is not parallel to $\dot{\gamma_0}$ at any time in $I$.
\end{itemize}

In order to construct the required $h\in\mathds{E}$ such that $\frac{\partial^2 E}{\partial g\partial\gamma}(g_0,\gamma_0)(h,J)\ne 0$, we apply Lemma~\ref{le:extension} to the vector bundle $TM^*\vee TM^*$. Let $U\subset M$ be any open subset containing $\gamma_0(I)$ such that
\begin{itemize}
\item[$U$:] $\gamma_0(t)\in U$ if and only if $t\in I$.
\end{itemize}
For instance, $U$ can be taken as the complement of $\gamma_0(S^1\setminus I)$. Let $H\in\sect^k(\gamma_0^*(TM^*\vee TM^*))$ be the identically null section and choose any $K\in\sect^{k}(\gamma_0^*(TM^*\vee TM^*))$ that satisfies $$K(\dot{\gamma_0},\dot{\gamma_0})\geq 0\;\;\mbox{ and }\;\;\int_{S^1} K(z)(\dot{\gamma_0}(z),\dot{\gamma_0}(z))\;\dd z>0,$$ for instance, $K(z)=g_\mathrm R(\gamma_0(z))$. Reducing the size of $I$ if necessary, we may assume that the result of Lemma~\ref{le:extension} holds. This gives a globally defined section $h\in\sect^k(TM^*\vee TM^*)$ with compact support contained in $U$ such that
\begin{equation*}
h(\gamma_0(z))=0\;\;\mbox{ and }\;\;\nabla_{J(z)} h=K(z), \quad\mbox{ for all } z\in I.
\end{equation*}
Clearly $h\in\mathds E$, since all $C^k$ sections of $E$ with compact support are in $\mathds E$. Finally, from the above construction,
\begin{eqnarray*}
\frac{\partial^2 E}{\partial g\partial\gamma}(g_0,\gamma_0)(h,J) &=& \int_{S^1} h(\dot{\gamma_0},\D J)+\tfrac{1}{2}\nabla h(J,\dot{\gamma_0},\dot{\gamma_0})\;\dd z\\
&=&\tfrac{1}{2}\int_{S^1} K(z)(\dot{\gamma_0}(z),\dot{\gamma_0}(z))\;\dd z\\
&>&0.
\end{eqnarray*}
Therefore, condition (eq-ii) holds.

The Equivariant Genericity Criterion~\ref{crit:weakequivariantgenericity} then gives genericity of the set $\mathcal G_*(M)$ of $g\in \A_{g_\mathrm A,\nu}$ such that all prime $g$--geodesics are $S^1$--nondegenerate, concluding the proof.
\end{proof}

\section{Strongly degenerate geodesics}
\label{sec:strongdeg}

In this section, we briefly explain the reason why the above local perturbation argument employed to verify condition (ii) of the Equivariant Genericity Criterion~\ref{crit:weakequivariantgenericity} fails in the case of iterate geodesics. Namely, this is due to possible existence of a particularly degenerate class of periodic geodesics, called {\em strongly degenerate} geodesics. Such geodesics will also play a special role in Chapter~\ref{chap6} when dealing with GECs that admit periodic geodesics.

\begin{definition}\label{def:defstdeg}
Let $\gamma:[0,1]\rightarrow M$ be a $g$--geodesic. Then $\gamma$ is said to be \emph{strongly degenerate}\index{Geodesic!strongly degenerate}\index{Strongly degenerate geodesic} if there exists an integer $k\ge2$ such that:
\begin{itemize}
\item[(a)] $\gamma\left(t+\tfrac{i}{k}\right)=\gamma(t)$, for all $i\in\{0,\dots,k-1\}$ and $t\in \left[0,\tfrac{1}{k}\right[$;
\item[(b)] $\gamma$ admits a Jacobi field $J\ne0$, such that $\sum\limits_{i=0}^{k-1} J\left(t+\tfrac{i}{k}\right)=0$, for all $t\in \left[0,\tfrac{1}{k}\right[$.
\end{itemize}
\end{definition}

\begin{figure}[htf]
\vspace{-0.5cm}
\begin{center}
\includegraphics[scale=0.5]{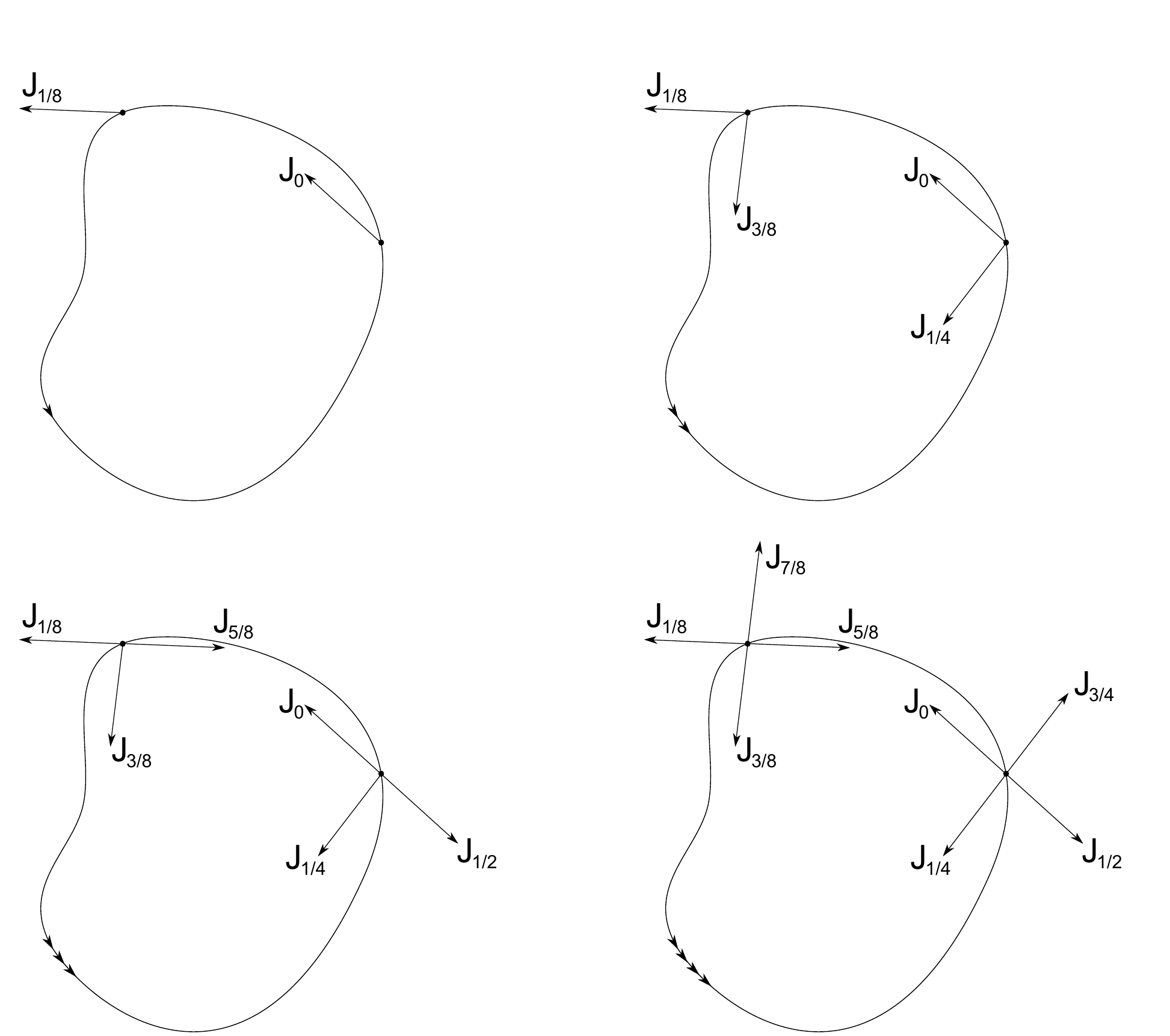}
\begin{pgfpicture}
\end{pgfpicture}
\end{center}
\caption{Example of a strongly degenerate geodesic with $k=4$, and a Jacobi field $J$ satisfying (b), illustrated at $t=0$, $t=\tfrac18$ and iterates.}\label{fig:stdeg}
\end{figure}

Observe that if $\gamma$ is strongly degenerate, then it is automatically a periodic geodesic\footnote{Recall that from item (a), $\gamma$ is a {\em geodesic loop}. In order to verify it is indeed a {\em periodic geodesic}, one has to check that $\dot\gamma$ is periodic, with the same period. This is easily done deriving the condition (a) at appropriate values of $t$.} with period $\omega=\tfrac{1}{k}$, for some $k\ge2$. More precisely, it is an iterate geodesic, see Definition~\ref{def:prime}. This allows to consider $\gamma$ defined in the domain $$S^1\cong\frac{[0,1]}{\big\{0,\tfrac1k,\ldots,\tfrac{k-1}{k},1\big\}},$$ nevertheless when dealing with strongly degenerate geodesics, we will prefer to adopt $[0,1]$ as its domain. Such convention will prove handy to deal with strongly degenerate $(g,\p)$--geodesics and at same time with non periodic $(g,\p)$--geodesics in Chapter~\ref{chap6}.

\begin{remark}\label{re:whenpertfails}
The transversality condition (ii) of the abstract genericity criteria of Chapter~\ref{chap4} trivially fails in the presence of a strongly degenerate geodesic. Indeed, this is the {\em only case} in which a local perturbation argument similar to the one used in the proof of the Weak Bumpy Metric Theorem~\ref{thm:weakbumpy} does not apply. Namely, if $\gamma_0$ is a strongly degenerate $g_0$--geodesic, then it admits a nontrivial Jacobi field $J$ which satisfies (b) of Definition~\ref{def:defstdeg}. For this $J$, the right--hand side of \eqref{eq:mixedderivative} is identically null {\em for any section} $h$ of $TM^*\vee TM^*$. Therefore, \eqref{eq:mixedderneq0} trivially fails.

This leads to the development of alternative methods to deal with the strongly degenerate geodesics, in order to prove the complete Bumpy Metric Theorem~\ref{thm:bumpy}. In Chapter~\ref{chap6}, it will be seen that such methods can be avoided in the case of $(g,\p)$--geodesics with a subtle trick employing the Bumpy Metric Theorem~\ref{thm:bumpy}. Moreover, it will also be proved in Theorem~\ref{thm:transvholds} that the local perturbation argument above can be adapted to non periodic geodesics, provided they are not parts of a strongly degenerate geodesic. This theorem will explore, in its full generality, the range of the local perturbation argument introduced in the above proof of the Weak Bumpy Metric Theorem~\ref{thm:weakbumpy}.
\end{remark}

We conclude this section with two final results on strongly degenerate geodesics.

\begin{proposition}\label{prop:stronglydegenerate}
Suppose $\gamma:[0,1]\to M$ is a strongly degenerate $g$--geodesic. Then $\gamma$ is an $S^1$--degenerate critical point of \eqref{eq:efunctper}, i.e., $\gamma$ admits a nontrivial periodic Jacobi field $J$ that is not a constant multiple of $\dot\gamma$.
\end{proposition}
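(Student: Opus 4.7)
The plan is to prove that the Jacobi field $J$ furnished by condition~(b) of Definition~\ref{def:defstdeg} is, after identifying $0\sim1$ in its domain, itself a nontrivial periodic Jacobi field along $\gamma:S^1\to M$ which is not a constant multiple of $\dot\gamma$; this will exhibit $\gamma$ as an $S^1$--degenerate critical point of the energy functional in the sense of Lemma~\ref{le:bumpychar} and Definition~\ref{def:gmorse}. Two things need to be checked: \emph{(i)} the periodicity relations $J(0)=J(1)$ and $\D^g J(0)=\D^g J(1)$, so that $J$ descends to a section of $\gamma^*TM$ over $S^1$; and \emph{(ii)} that $J$ is not of the form $c\,\dot\gamma$ for any $c\in\R$.

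For~(i), I would work directly with the defining identity
\begin{equation*}
\sum_{i=0}^{k-1} J\!\left(t+\tfrac{i}{k}\right)=0,\qquad t\in\bigl[0,\tfrac{1}{k}\bigr[,
\end{equation*}
which extends by continuity of $J$ (Corollary~\ref{cor:jacobick}) to $t=\tfrac{1}{k}$. Evaluating at $t=0$ gives $\sum_{i=0}^{k-1}J(i/k)=0$, while evaluating at $t=\tfrac{1}{k}$ gives $\sum_{i=1}^{k}J(i/k)=0$; subtracting yields $J(0)=J(1)$. Applying the covariant derivative $\D^g$ along $\gamma|_{[0,1/k]}$ to both sides of the identity above, and repeating the endpoint evaluation, produces the analogous relation $\D^g J(0)=\D^g J(1)$.

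For~(ii), I would exploit the observation (made in the paragraph following Definition~\ref{def:defstdeg}) that $\gamma$ is in fact a periodic $g$--geodesic of period $\tfrac{1}{k}$, so $\dot\gamma(t+i/k)=\dot\gamma(t)$ for every $i$ and $t$. If $J=c\,\dot\gamma$ for some $c\in\R$, condition~(b) becomes $kc\,\dot\gamma(t)=0$ identically, which forces $c=0$ and contradicts $J\ne0$. Combining~(i) and~(ii) gives the desired nontrivial periodic Jacobi field along $\gamma$ that is not proportional to $\dot\gamma$.

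The only step that demands genuine care, and hence the main obstacle, is the covariant differentiation inside~(i). One must justify that, applying $\D^g$ along $\gamma|_{[0,1/k]}$, the covariant derivative of the vector field $t\mapsto J(t+i/k)$ coincides with $(\D^g J)(t+i/k)$. Although all these vectors sit in the same fibre $T_{\gamma(t)}M$ by~(a), the latter expression is \emph{a priori} a covariant derivative computed along the arc $\gamma|_{[i/k,(i+1)/k]}$ rather than along $\gamma|_{[0,1/k]}$. That the two agree is a consequence of~(a) together with $\gamma$ being a geodesic: the two parametrised arcs $t\mapsto\gamma(t)$ and $t\mapsto\gamma(t+i/k)$ coincide as curves in $M$ with equal velocities, hence induce the same pulled-back connection on $\gamma^*TM$. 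Once this identification is made explicit, the whole argument reduces to the two short calculations above.
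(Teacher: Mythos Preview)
Your proposal is correct and follows essentially the same approach as the paper: use the Jacobi field $J$ from condition~(b), evaluate the summation identity at $t=0$ and $t=\tfrac1k$ and subtract to get $J(0)=J(1)$, do the analogous computation after covariant differentiation to get $\D^g J(0)=\D^g J(1)$, and then observe that periodicity of $\dot\gamma$ forces any constant multiple of $\dot\gamma$ satisfying~(b) to be trivial. The paper phrases the covariant derivative step via the auxiliary field $V(t)=\sum_i J(t+\tfrac ik)\equiv0$ (whence $\D^g V\equiv0$), and states the slightly stronger claim that $J$ is not everywhere parallel to $\dot\gamma$, but these are cosmetic differences; your careful discussion of why $\D^g_t\big(J(t+\tfrac ik)\big)=(\D^g J)(t+\tfrac ik)$ is in fact more explicit than the paper's treatment.
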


\begin{proof}
Take $J$ a Jacobi field as in (b). Then $J$ is not everywhere parallel to $\dot\gamma$, otherwise it would follow that $\dot\gamma=0$. Comparing condition (b) at $t=0$ and $t=\tfrac{1}{k}$, one obtains that
\begin{equation}\label{eq:sdeg1}
J(0)-J(1)=\sum_{i=0}^{k-1} J\left(\tfrac{i}{k}\right)-\sum_{i=0}^{k-1} J\left(\tfrac{1}{k}+\tfrac{i}{k}\right)=0.
\end{equation}
Moreover, from (b), $V(t)=\sum_{i=0}^{k-1} J\left(t+\tfrac{i}{k}\right)$ is the identically null vector field. Thus $\D V(t)=0$ for all $t\in [0,1]$. As a result, analogously to \eqref{eq:sdeg1},
\begin{equation}\label{eq:sdeg2}
\D J(0)-\D J(1)=\D V(0)-\D V(\tfrac{1}{k})=0.
\end{equation}
From \eqref{eq:sdeg1} and \eqref{eq:sdeg2}, it follows that $J$ is a periodic Jacobi field along $\gamma$ with respect to $g$. Thus, the same $J$ that degenerates $\gamma$ as a $g$--geodesic is also periodic and is not a constant multiple of $\dot\gamma$, hence in $\mathcal D_\gamma$. This concludes the proof, since there are no closed complements of $\mathcal D_\gamma$ where the restriction of \eqref{eq:indexform} gives an isomorphism, for its kernel intersects every such complement non trivially in $J$, see Definition~\ref{def:gmorse}.
\end{proof}

\begin{proposition}\label{prop:tangentisstdeg}
Let $\gamma:[0,1]\rightarrow M$ be a periodic $g$--geodesic with period $\tfrac{1}{k}$, for some $k\ge2$. Suppose $\gamma$ admits a nontrivial Jacobi field $J$ such that there exists $\lambda:[0,1]\to\R$ with
\begin{equation}\label{eq:sommaJ}
\sum\limits_{i=0}^{k} J\left(t+\tfrac{i}{k}\right)=\lambda(t)\dot{\gamma}(t), \quad t\in \left[0,\tfrac{1}{k}\right[.
\end{equation}
Then $\gamma$ is strongly degenerate.
\end{proposition}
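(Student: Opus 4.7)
The plan is to build, from the given Jacobi field $J$, a new Jacobi field $\tilde J$ along $\gamma$ that satisfies condition (b) of Definition~\ref{def:defstdeg} exactly (with right-hand side $0$), by subtracting from $J$ a suitable Jacobi field tangent to $\dot\gamma$. Condition (a) will come for free from the periodicity of $\gamma$ with period $\tfrac{1}{k}$.

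First I would observe that, since $\gamma$ is periodic with period $\tfrac1k$, each translate $t\mapsto J(t+\tfrac{i}{k})$ is again a $g$--Jacobi field along $\gamma$. Hence the sum
\begin{equation*}
V(t)=\sum_{i=0}^{k-1} J\!\left(t+\tfrac{i}{k}\right)
\end{equation*}
is itself a $g$--Jacobi field along $\gamma$, and, by the hypothesis \eqref{eq:sommaJ} (read with the summation bound matching Definition~\ref{def:defstdeg}(b), up to the trivially periodic $J(t+1)$--term which may be absorbed by a multiple of $\dot\gamma$), we have $V(t)=\lambda(t)\dot\gamma(t)$ on $[0,\tfrac1k[$ and, by the periodicity of $V$, on the whole interval $[0,1]$. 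So $V$ is a Jacobi field everywhere parallel to $\dot\gamma$. The final argument in the proof of Lemma~\ref{le:parallelfinite} then forces $V(t)=(c_1+c_2 t)\dot\gamma(t)$ for some constants $c_1,c_2\in\R$, since any Jacobi field that agrees in initial data with an affine multiple of $\dot\gamma$ must coincide with it by uniqueness for the second--order linear Jacobi ODE. In particular $\lambda$ is affine.

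Next I would choose an affine function $\mu(t)=a+bt$ so that $\sum_{i=0}^{k-1}\mu(t+\tfrac{i}{k})=\lambda(t)$ for all $t$; evaluating the left--hand side gives the two linear equations $kb=c_2$ and $ka+\tfrac{b(k-1)}{2}=c_1$, which determine $a,b$ uniquely. Set $\tilde J(t)=J(t)-\mu(t)\dot\gamma(t)$. Since $\mu$ is affine, $\mu\dot\gamma$ is a Jacobi field along $\gamma$, and therefore so is $\tilde J$. Using that $\dot\gamma$ is $\tfrac{1}{k}$--periodic, a direct calculation gives
\begin{equation*}
\sum_{i=0}^{k-1}\tilde J\!\left(t+\tfrac{i}{k}\right)=\lambda(t)\dot\gamma(t)-\Bigl(\sum_{i=0}^{k-1}\mu(t+\tfrac{i}{k})\Bigr)\dot\gamma(t)=0
\end{equation*}
for every $t\in[0,\tfrac{1}{k}[$. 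Condition (a) of Definition~\ref{def:defstdeg} is immediate from the fact that $\gamma$ has period $\tfrac1k$. It remains only to verify that $\tilde J\neq 0$.

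The main obstacle is precisely this nondegeneracy step: if $\tilde J\equiv 0$ then $J=\mu\dot\gamma$ is an affine multiple of $\dot\gamma$. I expect this to be ruled out by the hypothesis that $J$ is nontrivial, interpreted in the stronger sense (consistent with Lemma~\ref{le:jacobinotparallel} and Proposition~\ref{prop:tangentaintpjacobi}) that $J$ is not everywhere parallel to $\dot\gamma$; indeed if $J$ were parallel to $\dot\gamma$ everywhere on a periodic geodesic then, from the argument recalled above, $J$ would already be of the form $(c_1+c_2 t)\dot\gamma$, which one should regard as a trivial solution. Hence $\tilde J$ is a nontrivial Jacobi field satisfying (b), and $\gamma$ is strongly degenerate.
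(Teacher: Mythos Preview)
Your approach is exactly the paper's: the proof there is a single sentence, ``By adding a suitable multiple of $\dot\gamma$ to $J$, that depends on $\lambda$ and $k$, one easily obtains a Jacobi field along $\gamma$ that satisfies condition (b) of Definition~\ref{def:defstdeg}.'' You have simply spelled out what that suitable multiple is (an affine function $\mu$ chosen so that $\sum_{i=0}^{k-1}\mu(t+\tfrac{i}{k})=\lambda(t)$) and why it exists (because $V$ is a Jacobi field everywhere parallel to $\dot\gamma$, hence an affine multiple of $\dot\gamma$ by the argument in Lemma~\ref{le:parallelfinite}).

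The nondegeneracy concern you flag for $\tilde J$ is legitimate, and the paper's one-line proof does not address it either. Your resolution---reading ``nontrivial'' as excluding Jacobi fields everywhere parallel to $\dot\gamma$---is the intended one in this text; the paper explicitly calls $J=\dot\gamma$ and $J=0$ the ``trivial solutions'' of the Jacobi equation, and the only way $\tilde J$ vanishes is if $J$ was already an affine multiple of $\dot\gamma$. One small point: you do not need the periodicity of $V$ to extend $\lambda(t)\dot\gamma(t)$ to all of $[0,1]$; it is enough that $V$ is a Jacobi field parallel to $\dot\gamma$ on $[0,\tfrac{1}{k}[$, since that already forces $\lambda$ affine there, and the construction of $\tilde J$ only uses the identity on $[0,\tfrac{1}{k}[$.
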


\begin{proof}
By adding a suitable multiple of $\dot\gamma$ to $J$, that depends on $\lambda$ and $k$, one easily obtains a Jacobi field along $\gamma$ that satisfies condition (b) of Definition~\ref{def:defstdeg}. Therefore, in this case $\gamma$ is strongly degenerate.
\end{proof}

\section{Iterate geodesics}
\label{sec:iterates}

In this section, we study the problem of $S^1$--nondegeneracy for iterates, following closely the approach of Biliotti, Javaloyes and Piccione \cite{biljavapic2}, inspired by Anosov \cite{anosov}. The methods developed in the sequel take into account possible presence of strongly degenerate geodesics, giving the necessary tools to complete the proof of the semi--Riemannian Bumpy Metric Theorem in its full generality.

Recall that a periodic curve is called an {\em iterate} if it has nontrivial isotropy with respect to the reparameterization action \eqref{eq:mus1h1}, see Figure~\ref{fig:iterates}. From Lemma~\ref{le:cyclicgroup}, this isotropy group is a finite cyclic group, hence isomorphic to $\Z_n$. Let us denote $\gamma^{(n)}\in H^1(S^1,M)$ such an iterate curve, with $\# S^1_{\gamma^{(n)}}=n$. This means that $\gamma^{(n)}$ is given as $n$--fold iteration of a {\em prime} curve, i.e., there exists $\gamma\in H^1_*(S^1,M)$ such that $$\gamma^{(n)}(z)=\gamma(z^n), \quad z\in S^1.$$ Notice that, in this case, there is a clear relation between the energies of the iterate $\gamma^{(n)}$ and its prime generator $\gamma$. Notice first that
\begin{equation}\label{eq:gammangamma}
\begin{aligned}
\dot{\left(\gamma^{(n)}\right)}(z) &= \dd\gamma^{(n)}(z)iz\\
&= \dd\gamma(z^n)nz^{n-1}iz\\
&= n\dd\gamma(z^n)iz^n\\
&= n\dot\gamma(z^n),
\end{aligned}
\end{equation}
hence,
\begin{equation}\label{eq:ene}
\begin{aligned}
E_\mathrm R(\gamma^{(n)}) &= \tfrac12\int_{S^1} g_\mathrm R(n\dot\gamma(z^n),n\dot\gamma(z^n))\;\dd z\\
&= \tfrac{n^2}{2}\int_{S^1} g_\mathrm R(\dot\gamma(z^n),\dot\gamma(z^n))\;\dd z\\
&= \tfrac{n^2}{2}\int_{S^1} g_\mathrm R(\dot\gamma(w),\dot\gamma(w))\;\dd w\\
&= n^2 E_\mathrm R(\gamma),
\end{aligned}
\end{equation}
where the third equality holds by a simple change of variables\footnote{Notice that if $f:S^1\to\R$ is continuous, then curiously for any $n\in\N$, $$\int_{S^1}f(w)\;\dd w=\int_{S^1}f(z^n)\;\dd z.$$} $w=z^n$.

\begin{remark}
Obviously, from \eqref{eq:gammangamma} it is also possible to infer that $L_\mathrm R(\gamma^{(n)})=nL_\mathrm R(\gamma)$, but we shall deal with $E_\mathrm R$ rather than $L_\mathrm R$.
\end{remark}

Inspired by the relation \eqref{eq:ene}, we define two different ways of measuring the energy of an iterate with respect to the fixed auxiliary Riemannian metric $g_\mathrm R$, as follows.

\begin{definition}
Let $\gamma\in H^1(S^1,M)$. Define the {\em total energy}\index{Energy!total} of $\gamma$ to be
\begin{equation}
\ea(\gamma)=E_\mathrm R(\gamma)=\tfrac12\int_{S^1} g_\mathrm R(\dot\gamma,\dot\gamma)\;\dd z,
\end{equation}
and the {\em minimal energy}\index{Energy!minimal} of $\gamma$ to be
\begin{equation}
\eo(\gamma)=\frac{\ea(\gamma)}{(\# S^1_\gamma)^2}=\frac{1}{2(\# S^1_\gamma)^2}\int_{S^1} g_\mathrm R(\dot\gamma,\dot\gamma)\;\dd z,
\end{equation}
where $\# S^1_\gamma$ is the cardinality of the isotropy group of $\gamma$, see Lemma~\ref{le:cyclicgroup}. In this sense, \eqref{eq:ene} yields that the minimal energy $\eo(\gamma^{(n)})$ of an $n$--fold iterate is the total energy $\ea(\gamma)$ of the (unique) prime generator curve $\gamma\in H^1_*(S^1,M)$.

In other words, $\ea(\gamma)$ gives the $g_\mathrm R$--energy of {\em all} turns $\gamma$ makes while $\eo(\gamma)$ gives the $g_\mathrm R$--energy of only {\em one} turn of $\gamma$.
\end{definition}

\begin{remark}
Clearly, if $\gamma\in H^1_*(S^1,M)$ is prime, then $\ea(\gamma)=\eo(\gamma)$.
\end{remark}

\begin{remark}
According to the subscript $_\mathrm R$ notation being used throughout the text to highlight dependence\footnote{See, for instance, Remark~\ref{re:dependgr}.} on the choice of the auxiliary Riemannian metric $g_\mathrm R$, the quantities $\ea(\gamma)$ and $\eo(\gamma)$ should also carry a $_\mathrm R$, since they obviously depend on this choice. Nevertheless, for the sake of simplifying notation, we will deliberately omit this subindex and assume a given choice of auxiliary Riemannian metric $g_\mathrm R$.
\end{remark}

\begin{remark}
Analogously to Remark~\ref{re:dependgr}, we stress that although it is not desirable to have dependence on the choice of an auxiliary Riemannian metric for defining $\ea(\gamma)$ and $\eo(\gamma)$, this is the best possible setting for the desired applications. Namely, we must have $$0\leq\eo(\gamma)\leq\ea(\gamma),$$ for all $\gamma\in H^1(S^1,M)$, which would not hold replacing $g_\mathrm R$ with a semi--Riemannian metric, for instance.
\end{remark}

Henceforth, fix $\mathds{E}$ a separable $C^k$ Whitney type Banach space of sections of $TM^*\vee TM^*$ that tend to zero at infinity, with $k\geq 2$. Fix also $\nu\in\{0,\dots,m\}$ an index and let $g_\mathrm A\in\met_\nu^k(M)$ be such that $$\sup_{x\in M}\|g_\mathrm A(x)^{-1}\|_\mathrm R<+\infty.$$ Under these choices, $\A_{g_\mathrm A,\nu}$ is an open subset of an affine separable Banach space,\footnote{Recall Proposition~\ref{prop:affineworks}.} hence is a separable Banach manifold.

We now introduce families of metrics parameterized by two positive real numbers $a,b\in\R$ and a compact subset $K$ of $M$. These families are the natural extension of the families considered by Anosov \cite{anosov} and Biliotti, Javaloyes and Piccione \cite{biljavapic2} in their proof of the Bumpy Metric Theorem.

\begin{definition}\label{def:mkab}
Given the above choices of $\nu$ and $g_\mathrm A$, for each compact subset $K$ of $M$ and positive real numbers $0<a\leq b<+\infty$, define
\begin{equation*}
\M_K(a,b)=\left\{g\in\A_{g_\mathrm A,\nu}:\begin{array}{c}\mbox{ every periodic }g\mbox{--geodesic }\gamma\mbox{ with image in } K, \\ \eo(\gamma)\leq a\mbox{ and }\ea(\gamma)\leq b\mbox{ is }S^1\mbox{--nondegenerate}\end{array}\right\}
\end{equation*}
\end{definition}

\begin{lemma}\label{le:mabmab}
Given a compact $K\subset M$, if $a_1\leq a_2$ and $b_1\leq b_2$, then $$\M_K(a_2,b_2)\subset\M_K(a_1,b_1).$$
\end{lemma}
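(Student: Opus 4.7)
The plan is to observe that this is a pure set-theoretic monotonicity statement: enlarging the thresholds $a$ and $b$ in Definition~\ref{def:mkab} enlarges the set of periodic $g$--geodesics that are required to be $S^1$--nondegenerate, so the condition on $g$ becomes more restrictive. Consequently, the family $\M_K(a,b)$ should shrink as $a$ and $b$ grow.

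Concretely, I would fix an arbitrary $g\in\M_K(a_2,b_2)$ and show that $g\in\M_K(a_1,b_1)$. To this end, let $\gamma$ be any periodic $g$--geodesic with image in $K$, satisfying $\eo(\gamma)\leq a_1$ and $\ea(\gamma)\leq b_1$. Since by hypothesis $a_1\leq a_2$ and $b_1\leq b_2$, one immediately has $\eo(\gamma)\leq a_2$ and $\ea(\gamma)\leq b_2$. Hence $\gamma$ belongs to the class of periodic $g$--geodesics whose $S^1$--nondegeneracy is guaranteed by the assumption $g\in\M_K(a_2,b_2)$, and therefore $\gamma$ is $S^1$--nondegenerate. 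Since $\gamma$ was arbitrary, this shows $g\in\M_K(a_1,b_1)$.

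There is no genuine obstacle here; the argument is just a direct unpacking of Definition~\ref{def:mkab} together with transitivity of the inequalities $\eo(\gamma)\leq a_1\leq a_2$ and $\ea(\gamma)\leq b_1\leq b_2$. The only thing worth noting is that the compact $K$ is held fixed in both families, so no additional argument concerning the image of $\gamma$ is needed. The lemma is recorded in this form because it will be used repeatedly in Section~\ref{sec:bumpy} to reduce bumpiness of $g$ to membership in the countable intersection $\bigcap_{n\geq 1}\M_K(n,n)$.
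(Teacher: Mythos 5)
Your proof is correct and is exactly the unpacking the paper has in mind; the paper simply records it as ``Immediate from Definition~\ref{def:mkab}.''\ Nothing further is needed.
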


\begin{proof}
Immediate from Definition~\ref{def:mkab}.
\end{proof}

The following results on absence of short periodic geodesics and accumulation of degenerate periodic geodesics will be later used for our proof of the Bumpy Metric Theorem~\ref{thm:bumpy}.

\begin{lemma}\label{le:noshortgeods}
Given $g_0\in\A_{g_\mathrm A,\nu}$ and $K\subset M$ compact, there exists $r>0$ and an open neighborhood $\mathcal V$ of $g_0$ in $\A_{g_\mathrm A,\nu}$ such that for every $g\in\mathcal V$ no non constant periodic $g$--geodesics with image contained in $K$ have image also contained in a ball of $g_\mathrm R$--radius less than or equal to $r$. In particular, there exists $\widehat a>0$ such that for all $g\in\mathcal V$ and all prime $g$--geodesics $\gamma$ with image in $K$, $\ea(\gamma)\geq \widehat a$.
\end{lemma}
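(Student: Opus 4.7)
The plan is to exploit the standard fact that a non-constant periodic geodesic cannot be confined to a convex neighborhood, made uniform over $K$ and stable under small $C^k$--perturbations of $g_0$. First, I would invoke the observation from Definition~\ref{def:normalradius} (and its footnote citing O'Neill~\cite{oneill}) that for every $g\in\met_\nu^k(M)$ and $p\in M$ there exists a $g$--convex neighborhood of $p$, whose size depends continuously on $g$ in the $C^k$--topology. Combining this with the continuous dependence of the $g_\mathrm R$--balls $B_\mathrm R(p,\rho)$ on their center, one produces a lower semicontinuous function $\rho(g,p)>0$ on $\A_{g_\mathrm A,\nu}\times M$ with the property that the Riemannian ball $B_\mathrm R(p,\rho(g,p))$ is contained in a $g$--convex neighborhood of $p$.

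Next, compactness of $K$ and continuity of $\rho(g_0,\,\cdot\,)$ yield a uniform lower bound $\rho_0>0$ with $\rho(g_0,p)\ge\rho_0$ for every $p\in K$. Continuous dependence of $\rho$ on $g$ near $g_0$ then furnishes an open neighborhood $\mathcal V$ of $g_0$ in $\A_{g_\mathrm A,\nu}$ such that $\rho(g,p)\ge\rho_0/2$ for every $(g,p)\in\mathcal V\times K$. I would set $r=\rho_0/4$ and claim that this pair $(r,\mathcal V)$ works.

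For the contradiction, suppose $g\in\mathcal V$ admits a non-constant periodic $g$--geodesic $\gamma:S^1\to M$ whose image lies in $K$ and inside some $g_\mathrm R$--ball of radius $\le r$. Picking any $q\in\gamma(S^1)\subset K$, the triangle inequality gives $\gamma(S^1)\subset B_\mathrm R(q,2r)\subset B_\mathrm R(q,\rho_0/2)$, and this last ball sits inside a $g$--convex (hence $g$--normal) neighborhood $U$ of $q$. Writing $q=\gamma(t_0)$, the $g$--geodesic $t\mapsto\gamma(t_0+t)$ is then a geodesic loop at $q$ with image in $U$; since $\exp_q^g$ restricted to $(\exp_q^g)^{-1}(U)$ is a diffeomorphism (Definition~\ref{def:normalradius}), this forces $\dot\gamma(t_0)=0$, contradicting non-constancy of $\gamma$.

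For the final assertion, if $\gamma$ is a non-constant periodic $g$--geodesic with image in $K$, the previous step shows its image cannot lie inside any $g_\mathrm R$--ball of radius $r$; hence there exist $t_0,t_1\in S^1$ with $d_\mathrm R(\gamma(t_0),\gamma(t_1))>r$, and both complementary arcs of $\gamma$ between these points have $g_\mathrm R$--length $>r$, so $L_\mathrm R(\gamma)>2r$. The Cauchy--Schwartz inequality then yields $\ea(\gamma)=E_\mathrm R(\gamma)\ge\tfrac12 L_\mathrm R(\gamma)^2>2r^2$, so $\widehat a=2r^2$ suffices; this applies in particular to prime geodesics, since by Definition~\ref{def:prime} and Lemma~\ref{le:cyclicgroup} constant curves have full $S^1$--isotropy and are therefore not prime. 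The only delicate point in the whole argument is the joint lower semicontinuity of the convex radius $\rho(g,p)$ in $(g,p)$, but this is a standard consequence of the Inverse Function Theorem applied to the $g$--exponential map, combined with the already-cited continuous $C^k$--dependence of convex neighborhoods on the metric.
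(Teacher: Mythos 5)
Your proof is correct and takes essentially the same approach as the paper: for each point of $K$, produce a neighborhood lying inside a $g$--convex neighborhood for all $g$ near $g_0$, then use compactness of $K$ to uniformize the choice of $r$ and $\mathcal V$; the paper packages this via a finite cover of $K$ and its Lebesgue number rather than a quantitative convex-radius function, and it leaves the energy estimate in the ``in particular'' clause implicit. Two harmless imprecisions: the constant in the Cauchy--Schwartz bound $E_\mathrm R\geq\tfrac12 L_\mathrm R^2$ presupposes that the parameter domain of $\gamma$ has total measure~$1$ (a normalization of $S^1$ only changes $\widehat a$ by a fixed factor), and $\exp^g_q$ is guaranteed to be a diffeomorphism only from a star-shaped neighborhood $V\subset T_qM$ onto $U$, not from the full preimage $(\exp^g_q)^{-1}(U)$, so the contradiction really rests on convexity of $U$ (uniqueness of the geodesic segment in $U$ joining two of its points) rather than on injectivity of $\exp^g_q$ on $(\exp^g_q)^{-1}(U)$.
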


\begin{proof} 
Given any $p\in K$, there exists an open neighborhood $U_p$ of $p$ in $M$ and an open neighborhood $\mathcal V^p$ of $g$ in $\A_{g_\mathrm A,\nu}$ such that, for all $g\in\mathcal V^p$, the open subset $U_p$ is contained in a $g$--convex neighborhood of $p$, see Definition~\ref{def:normalradius}. By compactness of $K$, it can covered by a finite union $\{U_{p_i}\}_{i=1}^n$ of such open subsets. Let $r$ be the Lebesgue number of this open cover relatively to the metric induced by $g_{\mathrm R}$. It follows that every ball of $g_{\mathrm R}$--radius less than or equal to $r$ is contained in some $U_{p_i}$, and thus it cannot contain any non constant periodic $g$--geodesic for any $g\in\mathcal V=\bigcap_{i=1}^n\mathcal V^{p_i}$. This concludes the proof.
\end{proof}

\begin{lemma}\label{le:lemaconver}
Let $K\subset M$ be compact and $\{g_n\}_{n\in\N}$ be a sequence in $\A_{g_\mathrm A,\nu}$ converging to $g_\infty\in\A_{g_\mathrm A,\nu}$. Let $\{\gamma_n\}_{n\in\N}$ be curves with image contained in $K$ such that for every $n\in\N$, $\gamma_n$ is a degenerate $g_n$--geodesic and there exists $b>0$ such that $\ea(\gamma_n)\leq b$. Then there exists a subsequence of $\{\gamma_n\}_{n\in\N}$ that converges to a non constant degenerate geodesic $\gamma_\infty$ of $g_\infty$, also contained in $K$.
\end{lemma}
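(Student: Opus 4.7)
The plan is to extract a subsequence $\gamma_{n_k}$ that converges in $C^1$ to a periodic $g_\infty$-geodesic $\gamma_\infty$ with image in $K$, and then use the degeneracy of each $\gamma_n$ to produce a periodic $g_\infty$-Jacobi field along $\gamma_\infty$ that is not a constant multiple of $\dot\gamma_\infty$.

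First I would parameterize each $\gamma_n$ over $[0,1]$ as a periodic $g_n$-geodesic. Since $g_n\to g_\infty$ in the $C^k$-topology of $\A_{g_\mathrm A,\nu}$ and all images lie in the compact set $K$, the Christoffel tensors $\chr^{g_n}$ are uniformly bounded on $K$, and the geodesic equation $\ddot\gamma_n+\chr^{g_n}(\gamma_n)(\dot\gamma_n,\dot\gamma_n)=0$ combined with the bound $\int_0^1\|\dot\gamma_n\|_{g_\mathrm R}^2\,\mathrm dt\le 2b$ yields a uniform pointwise bound on $\|\dot\gamma_n\|_{g_\mathrm R}$. Indeed, at a point $t^*_n$ where $\|\dot\gamma_n\|_{g_\mathrm R}$ attains its supremum $M_n$, the estimate $\|\ddot\gamma_n\|\le CM_n^2$ forces $\|\dot\gamma_n\|_{g_\mathrm R}\ge M_n/2$ on an interval of length comparable to $1/(CM_n)$, whence the integral bound gives $M_n\le C'b$. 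Consequently $\{\gamma_n\}$ is equicontinuous with uniformly bounded derivatives, and by Arzelà--Ascoli together with continuous dependence of ODEs on coefficients and initial data, a subsequence converges in $C^1$ to a curve $\gamma_\infty$ satisfying the $g_\infty$-geodesic equation, periodic and with image in $K$.

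To rule out a constant limit I would invoke Lemma~\ref{le:noshortgeods}: there exist $r>0$ and a neighborhood $\mathcal V$ of $g_\infty$ such that for $g\in\mathcal V$ no non-constant periodic $g$-geodesic with image in $K$ lies in a $g_\mathrm R$-ball of radius $r$. Since each $\gamma_n$ is non-constant (a constant curve has no non-trivial Jacobi field outside $\operatorname{span}\dot\gamma=\{0\}$ capable of causing degeneracy in the sense of Definition~\ref{def:gmorse}) and eventually $g_n\in\mathcal V$, the diameter of $\mathrm{Im}(\gamma_n)$ is at least $r$; hence by Cauchy--Schwartz $\ea(\gamma_n)\ge r^2/2$. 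Passing to the limit, $\ea(\gamma_\infty)\ge r^2/2>0$, so $\gamma_\infty$ is non-constant.

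The hardest step is showing that $\gamma_\infty$ is degenerate, and this is where the main obstacle lies. Each $\gamma_n$ being $S^1$-degenerate provides a periodic $g_n$-Jacobi field $J_n$ along $\gamma_n$ not proportional to $\dot\gamma_n$. A naive unit-norm normalization of $J_n$ could collapse in the limit onto $\operatorname{span}\dot\gamma_\infty$, producing a trivial degeneration. To avoid this, I would pick, inside the four-dimensional-over-$\mathbb R$ space $T_{\gamma_n(0)}M\oplus T_{\gamma_n(0)}M$ of initial data for Jacobi fields, a $g_\mathrm R$-orthogonal complement $W_n$ to the one-dimensional subspace spanned by $(\dot\gamma_n(0),0)$; replace $J_n$ by $J_n-\lambda_n\dot\gamma_n$ for the appropriate scalar $\lambda_n$ so that $(J_n(0),\D^{g_n}J_n(0))\in W_n$, and then rescale to have unit Euclidean norm in this complement. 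The $C^1$-convergence $\gamma_n\to\gamma_\infty$ ensures $W_n\to W_\infty$, a complement transverse to $\operatorname{span}(\dot\gamma_\infty(0),0)$. Extracting a further subsequence so that the initial data $(J_n(0),\D^{g_n}J_n(0))$ converge in $W_\infty$ to a unit vector, continuous dependence of the linear Jacobi ODE on its coefficients and initial data produces a $C^1$-limit $J_\infty$ which is a $g_\infty$-Jacobi field along $\gamma_\infty$ with nontrivial initial data lying in $W_\infty$; hence $J_\infty$ is neither zero nor a multiple of $\dot\gamma_\infty$. Periodicity of $J_\infty$ is obtained by passing to the limit in the closed conditions $J_n(0)=J_n(1)$ and $\D^{g_n}J_n(0)=\D^{g_n}J_n(1)$, completing the proof that $\gamma_\infty$ is a non-constant degenerate $g_\infty$-geodesic.
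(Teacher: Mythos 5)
Your proof follows the same route as the paper's: extract a $C^1$-convergent subsequence of the $\gamma_n$ using the energy bound and ODE stability, rule out a constant limit via Lemma~\ref{le:noshortgeods}, normalize each $J_n$ so its initial value is $g_\mathrm R$-orthogonal to $\dot\gamma_n(0)$ with bounded, non-collapsing norm, and pass to the limit in the Jacobi equation and in the periodicity conditions. The normalization you pick (project initial data onto a $g_\mathrm R$-orthogonal complement of $\operatorname{span}(\dot\gamma_n(0),0)$, then rescale) is the same trick the paper uses, and the observation that a naive normalization could collapse onto $\operatorname{span}\dot\gamma_\infty$ correctly identifies why it is needed.

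One parenthetical remark is wrong, though. You claim a constant curve ``has no non-trivial Jacobi field outside $\operatorname{span}\dot\gamma=\{0\}$ capable of causing degeneracy.'' For a constant curve $\gamma\equiv p$ the Jacobi equation degenerates to $(\D^g)^2J=0$, and every constant vector field $J\equiv v\in T_pM$ is a nontrivial periodic Jacobi field; since $\mathcal D_\gamma=\operatorname{span}\dot\gamma=\{0\}$, all $m$ of these lie outside it, so a constant curve is in fact $S^1$-degenerate in the sense of Definition~\ref{def:gmorse}. Hence degeneracy does \emph{not} force $\gamma_n$ to be non-constant. The lemma (and the paper's own proof) tacitly assumes the $\gamma_n$ are non-constant --- which is the case in every application, since they arise as failures of bumpiness --- and you should simply take that as part of the hypothesis rather than try to derive it. Also, $T_{\gamma_n(0)}M\oplus T_{\gamma_n(0)}M$ is $2m$-dimensional, not four-dimensional. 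Neither point affects the rest of your argument.
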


\begin{proof}
Since $\ea(\gamma_n)\le b$, there exists $t_n\in[0,1]$ such that $$g_\mathrm R\big({\dot\gamma_n}(t_n),{\dot\gamma_n}(t_n)\big)\le b^2, \quad n\in\N.$$ Up to passing to a subsequence, assume that $\{t_n\}_{n\in\N}$ converges to $t_\infty\in[0,1]$ and $\{\dot{\gamma_n}(t_n)\}_{n\in\N}$ converges to $v\in T_{p_\infty}M$ as $n$ tends to $\infty$, with $p_\infty=\lim_{n\to+\infty}\gamma_n(t_\infty)$.

Let $\gamma_\infty$ be the solution of $\D^{g_\infty}\dot\gamma=0$ with initial conditions $\gamma_\infty(t_\infty)=p_\infty$ and $\dot{\gamma_\infty}(t_\infty)=v$. From continuous dependence of solutions of ODEs on initial conditions, it is easy to see that $\gamma_\infty$ is the $C^k$--limit of the sequence $\{\gamma_n\}_{n\in\N}$. In addition, $\gamma_\infty$ is clearly a periodic $g_\infty$--geodesic with $\ea(\gamma_\infty)\le b$ contained in $K$. It is also non constant, since if it were constant, there would be nontrivial periodic geodesics relatively to metrics arbitrarily near $g_\infty$ whose images lie in $K$ and in balls of $g_{\mathrm R}$--radius arbitrarily small, which contradicts Lemma~\ref{le:noshortgeods}.

Finally, $\gamma_\infty$ is a {\em degenerate} $g_\infty$--geodesic. Let $J_n$ be a periodic Jacobi field along $\gamma_n$ which is not a multiple of the tangent field $\dot{\gamma_n}$. By adding to $J_n$ a suitable multiple of $\dot{\gamma_n}$, one can assume that $J_n(0)$ is $g_{\mathrm R}$--orthogonal to $\dot{\gamma_n}(0)$. In addition, using an adequate normalization, it is also possible to assume that $\max\big\{\Vert J_n(0)\Vert_\mathrm R,\Vert \D^{g_n}J_n(0)\Vert_\mathrm R\big\}=1$. Again, up to subsequences, the initial conditions converge $$\lim\limits_{n\to+\infty}J_n(0)=v\in T_{\gamma_\infty(0)}M\;\quad\;\lim\limits_{n\to+\infty}\D^{g_n}J_n(0)=w\in T_{\gamma_\infty(0)}M.$$ By continuity, $v$ is $g_{\mathrm R}$--orthogonal to $\dot{\gamma_\infty}(0)$ and
\begin{equation}\label{eq:maxvw}
\max\big\{\Vert v\Vert_{\mathrm R},\Vert w\Vert_{\mathrm R}\big\}=1.
\end{equation}
The solution $J_\infty$ of the $g_\infty$--Jacobi equation along $\gamma_\infty$ with the above limit initial conditions is the $C^k$--limit of the Jacobi fields $J_n$, and thus periodic. In addition, it is not a multiple of the tangent field $\dot{\gamma_\infty}$. Indeed, if $J_\infty$ were a multiple of $\dot{\gamma_\infty}$, since $v$ is $g_\mathrm R$--orthogonal to $\dot{\gamma_\infty}(0)$, it would be $v=0$ and $w=0$, which contradicts \eqref{eq:maxvw}. Hence $\gamma_\infty$ is degenerate, which concludes the proof.
\end{proof}

\begin{corollary}\label{cor:mkabopen}
Let $K\subset M$ be compact. Then for all $0<a\le b$, the subset $\M_K(a,b)$ is open in $\A_{g_\mathrm A,\nu}$, see Definition~\ref{def:mkab}.
\end{corollary}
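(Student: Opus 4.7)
The plan is to show that the complement of $\M_K(a,b)$ in $\A_{g_\mathrm A,\nu}$ is (sequentially) closed. The complement consists of metrics $g\in\A_{g_\mathrm A,\nu}$ for which there exists a periodic $g$--geodesic $\gamma$ with image in $K$, $\eo(\gamma)\le a$, $\ea(\gamma)\le b$, and which is $S^1$--degenerate. So take a sequence $\{g_n\}_{n\in\N}$ in this complement converging to some $g_\infty\in\A_{g_\mathrm A,\nu}$, and choose corresponding degenerate periodic $g_n$--geodesics $\gamma_n$ with image in $K$ satisfying $\eo(\gamma_n)\le a$ and $\ea(\gamma_n)\le b$. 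I would like to produce a degenerate periodic $g_\infty$--geodesic $\gamma_\infty$ with image in $K$ satisfying the same energy bounds, which will show $g_\infty$ lies in the complement.

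The key obstacle is controlling the isotropy orders $k_n=\#S^1_{\gamma_n}$, because $\eo$ involves this discrete quantity in the denominator. First I would show $\{k_n\}_{n\in\N}$ is bounded: write each $\gamma_n$ as the $k_n$--fold iterate of its prime generator $\widetilde{\gamma_n}$, so that $\ea(\widetilde{\gamma_n})=\eo(\gamma_n)\le a$ and $\ea(\gamma_n)=k_n^2\,\eo(\gamma_n)\le b$. By Lemma~\ref{le:noshortgeods} applied to $g_\infty$ and $K$, there exist a neighborhood $\mathcal V$ of $g_\infty$ and a constant $\widehat a>0$ such that every non constant prime periodic geodesic (in $K$) of any metric in $\mathcal V$ has total energy at least $\widehat a$. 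For $n$ large enough, $g_n\in\mathcal V$, so $\eo(\gamma_n)=\ea(\widetilde{\gamma_n})\ge\widehat a$, and combining with $\ea(\gamma_n)\le b$ gives $k_n^2\le b/\widehat a$. Thus, passing to a subsequence, I may assume $k_n=k$ is constant.

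Now Lemma~\ref{le:lemaconver} applies directly to $\{g_n\}_{n\in\N}$ and $\{\gamma_n\}_{n\in\N}$: up to a further subsequence, $\gamma_n$ converges in the $C^k$--topology to a non constant degenerate $g_\infty$--geodesic $\gamma_\infty$ with image in $K$ (since $K$ is closed). By $C^0$--continuity of the maps $\gamma\mapsto\ea(\gamma)$, I obtain $\ea(\gamma_\infty)\le b$. Moreover, for each $i\in\{0,\dots,k-1\}$ the periodicity relation $\gamma_n(z\,\zeta^i)=\gamma_n(z)$ with $\zeta=e^{2\pi i/k}$ passes to the uniform limit, yielding $\#S^1_{\gamma_\infty}\ge k$, and therefore
\begin{equation*}
\eo(\gamma_\infty)=\frac{\ea(\gamma_\infty)}{(\#S^1_{\gamma_\infty})^2}\le\frac{\ea(\gamma_\infty)}{k^2}=\lim_{n\to\infty}\frac{\ea(\gamma_n)}{k^2}=\lim_{n\to\infty}\eo(\gamma_n)\le a.
\end{equation*}
Hence $g_\infty$ belongs to the complement of $\M_K(a,b)$, which is therefore closed in $\A_{g_\mathrm A,\nu}$, concluding the proof. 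The main work is the uniform bound on $k_n$; once this is established, everything else is a routine continuity argument on top of Lemma~\ref{le:lemaconver}.
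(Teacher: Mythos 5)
Your proof is correct and follows essentially the same argument as the paper: show the complement is sequentially closed by extracting a convergent degenerate geodesic via Lemma~\ref{le:lemaconver}, and use Lemma~\ref{le:noshortgeods} to bound the isotropy orders so the minimal-energy bound survives passage to the limit. One tiny slip: $\gamma\mapsto\ea(\gamma)$ is not $C^0$--continuous (it involves $\dot\gamma$), but since Lemma~\ref{le:lemaconver} delivers $C^k$--convergence (in particular $C^1$), the desired convergence $\ea(\gamma_n)\to\ea(\gamma_\infty)$ still holds.
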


\begin{proof}
Let us prove that the complementary $\A_{g_\mathrm A,\nu}\setminus\M_K(a,b)$ is closed. Assume that $\{g_n\}_{n\in\N}$ is a sequence in $\A_{g_\mathrm A,\nu}\setminus\M_K(a,b)$ that converges to $g_\infty\in\A_{g_\mathrm A,\nu}$. Then every $g_n$ has a non constant degenerate periodic geodesic $\gamma_n$ with image in $K$, $\ea(\gamma_n)\le b$ and $\eo(\gamma_n)\le a$. From Lemma~\ref{le:lemaconver}, there exists a subsequence of $\{\gamma_n\}_{n\in\N}$ that converges to a non constant $g_\infty$--degenerate periodic geodesic $\gamma_\infty$ with image in $K$. From Lemma~\ref{le:noshortgeods}, there exists $\widehat a>0$ such that, for $n$ sufficiently large, $$\eo(\gamma_n)=\frac{\ea(\gamma_n)}{(\# S^1_{\gamma_n})^2}\ge \widehat a,$$ i.e., the total energy of a nontrivial prime geodesic relatively to a metric near $g_\infty$ is greater or equal to $\widehat a$. 

Thus, $\# S^1_{\gamma_n}$ is bounded, hence up to passing to a subsequence, we may assume $N=\# S^1_{\gamma_n}$ is constant for all $n\in\N$. In addition, considering the limit when $n$ tends to $\infty$ in $$\gamma_n\left(t+\frac1N\right)=\gamma_n(t),$$ it follows from pointwise convergence that $\# S^1_{\gamma_n}\ge N$. Therefore, $\eo(\gamma_\infty)\le a$ and $g_\infty\in\A_{g_\mathrm A,\nu}\setminus\M_K(a,b)$, which is hence closed, concluding the proof.
\end{proof}

\section{Bumpy Metric Theorem}
\label{sec:bumpy}

In this section, we give a complete proof of an extension of the semi--Riemannian Bumpy Metric Theorem of Biliotti, Javaloyes and Piccione \cite[Theorem 3.14]{biljavapic2} to the non compact case, as discussed in the beginning of this chapter. More precisely, we establish genericity in the $C^k$--topology of semi--Riemannian metrics of given index over a non necessarily compact manifold that have no $S^1$--degenerate periodic geodesics. The proof is an adaptation of results in \cite{anosov,biljavapic2} combined with an exhaustion argument.

In addition, we stress that in the non compact case, there is no canonical separable Banach space structure on the space of semi--Riemannian metrics. Thus, we use the tools developed in Chapter~\ref{chap3} regarding this structure on subsets of metrics $\A_{g_\mathrm A,\nu}$. Recall that in the last section an index $\nu\in\{0,\dots,m\}$ was fixed and an auxiliary semi--Riemannian metric $g_\mathrm A\in\met_\nu^k(M)$ was chosen, satisfying $$\sup_{x\in M}\|g_\mathrm A(x)^{-1}\|_\mathrm R<+\infty.$$ Recall also that from Proposition~\ref{prop:affineworks}, this set $\A_{g_\mathrm A,\nu}$ is an open subset of an affine separable Banach space. We will establish genericity of bumpy metrics in this open subset.

\begin{proposition}\label{prop:a1}
Let $K\subset M$ be compact, $g_0\in\A_{g_\mathrm A,\nu}$ and $\gamma_0\in H^1(S^1,M)$ be a nondegenerate periodic $g_0$--geodesic with image contained in $K$. Then, there exists a neighborhood $\mathcal U_0$ of $g_0$ in $\A_{g_\mathrm A,\nu}$ and a $C^k$ map $$\gamma:\mathcal U_0\la H^1(S^1,M)$$ such that $\gamma(g)$ is a $g$--geodesic for all $g\in\mathcal U_0$. Moreover, for $g$ in $\mathcal U_0$, $\gamma(g)$ is the unique periodic $g$--geodesic near $\gamma_0$, and it is nondegenerate.
\end{proposition}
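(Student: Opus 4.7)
My plan is to prove Proposition~\ref{prop:a1} as a consequence of the implicit function theorem applied on a slice transverse to the reparameterization orbit of $\gamma_0$. Consider the map
\begin{equation*}
F:\A_{g_\mathrm A,\nu}\times H^1(S^1,M)\la TH^1(S^1,M)^*,\quad F(g,\gamma)=\tfrac{\partial E}{\partial\gamma}(g,\gamma),
\end{equation*}
whose zeros are precisely the periodic geodesics, by Proposition~\ref{prop:critgenenfunc}. We have $F(g_0,\gamma_0)=0$. By Proposition~\ref{prop:fredholmness} (applied with $\p=\Delta$), the partial derivative $\tfrac{\partial F}{\partial\gamma}(g_0,\gamma_0)$ coincides with the Hessian, which is a self--adjoint Fredholm operator on $T_{\gamma_0}H^1(S^1,M)$. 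The $S^1$--nondegeneracy of $\gamma_0$ together with Proposition~\ref{prop:tangentaintpjacobi} means that the kernel of this Hessian is exactly the one--dimensional space $\mathcal D_{\gamma_0}=\mathrm{span}(\dot\gamma_0)$, so the Hessian fails to be an isomorphism; the standard implicit function theorem cannot be applied directly.

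The strategy is then to kill this one--dimensional obstruction by restricting to a slice. Since $\gamma_0$ is $C^{k+1}$ by Corollary~\ref{cor:geodck}, hence lies in $H^2(S^1,M)$, Claim~\ref{cl:of} and Claim~\ref{cl:bah} from the proof of Proposition~\ref{prop:existslice} provide a submanifold $S_{\gamma_0}\subset H^1(S^1,M)$ containing $\gamma_0$ with $T_{\gamma_0}H^1(S^1,M)=\mathcal D_{\gamma_0}\oplus T_{\gamma_0}S_{\gamma_0}$, and such that critical points of $E_g\vert_{S_{\gamma_0}}$ are genuine critical points of $E_g$ (Claim~\ref{cl:vale2}). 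Define $\widetilde F:\A_{g_\mathrm A,\nu}\times S_{\gamma_0}\to T^*S_{\gamma_0}$ as the differential of $E$ along $S_{\gamma_0}$. Then $\widetilde F(g_0,\gamma_0)=0$, and the partial derivative $\tfrac{\partial\widetilde F}{\partial\gamma}(g_0,\gamma_0)$ is the restriction of $\mathrm{hess}(E_{g_0})(\gamma_0)$ to $T_{\gamma_0}S_{\gamma_0}$. Because this restriction is a self--adjoint Fredholm operator of index zero with trivial kernel (by Lemma~\ref{le:selfadjointzero}, Lemma~\ref{le:fred0}, and the $S^1$--nondegeneracy assumption), it is an isomorphism.

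I may therefore apply the implicit function theorem on Banach manifolds to $\widetilde F$, obtaining an open neighborhood $\mathcal U_0\subset\A_{g_\mathrm A,\nu}$ of $g_0$ and a $C^{k-1}$ map $\gamma:\mathcal U_0\to S_{\gamma_0}\subset H^1(S^1,M)$ with $\gamma(g_0)=\gamma_0$ and $\widetilde F(g,\gamma(g))=0$ for all $g\in\mathcal U_0$. (The $C^k$ regularity claimed in the statement follows from the fact that $E$ is smooth in $g$; a bootstrapping argument using Corollary~\ref{cor:geodck} upgrades the regularity of $\gamma(g)$ as a curve to $C^{k+1}$.) By Claim~\ref{cl:vale2} each $\gamma(g)$ is a periodic $g$--geodesic. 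Shrinking $\mathcal U_0$ if necessary and using openness of the transversality condition (Lemma~\ref{le:transvopen}) together with continuity of the Hessian in $(g,\gamma)$, the restricted Hessian remains an isomorphism on $T_{\gamma(g)}S_{\gamma(g)}$, whence $\gamma(g)$ is $S^1$--nondegenerate.

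Local uniqueness is the most delicate point, since any reparameterization of $\gamma(g)$ is also a periodic $g$--geodesic and lies near $\gamma_0$. The remedy is to exploit the slice structure: by Proposition~\ref{prop:tauskehocara}, $\rho(S^1\times S_{\gamma_0})$ contains an $H^1$--neighborhood of $\gamma_0$, so any periodic $g$--geodesic sufficiently close to $\gamma_0$ is a reparameterization of a unique element of $S_{\gamma_0}$ near $\gamma_0$; the implicit function theorem then forces that element to be $\gamma(g)$. The main obstacle throughout is handling the inherent one--dimensional degeneracy forced by the $S^1$--action, and this is precisely what the generalized slice construction from Section~\ref{sec:s1invariance} is designed to circumvent; once the slice is in hand, the argument reduces to a standard application of the implicit function theorem.
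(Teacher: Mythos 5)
Your proof follows the same route as the paper's: restrict the energy functional to a slice transverse to the $S^1$--orbit furnished by Proposition~\ref{prop:existslice}, observe that the restricted Hessian at $(g_0,\gamma_0)$ is a Fredholm isomorphism by $S^1$--nondegeneracy, and apply the implicit function theorem, with nondegeneracy of nearby $\gamma(g)$ following by continuity and uniqueness from the slice structure (your use of Proposition~\ref{prop:tauskehocara} makes explicit a point the paper leaves implicit). One small correction: Proposition~\ref{prop:tangentaintpjacobi} does not apply in the periodic case $\p=\Delta$ (as the remark following it points out), so to conclude $\ker\hess(E_{g_0})(\gamma_0)=\mathcal D_{\gamma_0}$ you should instead cite Lemma~\ref{le:invkernel} for the inclusion $\mathcal D_{\gamma_0}\subset\ker\hess(E_{g_0})(\gamma_0)$ and the definition of $S^1$--nondegeneracy for the reverse inclusion; fortunately this observation is only motivational and does not affect the rest of your argument.
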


\begin{proof}
From Proposition~\ref{prop:existslice}, there exists a generalized slice $(\mathfrak U,\{S_n\}_{n\in\N})$ for the action of $S^1$ on $H^1(S^1,M)$. This means that there exists $n\in\N$ such that $S^1(\gamma_0)\cap S_{n}\neq\emptyset$, see Definition~\ref{def:genslice}. Moreover, every metric $g\in\A_{g_\mathrm A,\nu}$ admits a (nondegenerate) periodic geodesic near $\gamma_0$ if and only if the $g$--energy functional $E_g\vert_{S_{n}}:S_{n}\to\R$ has a (nondegenerate) critical point in $S_{n}$. Consider the restriction $E:\A_{g_\mathrm A,\nu}\times S_{n}\to\R$ of the generalized energy functional \eqref{eq:efunct}, and its partial derivative $$\frac{\partial E}{\partial\gamma}:\A_{g_\mathrm A,\nu}\times S_{n}\la TS_{n}^*.$$ Since $\gamma_0$ is a nondegenerate $g$--geodesic, $\frac{\partial E}{\partial\gamma}(g_0,\gamma_0)\in\mathbf 0_{TS_{n}^*}$ and $\frac{\partial E}{\partial\gamma}$ is transverse to $\mathbf 0_{TS_{n}^*}$ at $(g_0,\gamma_0)$. From Proposition~\ref{prop:transvsubmnfld}, the inverse image $$\left(\frac{\partial E}{\partial\gamma}\right)^{-1}\left(\mathbf 0_{TS_{n}^*}\right)$$ is a $C^k$ embedded submanifold of $\A_{g_\mathrm A,\nu}\times S_{n}$. From the Implicit Function Theorem, there exists an open neighborhood $\mathcal U_0$ of $g_0$, such that this submanifold is the graph of a $C^k$ map
\begin{eqnarray*}
\gamma_0:\mathcal U_0 &\la& H^1(S^1,M)\\
g&\longmapsto&\gamma_0(g).
\end{eqnarray*}
By continuity, for $g$ near $g_0$, the periodic geodesic $\gamma(g)$ is nondegenerate. Moreover, if $g\in\mathcal U_0$, then $\gamma_0(g)$ is the unique periodic $g$--geodesic near $\gamma_0$ and it is nondegenerate, concluding the proof.
\end{proof}

\begin{remark}
A more elegant proof of Proposition~\ref{prop:a1} above is possible using an {\em equivariant} version of the Implicit Function Theorem, in preparation by Bettiol, Piccione and Siciliano.
\end{remark}

\begin{proposition}\label{prop:lightpert}
Let $\gamma_0\in H^1(S^1,M)$ be a nondegenerate lightlike\footnote{See Definition~\ref{def:causalchar}.} $g_0$--geodesic. Then, arbitrarily near $g_0$ in $\A_{g_\mathrm A,\nu}$ there exist metrics $\widetilde{g}$ having spacelike or timelike periodic nondegenerate geodesics near $\gamma_0$. Such metrics $\widetilde{g}$ can also be chosen in such way that $g_0-\widetilde{g}$ vanishes outside an arbitrarily prescribed open subset $U$ of $M$ containing the image of $\gamma_0$.
\end{proposition}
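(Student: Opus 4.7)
The plan is to show that a small perturbation of $g_0$ supported in $U$ moves the energy value of the perturbed periodic geodesic away from zero; since the causal character of a geodesic is determined by the sign of $g(\dot\gamma,\dot\gamma)$, which is constant along the geodesic, this yields timelike or spacelike periodic geodesics near $\gamma_0$.

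First I would apply Proposition~\ref{prop:a1} to $\gamma_0$, which is a nondegenerate periodic $g_0$--geodesic with compact image. This produces an open neighborhood $\mathcal U_0$ of $g_0$ in $\A_{g_\mathrm A,\nu}$ and a $C^k$ map $\widetilde g\mapsto\gamma(\widetilde g)$ assigning to each $\widetilde g\in\mathcal U_0$ the unique nearby periodic $\widetilde g$--geodesic, which is automatically nondegenerate. Recall that along any $\widetilde g$--geodesic $\gamma(\widetilde g)$ the quantity $\widetilde g(\dot\gamma,\dot\gamma)$ is constant, so its sign (equivalently, the sign of the energy $E_{\widetilde g}(\gamma(\widetilde g))$) determines whether the geodesic is spacelike, timelike, or lightlike.

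Next I would study the $C^k$ scalar function $\Phi:\mathcal U_0\to\R$ defined by $\Phi(\widetilde g)=E_{\widetilde g}(\gamma(\widetilde g))=E(\widetilde g,\gamma(\widetilde g))$, where $E$ is the generalized energy functional of Definition~\ref{def:energyfunc}. By the chain rule together with the fact that $\gamma_0$ is a critical point of $E_{g_0}$ (so $\frac{\partial E}{\partial\gamma}(g_0,\gamma_0)=0$), the directional derivative of $\Phi$ at $g_0$ along a tangent vector $h\in\mathds E$ is
\begin{equation*}
\dd\Phi(g_0)h=\frac{\partial E}{\partial g}(g_0,\gamma_0)h\stackrel{\eqref{eq:dfdg}}{=}\tfrac12\int_{S^1} h(\dot{\gamma_0},\dot{\gamma_0})\;\dd z.
\end{equation*}
Note that $\Phi(g_0)=E_{g_0}(\gamma_0)=0$ since $\gamma_0$ is lightlike.

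Then I would exhibit a specific compactly supported perturbation $h$ on which this derivative is nonzero. Since $\gamma_0$ is a non constant periodic geodesic, its image is a compact subset of $U$, and we may choose a smooth cutoff function $\psi:M\to\R$ with $\supp\psi\subset U$ and $\psi>0$ on the image of $\gamma_0$. Set $h=\psi\cdot g_\mathrm R\in\sect^k(TM^*\vee TM^*)$; this section has compact support contained in $U$, so it belongs to $\mathds E$ by property (ii) of Definition~\ref{def:ckwhitbanachspace}. Since $\gamma_0$ is non constant, $g_\mathrm R(\dot{\gamma_0},\dot{\gamma_0})>0$ on a set of positive measure in $S^1$, hence
\begin{equation*}
\dd\Phi(g_0)h=\tfrac12\int_{S^1}\psi(\gamma_0(z))\,g_\mathrm R(\dot{\gamma_0}(z),\dot{\gamma_0}(z))\;\dd z>0.
\end{equation*}

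Finally, I would conclude by considering the affine path $\widetilde g(\varepsilon)=g_0+\varepsilon h$. For $|\varepsilon|$ sufficiently small, $\widetilde g(\varepsilon)$ lies in $\mathcal U_0$, so $\gamma(\widetilde g(\varepsilon))$ is a nondegenerate periodic $\widetilde g(\varepsilon)$--geodesic near $\gamma_0$, and $\widetilde g(\varepsilon)-g_0=\varepsilon h$ vanishes outside $U$. Since $\Phi(\widetilde g(0))=0$ and $\frac{\dd}{\dd\varepsilon}\Phi(\widetilde g(\varepsilon))|_{\varepsilon=0}>0$, for small $\varepsilon>0$ we have $E_{\widetilde g(\varepsilon)}(\gamma(\widetilde g(\varepsilon)))>0$ (so $\gamma(\widetilde g(\varepsilon))$ is spacelike) and for small $\varepsilon<0$ it is negative (so $\gamma(\widetilde g(\varepsilon))$ is timelike), as required. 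The only slightly delicate point in this argument is the interchange of the two derivations in step two, which is legitimate because $E$ is $C^k$ jointly in $(g,\gamma)$ by Proposition~\ref{prop:fck} and $\gamma(\cdot)$ is $C^k$ by Proposition~\ref{prop:a1}; the rest is a direct application of previously established machinery.
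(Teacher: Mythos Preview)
Your proof is correct and follows essentially the same approach as the paper: apply Proposition~\ref{prop:a1}, differentiate the composite $\widetilde g\mapsto E_{\widetilde g}(\gamma(\widetilde g))$ using criticality of $\gamma_0$, and show this derivative is nonzero for a suitable $h$. Your version is in fact slightly more streamlined, since by choosing $h=\psi\,g_\mathrm R$ with $\supp\psi\subset U$ from the outset you obtain the localization $g_0-\widetilde g\equiv0$ outside $U$ in the same stroke, whereas the paper first argues with an arbitrary positive--definite $h$ and only afterwards multiplies by a bump function to achieve the support condition.
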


\begin{proof}
Consider the open neighborhood $\mathcal U_0$ and the $C^k$ map $\gamma:\mathcal U_0\to H^1(S^1,M)$ given by Proposition~\ref{prop:a1}. Then the above claim is equivalent to the following function changing sign in arbitrary neighborhoods of $g_0$, $$\phi:\mathcal U_0\ni g\longmapsto E(g,\gamma(g))\in\R,$$ where $E$ is the generalized energy functional \eqref{eq:efunct}. Notice that $\phi(g_0)=0$, so hence if $\phi$ does not change sign in some neighborhood of $g_0$, then $g_0$ would be a local extremum of $\phi$. In this case, it would be $\dd\phi(g_0)h=0$ for all $h\in\mathds E$, and 
\begin{eqnarray*}
\dd\phi(g_0)h &=& \frac{\partial f}{\partial g}(g_0,\gamma_0)h+\frac{\partial f}{\partial\gamma}(g_0,\gamma_0)\circ\dd\gamma(g_0)h\\
&=& \frac{\partial f}{\partial g}(g_0,\gamma_0)h\\
&=& \tfrac12\int_{S^1}h(\dot{\gamma_0},\dot{\gamma_0})\;\dd z.
\end{eqnarray*}
Nevertheless, the integral on the right hand side in the above equality cannot vanish for all $h\in\mathds E$. For instance, if $h$ is everywhere positive definite, i.e., a Riemannian metric tensor on $M$, then such quantity is strictly positive. Thus, arbitrary neighborhoods of $g_0$ contain metrics with timelike and metrics with spacelike periodic geodesics near $\gamma_0$. Once more, nondegeneracy follows from continuity.

In addition, assume that $\widetilde{g}$ is such a metric. By continuity, the difference $h=g_0-\widetilde{g}$ may be assumed sufficiently small so that for all $t\in[0,1]$, the sum $g_0+th$ is nondegenerate on $M$. If $U$ is any open subset of $M$ containing the image of $\gamma_0$, let $b:M\to[0,1]$ be a smooth function that is identically equal to $1$ near the image of $\gamma_0$ and vanishes outside $U$. Then $\widetilde{g}=g_0+bh$ coincides with $g_0$ outside $U$ and satisfies the required properties.
\end{proof}

\begin{corollary}\label{cor:klingworks}
Let $\gamma_0$ be an arbitrary prime $g_0$--geodesic with image in $K$ and $U$ an open subset of $M$ contained in $K$, that contains the image of $\gamma_0$. Then, arbitrarily near $g_0$ in $\A_{g_\mathrm A,\nu}$ there exists $g$ satisfying
\begin{itemize}
\item[(i)] the difference $g-g_0$ has support in $U$;
\item[(ii)] the unique prime $g$--geodesic $\gamma$ near $\gamma_0$, given by\footnote{See Proposition~\ref{prop:a1}.} $\gamma(g)$, is nondegenerate and its two--fold covering $\gamma^{(2)}$ is also nondegenerate.
\end{itemize}
\end{corollary}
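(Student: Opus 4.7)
The plan is to carry out two perturbations in succession, both supported in $U$: first using the Weak Bumpy Metric Theorem to make $\gamma(g)$ nondegenerate, and then a second perturbation, designed to directly modify the Poincar\'e map of the Jacobi equation, to remove the residual obstruction coming from $\gamma(g)^{(2)}$.

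First, I would restrict to the affine subspace $X_U = \{g_0 + h : h \in \mathds E,\ \supp h \subset U\} \cap \A_{g_\mathrm A,\nu}$. Inspecting the proof of the Weak Bumpy Metric Theorem~\ref{thm:weakbumpy}, the test perturbations constructed via Lemma~\ref{le:extension} to verify the transversality condition (eq-ii) can be chosen with support contained in an arbitrary open neighborhood of $\gamma_0(S^1)$; taking this neighborhood inside $U$ makes every such perturbation lie in $X_U$, so the Weak Bumpy Metric Theorem applies verbatim inside $X_U$. Arbitrarily close to $g_0$ in $X_U$ there is therefore a metric $g_1$ with $\gamma(g_1)$ nondegenerate, and by Proposition~\ref{prop:a1} this property persists on an open neighborhood $\mathcal V \subset X_U$ of $g_1$.

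To handle $\gamma(g_1)^{(2)}$, I decompose periodic Jacobi fields along it using the natural $\Z_2$ action of its isotropy: the symmetric part descends to a periodic Jacobi field along the prime $\gamma(g_1)$ and is therefore trivial by the previous step, while the antisymmetric part corresponds precisely to $-1$ being an eigenvalue of the monodromy $P_{g_1}$ of the $g_1$-Jacobi equation along $\gamma(g_1)$ over one full period. Thus the task reduces to perturbing $g_1$ inside $X_U$ so as to displace $-1$ from the spectrum of $P_g$.

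The main obstacle is that the abstract transversality formula~\eqref{eq:mixedderivative} vanishes identically when evaluated on an antisymmetric Jacobi field, because $\dot{\gamma(g_1)^{(2)}}(t + \tfrac12) = \dot{\gamma(g_1)^{(2)}}(t)$ while $J(t + \tfrac12) = -J(t)$, producing exact cancellation between the two half-arcs; this is precisely the strong degeneracy obstruction from Section~\ref{sec:strongdeg}. To sidestep it, I would work directly with the map $g \mapsto P_g$, which is $C^{k-2}$ on $\mathcal V$. Computing the first-order variation of $P_{g_1 + sh}$ in $s$ gives an integral over a \emph{single} period of $\gamma(g_1)$ of a quadratic form in $h$ and $\nabla h$ paired with parallel-transported initial data for the Jacobi equation, so none of the cancellation of~\eqref{eq:mixedderivative} survives. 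Combining Lemma~\ref{le:extension} with Proposition~\ref{prop:selfintersections} --- which, since $\gamma(g_1)$ is prime, guarantees that $\gamma(g_1)$ admits a non-self-intersecting sub-arc inside $U$ --- I would produce $h \in \mathds{E}$ supported in $U$ whose induced infinitesimal change of $P_{g_1 + sh}$ is nonzero on each generalized $(-1)$-eigenspace. Continuous dependence of eigenvalues then ensures that for all sufficiently small $s \neq 0$ the value $-1$ is no longer an eigenvalue of $P_{g_1 + sh}$, and any such $g = g_1 + sh$ satisfies both (i) and (ii).
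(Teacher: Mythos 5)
Your diagnosis of the obstruction is right --- the cancellation in~\eqref{eq:mixedderivative} between the two half-arcs of the iterate, and the identification of ``$\gamma^{(2)}$ degenerate while $\gamma$ is nondegenerate'' with $-1$ lying in the spectrum of the prime monodromy $P_{g_1}$ --- but the last step of your argument has a genuine gap. You produce an $h$ supported in $U$ whose first-order variation of $P_{g_1+sh}$ is ``nonzero on each generalized $(-1)$-eigenspace'' and then invoke continuous dependence of eigenvalues to conclude that $-1$ leaves the spectrum for small $s\ne 0$. That inference does not hold: a nonzero variation on the eigenspace can change the Jordan structure, or rotate eigenvectors, or shift a conjugate pair off the unit circle, while an eigenvalue of $P_{g_1+sh}$ remains pinned at $-1$; what one actually needs is a computation of the derivative of the eigenvalue configuration under the variation, carried out inside the symplectic group (which constrains which infinitesimal variations of $P_g$ are realizable at all, since eigenvalues occur in reciprocal and conjugate pairs). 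This is precisely the technical work that the paper delegates wholesale to Klingenberg's symplectic perturbation result \cite[Proposition~3.3.7]{kli} (also \cite{klitak}): that result shows that local, supported metric perturbations act transitively enough on the linearized Poincar\'e map to put it in any prescribed position, and your proposal silently assumes this output without establishing it.

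A secondary omission: the paper's proof of this corollary is essentially ``cite Klingenberg, then repair the one place it breaks down.'' The place it breaks is the lightlike case, where $T_{\gamma_0(t)}M \ne \mathbb{R}\dot\gamma_0(t)\oplus\dot\gamma_0(t)^\perp$ and Fermi coordinates are unavailable; the paper preprocesses this via Proposition~\ref{prop:lightpert}, first perturbing $g_0$ (still with support in $U$) so that the nearby geodesic becomes timelike or spacelike, and only then invoking Klingenberg. Your monodromy of the full Jacobi equation on $T_{\gamma(0)}M\oplus T_{\gamma(0)}M$ is defined regardless of causal character, which is an attractive feature, but without the Fermi normal form the surjectivity-type claim about realizing the needed variation of $P_g$ becomes harder to check, not easier --- and you do not check it. So either you should import the paper's causal preprocessing step explicitly, or you owe a direct argument that the perturbation map onto the space of admissible variations of the symplectic monodromy is onto the directions that move $-1$; as written, that is where the proof stops being a proof.
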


\begin{proof}
In the Riemannian case, this result\footnote{In fact, a more general result on the linearized Poincar\'e map of $\gamma_0$.} was proved by Klingenberg \cite[Proposition 3.3.7]{kli}, also present in a previous article by Klingenberg and Takens \cite{klitak}. This result ensures that in the perturbed metric $g$, the curve $\gamma_0$ remains a geodesic, i.e., $\gamma(g)=\gamma_0$. The proof employs only symplectic arguments, not using the positive--definite character of the metric. Thus, it carries over to the semi--Riemannian context, except for \emph{one point}. Namely, in the use of {\em Fermi coordinates} along $\gamma_0$, it is used that the tangent spaces $T_{\gamma_0(t)}M$ along $\gamma_0$ are spanned by the tangent vector $\dot\gamma_0(t)$ and its orthogonal space $\dot{\gamma_0}(t)^\perp$. In the general semi--Riemannian case, this fails to be true exactly when $\gamma_0$ is lightlike.

Nevertheless, under these circumstances, Proposition~\ref{prop:lightpert} applies. More precisely, it guarantees that it is possible to first perturb the metric $g_0$ to a new metric $\widetilde{g}$ arbitrarily near $g_0$, that coincides with $g_0$ outside $U$, and such that $\gamma=\gamma(\widetilde{g})$ has image contained in $U$ and it is not lightlike. Then, Klingenberg's perturbation argument can be applied to $\widetilde{g}$ along $\gamma$, with support in $U$, yielding a new metric having $\gamma$ and its two--fold covering $\gamma^{(2)}$ as nondegenerate geodesics.
\end{proof}

We now prove the key fact used to establish genericity of nondegeneracy for iterate geodesics. It is in great part an adaptation of \cite[Lemma 3.11]{biljavapic2}.

\begin{proposition}\label{prop:delcazzo}
Let $K\subset M$ be compact. Then for all $a>0$, $\M_K(a,2a)$ is dense in $\M_K(a,a)$.
\end{proposition}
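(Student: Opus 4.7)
The inclusion $\M_K(a,2a)\subset\M_K(a,a)$ is immediate from Lemma~\ref{le:mabmab}, so it remains to prove density. Fix $g_0\in\M_K(a,a)$ and an open neighborhood $\mathcal V$ of $g_0$ in $\A_{g_\mathrm A,\nu}$; my goal is to produce $g\in\mathcal V\cap\M_K(a,2a)$. For a prime curve $\gamma$ one has $\ea(\gamma)=\eo(\gamma)\le a$, so the only periodic $g$--geodesics in $K$ with $\eo\le a$ and $\ea\in(a,2a]$ are $n$--fold iterates $\gamma^{(n)}$, $n\ge2$, of prime $g$--geodesics $\gamma$ satisfying $\ea(\gamma)\le a$ and $n^2\ea(\gamma)\in(a,2a]$. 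By Lemma~\ref{le:noshortgeods} there exists $\widehat a>0$ bounding $\ea$ from below for every non--constant prime geodesic of a metric in some neighborhood of $g_0$, so the relevant $n$ are uniformly bounded by $\sqrt{2a/\widehat a}$.

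I first claim that only finitely many $S^1$--orbits of prime $g_0$--geodesics with image in $K$ satisfy $\ea\le a$. Indeed, if infinitely many such orbits existed, then the ODE--continuity argument from the proof of Lemma~\ref{le:lemaconver} (applied with $g_n\equiv g_0$) would extract from a sequence of representatives a subsequence converging to a non--constant periodic $g_0$--geodesic $\gamma_\infty$ with image in $K$ and $\ea(\gamma_\infty)\le a$. Since $g_0\in\M_K(a,a)$ forces $\gamma_\infty$ to be $S^1$--nondegenerate, the uniqueness clause of Proposition~\ref{prop:a1} makes the approximating orbits coincide eventually with the $S^1$--orbit of $\gamma_\infty$, a contradiction. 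Let $\gamma_1,\dots,\gamma_r$ be representatives of this finite list.

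I now apply successively to $\gamma_1,\dots,\gamma_r$ the Klingenberg--type perturbation of Corollary~\ref{cor:klingworks}, used in the strengthened form (referenced in its footnote) which controls the linearized Poincar\'e map of the prime geodesic and thereby makes nondegenerate all of the finitely many iterates $\gamma_i^{(n)}$ with $n^2\ea(\gamma_i)\le 2a$. Having inductively constructed $g_{i-1}\in\mathcal V\cap\M_K(a,a)$ so that for each $j<i$ the prime $g_{i-1}$--geodesic $\gamma_j(g_{i-1})$ (given by Proposition~\ref{prop:a1}) and all its relevant iterates are nondegenerate, I choose a thin tubular neighborhood $U_i$ of the image of $\gamma_i(g_{i-1})$ and apply the perturbation to obtain $g_i$ coinciding with $g_{i-1}$ off $U_i$, with $\gamma_i(g_i)$ and its relevant iterates nondegenerate. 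Choosing $g_i-g_{i-1}$ small enough, openness of $\M_K(a,a)$ (Corollary~\ref{cor:mkabopen}) keeps $g_i\in\mathcal V\cap\M_K(a,a)$, and continuous dependence of Jacobi fields on the metric preserves the nondegeneracies arranged at earlier steps, whose carriers meet $U_i$ only at the finitely many intersection points afforded by Lemma~\ref{le:finiteintersection}. Set $g=g_r$. Any periodic $g$--geodesic in $K$ with $\eo\le a$ and $\ea\le2a$ is either prime (and thus nondegenerate by $g\in\M_K(a,a)$) or equals $\gamma_i(g)^{(n)}$ for some $i$ and some relevant $n\ge 2$---because re--running the finiteness argument at $g$, whose closeness to $g_0$ makes Proposition~\ref{prop:a1} biject the relevant primes, shows that no new prime orbits appear---and is nondegenerate by construction, so $g\in\M_K(a,2a)$.

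The principal obstacle is the interference between successive perturbations: since the images of distinct prime geodesics may share finitely many points, the supports $U_i$ cannot be taken disjoint, and preserving at step $i$ the nondegeneracies arranged at earlier steps forces one simultaneously to localize the Klingenberg perturbations in thin tubular neighborhoods and to make their $C^k$--norm small enough that the robustness provided by Corollary~\ref{cor:mkabopen} and Proposition~\ref{prop:a1} absorbs the residual interference, while keeping the total deviation from $g_0$ inside the prescribed neighborhood $\mathcal V$. A secondary technical point is that Corollary~\ref{cor:klingworks} is stated only for the two--fold cover whereas the argument requires simultaneous nondegeneracy of all iterates $\gamma_i^{(n)}$ with $n^2\ea(\gamma_i)\le 2a$; this is the reason for invoking the underlying Poincar\'e--map formulation referenced in its footnote.
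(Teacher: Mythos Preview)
Your proof is correct and follows the same architecture as the paper's: finitely many prime $g_0$--orbits of total energy at most $a$, implicit--function tracking of each via Proposition~\ref{prop:a1}, a no--new--primes argument (the paper's Claim~\ref{cl:onlyggj}), and repeated Klingenberg perturbations with smallness chosen so that earlier nondegeneracies and membership in $\M_K(a,a)$ persist.

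You are in fact more careful than the paper on one technical point. The paper invokes only Corollary~\ref{cor:klingworks} as stated, i.e.\ nondegeneracy of the prime and its two--fold cover, and then concludes $g\in\M_K(a,2a)$. But an iterate $\beta^{(n)}$ with odd $n\ge3$ and $\ea(\beta)\in(a/n^2,2a/n^2]$ satisfies $\eo\le a$ and $a<\ea\le 2a$, yet is not the two--fold cover of anything, so the two--fold statement alone does not cover it. Your appeal to the underlying Poincar\'e--map formulation (the result of Klingenberg--Takens referenced in the footnote to Corollary~\ref{cor:klingworks}), which allows one to place the linearized Poincar\'e map in any prescribed open invariant set of jets and in particular to avoid all $n$--th roots of unity for the finitely many relevant $n\le\sqrt{2a/\widehat a}$, is exactly the right fix, and your discussion of the interference between successive perturbations matches what the paper does implicitly.
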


\begin{proof}
Let $g_0\in\M_K(a,a)$ and $\mathcal U$ be an arbitrary open neighborhood of $g_0$ in $\M_K(a,a)$. There exists only a {\em finite number} of geometrically distinct\footnote{Recall that from Example~\ref{ex:periodicgeod}, two periodic geodesics $\gamma_1,\gamma_2:S^1\to M$ are \emph{geometrically distinct} if $\gamma_1(S^1)$ and $\gamma_2(S^1)$ are distinct. In particular, geometrically distinct geodesics belong to different orbits of the action of $S^1$ on $H^1(S^1,M)$.} prime $g_0$--geodesics $\{\gamma_j\}_{j=1}^r$ of total energy less than or equal to $a$, and they are all nondegenerate by assumption. Namely, if there were infinitely many, since their image is in the compact subset $K\subset M$, they would accumulate to a necessarily degenerate prime $g_0$--geodesic of energy less than or equal to $a$, contradicting Lemma~\ref{le:lemaconver}.

For each $1\leq j\leq r$, Proposition~\ref{prop:a1} implies existence of open neighborhoods $\mathcal U_j$ of $g_0$ and $C^k$ maps
\begin{equation}\label{eq:gammmajimplicit}
\gamma_j:\mathcal U_j\la H^1(S^1,M),
\end{equation}
such that $\gamma_j(g)$ is the unique periodic $g$--geodesic near $\gamma_j$, and it is nondegenerate. Let $\mathcal V=\bigcap_{j=1}^r\mathcal U_j$. This is an open neighborhood of $g_0$ where the maps \eqref{eq:gammmajimplicit} are well--defined for all $1\leq j\leq r$, and satisfy
\begin{itemize}
\item[(a)] $\gamma_j(g)$ is a prime nondegenerate $g$--geodesic for all $g\in\mathcal V$;
\item[(b)] given $g\in\mathcal V$, if $\gamma$ is a prime $g$--geodesic near one of the $\gamma_j$'s, then $\gamma$ coincides with $\gamma_j(g)$.
\end{itemize}

\begin{claim}\label{cl:onlyggj}
Given $g$ sufficiently near $g_0$, then every periodic $g$--geodesics $\alpha$ with $\ea(\alpha)\le a$ coincides with one of the $\gamma_j(g)$'s.
\end{claim}

In fact, assume that this were not the case. Then there would exist a convergent sequence $\{g_n\}_{n\in\N}$ to $g_0$ and a sequence $\{\alpha_n\}_{n\in\N}$ of periodic $g_n$--geodesics with $\ea(\alpha_n)\le a$ and such that $\alpha_n$ does not coincide with any of the $\{\gamma_j(g_n)\}_{j=1}^r$. From (b), $\alpha_n$ must then stay away from some open subset of $H^1(S^1,M)$ containing the $\gamma_j$'s. Arguing as in the proof of Corollary~\ref{cor:mkabopen}, one would then obtain a $C^2$--limit $\alpha_\infty$ of (a suitable subsequence of) $\alpha_n$, which is a $g_0$--geodesic with $\ea(\alpha_\infty)\le a$, and that does not coincide with any of the $\gamma_j$'s. Since this is impossible, Claim~\ref{cl:onlyggj} is proved.

Finally, there exists $g\in\mathcal V$ such that all the $\gamma_j(g)$ are nondegenerate, as well as their two--fold iterates $\gamma_j(g)^{(2)}$. This follows from Corollary~\ref{cor:klingworks}. More precisely, Corollary~\ref{cor:klingworks} has to be used repeatedly for each $\gamma_j(g)$, $1\leq j\leq r$ and the perturbation at the $(j+1)^{\mbox{\tiny st}}$ step has to be sufficiently small so that $\gamma_1(g),\ldots,\gamma_j(g)$ remain nondegenerate together with their two--fold coverings. Moreover, as observed above, the perturbation $g$ of $g_0$ can be chosen in such a way that $g$ has no periodic geodesic of minimal energy less than or equal to $a$ that does not coincide
with any of the $\gamma_j(g_0)$'s. Then, it follows that $g\in\M_K(a,2a)$, for all its periodic geodesics of minimal energy less than or equal to $a$ and their two--fold coverings are nondegenerate. Finally, $$g\in\mathcal V\cap\M_K(a,2a)\subset\mathcal U\cap\M_K(a,2a),$$ which proves that $\mathcal M(a,2a)$ is dense in $\mathcal M(a,a)$.
\end{proof}

We are now ready to prove our generalized version of the Bumpy Metric Theorem.

\begin{bumpythm}\label{thm:bumpy}\index{Theorem!Bumpy Metric}\index{Bumpy Metric Theorem}
Let $M$ be a smooth $m$--dimensional manifold and fix $\mathds{E}$ a separable $C^k$ Whitney type Banach space of sections of $TM^*\vee TM^*$ that tend to zero at infinity, with $k\geq 3$. Fix $\nu\in\{0,\dots,m\}$ an index and let $g_\mathrm A\in\met_\nu^k(M)$ be such that $$\sup_{x\in M}\|g_\mathrm A(x)^{-1}\|_\mathrm R<+\infty.$$ Then the following set is generic in $\A_{g_\mathrm A,\nu}$\footnote{Recall Proposition~\ref{prop:affineworks}.}
$$\mathcal{G}_\Delta(M)=\left\{g\in\A_{g_\mathrm A,\nu}: g\text{ is bumpy}\right\}.$$
\end{bumpythm}

\begin{proof}
The proof is in great part adapted from the proofs of \cite[Theorem 3.14]{biljavapic2} and \cite[Theorem 1]{anosov}, however it is extended here to the non compact case, as discussed in the beginning of the chapter. We will first prove four claims about the sets $\M_K(a,b)$, see Definition~\ref{def:mkab}. These results, together with Proposition~\ref{prop:delcazzo}, allow to use an inductive argument similarly to the one employed by Anosov \cite{anosov}, concluding that each $\M_K(n,n)$ is dense in $\A_{g_\mathrm A,\nu}$. From Corollary~\ref{cor:mkabopen}, these are also open subsets, hence their intersection is generic. Finally, an exhaustion argument is used to finish the proof, removing dependence on the compact $K\subset M$.

First, notice that $\mathcal{G}_\Delta(M)$ can be regarded as $$\mathcal{G}_\Delta(M)=\left\{g\in\A_{g_\mathrm A,\nu}:\mbox{ all periodic }g\text{--geodesics are } S^1\mbox{--nondegenerate}\right\},$$ and consider $K$ any compact subset of $M$.

\begin{claim}\label{cl:b1}
For all $a>0$, $\mathcal G_*(M)\cap\M_K(a,2a)\subset\M_K(2a,2a)$.
\end{claim}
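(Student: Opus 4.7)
The plan is to prove the inclusion by a direct case analysis on whether a candidate periodic geodesic is prime or an iterate, exploiting the numerical relation $\eo(\gamma)=\ea(\gamma)/(\# S^1_\gamma)^2$ between the two energy notions.

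Concretely, I would fix $g\in\mathcal G_*(M)\cap\M_K(a,2a)$ and an arbitrary periodic $g$--geodesic $\gamma$ with image in $K$, $\eo(\gamma)\le 2a$ and $\ea(\gamma)\le 2a$, and show that $\gamma$ is $S^1$--nondegenerate. In the first case, suppose $\gamma$ is prime, i.e., $\# S^1_\gamma=1$ (see Definition~\ref{def:prime}). Then $S^1$--nondegeneracy of $\gamma$ is immediate from the hypothesis $g\in\mathcal G_*(M)$.

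In the second case, suppose $\gamma$ is an iterate, so that $n=\# S^1_\gamma\ge 2$. Then
\[
\eo(\gamma)=\frac{\ea(\gamma)}{n^2}\le\frac{2a}{4}=\frac{a}{2}\le a,
\]
while the assumption $\ea(\gamma)\le 2a$ is kept unchanged. Hence $\gamma$ is a periodic $g$--geodesic with image in $K$ satisfying $\eo(\gamma)\le a$ and $\ea(\gamma)\le 2a$, so that the defining conditions of $\M_K(a,2a)$ apply to $\gamma$; since $g\in\M_K(a,2a)$, this yields $S^1$--nondegeneracy of $\gamma$.

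Combining the two cases, every periodic $g$--geodesic $\gamma\subset K$ with $\eo(\gamma)\le 2a$ and $\ea(\gamma)\le 2a$ is $S^1$--nondegenerate, so $g\in\M_K(2a,2a)$, which proves the claim. There is no substantive obstacle here: the argument is a purely combinatorial splitting that leverages the quadratic drop of $\eo$ relative to $\ea$ under iteration, and the Weak Bumpy Metric Theorem~\ref{thm:weakbumpy} is used only to dispose of the prime case.
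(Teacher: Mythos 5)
Your proof is correct and follows the same route as the paper: split by whether $\gamma$ is prime or an iterate, use $g\in\mathcal G_*(M)$ for the prime case and $g\in\M_K(a,2a)$ for the iterate case via $\eo(\gamma)=\ea(\gamma)/(\#S^1_\gamma)^2\le 2a/4\le a$. You simply write out the arithmetic that the paper leaves implicit.
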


Choose $g\in\mathcal G_*(M)\cap\M_K(a,2a)$ and let $\gamma$ be a periodic $g$--geodesic with $\ea(\gamma)\le 2a$. If $\gamma$ is prime, then it is nondegenerate, for $g\in\mathcal G_*(M)$. If $\# S^1_{\gamma}\ge2$, then $\eo(\gamma)\le a$, and thus $\gamma$ is nondegenerate, for $g\in\M_K(a,2a)$.

\begin{claim}\label{cl:b2}
For all $a>0$, $\M_K\left(\tfrac32a,\tfrac32a\right)\cap\M_K(a,2a)$ is dense in $\M_K(a,2a)$.
\end{claim}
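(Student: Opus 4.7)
My plan is to exploit the Weak Bumpy Metric Theorem~\ref{thm:weakbumpy} together with the openness of $\M_K(a,2a)$ given by Corollary~\ref{cor:mkabopen}. The idea is that the only ``new'' nondegeneracy required to pass from $\M_K(a,2a)$ to $\M_K(\tfrac32a,\tfrac32a)\cap\M_K(a,2a)$ is nondegeneracy of prime periodic geodesics whose total energy lies in the interval $\left]a,\tfrac32a\right]$; genericity of the latter is exactly what Theorem~\ref{thm:weakbumpy} provides, so no further local perturbation argument is needed here.

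First, I would observe that the subset $\M_K(a,2a)$ is open in $\A_{g_\mathrm A,\nu}$ by Corollary~\ref{cor:mkabopen}, so by Lemma~\ref{le:genopenintersect} the intersection $\mathcal G_*(M)\cap\M_K(a,2a)$ is generic, hence in particular dense, in $\M_K(a,2a)$. It therefore suffices to prove the inclusion
\[
\mathcal G_*(M)\cap\M_K(a,2a)\subset\M_K\bigl(\tfrac32a,\tfrac32a\bigr)\cap\M_K(a,2a).
\]

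To verify this inclusion, fix $g\in\mathcal G_*(M)\cap\M_K(a,2a)$ and let $\gamma$ be a periodic $g$--geodesic with image in $K$ and $\ea(\gamma)\le\tfrac32a$ (note $\eo(\gamma)\le\ea(\gamma)\le\tfrac32a$ automatically). I would split into two cases according to the cardinality $n=\# S^1_\gamma$ of the isotropy group. If $n=1$, i.e., $\gamma$ is prime, then $\gamma$ is $S^1$--nondegenerate because $g\in\mathcal G_*(M)$. If instead $n\ge 2$, then
\[
\eo(\gamma)=\frac{\ea(\gamma)}{n^2}\le\frac{\tfrac32a}{4}=\tfrac{3}{8}a<a,\qquad \ea(\gamma)\le\tfrac32a<2a,
\]
so $\gamma$ falls within the range controlled by the hypothesis $g\in\M_K(a,2a)$, and is therefore $S^1$--nondegenerate. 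In both cases $\gamma$ is nondegenerate, hence $g\in\M_K(\tfrac32a,\tfrac32a)$, which gives the desired inclusion and finishes the proof.

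I do not anticipate any serious obstacle: the entire argument reduces to the arithmetic inequality $\tfrac32a/4<a$, which guarantees that nontrivial iterates whose total energy is at most $\tfrac32a$ have minimal energy strictly less than $a$ and are thus already controlled by $\M_K(a,2a)$. The only ``real'' analytic input is the Weak Bumpy Metric Theorem for prime geodesics and the openness statement of Corollary~\ref{cor:mkabopen}, both of which are already established. The conceptual point worth emphasising is that this claim is precisely what makes the inductive doubling scheme of Anosov work in our setting: $\mathcal G_*(M)$ handles primes once and for all, and the gap between $\M_K(a,2a)$ and $\M_K(\tfrac32a,\tfrac32a)$ is bridged without any further perturbation of the metric along a possibly strongly degenerate iterate.
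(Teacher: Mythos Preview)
Your proposal is correct and follows essentially the same approach as the paper: show the inclusion $\mathcal G_*(M)\cap\M_K(a,2a)\subset\M_K(\tfrac32a,\tfrac32a)$ via the prime/iterate case split, then invoke the Weak Bumpy Metric Theorem together with openness of $\M_K(a,2a)$ (Corollary~\ref{cor:mkabopen}) and Lemma~\ref{le:genopenintersect} to conclude density. Your arithmetic bound $\eo(\gamma)\le\tfrac{3}{8}a$ is in fact sharper than the paper's $\tfrac{3}{4}a$, but the argument is otherwise identical.
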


If we prove that $\mathcal G_*(M)\cap\M_K(a,2a)$ is contained in $\M_K\left(\tfrac32a,\tfrac32a\right)\cap\M_K(a,2a)$, then Claim~\ref{cl:b2} follows automatically from the Weak Bumpy Theorem~\ref{thm:weakbumpy}, since it implies that $\mathcal G_*(M)\cap\M_K(a,2a)$ is dense in $\M_K(a,2a)$, see Lemma~\ref{le:genopenintersect} and Corollary~\ref{cor:mkabopen}. In fact, choose $g\in\mathcal G_*(M)\cap\M_K(a,2a)$ and let $\gamma$ be a periodic $g$--geodesic such that $\ea(\gamma)\le\frac32a$. If $\gamma$ is prime, then it is nondegenerate because $g\in\mathcal G_*(M)$. If $\# S^1_{\gamma}\ge2$, then $\eo(\gamma)\le\frac34 a<a$, and thus $\gamma$ is nondegenerate, because $g\in\M_K(a,2a)$.

From Claim~\ref{cl:b2} and Proposition~\ref{prop:delcazzo}, it follows that for all $a>0$,
\begin{equation}\label{eq:3232aa}
\M_K\left(\tfrac32a,\tfrac32a\right)\mbox{ is dense in }\M_K(a,a).
\end{equation}

\begin{claim}\label{cl:b4}
For all $b>a$, $\M_K(b,b)$ is dense in $\M_K(a,a)$.
\end{claim}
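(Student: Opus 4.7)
The plan is to prove Claim~\ref{cl:b4} by iterating the density statement \eqref{eq:3232aa} and then applying the monotonicity property of the family $\M_K(\cdot,\cdot)$ provided by Lemma~\ref{le:mabmab}. The key observation is that the scaling factor $\tfrac{3}{2}>1$ in \eqref{eq:3232aa} yields a geometric progression $a,\tfrac{3}{2}a,\left(\tfrac{3}{2}\right)^2a,\dots$ that diverges to $+\infty$, so any prescribed threshold $b>a$ will eventually be surpassed.

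First, I would apply \eqref{eq:3232aa} inductively. Setting $a_n=\left(\tfrac{3}{2}\right)^n a$ for $n\in\N$, \eqref{eq:3232aa} gives that $\M_K(a_{n+1},a_{n+1})$ is dense in $\M_K(a_n,a_n)$ for every $n\geq0$. A standard topological fact asserts that density is transitive along chains of subspaces with the induced topology: if $A\subset B\subset C$ with $A$ dense in $B$ and $B$ dense in $C$, then $A$ is dense in $C$. Indeed, Corollary~\ref{cor:mkabopen} gives openness of each $\M_K(a_n,a_n)$ in $\A_{g_\mathrm A,\nu}$, and Lemma~\ref{le:mabmab} guarantees the inclusion chain $\cdots\subset\M_K(a_{n+1},a_{n+1})\subset\M_K(a_n,a_n)\subset\cdots\subset\M_K(a,a).$ A finite induction therefore yields that $\M_K(a_n,a_n)$ is dense in $\M_K(a,a)$ for every $n\in\N$.

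Next, given $b>a$, I would choose $n\in\N$ sufficiently large so that $a_n=\left(\tfrac{3}{2}\right)^na\geq b$. By Lemma~\ref{le:mabmab}, this ensures $\M_K(a_n,a_n)\subset\M_K(b,b)$. Since $\M_K(a_n,a_n)$ is dense in $\M_K(a,a)$ and contained in $\M_K(b,b)\subset\M_K(a,a)$, it follows immediately that $\M_K(b,b)$ is dense in $\M_K(a,a)$, concluding the proof of Claim~\ref{cl:b4}.

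I do not expect any serious obstacle here, since the argument is essentially a geometric iteration of the already--established density \eqref{eq:3232aa} combined with the monotonicity in Lemma~\ref{le:mabmab}. The subtlety lies entirely in the preceding Claim~\ref{cl:b2} and in Proposition~\ref{prop:delcazzo}, which together make \eqref{eq:3232aa} available; once this is in hand, Claim~\ref{cl:b4} is a straightforward bookkeeping step. Note that this claim will then be used together with Corollary~\ref{cor:mkabopen} and an exhaustion of $M$ by compacts to conclude that $\bigcap_{n\geq 1}\M_K(n,n)$ is generic, and ultimately that $\mathcal G_\Delta(M)$ is generic in $\A_{g_\mathrm A,\nu}$.
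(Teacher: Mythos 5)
Your proposal is correct and follows essentially the same route as the paper: iterate \eqref{eq:3232aa} to get that $\M_K\left((\tfrac32)^na,(\tfrac32)^na\right)$ is dense in $\M_K(a,a)$ for all $n$, pick $n$ with $(\tfrac32)^n a\geq b$, and conclude via the inclusion from Lemma~\ref{le:mabbab}. Your unpacking of the transitivity-of-density step is a minor elaboration that the paper treats as immediate; otherwise the arguments coincide.
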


An immediate induction argument using \eqref{eq:3232aa} shows that for all $n\in\N$, $\M_K\left((\frac32)^na,(\frac32)^na\right)$ is dense in $\M_K(a,a)$. Choosing $n$ such that $(\frac32)^n>b$, from Lemma~\ref{le:mabmab}, $$\M_K\left((\tfrac32)^na,(\tfrac32)^na\right)\subset\M_K(b,b)\subset\M_K(a,a),$$ and therefore $\M(b,b)$ is dense in $\M_K(a,a)$, proving Claim~\ref{cl:b4}.

Notice that for $b\le a$, $\M_K(b,b)$ contains $\M_K(a,a)$. Hence, for all $a$ and $b$, $\M_K(a,a)\cap\M_K(b,b)$ is dense in $\M_K(a,a)$.

\begin{claim}\label{cl:b5}
For all $a>0$, $\M_K(a,a)$ is dense in $\A_{g_\mathrm A,\nu}$.
\end{claim}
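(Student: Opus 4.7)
The plan is to combine the uniform lower bound on the total energy of prime geodesics near a fixed metric, given by Lemma~\ref{le:noshortgeods}, with the density statement of Claim~\ref{cl:b4} that allows passing from small to large values of the energy parameter. Concretely, fixing $g_0\in\A_{g_\mathrm A,\nu}$ and $a>0$, I would first produce a small $a_0>0$ such that $g_0$ itself lies in $\M_K(a_0,a_0)$, and then invoke Claim~\ref{cl:b4} to find metrics in $\M_K(a,a)$ arbitrarily close to $g_0$.

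For the first step, I would apply Lemma~\ref{le:noshortgeods} to $g_0$ and $K$ to obtain an open neighborhood $\mathcal V$ of $g_0$ and a constant $\widehat a>0$ such that, for every $g\in\mathcal V$, every nonconstant prime periodic $g$--geodesic with image in $K$ has total energy at least $\widehat a$. Since the minimal energy of an arbitrary iterate coincides with the total energy of its prime generator (compare \eqref{eq:ene}), every nonconstant periodic $g$--geodesic $\gamma$ with image in $K$ then satisfies $\eo(\gamma)\ge\widehat a$ as well. Setting $a_0=\widehat a/2$, for any $g\in\mathcal V$ there is no nonconstant periodic $g$--geodesic contained in $K$ with $\eo(\gamma)\le a_0$ and $\ea(\gamma)\le a_0$, so the nondegeneracy condition defining $\M_K(a_0,a_0)$ holds vacuously on $\mathcal V$. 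Consequently $\mathcal V\subset\M_K(a_0,a_0)$, and in particular $g_0\in\M_K(a_0,a_0)$.

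For the second step, I would split into two cases. If $a\le a_0$, Lemma~\ref{le:mabmab} gives $\M_K(a_0,a_0)\subset\M_K(a,a)$, so $g_0\in\M_K(a,a)$ and there is nothing further to do. If $a>a_0$, Claim~\ref{cl:b4} applied with parameters $a_0$ and $a$ tells us that $\M_K(a,a)$ is dense in $\M_K(a_0,a_0)$; hence every neighborhood of $g_0$, meeting $\M_K(a_0,a_0)$, also meets $\M_K(a,a)$. Since $g_0\in\A_{g_\mathrm A,\nu}$ was arbitrary, this will show that $\M_K(a,a)$ is dense in $\A_{g_\mathrm A,\nu}$.

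I do not expect any real obstacle here; the argument is essentially a bookkeeping combination of the absence of short geodesics (Lemma~\ref{le:noshortgeods}) with the density already obtained in Claim~\ref{cl:b4}. The only subtle point worth flagging is the observation that, in a neighborhood of any fixed metric, the constraint defining $\M_K(a_0,a_0)$ becomes \emph{vacuous} for sufficiently small $a_0$ depending on $g_0$; this vacuous base case is what gives a starting point from which Claim~\ref{cl:b4} can propagate density up to arbitrary $a>0$.
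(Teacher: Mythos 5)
Your proposal is correct and follows essentially the same route as the paper: apply Lemma~\ref{le:noshortgeods} to produce a small threshold $\widehat a>0$ below which the defining condition of $\M_K(\cdot,\cdot)$ is vacuous near $g_0$, and then propagate to arbitrary $a$ via Claim~\ref{cl:b4}. The only cosmetic difference is that you explicitly split into the cases $a\le a_0$ and $a>a_0$, whereas the paper absorbs both into the remark following Claim~\ref{cl:b4} (that $\M_K(a,a)\cap\M_K(b,b)$ is always dense in $\M_K(a,a)$).
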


Fix $a>0$ and $g$ in $\A_{g_\mathrm A,\nu}$. From Lemma~\ref{le:noshortgeods}, there exists $\widehat a>0$ such that all periodic $g$--geodesics have total energy greater than or equal to $\widehat a$. Thus, $g\in\M_K\left(\tfrac12{\widehat a},\tfrac12{\widehat a}\right)$. Given any neighborhood $\mathcal U$ of $g$ in $\A_{g_\mathrm A,\nu}$, since $\M_K\left(\tfrac12{\widehat a},\tfrac12{\widehat a}\right)$ is open in $\A_{g_\mathrm A,\nu}$, by Claim~\ref{cl:b4}, $\mathcal U\cap\M_K\left(\tfrac12{\widehat a},\tfrac12{\widehat a}\right)\cap\M_K(a,a)$ is nonempty. Thus, $\M_K(a,a)$ is dense in $\A_{g_\mathrm A,\nu}$, proving Claim~\ref{cl:b5}.

Let $$\mathcal G_\Delta^K(M)=\left\{g\in\A_{g_\mathrm A,\nu}:\begin{array}{c}\text{ all periodic } g\text{--geodesics with image in } K \\ \mbox{ are }S^1\mbox{--nondegenerate}\end{array}\right\}.$$ It is clear that $$\mathcal G_\Delta^K(M)=\bigcap_{n\in\N}\M_K(n,n),$$ and this is a countable intersection of open and dense subsets of $\A_{g_\mathrm A,\nu}$, see Corollary~\ref{cor:mkabopen} and Claim~\ref{cl:b5}. Thus, for any compact $K\subset M$, the subset $\mathcal G_\Delta^K(M)$ is generic in $\A_{g_\mathrm A,\nu}$.

Consider an exhaustion of $M$ by compact subsets, i.e., a sequence of compact subsets $\{K_n\}_{n\in\N}$ of $M$, with $K_n$ contained in the interior of $K_{n+1}$ for all $n\in\N$ and $M=\bigcup_{n\in\N} K_n$. It is also clear that $$\mathcal G_\Delta(M)=\bigcap_{n\in\N}\mathcal G_\Delta^{K_n}(M),$$ and this is a countable intersection of generic subsets of $\A_{g_\mathrm A,\nu}$. Therefore, from Lemma~\ref{le:gencountintersect}, the subset of bumpy metrics $\mathcal G_\Delta(M)$ is generic in $\A_{g_\mathrm A,\nu}$, concluding the proof.
\end{proof}

%
%

\section{\texorpdfstring{Bumpy Metric Theorem in the $C^\infty$--topology}{Bumpy Metric Theorem in the smooth topology}}
\label{sec:smooth1}

In this last section, we extend the Bumpy Metric Theorem~\ref{thm:bumpy} to the $C^\infty$--topology, following closely the approach of Biliotti, Javaloyes and Piccione \cite[Appendix B]{biljavapic2}. Recall that the statement of the Bumpy Metric Theorem~\ref{thm:bumpy} guarantees genericity of bumpy metrics in open subsets of the form $\A_{g_\mathrm A,\nu}$, described in Proposition~\ref{prop:affineworks}. These are formed by metrics in $\met_\nu^k(M)$, hence of class $C^k$. Replacing $\met_\nu^k(M)$ with $\met_\nu^\infty(M)$ restrains the use of most techniques developed in Chapters~\ref{chap3} and~\ref{chap4}, that apply only to Banach spaces. As observed in Remark~\ref{re:sectinftyfrechet}, requiring smoothness of tensors gives rise to a Fr\'echet structure in the space of sections, see Definition~\ref{def:frechet}.

None of the genericity criteria established in Chapter~\ref{chap4} applies in this context, since the Sard--Smale Theorem~\ref{thm:sardsmale}, which is keystone in the proof of all these criteria, does not have a sufficiently strong version for Fr\'echet spaces. Nevertheless, as pointed out by Biliotti, Javaloyes and Piccione \cite{biljavapic}, there is a standard intersection argument due to Taubes, discussed by Floer, Hofer and Salamon \cite{fhs}, that allows to overcome these technical difficulties. This is a sort of general algorithm to extend genericity results in the $C^k$--topology to the $C^\infty$--topology, used in several papers such as \cite{metmna,biljavapic,biljavapic2,gj}. Namely, it uses the genericity of a certain property in the $C^k$--topology, for $k\geq k_0$, to infer genericity of the same property in the $C^\infty$--topology.

Let us first make a few remarks on the considered topologies. Fix an index $\nu\in\{0,\dots,m\}$ and a smooth auxiliary metric $g_\mathrm A\in\met_\nu^\infty(M)$, such that $\sup_{x\in M}\|g_\mathrm A(x)^{-1}\|_\mathrm R<+\infty.$ For each $k\geq 3$, let $\mathds{E}^k$ be any separable $C^k$ Whitney type Banach space of sections of $TM^*\vee TM^*$ that tend to zero at infinity, for instance $\mathds E^k=\sect_0^k(TM^*\vee TM^*)$. Then, from Proposition~\ref{prop:affineworks},
\begin{equation*}
\A_{g_\mathrm A,\nu}^k=(g_\mathrm A+\mathds E^k)\cap\met_\nu^k(M)
\end{equation*}
is an open subset of the affine Banach space $g_\mathrm A+\mathds E^k$. The countable intersection
\begin{equation}\label{eq:ainfty}
\A_{g_\mathrm A,\nu}^\infty=\bigcap_{k\geq 3}\A_{g_\mathrm A,\nu}^k
\end{equation}
hence admits a family of inclusions $i_k:\A_{g_\mathrm A,\nu}^\infty\hookrightarrow\A_{g_\mathrm A,\nu}^k$, for all $k\geq3$. The topology considered in this intersection $\A_{g_\mathrm A,\nu}^\infty$ is the one induced by the whole family of inclusions $\{i_k\}_{k\geq3}$, i.e., the smallest topology that makes all of these inclusions continuous. Equivalently, this topology on $\A_{g_\mathrm A,\nu}^\infty$ is such that a subset $U\subset\A_{g_\mathrm A,\nu}^\infty$ is open if and only if there exist $k_0\geq3$ and an open subset $U^{k_0}$ of $\A_{g_\mathrm A,\nu}^{k_0}$ such that $U=U^{k_0}\cap\A_{g_\mathrm A,\nu}^\infty$.

This is the most natural topology to be considered in this intersection, and coincides with the so--called $C^\infty$ topology\index{Topology!$C^\infty$} of $$\A_{g_\mathrm A,\nu}^\infty=(g_\mathrm A+\mathds E^{\infty})\cap\met_\nu^\infty(M)$$ considered as an affine Fr\'echet space, where $\mathds E^\infty=\bigcap_{k\geq3}\mathds E^k$, or simply $\sect_0^\infty(TM^*\vee TM^*)$ in case $\mathds E^k$ was chosen as $\sect^k_0(TM^*\vee TM^*)$. The Fr\'echet space $\mathds E^\infty$ is here considered with the topology induced by the countable family of semi--norms given by the Banach norms $\|\cdot\|_{\mathds E^k}$ of each separable $C^k$ Whitney type Banach space of sections of $TM^*\vee TM^*$ that tend to zero at infinity, see Definition~\ref{def:ckwhitbanachspace} and Lemma~\ref{le:tvshell}.

\begin{remark}\label{re:finerthanany}
Notice that the topology on $\A_{g_\mathrm A,\nu}^\infty$ is hence finer than any topology $\tau_k$ induced by a single inclusion map $i_k:\A_{g_\mathrm A,\nu}^\infty\hookrightarrow\A_{g_\mathrm A,\nu}^k$, i.e., a $\tau_k$--open subset is always open in the considered topology (and the converse does not necessarily hold). In fact, the considered topology on $\A_{g_\mathrm A,\nu}^\infty$ is given by $\bigcup_{k\geq3}\tau_k$.
\end{remark}

Our version of the Bumpy Metric Theorem~\ref{thm:bumpy} in this $C^\infty$--topology will give genericity of smooth bumpy metrics in $\A_{g_\mathrm A,\nu}^\infty$, for a given choice of an index $\nu$, a smooth auxiliary metric $g_\mathrm A$ and a family $\{\mathds E^k\}_{k\geq3}$ where each $\mathds E^k$ is a separable $C^k$ Whitney type Banach spaces of sections of $TM^*\vee TM^*$ that tend to zero at infinity. More precisely, define
\begin{equation*}
\mathcal G_\Delta^\infty(M)=\{g\in\A_{g_\mathrm A,\nu}^\infty : g\mbox{ is bumpy}\},
\end{equation*}
and notice that $\mathcal G_\Delta^\infty(M)=\bigcap_{k\geq3}\mathcal G^k_\Delta(M)$, where
\begin{equation*}
\mathcal G^k_\Delta(M)=\left\{g\in\A^k_{g_\mathrm A,\nu}: g \mbox{ is bumpy}\right\}.
\end{equation*}
Recall that for each $k\geq 3$, the Bumpy Metric Theorem~\ref{thm:bumpy} gives genericity of $\mathcal G^k_\Delta(M)$ in $\A^k_{g_\mathrm A,\nu}$. We will also need the following elementary lemma.

\begin{lemma}\label{le:dotausk}
Let $X$ be a metric space, $D$ a dense subset of $X$ and $U$ an open and dense subset of $X$. Then $U\cap D$ is dense in $D$. 
\end{lemma}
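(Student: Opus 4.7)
The plan is to unwind the definition of density in the subspace topology and apply the density hypotheses on $D$ and $U$ in sequence. To show that $U\cap D$ is dense in $D$, I would verify that every nonempty relatively open subset of $D$ meets $U\cap D$. Such a relatively open subset has the form $V\cap D$ where $V\subset X$ is open, and it is nonempty precisely when $V\neq\emptyset$ (using density of $D$ to guarantee $V\cap D\neq\emptyset$ whenever $V\neq\emptyset$).

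First, I would fix an arbitrary nonempty open subset $V\subset X$ such that $V\cap D\neq\emptyset$, and reduce the claim to showing $V\cap U\cap D\neq\emptyset$. Next, since $U$ is open and dense in $X$, the intersection $V\cap U$ is open and nonempty (density of $U$ ensures that any nonempty open set $V$ meets $U$). Finally, $V\cap U$ being a nonempty open subset of $X$, density of $D$ yields $(V\cap U)\cap D\neq\emptyset$, which is what was needed.

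There is no real obstacle here; the statement is a direct two--step application of density, and the only subtlety is keeping straight which set is dense in which ambient space when passing to the subspace topology on $D$. The whole proof fits in a few lines.
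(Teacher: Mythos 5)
Your proposal is correct and follows essentially the same two--step argument as the paper: take an arbitrary nonempty open $V\subset X$, use density of $U$ to get $V\cap U$ open and nonempty, then use density of $D$ to conclude $V\cap U\cap D\neq\emptyset$. The only cosmetic difference is that the paper also notes at the outset that $V\cap D\neq\emptyset$ follows from density of $D$, which you fold into the observation that nonempty relatively open subsets of $D$ correspond to nonempty open $V$.
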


\begin{proof}
Let $V$ be any nonempty open subset of $X$. Then since $D$ is dense in $X$, $V\cap D\neq\emptyset$. Since $U$ is open, also $U\cap V$ is open and nonempty, hence $U\cap V\cap D\neq\emptyset$. Thus, the nonempty subset $V\cap D$ open on $D$ intersects $U\cap D$. Since every open subset of $D$ is of this form, it follows that $U\cap D$ is dense in $D$.
\end{proof}

We are now ready to state and prove the $C^\infty$ version of the Bumpy Metric Theorem~\ref{thm:bumpy}, using the above results.

\begin{cibumpythm}\index{Theorem!$C^\infty$ Bumpy Metric}\index{Bumpy Metric Theorem!$C^\infty$}
Consider choices as above for $\nu$, $g_\mathrm A$ and $\{\mathds E^k\}_{k\geq 3}$, and the $C^\infty$--topology induced in the intersection $\A_{g_\mathrm A,\nu}^\infty$. Then $\mathcal G_\Delta^\infty(M)$ is generic in $\A_{g_\mathrm A,\nu}^\infty$.
\end{cibumpythm}

\begin{proof}
As above mentioned, this proof is in great part adapted from \cite{biljavapic2}. Since we are not assuming compactness of the base manifold $M$, it will be necessary to use an exhaustion argument. Thus, consider $\{K_j\}_{j\in\N}$ an exhaustion of $M$ by compacts, i.e., a sequence of compact subsets $\{K_j\}_{j\in\N}$ of $M$, with $K_j$ contained in the interior of $K_{j+1}$ for all $j\in\N$ and $M=\bigcup_{j\in\N} K_j$. Define for each $j\in\N$ and $n\in\N$ the following subsets of $\A_{g_\mathrm A,\nu}^\infty$
\begin{equation}\label{eq:inftynj}
\A^{\infty,n,j}_{g_\mathrm A,\nu}=\left\{g\in\A^\infty_{g_\mathrm A,\nu}: \begin{array}{c} \mbox{all periodic } g\mbox{--geodesics }\gamma\mbox{ with } \gamma(S^1)\subset K_j \\ \mbox{ and } \ea(\gamma)<n \mbox{ are nondegenerate}\end{array}\right\}.
\end{equation}
Notice that $\mathcal G_\Delta^\infty(M)=\bigcap_{n,j\in\N}\A^{\infty,n,j}_{g_\mathrm A,\nu}$, hence it suffices to prove that for each $n\in\N$ and $j\in\N$, the subset $\A^{\infty,n,j}_{g_\mathrm A,\nu}$ is open and dense in $\A^{\infty}_{g_\mathrm A,\nu}$. It then follows that $\mathcal G_\Delta^\infty(M)$ contains a countable intersection of open dense subsets, and is hence generic.\footnote{Recall Definition~\ref{def:generic}.}

\begin{claim}\label{cl:njopen}
For each $n\in\N$ and $j\in\N$ the subset $\A^{\infty,n,j}_{g_\mathrm A,\nu}$ is open in $\A^{\infty}_{g_\mathrm A,\nu}$.
\end{claim}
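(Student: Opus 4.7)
My strategy is to reduce the openness of $\A^{\infty,n,j}_{g_\mathrm A,\nu}$ in the $C^\infty$--topology to openness of an analogous subset in some finite Banach topology, where Lemma~\ref{le:lemaconver} and its exploitation in Corollary~\ref{cor:mkabopen} directly apply. More precisely, by the description of the topology on $\A^\infty_{g_\mathrm A,\nu}$ recorded just before Remark~\ref{re:finerthanany}, each inclusion $i_k\colon\A^\infty_{g_\mathrm A,\nu}\hookrightarrow\A^k_{g_\mathrm A,\nu}$ is continuous. Thus, if I exhibit, for some fixed $k\geq3$, an open subset $V\subset\A^k_{g_\mathrm A,\nu}$ whose intersection with $\A^\infty_{g_\mathrm A,\nu}$ equals $\A^{\infty,n,j}_{g_\mathrm A,\nu}$, the claim follows. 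The natural candidate is $\A^{k,n,j}_{g_\mathrm A,\nu}$ defined by the same condition as in \eqref{eq:inftynj}, but on periodic $g$--geodesics for $g\in\A^k_{g_\mathrm A,\nu}$; clearly $i_k^{-1}\bigl(\A^{k,n,j}_{g_\mathrm A,\nu}\bigr)=\A^{\infty,n,j}_{g_\mathrm A,\nu}$, since the defining condition is intrinsic to the metric.

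The problem is therefore reduced to showing that $\A^{k,n,j}_{g_\mathrm A,\nu}$ is open in $\A^k_{g_\mathrm A,\nu}$, which I would establish by verifying that its complement is sequentially closed, in the spirit of the proof of Corollary~\ref{cor:mkabopen}. Namely, given a sequence $\{g_\ell\}_{\ell\in\N}$ in $\A^k_{g_\mathrm A,\nu}\setminus\A^{k,n,j}_{g_\mathrm A,\nu}$ converging to some $g_\infty$, each $g_\ell$ admits a degenerate periodic $g_\ell$--geodesic $\gamma_\ell$ with $\gamma_\ell(S^1)\subset K_j$ and $\ea(\gamma_\ell)<n$. Applying Lemma~\ref{le:lemaconver} with $K=K_j$ and $b=n$ yields a subsequence of $\{\gamma_\ell\}_{\ell\in\N}$ converging in $C^k$ to a non constant degenerate periodic $g_\infty$--geodesic $\gamma_\infty$ with $\gamma_\infty(S^1)\subset K_j$, and by continuity of the $g_\mathrm{R}$--energy under $C^1$--convergence, $\ea(\gamma_\infty)=\lim_{\ell\to+\infty}\ea(\gamma_\ell)\leq n$.

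The only delicate point is passing from $\ea(\gamma_\infty)\leq n$ to the strict bound $\ea(\gamma_\infty)<n$ needed to place $g_\infty$ in the complement. I would address this by showing that one may equivalently take the definition of $\A^{k,n,j}_{g_\mathrm A,\nu}$ with the non strict condition $\ea(\gamma)\leq n$: this leaves the countable intersection $\bigcap_{n,j\in\N}\A^{\infty,n,j}_{g_\mathrm A,\nu}=\mathcal G^\infty_\Delta(M)$ unchanged, since every periodic geodesic has finite total energy and its image, being compact, lies in some $K_j$ of the exhaustion. For the non strict version, the argument above closes cleanly, because $\ea(\gamma_\ell)\leq n$ is preserved in the $C^1$--limit, so $\gamma_\infty$ itself witnesses $g_\infty$ in the complement.

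The hard part is really just this interplay between the strict inequality in \eqref{eq:inftynj} and the limiting procedure, and it is resolved by the harmless reformulation above; the heart of the argument is the compactness provided by Lemma~\ref{le:lemaconver}, which is precisely the non--compact semi--Riemannian analogue of the convergence statements used in the proof of Corollary~\ref{cor:mkabopen}. Once Claim~\ref{cl:njopen} is established, openness of $\A^{\infty,n,j}_{g_\mathrm A,\nu}$ in $\A^\infty_{g_\mathrm A,\nu}$ follows immediately by pulling back along $i_k$.
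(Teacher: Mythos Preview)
Your approach is essentially the same as the paper's: pull back an open subset from $\A^k_{g_\mathrm A,\nu}$ via the continuous inclusion $i_k$, where openness at the $C^k$ level follows from the compactness argument of Lemma~\ref{le:lemaconver}. The paper simply cites Corollary~\ref{cor:mkabopen} directly, identifying $\A^{\infty,n,j}_{g_\mathrm A,\nu}=\A^\infty_{g_\mathrm A,\nu}\cap\M^k_{K_j}(n,n)$, rather than re-deriving the closedness of the complement as you do; your treatment of the strict versus non strict inequality is in fact more careful than the paper's, which is slightly inconsistent on this point but harmless for the same reason you note.
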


Notice that\footnote{See \eqref{eq:inftynj} and \eqref{eq:ainfty}.} for each $k\geq3$, 
\begin{equation}\label{eq:seila}
\A^{\infty,n,j}_{g_\mathrm A,\nu}=\A^{\infty}_{g_\mathrm A,\nu}\cap\M^k_{K_j}(n,n),
\end{equation}
where $\M_{K_j}^k(n,n)$ is given in Definition~\ref{def:mkab}, and the index $k$ stress the choice of regularity $C^k$ in that definition. More precisely, $$\M^k_{K_j}(n,n)=\left\{g\in\A^k_{g_\mathrm A,\nu}: \begin{array}{c} \mbox{all periodic } g\mbox{--geodesics }\gamma\mbox{ with } \gamma(S^1)\subset K_j \\ \mbox{ and } \ea(\gamma)<n \mbox{ are nondegenerate}\end{array}\right\}.$$ Fix $k\geq 3$. Then Corollary~\ref{cor:mkabopen} gives that $\M^k_{K_j}(n,n)$ is open in $\A^k_{g_\mathrm A,\nu}$ for all $n,j\in\N$. From Remark~\ref{re:finerthanany}, this implies that $\A^{\infty,n,j}_{g_\mathrm A,\nu}$ is also open in $\A^{\infty}_{g_\mathrm A,\nu}$, since\footnote{Notice that the intersection $\A^{\infty,n,j}_{g_\mathrm A,\nu}=\A^{\infty}_{g_\mathrm A,\nu}\cap\M^k_{K_j}(n,n)$ is open in $\A^{\infty}_{g_\mathrm A,\nu}$ with the topology induced by the inclusion $i_k:\A^{\infty}_{g_\mathrm A,\nu}\hookrightarrow\A^k_{g_\mathrm A,\nu}$. The $C^\infty$--topology on $\A^{\infty}_{g_\mathrm A,\nu}$ is finer than any of these topologies, for it is induced by the entire family $\{i_k\}_{k\geq3}$, hence $\A^{\infty,n,j}_{g_\mathrm A,\nu}$ is open in $\A^{\infty}_{g_\mathrm A,\nu}$.} it is $\tau_k$--open, concluding the proof of Claim~\ref{cl:njopen}.

\begin{claim}\label{cl:njdense}
For each $n\in\N$ and $j\in\N$ the subset $\A^{\infty,n,j}_{g_\mathrm A,\nu}$ is dense in $\A^{\infty}_{g_\mathrm A,\nu}$.
\end{claim}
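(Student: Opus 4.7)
The plan is to combine the $C^k$-version of the Bumpy Metric Theorem~\ref{thm:bumpy} with a standard smoothing argument of Taubes type, using the identity \eqref{eq:seila}, namely $\A^{\infty,n,j}_{g_\mathrm A,\nu}=\A^\infty_{g_\mathrm A,\nu}\cap\M^k_{K_j}(n,n)$, valid for every $k\geq3$. The characterization of $C^\infty$-open sets in $\A^\infty_{g_\mathrm A,\nu}$ given in the paragraph preceding Remark~\ref{re:finerthanany} reduces the matter to the following: for every $k\geq3$ and every $C^k$-open subset $U^k$ of $\A^k_{g_\mathrm A,\nu}$ with $g_0\in U^k\cap\A^\infty_{g_\mathrm A,\nu}$, one must exhibit a smooth metric in $U^k\cap\A^{\infty,n,j}_{g_\mathrm A,\nu}$.

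To carry this out I would fix $k\geq3$ and first invoke Corollary~\ref{cor:mkabopen} together with the Bumpy Metric Theorem~\ref{thm:bumpy} applied at regularity $C^k$ to obtain that $\M^k_{K_j}(n,n)$ is open and dense in $\A^k_{g_\mathrm A,\nu}$. Next I would establish that $\A^\infty_{g_\mathrm A,\nu}$ is $C^k$-dense in $\A^k_{g_\mathrm A,\nu}$ by a standard smoothing procedure: any $h\in\mathds E^k$ can be approximated in the $\mathds E^k$-norm by smooth sections in $\mathds E^\infty=\bigcap_{\ell\geq3}\mathds E^\ell$ (the analog for sections of tensor bundles of Corollary~\ref{cor:cinftyhk2}), and sufficiently fine approximations preserve the index $\nu$ by the openness of $\A^k_{g_\mathrm A,\nu}$ in $g_\mathrm A+\mathds E^k$ from Proposition~\ref{prop:affineworks}. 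Applying Lemma~\ref{le:dotausk} with $X=\A^k_{g_\mathrm A,\nu}$, $D=\A^\infty_{g_\mathrm A,\nu}$ and the open dense subset $\M^k_{K_j}(n,n)$ then yields that $\M^k_{K_j}(n,n)\cap\A^\infty_{g_\mathrm A,\nu}$ is $C^k$-dense in $\A^\infty_{g_\mathrm A,\nu}$, and by \eqref{eq:seila} this intersection is exactly $\A^{\infty,n,j}_{g_\mathrm A,\nu}$. Since a $C^\infty$-open neighborhood of $g_0$ is, by the cited characterization, of the form $U^k\cap\A^\infty_{g_\mathrm A,\nu}$ for some $k\geq3$ and some $C^k$-open $U^k$, it must meet $\A^{\infty,n,j}_{g_\mathrm A,\nu}$, proving Claim~\ref{cl:njdense}.

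The hard part will be the global smoothing step on the non-compact manifold $M$, which must preserve the tending-to-zero-at-infinity condition of Definition~\ref{def:tendstozero} encoded by the Whitney type Banach space $\mathds E^k$. Concretely I would handle this via a compact exhaustion $\{K_\ell\}_{\ell\in\N}$ of $M$ together with convolution by mollifiers of shrinking support, assembled through a locally finite smooth partition of unity; this produces smooth approximations in $\mathds E^\infty$ of any element of $\mathds E^k$ without disturbing its behavior at infinity. Once this smoothing is in hand, the argument is essentially parallel to the compact case in \cite[Appendix B]{biljavapic2}. Combining the density just sketched with the openness from Claim~\ref{cl:njopen}, the set $\mathcal G^\infty_\Delta(M)=\bigcap_{n,j\in\N}\A^{\infty,n,j}_{g_\mathrm A,\nu}$ becomes a countable intersection of open dense subsets in the $C^\infty$-topology, hence generic in $\A^\infty_{g_\mathrm A,\nu}$.
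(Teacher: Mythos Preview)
Your proposal is correct and follows essentially the same route as the paper: fix $k\geq3$, use \eqref{eq:seila}, invoke openness and density of $\M^k_{K_j}(n,n)$ in $\A^k_{g_\mathrm A,\nu}$ together with $C^k$-density of $\A^\infty_{g_\mathrm A,\nu}$ in $\A^k_{g_\mathrm A,\nu}$, and apply Lemma~\ref{le:dotausk}. The only cosmetic differences are that the paper cites Claim~\ref{cl:b5} directly (rather than the full Bumpy Metric Theorem) for the density of $\M^k_{K_j}(n,n)$, and dispatches the smoothing step by a one-line appeal to the Stone--Weierstrass Theorem~\ref{thm:stw2} rather than spelling out a mollifier/partition-of-unity argument; your more explicit handling of the $C^\infty$ versus $\tau_k$ topology issue is in fact clearer than the paper's.
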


Fix $n,j\in\N$, $k\geq3$ and consider once more the intersection \eqref{eq:seila}. Corollary~\ref{cor:mkabopen} gives that $\M^k_{K_j}(n,n)$ is open in $\A^k_{g_\mathrm A,\nu}$, and Claim~\ref{cl:b5} in the proof of the Bumpy Metric Theorem~\ref{thm:bumpy} gives that $\M^k_{K_j}(n,n)$ is dense in $\A^k_{g_\mathrm A,\nu}$. In addition, from the Stone--Weierstrass Theorem~\ref{thm:stw2}, it is easy to conclude that $\A^\infty_{g_\mathrm A,\nu}$ is also dense in $\A^k_{g_\mathrm A,\nu}$. Therefore, setting $X=\A^k_{g_\mathrm A,\nu}$, $U=\M^k_{K_j}(n,n)$ and $D=\A^\infty_{g_\mathrm A,\nu}$ in Lemma~\ref{le:dotausk}, it follows that $\A^{\infty,n,j}_{g_\mathrm A,\nu}=\A^{\infty}_{g_\mathrm A,\nu}\cap\M^k_{K_j}(n,n)$ is dense in $\A^{\infty}_{g_\mathrm A,\nu}$, concluding the proof.
\end{proof}

\chapter[Nondegeneracy under GEC]{Nondegeneracy under GEC}
\label{chap6}

In the last chapter, we proved a semi--Riemannian version of the Bumpy Metric Theorem, which is a central result in the theory of generic properties of geodesic flows. It is therefore natural to ask whether other classic generic properties of Riemannian geodesic flows extend to the semi--Riemannian context in a similar fashion. In fact, this problem was proposed by Biliotti, Javaloyes and Piccione \cite{biljavapic}, together with a genericity result on nondegeneracy of semi--Riemannian geodesics joining to {\em distinct} points. More recently, this result was extended by Bettiol and Giamb\`o \cite{metmna} to the context of {\em general endpoints conditions}, or {\em GEC}s, see Definition~\ref{def:gec}. Such extension is the main issue of this chapter.

In general terms, instead of considering semi--Riemannian geodesics joining two points, the results of \cite{metmna} allow to consider arbitrary endpoints conditions for geodesics, expressed in terms of a submanifold $\p$ of the product $M\times M$. In Section~\ref{sec:gecs}, the geometry of both fixed endpoints and general endpoints conditions were studied, see Lemma~\ref{le:opqsmnfld} and Proposition~\ref{prop:opmsubmfld}. In addition, the associated geodesic variational problems were explored in details in Sections~\ref{sec:genenfunc},~\ref{sec:genindexform} and~\ref{sec:pjacobi}. In this chapter, we aim to first give sufficient conditions on GECs for the genericity of nondegeneracy statement to hold in the $C^k$--topology. In Section~\ref{sec:admissgecs} we define {\em admissibility} for GECs, and finally in Section~\ref{sec:gnggec} we give a detailed proof of the main result in \cite{metmna}, Theorem~\ref{thm:bigone}.

Before getting to details, let us give further motivations for the study of generic properties of geodesics in semi--Riemannian manifolds. Other than Lorentzian geometry and its implications in general relativity, an important motivation comes from Morse theory. Indeed, a crucial assumption for developing a Morse theory for geodesics between fixed points is that the two arbitrarily fixed distinct points must be non conjugate. Recent works by Abbondandolo and Majer \cite{AbbMej2,AbbMaj,AbbMaj2} connect Morse relations for critical points of the semi--Riemannian energy functional to the homology of a doubly infinite chain complex, the Morse--Witten complex, constructed out of the critical points of a strongly indefinite Morse functional, using the dynamics of the gradient flow. The Morse relations for critical points are obtained computing the homology of this complex, which in the standard Morse theory is isomorphic to the singular homology of the base manifolds. Abbondandolo and Majer \cite{AbbMej2} also managed to prove stability of this homology with respect to small perturbations of the metric structure. Thus, it is important to ask whether it is possible to perturb a metric in such a way that the non conjugacy property between two points is preserved. A positive answer\footnote{As a matter of fact, there is a much simpler proof of this perturbation property. In Remark~\ref{re:sacana} we give a general idea of this proof, that only guarantees the existence of a such perturbation that destroys conjugacy between two fixed points. However, the result of Biliotti, Javaloyes and Piccione \cite{biljavapic}, stated in Theorem~\ref{thm:biljavapic}, is much stronger. It asserts {\em genericity} of the set of metrics for which two points are not conjugate, and not only its {\em density}, see Remark~\ref{re:genericvsdense}.} to this question is given by Biliotti, Javaloyes and Piccione \cite{biljavapic}, and the results of Bettiol and Giamb\`o \cite{metmna} assert that this property remains valid when considering, more generally, the non focality property between a point and a submanifold, see Definition~\ref{def:focalpoint}. In fact, more general {\em generic non focality} assertions may be inferred from \cite{metmna}, see Corollaries~\ref{cor:nonconjugacy},~\ref{cor:nonfocality} and~\ref{cor:nonfocality2}.

Let us begin by recalling the main result of Biliotti, Javaloyes and Piccione \cite[Proposition 4.3]{biljavapic}, which can be adapted to our context in the following way.

\begin{theorem}\label{thm:biljavapic}
Let $M$ be a smooth manifold and $\mathds E$ be a separable $C^k$ Whitney type Banach space of sections of $TM^*\vee TM^*$ that tend to zero at infinity, with $k\geq 3$. Fix $\nu\in\{0,\dots,m\}$ an index and let $g_\mathrm A\in\met_\nu^k(M)$ be such that $$\sup_{x\in M}\|g_\mathrm A(x)^{-1}\|_\mathrm R<+\infty.$$ Given any pair of {\em distinct} points $p,q\in M$, the set of semi--Riemannian metrics $g\in\mathcal A_{g_\mathrm A,\nu}$ such that all $g$--geodesics joining $p$ and $q$ are nondegenerate is generic in $\mathcal A_{g_\mathrm A,\nu}$.\footnote{Recall Proposition~\ref{prop:nondegenerateopen} and \eqref{eq:anup}.}
\end{theorem}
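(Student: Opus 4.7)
The plan is to obtain Theorem~\ref{thm:biljavapic} as a direct application of the Abstract Genericity Criterion~\ref{crit:abstractgenericity} to the generalized energy functional $E$ restricted to the domain $\mathcal U = \A_{g_\mathrm A,\nu}\times\Omega_{p,q}(M)$, specializing the GEC machinery of Section~\ref{sec:genenfunc} to $\p=\{p\}\times\{q\}$. All structural hypotheses are already in place: by Proposition~\ref{prop:affineworks}, $\A_{g_\mathrm A,\nu}$ is a separable Banach manifold; by Lemma~\ref{le:opqsmnfld}, $\Omega_{p,q}(M)$ is a separable Hilbert submanifold of $H^1([0,1],M)$; by Proposition~\ref{prop:fck}, $E$ is of class $C^k$. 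Since $T_{(p,q)}\p=\{0\}$, Proposition~\ref{prop:critgenenfunc} identifies the critical points of $E_g$ with $g$--geodesics joining $p$ to $q$, and Proposition~\ref{prop:pjacobi} identifies the kernel of the Hessian with Jacobi fields $J$ along such $\gamma_0$ with $J(0)=J(1)=0$. Condition (i) of the criterion (self-adjointness and Fredholmness of the Hessian) is the content of Proposition~\ref{prop:fredholmness}, so only condition (ii) remains.

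For condition (ii), fix $(g_0,\gamma_0)$ with $\gamma_0$ a $g_0$--geodesic from $p$ to $q$ and $J\not\equiv0$ a Jacobi field with $J(0)=J(1)=0$. Using Proposition~\ref{prop:mixedderivative} with the Levi--Civita connection of $g_0$, the goal is to produce $h\in\mathds E$ such that
\begin{equation*}
\int_0^1 h(\dot\gamma_0,\D^{g_0} J)+\tfrac{1}{2}\nabla^{g_0} h(J,\dot\gamma_0,\dot\gamma_0)\;\dd t\ne 0.
\end{equation*}
I will reuse the local perturbation argument of the Weak Bumpy Metric Theorem~\ref{thm:weakbumpy}. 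First, $J$ is not everywhere parallel to $\dot\gamma_0$: were it so, the affine coefficient arising from the Jacobi equation would have to vanish at both $0$ and $1$, forcing $J\equiv0$ (this is essentially the second half of Lemma~\ref{le:jacobinotparallel}). Consequently Lemma~\ref{le:parallelfinite} ensures that the set of $t$ where $J(t)\parallel\dot\gamma_0(t)$ is finite.

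The key technical step is selecting an open interval $I\subset(0,1)$ satisfying both $\gamma_0(I)\cap\gamma_0([0,1]\setminus I)=\emptyset$ and $J\not\parallel\dot\gamma_0$ on $I$. If $\gamma_0$ has only finitely many self--intersections (Lemma~\ref{le:finiteintersection}), such $I$ is produced by shrinking around any non--self--intersecting regular point and removing the finitely many parallel instants. The delicate case, which I expect to be the main obstacle, is when $\gamma_0$ has infinitely many self--intersections: Proposition~\ref{prop:selfintersections} then forces $\gamma_0$ to be a portion of a periodic geodesic of period $\omega<1$; but since $p\ne q$, $\gamma_0$ cannot traverse an integer number of periods, so a non--self--intersecting ``tail'' arc must remain, and $I$ can be chosen inside it. Once $I$ is fixed, let $U\subset M$ be an open neighborhood of $\gamma_0(I)$ disjoint from $\gamma_0([0,1]\setminus I)$, and apply Lemma~\ref{le:extension} to the bundle $TM^{*}\vee TM^{*}$ with $H\equiv 0$ and $K=g_\mathrm R|_{\gamma_0(I)}$: this produces a section $h\in\sect^k(TM^{*}\vee TM^{*})$ with compact support in $U$, with $h\circ\gamma_0\equiv 0$ and $\nabla^{g_0}_J h=g_\mathrm R$ on $I$. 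Such $h$ lies in $\mathds E$ because $\mathds E$ contains all compactly supported sections.

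Evaluating the mixed derivative with this $h$, the first integrand vanishes identically along $\gamma_0$, and the second reduces to $\tfrac12\int_I g_\mathrm R(\dot\gamma_0,\dot\gamma_0)\,\dd t>0$, which is strictly positive because $\gamma_0$ is non--constant (as $p\ne q$). This verifies condition (ii), and the Abstract Genericity Criterion~\ref{crit:abstractgenericity} directly yields genericity in $\A_{g_\mathrm A,\nu}$ of the set of metrics $g$ for which $E_g:\Omega_{p,q}(M)\to\R$ is Morse, i.e., for which every $g$--geodesic joining $p$ and $q$ is nondegenerate.
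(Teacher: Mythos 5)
Your approach follows the paper's route (a direct application of the Abstract Genericity Criterion~\ref{crit:abstractgenericity} together with the perturbation argument of Theorem~\ref{thm:transvholds}), and most of the structural verifications are correct. However, there is a genuine gap in the way you handle the case where $\gamma_0$ is a portion of a periodic geodesic.

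You claim that, since $p\ne q$, ``a non--self--intersecting tail arc must remain, and $I$ can be chosen inside it.'' This is false in general. Writing $\omega<1$ for the period, $k_*=\max\{k:k\omega<1\}$ and $t_*=1-k_*\omega\in(0,\omega)$, every $t\in[0,t_*]$ has $k_*+1$ copies $t,t+\omega,\dots,t+k_*\omega$ inside $[0,1]$, and every $t\in(t_*,\omega)$ has $k_*$ such copies. Thus when $k_*\geq 2$ (i.e.\ $\omega\leq 1/2$) \emph{every} point of the image is traversed at least twice, and no interval $I\subset[0,1]$ with $\gamma_0(I)\cap\gamma_0([0,1]\setminus I)=\emptyset$ exists. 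Such geodesics occur already on the round sphere: take a long geodesic arc between two non-antipodal points that winds around the great circle several times. So the local support construction via Lemma~\ref{le:extension} cannot isolate a single passage of $\gamma_0$, and the first integrand term $\int h(\dot\gamma_0,\D J)$ does not vanish for the reason you give.

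The paper fixes this with the parity trick of Claim~\ref{cl:infiniteinter1} in the proof of Theorem~\ref{thm:transvholds}: split $\gamma_0$ at $t_*$ into two arcs joining $p$ and $q$, define the ``folded'' Jacobi fields $W^1(t)=\sum_{i=0}^{k_*}J(t+i\omega)$ and $W^2(t)=\sum_{i=0}^{k_*-1}J(t+i\omega)$, and observe that they cannot both be everywhere parallel to $\dot\gamma_0$ (otherwise $J=W^1-W^2$, suitably shifted, would be parallel to $\dot\gamma_0$, contradicting Lemma~\ref{le:jacobinotparallel}). One then perturbs using whichever of $W^1,W^2$ fails to be parallel; when the contributions of the several passages through the support of $h$ are summed, they reproduce precisely $W^1$ or $W^2$ and give a strictly positive value in \eqref{eq:mixedderneq0holds}. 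This is exactly where the hypothesis $p\ne q$ enters: it guarantees $t_*\in(0,\omega)$ so that the two sub-arcs are both nontrivial, which in turn rules out strong degeneracy. Your argument would need to be replaced by this folding construction; the rest of your proposal (reduction to condition (ii), use of Propositions~\ref{prop:critgenenfunc}, \ref{prop:fredholmness}, \ref{prop:mixedderivative}, Lemma~\ref{le:jacobinotparallel}, Lemma~\ref{le:parallelfinite}, and the finite self-intersection case) is correct.
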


\begin{remark}
Although essentially proved in \cite[Proposition 4.3]{biljavapic}, Theorem~\ref{thm:biljavapic} above is stated in a slightly different way of the paper's original result, that allows to give a complete proof of the statement. This is basically due to the fact that the {\em separability}\footnote{Separability is a {\em necessary} condition to use the Sard--Smale Theorem~\ref{thm:sardsmale}, which is keystone in the proof of the Abstract Genericity Criterion~\ref{crit:abstractgenericity}.} problem of the space of metrics is ignored in \cite{biljavapic}, and this leads to some necessary adaptations. In fact, this is the subject dealt with in the end of Section~\ref{sec:banachspacetensors}, which we now briefly recall.

The ideal candidate to $\mathds E$ described in \cite[Example 1]{biljavapic}, that corresponds to $\sect_b^k(TM^*\vee TM^*)$ is non separable as pointed out in Remark~\ref{re:separability}. The easiest solution to this problem is replacing $\mathds E$ with a separable subspace of $\sect_b^k(TM^*\vee TM^*)$, for instance $\sect_0^k(TM^*\vee TM^*)$. However, this would cause the intersection $\sect_0^k(TM^*\vee TM^*)\cap\met_\nu^k(M)$ to have empty interior, as pointed out in Remark~\ref{re:emptyinterior}. We then replace this $\mathds E$ with an affine translation $g_\mathrm A+\mathds E$, where $g_\mathrm A$ satisfies a suitable uniform nondegeneracy property, see \eqref{eq:boundedfromzero}. Such affine space has the topology induced by the translation of $g_\mathrm A$, hence is clearly separable. This is the adequate\footnote{More than mathematically more suitable for our purposes, this setting of {\em asymptotically equal to $g_\mathrm A$} metrics is a relevant generalization of the so--called asymptotically flat space--times. The relativistic meaning and physical relevance of such conditions on the metrics is discussed in Remark~\ref{re:physics}.} setting for semi--Riemannian metrics on non compact manifolds for the type of genericity argument that follows. Our generalization of Theorem~\ref{thm:biljavapic} given by Theorem~\ref{thm:bigone} is stated an proved in such context.
\end{remark}

\begin{remark}\label{re:sacana}
If we were only interested in proving {\em density}\footnote{Recall that density is a much weaker property then genericity, see Remark~\ref{re:genericvsdense}.} of the set of semi--Riemannian metrics $g\in\mathcal A_{g_\mathrm A,\nu}$ such that all $g$--geodesics joining $p$ and $q$ are nondegenerate, there would be a quite simpler approach to the problem.

In fact, given $g\in\A_{g_\mathrm A,\nu}$, suppose $p$ and $q$ are $g$--conjugate. From Proposition~\ref{prop:conjugatecritexp}, a point $q'$ is $g$--conjugate to $p$ if and only if it is a critical value of the $g$--exponential map $\exp_p:T_pM\to M$. Applying the Sard Theorem~\ref{thm:sard}, it follows that the set of points $q'\in M$ such that $p$ is not $g$--conjugate to $q'$ is generic in $M$, in particular dense in $M$, see Remark~\ref{re:genericvsdense}. Choose such a $q'$ near $q$, that is not $g$--conjugate to $p$, as illustrated below.

\begin{figure}[htf]
\begin{center}
\vspace{-0.75cm}
\includegraphics[scale=1]{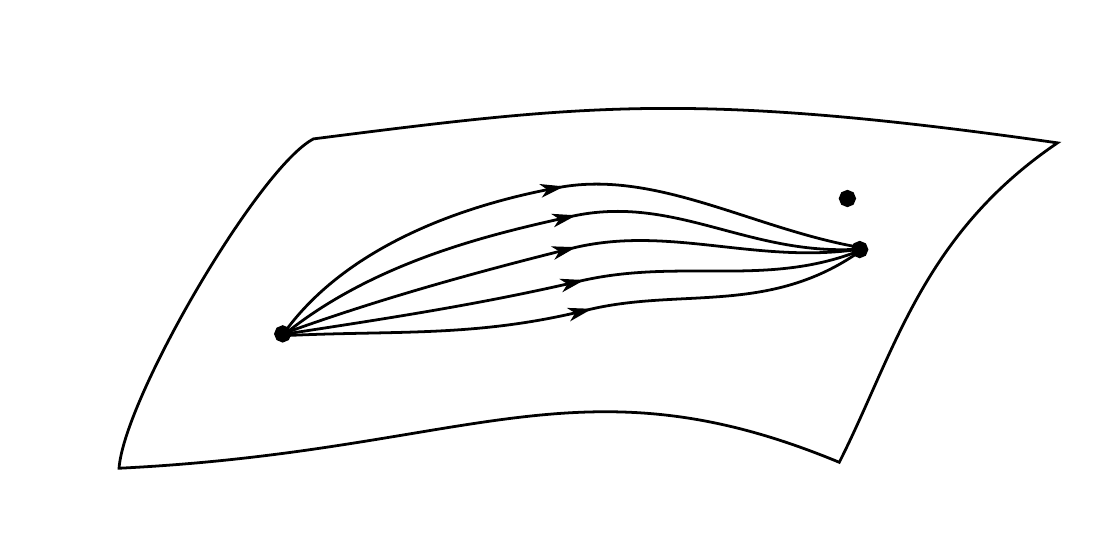}
\begin{pgfpicture}
\pgfputat{\pgfxy(-2.3,3)}{\pgfbox[center,center]{$q$}}
\pgfputat{\pgfxy(-2.3,3.8)}{\pgfbox[center,center]{$q'$}}
\pgfputat{\pgfxy(-9,2.1)}{\pgfbox[center,center]{$p$}}
\end{pgfpicture}
\vspace{-0.75cm}
\end{center}
\end{figure}

It obviously suffices to suppose that $q$ and $q'$ are in the same connected component of $p\in M$. Since we may regard the action of the diffeomorphism group $\diff(M)$ on this connected component of $M$ as transitive,\footnote{Recall Definition~\ref{def:actionobjects}. The infinite--dimensional group $G=\diff(M)$ is not a Lie group, however several important techniques may be used. In fact, to prove that its action on a connected $M$ is transitive, it suffices to prove that each orbit $G(x)$ is open. This implies that $M\setminus G(x)=\bigcup_{y\notin G(x)}G(y)$ is also open, since it is the union of the other orbits, and hence $G(x)$ is closed. Being open and closed, since $M$ is connected, $G(x)=M$ and hence the action is transitive. Notice that if $M$ is not connected, the transitivity holds for points in the same connected component (as required in Remark \ref{re:sacana}).

In order to verify that $G(x)$ is open, given $x'$ near $x$ it is possible to consider a local chart around $x$ and obtain a diffeomorphism of the domain of this chart that maps $x$ to $x'$ and coincides with the identity near the boundary of the chart. This is done with the images of $x$ and $x'$ in Euclidean space and then conjugated with the chart. Setting this diffeomorphism equal to the identity of $M$ outside the chart, we have a global diffeomorphism of $M$ that maps $x$ to $x'$, and hence $x$ admits an open neighborhood contained in $G(x)$, proving it is an open subset.} there exists a diffeomorphism $f:M\to M$ such that $f(q)=q'$. Then, the points $p$ and $q$ are not conjugate in the pull--back metric $f^*g$, by construction. In addition, continuity arguments prove that if $q'$ is sufficiently near $q$, then $f$ is sufficiently near the identity so that $f^*g$ is sufficiently near $g$. This implies that arbitrarily small perturbations of $g$ destroy the conjugacy property of $p$ and $q$, proving density of the desired subset. Notice however that this does not imply Theorem~\ref{thm:biljavapic}, since genericity is a property stronger than density.
\end{remark}

\section{Admissibility of GECs}
\label{sec:admissgecs}

In order to state our generalization of Theorem~\ref{thm:biljavapic} in the context of GECs, it is necessary to analyze in more details some nondegeneracy properties of submanifolds $\p$ of $M\times M$. We begin with a technical remark on the openness of this nondegeneracy property of semi--Riemannian submanifolds. This will be later used to endow $\p$ with a product metric.

From the reasons presented in Remark~\ref{re:whyg}, the natural choice is to consider the metric \eqref{eq:overlineg}, i.e., the restriction of the ambient space metric $g\oplus (-g)$ to $\p$,
\begin{equation*}
\overline{g}=g\oplus(-g)\in\met_m^k(M\times M).
\end{equation*}

\begin{remark}
Notice that the index of $\overline{g}$ is always equal to $m=\dim M$, with no dependence of $\nu$.
\end{remark}

\begin{proposition}\label{prop:nondegenerateopen}
Let $\mathds{E}$ be a $C^k$ Whitney type Banach space of sections of $E=TM^*\vee TM^*$ that tend to zero at infinity, $\nu\in\{0,\dots,m\}$ a fixed index and $g_\mathrm A\in\met_\nu^k(M)$ a metric satisfying \eqref{eq:boundedfromzero}. Consider the nonempty open subset $\A_{g_\mathrm A,\nu}=(g_\mathrm A+\mathds E)\cap\met_\nu^k(M)$ of metrics studied in Proposition~\ref{prop:affineworks}. Let $\p\subset M\times M$ be a \emph{compact} submanifold. Then the following subset is open in $\A_{g_\mathrm A,\nu}$
\begin{equation}\label{eq:anup}
\A_{g_\mathrm A,\nu,\p}=\big\{g\in\A_{g_\mathrm A,\nu}: \overline{g}\in\met_m^k(M\times M,\p)\big\},
\end{equation}
recall \eqref{eq:nondegmet} in Definition~\ref{def:nondegmet}.
\end{proposition}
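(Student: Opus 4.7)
My plan is to prove openness by a pointwise argument combined with compactness of $\p$, in the same spirit as the proof of Proposition~\ref{prop:affineworks}. The key observation is that $g \in \A_{g_\mathrm A,\nu,\p}$ if and only if the pulled--back tensor $i^*\overline{g}$ is a nondegenerate symmetric $(0,2)$--tensor at every point of $\p$, where $i:\p \hookrightarrow M\times M$ is the inclusion. Fix $g_0 \in \A_{g_\mathrm A,\nu,\p}$; the goal is to produce a neighborhood $\mathcal O$ of $g_0$ in $\A_{g_\mathrm A,\nu}$ contained in $\A_{g_\mathrm A,\nu,\p}$.

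First, I would endow $\p$ with an auxiliary Riemannian metric $g_\mathrm R^\p$, for example the restriction to $\p$ of $g_\mathrm R\oplus g_\mathrm R$, and observe that the map $x\mapsto\|i^*\overline{g_0}(x)^{-1}\|_{\mathrm R^\p}$ is continuous on the \emph{compact} manifold $\p$, hence uniformly bounded by some constant $c_0>0$. This gives the uniform nondegeneracy bound analogous to \eqref{eq:boundedfromzero}, but now on $\p$ rather than on $M$.

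Next, I would invoke Lemma~\ref{le:salvador} with constants $c_0$ and $c_0'=c_0+1$ at each point $x\in\p$: this produces, for each $x\in\p$, an $\varepsilon_x>0$ such that any symmetric $(0,2)$--tensor $s$ on $T_x\p$ with $\|s-i^*\overline{g_0}(x)\|_{\mathrm R^\p}<\varepsilon_x$ is automatically nondegenerate. A standard compactness/continuity argument (replacing $\varepsilon_x$ by $\inf_{x\in\p}\varepsilon_x$, which is strictly positive by continuity of the construction in $x$ and compactness of $\p$) yields a \emph{uniform} $\varepsilon>0$ with the property that if $s\in\sect^k(T\p^*\vee T\p^*)$ satisfies $\|s(x)-i^*\overline{g_0}(x)\|_{\mathrm R^\p}<\varepsilon$ for every $x\in\p$, then $s(x)$ is nondegenerate at every $x\in\p$.

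The final step is to show that the assignment $g\longmapsto i^*\overline{g}$ is continuous from $\A_{g_\mathrm A,\nu}$ (with its Banach topology inherited from $\mathds E$) to $\sect^k(T\p^*\vee T\p^*)$ with uniform $C^0$ convergence on the compact set $\p$. By condition (i) of Definition~\ref{def:ckwhitbanachspace}, the inclusion $\mathds E\hookrightarrow\sect_b^k(TM^*\vee TM^*)$ is bounded, so that convergence in $\mathds E$ forces uniform convergence of the tensors on $M$; taking the direct sum with $(-g)$ and restricting to the compact submanifold $\p\subset M\times M$ only strengthens this uniformity. Hence there is an open neighborhood $\mathcal O$ of $g_0$ in $\A_{g_\mathrm A,\nu}$ on which $\|i^*\overline{g}(x)-i^*\overline{g_0}(x)\|_{\mathrm R^\p}<\varepsilon$ for every $x\in\p$, and the previous step shows $\mathcal O\subset\A_{g_\mathrm A,\nu,\p}$.

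The main technical obstacle is packaging continuity of $g\mapsto i^*\overline{g}$ together with the uniform lower bound on nondegeneracy; once compactness of $\p$ is used to trade pointwise estimates for uniform ones, the result falls out of Lemma~\ref{le:salvador} in exactly the same way Proposition~\ref{prop:affineworks} does. Note that the index of $\overline{g}$ on $\p$ may jump between connected components of $\A_{g_\mathrm A,\nu,\p}$, but since we only claim openness and not connectedness, this causes no difficulty.
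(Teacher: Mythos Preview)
Your proof is correct but takes a different route from the paper's. The paper argues by showing the complement is closed: given a sequence $g_n \to g_\infty$ in $\A_{g_\mathrm A,\nu}\setminus\A_{g_\mathrm A,\nu,\p}$, it picks for each $n$ a degenerate subspace $V_n \subset T_{p_n}\p$ of some fixed dimension $r \geq 1$, regards $\{V_n\}$ as a sequence in the Grassmannian bundle $\gr_r(\p)$ (compact since $\p$ is compact), and extracts a convergent subsequence $V_n \to V_\infty \subset T_{p_\infty}\p$; continuity then forces $V_\infty \subset \ker(i^*\overline{g_\infty})_{p_\infty}$, so $g_\infty$ lies in the complement as well. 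Your approach is the direct one, reusing Lemma~\ref{le:salvador} to convert the uniform bound $c_0 = \sup_{x\in\p}\|i^*\overline{g_0}(x)^{-1}\|_{\mathrm R^\p}$ into a uniform perturbation radius, and then invoking continuity of $g \mapsto i^*\overline g|_\p$. Your route parallels Proposition~\ref{prop:affineworks} more closely and is slightly more quantitative (it produces an explicit neighborhood); the paper's Grassmannian argument is more topological and avoids the $\varepsilon$--$\delta$ bookkeeping. Both use compactness of $\p$ in an essential way, just at different stages.
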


\begin{proof}
Suppose $\A_{g_\mathrm A,\nu,\p}\ne\A_{g_\mathrm A,\nu}$, otherwise the statement is trivially verified. For each $g\in\A_{g_\mathrm A,\nu}$, consider the product metric $\overline{g}$. Let $\{g_n\}_{n\in\N}$ be a convergent sequence in $\A_{g_\mathrm A,\nu}\setminus\A_{g_\mathrm A,\nu,\p}$ and $\{\overline{g_n}\}_{n\in\N}$ the correspondent sequence in $\met_n^k(M\times M)\setminus \met_n^k(M\times M,\p)$, with $\lim_{n\to+\infty} \overline{g_n}=\overline{g_\infty}$.  From identifications \eqref{ident:bilin}, consider the symmetric tensor $i^*\overline{g_n}$ at each $p$ as a linear operator $$(i^*\overline{g_n})_p:T_p\p\la T_p\p^*\cong T_p\p,$$ denoted with the same symbol. Since for all $n$, $i^*\overline{g_n}$ is a degenerate symmetric bilinear tensor on $\p$, there exists $p_n\in\p$ and $V_n\subset T_{p_n}\p$, with $\dim V_n\geq 1$, such that $V_n\subset\ker (i^*\overline{g_n})_{p_n}$, see Definition~\ref{def:nondegenerate}. Choosing $r$ to be the minimum of $\dim V_n$, without loss of generality it is possible to assume that for all $n$, $\dim V_n=r\geq 1$.

Thus $\{V_n\}_{n\in\N}$ is a sequence in the $r$--Grassmannian bundle\footnote{See Example~\ref{ex:grassmann}.} $\gr_r(\p)$, which is compact, since $\p$ is compact. Up to subsequences, there exists $V_\infty\in\gr_r(\p)$ limit of the sequence $\{V_n\}_{n\in\N}$. By continuity of this convergence, there exists a limit point $p_\infty\in \p$, and $V_\infty\subset\ker (i^*\overline{g_\infty})_{p_\infty}$. Therefore, as $\dim V_\infty=r\geq 1$, the limit metric tensor $\overline{g_\infty}$ is also in $\met_n^k(M\times M)\setminus\met_n^k(M\times M,\p)$, hence $g_\infty\in\A_{g_\mathrm A,\nu}\setminus\A_{g_\mathrm A,\nu,\p}$.
\end{proof}

\begin{remark}
Notice that the above proof does not use the particular fact that tensors of $\mathds E$ tend to zero at infinity, or that $g_\mathrm A$ satisfies \eqref{eq:boundedfromzero}. These hypotheses are only made in order to provide the same context of that in the definition of $\A_{g_\mathrm A,\nu}$ in Proposition~\ref{prop:affineworks}. Indeed, this openness of nondegeneracy is a much more general result, that will however be applied to the context above.
\end{remark}

Recall that from Definition~\ref{def:gpgeod}, a curve $\gamma\in\op(M)$ is a $(g,\p)$--geodesic if $\frac{\partial E}{\partial \gamma}(g,\gamma)=0$, i.e., if $\gamma$ is a critical point of $E_g:\op(M)\to\R$. From Proposition~\ref{prop:critgenenfunc}, this is also equivalent to $\gamma$ being a $g$--geodesic that satisfies $$(\dot\gamma(0),\dot\gamma(1))\in T_{(\gamma(0),\gamma(1))}\p^\perp,$$ where $^\perp$ denotes orthogonality relatively to $\overline g$. For our main result on generic nondegeneracy of $(g,\p)$--geodesics, it is necessary to have a lower bound on the Riemannian length of such geodesics, analogously to Lemma~\ref{le:noshortgeods}. To this aim we introduce the following.

\begin{definition}\label{def:admgbc}
A GEC $\p$ will be said to be \emph{$(g_\mathrm A,\nu)$--admissible}\index{GEC!admissible} if
\begin{itemize}
\item[(i)] $\p$ is compact;
\item[(ii)] $\A_{g_\mathrm A,\nu,\p}$ given by \eqref{eq:anup} is \emph{nonempty};
\item[(iii)] for every $g_0\in\A_{g_\mathrm A,\nu,\p}$, there exists an open neighborhood $\mathcal V$ of $g_0$ in $\A_{g_\mathrm A,\nu,\p}$ and $a>0$, such that for all $g\in\mathcal V$ and all $(g,\p)$--geodesics $\gamma$, $L_\mathrm R(\gamma)\geq a$.
\end{itemize}
In case the pair $(g_\mathrm A,\nu)$ is evident from the context, we will simply say that $\p$ is {\em admissible}.
\end{definition}

It is easy to see that this definition does not depend on the choice of the auxiliary Riemannian metric $g_\mathrm R$.

\begin{remark}
Regarding emptiness of $\A_{g_\mathrm A,\nu,\p}$, recall that from Remark~\ref{re:metmpmightbeempty} the set $\met_\nu^k(M\times M,\p)$ may be empty depending on the topology of $\p$. Topological obstructions to the existence of metrics of given index were studied in Section~\ref{sec:topobst}, particularly the case of Lorentzian metrics and metrics on spheres, see Propositions~\ref{prop:noncompactlorentz},~\ref{prop:existlorentzian} and Theorems~\ref{thm:spheres1} and~\ref{thm:spheres2} respectively.

Thus, depending on the topology of $\p$, it may not admit any metrics of the form $g\oplus (-g)$, and in this case $\A_{g_\mathrm A,\nu,\p}=\emptyset$. For instance, if $M$ is three--dimensional and $\p$ is homeomorphic to the sphere $S^4$, then $\met_{3}^k(M\times M,\p)=\emptyset$. This follows easily from the following facts. On the one hand, the restriction to $\p$ of any metric tensor on $M\times M$ having index equal to $3$ cannot be positive or negative definite. On the other hand, $\p$ does not admit any metric tensor of index $1$ or $2$, since $\p$ does not admit distributions\footnote{Recall that from Proposition~\ref{prop:metricdistribution}, existence of a semi--Riemannian metric of index $\nu$ is equivalent to the existence of a distribution of rank $\nu$ with the same regularity.} of rank $1$ or $2$.

Much more general examples of homotopy types for $\p$ that for certain dimensions of $M$ imply emptiness of $\met_\nu^k(M\times M,\p)$ may be obtained from Theorem~\ref{thm:spheres2}.
\end{remark}

\begin{remark}
Let us briefly justify the hypotheses (i), (ii) and (iii) for $(g_\mathrm A,\nu)$--admissibility of a GEC $\p$. The subset $\A_{g_\mathrm A,\nu,\p}$ is genuinely the natural set of metrics to be considered in this context, and for this reason, admissibility of a GEC $\p$ is defined in such way that $\A_{g_\mathrm A,\nu,\p}\ne\emptyset$. More precisely, genericity of metrics without degenerate $(g,\p)$--geodesics will be established in this open subset, which is thus required to be nonempty, so that our statement is nontrivial. For a detailed study of why this is an appropriate domain to use our techniques, see Section~\ref{sec:banachspacetensors}.

It is also crucial to consider only nondegenerate metrics on $\p$ because the submanifold geometry of $\p$ determines the behavior of variational fields correspondent to curves with these conditions, see \eqref{eq:indexform}. More precisely, Lemma~\ref{le:jacobinotparallel} would not hold in case $\p$ was degenerate, see Remark~\ref{re:whyg}. Compactness of $\p$ is also a fundamental assumption, not only because it is used to prove Proposition~\ref{prop:nondegenerateopen} above, but also because we shall use boundedness of $\p$ to get the desired conditions on limits of curves satisfying such GEC. In this sense, we also need to replace the result of Lemma~\ref{le:noshortgeods} that gives a lower bound on the Riemannian length of geodesics in the sense of (iii), which is now required as a hypothesis on $\p$ for its admissibility.
\end{remark}

\begin{remark}
Notice that even for indexes $\nu=0$ and $\nu=m$, in which we are essentially dealing with {\em Riemannian manifolds}, the sets $\A_{g_\mathrm A,\nu}$ and $\A_{g_\mathrm A,\nu,\p}$ may not coincide. The key fact is that the metric $\overline g=g\oplus (-g)$ is {\em always semi--Riemannian}, of index $m$. For instance, if $\p$ is tangent to the diagonal $\Delta$ of $M\times M$ at any point, then $\A_{g_\mathrm A,\nu,\p}$ is trivially empty, since any metric in $\A_{g_\mathrm A,\nu}$ degenerates at this point, see Remark~\ref{re:diagonaldoesntwork}.

The only situation in which $\A_{g_\mathrm A,\nu,\p}=\A_{g_\mathrm A,\nu}$ automatically is when $\p$ is a point. This corresponds to fixed endpoints conditions $\{p\}\times\{q\}$, even if $p=q$. In this case, $\p$ trivially satisfies conditions (i) and (ii) of Definition~\ref{def:admgbc}.
\end{remark}

Some classes of GECs introduced in Example~\ref{ex:gecs} are clearly admissible. Let us now comment a few examples.

\begin{example}\label{ex:triviallyadmissible}
First, if $\p$ is compact, has no $\nu$--topological obstructions and satisfies $\p\cap\Delta=\emptyset$, then it is admissible. In this case, to verify condition (iii) of Definition~\ref{def:admgbc} it is enough to set $$a=\min_{(p,q)\in\p} d_\mathrm R(p,q),$$ where $d_\mathrm R$ denotes the $g_\mathrm R$--distance in $M$, see Definition~\ref{def:distance}. It is not difficult to see that there are no restrictions on the auxiliary metric $g_\mathrm A$ for admissibility in this case, provided that $\p$ has no topological obstructions to the existence of such metrics. For instance, this is the case of a fixed endpoints condition $\p=\{p\}\times\{q\}$, with $p\neq q$.
\end{example}

\begin{example}\label{ex:Pq}
Another class of admissible GECs is given by $\p=P\times \{q\}$, where $P\subset M$ is a compact submanifold and $q\in M$, as described in Example~\ref{ex:gecs}. We are clearly supposing that there are no topological obstructions on $P$ for a given choice of index $\nu$.

There are two possible situations, depending on the relative position of $P$ and $q$. Namely, if $q\notin P$, then $\p\cap\Delta=\emptyset$, hence it is also in the previous class of Example~\ref{ex:triviallyadmissible}. However, if $q\in P$, the proof of Lemma~\ref{le:noshortgeods} can be used to verify that $\p$ is admissible. In fact, although stated only for periodic geodesics, its proof is automatically valid considering non constant geodesic loops instead of periodic geodesics, hence gives the required condition on $\p$. Note that the same holds for the transpose $\p^t=\{q\}\times P$, see Remark~\ref{re:transpose}.
\end{example}

\begin{remark}\label{re:diagonaldoesntwork}
The diagonal case $\p=\Delta$ was already mentioned in Examples~\ref{ex:gecs} and~\ref{ex:gecs2}. Clearly, if $M$ is non compact, $\p$ is not admissible. Moreover, $\A_{g_\mathrm A,\nu,\p}$ is trivially empty for every $\nu$, since the tangent space to $\Delta$ at $(x,x)$ is the diagonal of $T_xM\oplus T_xM$, and any metric of the form $\overline g$ vanishes identically in such subspace. More generally, any $\p$ somewhere tangent to $\Delta$ is trivially degenerate for a metric of the form $\overline g$. Thus, these are not admissible GECs.

In this sense, we cannot expect to use GECs to generalize the Bumpy Metric Theorem~\ref{thm:bumpy}. Instead, we {\em use} the Bumpy Metric Theorem~\ref{thm:bumpy} to establish genericity of metrics without degenerate geodesics under GECs that may intersect $\Delta$ transversally, see Proposition~\ref{prop:admissibility}. Regarding the case $\p=\Delta$, the main generic property studied in this chapter, Theorem~\ref{thm:bigone}, coincides for such $\p$ with the statement of the Bumpy Metric Theorem~\ref{thm:bumpy}, interpreting degeneracy in the adequate sense, see Definition~\ref{def:gmorse} and Example~\ref{ex:gecs2}.
\end{remark}

We shall now establish the admissibility of a larger class of GECs that intersect $\Delta$, using a transversality approach, see Definition~\ref{def:transversality} and Remark~\ref{re:abouttransv}. To this aim, recall the estimate of the decrease of the difference between the normalized tangent field to a geodesic at its endpoints, in terms of its length, given by Lemma~\ref{le:short}.

\begin{proposition}\label{prop:admissibility}
If a GEC $\p$ intersects $\Delta$ transversally\footnote{That is, $T_{(x,x)}\p +\Delta=T_xM\oplus T_xM$, for all $x\in\p\cap\Delta$ (recall Definition~\ref{def:transversality}.}, then $\p$ satisfies (iii) of Definition~\ref{def:admgbc}. In particular, if in addition $\p$ is compact and has no topological obstructions, i.e., satisfies (i) and (ii), then it is admissible.
\end{proposition}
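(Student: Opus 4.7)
The plan is to argue by contradiction: suppose there exist $g_0\in\A_{g_\mathrm A,\nu,\p}$, a sequence $g_n\to g_0$ in $\A_{g_\mathrm A,\nu,\p}$, and $(g_n,\p)$--geodesics $\gamma_n:[0,1]\to M$ with $L_\mathrm R(\gamma_n)\to 0$. By compactness of $\p$, up to subsequence $(\gamma_n(0),\gamma_n(1))\to(p,q)\in\p$, and since $d_\mathrm R(\gamma_n(0),\gamma_n(1))\le L_\mathrm R(\gamma_n)\to 0$, we must have $p=q=:x$, so $(x,x)\in\p\cap\Delta$. Fix a local chart around $x$, identified with an open subset of $\R^m$ containing a compact neighborhood $K$ that eventually contains the image of every $\gamma_n$.

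Next, apply Lemma~\ref{le:short} to $\gamma_n$: setting $\hat v_n=\dot\gamma_n(0)/\|\dot\gamma_n(0)\|$ and $\hat w_n=\dot\gamma_n(1)/\|\dot\gamma_n(1)\|$ (Euclidean normalization in the chart), one obtains $\|\hat w_n-\hat v_n\|\le c\int_0^1\|\dot\gamma_n(t)\|\,\dd t\to 0$. Up to subsequence, $\hat v_n,\hat w_n\to v_\infty$ with $\|v_\infty\|=1$. Moreover, from the geodesic equation $\ddot\gamma_n=-\chr^{g_n}(\gamma_n)(\dot\gamma_n,\dot\gamma_n)$, one gets $\|\dot\gamma_n(1)-\dot\gamma_n(0)\|\le\kappa\int_0^1\|\dot\gamma_n\|^2\,\dd t\le\kappa\,(\max_t\|\dot\gamma_n(t)\|)\,L_\mathrm R(\gamma_n)$, which, combined with the standard inequality $\max\|\dot\gamma_n\|\le\min\|\dot\gamma_n\|(1-\kappa L_\mathrm R(\gamma_n))^{-1}$ that follows from Gronwall applied to the same ODE, yields $\|\dot\gamma_n(1)\|/\|\dot\gamma_n(0)\|\to 1$.

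Now use the endpoints condition $(\dot\gamma_n(0),\dot\gamma_n(1))\in T_{(\gamma_n(0),\gamma_n(1))}\p^{\perp_{\overline{g_n}}}$, which reads
\begin{equation*}
g_n(\dot\gamma_n(0),u_1)-g_n(\dot\gamma_n(1),u_2)=0\qquad\text{for all }(u_1,u_2)\in T_{(\gamma_n(0),\gamma_n(1))}\p.
\end{equation*}
Dividing by $\|\dot\gamma_n(0)\|$ and setting $r_n=\|\dot\gamma_n(1)\|/\|\dot\gamma_n(0)\|\to 1$, we obtain $g_n(\hat v_n,u_1)-r_n\,g_n(\hat w_n,u_2)=0$. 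Since $g_n\to g_0$ locally and $T_{(\gamma_n(0),\gamma_n(1))}\p\to T_{(x,x)}\p$ as subspaces of $T_{(x,x)}(M\times M)$ (by continuity of $\p$ as a submanifold), passing to the limit along sequences $(u_1^n,u_2^n)\to(u_1,u_2)\in T_{(x,x)}\p$ yields $g_0(v_\infty,u_1-u_2)=0$ for every $(u_1,u_2)\in T_{(x,x)}\p$.

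The transversality hypothesis $T_{(x,x)}\p\oplus\Delta=T_xM\oplus T_xM$ implies that every $(a,b)\in T_xM\oplus T_xM$ splits as $(u_1+v,u_2+v)$ with $(u_1,u_2)\in T_{(x,x)}\p$; hence $\{u_1-u_2:(u_1,u_2)\in T_{(x,x)}\p\}=T_xM$. Therefore $g_0(v_\infty,\,\cdot\,)$ vanishes identically on $T_xM$, and nondegeneracy of $g_0$ forces $v_\infty=0$, contradicting $\|v_\infty\|=1$. This establishes (iii), and combined with (i)--(ii) gives admissibility. The main technical obstacle I foresee is step~2 (comparability of the endpoint Euclidean norms $\|\dot\gamma_n(0)\|$ and $\|\dot\gamma_n(1)\|$): without this, dividing the orthogonality relation by a common factor would produce different weights $\alpha,\beta$ in the limit, and one would have to analyze the cases $\alpha\ne\beta$ separately; the ODE/Gronwall estimate above is what rescues the argument cleanly.
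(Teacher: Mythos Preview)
Your argument is correct and follows essentially the same route as the paper: contradiction, compactness to land on a diagonal point, Lemma~\ref{le:short} to align the normalized endpoint velocities, an ODE estimate to get $\|\dot\gamma_n(1)\|/\|\dot\gamma_n(0)\|\to 1$, and then the transversality hypothesis to force the limiting unit vector to vanish. The only cosmetic differences are that the paper obtains the norm--ratio limit via the cleaner estimate $\big|\tfrac{\dd}{\dd t}\log\|\dot\gamma\|\big|\le\kappa\|\dot\gamma\|$ (which is really what underlies your Gronwall step) and phrases the final linear algebra as $(v,v)\in T_{(x,x)}\p^\perp\cap\Delta=(T_{(x,x)}\p+\Delta)^\perp=\{0\}$ rather than via the image of $(u_1,u_2)\mapsto u_1-u_2$.
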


\begin{proof}
We proceed by contradiction. Since the weak Whitney $C^1$--topology is first countable, assuming $\p$ is does not satisfy (iii) implies that there exists a sequence $\{g_n\}_{n\in\N}$ in $\A_{g_\mathrm A,\nu,\p}$ converging to some $g_\infty\in\A_{g_\mathrm A,\nu,\p}$ in the weak Whitney $C^1$--topology and a sequence $\{\gamma_n\}_{n\in\N}$ in $\op(M)$ of non constant $(g_n,\p)$--geodesics such that $\lim_{n\to+\infty} L_\mathrm R(\gamma_n)=0$. Since $\p$ is compact, up to taking subsequences, we may assume that there exists $x\in M$ such that $(x,x)\in\p$ and both $\lim_{n\to+\infty} \gamma_n(0)=x$, $\lim_{n\to+\infty} \gamma_n(1)=x$.

By taking a local chart of $M$ around $x$, we can assume that we are in open subset $U\subset \R^m$. Let $K\subset U$ be any compact neighborhood of $x$, so that there exists $n_0$ such that for $n\geq n_0$, $\gamma_n([0,1])\subset K$. Since $L_\mathrm R(\gamma_n)$ tends to zero, then also the Euclidean length of $\gamma_n$ tends to zero. From Lemma~\ref{le:short}, it follows that, $$\lim_{n\to+\infty} \left(\frac{\dot\gamma_n(0)}{\|\dot\gamma_n(0)\|}-\frac{\dot\gamma_n(1)}{\|\dot\gamma_n(1)\|}\right)=0,$$ and up to taking subsequences, we can assume that both $\frac{\dot\gamma_n(0)}{\|\dot\gamma_n(0)\|}$ and $\frac{\dot\gamma_n(1)}{\|\dot\gamma_n(1)\|}$ converge to unitary vectors. However, from the above limit, both tend to the \emph{same unitary vector $v\in\R^m$}.

We claim that $(v,v)\in T_{(x,x)}\p^\perp$, where $^\perp$ denotes orthogonality with respect to $\overline{g_\infty}$, and that this concludes the proof. Indeed, suppose the claim to be true. Then $$(v,v)\in T_{(x,x)}\p^\perp\cap\Delta=(T_{(x,x)}\p+\Delta^\perp)^\perp.$$ It is easy to see that $\Delta^\perp=\Delta$; and since we assumed $T_{(x,x)}\p+\Delta=T_xM\oplus T_xM$, its orthogonal complement with respect to $\overline{g_\infty}$ is trivial. Hence $v=0$, which gives the desired contradiction.

It remains to prove the above claim that $(v,v)\in T_{(x,x)}\p^\perp$. Consider $\mathcal O$ the open neighborhood\footnote{Using the identification above given by a local chart $U$ of $M$ around $x$, since the restriction map $\A_{g_\mathrm A,\nu,\p}\owns h\mapsto h|_U\in\met_\nu^k(U)$ is continuous in the considered topologies, the open neighborhood of $g_\infty$ in $\A_{g_\mathrm A,\nu,\p}$ can be taken as the preimage of $\mathcal O$ by this restriction map.} of $g_\infty\in\met_\nu^k(U)$ in the weak Whitney $C^1$--topology given by Lemma~\ref{le:short} with the choices above. Then, for all $g\in\mathcal O$ it is possible to give the following estimate for any $g$--geodesic $\gamma$ with image lying in $K$,
{\allowdisplaybreaks
\begin{eqnarray*}
\left|\frac{\dd}{\dd t} \log \|\dot\gamma(t)\|\right| &=&\frac{\left|\langle\dot\gamma,\ddot\gamma\rangle\right|}{\|\dot\gamma\|^2}\\
&\stackrel{\eqref{eq:geodequation}}{=}&\frac{\left|\langle\dot\gamma,\chr^{g}(\gamma)(\dot\gamma,\dot\gamma)\rangle\right|}{\|\dot\gamma\|^2}\\
&\leq& \frac{\|\chr^g(\gamma)\|\|\dot\gamma\|^3}{\|\dot\gamma\|^2}\\
&\leq& \kappa\|\dot\gamma\|,
\end{eqnarray*}}where $\kappa=\max_{x\in K} \|\Gamma^{g_\infty}(x)\| +1$ is again the same as in Lemma~\ref{le:short}. Hence, integrating the above inequality in $[0,1]$, it follows that
{\allowdisplaybreaks
\begin{eqnarray*}
\left|\log\frac{\|\dot\gamma(1)\|}{\|\dot\gamma(0)\|}\right| &=& \left|\int_0^1 \frac{\dd}{\dd t}\log\|\dot\gamma\|\;\dd t\right|\\
&\leq&\int_0^1 \left|\frac{\dd}{\dd t}\log\|\dot\gamma\|\right|\;\dd t\\
&\leq&\kappa\int_0^1\|\dot\gamma\|\;\dd t.
\end{eqnarray*}}Applying this estimate to the $(g_n,\p)$--geodesics $\gamma_n$, since its Euclidean length tend to zero, one concludes that $$\lim_{n\to+\infty} \frac{\|\dot{\gamma_n}(1)\|}{\|\dot{\gamma_n}(0)\|}=1.$$ Moreover, for each $n$, $$\left(\frac{\dot{\gamma_n}(0)}{\|\dot{\gamma_n}(1)\|},\frac{\dot{\gamma_n}(1)}{\|\dot{\gamma_n}(1)\|}\right)\in T_{(\gamma_n(0),\gamma_n(1))}\p^{\perp_n},$$ where $^{\perp_n}$ denotes orthogonality with respect to $\overline{g_n}$, and $\lim_{n\to+\infty} \frac{\dot{\gamma_n}(1)}{\|\dot{\gamma_n}(1)\|}=v$.

From the limits $$\lim_{n\to+\infty}\frac{\dot{\gamma_n}(0)}{\|\dot{\gamma_n}(0)\|}=v  \;\; \mbox{ and } \;\; \lim_{n\to+\infty}\frac{\|\dot{\gamma_n}(1)\|}{\|\dot{\gamma_n}(0)\|}=1,$$ it follows that also $\lim_{n\to+\infty}\frac{\dot{\gamma_n}(0)}{\|\dot{\gamma_n}(1)\|}=v$. Since $\p$ is compact, this proves the claim that $(v,v)\in T_{(x,x)}\p^\perp$, concluding the proof.
\end{proof}

\begin{remark}
Since admissibility of $\p$ can be characterized by its transversality to $\Delta$, it follows from the Transversality Theorem~\ref{thm:transversality} that GECs are {\em generically} admissible, see Remark~\ref{re:transvgensubmnfld}.
\end{remark}

To end this section, we analyze admissibility of the GECs given in Example~\ref{ex:gecs}.

\begin{example}\label{ex:adgecs}
A fixed endpoints condition $\p=\{p\}\times\{q\}$ is always admissible. Namely, $\p$ is compact and since the tangent space to $\p$ is trivial, it follows that $\A_{g_\mathrm A,\nu,\p}=\A_{g_\mathrm A,\nu}$ for any $\nu$ and $g_\mathrm A\in\met_\nu^k(M)$. Hence the nondegeneracy condition is empty, and (i) and (ii) trivially hold. In addition, regarding condition (iii), it falls in the class of endpoints conditions of the form $\p=P\times\{q\}$, where $P\subset M$ is a compact submanifold, explored in Example~\ref{ex:Pq}. Thus $\p=\{p\}\times\{q\}$ is admissible, even if\footnote{For technical reasons, it necessary to assume $p\neq q$ in the proof of Theorem~\ref{thm:biljavapic}, to guarantee non existence of strongly degenerate geodesics.} $p=q$. Notice that it is not necessary to use Proposition~\ref{prop:admissibility} to infer this conclusion.

Replacing one of the points with a compact submanifold, we fall in the previous case $\p=P\times\{q\}$ discussed in Example~\ref{ex:Pq}. Replacing both points with compact submanifolds gives $\p=P\times Q$, as in Example~\ref{ex:gecs}. Provided that these submanifolds have no $\nu$--topological obstructions, they satisfy (i) and (ii). As for (iii), if $P\cap Q=\emptyset$, it trivially holds as discussed above. However, if $P\cap Q\neq\emptyset$, it is easy to see that $\p$ is transverse to $\Delta$ if and only if $P$ and $Q$ are transverse submanifolds of $M$. In this case, from Proposition~\ref{prop:admissibility}, (iii) holds. In particular, from the Transversality Theorem~\ref{thm:transversality}, two generic compact submanifolds $P$ and $Q$ without $\nu$--topological obstructions give rise to an admissible GEC, since generic submanifolds are transversal, see Remark~\ref{re:transvgensubmnfld}.

Finally, as already mentioned in Remark~\ref{re:diagonaldoesntwork}, the diagonal GEC $\p=\Delta$ is not admissible.
\end{example}

\section{Generic properties of geodesics under GEC}
\label{sec:gnggec}

In this section, we give a detailed proof of the main result of Bettiol and Giamb\`o \cite{metmna}, on genericity of nondegeneracy of $(g,\p)$--geodesics, see Theorem~\ref{thm:bigone}. More precisely, given choices of an index $\nu$, an auxiliary metric $g_\mathrm A\in\met_\nu^k(M)$ satisfying \eqref{eq:boundedfromzero} and an admissible GEC $\p$, we establish genericity of metrics $g\in\A_{g_\mathrm A,\nu,\p}$, see \eqref{eq:anup}, such that the $g$--energy functional \eqref{eq:efunct} is Morse. As explained in the beginning of this chapter, such result extends Theorem~\ref{thm:biljavapic}, which corresponds to the case $\p=\{p\}\times\{p\}$, to the GEC context.

Apart from direct applications to obtain genericity of non focality properties among others, Theorem~\ref{thm:bigone} gives an affirmative answer to some questions of Biliotti, Javaloyes and Piccione \cite{biljavapic} concerning more general settings for their result. For instance, it is conjectured in \cite{biljavapic} that the same genericity result holds for geodesics joining $p$ and $q$ even if $p=q$. Although this result trivially follows from the Bumpy Metric Theorem~\ref{thm:bumpy}, it also follows from Theorem~\ref{thm:bigone} setting $\p=\{p\}\times\{p\}$. More generally, Theorem~\ref{thm:bigone} gives a much wider context in which a property similar to Theorem~\ref{thm:biljavapic} holds. In addition, as mentioned above in Remark~\ref{re:diagonaldoesntwork}, one cannot expect to use GECs to prove the Bumpy Metric Theorem~\ref{thm:bumpy}, for $\p=\Delta$ is not an admissible GEC. In fact, the Bumpy Metric Theorem~\ref{thm:bumpy} will be used in the proof of Theorem~\ref{thm:bigone} in the case $\p\cap\Delta\neq\emptyset$.

Before stating and proving such generic property, we need a generalization of the local perturbation argument employed in the proof of the Weak Bumpy Metric Theorem~\ref{thm:weakbumpy} to verify the transversality condition (ii) of the genericity criterion. Namely, as stated in Remark~\ref{re:whenpertfails}, this perturbation argument only fails in the presence of strongly degenerate geodesics, see Definition~\ref{def:defstdeg}. The following result proves this statement, using minor adaptations to fit the context of GECs. For instance, depending on the geometry of $\p$, there might be $(g,\p)$--geodesics that have infinitely many self intersections, and in this case it is necessary to appeal to a parity trick. Nevertheless, the essential ideas for the local perturbation are the same as in the Weak Bumpy Metric Theorem~\ref{thm:weakbumpy}.

\begin{theorem}\label{thm:transvholds}
Fix $(g_0,\gamma_0)\in\mathcal U$ such that $\frac{\partial E}{\partial\gamma}(g_0,\gamma_0)=0$, see Proposition~\ref{prop:critgenenfunc}. Suppose that the $g_0$--geodesic $\gamma_0$ is {\em not strongly degenerate}\footnote{Recall Definition~\ref{def:defstdeg}.}. Then for every $\p$--Jacobi field $J\in\ker\left[\frac{\partial^2 E}{\partial\gamma^2}(g_0,\gamma_0)\right]\setminus\{0\}$ along $\gamma_0$, there exists $h\in\mathds E$ such that
\begin{equation}\label{eq:mixedderneq0}
\frac{\partial^2 E}{\partial g\partial\gamma}(g_0,\gamma_0)(h,J) \stackrel{\eqref{eq:mixedderivative}}{=}\int_0^1 h(\dot{\gamma_0},\D J)+\tfrac{1}{2}\nabla h(J,\dot{\gamma_0},\dot{\gamma_0})\;\dd t\neq 0.
\end{equation}
\end{theorem}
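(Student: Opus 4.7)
The strategy is to adapt the local perturbation argument employed in the proof of the Weak Bumpy Metric Theorem~\ref{thm:weakbumpy}. Given a nontrivial $\p$--Jacobi field $J$ along $\gamma_0$, I would seek an open interval $I \subset (0,1)$ together with a section $h \in \mathds E$ supported in a small neighborhood of $\gamma_0(I)$, such that the contribution to the mixed derivative \eqref{eq:mixedderivative} from $I$ does not cancel against contributions from outside $I$. The choice of $h$ is then dictated by Lemma~\ref{le:extension}: require $h \equiv 0$ along $\gamma_0$ while $\nabla_{J(t)} h = K(t)$ for a prescribed $K$ with $K(\dot{\gamma_0},\dot{\gamma_0}) \geq 0$ and $\int_I K(\dot{\gamma_0},\dot{\gamma_0})\,\dd t > 0$, so that $h(\dot{\gamma_0},\D J)$ vanishes identically and the integral in \eqref{eq:mixedderneq0} collapses to $\tfrac{1}{2}\int_I K(\dot{\gamma_0},\dot{\gamma_0})\,\dd t > 0$. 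For this to succeed, $I$ must be chosen so that a section supported near $\gamma_0(I)$ does not spuriously act on $\gamma_0$ elsewhere in $[0,1]$.

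The plan is to split into two cases according to the self--intersection structure of $\gamma_0$, using Proposition~\ref{prop:selfintersections}. In the first case, when $\gamma_0$ has only finitely many self--intersections, Lemma~\ref{le:finiteintersection} together with Corollary~\ref{cor:pjacobiparallelfinite} allows one to choose an open interval $I \subset (0,1)$ on which $J$ is nowhere parallel to $\dot{\gamma_0}$ and such that $\gamma_0(I) \cap \gamma_0([0,1]\setminus I) = \emptyset$. With this $I$, the construction of $h$ proceeds verbatim as in the proof of the Weak Bumpy Metric Theorem~\ref{thm:weakbumpy}, yielding the desired nonvanishing mixed derivative.

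In the second case, $\gamma_0$ has infinitely many self--intersections, so by Proposition~\ref{prop:selfintersections} it is a portion of a periodic geodesic with period $\omega < 1$; the relevant situation is when $\omega = 1/k$ for some integer $k \geq 2$ (otherwise the number of repetitions of each point within $[0,1]$ is again finite and one reduces to the first case after restricting to a suitable subinterval). When $\omega = 1/k$, the points $\gamma_0(t+i/k)$ all coincide for $i = 0,\dots,k-1$, so any $h$ localized near $\gamma_0(t)$ is simultaneously felt at all these times. Splitting the integral in \eqref{eq:mixedderivative} as $\int_0^1 = \int_0^{1/k}\sum_{i=0}^{k-1}$ and using periodicity of $\dot{\gamma_0}$, the expression collapses to
\begin{equation*}
\int_0^{1/k}\Bigl[h(\dot{\gamma_0},\D V)+\tfrac{1}{2}\nabla h(V,\dot{\gamma_0},\dot{\gamma_0})\Bigr]\,\dd t,
\end{equation*}
where $V(t) = \sum_{i=0}^{k-1} J(t+i/k)$ is itself a Jacobi field along $\gamma_0\vert_{[0,1/k]}$. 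Since $\gamma_0$ is not strongly degenerate, Proposition~\ref{prop:tangentisstdeg} prevents $V$ from being pointwise a multiple of $\dot{\gamma_0}$, and by Lemma~\ref{le:parallelfinite} the set where $V \parallel \dot{\gamma_0}$ is finite. One then selects $I \subset (0,1/k)$ on which $V$ is nowhere parallel to $\dot{\gamma_0}$, and applies Lemma~\ref{le:extension} with $V$ in place of $J$ to produce the required $h$.

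The main obstacle is the bookkeeping in the periodic case: one must verify that in the averaged integral, $h$ and $\nabla h$ genuinely factor out of the sum over $i$ (which they do, precisely because they are evaluated at the common point $\gamma_0(t) = \gamma_0(t+i/k)$), so that the role of $J$ in the Weak Bumpy Metric Theorem~\ref{thm:weakbumpy} is taken over cleanly by $V$. The assumption that $\gamma_0$ is not strongly degenerate enters exactly at this point, via Proposition~\ref{prop:tangentisstdeg}, to rule out the pathological situation described in Remark~\ref{re:whenpertfails} in which $V$ is a multiple of $\dot{\gamma_0}$ and the entire integrand vanishes for every admissible $h$.
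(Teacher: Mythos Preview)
Your overall strategy is correct and mirrors the paper's approach: apply Lemma~\ref{le:extension} to produce a compactly supported section $h$ that vanishes along $\gamma_0$ with prescribed covariant derivative in a transverse direction, so that the mixed derivative collapses to a positive integral. The finite self--intersection case and the iterate case $\omega = 1/k$ are handled essentially as in the paper.

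However, there is a genuine gap in the intermediate case: when $\gamma_0$ is a portion of a periodic geodesic with period $\omega < 1$ but has \emph{distinct} endpoints $\gamma_0(0) \neq \gamma_0(1)$, so that $\omega$ is not of the form $1/k$. Your dismissal ``one reduces to the first case after restricting to a suitable subinterval'' does not work. The integral in \eqref{eq:mixedderneq0} is over all of $[0,1]$, and any section $h$ supported near $\gamma_0(I)$ for some small $I \subset (0,\omega)$ will be felt at every time in the preimage $\gamma_0^{-1}(\gamma_0(I)) \cap [0,1]$, not just at $I$. Crucially, the number of such preimages depends on where $I$ sits: writing $1 = k_*\omega + t_*$ with $0 < t_* < \omega$, points of $\gamma_0([0,t_*])$ are traversed $k_*+1$ times while those of $\gamma_0([t_*,\omega])$ are traversed only $k_*$ times. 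Thus the averaged field one must control is either $W^1(t) = \sum_{i=0}^{k_*} J(t+i\omega)$ or $W^2(t) = \sum_{i=0}^{k_*-1} J(t+i\omega)$, and one must show that at least one of these is not everywhere parallel to $\dot\gamma_0$. This is precisely the paper's ``parity trick'': if both $W^1$ and $W^2$ were parallel to $\dot\gamma_0$ on their respective intervals, one deduces that $J$ itself is parallel to $\dot\gamma_0$ on an open set, contradicting Lemma~\ref{le:jacobinotparallel} via Lemma~\ref{le:parallelfinite}. Note that the ``not strongly degenerate'' hypothesis is \emph{not} available in this subcase (it only bites when $\gamma_0$ is a genuine iterate), so you cannot invoke Proposition~\ref{prop:tangentisstdeg} here; a separate argument of the above type is required.
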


\begin{proof}
This proof is in great part adapted from \cite[Proposition 4.3]{biljavapic}. Let $J\in\ker\left[\frac{\partial^2 E}{\partial\gamma^2}(g_0,\gamma_0)\right]\setminus\{0\}$ be a nontrivial $\p$--Jacobi field along $\gamma_0$. The main idea is to use a {\em local perturbation argument} that will employ $J$ to construct\footnote{Using the extension Lemma~\ref{le:extension}.} a section $h$ with the required regularity, having compact support contained in a neighborhood of a segment of $\gamma_0$ where $J$ is not parallel to $\dot{\gamma_0}$.

We will split the proof of the existence of such $h$ such that \eqref{eq:mixedderneq0} holds in three claims, to deal with the possibly infinite number of self intersections of $\gamma_0$. The geodesic $\gamma_0$ has either infinite or finite self intersections. From Proposition~\ref{prop:selfintersections}, these possibilities correspond respectively to $\gamma_0$ being a portion of a periodic geodesic with period $\omega<1$ or not. The first possibility will be subdivided again in two cases, namely corresponding to when $\gamma_0$ has endpoints that coincide or not. Notice that the second possibility, with a finite number of self intersections, covers {\em prime geodesics}, i.e. periodic geodesics that are not $k$--fold iteration of other $g_0$--geodesics. Notice that under the hypotheses on $\gamma_0$, these cases cover all possibilities, since $\gamma_0$ must fall in one of the following cases above discussed:
\begin{itemize}
\item[-- ] $\gamma_0$ is {\em not} a portion of a periodic geodesic of period $\omega<1$;
\item[-- ] $\gamma_0$ is a portion of a periodic geodesic of period $\omega<1$, however with distinct endpoints;
\item[-- ] $\gamma_0$ is an iterate geodesic.
\end{itemize}
We will respectively cover each of these possibilities in the following three claims.

\begin{claim}\label{cl:finiteinter}
The theorem holds if $\gamma_0$ is {\em not} a portion of a periodic geodesic with period $\omega<1$.\footnote{Notice that $\gamma_0$ may be a prime geodesic with these hypothesis.}
\end{claim}

From Proposition~\ref{prop:selfintersections}, in this case $\gamma_0$ has only a finite number of self intersections. Thus, there exists a nonempty open interval $I\subset [0,1]$ such that 
\begin{itemize}
\item[$I$--1:] $\gamma_0(I)\cap\gamma_0([0,1]\setminus I)=\emptyset$;
\item[$I$--2:] $J$ is not parallel to $\dot{\gamma_0}$ at any time in $I$.
\end{itemize}
Indeed such an interval exists, since the first condition is feasible due to the finiteness of self intersections of $\gamma_0$ and the second is also admissible as a consequence of Corollary~\ref{cor:pjacobiparallelfinite}.

In order to construct the required $h\in\mathds{E}$ such that $\frac{\partial^2 E}{\partial g\partial\gamma}(g_0,\gamma_0)(h,J)\ne 0$, we apply Lemma~\ref{le:extension} to the vector bundle $TM^*\vee TM^*$. Let $U\subset M$ be any open subset containing $\gamma_0(I)$ such that
\begin{itemize}
\item[$U$:] $\gamma_0(t)\in U$ if and only if $t\in I$.
\end{itemize}
For instance, $U$ can be taken as the complement of $\gamma_0([0,1]\setminus I)$. Let $H\in\sect^k(\gamma_0^*E)$ be the identically null section and $K\in\sect^k(\gamma_0^*E)$ any symmetric bilinear form continuous on $t$, that satisfies $$K(\dot{\gamma_0},\dot{\gamma_0})\geq 0\;\;\mbox{ and }\;\;\int_I K(t)(\dot{\gamma_0}(t),\dot{\gamma_0}(t))\;\dd t>0,$$
for instance, $K(t)=g_\mathrm R(\gamma_0(t))$. Reducing the size of $I$ if necessary, we may assume that the result of Lemma~\ref{le:extension} holds.

\begin{figure}[htf]
\begin{center}
\includegraphics[scale=0.5]{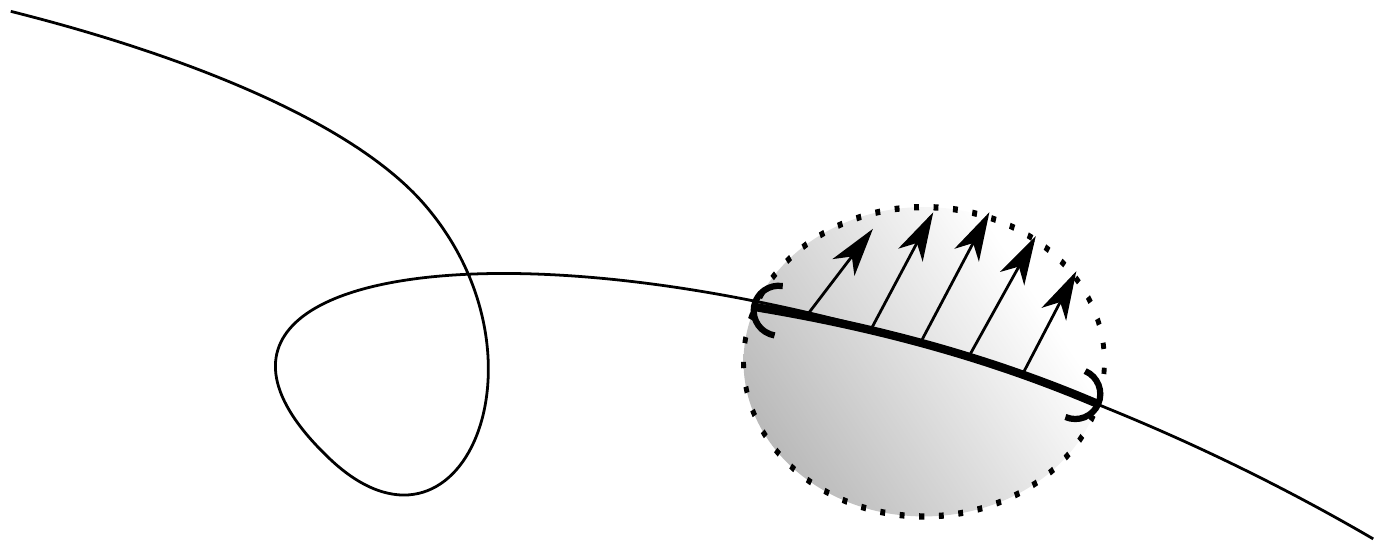}
\begin{pgfpicture}
\pgfputat{\pgfxy(-6,2.8)}{\pgfbox[center,center]{$\gamma_0$}}
\pgfputat{\pgfxy(-3.3,0)}{\pgfbox[center,center]{$U$}}
\pgfputat{\pgfxy(-2,2)}{\pgfbox[center,center]{$K$}}
\pgfputat{\pgfxy(-1,0.9)}{\pgfbox[center,center]{$\gamma_0(I)$}}
\end{pgfpicture}
\end{center}
\caption{Local perturbation along $\gamma_0$ to obtain the section $h\in\sect^k(TM^*\vee TM^*)$ that ensures \eqref{eq:mixedderneq0}.}\label{fig:localperturb}
\end{figure}

This gives a globally defined section $h\in\sect^k(TM^*\vee TM^*)$ with compact support contained in $U$ such that
\begin{equation}\label{eq:hhk}
h(\gamma_0(t))=0\;\;\mbox{ and }\;\;\nabla_{J(t)} h=K(t), \quad\mbox{ for all } t\in I.
\end{equation}
Clearly $h\in\mathds E$, since all $C^k$ sections of $E$ with compact support are in $\mathds E$. Finally, from the above construction,
\begin{eqnarray*}
\frac{\partial^2 E}{\partial g\partial\gamma}(g_0,\gamma_0)(h,J) &=& \int_0^1 h(\dot{\gamma_0},\D J)+\tfrac{1}{2}\nabla h(J,\dot{\gamma_0},\dot{\gamma_0})\;\dd t\\
&=&\tfrac{1}{2}\int_I K(t)(\dot{\gamma_0}(t),\dot{\gamma_0}(t))\;\dd t\\
&>&0.
\end{eqnarray*}

\begin{claim}\label{cl:infiniteinter1}
The theorem holds if $\gamma_0$ is a portion of a periodic geodesic with period $\omega<1$, however with distinct endpoints.
\end{claim}

To prove this second claim we adapt the local perturbation argument above using a {\em parity trick} to find an open interval $I\subset [0,1]$ with the required properties. Let $p=\gamma_0(0)$ and $q=\gamma_0(1)$ and define
$$t_*=\min\,\{t>0:\gamma_0(t)=q\}\;\; \mbox{ and }\;\; k_*=\max\,\{k\in\N:k\omega<1\}.$$
Then, it follows that
$$k_*\ge 1,\quad 0<t_*<\omega,\quad k_*\omega+t_*=1.$$
Notice that $p\ne q$ is a necessary hypothesis here. Indeed, $p=q$ would imply $t_*=\omega$, and the parity trick below fails in this case.

Consider the geodesics $\gamma_1=\gamma_0\vert_{[0,t_*]}$ and $\gamma_2=\gamma_0\vert_{[t_*,T]}$, where $\gamma_2$ has the opposite orientation of $\gamma_0$. Both $\gamma_1$ and $\gamma_2$ have finitely many self intersections and join $p$ and $q$. Thus Claim~\ref{cl:finiteinter} applies to both and hence there exist nonempty open intervals $I_1=\,]a_1,b_1[\,\subset[0,t_*]$ and $I_2=\,]a_2,b_2[\,\subset[t_*,\omega]$ such that
\begin{itemize}
\item[$I_1$--1:] $\gamma_0(I_1)\cap\gamma_0\big(([0,t_*]\setminus I_1)\cup [t_*,\omega]\big)=\emptyset$;
\item[$I_2$--1:] $\gamma_0(I_2)\cap\gamma_0\big(([t_*,\omega]\setminus I_2)\cup [0,t_*]\big)=\emptyset$.
\end{itemize}
Analogously to the proof of Claim~\ref{cl:finiteinter}, there exist open subsets $U_1,U_2\subset M$, with $\gamma(I_j)\subset U_j$, $j=1,2$, satisfying
\begin{itemize}
\item[$U_1$:] $\gamma_0(t)\in U_1$ for some $t\in I_1$ if and only if $t-i\omega\in I_1$ for some $i\in\{0,\ldots,k_*\}$;
\item[$U_2$:] $\gamma_0(t)\in U_2$ for some $t\in I_2$ if and only if $t-i\omega\in I_2$ for some $i\in\{0,\ldots,k_*-1\}$.
\end{itemize}
Once more, these may be taken as $U_j=\gamma_0([0,1]\setminus I_j)$.

For $j=1,2$, consider the $\p$--Jacobi fields $W^j$ along $\gamma_j$ defined by
\begin{equation}\label{eq:defW1W2}
\begin{aligned}
W^1(t)&=\displaystyle\sum_{i=0}^{k_*}J(t+i\omega),\quad t\in I_1\\
W^2(t)&=\displaystyle\sum_{i=0}^{k_*-1} J(t+i\omega),\quad t\in I_2
\end{aligned}
\end{equation}
It is impossible that both $W^1$ and $W^2$ are everywhere parallel to $\dot{\gamma_0}$ at $I_1$ and $I_2$ respectively, for otherwise from \eqref{eq:defW1W2} one would easily conclude that $J$ is everywhere parallel to $\dot{\gamma_0}$, contradicting Lemma~\ref{le:parallelfinite} (and Corollary~\ref{cor:pjacobiparallelfinite}). Thus, we may assume that, for instance $W^1$, is not everywhere parallel to $\dot{\gamma_0}$ on $I_1$. This means that there are only points where $W^1(t)$ is parallel to $\dot{\gamma_0}(t)$. Reducing the size of $I_1$ if necessary, we can assume that $W^1(t)$ is never a multiple of $\dot{\gamma_0}(t)$ on $I_1$.

At this point it is possible to repeat exactly the same construction from Claim~\ref{cl:finiteinter} replacing the Jacobi field $J$ with $W^1$. From Lemma~\ref{le:extension}, there exists $h\in\sect^k(TM^*\vee TM^*)$ with compact support contained in $U_1$ with prescribed values $H$ and covariant derivative $K$ in the direction $W^1$ along $\gamma_0\vert_{I_1}$, analogously to \eqref{eq:hhk}. Choosing $H$ and $K$ as in the proof of Claim~\ref{cl:finiteinter}, it follows that
\begin{eqnarray}\label{eq:mixedderneq0holds}
\frac{\partial^2 E}{\partial g\partial\gamma}(g_0,\gamma_0)(h,J) &=& \int_0^1 h(\dot{\gamma_0},\D J)+\tfrac12\nabla h(J,\dot{\gamma_0},\dot{\gamma_0})\;\dd t \nonumber \\
&=& \tfrac12\sum_{i=0}^{k_*}\int_{a_1+i\omega}^{b_1+i\omega}\nabla h(J,\dot{\gamma_0},\dot{\gamma_0})\;\dd t\nonumber \\
&=& \tfrac12\int_{a_1}^{b_1}\nabla h(W^1,\dot{\gamma_0},\dot{\gamma_0})\;\dd t \\
&=& \tfrac12\int_I K(t)(\dot{\gamma_0}(t),\dot{\gamma_0}(t))\;\dd t \nonumber \\
&>&0.\nonumber 
\end{eqnarray}

\begin{claim}\label{cl:infiniteinter2}
The theorem holds if $\gamma_0$ is an iterate geodesic.
\end{claim}

The local perturbation argument used in Claims~\ref{cl:finiteinter} and~\ref{cl:infiniteinter2} can also be adapted to this last case where $\gamma_0$ is an iterate geodesic, provided it is {\em not strongly degenerate}. This is a simple scholium from Claim~\ref{cl:infiniteinter1}. Under these hypotheses, $\gamma_0$ is a periodic geodesic with period $\omega=\tfrac{1}{k}$, for some $k\ge 2$. Notice that for $k=1$, $\gamma_0$ is a prime geodesic and this case was already covered by Claim~\ref{cl:finiteinter}.

Analogously to \eqref{eq:defW1W2}, define the $\p$--Jacobi field $$W(t)=\sum\limits_{i=0}^{k-1} J\left(t+\tfrac{i}{k}\right),\quad t\in [0,1].$$ We claim that a sufficient condition to apply the local perturbation argument is that
\begin{equation}\label{eq:whenworks}
W(t_0)\ne 0\;\;\mbox{ for some }\;\; t_0\in [0,1].
\end{equation}
Before verifying that indeed this is a sufficient condition, notice that since $\gamma_0$ is {\em not} strongly degenerate, \eqref{eq:whenworks} clearly holds, see Definition~\ref{def:defstdeg}.

Finally, let us prove that \eqref{eq:whenworks} allows to apply the local perturbation argument as above. By continuity, from \eqref{eq:whenworks}, there exists a nonempty open interval $I\subset [0,1]$ around such $t_0$ where $W$ does not vanish, with the same properties of the intervals $I$ considered above. Namely,
\begin{itemize}
\item[$I$--1:] $\gamma_0\big([0,\tfrac1k]\setminus I\big)\cap\gamma_0(I)=\emptyset$;
\item[$I$--2:] $W$ is not parallel to $\dot{\gamma_0}$ at any $t\in I$.
\end{itemize}
Once more, the second condition is feasible as a consequence of Corollary~\ref{cor:pjacobiparallelfinite}. It is also easy to obtain an open neighborhood $U$ of $\gamma_0(I)$ such that
\begin{itemize}
\item[$U$:] $\gamma_0(t)\in U$ for some $t\in I$ if and only if $t-\tfrac{i}{k}\in I$ for some $i\in\{0,\ldots,k\}$.
\end{itemize}
Again, take for instance $U=M\setminus\gamma_0([0,1]\setminus I)$. This gives a situation totally analogous to the one illustrated in Figure~\ref{fig:localperturb}.

Reducing the size of $I$ if necessary, we may assume that the result of Lemma~\ref{le:extension} holds. Once more, this gives a globally defined section $h\in\sect^k(TM^*\vee TM^*)$ with compact support contained in $U$ and prescribed values $H$ and covariant derivative $K$ in the direction $W$ along $\gamma_0\vert_{I}$. Once more, $h\in\mathds E$, since it has compact support. Prescribing appropriate values again for $H$ and $K$, exactly as in the end of the proof of Claim~\ref{cl:infiniteinter1}, a computation similar to \eqref{eq:mixedderneq0holds} proves that \eqref{eq:mixedderneq0} holds for this $h$.

This concludes the proof, since all the three possibilities for $\gamma_0$ described above have been covered.
\end{proof}

\begin{remark}
Theorem~\ref{thm:transvholds} guarantees that transversality condition (ii) of the Abstract Genericity Criterion~\ref{crit:abstractgenericity} holds for the geodesic setup unless $\gamma_0$ is a strongly degenerate geodesic.\footnote{Recall Definition~\ref{def:defstdeg}.} More precisely, from Remark~\ref{re:whenpertfails}, this is the only case in which the local perturbation argument above used fails. Recall that if $\gamma_0$ is a strongly degenerate $(g_0,\p)$--geodesic, then it admits a nontrivial Jacobi field $J$ which satisfies (b) of Definition~\ref{def:defstdeg}. For this $J$, the right--hand side of \eqref{eq:mixedderivative} is identically null for any section $h$ of $TM^*\vee TM^*$, hence \eqref{eq:mixedderneq0} trivially fails.
\end{remark}

We are now ready to prove our main genericity result, on nondegeneracy of geodesics under GECs. The proof will be done in two steps. First, we consider the case $\p\cap\Delta=\emptyset$ and apply the Abstract Genericity Criterion~\ref{crit:abstractgenericity} using a local perturbation argument, proved in Theorem~\ref{thm:transvholds}. Secondly, we treat the special case $\p\cap\Delta\neq\emptyset$ using its admissibility, since the abstract criterion fails due to the possible presence of strongly degenerate geodesics.

We stress that this case is \emph{not} an immediate consequence of the first case $\p\cap\Delta=\emptyset$ and the Bumpy Metric Theorem~\ref{thm:bumpy}. Indeed, if $\gamma\in\op(M)$ is a periodic $(g,\p)$--geodesic, the notions of degeneracy as a $(g,\p)$--geodesic and as a periodic geodesic \emph{do not} coincide, see Remark~\ref{re:degeneracynotions}. For this, we use a more elaborate argument, which employs both the Bumpy Metric Theorem~\ref{thm:bumpy} and the Abstract Genericity Criterion~\ref{crit:abstractgenericity} in a different way.

\begin{theorem}\label{thm:bigone}
Let $M$ be a smooth $m$--dimensional manifold and fix $\mathds{E}$ a separable $C^k$ Whitney type Banach space of sections of $E=TM^*\vee TM^*$ that tend to zero at infinity, with $k\geq 3$. Fix $\nu\in\{0,\dots,m\}$ an index and let $g_\mathrm A\in\met_\nu^k(M)$ be such that $$\sup_{x\in M}\|g_\mathrm A(x)^{-1}\|_\mathrm R<+\infty.$$ Consider $\p$ an $(g_\mathrm A,\nu)$--admissible GEC. Then the following is a generic subset of $\A_{g_\mathrm A,\nu,\p}$\footnote{Recall Proposition~\ref{prop:nondegenerateopen} and \eqref{eq:anup}.}
$$\mathcal{G}_\p(M)=\left\{g\in\A_{g_\mathrm A,\nu,\p}: \begin{array}{c} \text{ all } (g,\p)\text{--geodesics }\gamma\in\op(M) \\ \text{ are nondegenerate }\end{array}\right\}.$$
\end{theorem}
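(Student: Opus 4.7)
The plan is to apply the Abstract Genericity Criterion~\ref{crit:weakabstractgenericity} to the generalized energy functional $E:\mathcal U\to\R$ with $\mathcal U=\A_{g_\mathrm A,\nu,\p}\times\op(M)$. The base spaces are, by Propositions~\ref{prop:affineworks},~\ref{prop:nondegenerateopen} and~\ref{prop:opmsubmfld}, respectively a separable Banach manifold and a separable Hilbert manifold, and $E$ is $C^k$ by Proposition~\ref{prop:fck}. As the distinguished set of critical points, I would take
\begin{equation*}
\mathfrak C=\left\{(g,\gamma)\in\mathcal U:\tfrac{\partial E}{\partial\gamma}(g,\gamma)=0\text{ and }\gamma\text{ is not strongly degenerate}\right\}.
\end{equation*}
Hypothesis (i) of the criterion is then delivered verbatim by Proposition~\ref{prop:fredholmness}, since the index form \eqref{eq:indexformrep} at any critical pair is a self--adjoint Fredholm operator on $T_\gamma\op(M)$.

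For hypothesis (ii), I would invoke directly the local perturbation Theorem~\ref{thm:transvholds}, which asserts precisely that for every non--strongly degenerate critical pair $(g_0,\gamma_0)$ and every nontrivial $\p$--Jacobi field $J\in\ker\bigl[\tfrac{\partial^2E}{\partial\gamma^2}(g_0,\gamma_0)\bigr]$, one can build $h\in\mathds E$ with compact support such that the mixed derivative $\tfrac{\partial^2 E}{\partial g\partial\gamma}(g_0,\gamma_0)(h,J)\neq0$. The criterion then yields genericity in $\A_{g_\mathrm A,\nu,\p}$ of
\begin{equation*}
\mathcal G_{\mathfrak C}=\bigl\{g\in\A_{g_\mathrm A,\nu,\p}:\text{every non--strongly degenerate }(g,\p)\text{--geodesic is nondegenerate}\bigr\}.
\end{equation*}

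To conclude that $\mathcal G_\p=\mathcal G_{\mathfrak C}$, one has to rule out strongly degenerate $(g,\p)$--geodesics, and here the analysis splits in two cases. If $\p\cap\Delta=\emptyset$, then by condition (a) of Definition~\ref{def:defstdeg} any strongly degenerate $\gamma$ must satisfy $\gamma(0)=\gamma(1)$, so $(\gamma(0),\gamma(1))\in\p\cap\Delta$, which is empty; hence no strongly degenerate $(g,\p)$--geodesics exist for any $g$, and $\mathcal G_\p=\mathcal G_{\mathfrak C}$ is generic. When $\p\cap\Delta\ne\emptyset$ this fails and we must neutralize the strongly degenerate curves by a different device: intersect with the generic bumpy set $\mathcal G_\Delta(M)$ of the Bumpy Metric Theorem~\ref{thm:bumpy}. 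By Proposition~\ref{prop:stronglydegenerate} every strongly degenerate geodesic is an $S^1$--degenerate periodic geodesic, hence is forbidden for bumpy $g$; combining this with the previous step yields
\begin{equation*}
\mathcal G_\p\supset\mathcal G_{\mathfrak C}\cap\bigl(\mathcal G_\Delta(M)\cap\A_{g_\mathrm A,\nu,\p}\bigr),
\end{equation*}
which is a generic subset of $\A_{g_\mathrm A,\nu,\p}$ by Lemmas~\ref{le:genopenintersect} and~\ref{le:gencountintersect}, since $\A_{g_\mathrm A,\nu,\p}$ is open in $\A_{g_\mathrm A,\nu}$ by Proposition~\ref{prop:nondegenerateopen} and $\mathcal G_\Delta(M)$ is generic in $\A_{g_\mathrm A,\nu}$. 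By Remark~\ref{re:stupidremark}, $\mathcal G_\p$ itself is generic.

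The principal obstacle is genuinely the strongly degenerate case: there the local perturbation argument of Theorem~\ref{thm:transvholds} is intrinsically powerless because for the distinguished Jacobi field $J$ of Definition~\ref{def:defstdeg}(b) the right--hand side of \eqref{eq:mixedderivative} vanishes identically for \emph{every} $h$. The elegant trick is that admissibility of $\p$ (in particular compactness and the uniform lower bound on $L_\mathrm R$ from Definition~\ref{def:admgbc}) together with the Bumpy Metric Theorem suffice to excise these pathological geodesics by a \emph{global} symmetry argument rather than a local perturbation; a secondary technical point will be to check that the admissibility hypothesis is what sustains the criterion's setup, ensuring $\A_{g_\mathrm A,\nu,\p}$ is a nonempty open subset of $\A_{g_\mathrm A,\nu}$ with the correct Banach structure.
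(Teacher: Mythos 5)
Your proof is correct but takes a genuinely different and more economical route than the paper's. You apply the weak Abstract Genericity Criterion~\ref{crit:weakabstractgenericity} with $\mathfrak C$ the set of non--strongly degenerate critical pairs, getting genericity of $\mathcal G_{\mathfrak C}$ in one stroke, then intersect with the bumpy set $\mathcal G_\Delta(M)\cap\A_{g_\mathrm A,\nu,\p}$. The paper instead decomposes $\mathcal G_\p(M)=\bigcap_{n\in\N}\mathcal R_n$ into a countable intersection, proves each $\mathcal R_n$ open by an Arzel\`a--Ascoli compactness argument (this is precisely where admissibility condition (iii) is used, to rule out constant limits) and dense by restricting the domain of the energy functional to $\mathcal B_n\times\{\gamma\in\op(M):L_\mathrm R(\gamma)<n\}$ --- bumpy metrics only, so no strongly degenerate geodesics in range --- and applying the full Criterion~\ref{crit:abstractgenericity} there. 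Both routes rest on the same two keystones, the local perturbation Theorem~\ref{thm:transvholds} and the Bumpy Metric Theorem~\ref{thm:bumpy}, but yours avoids the countable intersection and the Arzel\`a--Ascoli passage entirely, since Lemma~\ref{le:transvopen} absorbs the openness step inside the weak criterion; a notable consequence is that admissibility condition (iii) never surfaces explicitly in your argument, which is worth flagging. One small caveat: $\mathfrak C$ as you define it formally contains constant curves at points of $\p\cap\Delta$, which are never strongly degenerate but lie outside the reach of Theorem~\ref{thm:transvholds} --- its local perturbation is built along a nonvanishing tangent field, and the mixed--derivative formula \eqref{eq:mixedderivative} vanishes identically when $\dot\gamma_0\equiv 0$ --- so to be fully rigorous you should exclude them from $\mathfrak C$, exactly as the paper tacitly does throughout.
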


\begin{proof}
We shall prove the genericity of $\mathcal{G}_\p(M)$ in $\A_{g_\mathrm A,\nu,\p}$ through a sequence of four claims. The first claim establishes the genericity of $\mathcal G_\p(M)$ if $\p\cap\Delta=\emptyset$, using the Abstract Genericity Criterion~\ref{crit:abstractgenericity}. The second claim deals with the case $\p\cap\Delta\ne\emptyset$, setting the context to prove genericity of $\mathcal G_\p(M)$ for such GECs using the Abstract Genericity Criterion~\ref{crit:abstractgenericity} and the Bumpy Metric Theorem~\ref{thm:bumpy} in a more technical argument. Finally, the last two claims guarantee that the second claim holds.

\begin{claim}\label{cl:bigclaim1}
$\mathcal G_\p(M)$ is generic in $\A_{g_\mathrm A,\nu,\p}$ if $\p\cap\Delta=\emptyset$.
\end{claim}

To prove genericity of $\mathcal G_\p(M)$ in this case, we apply the Abstract Genericity Criterion~\ref{crit:abstractgenericity} to the generalized energy functional \eqref{eq:efunct}, $$E:\mathcal U\ni (g,\gamma)\longmapsto E_g(\gamma)=\tfrac{1}{2}\int_0^1 g(\dot\gamma,\dot\gamma)\;\dd t\in\R,$$ where $\mathcal U=\A_{g_\mathrm A,\nu,\p}\times\op(M)$. Recall that this criterion states that under two conditions (i) and (ii) on the points $(g_0,\gamma_0)\in\mathcal U$ such that $\frac{\partial E}{\partial\gamma}(g_0,\gamma_0)=0$, the set of parameters $g\in\A_{g_\mathrm A,\nu,\p}$ such that $E_g$ is a Morse function is generic in $\A_{g_\mathrm A,\nu,\p}$. From Proposition~\ref{prop:critgenenfunc}, $E_g$ is Morse if and only if all $(g,\p)$--geodesics are nondegenerate. Thus, it suffices to verify these two conditions to obtain the desired genericity of $\mathcal G_\p(M)$ for $\p\cap\Delta=\emptyset$.

Condition (i) of the Abstract Genericity Criterion~\ref{crit:abstractgenericity} is an immediate consequence of Proposition~\ref{prop:fredholmness}. Namely, this proposition asserts that given $(g_0,\gamma_0)\in\mathcal U$ such that\footnote{See Proposition~\ref{prop:critgenenfunc} for a characterization of this fact.} $\frac{\partial E}{\partial\gamma}(g_0,\gamma_0)=0$, the index form $$\frac{\partial^2 E}{\partial\gamma^2}(g_0,\gamma_0):T_{\gamma_0}\op(M)\la T_{\gamma_0}\op(M)^*\cong T_{\gamma_0}\op(M)$$ given by \eqref{eq:indexform} is represented by a self--adjoint Fredholm operator. This is exactly the content of condition (i).

As for condition (ii) of the Abstract Genericity Criterion~\ref{crit:abstractgenericity}, it is an immediate consequence of Theorem~\ref{thm:transvholds}. Namely, condition (ii) asserts that given $(g_0,\gamma_0)\in\mathcal U$ such that $\frac{\partial E}{\partial\gamma}(g_0,\gamma_0)=0$, for all $J\in\ker\left[\frac{\partial^2 E}{\partial\gamma^2}(g_0,\gamma_0)\right]\setminus\{0\}$ there must exist $h\in T_{g_0}\A_{g_\mathrm A,\nu,\p}$ such that the mixed derivative \eqref{eq:mixedderivative}, $$\frac{\partial^2 E}{\partial g\partial\gamma}(g_0,\gamma_0)(h,J)=\int_0^1 h(\dot{\gamma_0},\D J)+\tfrac{1}{2}\nabla h(J,\dot{\gamma_0},\dot{\gamma_0})\;\dd t$$ does not vanish. Notice that since we are assuming $\p\cap\Delta=\emptyset$, the $g_0$--geodesic $\gamma_0$ has distinct endpoints. Hence, from Claims~\ref{cl:finiteinter} and~\ref{cl:infiniteinter1} in the proof of Theorem~\ref{thm:transvholds}, condition (ii) is verified in this case.

Therefore, if $\p\cap\Delta=\emptyset$, the Abstract Genericity Criterion~\ref{crit:abstractgenericity} implies that $\mathcal G_\p(M)$ is generic in $\A_{g_\mathrm A,\nu,\p}$. This concludes the proof of Claim~\ref{cl:bigclaim1}.

\begin{claim}\label{cl:bigclaim2}
$\mathcal G_\p(M)$ is generic in $\A_{g_\mathrm A,\nu,\p}$ if $\p\cap\Delta\ne\emptyset$.
\end{claim}

Define for each $n\in\N$,
\begin{equation}\label{eq:Ralpha}
\mathcal{R}_n=\left\{g\in\mathcal{A}_{g_\mathrm A,\nu,\p}:\begin{array}{c}\mbox{ all } (g,\p)\mbox{--geodesics }\gamma\mbox{ with } \\ L_\mathrm R (\gamma)\leq n\mbox{ are nondegenerate} \end{array}\right\}.
\end{equation}
Since $\mathcal{G}_\p(M)=\bigcap_{n\in\N}\mathcal{R}_n$, from Lemma~\ref{le:gencountintersect} it suffices to prove that each $\mathcal R_n$ is open and dense in $\A_{g_\mathrm A,\nu,\p}$. We now prove separately that each $\mathcal R_n$ is open, using the Arzel\`a--Ascoli Theorem; and dense, using the Abstract Genericity Criterion~\ref{crit:abstractgenericity} together with the Bumpy Metric Theorem~\ref{thm:bumpy}.

\begin{claim}\label{cl:bigclaim3}
$\mathcal R_n$ is open in $\mathcal A_{g_\mathrm A,\nu,\p}$ for every $n\in\N$.
\end{claim}

Let $\{g_i\}_{i\in\N}$ be a convergent sequence in $\mathcal A_{g_\mathrm A,\nu,\p}\setminus\mathcal R_n$, with $\lim_{i\to+\infty} g_i=g_\infty$. From the definition of $\mathcal R_n$, for each $i\in\N$ there exists a degenerate $(g_i,\p)$--geodesic $\gamma_i$ with $L_\mathrm R(\gamma_i)\leq n$. Since $\p$ is compact and $L_\mathrm R(\gamma_i)\leq n$, by the Arzel\`a--Ascoli Theorem, up to subsequences, there exists a convergent sequence $\{t_i\}_{i\in\N}$ in $[0,1]$ with $\lim_{i\to+\infty} t_i=t_\infty$ such that $\left\|\dot{\gamma_i}(t_i)\right\|_\mathrm R\leq n$ for all $i\in\N$, and $\dot{\gamma_i}(t_i)$ converges to $v\in T_{p_\infty}M$, with $p_\infty=\lim_{i\to+\infty}\gamma_i(t_\infty)$. From continuous dependence of ODE's solutions on initial conditions, it is easy to see that the solution $\gamma_\infty$ of $\D^{g_\infty}\dot\gamma=0$ with initial conditions $\gamma(t_\infty)=p_\infty$ and $\dot\gamma(t_\infty)=v$ is the $C^2$--limit of the sequence of geodesics $\gamma_i$. Therefore $\gamma_\infty$ is a $(g_\infty,\p)$--geodesic, and obviously $L_\mathrm R(\gamma_\infty)\leq n$.

Moreover, $\gamma_\infty$ is \emph{non constant}. This follows from the fact that $\p$ is admissible.\footnote{Indeed, condition (iii) of Definition~\ref{def:admgbc} of admissibility is used only in this part of the proof.} Hence there exists $a>0$ such that $L_\mathrm R(\gamma_i)\geq a$ for large $i$, since $g_i$ will be in any open neighborhoods of $g_\infty$ in $\mathcal A_{g_\mathrm A,\nu,\p}$.

In order to prove that such $\gamma_\infty$ is a \emph{degenerate} $(g_\infty,\p)$--geodesic, for each $i$ let $J_i$ be a nontrivial $\p$--Jacobi field along $\gamma_i$. Then $J_i$ is the solution of a second order ODE whose initial conditions converge to initial conditions of the $\p$--Jacobi fields equation along the $g_\infty$--geodesic $\gamma_\infty$. More precisely, for each $i$, $J_i$ is a nontrivial $\p$--Jacobi field, that in particular satisfies the $g_i$--Jacobi equation \eqref{eq:jacobi}, $$\D^{g_i}J_i=R^{g_i}(\dot{\gamma_i},J_i)\dot{\gamma_i}.$$ By adding a suitable multiple of $\dot{\gamma_i}$, we may assume that $J_i(0)$ is $g_\mathrm R$--orthogonal to $\dot{\gamma_i}(0)$. In addition, using an adequate normalization it is also possible to assume that $\max\{\|J_i(0)\|_{\mathrm R},\|\D^{g_i} J_i(0)\|_{\mathrm R}\}=1.$ Again, up to subsequences, the initial conditions converge, $$\lim_{i\to+\infty} J_i(0)=v\in T_{\gamma_\infty(0)}M, \;\quad\; \lim_{i\to+\infty} \D^{g_i} J_i(0)=w\in T_{\gamma_\infty(0)}M.$$ By continuity, $v$ is $g_\mathrm R$--orthogonal to $\dot{\gamma_\infty}(0)$, and \begin{equation}\label{eq:maxinfty} \max\{\|v\|_{\mathrm R},\|w\|_{\mathrm R}\}=1. \end{equation} The solution of the $g_\infty$--Jacobi equation along $\gamma_\infty$ with the above limit initial conditions is a $\p$--Jacobi field $J_\infty$ that is also the $C^2$--limit of the $\p$--Jacobi fields $J_i$. Finally, it is not a multiple of the tangent field $\dot{\gamma_\infty}$. Indeed, if $J_\infty$ were a multiple of $\dot{\gamma_\infty}$, since $v$ is $g_\mathrm R$--orthogonal to $\dot{\gamma_\infty}(0)$, it would be $v=0$ and $w=0$, which contradicts \eqref{eq:maxinfty}. Hence $g_\infty\in\mathcal A_{g_\mathrm A,\nu,\p}\setminus\mathcal R_n$, which proves that $\mathcal R_n$ is an open subset.

\begin{claim}\label{cl:bigclaim4}
$\mathcal R_n$ is dense in $\mathcal A_{g_\mathrm A,\nu,\p}$ for every $n\in\N$.
\end{claim}

For each $n\in\N$, define the following subsets of $\mathcal{A}_{g_\mathrm A,\nu,\p}$, $$\mathcal{B}_n=\left\{g\in\mathcal{A}_{g_\mathrm A,\nu,\p}:\begin{array}{c}\mbox{ all periodic } g\mbox{--geodesics }\gamma\mbox{ with }  \\ L_\mathrm R (\gamma)\leq n\mbox{ are nondegenerate} \end{array}\right\},$$ \smallskip $$ \mathcal{D}_n=\left\{g\in\mathcal{A}_{g_\mathrm A,\nu,\p}: \begin{array}{c} \mbox{ all } g\mbox{--geodesics }\gamma\mbox{ with }L_\mathrm R (\gamma)<n \mbox{ that are} \\ \mbox{periodic or }(g,\p)\mbox{--geodesics are nondegenerate} \end{array} \right\}.$$

It is easy to see that for each $n$, $\mathcal{D}_{n+1}\subset\mathcal{R}_n$. From the Bumpy Metric Theorem~\ref{thm:bumpy}, each $\mathcal B_n$ is open and dense in $\mathcal A_{g_\mathrm A,\nu,\p}$. Hence to prove that $\mathcal R_n$ is dense in $\mathcal A_{g_\mathrm A,\nu,\p}$, it suffices to prove that $\mathcal D_n$ is dense in $\mathcal B_n$. To this aim, for each $n$ we use the Abstract Genericity Criterion~\ref{crit:abstractgenericity} again. The setting is the same geodesic setup used in Claim~\ref{cl:bigclaim1}, with the only difference being the domain of the generalized energy functional \eqref{eq:efunct}, which we now take as the open subset $$\mathcal U_n=\mathcal B_n\times\{\gamma\in\op(M):L_\mathrm R(\gamma)<n\}.$$ This means that we are dealing only with bumpy metrics, i.e., without degenerate periodic geodesics.

Let us prove that conditions (i) and (ii) of the Abstract Genericity Criterion~\ref{crit:abstractgenericity} are verified also in this context, concluding the proof. Since it is local, condition (i) follows again from Proposition~\ref{prop:fredholmness}. Theorem~\ref{thm:transvholds} implies that the transversality condition (ii) would only fail in the presence of strongly degenerate geodesics. Nevertheless, there cannot be critical points of the form $(g_0,\gamma_0)$, where $\gamma_0$ is a strongly degenerate $(g_0,\p)$--geodesic. This follows from Proposition~\ref{prop:stronglydegenerate}, since $\gamma_0$ would also be a {\em degenerate periodic geodesic}, contradicting $g_0\in\mathcal B_n$. Thus, condition (ii) is verified and the Abstract Genericity Criterion~\ref{crit:abstractgenericity} applies also in this setting. Therefore $\mathcal R_n$ is generic, in particular dense, in $\mathcal B_n$ hence also in $\A_{g_\mathrm A,\nu,\p}$.

Therefore, each $\mathcal R_n$ is open and dense in $\A_{g_\mathrm A,\nu,\p}$, hence generic. From Lemma~\ref{le:gencountintersect}, the countable intersection $\mathcal{G}_\p(M)=\bigcap_{n\in\N} \mathcal{R}_n$ is also generic in $\mathcal A_{g_\mathrm A,\nu,\p}$. This concludes the proof of Claim~\ref{cl:bigclaim2}, which combined with Claim~\ref{cl:bigclaim1}, implies that $\mathcal G_\p(M)$ is generic in $\mathcal A_{g_\mathrm A,\nu,\p}$, concluding the proof.
\end{proof}

We end this section with a few examples of applications of Theorem~\ref{thm:bigone} in the case of the admissible GECs given in Example~\ref{ex:adgecs}, regarding conjugacy and focality properties.

\begin{corollary}\label{cor:nonconjugacy}
Let $p,q\in M$. For a generic metric $g\in\A_{g_\mathrm A,\nu}$, the points $p$ and $q$ are not $g$--conjugate.
\end{corollary}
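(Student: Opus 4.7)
The plan is to apply Theorem~\ref{thm:bigone} to the fixed endpoints condition $\p=\{p\}\times\{q\}$. First I would verify that this $\p$ is an admissible GEC in the sense of Definition~\ref{def:admgbc}: this is exactly what is observed in the first paragraph of Example~\ref{ex:adgecs}. More precisely, $\p$ is compact (a single point of $M\times M$); the tangent space $T_{(p,q)}\p$ is trivial, so the restriction of $\overline g=g\oplus(-g)$ to $\p$ is vacuously nondegenerate and consequently $\A_{g_\mathrm A,\nu,\p}=\A_{g_\mathrm A,\nu}$; and condition (iii) follows either trivially (when $p\neq q$, as in Example~\ref{ex:triviallyadmissible}) or by the argument recalled in Example~\ref{ex:Pq} for the case $p=q$, treated as a degenerate instance of $P\times\{q\}$. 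Hence $\p=\{p\}\times\{q\}$ is $(g_\mathrm A,\nu)$-admissible for \emph{any} choice of $p,q\in M$.

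Next I would invoke Theorem~\ref{thm:bigone}, which immediately yields that
\[
\mathcal G_\p(M)=\left\{g\in\A_{g_\mathrm A,\nu}:\text{all }(g,\p)\text{-geodesics }\gamma\in\op(M)\text{ are nondegenerate}\right\}
\]
is a generic subset of $\A_{g_\mathrm A,\nu,\p}=\A_{g_\mathrm A,\nu}$.

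To conclude, I would translate nondegeneracy of $(g,\p)$-geodesics into the non-conjugacy of $p$ and $q$. By Definition~\ref{def:gpgeod}, for $\p=\{p\}\times\{q\}$ the $(g,\p)$-geodesics are exactly the $g$-geodesics $\gamma:[0,1]\to M$ with $\gamma(0)=p$ and $\gamma(1)=q$, since the orthogonality condition \eqref{eq:gpgeod} is vacuous. Moreover, as pointed out in Example~\ref{ex:jac}, the $\p$-Jacobi fields along such a $\gamma$ are precisely the $g$-Jacobi fields vanishing at both endpoints; thus a $(g,\p)$-geodesic is nondegenerate if and only if it admits no nontrivial Jacobi field with $J(0)=0=J(1)$. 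By Definition~\ref{def:conjugate}, this is equivalent to saying that $p$ and $q$ are not $g$-conjugate along $\gamma$. Hence the condition ``all $(g,\p)$-geodesics are nondegenerate'' is equivalent to ``$p$ and $q$ are not $g$-conjugate,'' and $\mathcal G_\p(M)$ is therefore contained in the set of metrics for which $p$ and $q$ are not $g$-conjugate, which by Remark~\ref{re:stupidremark} is itself generic in $\A_{g_\mathrm A,\nu}$.

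There is no real obstacle here: the corollary is a direct specialization of Theorem~\ref{thm:bigone}, and the only nontrivial points are the admissibility check (handled in Example~\ref{ex:adgecs}) and the identification of $\p$-Jacobi fields with endpoint-vanishing Jacobi fields (handled in Example~\ref{ex:jac}). The mildly subtle case is $p=q$, which is precisely the improvement over Theorem~\ref{thm:biljavapic} highlighted in the discussion at the start of this section.
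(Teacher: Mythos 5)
Your proposal is correct and follows essentially the same route as the paper's own proof: set $\p=\{p\}\times\{q\}$, appeal to Example~\ref{ex:adgecs} for admissibility (including the case $p=q$), apply Theorem~\ref{thm:bigone}, and use Example~\ref{ex:jac} to identify nondegeneracy of $(g,\p)$-geodesics with non-conjugacy of $p$ and $q$. The only difference is stylistic: the paper notes directly that $\mathcal G_\p(M)$ \emph{coincides} with the set of metrics for which $p$ and $q$ are not $g$-conjugate, making the appeal to Remark~\ref{re:stupidremark} unnecessary, whereas you phrase it as a containment; both readings are correct.
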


\begin{proof}
Set $\p=\{p\}\times\{q\}$. As explained in Example~\ref{ex:adgecs}, this is always an admissible GEC, even if $p=q$. Thus, Theorem~\ref{thm:bigone} applies and gives genericity of the set $\mathcal G_\p(M)$ of metrics $g$ in $\A_{g_\mathrm A,\nu,\p}=\A_{g_\mathrm A,\nu}$ such that all $g$--geodesics joining $p$ and $q$ are nondegenerate. From Example~\ref{ex:jac}, this set coincides with the set of metrics $g\in\A_{g_\mathrm A,\nu}$ such that $p$ and $q$ are not $g$--conjugate, concluding the proof.
\end{proof}

\begin{remark}
The above corollary may also be seen as a corollary of Theorem~\ref{thm:biljavapic}, by Biliotti, Javaloyes and Piccione \cite{biljavapic}. Nevertheless, the case $p=q$ of geodesic loops was left open in \cite{biljavapic}, and is proved to hold with the above GECs approach.
\end{remark}

\begin{corollary}\label{cor:nonfocality}
Let $P$ be a submanifold of $M$ and $q\in M$, such that $\p=P\times\{q\}$ is admissible.\footnote{Admissibility of this class of GECs is discussed in Example~\ref{ex:adgecs}.} For a generic metric $g\in\A_{g_\mathrm A,\nu,\p}$, $q$ is not $g$--focal to $P$.
\end{corollary}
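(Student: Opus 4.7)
The plan is to invoke Theorem~\ref{thm:bigone} with $\p = P \times \{q\}$ and then translate the statement about nondegeneracy of $(g,\p)$--geodesics into the desired statement about non focality. Since admissibility of $\p$ is assumed in the hypothesis, Theorem~\ref{thm:bigone} applies directly and yields genericity in $\A_{g_\mathrm A,\nu,\p}$ of the set $\mathcal G_\p(M)$ of metrics $g$ for which every $(g,\p)$--geodesic $\gamma \in \op(M)$ is nondegenerate, i.e., admits no nontrivial $\p$--Jacobi field.

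The remaining task is to identify $\mathcal G_\p(M)$ with the set of metrics for which $q$ is not $g$--focal to $P$. First I would unpack what $(g,\p)$--geodesics and $\p$--Jacobi fields mean in this particular case. By Proposition~\ref{prop:critgenenfunc} and the discussion in Example~\ref{ex:gecs2}, a curve $\gamma \in \op(M)$ is a $(g,\p)$--geodesic if and only if $\gamma$ is a $g$--geodesic with $\gamma(0) \in P$, $\gamma(1) = q$, and $\dot\gamma(0) \in T_{\gamma(0)}P^{\perp_g}$ (the endpoint condition at $q$ being automatic since $T_q\{q\} = \{0\}$). Moreover, from Example~\ref{ex:jac}, setting $Q = \{q\}$ and using $\s^{\{q\}} = 0$, a $\p$--Jacobi field along such a $\gamma$ is precisely a $g$--Jacobi field $J$ with $J(0) \in T_{\gamma(0)}P$, $J(1) = 0$, and $\D^g J(0) + \s^P_{\dot\gamma(0)}(J(0)) \in T_{\gamma(0)}P^{\perp_g}$.

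This is exactly the condition appearing in Definition~\ref{def:focalpoint}: $q$ is $g$--focal to $P$ if and only if there exists such a $g$--geodesic $\gamma$ and a nontrivial $g$--Jacobi field $J$ along it meeting these conditions. Thus $g \in \mathcal G_\p(M)$ precisely when $q$ is not $g$--focal to $P$, and the conclusion follows from the genericity of $\mathcal G_\p(M)$. Since this reduction is a direct unwinding of the relevant definitions, I do not foresee any real obstacle; the only point worth verifying carefully is that the linearized endpoints condition \eqref{eq:prepcampodijacobi} specializes correctly when one of the factors of $\p$ is a single point, which is immediate because the second fundamental form of a point submanifold vanishes identically and the orthogonality requirement at the endpoint mapped to $q$ becomes vacuous.
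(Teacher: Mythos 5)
Your proposal is correct and follows essentially the same route as the paper: apply Theorem~\ref{thm:bigone} to $\p=P\times\{q\}$, unpack $(g,\p)$--geodesics and $\p$--Jacobi fields via Proposition~\ref{prop:critgenenfunc} and Examples~\ref{ex:gecs2},~\ref{ex:jac}, and identify the resulting conditions with $g$--focality of $q$ to $P$ via Definition~\ref{def:focalpoint}. The only cosmetic difference is that you spell out the vanishing of $\s^{\{q\}}$ and the vacuousness of the endpoint condition at $q$, which the paper leaves implicit.
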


\begin{proof}
Applying Theorem~\ref{thm:bigone} to $\p=P\times\{q\}$ we obtain genericity of the set $\mathcal G_\p(M)$ of metrics $g$ in $\A_{g_\mathrm A,\nu,\p}$ such that all $(g,\p)$--geodesics are nondegenerate. From Examples~\ref{ex:gecs}, \ref{ex:gecs2} and \ref{ex:jac}, these are $g$--geodesics $\gamma:[0,1]\to M$ that are $g$--orthogonal to $P$ at $\gamma(0)$ and do not admit any $g$--Jacobi field $J$ satisfying $J(0)\in T_{\gamma(0)}P$, $J(1)=0$ and $$\D^g J(0)+\s^P_{\gamma(0)}(J(0))\in T_{\gamma(0)}P^\perp.$$ From Definition~\ref{def:focalpoint}, the generic set $\mathcal G_\p(M)$ coincides with the set of metrics $g\in\A_{g_\mathrm A,\nu,\p}$ such that $q$ is not $g$--focal to $P$, concluding the proof.
\end{proof}

\begin{corollary}\label{cor:nonfocality2}
Let $P$ and $Q$ be submanifolds of $M$, such that $\p=P\times Q$ is admissible.\footnote{Admissibility of this class of GECs is also discussed in Example~\ref{ex:adgecs}.} For a generic metric $g\in\A_{g_\mathrm A,\nu,\p}$, $P$ and $Q$ are not $g$--focal.
\end{corollary}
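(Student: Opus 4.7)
The plan is to mirror the proofs of Corollaries \ref{cor:nonconjugacy} and \ref{cor:nonfocality}, applying Theorem \ref{thm:bigone} directly to the GEC $\p=P\times Q$, which is admissible by hypothesis. This immediately yields the genericity in $\A_{g_\mathrm A,\nu,\p}$ of the set $\mathcal G_\p(M)$ of metrics $g$ all of whose $(g,\p)$--geodesics are nondegenerate. The remaining work is purely a matter of translation: I must identify $\mathcal G_\p(M)$ with the set of metrics for which $P$ and $Q$ are not $g$--focal.

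For this translation step, I would invoke Examples \ref{ex:gecs}, \ref{ex:gecs2} and \ref{ex:jac} to unwind what being a $(g,\p)$--geodesic and a $\p$--Jacobi field means when $\p=P\times Q$. First, since $T_{(\gamma(0),\gamma(1))}\p = T_{\gamma(0)}P\oplus T_{\gamma(1)}Q$, the condition $(\dot\gamma(0),\dot\gamma(1))\in T_{(\gamma(0),\gamma(1))}\p^\perp$ with respect to $\overline g = g\oplus(-g)$ translates precisely into $\dot\gamma(0)\in T_{\gamma(0)}P^{\perp_g}$ and $\dot\gamma(1)\in T_{\gamma(1)}Q^{\perp_g}$. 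Second, using the formula $\s^{P\times Q}_{(\dot\gamma(0),\dot\gamma(1))} = \s^P_{\dot\gamma(0)}-\s^Q_{\dot\gamma(1)}$ recalled in Example \ref{ex:jac}, the endpoint condition \eqref{eq:pcampodijacobi} for a $\p$--Jacobi field $J$ decouples into the two conditions
\begin{equation*}
\D^g J(0)+\s^P_{\dot\gamma(0)}(J(0))\in T_{\gamma(0)}P^{\perp_g}, \quad \D^g J(1)+\s^Q_{\dot\gamma(1)}(J(1))\in T_{\gamma(1)}Q^{\perp_g},
\end{equation*}
together with $J(0)\in T_{\gamma(0)}P$ and $J(1)\in T_{\gamma(1)}Q$.

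Comparing these with Definition \ref{def:focalsubmanifolds}, the existence of a nontrivial $\p$--Jacobi field along some $(g,\p)$--geodesic $\gamma$ is precisely the condition that $P$ and $Q$ are $g$--focal. Equivalently, all $(g,\p)$--geodesics being nondegenerate (i.e., admitting no nontrivial $\p$--Jacobi fields) is precisely the condition that $P$ and $Q$ are not $g$--focal. Thus $\mathcal G_\p(M)$ coincides with the set of metrics in $\A_{g_\mathrm A,\nu,\p}$ for which $P$ and $Q$ are not $g$--focal, and the genericity follows from Theorem \ref{thm:bigone}.

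I do not foresee any real obstacle: admissibility of $\p=P\times Q$ is granted as a hypothesis (and in any case was already established in Example \ref{ex:adgecs} under transversality of $P$ and $Q$), Theorem \ref{thm:bigone} is directly applicable, and the identification of $\p$--Jacobi fields with focal Jacobi fields between $P$ and $Q$ is a straightforward bookkeeping exercise already carried out in the relevant examples. The only minor subtlety worth flagging is the sign convention in the formula $\s^{P\times Q}_{(\dot\gamma(0),\dot\gamma(1))} = \s^P_{\dot\gamma(0)}-\s^Q_{\dot\gamma(1)}$, which comes from the $-g$ factor on the second component of $\overline g$ and is exactly what makes the endpoint condition at $\gamma(1)$ match Definition \ref{def:focalsubmanifolds}.
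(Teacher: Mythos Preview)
Your proposal is correct and follows essentially the same approach as the paper's proof: apply Theorem~\ref{thm:bigone} to the admissible GEC $\p=P\times Q$, then invoke Examples~\ref{ex:gecs}, \ref{ex:gecs2}, \ref{ex:jac} and Definition~\ref{def:focalsubmanifolds} to identify $\mathcal G_\p(M)$ with the set of metrics for which $P$ and $Q$ are not $g$--focal. Your write-up is in fact slightly more explicit than the paper's about how the endpoint conditions decouple and about the sign in $\s^{P\times Q}$, but the logic is identical.
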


\begin{proof}
Applying Theorem~\ref{thm:bigone} to $\p=P\times Q$ we obtain genericity of the set $\mathcal G_\p(M)$ of metrics $g$ in $\A_{g_\mathrm A,\nu,\p}$ such that all $(g,\p)$--geodesics are nondegenerate. From Examples~\ref{ex:gecs}, \ref{ex:gecs2} and \ref{ex:jac}, these are $g$--geodesics $\gamma:[0,1]\to M$ that are $g$--orthogonal to $P$ at $\gamma(0)$ and $Q$ at $\gamma(1)$ and do not admit any $g$--Jacobi field $J$ satisfying $J(0)\in T_{\gamma(0)}P$, $J(1)\in T_{\gamma(1)}Q$ and
\begin{equation*}
\begin{aligned}
\D^g J(0)+\s_{\dot\gamma(0)}^P(J(0)) &\in T_{\gamma(0)}P^\perp \\
\D^g J(1)+\s_{\dot\gamma(1)}^Q(J(1)) &\in T_{\gamma(1)}Q^\perp,
\end{aligned}
\end{equation*}
where $^\perp$ is orthogonality with respect to the metrics on $P$ and $Q$ induced by $g$. From Definition~\ref{def:focalsubmanifolds}, the generic set $\mathcal G_\p(M)$ coincides with the set of metrics $g\in\A_{g_\mathrm A,\nu,\p}$ such that $P$ and $Q$ are not $g$--focal, concluding the proof.
\end{proof}

\section{\texorpdfstring{Genericity in the $C^\infty$--topology}{Genericity in the smooth topology}}
\label{sec:smooth2}

Analogously to Section~\ref{sec:smooth1}, in this section we extend the generic property described in Theorem~\ref{thm:bigone} from the case of the $C^k$--topology to the $C^\infty$--topology. Consider once more an index $\nu\in\{0,\dots,m\}$, a smooth auxiliary metric $g_\mathrm A\in\met_\nu^\infty(M)$, such that $\sup_{x\in M}\|g_\mathrm A(x)^{-1}\|_\mathrm R<+\infty$, and a $(g_\mathrm A,\nu)$--admissible GEC $\p$. For each $k\geq 3$, let $\mathds{E}^k$ be any separable $C^k$ Whitney type Banach space of sections of $TM^*\vee TM^*$ that tend to zero at infinity, for instance $\mathds E^k=\sect_0^k(TM^*\vee TM^*)$. Then, from Proposition~\ref{prop:affineworks},
\begin{equation*}
\A_{g_\mathrm A,\nu}^k=(g_\mathrm A+\mathds E^k)\cap\met_\nu^k(M)
\end{equation*}
is an open subset of the affine Banach space $g_\mathrm A+\mathds E^k$. In Section~\ref{sec:smooth1}, we endowed the countable intersection
\begin{equation*}
\A_{g_\mathrm A,\nu}^\infty=\bigcap_{k\geq 3}\A_{g_\mathrm A,\nu}^k
\end{equation*}
with the so--called $C^\infty$--topology, which is the smallest topology that makes all inclusions $i_k:\A_{g_\mathrm A,\nu}^\infty\hookrightarrow\A_{g_\mathrm A,\nu}^k$ continuous. Moreover, from Proposition~\ref{prop:nondegenerateopen}, the subset $\A_{g_\mathrm A,\nu,\p}^k$ of $\A_{g_\mathrm A,\nu}^k$, formed by\footnote{Recall \eqref{eq:anup}.} metrics $g$ such that $\overline g$ is nondegenerate on $\p$, is open, for each $k\geq3$. Similarly, consider
\begin{equation*}
\A_{g_\mathrm A,\nu,\p}^\infty=\bigcap_{k\geq3}\A_{g_\mathrm A,\nu,\p}^k,
\end{equation*}
which is open in $\A_{g_\mathrm A,\nu}^\infty$. In fact, for any $k\geq3$, the subset $\A_{g_\mathrm A,\nu,\p}^k$ is open in $\A_{g_\mathrm A,\nu}^k$. In addition, $$\A_{g_\mathrm A,\nu,\p}^\infty=\A_{g_\mathrm A,\nu,\p}^k\cap\A_{g_\mathrm A,\nu}^\infty,$$ and hence $\A_{g_\mathrm A,\nu,\p}^\infty$ is $\tau_k$--open and therefore open in $\A_{g_\mathrm A,\nu}^\infty$, see Remark~\ref{re:finerthanany}.

Our version of the generic property stated in Theorem~\ref{thm:bigone} in the $C^\infty$--topology will give genericity of smooth metrics without $(g,\p)$--degenerate geodesics in $\A_{g_\mathrm A,\nu,\p}^\infty$, for a given choice of an index $\nu$, a smooth auxiliary metric $g_\mathrm A$, a family $\{\mathds E^k\}_{k\geq3}$ where each $\mathds E^k$ is a separable $C^k$ Whitney type Banach spaces of sections of $TM^*\vee TM^*$ that tend to zero at infinity, and a $(g_\mathrm A,\nu)$--admissible GEC $\p$. More precisely, define
\begin{equation*}
\mathcal G_\p^\infty(M)=\left\{g\in\A_{g_\mathrm A,\nu,\p}^\infty: \begin{array}{c} \text{ all } (g,\p)\text{--geodesics }\gamma\in\op(M) \\ \text{ are nondegenerate }\end{array}\right\}.
\end{equation*}
and notice that $\mathcal G_\p^\infty(M)=\bigcap_{k\geq3}\mathcal G^k_\p(M)$, where
\begin{equation*}
\mathcal G^k_\p(M)=\left\{g\in\A^k_{g_\mathrm A,\nu,\p}: \begin{array}{c} \text{ all } (g,\p)\text{--geodesics }\gamma\in\op(M) \\ \text{ are nondegenerate }\end{array}\right\}.
\end{equation*}
Recall that for each $k\geq 3$, Theorem~\ref{thm:bigone} gives genericity of $\mathcal G^k_\p(M)$ in $\A^k_{g_\mathrm A,\nu,\p}$.

\begin{theorem}\label{thm:bigonecinfty}
Consider choices of $\nu$, $g_\mathrm A$, $\p$ and $\{\mathds E^k\}_{k\geq 3}$ as described above, and the $C^\infty$--topology induced in the intersection $\A_{g_\mathrm A,\nu,\p}^\infty$. Then the subset $\mathcal G^\infty_\p(M)$ is generic in $\mathcal A^\infty_{\nu,\p}$.
\end{theorem}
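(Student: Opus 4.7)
The plan is to mimic closely the intersection argument used in Section~\ref{sec:smooth1} for the $C^\infty$ Bumpy Metric Theorem, exploiting the decomposition of $\mathcal G_\p(M)$ as a countable intersection of length-bounded subsets already introduced in the proof of Theorem~\ref{thm:bigone}. For each $k\geq3$ and $n\in\N$, let
\begin{equation*}
\mathcal R_n^k=\left\{g\in\A^k_{g_\mathrm A,\nu,\p}:\begin{array}{c}\text{all }(g,\p)\text{--geodesics }\gamma\text{ with}\\ L_\mathrm R(\gamma)\leq n\text{ are nondegenerate}\end{array}\right\},
\end{equation*}
and set $\mathcal R_n^\infty=\mathcal R_n^k\cap\A^\infty_{g_\mathrm A,\nu,\p}$ (which does not depend on $k$). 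The identity
\begin{equation*}
\mathcal G^\infty_\p(M)=\bigcap_{n\in\N}\mathcal R_n^\infty
\end{equation*}
reduces the problem to showing that each $\mathcal R_n^\infty$ is open and dense in $\A^\infty_{g_\mathrm A,\nu,\p}$; genericity then follows from Definition~\ref{def:generic}.

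For openness, I would fix $n\in\N$ and any $k\geq3$ and invoke Claim~\ref{cl:bigclaim3} in the proof of Theorem~\ref{thm:bigone}, which gives that $\mathcal R_n^k$ is open in $\A^k_{g_\mathrm A,\nu,\p}$. Since $\mathcal R_n^\infty=\A^\infty_{g_\mathrm A,\nu,\p}\cap\mathcal R_n^k$, the set $\mathcal R_n^\infty$ is open in the topology $\tau_k$ on $\A^\infty_{g_\mathrm A,\nu,\p}$ induced by the single inclusion $i_k:\A^\infty_{g_\mathrm A,\nu,\p}\hookrightarrow\A^k_{g_\mathrm A,\nu,\p}$; by Remark~\ref{re:finerthanany} the $C^\infty$--topology is finer than $\tau_k$, hence $\mathcal R_n^\infty$ is open in the $C^\infty$--topology as required. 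This step is routine.

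The more delicate step is density. The plan is to apply Lemma~\ref{le:dotausk} with $X=\A^k_{g_\mathrm A,\nu,\p}$, $U=\mathcal R_n^k$ and $D=\A^\infty_{g_\mathrm A,\nu,\p}$. Openness of $U$ is clear from the previous paragraph and density of $U$ in $X$ is given by Claim~\ref{cl:bigclaim4} of Theorem~\ref{thm:bigone}. The genuinely new ingredient needed is density of $\A^\infty_{g_\mathrm A,\nu,\p}$ in $\A^k_{g_\mathrm A,\nu,\p}$, which I expect to be the main obstacle and will be handled as follows. Density of smooth sections in $C^k$ sections of $TM^*\vee TM^*$ (via the Stone--Weierstrass type statement in Remark~\ref{re:sectinftyfrechet}) gives density of $g_\mathrm A+\mathds E^\infty$ in $g_\mathrm A+\mathds E^k$; since $\A^k_{g_\mathrm A,\nu}$ is open in $g_\mathrm A+\mathds E^k$ by Proposition~\ref{prop:affineworks}, a small $C^k$ perturbation of any $g\in\A^k_{g_\mathrm A,\nu,\p}$ by a smooth tensor remains a semi--Riemannian metric of index $\nu$, and by the openness of the $\overline g$--nondegeneracy condition on $\p$ (Proposition~\ref{prop:nondegenerateopen}) remains in $\A^k_{g_\mathrm A,\nu,\p}$, so the perturbation lies in $\A^\infty_{g_\mathrm A,\nu,\p}$. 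Thus Lemma~\ref{le:dotausk} applies and yields density of $\mathcal R_n^\infty$ in $\A^\infty_{g_\mathrm A,\nu,\p}$ (with respect to $\tau_k$, hence a fortiori with respect to the finer $C^\infty$--topology). Combining openness and density, each $\mathcal R_n^\infty$ is open and dense in $\A^\infty_{g_\mathrm A,\nu,\p}$, so the countable intersection $\mathcal G^\infty_\p(M)$ is generic, which concludes the proof.
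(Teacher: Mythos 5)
Your proof follows the same route as the paper: the same decomposition $\mathcal G^\infty_\p(M)=\bigcap_{n\in\N}\mathcal R_n^\infty$, openness via Claim~\ref{cl:bigclaim3} and Remark~\ref{re:finerthanany}, density via Lemma~\ref{le:dotausk} with Claim~\ref{cl:bigclaim4}, and the Stone--Weierstrass density of smooth tensors. The structure is right.

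One inference in the density step is stated backwards, however. You apply Lemma~\ref{le:dotausk} for a \emph{fixed} $k$ and conclude that $\mathcal R_n^\infty$ is $\tau_k$--dense in $\A^\infty_{g_\mathrm A,\nu,\p}$, and then say this holds ``a fortiori with respect to the finer $C^\infty$--topology.'' That implication goes the wrong way: a finer topology has \emph{more} nonempty open sets to hit, so density in a coarser topology ($\tau_k$) does not by itself give density in a finer one. (Compare with the openness step, where finer~$\Rightarrow$~$\tau_k$--open~$\Rightarrow$~$C^\infty$--open is the correct direction, but density goes the opposite way.) What actually saves the argument is that it works uniformly for every $k\geq3$: since the $C^\infty$--topology on $\A^\infty_{g_\mathrm A,\nu,\p}$ is generated by $\bigcup_{k\geq3}\tau_k$, and the $\tau_k$'s form an increasing chain, every nonempty $C^\infty$--open set $V$ already contains a nonempty $\tau_{k_0}$--open set for \emph{some} $k_0\geq3$; running your Lemma~\ref{le:dotausk} argument with $k=k_0$ shows $\mathcal R_n^\infty$ meets $V$. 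This is also what the paper's proof implicitly relies on. So the conclusion is correct, but replace the ``a fortiori'' clause by the observation that $\tau_k$--density for \emph{all} $k\geq3$ together with the description of the $C^\infty$--topology as the union of the $\tau_k$'s gives $C^\infty$--density.
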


\begin{proof}
This proof is in great part adapted from Bettiol and Giamb\`o \cite[Proposition 5.12]{metmna}, and employs the same techniques used for instance in \cite{biljavapic,biljavapic2,fhs,gj}, described in Section~\ref{sec:smooth1}. For each $n\in\N$ let
\begin{equation}\label{eq:inftyrn}
\mathcal{R}_n^\infty=\left\{g\in\mathcal{A}_{g_\mathrm A,\nu,\p}^\infty:\begin{array}{c}\mbox{ all } (g,\p)\mbox{--geodesics }\gamma\mbox{ with } \\ L_\mathrm R (\gamma)\leq n\mbox{ are nondegenerate} \end{array}\right\}.
\end{equation}
Notice that $\mathcal G_\p^\infty(M)=\bigcap_{n\in\N}\mathcal R_n^\infty$, hence it suffices to prove that for each $n\in\N$, the subset $\mathcal R_n^\infty$ is open and dense in $\A^{\infty}_{g_\mathrm A,\nu,\p}$. It then follows that $\mathcal G_\p^\infty(M)$ contains a countable intersection of open dense subsets, and is hence generic.\footnote{Recall Definition~\ref{def:generic}.}

\begin{claim}\label{cl:ropen}
For each $n\in\N$ the subset $\mathcal R_n^\infty$ is open in $\A^{\infty}_{g_\mathrm A,\nu,\p}$.
\end{claim}

Notice that\footnote{See \eqref{eq:inftyrn} and \eqref{eq:ainfty}.} for each $k\geq3$, 
\begin{equation}\label{eq:seila2}
\mathcal R_n^\infty=\A_{g_\mathrm A,\nu,\p}^\infty\cap\mathcal R_n^k
\end{equation}
where $\mathcal{R}_n^k$ is given by \eqref{eq:Ralpha}, and the index $k$ stress the choice of regularity $C^k$ in that definition. More precisely, 
\begin{equation*}
\mathcal{R}_n^k=\left\{g\in\mathcal{A}_{g_\mathrm A,\nu,\p}^k:\begin{array}{c}\mbox{ all } (g,\p)\mbox{--geodesics }\gamma\mbox{ with } \\ L_\mathrm R (\gamma)\leq n\mbox{ are nondegenerate} \end{array}\right\}.
\end{equation*}
Observe that $\mathcal R_n^\infty=\bigcap_{k\geq3}\mathcal R_n^k$. 

Fix $k\geq 3$. Then Claim~\ref{cl:bigclaim3} gives that $\mathcal R_n^k$ is open in $\A^k_{g_\mathrm A,\nu,\p}$ for all $n\in\N$. From Remark~\ref{re:finerthanany}, this implies that $\mathcal R_n^\infty$ is also open in $\A^{\infty}_{g_\mathrm A,\nu,\p}$, since\footnote{Notice that the intersection $\mathcal R_n^\infty=\A^{\infty}_{g_\mathrm A,\nu,\p}\cap\mathcal R_n^k$ is open in $\A^{\infty}_{g_\mathrm A,\nu,\p}$ with the topology induced by the inclusion $i_k:\A^{\infty}_{g_\mathrm A,\nu,\p}\hookrightarrow\A^k_{g_\mathrm A,\nu,\p}$. The $C^\infty$--topology on $\A^{\infty}_{g_\mathrm A,\nu,\p}$ is finer than any of these topologies, for it is induced by the entire family $\{i_k\}_{k\geq3}$, hence $\mathcal R_n^\infty$ is open in $\A^{\infty}_{g_\mathrm A,\nu,\p}$.} it is $\tau_k$--open, concluding the proof of Claim~\ref{cl:ropen}.

\begin{claim}\label{cl:rdense}
For each $n\in\N$ the subset $\mathcal R_n^\infty$ is dense in $\A^{\infty}_{g_\mathrm A,\nu}$.
\end{claim}

Fix $n\in\N$, $k\geq3$ and consider once more the intersection \eqref{eq:seila2}. Claim~\ref{cl:bigclaim3} in the proof of Theorem~\ref{thm:bigone} gives that $\mathcal R_n^k$ is open in $\A^k_{g_\mathrm A,\nu,\p}$, and Claim~\ref{cl:bigclaim4} gives that $\mathcal R_n^k$ is dense in $\A^k_{g_\mathrm A,\nu,\p}$. In addition, from the Stone--Weierstrass Theorem~\ref{thm:stw2}, it is easy to conclude that $\A^\infty_{g_\mathrm A,\nu,\p}$ is also dense in $\A^k_{g_\mathrm A,\nu,\p}$. Therefore, setting $X=\A^k_{g_\mathrm A,\nu,\p}$, $U=\mathcal R_n^k$ and $D=\A^\infty_{g_\mathrm A,\nu,\p}$ in Lemma~\ref{le:dotausk}, it follows that $\mathcal R_n^\infty=\A^{\infty}_{g_\mathrm A,\nu,\p}\cap\mathcal R_n^k$ is dense in $\A^{\infty}_{g_\mathrm A,\nu,\p}$, concluding the proof.
\end{proof}

\begin{remark}
Analogously to Theorem~\ref{thm:bigonecinfty}, the $C^\infty$ version of the statements of Corollaries~\ref{cor:nonconjugacy},~\ref{cor:nonfocality} and~\ref{cor:nonfocality2} are automatically valid.
\end{remark}

\chapter{Final remarks and considerations}
\label{chap7}

In this short chapter, we make some final remarks on the topics discussed in the previous chapters, briefly mentioning some interesting details.

With regard to the main result in Chapter~\ref{chap5}, the Bumpy Metric Theorem~\ref{thm:bumpy}, there are several important consequences of this result well--studied in the literature. After the first article of Abraham \cite{abraham}, where the compact Riemannian version of this theorem was first announced, many other authors developed further genericity results for periodic geodesics and, more generally, periodic orbits of certain well--behaved Hamiltonian flows.

A well--known result of this type is the genericity statement of Klingenberg and Takens \cite{klitak}, on the $k$--jet of the {\em Poincar\'e map}, or {\em first recurrence map}, of any periodic geodesic. Roughly, it asserts that given $Q$ an open dense and invariant subset of the space of $k$--jets, for a generic metric in the $C^{k+1}$--topology the Poincar\'e map of every periodic geodesic belongs to $Q$. Biliotti, Javaloyes and Piccione \cite{biljavapic2} managed to use further perturbation properties on lightlike geodesics and establish a (compact) semi--Riemannian version of the Klingenberg--Takens generic property, \cite[Corollary 4.2]{biljavapic2}. At this stage, given the techniques used to prove the non compact version of the semi--Riemannian Bumpy Metric Theorem, Theorem~\ref{thm:bumpy}, it is reasonable to expect that the same perturbation arguments of Biliotti, Javaloyes and Piccione \cite{biljavapic2} may be used to obtain the non compact semi--Riemannian version of this generic property.

In addition, Contreras--Barandiar\'an and Paternain \cite{contpater} successfully used the classic Bumpy Metric Theorem to prove that generic Riemannian geodesic flows have positive {\em topological entropy}. The topological entropy $h_{\mathrm{top}}(g)$ of a metric $g$ is a dynamical invariant that roughly measures orbit structure complexity of a flow.\footnote{The following interesting characterization of this invariant was given by Ma\~n\'e \cite{mane}. Denoting $a_n(p,q)$ the number of geodesic segments joining $p$ and $q$ with length less then $n$, $$h_{\mathrm{top}}(g)=\lim_{n\to+\infty}\tfrac1n\log\int_{M\times M}a_n(p,q)\;\dd p\,\dd q.$$} In particular, positiveness of $h_{\mathrm{top}}(g)$ implies that the average number of geodesic segments joining $p$ and $q$ grows exponentially with length.

Counter--examples by Meyer and Palmore \cite{mp} point out that abstract Hamiltonian systems cannot be considered for generalizations of the Bumpy Metric Theorem to a more comprehensive class of dynamical flows. Basically, the dynamics of solutions differ in distinct energy levels, and hence the nondegeneracy property fails to be generic. In the particular case of geodesic flows, energy levels are well--organized, since adequate reparameterizations of periodic geodesics give other periodic geodesics with any prescribed energy, see \eqref{eq:ene}. The counter--examples in this paper also prove false a conjecture of Abraham and Marsden \cite{abmakekol} on genericity of nondegeneracy for periodic orbits. It exhibits a cylinder of periodic orbits in which the conjectured generic property is violated at isolated values of the energy, in a generic way.

Nevertheless, in some particular cases of Hamiltonian flows it is possible to infer a {\em bumpy--type} result. For instance, Gon\c{c}alves Miranda \cite{GonMir} recently proved genericity of nondegenerate periodic trajectories for the magnetic flow on surfaces. This result is also used to establish an extension of the Kupka--Smale Theorem, in this context. Thanks to these and many other implications of the Bumpy Metric Theorem, it became a central result in the theory of generic properties of flows. In this sense, it is also reasonable to expect that several of these subsequent results may be generalized for instance to the case of semi--Riemannian metrics, using the Bumpy Metric Theorem~\ref{thm:bumpy}.

Regarding the genericity results of nondegeneracy of semi--Riemannian geodesics under GECs in Chapter~\ref{chap6}, other than the given geometric applications,\footnote{For instance, Corollaries~\ref{cor:nonconjugacy},~\ref{cor:nonfocality},~\ref{cor:nonfocality2}.} they have relevant implications in general relativity. For instance, the problem of light conjugacy between an event and an observer may be regarded as a focality issue between a point and a submanifold in a space--time. Giamb\`o, Giannoni and Piccione \cite{ggp} studied genericity of nondegeneracy for lightlike geodesics in stationary space--times conjugating an event and an observer. Namely, after reducing the original problem to a Finsler geodesic problem via a second order Fermat principle for light rays, transversality techniques similar to those studied in Chapter~\ref{chap4} are used to establish the desired genericity.

In addition, it is possible to give more precise information on how to perturb a metric $g\in\A_{g_\mathrm A,\nu,p}\setminus\mathcal G_\p(M)$ to an arbitrarily close generic metric $g'\in\mathcal G_\p(M)$ without degenerate $(g,\p)$--geodesics.\footnote{See the context and notation of Theorem~\ref{thm:bigone}.} Namely, applying the same ideas used by Biliotti, Javaloyes and Piccione \cite[Section 4.3]{biljavapic} it is possible to prove that $g'$ may be taken conformal to $g$, i.e., there exists a $C^k$ positive function $f:M\to\R_+$ such that $g'=fg\in\mathcal G_\p(M)$, with $f$ arbitrarily close to the constant function equal to $1$. This gives a qualitative refinement of Theorem~\ref{thm:bigone}, and the proof is very similar to the proof of \cite[Proposition 4.4]{biljavapic}. More precisely, it employs the same arguments of Theorem~\ref{thm:transvholds} on $C^k$ sections of the trivial bundle $M\times\R$, which are $C^k$ functions on $M$. The absence of strongly degenerate geodesics then allows to conclude the existence of a such positive function $f$ that gives the desired generic metric conformal to $g$.

Finally, it is also possible to explore consequences of Theorem~\ref{thm:bigone} for GECs in more specific ambients, such as {\em orthogonally split} metrics, {\em globally hyperbolic space--times} and {\em stationary space--times}. This is also possible in a similar fashion to \cite[Sections 4.4-4.6]{biljavapic}, where results are explored in the case of fixed endpoints conditions $\p=\{p\}\times\{q\}$, with $p\neq q$.

\backmatter
\renewcommand{\chaptermark}[1]{\markboth{#1}{}}
\renewcommand{\sectionmark}[1]{\markright{#1}}


\clearpage
\phantomsection
\bibliographystyle{amsplain}


\clearpage
\phantomsection
\printindex

\end{document}